\numberwithin{equation}{section}
\selectfont\symbol{60}\fontencoding{\encodingdefault}}
\selectfont\symbol{62}\fontencoding{\encodingdefault}}
\newcommand{\assign}{:=}
\newcommand{\comma}{{,}}
\newcommand{\mathi}{\mathrm{i}}
\newcommand{\nobracket}{}
\newcommand{\nocomma}{}
\newcommand{\nosymbol}{}
\newcommand{\tmem}[1]{{\em #1\/}}
\newcommand{\tmmathbf}[1]{\ensuremath{\boldsymbol{#1}}}
\newcommand{\tmop}[1]{\ensuremath{\operatorname{#1}}}
\newcommand{\tmrsub}[1]{\ensuremath{_{\textrm{#1}}}}
\newcommand{\tmtextbf}[1]{{\bfseries{#1}}}
\newcommand{\tmtextit}[1]{{\itshape{#1}}}
\newenvironment{itemizeminus}{\begin{itemize} }{\end{itemize}}
\newenvironment{proof}{\noindent\textbf{Proof\ }}{\hspace*{\fill}$\Box$\medskip}
\newenvironment{tmindent}{\begin{tmparmod}{1.5em}{0pt}{0pt} }{\end{tmparmod}}
\newenvironment{tmparmod}[3]{\begin{list}{}{\setlength{\topsep}{0pt}\setlength{\leftmargin}{#1}\setlength{\rightmargin}{#2}\setlength{\parindent}{#3}\setlength{\listparindent}{\parindent}\setlength{\itemindent}{\parindent}\setlength{\parsep}{\parskip}} \item[]}{\end{list}}
\newtheorem{theorem}{Theorem}[section]
\newtheorem{corollary}[theorem]{Corollary}
\newtheorem{lemma}[theorem]{Lemma}
\newtheorem{proposition}[theorem]{Proposition}
{\theorembodyfont{\rmfamily}\newtheorem{remark}[theorem]{Remark}}
\date{}
\author{David Chiron}
\author{Eliot Pacherie}
\affil{Universit{\'e} C\^{o}te d'Azur, CNRS, LJAD, France}
\begin{document}

\title{ Smooth branch of travelling waves for the Gross-Pitaevskii equation in
$\mathbbm{R}^2$ for small speed}

\maketitle

\begin{abstract}
  We construct a smooth branch of travelling wave solutions for the 2
  dimensional Gross-Pitaevskii equations for small speed. These travelling
  waves exhibit two vortices far away from each other. We also compute the
  leading order term of the derivatives with respect to the speed. We
  construct these solutions by an implicit function type argument.
\end{abstract}

\section{Introduction and statement of the result}

We consider the Gross-Pitaevskii equation
\[ 0 = (\tmop{GP}) (\mathfrak{u}) \assign i \partial_t \mathfrak{u}+ \Delta
   \mathfrak{u}- (| \mathfrak{u} |^2 - 1) \mathfrak{u} \]
in dimension 2 for $\mathfrak{u}: \mathbbm{R}_t \times \mathbbm{R}_x^2 \mapsto
\mathbbm{C}$. The Gross-Pitaevskii equation is a physical model for
Bose-Einstein condensates {\cite{Ginz-Pit_58a}}, {\cite{Neu_90}}, and is
associated with the Ginzburg-Landau energy
\[ E (v) \assign \frac{1}{2} \int_{\mathbbm{R}^2} | \nabla v |^2 + \frac{1}{4}
   \int_{\mathbbm{R}^2} (1 - | v |^2)^2 . \]
The condition at infinity for $(\tmop{GP})$ will be
\[ | \mathfrak{u} | \rightarrow 1 \quad \tmop{as} \quad | x | \rightarrow +
   \infty . \]
We look for travelling wave solutions of $(\tmop{GP})$:
\[ \mathfrak{u} (t, x) = v (x_1, x_2 + c t) \]
where $x = (x_1, x_2)$ and $c > 0$ is the speed of the travelling wave, which
moves along the direction $- \overrightarrow{e_2}$. The equation on $v$ is
\[ 0 = (\tmop{TW}_c) (v) \assign - i c \partial_{x_2} v - \Delta v - (1 - | v
   |^2) v. \]

We want to construct travelling waves for small speed that look like the
product of two well-separated vortices. Vortices are stationary solutions of
$(\tmop{GP})$ of degrees $n \in \mathbbm{Z}^{\ast}$ (see {\cite{CX}} and
{\cite{HH}}):
\[ V_n (x) = \rho_n (r) e^{i n \theta}, \]
where $x = r e^{i \theta}$, solving
\[ \left\{ \begin{array}{l}
     \Delta V_n - (| V_n |^2 - 1) V_n = 0\\
     | V_n | \rightarrow 1 \quad \tmop{as} \quad | x | \rightarrow \infty .
   \end{array} \right. \]
A vortex alone in the plane is a stationnary solution of $(\tmop{GP})$, and
vortices might interact when there are several of them. It is expected that if
they are far away from each other, their dynamic is governed, at least at
first order, by the point vortex system (see {\cite{PV}} and references
therein). In particular, a vortex $V_1$ of degree 1 with a vortex $V_{- 1}$ of
degree $- 1$ should move at constant speed in the direction orthogonal to the
line that connects their centers.

\

The main result of this paper is the construction of a branch of solution by
perturbation of the product of two vortices at any small speed $c > 0$, and
the fact that this branch of solution is $C^1$ with respect to the speed.

\begin{theorem}
  \label{th1}There exists $c_0 > 0$ a small constant such that, for any $0 < c
  \leqslant c_0$, there exists a solution of $(\tmop{TW}_c)$ of the form
  \[ Q_c = V_1 (. - d_c \overrightarrow{e_1}) V_{- 1} (. + d_c
     \overrightarrow{e_1}) + \Gamma_{c, d_c}, \]
  where $d_c = \frac{1 + o_{c \rightarrow 0} (1)}{c}$ is a continuous function
  of c. This solution has finite energy $(E (Q_c) < + \infty)$ and $Q_c
  \rightarrow 1$ when $| x | \rightarrow + \infty$.
  
  Furthermore, for all $+ \infty \geqslant p > 2$, there exists $c_0 (p) > 0$
  such that if $c < c_0 (p)$, for the norm
  \[ \| h \|_{X_p} \assign \| h \|_{L^p (\mathbbm{R}^2)} + \| \nabla h
     \|_{L^{p - 1} (\mathbbm{R}^2)} \]
  and the space $X_p \assign \{ f \in L^p (\mathbbm{R}^2), \nabla f \in L^{p -
  1} (\mathbbm{R}^2) \}$, one has
  \[ \| \Gamma_{c, d_c} \|_{X_p} = o_{c \rightarrow 0} (1) . \]
  In addition,
  \[ c \mapsto Q_c - 1 \in C^1 (] 0, c_0 (p) [, X_p), \]
  with the estimate (for $\nu (c) = \frac{1 + o_{c \rightarrow 0} (1)}{c^2}$)
  \[ \| \partial_c Q_c + \nu (c) \partial_d (V_1 (. - d \overrightarrow{e_1})
     V_{- 1} (. + d \overrightarrow{e_1}))_{| d = d_c \nobracket} \|_p = o_{c
     \rightarrow 0} \left( \frac{1}{c^2} \right) . \]
\end{theorem}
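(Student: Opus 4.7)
The overall strategy is a Lyapunov-Schmidt reduction around the ansatz $V(d) := V_1(\cdot - d \overrightarrow{e_1}) V_{-1}(\cdot + d \overrightarrow{e_1})$, with $d$ serving as a modulation parameter to be tuned and $c$ as the small parameter. Writing $Q_c = V(d) + \Gamma$, the equation $(\tmop{TW}_c)(Q_c) = 0$ becomes
\[ L_{c,d} \Gamma + N_{c,d}(\Gamma) = - E(c,d), \]
where $E(c,d) := (\tmop{TW}_c)(V(d))$ is the ansatz error, $L_{c,d}$ is the linearization of $(\tmop{TW}_c)$ at $V(d)$, and $N_{c,d}$ gathers the quadratic/cubic remainder in $\Gamma$.

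The first task is to estimate $E(c,d)$ in a norm compatible with $X_p$. Using the far-field profiles of $V_{\pm 1}$ and the heuristic point-vortex balance, $-ic \partial_{x_2} V_1$ is approximately cancelled by the ``force'' that $V_{-1}$ exerts on $V_1$ precisely when $cd \approx 1$; the leading portion of $E(c,d)$ then lives along a direction close to $\partial_d V(d)$ with amplitude of order $|c - 1/d|$, while the transverse part is genuinely small. Restricting from the outset to the symmetric class $Q(-x_1, x_2) = Q(x_1, x_2)$, and using that the $x_2$-translation is already absorbed by the travelling-wave ansatz, eliminates most of the symmetry-generated kernel of $L_{c,d}$, leaving, to leading order, a one-dimensional near-kernel essentially spanned by $\partial_d V(d)$.

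The second task is a uniform invertibility estimate for $L_{c,d}$ on the subspace of $X_p$ transverse to $\partial_d V(d)$. Starting from the coercivity of the linearized operator around a single degree-$\pm 1$ vortex, one transplants it to the two-vortex configuration via cut-offs, treating cross-interactions of order $1/d$ as perturbations. A Banach fixed-point argument then produces a unique $\Gamma = \Gamma(c,d)$ of size $o_{c \to 0}(1)$ in $X_p$, smoothly dependent on $(c,d)$. The remaining scalar projection of the equation along $\partial_d V(d)$ yields a one-dimensional compatibility equation $\Lambda(c,d) = 0$ whose leading order is the point-vortex relation $cd - 1 = o(1)$; the standard implicit function theorem then determines $d = d_c$ with $d_c = (1 + o_{c \to 0}(1))/c$.

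For the $C^1$ statement, applying the implicit function theorem to the smooth map $(c,d,\Gamma) \mapsto (\tmop{TW}_c)(V(d) + \Gamma)$ in $X_p$ yields $C^1$ dependence of $(d_c, \Gamma(c, d_c))$ on $c$, hence of $Q_c - 1 \in X_p$. Differentiating $Q_c = V(d_c) + \Gamma(c, d_c)$ and isolating the dominant contribution $\partial_d V(d)_{|d = d_c} \cdot \partial_c d_c$, with $\partial_c d_c = -(1 + o(1))/c^2 = -\nu(c)$, then produces the stated expansion with $o(1/c^2)$ remainder in $L^p$. The main obstacle is the second task: the vortex profiles decay only like $1/|x|$ at infinity, so the spectral structure of $L_{c,d}$ is fragile, and producing sharp inversion estimates that are uniform in $(c,d)$ in a norm strong enough to close the fixed-point scheme is the core of the analysis.
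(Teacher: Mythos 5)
Your outline reproduces the broad Lyapunov--Schmidt skeleton of the paper, but two of the steps you treat as routine are precisely where the real difficulties lie, and as stated they do not go through.

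First, the linear theory. You propose to invert $L_{c,d}$ on a transverse subspace of $X_p$ with a purely additive perturbation, ``treating cross-interactions of order $1/d$ as perturbations.'' This fails for the transport term: since $c\sim 1/d$ and $\nabla V_{\pm 1}$ decays only like $1/r$ (phase tail), the term $-ic\partial_{x_2}\Gamma$ is \emph{not} subordinate to the rest of the linearization in any norm strong enough to close the fixed point, and it must be kept inside the operator being inverted (the paper stresses that the contrary claim in \cite{LW} is erroneous, cf.\ Remark \ref{normsetup2}). Moreover a single $L^p$ scale does not see the anisotropy of the GP linearization: the real part of the perturbation effectively solves a $(-\Delta+2)$-problem while the imaginary part solves a degenerate $\Delta$-problem coupled through $c\partial_{x_2}$, which forces weighted $L^\infty$ norms with \emph{different} decay rates for $\mathfrak{R}\mathfrak{e}\,\Psi$ and $\mathfrak{I}\mathfrak{m}\,\Psi$, a mixed additive/multiplicative ansatz $\eta V(1+\Psi)+(1-\eta)Ve^{\Psi}$ (one cannot divide by $V$ at the vortex cores, and the phase correction is not additively small far away), and the convolution estimates for the Gross--Pitaevskii kernels $K_0,K_j,L_{j,k}$. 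The $X_p$ bound on $\Gamma_{c,d_c}$ is recovered only a posteriori from these weighted estimates.

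Second, and more seriously, the $C^1$ statement. The reduced equation is $\Lambda(c,d)=\pi(1/d-c)+O(c^{2-\sigma})=0$, and applying the implicit function theorem in $c$ requires $\partial_d\Lambda(c,d_c)\neq 0$ with the precise asymptotics $\partial_d\Lambda\sim-\pi/d_c^2\sim-\pi c^2$. But the error term is only $O(c^{2-\sigma})$, which is \emph{larger} than $c^2$, so its $d$-derivative cannot be discarded by the a priori estimates: the ``standard implicit function theorem'' step you invoke is exactly the open point. The paper resolves it by proving an improved smallness estimate for $\partial_{z_1}\Psi_{c,d}=(\partial_{x_1}+\partial_d)\Psi_{c,d}$ near each vortex (Proposition \ref{derg}, giving $o(c^{1+\lambda})$ with $\lambda>0$ rather than $o(c^{1-\sigma})$), after recasting the orthogonality condition on balls $B(\pm d\vec{e}_1,d^{\varepsilon'})$ so that only the well-understood derivative ``moving the other vortex'' enters. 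Without an argument of this type, the nondegeneracy of the reduced equation, the formula $\partial_c d_c=-(1+o(1))/c^2$, and hence the whole expansion of $\partial_c Q_c$, remain unproven.
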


Existence of travelling waves solutions for this equation with finite energy
has already been proven for small speeds in {\cite{MR1669387}} (see also
{\cite{MR2461988}} and {\cite{MR3686002}} for results in dimension 2 and 3).
Moreover, the decay at infinity conjectured in {\cite{0305-4470-15-8-036}} has
been established in {\cite{G1}}. Here, we use an implicit function argument to
construct the solution, using techniques developed in {\cite{DP2}} or
{\cite{LW}} for instance, displaying a clear understanding of the shape of the
solution (see Lemma \ref{Qcfest} for instance). We show in addition that the
constructed branch is $C^1$, which is, to the best of our knowledge, the first
result of this kind in dimension larger than one.

In the Gross-Pitaevskii equation, vortices play the role of solitons (as we
can see in NLS or other such equations). In particular here we show that two
vortices interact at long range since the speed $c$ is of order
$\frac{1}{d_c}$, the half distance between the vortices. This is due to the
slow decay of the vortex: $\nabla V_1$ is of order $\frac{1}{r}$ at infinity
due to the phase.

The formal method for this kind of construction is well known. It has been
done rigorously in a bounded domain for the Ginzburg Landau equation with no
speed ({\cite{DP2}}). One of the difficulties here is to find the right
functional setting to construct the $C^1$ branch, in particular with regards
to the transport term $i c \partial_{x_2} v$. On the contrary of what is claim
in {\cite{LW}}, the transport term can not be treated perturbatively. This is
why we use another functional setting than {\cite{LW}} or {\cite{LW2B}} (see
Remark \ref{normsetup2} for more details)

In this paper, we start by doing the construction of the solution to fill
these gaps in the case of two vortices in $(\tmop{GP})$. This construction is
also a good introduction to the proof of the differentiability of the branch,
which uses many of the same ideas, but with a more technical setting.

\

We start by reducing the problem to a one dimensional one in section
\ref{sec2}. The construction of the travelling wave $Q_c$ is completed in
section \ref{n3}. Furthermore, in subsection \ref{th1proof}, we show that
$Q_c$ has finite energy and we compute some estimates particular to the branch
of solutions. Finally section \ref{qcc1} is devoted to the proof of the
differentiability of the branch.

\

We use the scalar product for $f, g \in L^2 (\mathbbm{R}^2)$,
\[ \langle f, g \rangle \assign \mathfrak{R}\mathfrak{e} \int_{\mathbbm{R}^2}
   f \bar{g}, \]
For $X = (X_1, X_2), Y = (Y_1, Y_2) \in \mathbbm{C}^2$, we define
\[ X.Y \assign X_1 Y_1 + X_2 Y_2, \]
which is the scalar product if $X, Y \in \mathbbm{R}^2$. We use the notation
$B (x, r)$ to define the closed ball in $\mathbbm{R}^2$ of center $x \in
\mathbbm{R}^2$ and radius $r > 0$ for the Euclidean norm. In the estimates, a
constant $K > 0$ is a universal constant independent of any parameter of the
problem.

{\noindent}\tmtextbf{Acknowledgments . }The authors would like to thank Pierre
Rapha{\"e}l for helpful discussions. E.P. is supported by the ERC-2014-CoG
646650 SingWave.{\hspace*{\fill}}{\medskip}

\section{Lyapunov-Schmidt reduction}\label{sec2}

The proof of Theorem \ref{th1} follows closely the construction done in
{\cite{DP2}} or {\cite{LW}}. The main idea is to use perturbation methods on
an approximate solution.

In subsection \ref{V} we define this approximate solution $V$ which consists
in two vortices at distance $2 d$ from each other. We then look for a solution
of $(\tmop{TW}_c)$ as a perturbation of $V$, with an additive perturbation
close to the vortices and a multiplicative one far from them. This is computed
in subsection \ref{perturbation}. We define suitable spaces in subsection
\ref{normsetup} that we will use to invert the linear part and use a
contraction argument. We ask for an orthogonality on the perturbation, and the
norms are a little better but more technical than the ones in Theorem
\ref{th1}. In particular $\Gamma_{c, d_c}$ in Theorem \ref{th1} verifies
better estimates which are discussed for instance in Corollary \ref{sigmarem}
and in Lemma \ref{Qcfest}. We invert the linearized operator in Proposition
\ref{invertop} and show that the perturbation is a fixed point of a
contracting functional in Proposition \ref{contractionest}. The orthogonality
condition create a Lagragian multiplier (see subsection \ref{orthocond}),
which left us with a problem in one dimension. This multiplier will be
cancelled for a good choice of the parameter $d$ in section \ref{n3}.

\subsection{Estimates on vortices}\label{V}

From {\cite{HH}}, we can find stationary solution of $(\tmop{GP})$:
\[ V_n (x) = \rho_n (r) e^{i n \theta} \]
where $x = r e^{\mathi \theta}, n \in \mathbbm{Z}^{\ast}$, solving
\[ \left\{ \begin{array}{l}
     \Delta V_n - (| V_n |^2 - 1) V_n = 0\\
     | V_n | \rightarrow 1 \quad \tmop{as} \quad | x | \rightarrow \infty .
   \end{array} \right. \]
These solutions are well understood and, in particular, we have some estimates
(see {\cite{HH}} for instance) that we will use. We also know the kernel of
the linearized operator around $V_{\pm 1}$ ({\cite{DP}}), which we will need
for inverting the linearized operator around the approximate solution $V$
defined using these vortices
\[ V (x) \assign V_1 (x - d \vec{e_1}) V_{- 1} (x + d \overrightarrow{e_1}) \]
where $d > 0$, $x = (x_1, x_2)$. The function $V$ is the product of two
vortices with opposite degrees at a distance $2 d$ from each other. One vortex
alone in $\mathbbm{R}^2$ is a stationary solution, and it is expected that two
vortices interact and translate at a constant speed of order $c \simeq
\frac{1}{d}$, see {\cite{PV}}. Hence for the two parameters of this problem
$c, d > 0$, we let them be free from each other, but with the condition $c$ is
of order $1 / d$ by imposing that $\frac{1}{2 c} < d < \frac{2}{c}$.

\

We will study in particular areas near the center of each vortices. We will
use coordinates adapted to this problem:
\begin{eqnarray}
  x & = & (x_1, x_2) = r e^{i \theta}, \nonumber\\
  y & = & (y_1, y_2) \assign x - d \overrightarrow{e_1} = r_1 e^{i \theta_1},
  \nonumber\\
  z & = & (z_1, z_2) \assign y + 2 d \overrightarrow{e_1} = x + d
  \overrightarrow{e_1} = r_{- 1} e^{i \theta_{- 1}}, \nonumber\\
  \tilde{r} & \assign & \min (r_1, r_{- 1}) . \label{notation} 
\end{eqnarray}
Using $y$ coordinate mean that we are centered around $V_1$, and $z$
coordinate for around $V_{- 1}$. Note that we have
\[ V (x) = V_1 (y) V_{- 1} (z) \]
using these notations. If it is not precised, $V$ will be taken in $x$, $V_1$
in $y$ and $V_{- 1}$ in $z$. If we compute $(\tmop{TW}_c)$ for $V$, i.e. $- i
c \partial_{x_2} V - \Delta V - (1 - | V |^2) V$, we get
\[ (\tmop{TW}_c) (V) = E - i c \partial_{x_2} V, \]
where we defined
\[ E \assign - \Delta V - (1 - | V |^2) V. \]
We have $V = V_1 V_{- 1}$ and, by using $- \Delta V_{\varepsilon} = (1 - |
V_{\varepsilon} |^2) V_{\varepsilon}$ for $\varepsilon = \pm 1$, we compute
\[ E = - 2 \nabla V_1 . \nabla V_{- 1} + V_1 V_{- 1} (1 - | V_1 |^2 + 1 - |
   V_{- 1} |^2 - 1 + | V_1 V_{- 1} |^2) . \]
Hence
\begin{equation}
  E = - 2 \nabla V_1 . \nabla V_{- 1} + (1 - | V_1 |^2) (1 - | V_{- 1} |^2)
  V_1 V_{- 1} . \label{E2}
\end{equation}
The rest of this subsection is devoted to the computation of estimates on $V,
E, \partial_d V$ and $i c \partial_{x_2} V$ using estimates on $V_1$ and $V_{-
1}$. Let us start with the properties on $V_{\pm 1}$ we need.

\begin{lemma}[{\cite{HH}}]
  \label{lemme3} $V_1 (x) = \rho_1 (r) e^{i \theta}$ verifies $V_1 (0) = 0$,
  and there exists a constant $\kappa > 0$ such that, for all $r > 0$, $0 <
  \rho_1 (r) < 1$, $\rho_1' (r) > 0$, and
  \[ \rho_1 (r) \sim_{r \rightarrow 0} \kappa r, \]
  \[ \rho_1' (r) = O_{r \rightarrow \infty} \left( \frac{1}{r^3} \right), \]
  \[ \rho_1'' (r) = o_{r \rightarrow \infty} \left( \frac{1}{r^3} \right), \]
  \[ 1 - | V_1 (x) | = \frac{1}{2 r^2} + O_{r \rightarrow \infty} \left(
     \frac{1}{r^3} \right), \]
  \[ \nabla V_1 (x) = i V_1 (x) \frac{x^{\bot}}{r^2} + O_{r \rightarrow
     \infty} \left( \frac{1}{r^3} \right) \]
  where $x^{\perp} = (- x_2, x_1)$, $x = r e^{i \theta}$. Furthermore we have
  similar properties for $V_{- 1}$ since
  \[ V_{- 1} (x) = \overline{V_1 (x)_{}} . \]
\end{lemma}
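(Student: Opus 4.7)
The plan is to extract all the stated estimates from the radial ODE satisfied by $\rho_1$. Plugging the ansatz $V_1 = \rho_1(r) e^{i\theta}$ into $\Delta V_1 - (|V_1|^2 - 1) V_1 = 0$ reduces the system to
\[ \rho_1'' + \frac{\rho_1'}{r} - \frac{\rho_1}{r^2} + \rho_1 (1 - \rho_1^2) = 0, \qquad \rho_1(0) = 0, \quad \lim_{r \to \infty} \rho_1(r) = 1, \]
which is the classical profile equation underlying [HH]. The value $V_1(0) = 0$ is immediate from the ansatz together with $\rho_1(0)=0$. The bounds $0 < \rho_1 < 1$ and the monotonicity $\rho_1' > 0$ I would obtain through a maximum-principle argument: at a hypothetical interior critical point $r_0$ with $\rho_1(r_0) < 1$, one has $\rho_1'(r_0)=0$ and $\rho_1''(r_0) \leq 0$, which combined with the ODE forces $\rho_1(r_0)/r_0^2 \geq \rho_1(r_0)(1 - \rho_1(r_0)^2) > 0$; a comparison argument then contradicts the boundary condition at infinity. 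The equivalent $\rho_1(r) \sim \kappa r$ near $0$ comes from the fact that the singular part of the operator at the origin is the Euler operator $\partial_{rr} + r^{-1} \partial_r - r^{-2}$, whose regular solution is $\kappa r$; a Frobenius-type expansion then gives $\rho_1(r) = \kappa r + O(r^3)$ with $\kappa > 0$ fixed by global shooting.

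For the behaviour at infinity, the key step is to set $\rho_1 = 1 - f$ with $f \to 0$. The ODE transforms into
\[ f'' + \frac{f'}{r} - 2 f = \frac{f - 1}{r^2} - 3 f^2 + f^3. \]
The leading balance $-2 f \approx -1/r^2$ suggests the guess $f \sim 1/(2 r^2)$. To promote this into the claimed $O(1/r^3)$ remainder, I would invert the linear operator $L := \partial_{rr} + r^{-1} \partial_r - 2$ against polynomially decaying right-hand sides using its Green's function, which is built from the modified Bessel functions $I_0(\sqrt{2}\, r)$ and $K_0(\sqrt{2}\, r)$ and hence decays exponentially at infinity. A bootstrap argument starting from the rough bound $f = O(1/r^2)$ and feeding successive approximations into the integral representation produces $f = \frac{1}{2r^2} + O(1/r^3)$, from which the stated expansion of $1 - |V_1|$ follows. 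Differentiating the integral representation gives $\rho_1' = -f' = O(1/r^3)$, and substituting the resulting decay back into the ODE and solving algebraically for $\rho_1''$ yields $\rho_1'' = o(1/r^3)$.

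The gradient formula then follows from a direct polar computation:
\[ \nabla V_1 = \rho_1'(r)\, \frac{x}{r}\, e^{i\theta} + i \rho_1(r)\, \frac{x^{\perp}}{r^2}\, e^{i\theta} = i V_1(x)\, \frac{x^{\perp}}{r^2} + O\!\left(\frac{1}{r^3}\right), \]
the error absorbing $\rho_1'(r)\, \frac{x}{r}\, e^{i\theta}$ by the bound just established. Finally, $V_{-1} = \overline{V_1}$ follows from the invariance of the vortex equation under complex conjugation, which sends a degree-$1$ solution converging to $1$ at infinity to a degree-$(-1)$ solution with the same boundary condition, together with uniqueness of $V_{-1}$.

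The main obstacle is the sharp asymptotic analysis at infinity, specifically pinning the $O(1/r^3)$ remainder on $\rho_1, \rho_1'$ and the $o(1/r^3)$ remainder on $\rho_1''$: the naive leading-order balance gives only $\rho_1 = 1 - \frac{1}{2 r^2} + o(1/r^2)$, and promoting this to the sharper statement requires the Green's function machinery for $L$ and the bootstrap described above. The other ingredients, by contrast, are essentially ODE bookkeeping.
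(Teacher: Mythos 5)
This lemma is not proved in the paper at all: it is quoted verbatim from H\'erve--H\'erve \cite{HH} (and, for the last two displayed asymptotics, from the standard expansion of the vortex profile), so there is no in-paper argument to compare against. Your sketch reconstructs the standard route: the radial ODE $\rho_1''+\rho_1'/r-\rho_1/r^2+\rho_1(1-\rho_1^2)=0$, Frobenius at the origin for $\rho_1\sim\kappa r$, the substitution $\rho_1=1-f$ and inversion of $\partial_{rr}+r^{-1}\partial_r-2$ via its Bessel-type Green's function at infinity, and the polar-coordinate computation of $\nabla V_1$. That is indeed how these facts are established in the literature, and the transformed equation for $f$, the gradient identity, and the conjugation argument for $V_{-1}$ are all correct.

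Two points deserve attention. First, the qualitative claims $0<\rho_1<1$ and $\rho_1'>0$ are the genuinely hard part of \cite{HH}, and your maximum-principle sketch as written does not work: at an interior local maximum ($\rho_1'=0$, $\rho_1''\leqslant 0$) the ODE gives $\rho_1/r_0^2\leqslant\rho_1(1-\rho_1^2)$, the \emph{reverse} of the inequality you state, and in any case monotonicity of the profile is obtained in \cite{HH} by a careful shooting/comparison analysis rather than a one-line pointwise argument. Second, for $\rho_1''=o(1/r^3)$: substituting only $f=\tfrac{1}{2r^2}+O(1/r^3)$ back into the ODE yields $\rho_1''=-2\rho_1\bigl(f-\tfrac{1}{2r^2}\bigr)+O(1/r^4)$, which is merely $O(1/r^3)$. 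You need one further iteration of your bootstrap, giving $f=\tfrac{1}{2r^2}+O(1/r^4)$, before the algebraic substitution delivers the claimed $o(1/r^3)$ (in fact $O(1/r^4)$). Neither issue is fatal --- both are repaired by the machinery you already invoke or by deferring to \cite{HH} as the paper does --- but as written the monotonicity step would fail and the $\rho_1''$ estimate is not yet justified.
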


We will use the $O$ notation for convergence independent of any other
quantity. Now let us write all the derivatives of a vortex in polar
coordinate, which will be useful all along the proof of the results.

\begin{lemma}
  \label{dervor}We define $u \assign \frac{\rho_1' (r_1)}{\rho_1 (r_1)}$.
  Then,
  \[ \partial_{x_1} V_1 (y) = \left( \cos (\theta_1) u - \frac{i}{r_1} \sin
     (\theta_1) \right) V_1, \]
  \[ \partial_{x_2} V_1 (y) = \left( \sin (\theta_1) u + \frac{i}{r_1} \cos
     (\theta_1) \right) V_1, \]
  \[ \partial_{x_1 x_1} V_1 (y) = \left( \cos^2 (\theta_1) (u^2 + u') + \sin^2
     (\theta_1) \left( \frac{u}{r_1} - \frac{1}{r_1^2} \right) + 2 i \sin
     (\theta_1) \cos (\theta_1) \left( \frac{1}{r_1^2} - \frac{u}{r_1} \right)
     \right) V_1, \]
  \[ \partial_{x_1 x_2} V_1 (y) = \left( \sin (\theta_1) \cos^{\nosymbol}
     (\theta_1) \left( u^2 + u' + \frac{1}{r_1^2} - \frac{u}{r_1} \right) - i
     \cos (2 \theta_1) \left( \frac{1}{r_1^2} - \frac{u}{r_1} \right) \right)
     V_1 . \]
\end{lemma}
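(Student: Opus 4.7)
The plan is to treat this as a direct calculation in polar coordinates. First I note that since $y = x - d\vec{e_1}$ is a pure translation, partial derivatives with respect to $x_j$ and $y_j$ coincide, so it suffices to compute $\partial_{y_1}V_1$, $\partial_{y_2}V_1$, $\partial_{y_1y_1}V_1$, $\partial_{y_1y_2}V_1$ from the polar expression $V_1(y) = \rho_1(r_1)e^{i\theta_1}$.

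For the first derivatives, I use the chain rule with $\partial_{y_1}r_1 = \cos(\theta_1)$, $\partial_{y_2}r_1 = \sin(\theta_1)$, $\partial_{y_1}\theta_1 = -\sin(\theta_1)/r_1$, $\partial_{y_2}\theta_1 = \cos(\theta_1)/r_1$. Applying this to $\rho_1(r_1)e^{i\theta_1}$ and factoring $V_1$ out of each term produces the two formulas immediately, using the definition $u = \rho_1'(r_1)/\rho_1(r_1)$ to absorb the radial logarithmic derivative.

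For the second derivatives, I write $\partial_{y_j}V_1 = f_j V_1$ with $f_1 = \cos(\theta_1)u - i\sin(\theta_1)/r_1$ and $f_2 = \sin(\theta_1)u + i\cos(\theta_1)/r_1$, and apply the Leibniz rule to get $\partial_{y_k y_j}V_1 = (\partial_{y_k}f_j)V_1 + f_j f_k V_1$. Each $\partial_{y_k} f_j$ is expanded by the chain rule, breaking $\cos(\theta_1)$, $\sin(\theta_1)$, $u(r_1)$ and $1/r_1$ into their individual derivatives; noting that $u'$ appears only through the pure radial term $\partial_{y_k}u = u'(r_1)(\partial_{y_k}r_1)$. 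Then one collects the real and imaginary parts, grouping by $\cos^2(\theta_1)$, $\sin^2(\theta_1)$, $\sin(\theta_1)\cos(\theta_1)$ and $\cos(2\theta_1)$ via $\cos^2-\sin^2 = \cos(2\theta_1)$ in the $\partial_{y_1y_2}$ case.

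There is no real obstacle, just careful bookkeeping of signs; the only place where a small simplification is needed is the mixed derivative, where the identity $\cos^2(\theta_1) - \sin^2(\theta_1) = \cos(2\theta_1)$ is used to match the compact form displayed in the statement. The formulas then follow by matching coefficients term by term.
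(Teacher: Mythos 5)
Your proposal is correct and is essentially the computation the paper performs: the paper applies the polar-coordinate operators $\partial_{x_1}=\cos(\theta_1)\partial_{r_1}-\frac{\sin(\theta_1)}{r_1}\partial_{\theta_1}$ and $\partial_{x_2}=\sin(\theta_1)\partial_{r_1}+\frac{\cos(\theta_1)}{r_1}\partial_{\theta_1}$ twice, which is just a reorganization of your Leibniz-rule expansion $\partial_{y_k y_j}V_1=(\partial_{y_k}f_j)V_1+f_jf_kV_1$. The translation remark and the final grouping via $\cos^2(\theta_1)-\sin^2(\theta_1)=\cos(2\theta_1)$ match the paper's argument as well.
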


We obtain the derivatives of $V_{- 1}$ by changing $i \rightarrow - i$, $y
\rightarrow z$, $\theta_1 \rightarrow \theta_{- 1}$, $r_1 \rightarrow r_{- 1}$
and $V_1 \rightarrow V_{- 1}$. We remark in particular that the first
derivatives are of first order $\frac{1}{r_1}$ and the second derivatives are
of first order $\frac{1}{r_1^2}$ for large values of $r_1$. From {\cite{HH}},
we can check that, more generally, we have
\begin{equation}
  | D^{(n)} V_1 (y) | \leqslant \frac{K (n)}{(1 + r_1)^n} . \label{230210}
\end{equation}
\begin{proof}
  With the notation of (\ref{notation}) in radial coordinate around $d
  \overrightarrow{e_1}$, the center of $V_1$:
  \[ \partial_{x_1} = \cos (\theta_1) \partial_{r_1} - \frac{\sin
     (\theta_1)}{r_1} \partial_{\theta_1} \]
  \[ \partial_{x_2} = \sin (\theta_1) \partial_{r_1} + \frac{\cos
     (\theta_1)}{r_1} \partial_{\theta_1}, \]
  we compute directly the first two equalities of the lemma. Now, we compute
  \[ \partial_{x_1 x_1} V_1 = \cos (\theta_1) \partial_{r_1} (\partial_{x_1}
     V_1) - \frac{\sin (\theta_1)}{r_1} \partial_{\theta_1} (\partial_{x_1}
     V_1) \]
  with
  \[ \partial_{r_1} (\partial_{x_1} V_1) = \left( u \left( \cos (\theta_1) u -
     \frac{i}{r_1} \sin (\theta_1) \right) + \cos (\theta_1) u' +
     \frac{i}{r_1^2} \sin (\theta) \right) V_1 \]
  and
  \[ \partial_{\theta_1} (\partial_{x_1} V_1) = \left( i \cos (\theta_1) u +
     \frac{1}{r_1} \sin (\theta_1) - \sin (\theta_1) u - \frac{i}{r_1} \cos
     (\theta_1) \right) V_1 \]
  for the third inequality. We use them also in
  \[ \partial_{x_1 x_2} V_1 = \sin (\theta_1) \partial_{r_1} (\partial_{x_1}
     V_1) + \frac{\cos (\theta_1)}{r_1} \partial_{\theta_1} (\partial_{x_1}
     V_1) \]
  for the fourth relation, with $\cos^2 (\theta_1) - \sin^2 (\theta_1) = \cos
  (2 \theta_1)$.
\end{proof}

Now, we compute some basic estimates on $V$.

\begin{lemma}
  \label{nonmodV}There exists a universal constant $K > 0$ and a constant $K
  (d) > 0$ depending only on $d > 1$ such that
  \[ | 1 - V |^2 \leqslant \frac{K (d)}{(1 + r)^2}, \]
  \[ 0 \leqslant 1 - | V |^2 \leqslant \frac{K}{(1 + \tilde{r})^2}, \]
  \[ | \nabla (| V |) | \leqslant \frac{K}{(1 + \tilde{r})^3}, \]
  and we have
  \[ | \nabla V | \leqslant \frac{K}{(1 + \tilde{r})}, \]
  as well as
  \[ | \nabla V | \leqslant \frac{K d}{(1 + \tilde{r})^2}, \]
  where $\tilde{r} = \min (r_1, r_{- 1})$. Furthermore,
  \[ | \nabla^2 V | \leqslant \frac{K}{(1 + \tilde{r})^2} \]
  and
  \[ | \nabla^2 V | \leqslant \frac{K d}{(1 + \tilde{r})^3} . \]
\end{lemma}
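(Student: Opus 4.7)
The plan is to reduce every estimate to pointwise bounds on $V_1$ and $V_{-1}$ coming from Lemma \ref{lemme3} and Lemma \ref{dervor}, and then to exploit the relation $\tilde{r} = \min(r_1, r_{-1})$, the triangle inequality $r_{-1} \leq r_1 + 2d$, and the two-vortex cancellation that becomes effective when $\tilde{r} > d$. The easy inequalities are the first four: writing $V = V_1(y) V_{-1}(z)$, the identity
\[ 1 - |V|^2 = (1 - |V_1|^2) + |V_1|^2 (1 - |V_{-1}|^2) \]
is nonnegative since $|V_{\pm 1}| \leq 1$, and each factor is $O((1+r_{\pm 1})^{-2})$ by Lemma \ref{lemme3}, giving the universal constant $K$ with $\tilde{r}$ inserted. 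For $\nabla |V|$, I write $|V| = \rho_1(r_1)\rho_1(r_{-1})$ and use $\rho_1'(r) \leq K/(1+r)^3$ (from $\rho_1'(r) = O(r^{-3})$ at infinity and boundedness near $0$) to collect the two terms and dominate by $K/(1+\tilde{r})^3$. For $|\nabla V|$, I apply the Leibniz rule with $|\nabla V_{\pm 1}| \leq K/(1+r_{\pm 1})$ from \eqref{230210}.

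The estimate $|1-V|^2 \leq K(d)/(1+r)^2$ is slightly different because it is stated in terms of $r$, not $\tilde{r}$, so the constant is allowed to depend on $d$. I would treat separately the region $r \leq 4d$ (where $|1-V| \leq 2$ and $(1+r)^2 \leq (1+4d)^2$, absorbed into $K(d)$) and the region $r \geq 4d$ (where both $r_1, r_{-1} \geq r/2$, so $1 - |V|^2 \leq K/(1+r)^2$, while the phase of $V$ is $\theta_1 - \theta_{-1} = \arg(|x|^2 - d^2 + 2idx_2) = O(d/r)$, giving $|V - 1|^2 \leq K d^2/r^2$).

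The main obstacle is the two improved bounds with a $d$ in the numerator: $|\nabla V| \leq Kd/(1+\tilde{r})^2$ and $|\nabla^2 V| \leq Kd/(1+\tilde{r})^3$. For these I split into the regions $\tilde{r} \leq d$ and $\tilde{r} > d$. In $\tilde{r} \leq d$, the triangle inequality gives $r_{-1} \geq 2d - r_1 \geq d$ (if $\tilde{r} = r_1$), so the already-proved $|\nabla V| \leq K/(1+\tilde{r})$ and $|\nabla^2 V| \leq K/(1+\tilde{r})^2$ are dominated by their improved counterparts simply because $1+\tilde{r} \leq 1+d \lesssim d$. In $\tilde{r} > d$, both vortices are far from $x$ and I use the asymptotic formula from Lemma \ref{lemme3}:
\[ \nabla V = V \left( i \frac{y^\perp}{r_1^2} - i \frac{z^\perp}{r_{-1}^2} \right) + V_{-1}\, O(r_1^{-3}) + V_1\, O(r_{-1}^{-3}). \]
A direct computation with $y = x - d\vec{e_1}$, $z = x + d\vec{e_1}$ yields $|y^\perp/r_1^2 - z^\perp/r_{-1}^2| \leq Kd/(r_1 r_{-1}) \leq Kd/\tilde{r}^2$ (the two opposite-sign phase gradients cancel to leading order, leaving a discrete derivative of size $2d$), and the error terms $O(r_i^{-3})$ are $\leq K/(1+\tilde{r})^3 \leq Kd/(1+\tilde{r})^3 \leq Kd/(1+\tilde{r})^2$ since $\tilde{r} \geq d \geq 1$.

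For $\nabla^2 V$ I proceed identically: the first bound follows from Leibniz and \eqref{230210}, and the refined bound in the region $\tilde{r} > d$ is obtained by differentiating the identity $\nabla V = V (i y^\perp/r_1^2 - i z^\perp/r_{-1}^2) + \text{error}$ once more. The principal term produces $V \nabla(iy^\perp/r_1^2 - iz^\perp/r_{-1}^2)$, whose components are $O(d/\tilde{r}^3)$ by the same cancellation as before (now one order deeper), plus a product $(\nabla V/V)^{\otimes 2} V$ of size $O(d^2/\tilde{r}^4) \leq O(d/\tilde{r}^3)$ since $d \leq \tilde{r}$, while the error derivatives are $O(r_i^{-4}) \leq K/(1+\tilde{r})^3 \leq Kd/(1+\tilde{r})^3$. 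The bookkeeping of these cases is routine once the cancellation mechanism and the splitting $\tilde{r} \lessgtr d$ are set up.
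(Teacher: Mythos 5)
Your proof is correct and follows essentially the same route as the paper: Leibniz plus the pointwise vortex bounds for the crude estimates, and the exact cancellation identity for $\frac{y^{\bot}}{r_1^2}-\frac{z^{\bot}}{r_{-1}^2}$ (which produces the factor $d$) for the refined ones; your splitting $\tilde{r}\lessgtr d$ is a cosmetic variant of the paper's splitting at $\tilde{r}=1$ combined with the identity holding uniformly. If anything, your explicit treatment of the quadratic cross term $(\nabla V/V)^{\otimes 2}V$ in the second-derivative bound is slightly more careful than the paper's.
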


\begin{proof}
  For the first inequality, we are at fixed $d$. Since $V = | V_1 V_{- 1} |
  e^{i (\theta_1 - \theta_{- 1})}$ and $\theta_1, \theta_{- 1}$ are angles
  from points separated by $2 d$, we infer
  \[ e^{i (\theta_1 - \theta_{- 1})} = 1 + O^d_{r \rightarrow \infty} \left(
     \frac{1}{r} \right), \]
  and $| V_1 V_{- 1} | = 1 + O^d_{r \rightarrow \infty} \left(
  \frac{1}{r_{\nosymbol}^2} \right)$ from Lemma \ref{lemme3} where $O^d_{r
  \rightarrow \infty} \left( \frac{1}{r} \right)$ is a quantity that decay in
  $\frac{1}{r}$ is at fixed $d$. Therefore,
  \[ | 1 - V |^2 = | 1 - | V_1 V_{- 1} | e^{i (\theta_1 - \theta_{- 1})} |^2 =
     \left| K (d) O_{r \rightarrow \infty} \left( \frac{1}{r} \right)
     \right|^2 \leqslant \frac{K (d)}{(1 + r)^2} . \]
  From Lemma \ref{lemme3}, we compute
  \[ 1 - | V |^2 = 1 - | V_1 |^2 + | V_1^{\nosymbol} |^2 (1 - | V_{- 1} |^2)
     \leqslant K \left( \frac{1}{(1 + r_1)^2} + \frac{1}{(1 + r_{- 1})_{}^2}
     \right) \leqslant \frac{K}{(1 + \tilde{r})^2}, \]
  and
  \[ | \nabla (| V |) | \leqslant | \nabla (| V_1 |) | V_{- 1} | | + | \nabla
     (| V_{- 1} |) | V_1 | | \leqslant K \left( \frac{1}{(1 + r_1)^3} +
     \frac{1}{(1 + r_{- 1})^3} \right) \leqslant \frac{K}{(1 + \tilde{r})^3} .
  \]
  We check that $\nabla V = \nabla V_1 V_{- 1} + \nabla V_{- 1} V_1$, and
  therefore, with Lemma \ref{dervor}, we have
  \[ | \nabla V | \leqslant \frac{K}{(1 + r_1)} + \frac{K}{(1 + r_{- 1})}
     \leqslant \frac{K}{(1 + \tilde{r})} . \]
  Furthermore, by Lemma \ref{lemme3},
  \[ \nabla V_{\pm 1} = \frac{\pm i}{r_{\pm 1}}
     \overrightarrow{e_{}}_{\theta_{\pm 1}} + O_{r_{\pm 1} \rightarrow \infty}
     \left( \frac{1}{r_{\pm 1}^3} \right) . \]
  For $\tilde{r} \geqslant 1$ (the last estimate on $| \nabla V |$ for
  $\tilde{r} \leqslant 1$ is a consequence of $| \nabla V | \leqslant
  \frac{K}{(1 + \tilde{r})}$), since $r_{\pm 1} e^{i \theta_{\pm 1}} = x \mp d
  \vec{e}_1$,
  \begin{eqnarray*}
    \frac{\cos (\theta_1)}{r_1} - \frac{\cos (\theta_{- 1})}{r_{- 1}} & = &
    \frac{x_1 - d}{(x_1 - d)^2 + x_2^2} - \frac{x_1 + d}{(x_1 + d)^2 +
    x_2^2}\\
    & = & \frac{x_1}{r_1^2 r_{- 1}^2} ((x_1 + d)^2 + x_2^2 - ((x_1 - d)^2 +
    x_2^2)) - d \left( \frac{1}{r_1^2} + \frac{1}{r_{- 1}^2} \right)\\
    & = & \frac{d}{r_1^2 r_{- 1}^2} (2 x_1^2 - r_1^2 - r_{- 1}^2),
  \end{eqnarray*}
  therefore
  \[ \left| \frac{\cos (\theta_1)}{r_1} - \frac{\cos (\theta_{- 1})}{r_{- 1}}
     \right| \leqslant \frac{K d}{(1 + \tilde{r})^2} \]
  since $\frac{x_1}{r_1 r_{- 1}} \leqslant \frac{1}{\tilde{r}}$ if $\tilde{r}
  \geqslant 1$. With a similar estimation for $\frac{\sin (\theta_1)}{r_1} -
  \frac{\sin (\theta_{- 1})}{r_{- 1}}$, we infer
  \begin{eqnarray*}
    | \nabla V | & \leqslant & \left| \frac{\vec{e}_{\theta_1}}{r_1} -
    \frac{\vec{e}_{\theta_{- 1}}}{r_{- 1}} \right| + \frac{K}{(1 +
    \tilde{r})^3}\\
    & \leqslant & \frac{K d}{(1 + \tilde{r})^2} + \frac{K}{(1 +
    \tilde{r})^3}\\
    & \leqslant & \frac{K d}{(1 + \tilde{r})^2} .
  \end{eqnarray*}
  Finally, for the second derivatives, we have for $j, k \in \{ 1, 2 \}$
  \[ \partial_{x_j x_k} V = \partial_{x_j x_k} V_1 V_{- 1} + \partial_{x_j}
     V_1 \partial_{x_k} V_{- 1} + \partial_{x_k} V_1 \partial_{x_j} V_{- 1} +
     \partial_{x_j x_k} V_{- 1} V_1, \]
  therefore, with (\ref{230210}),
  \[ | \nabla^2 V | \leqslant \frac{K}{(1 + r_1)^2} + \frac{K}{(1 + r_{- 1})
     (1 + r_1)} + \frac{K}{(1 + r_{- 1})^2} \leqslant \frac{K}{(1 +
     \tilde{r})^2} . \]
  We check that $\frac{x_1}{r_1 r_{- 1}} \leqslant \frac{1}{\tilde{r}}$ and
  $\left| \nabla \left( \frac{1}{r_{\pm 1}} \right) \right| \leqslant
  \frac{K}{r_{\pm 1}^2}$ if $\tilde{r} \geqslant 1$, hence
  \[ \left| \nabla \left( \frac{\cos (\theta_1)}{r_1} - \frac{\cos (\theta_{-
     1})}{r_{- 1}} \right) \right| \leqslant \frac{K d}{(1 + \tilde{r})^3} .
  \]
  With a similar estimation for $\nabla \left( \frac{\sin (\theta_1)}{r_1} -
  \frac{\sin (\theta_{- 1})}{r_{- 1}} \right)$ and Lemma \ref{lemme3}, we
  conclude with
  \[ | \nabla^2 V | \leqslant \left| \nabla \left(
     \frac{\vec{e}_{\theta_1}}{r_1} - \frac{\vec{e}_{\theta_{- 1}}}{r_{- 1}}
     \right) \right| + \frac{K}{(1 + \tilde{r})^3} \leqslant \frac{K d}{(1 +
     \tilde{r})^3} . \]
\end{proof}

Now we look at the convergence of some quantities when we are near the center
of $V_1$ and $d \rightarrow \infty$. When we are close to the center of $V_1$
and $d$ goes to infinity, we expect that the second vortex as no influence.

\begin{lemma}
  \label{infest}As $d \rightarrow \infty$, we have, locally uniformly in
  $\mathbbm{R}^2$,
  \[ V (. + d \vec{e_1}) = V_1 (.) V_{- 1} (. + 2 d \vec{e_1}) \rightarrow V_1
     (.), \]
  \[ E (. + d \vec{e_1}) \rightarrow 0 \]
  and
  \[ \partial_d V (. + d \overrightarrow{e_1}) \rightarrow - \partial_{x_1}
     V_1 (.) . \]
\end{lemma}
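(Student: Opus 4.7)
The plan is to perform the translation $x \mapsto x + d \vec{e_1}$ in each of the three expressions and pass to the limit using the decay estimates of Lemma \ref{lemme3}. The key observation is that after this shift, the argument of $V_{-1}$ becomes $x + 2d \vec{e_1}$, which for $x$ in any fixed compact $K \subset \mathbbm{R}^2$ escapes to infinity along the positive $x_1$-direction uniformly. This is the only mechanism at work, and no perturbation or fine estimate on $V_1$ is needed.

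For the first identity, the definition of $V$ directly gives
\[ V(x + d \vec{e_1}) = V_1(x)\, V_{-1}(x + 2d \vec{e_1}). \]
I would then use Lemma \ref{lemme3} to see that on $K$, $|V_{-1}(x + 2d \vec{e_1})| = 1 + O(1/d^2)$ and that the phase $-\theta_{-1}(x + 2d \vec{e_1}) \to 0$ uniformly, since the point $(x_1 + 2d, x_2)$ lies in an arbitrarily narrow cone around $\mathbbm{R}_+ \vec{e_1}$ for $d$ large. Hence $V_{-1}(x + 2d \vec{e_1}) \to 1$ locally uniformly, which yields the first claim.

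For $E$, I would substitute into formula (\ref{E2}) to obtain
\[ E(x + d \vec{e_1}) = -2 \nabla V_1(x) \cdot \nabla V_{-1}(x + 2d \vec{e_1}) + (1 - |V_1(x)|^2)(1 - |V_{-1}(x + 2d \vec{e_1})|^2) V_1(x) V_{-1}(x + 2d \vec{e_1}). \]
On $K$, the factors depending on $V_1$ are bounded independently of $d$, while Lemma \ref{lemme3} gives $|\nabla V_{-1}(x + 2d \vec{e_1})| = O(1/d)$ and $1 - |V_{-1}(x + 2d \vec{e_1})|^2 = O(1/d^2)$, so both terms tend to zero uniformly on $K$.

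For the third limit, applying $\partial_d$ to each factor in the definition of $V$ and then shifting gives
\[ \partial_d V(x + d \vec{e_1}) = -\partial_{x_1} V_1(x)\, V_{-1}(x + 2d \vec{e_1}) + V_1(x)\, \partial_{x_1} V_{-1}(x + 2d \vec{e_1}). \]
The first term converges to $-\partial_{x_1} V_1(x)$ by the first claim applied to the second factor, while the second term is $O(1/d)$ on $K$ by Lemma \ref{lemme3} and hence vanishes. There is no substantial obstacle here; the only mildly delicate point is the uniform convergence of the phase of $V_{-1}$ at the translated point, which is handled by the cone argument above.
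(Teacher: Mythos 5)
Your proposal is correct and follows essentially the same route as the paper: shift by $d\vec{e_1}$, observe that the argument of $V_{-1}$ escapes to infinity uniformly on compacts, and use the modulus and phase asymptotics of Lemma \ref{lemme3} (equivalently the gradient decay) on each of the three expressions. The cone argument you give for the phase is just a more explicit version of the paper's remark that $\theta_{-1}\to 0$ and $r_{-1}\to+\infty$ locally uniformly.
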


\begin{proof}
  In the limit $d \rightarrow \infty$, for $y \in \mathbbm{R}^2$,
  \[ V (y + d \vec{e_1}) = V_1 (y) e^{- i \theta_{- 1}} \left( 1 + O \left(
     \frac{1}{r_{- 1}^2} \right) \right) \]
  by Lemma \ref{lemme3}, hence
  \[ V (.) \rightarrow V_1 (.) \]
  locally uniformly since $\theta_{- 1} \rightarrow 0, r_{- 1} \rightarrow +
  \infty$ when $d \rightarrow \infty$ locally uniformly. On the other hand,
  since $V (x) = V_1 (y) V_{- 1} (y + 2 d \vec{e}_1)$, we have
  \[ (\partial_d V) (y + d \vec{e_1}) = - \partial_{x_1} V_1 (y) V_{- 1} (y +
     2 d \vec{e_1}) + V_1 (y) \partial_{x_1} V_{- 1} (y + 2 d \vec{e_1}) . \]
  Since $\partial_{x_1} V_{- 1} (y + 2 d \vec{e_1}) = \nabla V_{- 1} (y + 2 d
  \vec{e_1}) . \vec{e_1} \rightarrow 0$ locally uniformly as $d \rightarrow
  \infty$, we have
  \[ \partial_d V (.) \rightarrow - \partial_{x_1} V_1 (.) \]
  locally uniformly. Finally, from (\ref{E2}), we have that
  \[ E (x) = - 2 \nabla V_1 (y) . \nabla V_{- 1} (z) + (1 - | V_1 (y) |^2) (1
     - | V_{- 1} (z) |^2) V_1 (y) V_{- 1} (z) \]
  with the notations from (\ref{notation}), therefore, locally uniformly,
  \[ E (. + d \vec{e_1}) \rightarrow 0 \]
  as $\nabla V_{- 1} \rightarrow 0$ and $| V_{- 1} | \rightarrow 1$ locally
  uniformly when $d \rightarrow \infty$.
\end{proof}

We now do a precise computation on the term $i c \partial_{x_2} V$, which
appears in $(\tmop{TW}_c) (V)$.

\begin{lemma}
  \label{fineest}There exists a universal constant $C > 0$ (independent of
  $d$) such that if $r_1, r_{- 1} \geqslant 1$,
  \[ \left| i \frac{\partial_{x_2} V}{V} - 2 d \frac{x_1^2 - d^2 -
     x_2^2}{r_{1^{\nosymbol \nosymbol}}^2 r_{- 1}^2} \right| \leqslant C
     \left( \frac{1}{r_1^3} + \frac{1}{r_{- 1}^3} \right) . \]
\end{lemma}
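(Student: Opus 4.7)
The plan is to expand $\partial_{x_2} V/V$ using the multiplicative structure $V = V_1(y) V_{-1}(z)$, apply the explicit formulas from Lemma \ref{dervor}, and split the result into a large real part (which contains the main term) and a small imaginary part (which supplies the error).

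First, I would use $\partial_{x_2} V/V = \partial_{x_2} V_1(y)/V_1(y) + \partial_{x_2} V_{-1}(z)/V_{-1}(z)$ together with Lemma \ref{dervor} and its analog for $V_{-1}$ (obtained by sending $i \to -i$, $\theta_1 \to \theta_{-1}$, $r_1 \to r_{-1}$). Setting $u_{\pm 1} := \rho_1'(r_{\pm 1})/\rho_1(r_{\pm 1})$, this yields
\[
i\frac{\partial_{x_2} V}{V} \;=\; \bigl( i u_1 \sin\theta_1 + i u_{-1} \sin\theta_{-1} \bigr) \;+\; \Bigl( -\frac{\cos\theta_1}{r_1} + \frac{\cos\theta_{-1}}{r_{-1}} \Bigr).
\]

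For the imaginary part, Lemma \ref{lemme3} gives $\rho_1'(r) = O(1/r^3)$ at infinity, while $\rho_1(r) \geqslant \rho_1(1) > 0$ for $r \geqslant 1$ (with $\rho_1'/\rho_1$ continuous and bounded on $[1,2]$). Together, these provide the uniform estimate $|u_{\pm 1}| \leqslant K/r_{\pm 1}^3$ on $\{r_{\pm 1} \geqslant 1\}$, so the purely imaginary contribution is bounded by $K(1/r_1^3 + 1/r_{-1}^3)$.

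For the real part, I would convert to Cartesian coordinates via $\cos\theta_{\pm 1}/r_{\pm 1} = (x_1 \mp d)/r_{\pm 1}^2$ and put over the common denominator $r_1^2 r_{-1}^2$:
\[
-\frac{\cos\theta_1}{r_1} + \frac{\cos\theta_{-1}}{r_{-1}} \;=\; \frac{(x_1+d)\,r_1^2 - (x_1-d)\,r_{-1}^2}{r_1^2\, r_{-1}^2}.
\]
Expanding the numerator with $r_{\pm 1}^2 = (x_1 \mp d)^2 + x_2^2$ reduces (exactly as in the algebraic identity $2x_1^2 - r_1^2 - r_{-1}^2 = -2(d^2 + x_2^2)$ already used in Lemma \ref{nonmodV}) to $2d(d^2 + x_2^2 - x_1^2)$, producing the announced main term $2d(x_1^2 - d^2 - x_2^2)/(r_1^2 r_{-1}^2)$ up to sign with \emph{no} remainder. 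Hence the total error in the lemma comes entirely from the imaginary $u_{\pm 1}$ terms, giving the desired $C(1/r_1^3 + 1/r_{-1}^3)$ bound.

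There is no real obstacle here: the computation is completely explicit and the only mildly delicate step is controlling $|u_{\pm 1}| \leqslant K/r_{\pm 1}^3$ uniformly on $\{r_{\pm 1} \geqslant 1\}$ rather than merely asymptotically, which follows from the continuity and positivity of $\rho_1$ combined with the sharp decay of $\rho_1'$ from Lemma \ref{lemme3}.
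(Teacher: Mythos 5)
Your proposal is correct and follows essentially the same route as the paper: split $\partial_{x_2}V/V$ into the two vortex contributions via Lemma \ref{dervor}, reduce the leading (real) part to the rational expression $2d(x_1^2-d^2-x_2^2)/(r_1^2 r_{-1}^2)$ by the same algebraic identity, and absorb the $\rho_1'/\rho_1$ terms into the $O(1/r_1^3+1/r_{-1}^3)$ error (your explicit uniform bound $|u_{\pm1}|\leqslant K/r_{\pm1}^3$ on $\{r_{\pm1}\geqslant 1\}$ is a slightly more careful rendering of the paper's $O$ notation). The sign discrepancy you flag ("up to sign") is also present in the paper's own computation relative to the stated lemma and is harmless for its applications, where only the modulus of the leading term is used.
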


Remark that this shows that the first order term of $i \frac{\partial_{x_2}
V}{V}$ is real-valued and the dependence on $d$ of this term is explicit.

\begin{proof}
  Recall from Lemma \ref{dervor} that for $\varepsilon = \pm 1$,
  \[ \partial_{x_2} V_{\varepsilon} = \frac{i \varepsilon}{r_{\varepsilon}}
     \cos (\theta_{\varepsilon}) V_{\varepsilon} + O_{r_1 \rightarrow \infty}
     \left( \frac{1}{r_1^3} \right) . \]
  We have
  \[ \frac{\partial_{x_2} V}{V} = \frac{\partial_{x_2} V_1}{V_1} +
     \frac{\partial_{x_2} V_{- 1}}{V_{- 1}} \]
  and
  \[ \cos (\theta_{\varepsilon}) = \frac{x_1 - \varepsilon
     d}{r_{\varepsilon}}, \]
  yielding
  \begin{eqnarray*}
    \frac{\partial_{x_2} V}{V} & = & i \left( \frac{x_1 - d}{r_1^2} -
    \frac{x_1 + d}{r_{- 1}^2} \right)\\
    & = & i \left( x_1 \left( \frac{1}{r_1^2} - \frac{1}{r_{- 1}^2} \right) -
    d \left( \frac{1}{r_1^2} + \frac{1}{r_{- 1}^2} \right) \right) + O_{r_1
    \rightarrow \infty} \left( \frac{1}{r_1^3} \right) + O_{r_{- 1}
    \rightarrow \infty} \left( \frac{1}{r_{- 1}^3} \right) .
  \end{eqnarray*}
  We compute with (\ref{notation}) that
  \[ \frac{1}{r_1^2} - \frac{1}{r_{- 1}^2} = \frac{(x_1 + d)^2 + x_2^2 - (x_1
     - d)^2 - x_2^2}{r_1^2 r_{- 1}^2} = \frac{4 d x_1}{r_1^2 r_{- 1}^2} \]
  and
  \[ \frac{1}{r_1^2} + \frac{1}{r_{- 1}^2} = \frac{(x_1 + d)^2 + x_2^2 + (x_1
     - d)^2 + x_2^2}{r_1^2 r_{- 1}^2} = 2 \frac{x_1^2 + d^2 + x_2^2}{r_1^2
     r_{- 1}^2}, \]
  yielding the estimate.
\end{proof}

Finally, we show an estimate on $\partial_d V = \partial_d (V_1 (x - d
\overrightarrow{e_1}) V_{- 1} (x + d \overrightarrow{e_1})) = - \partial_{x_1}
V_1 V_{- 1} + \partial_{x_1} V_{- 1} V_1$.

\begin{lemma}
  \label{ddVest}There exists a constant $K > 0$ such that
  \[ | \partial_d V | \leqslant \frac{K}{(1 + \tilde{r})}, \]
  \[ | \nabla \partial_d V | \leqslant \frac{K}{(1 + \tilde{r})^2} \]
  and
  \[ | \mathfrak{R}\mathfrak{e} (\bar{V} \partial_d V) | \leqslant \frac{K}{(1
     + \tilde{r})^3} . \]
  Furthermore,
  \[ | \partial_d^2 V | \leqslant \frac{K}{(1 + \tilde{r})^2} \]
  and
  \[ | \partial_d^2 \nabla V | \leqslant \frac{K}{(1 + \tilde{r})^3} . \]
\end{lemma}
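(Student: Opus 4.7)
The plan is to expand $\partial_d V$ by the product rule as
\[
\partial_d V = -(\partial_{x_1} V_1)(y)\,V_{-1}(z) + V_1(y)\,(\partial_{x_1} V_{-1})(z),
\]
and similarly $\partial_d^2 V$ by differentiating once more in $d$, which produces three terms of the form $\partial_{x_1 x_1}V_1\cdot V_{-1}$, $\partial_{x_1}V_1\cdot\partial_{x_1}V_{-1}$ and $V_1\cdot \partial_{x_1 x_1}V_{-1}$. The pointwise bounds $|\partial_d V|\le K/(1+\tilde r)$ and $|\partial_d^2 V|\le K/(1+\tilde r)^2$ then follow immediately from $|V_{\pm 1}|\le 1$ together with the derivative estimate (\ref{230210}) and the elementary inequality $1/(1+r_1)^a(1+r_{-1})^b\le K/(1+\tilde r)^{a+b}$. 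Taking one additional $x$-derivative of each of these expressions and invoking (\ref{230210}) at one higher order gives $|\nabla\partial_d V|\le K/(1+\tilde r)^2$ and $|\partial_d^2 \nabla V|\le K/(1+\tilde r)^3$. These four bounds are routine Leibniz estimates.

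The only interesting estimate is the one on $\mathfrak{R}\mathfrak{e}(\bar V\,\partial_d V)$, where the naive bound would only give $K/(1+\tilde r)^2$ because of the slow decay $|\nabla V_{\pm 1}|\sim 1/r_{\pm 1}$. The gain comes from the fact that $|V_1|$ and $|V_{-1}|$ are radial, so at leading order $\nabla V_{\pm 1}$ is purely tangential (i.e.\ orthogonal to $V_{\pm 1}$ in the real inner product). Concretely, since $V=V_1V_{-1}$,
\[
\mathfrak{R}\mathfrak{e}(\bar V\,\partial_d V) = -|V_{-1}|^2\,\mathfrak{R}\mathfrak{e}\!\left(\bar V_1\,\partial_{x_1}V_1\right)(y) + |V_1|^2\,\mathfrak{R}\mathfrak{e}\!\left(\bar V_{-1}\,\partial_{x_1}V_{-1}\right)(z).
\]
The identity $\mathfrak{R}\mathfrak{e}(\bar V_j\,\partial V_j)=\tfrac12\,\partial|V_j|^2$ turns each term into a cartesian derivative of $\rho_{\pm 1}^2(r_{\pm 1})$, which by Lemma \ref{lemme3} is bounded by $2|\rho_{\pm 1}\rho_{\pm 1}'|\le K/(1+r_{\pm 1})^3$. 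Combined with $|V_{\pm 1}|\le 1$, this yields $|\mathfrak{R}\mathfrak{e}(\bar V\,\partial_d V)|\le K/(1+\tilde r)^3$.

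The only delicate point is thus the phase cancellation observation above; once it is in place, the five bounds reduce to direct applications of Lemma \ref{lemme3}, Lemma \ref{dervor} and (\ref{230210}). I do not anticipate any technical obstacle beyond keeping track of which point ($y$ or $z$) each derivative is evaluated at.
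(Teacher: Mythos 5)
Your proof is correct and follows essentially the same route as the paper: expand $\partial_d V$ and $\partial_d^2 V$ by the product rule, bound each factor via Lemma \ref{dervor} and (\ref{230210}), and for $\mathfrak{R}\mathfrak{e}(\bar V\,\partial_d V)$ use the same decomposition into $-|V_{-1}|^2\mathfrak{R}\mathfrak{e}(\overline{V_1}\partial_{x_1}V_1)+|V_1|^2\mathfrak{R}\mathfrak{e}(\overline{V_{-1}}\partial_{x_1}V_{-1})$ together with the cubic decay of $\mathfrak{R}\mathfrak{e}(\overline{V_{\pm1}}\partial_{x_1}V_{\pm1})$ from Lemma \ref{lemme3}. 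Your explicit identification of this last quantity as $\tfrac12\partial(\rho_{\pm1}^2)$ is a slightly more detailed justification of what the paper cites directly, but the argument is the same.
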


\begin{proof}
  We have that $\partial_d V = - \partial_{x_1} V_1 V_{- 1} + \partial_{x_1}
  V_{- 1} V_1$ and from Lemma \ref{dervor},
  \[ | \partial_{x_1} V_1 | \leqslant \frac{K}{(1 + r_1)} \leqslant
     \frac{K}{(1 + \tilde{r})} . \]
  Similarly, $| \partial_{x_1} V_{- 1} | \leqslant \frac{K}{(1 + \tilde{r})}$
  and this proves the first inequality. Furthemore, for $\nabla \partial_d V$,
  every terms has two derivatives, each one bringing a $\frac{1}{(1 +
  \tilde{r})}$ by (\ref{230210}), this shows the second inequality. Finally,
  we compute
  \[ \mathfrak{R}\mathfrak{e} (\bar{V} \partial_d V) = - | V_{- 1} |^2
     \mathfrak{R}\mathfrak{e} (\overline{V_1} \partial_{x_1} V_1) + | V_1 |^2
     \mathfrak{R}\mathfrak{e} (\overline{V_{- 1}} \partial_{x_1} V_{- 1}) . \]
  From Lemma \ref{lemme3}, $| \mathfrak{R}\mathfrak{e} (\overline{V_1}
  \partial_{x_1} V_1) | \leqslant \frac{K}{(1 + r_1)^3} \leqslant \frac{K}{(1
  + \tilde{r})^3}$ and $| V_{- 1} |^2 \leqslant 1$. Similarly we have
  \[ | | V_1 |^2 \mathfrak{R}\mathfrak{e} (\overline{V_{- 1}} \partial_{x_1}
     V_{- 1}) | \leqslant \frac{K}{(1 + \tilde{r})^3} . \]
  Furthermore, since $\partial_d^2 V = \partial_{x_1}^2 V_1 V_{- 1} - 2
  \partial_{x_1} V_1 \partial_{x_1} V_{- 1} + \partial_{x_1}^2 V_{- 1} V_1$,
  with equation (\ref{230210}), we check easily the estimations on
  $\partial_d^2 V$ and $\partial_d^2 \nabla V$.
\end{proof}

\subsection{Setup of the proof}\label{perturbation}

In the same way as in {\cite{DP2}} (see also {\cite{LW}}), we will look at a
solution of $(\tmop{TW}_c)$ as a perturbation of $V$ of the form
\[ v \assign \eta V (1 + \Psi) + (1 - \eta) V e^{\Psi} \]
where $\eta (x) = \tilde{\eta} (r_1) + \tilde{\eta} (r_{- 1})$ and
$\tilde{\eta}$ is a $C^{\infty}$ positive cutoff with $\tilde{\eta} (r) = 1$
if $r \leqslant 1$ and 0 if $r \geqslant 2$. The perturbation is $\Psi$ and we
will also use
\[ \Phi \assign V \Psi . \]
We use such a perturbation because we want it to be additive (in $\Phi$) near
the center of the vortices (where $v = V + \Phi$), and multiplicative (in
$\Psi$) far from them (where $v = V e^{\Psi}$). We shall require $\Phi$ to be
bounded (and small) near the vortices. The problem becomes an equation on
$\Psi$, with the following Lemma \ref{lemma7}, we shall write
\[ \eta L (\Phi) + (1 - \eta) V L' (\Psi) + F (\Psi) = 0 \]
where $L$ and $L'$ are linear. The main part of the proof of the construction
consists of inverting the linearized operator $\eta L (\Phi) + (1 - \eta) V L'
(\Psi)$ in suitable spaces, and then use a contraction argument by showing
that $F$ is small and conclude on the existence of a solution $\Psi$ by a
fixed point theorem.

\begin{lemma}
  \label{lemma7}The function $v = \eta V (1 + \Psi) + (1 - \eta) V e^{\Psi}$
  is solution of $(\tmop{TW}_c)$ if and only if
  \[ \eta L (\Phi) + (1 - \eta) V L' (\Psi) + F (\Psi) = 0, \]
  where $\Phi = V \Psi$,
  \[ L' (\Psi) \assign - \Delta \Psi - 2 \frac{\nabla V}{V} . \nabla \Psi + 2
     | V |^2 \mathfrak{R}\mathfrak{e} (\Psi) - i c \partial_{x_2} \Psi, \]
  \[ L (\Phi) \assign - \Delta \Phi - (1 - | V |^2) \Phi +
     2\mathfrak{R}\mathfrak{e} (\bar{V} \Phi) V - i c \partial_{x_2} \Phi, \]
  \[ F (\Psi) \assign E - i c \partial_{x_2} V + V (1 - \eta) (- \nabla \Psi .
     \nabla \Psi + | V |^2 S (\Psi)) + R (\Psi), \]
  with
  \[ E = - \Delta V - (1 - | V |^2) V, \]
  \[ S (\Psi) \assign e^{2\mathfrak{R}\mathfrak{e} (\Psi)} - 1 -
     2\mathfrak{R}\mathfrak{e} (\Psi) \]
  and $R (\Psi)$ is a sum of terms at least quadratic in $\Psi$ or $\Phi$
  localized in the area where $\eta \neq 0$. Furthermore, there exists $C, C_0
  > 0$ such that the estimate
  \[ | R (\Psi) | + | \nabla R (\Psi) | \leqslant C \| \Phi \|^2_{C^2 (\{
     \nobracket \tilde{r} \leqslant 2 \} \nobracket)} \]
  holds if $\| \Phi \|_{C^2 (\mathbbm{R}^2)} \leqslant C_0$ (a constant
  independent of $c$), where $\tilde{r} = \min (| x - d \vec{e_1} |, | x + d
  \overrightarrow{e_1} |)$ for $x \in \mathbbm{R}^2$. Additionally, $L (\Phi)$
  and $L' (\Psi)$ are related by
  \[ L (\Phi) = (E - i c \partial_{x_2} V) \Psi + V L' (\Psi) . \]
\end{lemma}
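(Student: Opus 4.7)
The plan is to first establish the algebraic identity $L(\Phi) = (E - i c \partial_{x_2} V)\Psi + V L'(\Psi)$, then compute $(\tmop{TW}_c)(v)$ in each of the regions where $\eta \equiv 1$ and $\eta \equiv 0$, and finally collect the contributions from the transition annulus into $R(\Psi)$.

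I would obtain the closing identity by substituting $\Phi = V\Psi$ into the definition of $L(\Phi)$ and applying Leibniz: $\Delta\Phi = \Psi\Delta V + 2\nabla V\cdot\nabla\Psi + V\Delta\Psi$, $\partial_{x_2}\Phi = \Psi\partial_{x_2}V + V\partial_{x_2}\Psi$, and $2\mathfrak{R}\mathfrak{e}(\bar V\Phi) = 2|V|^2\mathfrak{R}\mathfrak{e}(\Psi)$. Using $E = -\Delta V - (1-|V|^2)V$, the $\Psi$-factor regroups into $(E - i c\partial_{x_2}V)\Psi$ and the derivatives of $\Psi$ assemble into $V L'(\Psi)$.

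For $\{\tilde r \leq 1\}$ where $\eta = 1$, the Ansatz gives $v = V + \Phi$ and a direct expansion using $|V + \Phi|^2 = |V|^2 + 2\mathfrak{R}\mathfrak{e}(\bar V\Phi) + |\Phi|^2$ yields
\[ (\tmop{TW}_c)(V+\Phi) = (E - i c\partial_{x_2}V) + L(\Phi) + 2\mathfrak{R}\mathfrak{e}(\bar V\Phi)\Phi + |\Phi|^2 V + |\Phi|^2 \Phi, \]
whose cubic--quadratic tail will constitute $R(\Psi)$ in this region. For $\{\tilde r \geq 2\}$ where $\eta = 0$ and $v = V e^\Psi$, the chain rule gives $\nabla(V e^\Psi) = e^\Psi(\nabla V + V\nabla\Psi)$, $\Delta(V e^\Psi) = e^\Psi[\Delta V + 2\nabla V\cdot\nabla\Psi + V\Delta\Psi + V\nabla\Psi\cdot\nabla\Psi]$, and $|V e^\Psi|^2 = |V|^2(1 + 2\mathfrak{R}\mathfrak{e}(\Psi) + S(\Psi))$; after factoring $e^\Psi$ I expect
\[ (\tmop{TW}_c)(V e^\Psi) = e^\Psi \bigl[ (E - i c\partial_{x_2}V) + V L'(\Psi) + V(-\nabla\Psi\cdot\nabla\Psi + |V|^2 S(\Psi)) \bigr], \]
which vanishes iff the bracket vanishes, so $R(\Psi) \equiv 0$ here.

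In the transition annulus $\{1 \leq \tilde r \leq 2\}$ I would write $v = V[1 + \Psi + (1-\eta)(e^\Psi - 1 - \Psi)]$ and exploit the crucial fact that $e^\Psi - 1 - \Psi$ is quadratic in $\Psi$; every cross term coming from $\nabla\eta, \Delta\eta$ in the Leibniz expansion of $-i c\partial_{x_2}v - \Delta v$ is then the product of a smooth bounded factor with a quantity at least quadratic in $\Psi$, supported in $\tmop{supp}(\nabla\eta) \subset \{\tilde r \leq 2\}$. I would \emph{define} $R(\Psi)$ as this collection of cross terms together with the tail from the near region (both weighted by $\eta$); then $(\tmop{TW}_c)(v)$ equals $\eta L(\Phi) + (1-\eta)V L'(\Psi) + F(\Psi)$ in $\{\tilde r \leq 2\}$ and equals $e^\Psi$ times this expression in $\{\tilde r \geq 2\}$, so the two equations $(\tmop{TW}_c)(v) = 0$ and $\eta L(\Phi) + (1-\eta)V L'(\Psi) + F(\Psi) = 0$ are equivalent. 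The pointwise estimate $|R(\Psi)| + |\nabla R(\Psi)| \leq C\|\Phi\|^2_{C^2(\{\tilde r \leq 2\})}$ would follow because each summand is smooth and bounded on $\{\tilde r \leq 2\}$, at least quadratic in $\Phi$ (equivalently $\Psi$, since $V$ is bounded on this compact set and $\Phi = V\Psi$), and involves at most two derivatives of $\Phi$ after the gradient is applied; the smallness assumption $\|\Phi\|_{C^2}\leq C_0$ dominates cubic and higher tails by the quadratic ones. The principal bookkeeping obstacle will be to verify that all transition cross terms genuinely vanish to second order in $\Psi$ and assemble cleanly into a single $R(\Psi)$, while handling the non-identical factor $e^\Psi$ in the far region only through the equivalence of zero sets.
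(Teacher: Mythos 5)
Your overall strategy is the same as the paper's (expand $(\tmop{TW}_c)(v)$, identify the linear part as $\eta L (\Phi) + (1 - \eta) V L' (\Psi)$, and collect the remainder into $F$), and the pieces you actually carry out are correct: the closing identity $L (\Phi) = (E - i c \partial_{x_2} V) \Psi + V L' (\Psi)$ is exactly the paper's first step, and your computations in the two pure regions $\{\eta \equiv 1\}$ and $\{\eta \equiv 0\}$ both check out. The difference is organizational: the paper works globally, writing $v = \eta A + (1 - \eta) B$ with $A = V (1 + \Psi)$, $B = V e^{\Psi}$, and factoring out $V (\eta + (1 - \eta) e^{\Psi})$ wherever possible, whereas you argue region by region and defer the transition annulus.

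That deferral is where the genuine gap sits, because the annulus is the only place where $R (\Psi)$ is nontrivial and hence the only place the lemma has content beyond a routine expansion. Your quadraticity argument via $\zeta = 1 + \Psi - e^{\Psi}$ is applied only to the terms where $\nabla \eta$ or $\Delta \eta$ falls on $V \zeta$ in $- i c \partial_{x_2} v - \Delta v$; the cubic term $(1 - | v |^2) v$ with $v = \eta A + (1 - \eta) B$ is never decomposed, and its cross terms such as $\eta^2 (1 - \eta) | A |^2 B$ are \emph{not} visibly ``bounded times quadratic in $\Psi$''. The paper's key algebraic step is to write
\[ (1 - | v |^2) v = \eta (1 - | A |^2) A + (1 - \eta) (1 - | B |^2) B + \eta (1 - \eta) (V \zeta\, G (\Psi) + \overline{V \zeta}\, H (\Psi)), \]
which works because the collection of cross terms vanishes identically when $A = B$ and $A - B = V \zeta$ is quadratic in $\Psi$; without this (or an equivalent expansion of $(1 - | v |^2) v$ around $V + \Phi$ in powers of $(1 - \eta) V \zeta$, followed by the bookkeeping that separates the non-compactly-supported quadratic piece $V (1 - \eta) (- \nabla \Psi . \nabla \Psi + | V |^2 S (\Psi))$ from the compactly supported $R$), the claimed form of $F$ is not established. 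A second, smaller omission: to pass from $(\tmop{TW}_c) (v) = 0$ to the displayed equation one must divide by $\eta + (1 - \eta) e^{\Psi}$ (not merely by $e^{\Psi}$ in the far region), and in the annulus this factor is not automatically nonzero; the paper uses $\| \Phi \|_{C^2 (\mathbbm{R}^2)} \leqslant C_0$ to guarantee $| e^{\Psi} - 1 | \leqslant 1 / 2$ on $\{ \eta \neq 1 \}$, and this denominator then enters the definition of $R (\Psi)$ itself. Your ``equivalence of zero sets'' remark does not cover this point.
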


\

See Appendix $\ref{AA}$ for the proof of this result.

\

The main reason for such a perturbation ansatz is because $V (d
\overrightarrow{e_1}) = V (- d \overrightarrow{e_1}) = 0$, so we can not
divide by $V$ as done in $L'$ for instance when we look near the vortices,
therefore an additive perturbation is more suitable. But far from the
vortices, the perturbation is easier to compute when written multiplicatively
with a factorisation by $V$. Remark also that this allows us to take $\Psi$ to
explode at $d \vec{e}_1$ and $- d \vec{e}_1$ as long as $\Phi = V \Psi$ does
not. This is needed for the norm we use in subsection \ref{normsetup}.

As we look for $\Phi$ small (it is a perturbation), the conditions $\| \Phi
\|_{C^2 (\mathbbm{R}^2)} \leqslant C_0$ will always be true. We need them
because some of the error terms have an exponential contribution in $\Psi$,
and not only quadratic. We recall that, with our notations, $\nabla \Psi .
\nabla \Psi$ is complex-valued.

Remark that the quantity $F$ contains only nonlinear terms and the source
term, which is $E - i c \partial_{x_2} V$. Furthermore, contrary to the work
{\cite{LW}}, the transport term is in the linearized operator, and not
considered as an error term in $F$.

\subsection{Setup of the norms}\label{normsetup}

For a given $\sigma \in \mathbbm{R}$, we define, similarly as in {\cite{DP2}}
and {\cite{LW}}, for $\Psi = \Psi_1 + i \Psi_2$ and $h = h_1 + i h_2$, the
norms
\begin{eqnarray*}
  \| \Psi \|_{\ast, \sigma, d} & \assign & \| V \Psi \|_{C^2 (\{ \tilde{r}
  \leqslant 3 \})}\\
  & + & \| \tilde{r}^{1 + \sigma} \Psi_1 \|_{L^{\infty} (\{ \tilde{r}
  \geqslant 2 \})} + \| \tilde{r}^{2 + \sigma} \nabla \Psi_1 \|_{L^{\infty}
  (\{ \tilde{r} \geqslant 2 \})} + \| \tilde{r}^{2 + \sigma} \nabla^2 \Psi_1
  \|_{L^{\infty} (\{ \tilde{r} \geqslant 2 \})}\\
  & + & \| \tilde{r}^{\sigma} \Psi_2 \|_{L^{\infty} (\{ \tilde{r} \geqslant 2
  \})} + \| \tilde{r}^{1 + \sigma} \nabla \Psi_2 \|_{L^{\infty} (\{ \tilde{r}
  \geqslant 2 \})} + \| \tilde{r}^{2 + \sigma} \nabla^2 \Psi_2 \|_{L^{\infty}
  (\{ \tilde{r} \geqslant 2 \})},\\
  &  & \\
  \| h \|_{\ast \ast, \sigma, d} & \assign & \| V h \|_{C^1 (\{ \tilde{r}
  \leqslant 3 \})}\\
  & + & \| \tilde{r}^{1 + \sigma} h_1 \|_{L^{\infty} (\{ \tilde{r} \geqslant
  2 \})} + \| \tilde{r}^{2 + \sigma} \nabla h_1 \|_{L^{\infty} (\{ \tilde{r}
  \geqslant 2 \})}\\
  & + & \| \tilde{r}^{2 + \sigma} h_2 \|_{L^{\infty} (\{ \tilde{r} \geqslant
  2 \})} + \| \tilde{r}^{2 + \sigma} \nabla h_2 \|_{L^{\infty} (\{ \tilde{r}
  \geqslant 2 \})},
\end{eqnarray*}
where $\tilde{r} = \min (r_1, r_{- 1})$ (which depends on $d$). These are the
spaces we shall use for the inversion of the linear operator for suitable
values of $\sigma$.

This norm is not the ``natural'' energy norm that we could expect, for
instance:
\[ \| \Phi \|_{H_{V_{\nosymbol}}}^2 \assign \int_{\mathbbm{R}^2} | \nabla \Phi
   |^2 + (1 - | V_{\nosymbol} |^2) | \Phi |^2 +\mathfrak{R}\mathfrak{e}
   (\bar{V} \Phi)^2 . \]
In particular, we require different conditions on the decay at infinity (with,
in a way, less decay). As a consequence, the decay we have in Theorem
\ref{th1} is not optimal (see {\cite{G1}}). This decay will be recovered later
on by showing that the solution has finite energy. The main advantage of the
norms $\| . \|_{\ast, \sigma, d}$ and $\| . \|_{\ast \ast, \sigma, d}$ is that
they will allow us to have uniform estimates on the error, without constants
depending on $c$ or $d$.

\

We are looking for a solution $\Psi$ on a space of symmetric functions: we
suppose that
\[ \forall x = (x_1, x_2) \in \mathbbm{R}^2, \Psi (x_1, x_2) = \overline{\Psi
   (x_1, - x_2)} = \Psi (- x_1, x_2) \]
because $V$ and the equation has the same symmetries. With only those
symmetries we will not be able to invert the linearized operator because it
has a kernel, we also need an orthogonal condition. We define
\[ Z_d (x) \assign \partial_d V (x) (\tilde{\eta} (4 r_1) + \tilde{\eta} (4
   r_{- 1})), \]
where $\tilde{\eta}$ is the same function as the one used for $v$: it is a
$C^{\infty}$ non negative smooth cutoff with $\tilde{\eta} (r) = 1$ if $r
\leqslant 1$ and 0 if $r \geqslant 2$. In particular $Z_d (x) = 0$ if
$\tilde{r} \geqslant 1 / 2$, which will make some computations easier. The
other interest of the cutoff function is that without it
\[ \partial_d V (x) = - \partial_{x_1} V_1 V_{- 1} + \partial_{x_1} V_{- 1}
   V_1 \]
is not integrable in all $\mathbbm{R}^2$. We define the Banach spaces we shall
use for inverting the linear part:
\[ \mathcal{E}_{\ast, \sigma, d} \assign \]
\[ \{ \Phi = V \Psi \in C^2 (\mathbbm{R}^2, \mathbbm{C}), \| \Psi \|_{\ast,
   \sigma, d} < + \infty ; \langle \Phi, Z_d \rangle = 0 ; \forall x \in
   \mathbbm{R}^2, \Psi (x_1, x_2) = \overline{\Psi (x_1, - x_2)} = \Psi (-
   x_1, x_2) \}, \]
\[ \mathcal{E}_{\ast \ast, \sigma', d} \assign \{ V h \in C^1 (\mathbbm{R}^2,
   \mathbbm{C}), \| h \|_{\ast \ast, \sigma', d} < + \infty \} \]
for $\sigma, \sigma' \in \mathbbm{R}$. We shall omit the subscript $d$ in the
construction and use only $\mathcal{E}_{\ast, \sigma}$, $\mathcal{E}_{\ast
\ast, \sigma'}$. Remark that $\mathcal{E}_{\ast, \sigma}$ contains an
orthogonality condition as well as the symmetries.

\

Our first goal is to invert the linearized operator. This is a difficult
part, which requires a lot of computations and critical elliptic estimates.
The next subsection is devoted to the proof of the elliptic tools use in the
proof of the inversion. In particular, our paper diverges here from
{\cite{LW}} (see Remark \ref{normsetup2} thereafter).

\subsection{Some elliptic estimates}

In this subsection, we provide some tools for elliptic estimate adapted to
$L^{\infty}$ norms.

\subsubsection{Weighted $L^{\infty}$ estimates on a Laplacian problem}

\begin{lemma}
  \label{lapzeta}For $d \geqslant 5$, $0 < \alpha < 1$, there exists a
  constant $K (\sigma) > 0$ such that, for $f \in C^0 (\mathbbm{R}^2,
  \mathbbm{C})$ such that
  \[ \forall (x_1, x_2) \in \mathbbm{R}^2, \quad f (x_1, x_2) = - f (x_1, -
     x_2) \]
  and with
  \[ \varepsilon_{f, \alpha} \assign \| f (x) (1 + \tilde{r})^{2 + \alpha}
     \|_{L^{\infty} (\mathbbm{R}^2)} < + \infty, \]
  there exists a unique $C^1 (\mathbbm{R}^2)$ function $\zeta$ such that
  \[ \Delta \zeta = f \]
  in the distribution sense,
  \[ \forall (x_1, x_2) \in \mathbbm{R}^2, \quad \zeta (x_1, x_2) = - \zeta
     (x_1, - x_2) \]
  and $\zeta$ satisfies the following two estimates:
  \[ \forall x \in \mathbbm{R}^2, | \zeta (x) | \leqslant \frac{K (\sigma)
     \varepsilon_{f, \alpha}}{(1 + \tilde{r})^{\alpha}} \]
  and
  \[ \forall x \in \mathbbm{R}^2, | \nabla \zeta (x) | \leqslant \frac{K
     (\sigma) \varepsilon_{f, \alpha}}{(1 + \tilde{r})^{1 + \alpha}} . \]
\end{lemma}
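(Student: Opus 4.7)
The plan is to construct $\zeta$ explicitly as the convolution of $f$ with the Dirichlet Green's function of the upper half-plane, which exploits precisely the antisymmetry hypothesis on $f$, and then to extract the decay from the dipole structure of this kernel.

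First I would set
\[ \zeta(x) \assign \frac{1}{2\pi} \int_{\{y_2>0\}} \log\!\frac{|x-y|}{|x-\bar y|}\, f(y)\, dy, \qquad \bar y \assign (y_1,-y_2). \]
Since $f$ is antisymmetric, extending it oddly across $\{y_2=0\}$ matches $f$ itself, so this coincides with the Newtonian potential $\frac{1}{2\pi}\log|\cdot| \ast f$; hence $\Delta\zeta=f$ distributionally and $\zeta\in C^1$ by standard elliptic regularity. The antisymmetry $\zeta(x_1,-x_2)=-\zeta(x_1,x_2)$ is built in, because swapping $x_2\to-x_2$ interchanges the two logarithms. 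Convergence of the integral is ensured by the weight $(1+\tilde r(y))^{2+\alpha}$ with $\alpha>0$.

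Second, I would estimate $\zeta$ pointwise by splitting the integration at scale $\rho:=(1+\tilde r(x))/4$. On the inner region $|x-y|\le\rho$, the triangle inequality gives $\tilde r(y)\gtrsim\tilde r(x)$, hence $|f(y)|\lesssim\varepsilon_{f,\alpha}(1+\tilde r(x))^{-2-\alpha}$; the integrable log-singularity of $\log|x-y|$, against the Lebesgue measure of a disk of radius $\rho$, then contributes at most $\varepsilon_{f,\alpha}\rho^2/(1+\tilde r(x))^{2+\alpha}$, which is of size $\varepsilon_{f,\alpha}(1+\tilde r(x))^{-\alpha}$ up to absorbing a harmless $\log\rho$ factor. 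On the outer region $|x-y|>\rho$, I would use the dipole identity
\[ \log\!\frac{|x-y|}{|x-\bar y|} = -\tfrac{1}{2}\log\!\Big(1+\frac{4 x_2 y_2}{|x-y|^2}\Big), \]
which gives the bound $|K(x,y)|\lesssim |x_2||y_2|/|x-y|^2 \wedge 1$; combining with $|f(y)|\lesssim\varepsilon_{f,\alpha}(1+\tilde r(y))^{-2-\alpha}$ and integrating in polar coordinates around $x$ yields again $\varepsilon_{f,\alpha}(1+\tilde r(x))^{-\alpha}$.

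For the gradient bound I would invoke interior elliptic regularity: on $B(x,\rho)$ with $\rho\sim 1+\tilde r(x)$,
\[ |\nabla\zeta(x)|\le C\big(\rho^{-1}\|\zeta\|_{L^\infty(B(x,\rho))}+\rho\|f\|_{L^\infty(B(x,\rho))}\big), \]
which, combined with the pointwise decay of $\zeta$ just obtained and the weighted bound on $f$, delivers $|\nabla\zeta(x)|\lesssim \varepsilon_{f,\alpha}(1+\tilde r(x))^{-1-\alpha}$. Uniqueness follows since the difference of two solutions is harmonic on $\mathbbm{R}^2$, antisymmetric in $x_2$ and tends to zero at infinity by the pointwise bound, hence identically zero by Liouville.

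The main obstacle I anticipate is that $\tilde r$ is not a monotone function of $|x|$: when $x$ sits between the two vortices, $\tilde r(x)\sim d$ while $|x|$ can be small, so half-plane Green's-function estimates keyed to $|x|$ do not translate directly into the required $\tilde r$-weighted form. This intermediate regime must be handled by using the smallness of $|f|\lesssim\varepsilon_{f,\alpha}d^{-2-\alpha}$ in the middle strip together with the antisymmetry-forced vanishing of $\zeta$ on $\{x_2=0\}$, which via Lipschitz control across the axis upgrades the naive $|x|$-estimate into the sharp $\tilde r$-decay claimed in the lemma.
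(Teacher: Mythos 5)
Your construction is correct in outline but follows a genuinely different route from the paper's. The paper also sets $\zeta=\frac{1}{2\pi}\ln|\cdot|\ast f$, but it exploits the antisymmetry through the induced cancellation $\int_{\{y_1\geqslant 0\}}f=\int_{\{y_1\leqslant 0\}}f=0$ over the two half-planes separated by the vertical axis \emph{between} the vortices: it subtracts from the gradient kernel its value at $\pm d\overrightarrow{e_1}$, estimates the resulting difference, obtains $|\nabla\zeta|\lesssim\varepsilon_{f,\alpha}(1+\tilde{r})^{-1-\alpha}$ first, and only then recovers the bound on $\zeta$ by showing $\zeta\to 0$ at infinity and integrating the gradient from infinity. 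You instead encode the antisymmetry in the reflected dipole kernel $\log(|x-y|/|x-\bar{y}|)$ across $\{x_2=0\}$, bound $\zeta$ directly, and obtain the gradient from a rescaled interior estimate on $B(x,(1+\tilde{r}(x))/4)$. Both mechanisms are legitimate; yours avoids the Taylor-type expansion of the kernel around the vortex centres, while the paper's avoids invoking interior elliptic estimates. Your closing worry about the ``middle strip'' is unnecessary: your splitting is already keyed to $\tilde{r}(x)$ rather than $|x|$, and the Lipschitz bound $\tilde{r}(y)\geqslant\tilde{r}(x)-|x-y|$ is all that is needed there.

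The one point you must repair is the inner region, where the ``$\log\rho$ factor'' is \emph{not} harmless. As written you bound the kernel there by $|\log|x-y||$ alone, and $\int_{B(x,\rho)}|\log|x-y||\,dy\sim\rho^{2}\log\rho$ for large $\rho$, which only yields $|\zeta(x)|\lesssim\varepsilon_{f,\alpha}\log(2+\tilde{r})(1+\tilde{r})^{-\alpha}$ — strictly weaker than the asserted estimate, and you cannot trade the logarithm for a smaller exponent because the lemma is stated for the same $\alpha$ that appears in $\varepsilon_{f,\alpha}$. The fix is to keep the dipole structure in the inner region as well: since $|x_2|\leqslant\tilde{r}(x)$ and $y_2\leqslant\tilde{r}(y)\lesssim\tilde{r}(x)$ on $B(x,\rho)$, one has $|x-y|\leqslant|x-\bar{y}|\leqslant C(1+\tilde{r}(x))$, hence $\bigl|\log(|x-y|/|x-\bar{y}|)\bigr|\leqslant\log\bigl(C(1+\tilde{r}(x))/|x-y|\bigr)$, and $\int_{B(x,\rho)}\log\bigl(C(1+\tilde{r}(x))/|x-y|\bigr)dy\leqslant C'\rho^{2}$ with no logarithm, because the large scale inside the log matches $\rho$. (In the outer region your bound is fine: there $x_2\leqslant\tilde{r}(x)\lesssim\rho<|x-y|$ and $y_2\leqslant\tilde{r}(x)+|x-y|$, so the argument of $\log(1+4x_2y_2/|x-y|^{2})$ is bounded by a universal constant and the kernel is indeed $\lesssim x_2y_2/|x-y|^{2}$.) With that correction your proof goes through.
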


See Appendix \ref{AB1} for the proof of this result.

\

Remark here that for a given function $f$, if it satisfies two inequalities
with different values of $(\varepsilon_{f, \alpha}, \alpha)$, then the
associated function $\zeta$ satisfies the estimates with both sets of values
by uniqueness. Furthermore, with only the hypothesis $f \in C^0
(\mathbbm{R}^2)$, we do not have $\zeta \in C_{\tmop{loc}}^2 (\mathbbm{R}^2)$
a priori.

\subsubsection{Fundamental solution for $- \Delta + 2$}

We will use the fundamental solution of $- \Delta + 2$. It can be deduce from
the fundamental solution of $- \Delta + 1$, which has the following
properties.

\begin{lemma}[{\cite{MR0167642}}]
  \label{Kzero}The fundamental solution of $- \Delta + 1$ in $\mathbbm{R}^2$
  is $\frac{1}{2 \pi} K_0 (| . |)$, where $K_0$ is the modified Bessel
  function of second kind. It satifies $K_0 \in C^{\infty} (\mathbbm{R}^{+
  \ast})$ and
  \[ K_0 (r) \sim_{r \rightarrow \infty} \sqrt{\frac{\pi}{2 r}} e^{- r}, \]
  \[ K_0 (r) \sim_{r \rightarrow 0} - \ln (r), \]
  
  \[ K_0' (r) \sim_{r \rightarrow \infty} - \sqrt{\frac{\pi}{2 r}} e^{- r}, \]
  \[ K_0' (r) \sim_{r \rightarrow 0} \frac{- 1}{r}, \]
  \[ \forall r > 0, K_0 (r) > 0, K'_0 (r) < 0 \quad \mathit{and} \quad K_0''
     (r) > 0. \]
\end{lemma}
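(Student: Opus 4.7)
The plan is to deduce everything from the standard integral representation
\[ K_0(r) = \int_0^{+\infty} e^{-r \cosh t}\,dt, \qquad r > 0, \]
which is taken either as a definition or shown equivalent to the one used in \cite{MR0167642}. Smoothness on $\mathbb{R}^{+*}$ follows by differentiating under the integral, since $\cosh^n(t)\, e^{-r\cosh t}$ is dominated near any $r_0 > 0$ by an integrable function. The sign properties are immediate: positivity of the integrand gives $K_0 > 0$; one differentiation gives $K_0'(r) = -\int_0^{+\infty} \cosh(t)\, e^{-r\cosh t}\,dt < 0$; and a second gives $K_0''(r) = \int_0^{+\infty} \cosh^2(t)\, e^{-r\cosh t}\,dt > 0$.

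For the asymptotics at $r \to +\infty$, I would apply Laplace's method. Writing $K_0(r) = e^{-r}\int_0^{+\infty} e^{-r(\cosh t - 1)}\,dt$ and using $\cosh t - 1 = t^2/2 + O(t^4)$ near $t = 0$, the change of variables $t = s/\sqrt{r}$ localizes the mass near $s = 0$ and yields
\[ K_0(r) \sim e^{-r} \int_0^{+\infty} e^{-rt^2/2}\,dt = \sqrt{\frac{\pi}{2r}}\, e^{-r}. \]
The same argument applied to the integral representation of $K_0'$ gives the asymptotic for $K_0'$; the extra factor $\cosh t \to 1$ near $t = 0$ explains why the leading constant is identical up to sign.

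For the behavior at $r \to 0^+$, I would use the classical series
\[ K_0(r) = -\bigl(\ln(r/2) + \gamma\bigr) I_0(r) + \sum_{k \geq 1} \frac{(r/2)^{2k}}{(k!)^2}\Bigl(\sum_{j=1}^k \frac{1}{j}\Bigr), \]
where $I_0(r) = \sum_{k \geq 0} (r/2)^{2k}/(k!)^2$ and $\gamma$ is Euler's constant. Since $I_0(0) = 1$ and the second sum vanishes at $0$, one reads off $K_0(r) \sim -\ln r$; differentiating term by term gives $K_0'(r) \sim -1/r$.

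To finish, I would identify $\frac{1}{2\pi} K_0(|\cdot|)$ as the fundamental solution of $-\Delta + 1$ on $\mathbb{R}^2$. Away from the origin, the radial form of $-\Delta + 1$ applied to $K_0(r)$ is $-K_0''(r) - K_0'(r)/r + K_0(r)$, which vanishes since $K_0$ solves the modified Bessel equation of order $0$; this can itself be checked by a direct computation on the integral representation. The singularity at $r = 0$ reduces, by the expansion above, to $-\frac{1}{2\pi}\ln r$, which is exactly the fundamental solution of $-\Delta$ in $\mathbb{R}^2$, so testing against a compactly supported smooth function yields $\delta_0$. There is no serious obstacle here: the only mildly delicate point is the uniformity needed to justify Laplace's method at infinity, but the estimate $\cosh t - 1 \geq t^2/2$ for all $t \geq 0$ provides the necessary Gaussian domination.
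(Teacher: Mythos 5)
Your proposal is correct, but it takes a genuinely different route from the paper. The paper's proof is essentially a sequence of citations to Abramowitz--Stegun: the three asymptotics at infinity and at the origin are read off from equations 9.7.2, 9.6.8, 9.7.4 (and the fourth from 9.6.27 and 9.6.9), smoothness comes from the Bessel ODE 9.6.1, and the sign properties are obtained from the recurrences $K_0' = -K_1$ and $K_0'' = -K_1' = \tfrac{1}{2}(K_0 + K_2)$ together with the fact that $K_\nu$ has no zeros. You instead build everything self-containedly from the integral representation $K_0(r) = \int_0^{+\infty} e^{-r\cosh t}\,dt$: the signs become trivial (each derivative just inserts a factor $(-\cosh t)^n$ into a positive integrand, which is cleaner than the paper's recurrence argument), the large-$r$ asymptotics follow from Laplace's method with the Gaussian domination $\cosh t - 1 \geqslant t^2/2$, and the small-$r$ behaviour from the classical series involving $I_0$. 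You also prove the fundamental-solution claim (ODE away from the origin plus the $-\tfrac{1}{2\pi}\ln r$ singularity tested against $C_c^\infty$ functions), which the paper asserts without proof. The trade-off is length versus verifiability: the paper's citation-based proof is three lines, while yours is reproducible without the handbook. The only point to watch is that "differentiating term by term" the small-$r$ expansion should be phrased as differentiating the convergent series representation (log times entire function plus entire function), not the asymptotic relation $K_0(r)\sim -\ln r$ itself; as you state it via the explicit series this is fine.
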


\begin{proof}
  The first three equivalents are respectively equations 9.7.2, 9.6.8 and
  9.7.4 of {\cite{MR0167642}}. The fourth one can be deduced from equations
  9.6.27 and 9.6.9 of {\cite{MR0167642}}. For $\nu \in \mathbbm{N}$, $K_{\nu}$
  is $C^{\infty} (\mathbbm{R}, \mathbbm{R})$ since it solves 9.6.1 of
  {\cite{MR0167642}} and from the end of 9.6 of {\cite{MR0167642}}, we have
  that $K_{\nu}$ has no zeros. In particular with the asymptotics of 9.6.8,
  this implies that $K_{\nu} (r) > 0$. Furthermore, from 9.6.27 of
  {\cite{MR0167642}}, we have $K'_0 = - K_1 < 0$ and $K''_0 = - K_1' =
  \frac{K_0 + K_2}{2} > 0$.
\end{proof}

We end this subsection by the proof an elliptic estimate that will be used in
the proof of Proposition \ref{invertop}.

\begin{lemma}
  \label{see}For any $\alpha > 0$, there exists a constant $C (\alpha) > 0$
  such that, for any $d > 1$, if two real-valued functions $\Psi \in H^1
  (\mathbbm{R}^2), h \in C^0 (\mathbbm{R}^2)$ satisfy in the distribution
  sense
  \[ (- \Delta + 2) \Psi = h, \]
  and
  \[ \| (1 + \tilde{r})^{\alpha} h \|_{L^{\infty} (\mathbbm{R}^2)} < + \infty,
  \]
  then $\Psi \in C^1 (\mathbbm{R}^2)$ with
  \[ | \Psi | + | \nabla \Psi | \leqslant \frac{C (\alpha) \| (1 +
     \tilde{r})^{\alpha} h \|_{L^{\infty} (\mathbbm{R}^2)}}{(1 +
     \tilde{r})^{\alpha}} . \]
\end{lemma}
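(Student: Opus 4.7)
The strategy is to represent $\Psi$ as a convolution with the fundamental solution of $-\Delta+2$ and to exploit the exponential decay of that kernel. By Lemma~\ref{Kzero} and the rescaling $y=\sqrt{2}x$, the fundamental solution of $-\Delta+2$ on $\mathbbm{R}^2$ is $G(x):=\frac{1}{2\pi}K_0(\sqrt{2}|x|)$. From Lemma~\ref{Kzero} it is positive and $C^\infty$ away from $0$, is logarithmic at the origin with $|\nabla G(x)|\leq C|x|^{-1}$ there, and both $G(x)$ and $|\nabla G(x)|$ decay like $|x|^{-1/2}e^{-\sqrt{2}|x|}$ at infinity. In particular, for every $\beta\geq 0$,
\[
\int_{\mathbbm{R}^2}(1+|z|)^\beta\bigl(G(z)+|\nabla G(z)|\bigr)\,dz<+\infty.
\]

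The plan then has three steps. First, set $\tilde{\Psi}:=G*h$. Since $G\in L^1(\mathbbm{R}^2)$ and $h\in C^0(\mathbbm{R}^2)\cap L^\infty(\mathbbm{R}^2)$, $\tilde{\Psi}\in C^1(\mathbbm{R}^2)$ with $\nabla\tilde{\Psi}=\nabla G*h$, and $(-\Delta+2)\tilde{\Psi}=h$ in the distribution sense. Second, I prove the pointwise bound on $\tilde{\Psi}$. The geometric key is the elementary inequality
\[
1+\tilde{r}(x)\leq (1+\tilde{r}(y))(1+|x-y|),
\]
which follows from $r_{\pm 1}(x)\leq r_{\pm 1}(y)+|x-y|$ applied to the minimum. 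Writing $M:=\|(1+\tilde{r})^\alpha h\|_{L^\infty(\mathbbm{R}^2)}$, this gives
\[
|\tilde{\Psi}(x)|\leq M\int_{\mathbbm{R}^2}\frac{G(x-y)}{(1+\tilde{r}(y))^\alpha}\,dy\leq \frac{M}{(1+\tilde{r}(x))^\alpha}\int_{\mathbbm{R}^2}G(z)(1+|z|)^\alpha\,dz,
\]
and the last integral is a finite $C(\alpha)$ by the first paragraph; the bound on $|\nabla\tilde{\Psi}(x)|$ is obtained identically with $\nabla G$ in place of $G$. Third, I identify $\Psi=\tilde{\Psi}$. The difference $w:=\Psi-\tilde{\Psi}$ solves $(-\Delta+2)w=0$ distributionally. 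Using $\Psi\in L^2(\mathbbm{R}^2)$ from $\Psi\in H^1$ and the sup-decay of $\tilde{\Psi}$ just established, $\|w\|_{L^2(B(x,1))}\to 0$ as $|x|\to\infty$; classical interior estimates for the constant-coefficient elliptic operator $-\Delta+2$ then upgrade this to $\|w\|_{L^\infty(B(x,1/2))}\leq C\|w\|_{L^2(B(x,1))}\to 0$, so $w(x)\to 0$ at infinity. A maximum principle argument now concludes: a positive supremum of $w$ would be attained at some interior $x_0$, where $-\Delta w(x_0)\geq 0$ and the equation force $2w(x_0)\leq 0$, a contradiction; similarly for $-w$. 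Hence $w\equiv 0$ and $\Psi=\tilde{\Psi}$ inherits both estimates, together with $C^1$ regularity.

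The main technical obstacle is the third step. On $\mathbbm{R}^2$, $H^1$ does not imply pointwise decay, and because $\alpha$ can be arbitrarily small, $\tilde{\Psi}$ need not lie in $L^2$ (the decay $(1+\tilde{r})^{-\alpha}$ is square-integrable only for $\alpha>1$), so the naive energy argument $\int|\nabla w|^2+2|w|^2=0$ is not directly available. The detour through interior elliptic estimates on unit balls, combining the local $L^2$ bound on $\Psi$ with the global sup bound on $\tilde{\Psi}$, is what brings us into the regime where the maximum principle closes the argument.
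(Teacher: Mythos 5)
Your proof is correct, and at its core it is the same as the paper's: both represent the solution as a convolution with the fundamental solution $\frac{1}{2\pi}K_0\bigl(\sqrt{2}\,|\cdot|\bigr)$ and exploit its integrable singularity and exponential decay. You differ in two places, both to your advantage. For the weighted bound, the paper splits the integral by hand into the regions $\{|x\mp d\vec{e}_1|\leq 1\}$, $\{|Y|\leq |x-d\vec{e}_1|/2\}$ and its complement, handling each half-plane separately; your submultiplicative inequality $1+\tilde{r}(x)\leq(1+\tilde{r}(y))(1+|x-y|)$ collapses all of that into a single line, at no cost since the kernel absorbs any polynomial weight. For the identification $\Psi=G*h$, the paper simply asserts it from $\Psi\in H^1$ and strict ellipticity, whereas you supply the actual argument (local elliptic estimates to get decay of $w=\Psi-\tilde{\Psi}$ at infinity, then the maximum principle), correctly noting that the naive energy identity is unavailable because $\tilde{\Psi}$ need not be in $L^2$ when $\alpha\leq 1$. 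Both of these are genuine improvements in exposition, and the uniform-in-$d$ character of your constant is transparent.
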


See Appendix \ref{AB2} for the proof of this result.

\begin{remark}
  \label{normsetup2}Lemma \ref{see} is different from the equivalent one of
  {\cite{LW}} for the gradient, which is equation (5.21) there. They claim
  that:
  
  for any $0 < \sigma < 1$, there exists $C > 0$ such that, if two real-valued
  functions $\Psi \in C^1 (\mathbbm{R}^2), h \in C^0 (\mathbbm{R}^2)$ satisfy
  \[ (- \Delta + 2) \Psi = h \]
  in the distribution sense, and
  \[ \| \Psi (1 + \tilde{r})^{1 + \sigma} \|_{L^{\infty} (\mathbbm{R}^2)} + \|
     \nabla \Psi (1 + \tilde{r})^{2 + \sigma} \|_{L^{\infty} (\mathbbm{R}^2)}
     + \| (1 + \tilde{r})^{1 + \sigma} h \|_{L^{\infty} (\mathbbm{R}^2)} < +
     \infty, \]
  then
  \[ | \Psi | \leqslant \frac{C \| (1 + \tilde{r})^{1 + \sigma} h
     \|_{L^{\infty} (\mathbbm{R}^2)}}{(1 + \tilde{r})^{1 + \sigma}} \]
  and
  \[ | \nabla \Psi | \leqslant \frac{C \| (1 + \tilde{r})^{1 + \sigma} h
     \|_{L^{\infty} (\mathbbm{R}^2)}}{(1 + \tilde{r})^{2 + \sigma}} . \]
  The main difference they claim would be a stronger decay for the gradient.
  However, such a result can not hold, because of the following
  counterexample:
  \[ \Psi_{\varepsilon} (x) = \left\{ \begin{array}{ll}
       0 & \tmop{if} | x | \leqslant 1 / \varepsilon\\
       \frac{\sin^2 (r)}{(1 + r)^{2 + \sigma}} & \tmop{if} | x | \geqslant 1 /
       \varepsilon .
     \end{array} \right. \]
  For $\varepsilon > 0$ small enough (in particular such that
  $\frac{1}{\varepsilon} \gg \frac{1}{c}$, and such that
  $\frac{1}{\varepsilon}$ is an integer multiple of $\pi$, so that
  $\Psi_{\varepsilon}$ is $C^2$), we have
  \[ \| (1 + \tilde{r})^{1 + \sigma} h (x) \|_{L^{\infty} (\mathbbm{R}^2)} =
     \| (1 + \tilde{r})^{1 + \sigma} ((- \Delta + 2) \Psi) (x) \|_{L^{\infty}
     (\mathbbm{R}^2)} \leqslant K \varepsilon \]
  and
  \[ \| (1 + \tilde{r})^{2 + \sigma} | \nabla \Psi (x) | \|_{L^{\infty}
     (\mathbbm{R}^2)} \geqslant 1 / 2. \]
  Therefore, taking $\varepsilon \rightarrow 0$, we see that the estimate $|
  \nabla \Psi (x) | \leqslant \frac{C \| (1 + \tilde{r})^{1 + \sigma} h
  \|_{L^{\infty} (\mathbbm{R}^2)}}{(1 + \tilde{r})^{2 + \sigma}}$ can not
  hold.
\end{remark}

For our proof of the inversion of the linearized operator (Proposition
\ref{invertop} below), we did not choose the same norms $\| . \|_{\ast,
\sigma, d}$ and $\| . \|_{\ast \ast, \sigma', d}$ as in {\cite{LW}} (at the
beginning of subsection \ref{normsetup}). In particular, we require decays on
the second derivatives for $\| . \|_{\ast \ast, \sigma', d}$. Our proof of the
inversion of the linearized operator (the equivalent of Lemma 5.1 of
{\cite{LW}}) will be different, and will follow more closely the proof of
{\cite{DP2}}.

\subsubsection{Estimates for the Gross-Pitaevskii kernels}\label{gpkernel}

We are interested here in solving the following equation on $\psi$, given a
source term $h$ and $c \in \left] 0, \sqrt{2} \right[$:
\[ - i c \partial_{x_2} \psi - \Delta \psi + 2\mathfrak{R}\mathfrak{e} (\psi)
   = h. \]
It will appear in the inversion of the linearized operator around $V$. See
Lemma \ref{P3starest} for the exact result. We give here a way to construct a
solution formally. We will highlight all the important quantities, as well as
all the difficulties that arise when trying to solve this equation rigorously.

In this subsection, we want to check that a solution of this equation, with
$\psi = \psi_1 + i \psi_2$ and $h = h_1 + i h_2$ (where $\psi_1, \psi_2, h_1,
h_2$ are real valued) can be written
\begin{equation}
  \psi_1 = K_0 \ast h_1 + c H, \label{P32931}
\end{equation}
with $H$ a function that satisfies
\[ \partial_{x_j} H \assign K_j \ast h_2, \]
and
\begin{equation}
  \partial_{x_j} \psi_2 = G_j - c K_j \ast h_1, \label{P32932}
\end{equation}
where similarly $G_j$ satisfies
\[ \partial_{x_k} G_j \assign (c^2 L_{j, k} - R_{j, k}) \ast h_2, \]
where, for $j, k \in \{ 1, 2 \}$, $\xi = (\xi_1, \xi_2) \in \mathbbm{R}^2$,
\[ \widehat{K_0} (\xi) \assign \frac{| \xi |^2}{| \xi |^4 + 2 | \xi |^2 - c^2
   \xi^2_2}, \]
\[ \widehat{K_j} (\xi) \assign \frac{\xi_2 \xi_j}{| \xi |^4 + 2 | \xi |^2 -
   c^2 \xi^2_2}, \]
\[ \widehat{L_{j, k}} (\xi) \assign \frac{\xi_2^2 \xi_j \xi_k}{| \xi |^2 (|
   \xi |^4 + 2 | \xi |^2 - c^2 \xi^2_2)}, \]
and
\[ \widehat{R_{j, k}} (\xi) \assign \frac{\xi_j \xi_k}{| \xi |^2} . \]
We will check later on that, for continuous and sufficiently decaying
functions $h$, these quantities are well defined, and that $H, G_j, \psi_2$
can be defined from there derivatives. The Gross-Pitaevskii kernels, $K_0,
K_j, L_{j, k}$, and the Riesz kernels $R_{j, k}$ have been studied in
{\cite{MR2086751}}, and we will recall some of the results obtained there.

We write the system in real and imaginary part:
\[ \left\{ \begin{array}{l}
     c \partial_{x_2} \psi_2 - \Delta \psi_1 + 2 \psi_1 = h_1\\
     - c \partial_{x_2} \psi_1 - \Delta \psi_2 = h_2 .
   \end{array} \right. \]
Now, taking the Fourier transform of the system, we have
\[ \left\{ \begin{array}{l}
     i \xi_2 c \widehat{\psi_2} + (| \xi |^2 + 2) \widehat{\psi_1} =
     \widehat{h_1}\\
     - i \xi_2 c \widehat{\psi_1} + | \xi |^2 \widehat{\psi_2} =
     \widehat{h_2},
   \end{array} \right. \]
and we write it
\[ \left(\begin{array}{cc}
     | \xi |^2 + 2 & i c \xi_2\\
     - i c \xi_2 & | \xi |^2
   \end{array}\right) \left(\begin{array}{c}
     \widehat{\psi_1}\\
     \widehat{\psi_2}
   \end{array}\right) = \left(\begin{array}{c}
     \widehat{h_1}\\
     \widehat{h_2}
   \end{array}\right) . \]
Here, we suppose that $\psi$ is a tempered distributions and $h \in L^p
(\mathbbm{R}^2, \mathbbm{C})$ for some $p > 1$.

Now, we want to invert the matrix, and for that, we have to divide by its
determinant, $| \xi |^4 + 2 | \xi |^2 - c^2 \xi^2_2$. For $0 < c < \sqrt{2}$,
this quantity is zero only for $\xi = 0$. Thus, for $\xi \neq 0$,
\[ \left(\begin{array}{c}
     \widehat{\psi_1}\\
     \widehat{\psi_2}
   \end{array}\right) = \frac{1}{| \xi |^4 + 2 | \xi |^2 - c^2 \xi_2^2}
   \left(\begin{array}{c}
     | \xi |^2  \widehat{h_1} - i c \xi_2 \widehat{h_2}\\
     (| \xi |^2 + 2) \widehat{h_2} + i c \xi_2 \widehat{h_1}
   \end{array}\right), \]
which implies that
\[ \widehat{\psi_1} = \frac{| \xi |^2  \widehat{h_1}}{| \xi |^4 + 2 | \xi |^2
   - c^2 \xi_2^2} + \frac{- i c \xi_2 \widehat{h_2}}{| \xi |^4 + 2 | \xi |^2 -
   c^2 \xi_2^2} . \]
With the definition of $K_0$, we have $\frac{| \xi |^2}{| \xi |^4 + 2 | \xi
|^2 - c^2 \xi_2^2}  \widehat{h_1} = \widehat{K_0} \widehat{h_1}$ and, defining
the distribution $H$ by $\hat{H} = \frac{- i \xi_2}{| \xi |^4 + 2 | \xi |^2 -
c^2 \xi_2^2} \widehat{h_2}$, we have, for $\xi \neq 0$,
\[ \widehat{\partial_{x_j} H} = \frac{\xi_j \xi_2 \widehat{h_2}}{| \xi |^4 + 2
   | \xi |^2 - c^2 \xi_2^2} = \widehat{K_j} \widehat{h_2} . \]
Remark that $\frac{- i \xi_2}{| \xi |^4 + 2 | \xi |^2 - c^2 \xi_2^2} \in L^{3
/ 2} (\mathbbm{R}^2, \mathbbm{C})$ and thus is a tempered distribution.

\

Now, we have
\[ \widehat{\partial_{x_j} \psi_2} = \frac{i \xi_j (| \xi |^2 + 2)
   \widehat{h_2}}{| \xi |^4 + 2 | \xi |^2 - c^2 \xi_2^2} + \frac{- c \xi_j
   \xi_2 \widehat{h_1}}{| \xi |^4 + 2 | \xi |^2 - c^2 \xi_2^2} . \]
We check that $\frac{- c \xi_j \xi_2}{| \xi |^4 + 2 | \xi |^2 - c^2 \xi_2^2}
\widehat{h_1} = - c \widehat{K_j} \widehat{h_1}$, and we compute
\[ \frac{| \xi |^2 + 2}{| \xi |^4 + 2 | \xi |^2 - c^2 \xi_2^2} = \frac{1}{|
   \xi |^2} \left( 1 - \frac{c^2 \xi_2^2}{| \xi |^4 + 2 | \xi |^2 - c^2
   \xi_2^2} \right) = \frac{1}{| \xi |^2} - c^2 \frac{\xi_2^2}{| \xi |^2 (|
   \xi |^4 + 2 | \xi |^2 - c^2 \xi_2^2)}, \]
thus, denoting $\widehat{G_j} = \frac{i \xi_j (| \xi |^2 + 2)}{| \xi |^4 + 2 |
\xi |^2 - c^2 \xi_2^2} \widehat{h_2}$, we have
\[ \widehat{\partial_{x_k} G_j} \assign (c^2 \widehat{L_{j, k}} -
   \widehat{R_{j, k}}) \widehat{h_2} . \]
We therefore have that, at least formally, for $\xi \neq 0$, $\widehat{- i c
\partial_{x_2} \psi - \Delta \psi + 2\mathfrak{R}\mathfrak{e} (\psi) - h}
(\xi) = 0$. We deduce that there exists $P \in \mathbbm{C} [X_1, X_2]$ such
that $- i c \partial_{x_2} \psi - \Delta \psi + 2\mathfrak{R}\mathfrak{e}
(\psi) - h = P$ Now, if the function $\psi$ and $h$ are such that the left
hand side is bounded and goes to $0$ at infinity, this implies that $P = 0$.
This will hold under a condition on $h$ (which will be $\int_{\mathbbm{R}^2}
h_2 = 0$ and some decay estimates, that $\psi$ will inherit). Another remark
is that $\psi$ is here in part defined through its derivatives, and we need an
argument to construct a primitive. See Lemma \ref{P3starest} for a rigorous
proof of this construction. Remark that $- i c \partial_{x_2} \psi - \Delta
\psi + 2\mathfrak{R}\mathfrak{e} (\psi) = 0$ has some nonzero or unbounded
polynomial solutions, for instance $\psi = i$ or $\psi = i x_2 - \frac{c}{2}$.

The kernels $K_0, K_j$ and $L_{j, k}$ have been studied in details in
{\cite{MR2086751}}, {\cite{MR2191764}} and {\cite{G1}}. In particular, we
recall the following result.

\begin{theorem}[{\cite{MR2086751}}, Theorems 5 and 6]
  \label{P3gravejatftw}For $\mathcal{K} \in \{ K_0, K_j, L_{j, k} \}$ and any
  $0 < c_0 < \sqrt{2}$, there exist a constant $K (c_0) > 0$ such that, for
  all $0 < c < c_0$,
  \[ | \mathcal{K} (x) | \leqslant \frac{K (c_0)}{| x |^{1 / 2} (1 + | x |)^{3
     / 2}} \]
  and
  \[ | \nabla \mathcal{K} (x) | \leqslant \frac{K (c_0)}{| x |^{3 / 2} (1 + |
     x |)^{3 / 2}} . \]
\end{theorem}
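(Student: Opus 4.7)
The statement is quoted verbatim from Theorems 5 and 6 of Gravejat \cite{MR2086751}, so formally the plan is simply to invoke those results. To indicate how a direct argument would proceed, one analyzes the explicit rational symbols by Fourier methods. The key quantitative input is that the denominator $D(\xi) \assign |\xi|^4 + 2|\xi|^2 - c^2 \xi_2^2$ is strictly positive for $\xi \neq 0$ when $c < \sqrt{2}$ and vanishes only to quadratic order at the origin, with $D(\xi) = 2 \xi_1^2 + (2-c^2)\xi_2^2 + O(|\xi|^4)$ near $0$. This yields the uniform two-sided bound $D(\xi) \simeq \min(|\xi|^2, |\xi|^4)$ for $c \in (0, c_0]$, together with uniform bounds on derivatives of $1/D$ away from $\xi = 0$.

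The plan is then to split each symbol with a smooth radial cutoff $\chi$ supported near the origin, writing $\widehat{\mathcal{K}} = \chi \widehat{\mathcal{K}} + (1-\chi)\widehat{\mathcal{K}}$, and to analyze the two inverse Fourier transforms separately. The high-frequency piece $(1-\chi)\widehat{\mathcal{K}}$ is a smooth symbol of class $S^{-2}$ (the numerator is polynomial of degree at most $2$, the denominator grows like $|\xi|^4$), so repeated integration by parts in the oscillatory integral $\int e^{ix\cdot\xi}(1-\chi)\widehat{\mathcal{K}}(\xi)\, d\xi$ produces arbitrary polynomial decay in $|x|$. The low-frequency piece $\chi \widehat{\mathcal{K}}$ has compact support, hence smooth inverse Fourier transform in $x$, and its large-$|x|$ behavior is governed by the angular (essentially zero-homogeneous) discontinuity of $\widehat{\mathcal{K}}$ at $\xi = 0$; this anisotropic singularity produces precisely the $|x|^{-2}$ decay that matches the stated bound at infinity. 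The gradient estimate at infinity is obtained by the same analysis applied to $\xi_j \widehat{\mathcal{K}}$, which carries one extra power of $\xi$ in the numerator and yields an additional factor $|x|^{-1}$.

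The singularity $|x|^{-1/2}$ at $x = 0$ is the principal obstacle: it cannot be read off directly from elementary integration by parts on either piece, because the symbol is not integrable enough. The classical approach, followed in \cite{MR2086751}, is a stationary-phase analysis on the level sets of $D(\xi)$. These are smooth closed curves — deformations of the ellipse $2\xi_1^2 + (2-c^2)\xi_2^2 = \mathrm{const}$ near the origin — and, once parametrized, reduce the inverse transform to a one-dimensional oscillatory integral of the form $\int e^{i|x|\varphi(\omega)} a(\omega,|x|)\, d\omega$ with isolated non-degenerate critical points corresponding to normal directions to those level sets. The standard one-dimensional stationary phase then produces the $|x|^{-1/2}$ factor, and the gradient bound follows by the same analysis with an extra power of $|\xi|$ in the amplitude. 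The main technical difficulty is obtaining constants independent of $c \in (0, c_0]$: this requires coordinates adapted uniformly to the varying aspect ratio of the ellipse $2\xi_1^2 + (2-c^2)\xi_2^2$, which stays controlled precisely because $2 - c_0^2$ is bounded away from $0$.
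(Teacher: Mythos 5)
Your first move --- quoting Theorems 5 and 6 of \cite{MR2086751} and reducing the remaining work to checking that the constant is uniform for $c\in(0,c_0]$ --- is exactly what the paper does, and your observation that everything rests on $2-c_0^2$ being bounded away from zero is the correct source of that uniformity. (Minor slip: $D(\xi)=2\xi_1^2+(2-c^2)\xi_2^2+|\xi|^4\simeq\max(|\xi|^2,|\xi|^4)$, not $\min$.)

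The sketch of the underlying argument, however, misrepresents the method and goes wrong precisely at the point you single out as the main difficulty. Theorems 5 and 6 of \cite{MR2086751} are not proved by stationary phase on the level sets of $D$; they follow from weighted $L^\infty$ bounds on $\widehat{\mathcal K}$ and its derivatives (boundedness of $\widehat{\mathcal K}$, of $|\xi|\,\nabla\widehat{\mathcal K}$, etc., together with the $|\xi|^{-2}$ decay at infinity), combined with the standard splitting of the Fourier integral at $|\xi|\sim|x|^{-1}$ and integration by parts --- which is exactly why the paper can assert that the constants depend only on such weighted norms of $\widehat{\mathcal K}$ and its first derivatives, and hence are uniform in $c\le c_0$. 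Moreover, the $|x|^{-1/2}$ factor at the origin is not ``the principal obstacle'': the paper explicitly remarks that this exponent is not at all optimal. Since in dimension $2$ each symbol decays like $|\xi|^{-2}$, the kernel has at worst a logarithmic singularity at $x=0$ (and $\nabla\mathcal K$ at worst $|x|^{-1}$), both crudely dominated by the stated bounds via the elementary splitting above. Stationary phase with large parameter $|x|$ is a large-$|x|$ tool; it cannot produce, and is not needed for, a small-$|x|$ singularity. As a citation-plus-uniformity argument your proof stands, but the proposed direct route should be replaced by the weighted-multiplier argument just described; note also the harmless swap of $\xi_1$ and $\xi_2$ relative to \cite{MR2086751}, where the speed is taken along $\vec{e}_1$.
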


\begin{proof}
  This is the main result of Theorems 5 and 6 of {\cite{MR2086751}}. We added
  the fact that the constant $K$ is uniform in $c$, given that $c$ is small.
  This can be easily shown by following the proof of Theorem 5 and 6 of
  {\cite{MR2086751}}, and verifying that the constants depends only on weighed
  $L^{\infty}$ norms on $\hat{\mathcal{K}}$ and its first derivatives, which
  are uniforms in $c$ if $c > 0$ is small. The condition $c < c_0$ is taken in
  ordrer to avoid $c \rightarrow \sqrt{2}$, where this does not hold (the
  singularity near $\xi = 0$ of $\hat{\mathcal{K}}$ changes of order at the
  limit). Furthermore, the factor $1 / 2$ for the growth near $x = 0$ is not
  at all optimal, but we will not require more here.
  
  Remark that the speed in {\cite{MR2086751}} is in the direction
  $\overrightarrow{e_1}$, whereas it is in the direction
  $\overrightarrow{e_2}$ in our case, which explains the swap between $\xi_2$
  and $\xi_1$ in the two papers.
\end{proof}

We recall that $\tilde{r} = \min (r_1, r_{- 1})$ with $r_{\pm 1} = | x \mp d
\overrightarrow{e_1} |$. We give some estimates of convolution with these
kernels.

\begin{lemma}
  \label{P3kerest1}Take $K \in \{ K_0, K_j, L_{j, k} \}$ and $h \in C^0
  (\mathbbm{R}^2, \mathbbm{R})$, and suppose that, for some $\alpha > 0$, $\|
  h (1 + \tilde{r})^{\alpha} \|_{L^{\infty} (\mathbbm{R}^2)} < + \infty$.
  Then, for any $0 < \alpha' < \alpha$, there exists $C (\alpha, \alpha') > 0$
  such that, for $0 < c < 1$, if either
  \begin{itemizeminus}
    \item $\alpha < 2$
    
    \item $2 < \alpha < 3$, $\forall (x_1, x_2) \in \mathbbm{R}^2, h (- x_1,
    x_2) = h (x_1, x_2)$ and $\int_{\mathbbm{R}^2} h = 0$,
  \end{itemizeminus}
  then
  \[ | K \ast h | \leqslant \frac{C (\alpha, \alpha') \| h (1 +
     \tilde{r})^{\alpha} \|_{L^{\infty} (\mathbbm{R}^2)}}{(1 +
     \tilde{r})^{\alpha'}} . \]
  Furthermore, if $\alpha < 3$ (without any other conditions), then
  \[ | \nabla K \ast h | \leqslant \frac{C (\alpha, \alpha') \| h (1 +
     \tilde{r})^{\alpha} \|_{L^{\infty} (\mathbbm{R}^2)}}{(1 +
     \tilde{r})^{\alpha'}} . \]
\end{lemma}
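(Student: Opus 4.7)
The plan is to separate the three sub-statements and reduce them, modulo an extra trick in the case $2<\alpha<3$, to a single-center convolution estimate. Normalize so that $\|h(1+\tilde r)^{\alpha}\|_{L^\infty(\mathbbm{R}^2)}=1$. Using $(1+\tilde r(y))^{-\alpha}\leq(1+|y-d\vec{e_1}|)^{-\alpha}+(1+|y+d\vec{e_1}|)^{-\alpha}$ and translation invariance of the convolution, the first inequality for $\alpha<2$ and the second inequality for $\alpha<3$ reduce to bounding
\[ J_{\mathcal{K}}(z)\assign\int_{\mathbbm{R}^2}|\mathcal{K}(z-w)|(1+|w|)^{-\alpha}\,dw,\qquad\mathcal{K}\in\{K,\nabla K\}, \]
by $C(\alpha,\alpha')/(1+|z|)^{\alpha'}$; summing over the two shifts $z=x\mp d\vec{e_1}$ gives the claim, since $|x\mp d\vec{e_1}|\geq\tilde r(x)$.

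I estimate $J_{\mathcal{K}}$ by the standard three-region decomposition of the $w$-integral: (A) $|z-w|\leq|z|/2$, where $|w|\geq|z|/2$ so the weight is $\leq C/(1+|z|)^{\alpha}$ while $\int_{|u|\leq|z|/2}|\mathcal{K}(u)|\,du$, controlled via Theorem \ref{P3gravejatftw}, is $O(\log(1+|z|))$ for $K$ (from its non-integrable $|u|^{-2}$ tail) and $O(1)$ for $\nabla K$ (whose $|u|^{-3}$ tail is integrable in $\mathbbm{R}^2$); (B) $|w|\leq|z|/2$, where $|z-w|\geq|z|/2$ gives $|\mathcal{K}(z-w)|\leq C/|z|^{2}$ (resp.\ $C/|z|^{3}$) while $\int_{|w|\leq|z|/2}(1+|w|)^{-\alpha}\,dw$ equals $O(|z|^{2-\alpha})$ for $\alpha<2$ or $O(1)$ for $\alpha>2$; (C) $|w|,|z-w|\geq|z|/2$, handled by separating the tail $|w|\geq 2|z|$ (integrand $\sim 1/|w|^{2+\alpha}$) from the annulus $|z|/2\leq|w|\leq 2|z|$. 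Collecting: for $K$ with $\alpha<2$ the total is at most $C\log(1+|z|)/(1+|z|)^{\alpha}$, absorbed by any $\alpha'<\alpha$; for $\nabla K$ with $\alpha<3$, region (B)'s worst contribution $C/|z|^{3}$ is dominated by $C/|z|^{\alpha}$, yielding the same conclusion.

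The delicate subcase is the first inequality with $2<\alpha<3$: region (B) then produces only $C/|z|^{2}$ and the naive reduction above fails. Here I invoke the hypothesis. From $h(-y_1,y_2)=h(y_1,y_2)$ and $\int_{\mathbbm{R}^2}h=0$, one deduces $\int_{\{y_1>0\}}h=\int_{\{y_1<0\}}h=0$, so I decompose $h=h_++h_-$ with $h_{\pm}\assign h\,\mathbbm{1}_{\pm y_1>0}$; each $h_\pm$ has vanishing integral and is supported where $\tilde r(y)=|y\mp d\vec{e_1}|$. Setting $z\assign x\mp d\vec{e_1}$ and $\tilde h_{\pm}(u)\assign h_{\pm}(u\pm d\vec{e_1})$, one verifies $|\tilde h_{\pm}(u)|\leq C(1+|u|)^{-\alpha}$ and $\int\tilde h_{\pm}=0$, and the free subtraction gives
\[ K\ast h_{\pm}(x)=\int_{\mathbbm{R}^2}[K(z-u)-K(z)]\,\tilde h_{\pm}(u)\,du. \]
On $|u|\leq|z|/2$, the mean-value bound $|K(z-u)-K(z)|\leq C|u|/|z|^{3}$ (from $|\nabla K(\xi)|\leq C/|\xi|^{3}$ for $|\xi|\geq|z|/2$) together with $\int_{|u|\leq|z|/2}|u|(1+|u|)^{-\alpha}\,du\leq C|z|^{3-\alpha}$ produces $C/|z|^{\alpha}$; on $|u|>|z|/2$, the two terms are estimated separately, using $|\tilde h_{\pm}|\leq C/|z|^{\alpha}$ there and $\bigl|\int_{|u|>|z|/2}\tilde h_{\pm}\bigr|\leq C|z|^{2-\alpha}$, which yields $C\log(1+|z|)/|z|^{\alpha}$. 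Absorbing the log by $\alpha'<\alpha$ closes the estimate.

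The main obstacle is exactly this last subcase: because $K$ decays in $\mathbbm{R}^2$ only as $|x|^{-2}$, $\int|K|$ is logarithmically divergent and a direct convolution estimate loses one full order of decay. One must combine the two vanishing integrals $\int h=0$ and $\int_{\{y_1>0\}}h=0$ (the latter requiring the even $y_1$-symmetry to localize cancellation around each vortex after translation) with first-order Taylor cancellation that effectively upgrades the decay of $K$ to the decay of $\nabla K$. The remaining pieces of the lemma are routine three-region convolution estimates.
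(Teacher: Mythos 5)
Your proposal is correct and follows essentially the same route as the paper's proof in Appendix \ref{AB3}: split the weight $(1+\tilde r)^{-\alpha}$ into the two single-vortex weights, run the region decomposition with the pointwise kernel bounds of Theorem \ref{P3gravejatftw} (absorbing the logarithmic loss into $\alpha'<\alpha$), and, for $2<\alpha<3$, exploit $\int_{\{\pm y_1>0\}}h=0$ to subtract $K(x\mp d\vec{e_1})$ and gain one order of decay via the mean-value theorem. The only cosmetic difference is that you phrase the reduction as a translation-invariant single-center estimate $J_{\mathcal K}$ rather than working directly on the two half-planes, which changes nothing of substance.
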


See Appendix \ref{AB3} for the proof of this result.

\

The symmetry and $\int_{\mathbbm{R}^2} h = 0$ in the case $2 < \alpha < 3$
could be removed, if we suppose instead that $\int_{\{ x_1 \geqslant 0 \}} h =
\int_{\{ x_1 \leqslant 0 \}} h = 0$. In particular, if we suppose that
$\forall (x_1, x_2) \in \mathbbm{R}^2, h (x_1, x_2) = - h (x_1, - x_2)$, then
the condition $\int_{\mathbbm{R}^2} h = 0$ is automatically satisfied.

\

We complete these estimates with some for $R_{j, k}$.

\begin{lemma}
  \label{P3kerest2}Take $h \in C^1 (\mathbbm{R}^2, \mathbbm{R})$ with $\forall
  x = (x_1, x_2) \in \mathbbm{R}^2, h (- x_1, x_2) = h (x_1, x_2)$, and
  suppose that for some $\alpha > 0$, $\| h (1 + \tilde{r})^{\alpha}
  \|_{L^{\infty} (\mathbbm{R}^2)} + \| \nabla h (1 + \tilde{r})^{\alpha}
  \|_{L^{\infty} (\mathbbm{R}^2)} < + \infty$. Then, for any $0 < \alpha' <
  \alpha$, for $0 < c < 1$, if either
  \begin{itemizeminus}
    \item $\alpha < 2$
    
    \item $2 < \alpha < 3$ and $\int_{\mathbbm{R}^2} h = 0$,
  \end{itemizeminus}
  then, there exists $C (\alpha, \alpha') > 0$ such that
  \[ | R_{j, k} \ast h | \leqslant \frac{C (\alpha, \alpha') (\| h (1 +
     \tilde{r})^{\alpha} \|_{L^{\infty} (\mathbbm{R}^2)} + \| \nabla h (1 +
     \tilde{r})^{\alpha} \|_{L^{\infty} (\mathbbm{R}^2)})}{(1 +
     \tilde{r})^{\alpha'}} . \]
\end{lemma}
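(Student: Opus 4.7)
The plan is to imitate the proof of Lemma \ref{P3kerest1} while compensating for the more singular structure of $R_{j,k}$ at the origin by using the $C^1$ regularity of $h$. Indeed $R_{j,k}$ is a Calder\'on--Zygmund kernel whose pointwise bound away from the origin is $|R_{j,k}(x)| \leqslant C/|x|^2$, matching the far-field decay of $K_0, K_j, L_{j,k}$; in 2D it is explicitly given, as a principal value distribution, by
\[
R_{j,k}(x) = \frac{1}{2\pi}\left(\frac{\delta_{jk}}{|x|^2} - \frac{2x_j x_k}{|x|^4}\right), \qquad x \neq 0,
\]
whose angular mean over any circle around $0$ vanishes (up to a possible Dirac contribution at the origin, whose effect $c\,\delta_{jk}\,h(x)$ already satisfies the desired bound).

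\textbf{Main steps.} For fixed $x \in \mathbbm{R}^2$, I would write
\[
(R_{j,k}\ast h)(x) = \int_{|x-y|<1} R_{j,k}(x-y)\bigl(h(y)-h(x)\bigr)\,dy + \int_{|x-y|\geqslant 1} R_{j,k}(x-y)\, h(y)\,dy =: I(x) + J(x),
\]
where subtracting $h(x)$ in the first integral is legitimated by the angular mean-zero property. For $I(x)$, the Lipschitz bound $|h(y)-h(x)| \leqslant |y-x|\,\|\nabla h\|_{L^\infty(B(x,1))}$ combined with $|R_{j,k}(x-y)|\leqslant C/|x-y|^2$ gives $|I(x)|\leqslant C\|\nabla h\|_{L^\infty(B(x,1))}$; since $\tilde{r}$ is comparable to $\tilde{r}(x)$ on $B(x,1)$ once $\tilde{r}(x)\geqslant 2$, this yields the bound $|I(x)|\leqslant C\|\nabla h (1+\tilde{r})^{\alpha}\|_{L^\infty}(1+\tilde{r}(x))^{-\alpha}$. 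The term $J(x)$ is exactly of the form appearing in the proof of Lemma \ref{P3kerest1}: I would reuse the same partition of $\mathbbm{R}^2\setminus B(x,1)$ into a neighborhood of $x$, the two vortex balls $B(\pm d\vec{e_1}, 1)$, and the remainder, combining the kernel bound $|R_{j,k}|\leqslant C/|x-y|^2$ with $|h(y)|\leqslant \|h(1+\tilde{r})^{\alpha}\|_{L^\infty}(1+\tilde{r}(y))^{-\alpha}$ on each region.

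\textbf{The two cases and the main obstacle.} For $\alpha < 2$ this already gives the claimed decay (the $\alpha'<\alpha$ loss absorbing a logarithmic factor at a critical scale). For $2<\alpha<3$, the integral $J(x)$ would otherwise contribute a non-decaying leading term proportional to $R_{j,k}(x)\int h$; the hypothesis $\int_{\mathbbm{R}^2} h = 0$ kills this, and after rewriting
\[
J(x) = \int_{|x-y|\geqslant 1}\bigl(R_{j,k}(x-y)-R_{j,k}(x)\bigr)h(y)\,dy - R_{j,k}(x)\int_{|x-y|<1} h(y)\,dy,
\]
the mean-value inequality $|R_{j,k}(x-y)-R_{j,k}(x)|\leqslant C|y|/|x|^3$ for $|y|\leqslant |x|/2$ supplies one extra power of decay, exactly as in the proof of Lemma \ref{P3kerest1}. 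The principal obstacle I anticipate is precisely this last point: the zero-mean condition is a cancellation adapted to a single center, while the weight $\tilde{r}$ is a two-center object, so one must carefully track how contributions from each vortex neighborhood match the subtracted pole; the symmetry $h(-x_1,x_2)=h(x_1,x_2)$ is the crucial ingredient that makes this bookkeeping work, and it also allows us to control the intermediate region $|y|\sim |x|$ between the two vortices where $h$ has no extra decay beyond $(1+\tilde{r})^{-\alpha}$.
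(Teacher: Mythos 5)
Your proposal is correct and takes essentially the same route as the paper: the paper starts from the representation (\ref{P3Rjk}) of $R_{j,k}\ast h$ (from \cite{MR2086751}), which already builds in the subtraction of $h(x)$ on $\{|x-y|\le 1\}$, bounds that near part by the Lipschitz estimate $|h(y)-h(x)|\le |y-x|\,\|\nabla h(1+\tilde r)^\alpha\|_{L^\infty}(1+\tilde r(x))^{-\alpha}$, and treats the far part exactly as in Lemma \ref{P3kerest1} using $|R_{j,k}(z)|\le K/(1+|z|)^2$ for $|z|\ge 1$. The only point to correct is your displayed formula for $J(x)$ in the case $2<\alpha<3$: subtracting the single pole $R_{j,k}(x)$ centered at the origin and using $|R_{j,k}(x-y)-R_{j,k}(x)|\le C|y|/|x|^3$ cannot work uniformly in $d$ (the mass of $h$ sits near $\pm d\vec{e}_1$, so this gives decay in $|x|$ rather than in $\tilde r(x)$); as your own closing remarks anticipate and as the paper does, one must split into half-planes and subtract $R_{j,k}(x\mp d\overrightarrow{e_1})$ on $\{\pm y_1\ge 0\}$, which is legitimate because the symmetry $h(-x_1,x_2)=h(x_1,x_2)$ together with $\int_{\mathbbm{R}^2}h=0$ forces $\int_{\{y_1\ge 0\}}h=\int_{\{y_1\le 0\}}h=0$.
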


See Appendix \ref{AB4} for the proof of this result.

\

We can now solve the problem
\[ - i c \partial_{x_2} \psi - \Delta \psi + 2\mathfrak{R}\mathfrak{e} (\psi)
   = h, \]
\[ \int_{\mathbbm{R}^2} \mathfrak{I}\mathfrak{m} (h) = 0 \]
in some suitable spaces. We define the norms, for $\sigma, \sigma' \in
\mathbbm{R}$,
\begin{eqnarray*}
  \| \psi \|_{\otimes, \sigma, \infty} & \assign & \| (1 + \tilde{r})^{1 +
  \sigma} \psi_1 \|_{L^{\infty} (\mathbbm{R}^2)} + \| (1 + \tilde{r})^{2 +
  \sigma} \nabla \psi_1 \|_{L^{\infty} (\mathbbm{R}^2)}\\
  & + & \| (1 + \tilde{r})^{2 + \sigma} \nabla^2 \psi_1 \|_{L^{\infty}
  (\mathbbm{R}^2)} + \| (1 + \tilde{r})^{\sigma} \psi_2 \|_{L^{\infty}
  (\mathbbm{R}^2)}\\
  & + & \| (1 + \tilde{r})^{1 + \sigma} \nabla \psi_2 \|_{L^{\infty}
  (\mathbbm{R}^2)} + \| (1 + \tilde{r})^{2 + \sigma} \nabla^2 \psi_2
  \|_{L^{\infty} (\mathbbm{R}^2)}
\end{eqnarray*}
and
\begin{eqnarray*}
  \| h \|_{\otimes \otimes, \sigma', \infty} & \assign & \| (1 + \tilde{r})^{1
  + \sigma'} h_1 \|_{L^{\infty} (\mathbbm{R}^2)} + \| (1 + \tilde{r})^{2 +
  \sigma'} \nabla h_1 \|_{L^{\infty} (\mathbbm{R}^2)}\\
  & + & \| (1 + \tilde{r})^{2 + \sigma'} h_2 \|_{L^{\infty} (\mathbbm{R}^2)}
  + \| (1 + \tilde{r})^{2 + \sigma'} \nabla h_2 \|_{L^{\infty}
  (\mathbbm{R}^2)},
\end{eqnarray*}
as well as the spaces
\[ \mathcal{E}_{\otimes, \sigma}^{\infty} \assign \{ \psi \in C^2
   (\mathbbm{R}^2, \mathbbm{C}), \| \psi \|_{\otimes, \sigma, \infty} < +
   \infty, \forall (x_1, x_2) \in \mathbbm{R}^2, \psi (x_1, x_2) = \psi (-
   x_1, x_2) \}, \]
and
\[ \mathcal{E}_{\otimes \otimes, \sigma'}^{\infty} \assign \{ h \in C^1
   (\mathbbm{R}^2, \mathbbm{C}), \| h \|_{\otimes \otimes, \sigma', \infty} <
   + \infty, \forall (x_1, x_2) \in \mathbbm{R}^2, h (x_1, x_2) = h (- x_1,
   x_2) \} . \]
The norms $\| . \|_{\otimes, \sigma, \infty}$ and $\| . \|_{\ast, \sigma}$
differ only on $\{ \tilde{r} \leqslant 3 \}$, and $\mathcal{E}_{\otimes,
\sigma}^{\infty}$ has one less symmetry than $\mathcal{E}_{\ast, \sigma}$, but
they are equivalents at infinity in position. Same remarks hold for $\| .
\|_{\otimes \otimes, \sigma', \infty}$ and $\| . \|_{\ast \ast, \sigma'}$ and
their associated spaces. Remark that if $\chi \geqslant 0$ is a smooth cutoff
function with value $0$ on $\{ \tilde{r} \leqslant R / 2 \}$ and $1$ on $\{
\tilde{r} \geqslant R \}$, then for any $\sigma \in \mathbbm{R}$,
\begin{equation}
  \| \psi \|_{\ast, \sigma} \leqslant K (R, \sigma) \| V \psi \|_{C^2 (\{
  \tilde{r} \leqslant R \})} + K \| \chi \psi \|_{\otimes, \sigma, \infty} .
  \label{P3zelda}
\end{equation}
\begin{lemma}
  \label{P3starest}Given $1 > \sigma' > \sigma > 0$, there exists $K_1
  (\sigma, \sigma') > 0$ such that, for any $h \in
  \mathcal{E}^{\infty}_{\otimes \otimes, \sigma'}$ with $\int_{\mathbbm{R}^2}
  \mathfrak{I}\mathfrak{m} (h) = 0$ and $0 < c < 1$, there exists a unique
  solution to the problem
  \[ - i c \partial_{x_2} \psi - \Delta \psi + 2\mathfrak{R}\mathfrak{e}
     (\psi) = h, \]
  in $\mathcal{E}^{\infty}_{\otimes, \sigma}$. This solution $\psi \in
  \mathcal{E}^{\infty}_{\otimes, \sigma}$ satisfies
  \[ \| \psi \|_{\otimes, \sigma, \infty} \leqslant K_1 (\sigma, \sigma') \| h
     \|_{\otimes \otimes, \sigma', \infty} . \]
  Furthermore, if instead $\sigma \in] - 1, 0 [$ and $1 > \sigma' > \sigma$,
  there exists then $K_2 (\sigma, \sigma') > 0$ such that, for any $h \in
  \mathcal{E}^{\infty}_{\otimes \otimes, \sigma'}$ with $\forall (x_1, x_2)
  \in \mathbbm{R}^2, h (x_1, x_2) = \overline{h (x_1, - x_2)}$, there exists a
  unique solution to the problem
  \[ - i c \partial_{x_2} \psi - \Delta \psi + 2\mathfrak{R}\mathfrak{e}
     (\psi) = h \]
  in $\{ \Psi \in \mathcal{E}^{\infty}_{\otimes, \sigma}, \forall (x_1, x_2)
  \in \mathbbm{R}^2, \Psi (x_1, x_2) = \overline{\Psi (x_1, - x_2)} \}$. This
  solution $\psi \in \mathcal{E}^{\infty}_{\otimes, \sigma}$ satisfies
  \[ \| \psi \|_{\otimes, \sigma, \infty} \leqslant K_2 (\sigma, \sigma') \| h
     \|_{\otimes \otimes, \sigma', \infty} . \]
\end{lemma}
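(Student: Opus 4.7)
My plan is to follow the formal Fourier-side construction sketched just before the lemma, obtaining $\psi$ as a superposition of convolutions with the Gross--Pitaevskii kernels $K_0, K_j, L_{j,k}$ and the Riesz kernels $R_{j,k}$, and controlling each term via Lemmas \ref{P3kerest1} and \ref{P3kerest2}. Concretely, I would set $\psi_1 \assign K_0 \ast h_1 + c H$, where $H$ is the primitive (vanishing at infinity, with the required symmetry) of the vector field $(K_1 \ast h_2, K_2 \ast h_2)$; this vector field is curl-free by the Fourier identity $\xi_1 \widehat{K_2}(\xi) = \xi_2 \widehat{K_1}(\xi)$, so $H$ is defined by line integration along a ray going to infinity away from both vortex centers. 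Similarly $\psi_2$ is built as a primitive of $G - c K \ast h_1$, where each $G_j$ is itself the primitive of $(c^2 L_{j,k} - R_{j,k}) \ast h_2$. The hypothesis $\int_{\mathbbm{R}^2} \mathfrak{I}\mathfrak{m}(h) = 0$ is precisely what allows Lemma \ref{P3kerest1} to apply to the $h_2$-convolutions in the regime $\alpha = 2+\sigma' \in (2,3)$: the even $x_1$-symmetry of $h_2$ comes from $h \in \mathcal{E}^{\infty}_{\otimes\otimes, \sigma'}$ and the zero-mean is exactly this hypothesis.

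Next I would establish the weighted bounds. Lemma \ref{P3kerest1} applied to $K_0 \ast h_1$ with $\alpha = 1+\sigma' < 2$ yields a pointwise bound of order $(1+\tilde r)^{-(1+\sigma)}$, and integration along a ray of the gradient bound $|\nabla H| \leqslant C(1+\tilde r)^{-(2+\sigma)}$ gives the same for $c H$; for $\psi_2$ the Riesz-kernel contributions are controlled by Lemma \ref{P3kerest2}, which accounts for the weaker weight $(1+\tilde r)^{-\sigma}$ on $\psi_2$. The one-power improvement in the decay of $\nabla \psi_1$ (resp.\ $\nabla \psi_2$) over $\psi_1$ (resp.\ $\psi_2$) is obtained by passing a derivative onto $h_j$, exploiting the faster decay $|\nabla h_j| \leqslant C(1+\tilde r)^{-(2+\sigma')}$ together with a careful check of the symmetry/zero-mean hypotheses of Lemma \ref{P3kerest1} (or of the alternative half-plane condition described in the remark following it). The second-derivative bounds are then recovered by differentiating the PDE once and invoking interior Schauder estimates on balls of radius one, on which $\tilde r$ varies by a bounded factor. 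Finally, the equation itself is verified by a Fourier argument: by construction the Fourier transform of $-i c \partial_{x_2}\psi - \Delta\psi + 2 \mathfrak{R}\mathfrak{e}(\psi) - h$ is supported at $\xi = 0$, so this quantity is a polynomial, and the weighted decay of $\psi$ and $h$ forces it to vanish identically.

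Uniqueness follows by the same Fourier argument applied to the homogeneous problem: a homogeneous solution in the space is a polynomial, and in the first part ($\sigma > 0$) the positive decay rates of both $\psi_1$ and $\psi_2$ rule out every nonzero polynomial. For the second statement, where $\sigma \in (-1, 0)$ and the new symmetry $h(x_1,x_2) = \overline{h(x_1,-x_2)}$ automatically yields $\int_{\mathbbm{R}^2} h_2 = 0$ (since $h_2$ is then odd in $x_2$), the existence proof runs identically with minor bookkeeping for the extended range of $\sigma'$; uniqueness is more subtle because $\psi_2$ may grow at infinity and the homogeneous equation admits polynomial solutions such as $\psi = i c_0$ and $\psi = i x_2 - c/2$, but the imposed symmetry $\psi_2(x_1,x_2) = -\psi_2(x_1,-x_2)$ forces $\psi_2(x_1,0) = 0$ and, combined with the constraints from the $\otimes$-norm, excludes every such polynomial. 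The principal technical obstacles I anticipate are the rigorous construction of the primitives $H$ and $G_j$ (requiring the curl-free identity together with integrability of the gradient along rays avoiding both vortex centers) and obtaining the one-power improvement from $\psi_j$ to $\nabla \psi_j$, which forces the derivative-onto-source trick to be carried out direction by direction with the appropriate symmetry bookkeeping.
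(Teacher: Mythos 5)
Your construction coincides with the paper's: the first derivatives of $\psi_1$ and the second derivatives of $\psi_2$ are defined as convolutions with $K_0$, $K_j$, $L_{j,k}$, $R_{j,k}$, bounded via Lemmas \ref{P3kerest1} and \ref{P3kerest2}, the functions themselves are recovered by integration from infinity using the curl-free identities and the vanishing of the integrals over vertical lines, and both the verification of the equation and uniqueness rest on the Fourier-support-at-the-origin argument. You also correctly identify the two delicate points (construction of the primitives, and the derivative-onto-source trick).

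However, at the hardest sub-step the tool you propose does not apply. To get the decay $(1+\tilde{r})^{-(2+\sigma)}$ for $\partial_{x_1}\psi_1$ you must estimate $K_0\ast\partial_{x_1}h_1$ (and $K_j\ast\partial_{x_1}h_1$ inside $\nabla^2\psi_2$) in the regime $\alpha=2+\sigma'\in(2,3)$ of Lemma \ref{P3kerest1}, which requires evenness in $x_1$ and zero mean; $\partial_{x_1}h_1$ is odd in $x_1$, so this fails. The fallback you invoke — the half-plane condition $\int_{\{x_1\geqslant0\}}=\int_{\{x_1\leqslant0\}}=0$ from the remark — fails as well, since
\[ \int_{\{y_1\geqslant0\}}\partial_{x_1}h_1(y)\,dy=-\int_{\mathbbm{R}}h_1(0,y_2)\,dy_2, \]
which is not zero in general. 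The paper's proof needs a dedicated cancellation here: it isolates the residual contribution $K(x-d\vec{e}_1)\int_{\{y_1\geqslant0\}}\partial_{x_1}h_1+K(x+d\vec{e}_1)\int_{\{y_1\leqslant0\}}\partial_{x_1}h_1$, bounds the line integral by $c^{\sigma}\|h\|_{\otimes\otimes,\sigma',\infty}$, and bounds the kernel difference $|K(x+d\vec{e}_1)-K(x-d\vec{e}_1)|$ by $K d^{\sigma}(1+\tilde{r})^{-(2+\sigma)}$ via interpolation between a near-field bound and a mean-value bound, the product being controlled because $cd$ is bounded. Without this, the claimed decay of $\nabla\psi_1$, hence membership of $\psi$ in $\mathcal{E}^{\infty}_{\otimes,\sigma}$, is not established. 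A secondary point: the components $(c^2L_{j,k}-R_{j,k})\ast h_2-cK_j\ast\partial_{x_k}h_1$ of the field whose primitive gives $\nabla\psi_2$ are only continuous (Lemma \ref{P3kerest2} already consumes $\nabla h_2$), so path-independence cannot be obtained from the classical Poincar\'e lemma for $C^1$ fields; the paper mollifies first and passes to the limit. Both issues are repairable, but they are the substance of the proof rather than bookkeeping.
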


The case $\sigma \in] - 1, 0 [$ is particular and such a norm will be used
only in the proof of Lemma \ref{funky} (if $\| \psi \|_{\otimes, \sigma,
\infty} < + \infty$ for $\sigma < 0$, the function $\psi$ is not necessarily
bounded for instance). Remark that the condition $\int_{\mathbbm{R}^2}
\mathfrak{I}\mathfrak{m} (h) = 0$ is automatically satisfied if $\forall (x_1,
x_2) \in \mathbbm{R}^2, h (x_1, x_2) = \overline{h (x_1, - x_2)}$.

\begin{proof}
  For $1 > \sigma' > \sigma > - 1$, we write in real and imaginary parts $h =
  h_1 + i h_2$. We define, for $j \in \{ 1, 2 \}$,
  \[ \Psi_{1, j} \assign K_0 \ast \partial_{x_j} h_1 + c K_j \ast h_2 . \]
  If $1 > \sigma' > \sigma > 0$, since $\partial_{x_j} h_1, h_2 \in L^1
  (\mathbbm{R}^2)$ (because $\sigma' > 0$ and $h \in
  \mathcal{E}^{\infty}_{\otimes \otimes, \sigma'}$), and $\int_{\mathbbm{R}^2}
  h_2 = \int_{\mathbbm{R}^2} \partial_{x_2} h_1 = 0$, by Lemma \ref{P3kerest1}
  (applied with $0 < \alpha = 2 + \sigma' < 3$, $0 < \alpha' = 2 + \sigma <
  \alpha$), the function $\Psi_{1, 2}$ is well defined and satisfies
  \[ | \nabla \Psi_{1, 2} | + | \Psi_{1, 2} | \leqslant \frac{K (\sigma,
     \sigma') \| h \|_{\otimes \otimes, \sigma', \infty}}{(1 + \tilde{r})^{2 +
     \sigma}} . \]
  This result still holds if $\sigma \in] - 1, 0 [$ and $1 > \sigma' >
  \sigma$, since $0 < \alpha = 2 + \sigma' < 3, 0 < \alpha' = 2 + \sigma <
  \alpha$. We check, still with Lemma \ref{P3kerest1} (applied with $0 <
  \alpha = 2 + \sigma' < 3$, $0 < \alpha' = 2 + \sigma < \alpha$), that
  $\Psi_{1, 1}$ is well defined and
  \[ | \nabla \Psi_{1, 1} | \leqslant \frac{K (\sigma, \sigma') \| h
     \|_{\otimes \otimes, \sigma', \infty}}{(1 + \tilde{r})^{2 + \sigma}} . \]
  If $\sigma \in] - 1, 0 [$, we have $| \Psi_{1, 1} | \leqslant \frac{K
  (\sigma, \sigma') \| h \|_{\otimes \otimes, \sigma', \infty}}{(1 +
  \tilde{r})^{2 + \sigma}}$ by Lemma \ref{P3kerest1} ($2 + \sigma < 2$). But
  since $\partial_{x_1} h_1$ is not even in $x_1$, we can not apply Lemma
  \ref{P3kerest1} to estimate $\Psi_{1, 1}$ with the same decay in the case
  $\sigma > 0$. However, following the proof of Lemma \ref{P3kerest1}, we
  check that the estimate holds if $| x + d \vec{e}_1 | \leqslant 1$ or $| x -
  d \vec{e}_1 | \leqslant 1$, and that otherwise
  \[ | \Psi_{1, 1} | \leqslant \frac{K (\sigma, \sigma') \| h \|_{\otimes
     \otimes, \sigma', \infty}}{(1 + \tilde{r})^{2 + \sigma}} + \left| K (x +
     d \vec{e}_1) \int_{\{ y_1 \leqslant 0 \}} \partial_{x_1} h (y) d y + K (x
     - d \vec{e}_1) \int_{\{ y_1 \geqslant 0 \}} \partial_{x_1} h (y) d y
     \right| . \]
  Since
  \[ \int_{\{ y_1 \leqslant 0 \}} \partial_{x_1} h (y) d y = - \int_{\{ y_1
     \geqslant 0 \}} \partial_{x_1} h (y) d y = \int_{\mathbbm{R}} h (0, y_2)
     d y_2, \]
  and
  \[ \left| \int_{\mathbbm{R}} h (0, y_2) d y_2 \right| \leqslant
     \int_{\mathbbm{R}} \frac{\| h \|_{\otimes \otimes, \sigma', \infty}}{(1 +
     \tilde{r})^{1 + \sigma'}} d y_2 \leqslant c^{\sigma} \| h \|_{\otimes
     \otimes, \sigma', \infty} \int_{\mathbbm{R}} \frac{d y_2}{(1 + | y_2
     |)^{1 + \sigma' - \sigma}} \leqslant K (\sigma, \sigma') c^{\sigma} \| h
     \|_{\otimes \otimes, \sigma', \infty}, \]
  we have
  \begin{eqnarray*}
    &  & \left| K (x + d \vec{e}_1) \int_{\{ y_1 \leqslant 0 \}}
    \partial_{x_1} h (y) d y + K (x - d \vec{e}_1) \int_{\{ y_1 \geqslant 0
    \}} \partial_{x_1} h (y) d y \right|\\
    & \leqslant & K (\sigma, \sigma') | K (x + d \vec{e}_1) - K (x - d
    \vec{e}_1) | c^{\sigma} \| h \|_{\otimes \otimes, \sigma', \infty} .
  \end{eqnarray*}
  By Theorem \ref{P3gravejatftw}, if $| x + d \vec{e}_1 |, | x - d \vec{e}_1 |
  \geqslant 1$,
  \[ | K (x + d \vec{e}_1) - K (x - d \vec{e}_1) | \leqslant \frac{K}{(1 + | x
     + d \vec{e}_1 |)^2} + \frac{K}{(1 + | x - d \vec{e}_1 |)^2} \leqslant
     \frac{K}{(1 + \tilde{r})^2}, \]
  and, if $\tilde{r} \leqslant 3 d$,
  \[ | K (x + d \vec{e}_1) - K (x - d \vec{e}_1) | \leqslant \frac{K}{(1 +
     \tilde{r})^2} \leqslant \frac{K d}{(1 + \tilde{r})^3}, \]
  or if $\tilde{r} \geqslant 3 d$,
  \[ | K (x + d \vec{e}_1) - K (x - d \vec{e}_1) | \leqslant K d \sup_{\nu \in
     [- d, d]} | \nabla K (x + \nu \vec{e}_1) | \leqslant \frac{K d}{(1 +
     \tilde{r})^3}, \]
  therefore, by interpolation,
  \[ | K (x + d \vec{e}_1) - K (x - d \vec{e}_1) | \leqslant \left(
     \frac{K}{(1 + \tilde{r})^2} \right)^{1 - \sigma} \times \left( \frac{K
     d}{(1 + \tilde{r})^3} \right)^{\sigma} \leqslant \frac{K d^{\sigma}}{(1 +
     \tilde{r})^{2 + \sigma}} . \]
  We deduce
  \begin{eqnarray*}
    &  & \left| K (x + d \vec{e}_1) \int_{\{ y_1 \leqslant 0 \}}
    \partial_{x_1} h (y) d y + K (x - d \vec{e}_1) \int_{\{ y_1 \geqslant 0
    \}} \partial_{x_1} h (y) d y \right|\\
    & \leqslant & K (\sigma, \sigma') | K (x + d \vec{e}_1) - K (x - d
    \vec{e}_1) | c^{\sigma} \| h \|_{\otimes \otimes, \sigma', \infty}\\
    & \leqslant & \frac{K (\sigma, \sigma')  (d c)^{\sigma}}{(1 +
    \tilde{r})^{2 + \sigma}} \| h \|_{\otimes \otimes, \sigma', \infty}\\
    & \leqslant & \frac{K (\sigma, \sigma')}{(1 + \tilde{r})^{2 + \sigma}} \|
    h \|_{\otimes \otimes, \sigma', \infty} .
  \end{eqnarray*}
  Combining the previous estimates, we conclude that, for $j \in \{ 1, 2 \}$,
  \begin{equation}
    | \nabla \Psi_{1, j} | + | \Psi_{1, j} | \leqslant \frac{K (\sigma,
    \sigma') \| h \|_{\otimes \otimes, \sigma', \infty}}{(1 + \tilde{r})^{2 +
    \sigma}} . \label{P3zelda1}
  \end{equation}

  Let us show that $\Psi_{1, j} \in C^1 (\mathbbm{R}^2, \mathbbm{C})$ by
  dominated convergence theorem (it is not clear at this point that $\nabla
  \Psi_{1, j}$ is continuous). For $x, \varepsilon \in \mathbbm{R}^2$,
  \begin{eqnarray*}
    \nabla \Psi_{1, j} (x + \varepsilon) - \nabla \Psi_{1, j} (x) & = &
    \int_{\mathbbm{R}^2} \nabla K_0 (y) (\partial_{x_j} h_1 (x + \varepsilon -
    y) - \partial_{x_j} h_1 (x - y)) d y,\\
    & + & c \int_{\mathbbm{R}^2} \nabla K_j (y) (h_2 (x + \varepsilon - y) -
    h_2 (x - y)) d y.
  \end{eqnarray*}
  We check that for any $y \in \mathbbm{R}^2$, $\partial_{x_j} h_1 (x +
  \varepsilon - y) - \partial_{x_j} h_1 (x - y) \rightarrow 0, h_2 (x +
  \varepsilon - y) - h_2 (x - y) \rightarrow 0$ pointwise when $| \varepsilon
  | \rightarrow 0$ (by continuity of $\partial_{x_j} h_1$ and $h_2$), and
  \begin{eqnarray*}
    &  & | \nabla K_0 (y) (\partial_{x_j} h_1 (x + \varepsilon - y) -
    \partial_{x_j} h_1 (x - y)) |\\
    & + & c | \nabla K_j (y) (h_2 (x + \varepsilon - y) - h_2 (x - y)) |\\
    & \leqslant & K (\sigma) \frac{| \nabla K_0 (y) |}{(1 + \tilde{r} (x -
    y))^{2 + \sigma'}} \| \partial_{x_j} h_1 (1 + \tilde{r})^{2 + \sigma'}
    \|_{L^{\infty} (\mathbbm{R}^2)}\\
    & + & K (\sigma) \frac{c | \nabla K_j (y) |}{(1 + \tilde{r} (x - y))^{2 +
    \sigma'}} \| h_2 (1 + \tilde{r})^{2 + \sigma'} \|_{L^{\infty}
    (\mathbbm{R}^2)}\\
    & \leqslant & K (\sigma, x) \frac{| \nabla K_0 (y) |}{(1 + \tilde{r}
    (y))^{2 + \sigma'}} \| \partial_{x_j} h_1 (1 + \tilde{r})^{2 + \sigma'}
    \|_{L^{\infty} (\mathbbm{R}^2)}\\
    & + & K (\sigma, x) \frac{c | \nabla K_j (y) |}{(1 + \tilde{r} (y))^{2 +
    \sigma'}} \| h_2 (1 + \tilde{r})^{2 + \sigma'} \|_{L^{\infty}
    (\mathbbm{R}^2)} \in L^1 (\mathbbm{R}^2)
  \end{eqnarray*}
  for $| \varepsilon | \leqslant 1$ and a constant $K (\sigma, x) > 0$, giving
  the domination.
  
  \
  
  Now, we check, by taking their Fourier transforms, that $\partial_{x_1}
  \Psi_{1, 2} = \partial_{x_2} \Psi_{1, 1} \in L^2 (\mathbbm{R}^2,
  \mathbbm{C})$ (see the computations at the beginning of subsection
  \ref{gpkernel}), and thus the integral of the vector field
  $\left(\begin{array}{c}
    \Psi_{1, 1}\\
    \Psi_{1, 2}
  \end{array}\right)$ on any closed curve of $\mathbbm{R}^2$ is $0$. For a
  large constant $D > 0$, taking, for $x_1 \in \mathbbm{R}$, the path
  \[ \{ (x_1, y), y \in [- D, D] \} \cup \{ Y = (y_1, y_2) \in \mathbbm{R}^2,
     | (x_1, 0) - Y | = D, y_1 \geqslant 0 \}, \]
  since $| \Psi_{1, 2} | \leqslant \frac{K (\sigma, \sigma') \| h \|_{\otimes
  \otimes, \sigma', \infty}}{(1 + \tilde{r})^{2 + \sigma}}$ and
  \[ \int_{\{ Y = (y_1, y_2) \in \mathbbm{R}^2, | (x_1, 0) - Y | = D, y_1
     \geqslant 0 \}} | \Psi_{1, 2} | \leqslant \frac{K (c, \sigma, \sigma',
     h)}{D^{1 + \sigma}} \rightarrow 0 \]
  when $D \rightarrow \infty$ (since $1 + \sigma > 0$), we deduce that
  \begin{equation}
    \int_{- \infty}^{+ \infty} \Psi_{1, 2} (x_1, y_2) d y_2 = 0.
    \label{P33429}
  \end{equation}
  We then define for $(x_1, x_2) \in \mathbbm{R}^2$,
  \[ \psi_1 (x_1, x_2) = \int_{+ \infty}^{x_2} \Psi_{1, 2} (x_1, y_2) d y_2,
  \]
  and thus, if $x_2 < 0$,
  \[ \psi_1 (x_1, x_2) = \int_{- \infty}^{x_2} \Psi_{1, 2} (x_1, y_2) d y_2 .
  \]
  With (\ref{P3zelda1}), we check that $\psi_1 \in C^1 (\mathbbm{R}^2,
  \mathbbm{C})$, and by simple integration from infinity using the equations
  above (with $\tilde{r} = \min (| x - d_c \overrightarrow{e_1} |, | x + d_c
  \overrightarrow{e_1} |)$, and since $1 + \sigma > 0$), that
  \[ | \psi_1 | \leqslant \frac{K (\sigma, \sigma') \| h \|_{\otimes \otimes,
     \sigma', \infty}}{(1 + \tilde{r})^{1 + \sigma}} . \]
  Furthermore, we check that
  \[ \partial_{x_2} \psi_1 = \Psi_{1, 2} \in C^1 (\mathbbm{R}^2, \mathbbm{C}),
  \]
  and (by taking their Fourier transforms)
  \[ \partial_{x_1} \psi_1 = \Psi_{1, 1} \in C^1 (\mathbbm{R}^2, \mathbbm{C}),
  \]
  therefore $\psi_1 \in C^2 (\mathbbm{R}^2, \mathbbm{C})$, and by
  (\ref{P3zelda1}),
  \[ | \nabla \psi_1 | \leqslant | \Psi_{1, 1} | + | \Psi_{1, 2} | \leqslant
     \frac{K (\sigma, \sigma') \| h \|_{\otimes \otimes, \sigma', \infty}}{(1
     + \tilde{r})^{2 + \sigma}} . \]
  For $j, k \in \{ 1, 2 \}$, we have $\partial^2_{x_j x_k} \psi_1 =
  \partial_{x_j} \Psi_{1, k}$, thus, by (\ref{P3zelda1}),
  \[ | \nabla^2 \psi_1 | \leqslant \frac{K (\sigma, \sigma') \| h \|_{\otimes
     \otimes, \sigma', \infty}}{(1 + \tilde{r})^{2 + \sigma}} . \]

  Now, we define
  \[ \Psi_{2, j, k} \assign (c^2 L_{j, k} - R_{j, k}) \ast h_2 - c K_j \ast
     \partial_{x_k} h_1 . \]
  In the case $1 > \sigma' > \sigma > 0$, $\partial_{x_k} h_1, h_2 \in L^1
  (\mathbbm{R}^2)$ and since $\int_{\mathbbm{R}^2} h_2 = \int_{\mathbbm{R}^2}
  \partial_{x_k} h_1 = 0$, by Lemmas \ref{P3kerest1} and \ref{P3kerest2} (for
  $\alpha = 2 + \sigma' < 3$, $\alpha' = 2 + \sigma < \alpha$, and the same
  variant for $K_j \ast \partial_{x_1} h_1$ as in the proof of
  (\ref{P3zelda1})), this function is well defined in $L^{\infty}
  (\mathbbm{R}^2, \mathbbm{C})$, and satisfies,
  \begin{equation}
    | \Psi_{2, j, k} | \leqslant \frac{K (\sigma, \sigma') \| h \|_{\otimes
    \otimes, \sigma', \infty}}{(1 + \tilde{r})^{2 + \sigma}} . \label{P335299}
  \end{equation}
  We check, as for the proof of (\ref{P3zelda1}), that this result holds if
  $\sigma \in] - 1, 0 [$ and $1 > \sigma' > \sigma$.
  
  Remark here that we do not have $\Psi_{2, j, k} \in C^1 (\mathbbm{R}^2,
  \mathbbm{C})$, since in Lemma \ref{P3kerest2}, the estimate on $R_{j, k}
  \ast h_2$ uses $\nabla h_2$ in the norm (showing that $\Psi_{2, j, k} \in
  C^1 (\mathbbm{R}^2, \mathbbm{C})$ would require estimates on $\nabla^2
  h_2$). However, we have that $\Psi_{2, j, k} \in C^0 (\mathbbm{R}^2,
  \mathbbm{C})$ by dominated convergence and continuity of $h_2$ and
  $\partial_{x_k} h_1$ (as for $\nabla \Psi_{1, j}$). Furthermore, we check
  (by taking their Fourier transforms) that $\partial_{x_1} \Psi_{2, j, 2} =
  \partial_{x_2} \Psi_{2, j, 1}$ in the distribution sense. We infer that the
  integral of $\left(\begin{array}{c}
    \Psi_{2, j, 1}\\
    \Psi_{2, j, 2}
  \end{array}\right)$ on any bounded closed curve of $\mathbbm{R}^2$ is $0$.
  Indeed, taking $\chi_n$ a mollifier sequence, then $\chi_n \ast \Psi_{2, j,
  1}, \chi_n \ast \Psi_{2, j, 2} \in C^1 (\mathbbm{R}^2, \mathbbm{C})$,
  \[ \partial_{x_1} (\chi_n \ast \Psi_{2, j, 2}) - \partial_{x_2} (\chi_n \ast
     \Psi_{2, j, 1}) = \chi_n \ast (\partial_{x_1} \Psi_{2, j, 2} -
     \partial_{x_2} \Psi_{2, j, 1}) = 0, \]
  therefore, for any closed curve $\mathcal{C}$, the integral of the field
  $\left(\begin{array}{c}
    \chi_n \ast \Psi_{2, j, 1}\\
    \chi_n \ast \Psi_{2, j, 2}
  \end{array}\right)$ is $0$. Using $\chi_n \ast \Psi_{2, j, k} \rightarrow
  \Psi_{2, j, k}$ pointwise (by continuity of $\Psi_{2, j, k}$) and the
  domination
  \[ \| \chi_n \ast \Psi_{2, j, 1} \|_{L^{\infty} (\mathbbm{R}^2)} \leqslant
     \| \Psi_{2, j, 1} \|_{L^{\infty} (\mathbbm{R}^2)} < + \infty, \]
  we infer that this result holds for $\left(\begin{array}{c}
    \Psi_{2, j, 1}\\
    \Psi_{2, j, 2}
  \end{array}\right)$. We deduce, as for the proof of (\ref{P33429}), that
  \begin{equation}
    \int_{- \infty}^{+ \infty} \Psi_{2, j, 2} (x_1, y_2) d y_2 = 0.
    \label{P335299re}
  \end{equation}

  We then define for $(x_1, x_2) \in \mathbbm{R}^2, j \in \{ 1, 2 \}$,
  \[ \Psi_{2, j} (x_1, x_2) = \int_{+ \infty}^{x_2} \Psi_{2, j, 2} (x_1, y_2)
     d y_2, \]
  and if $x_2 < 0$, by (\ref{P335299re}),
  \[ \Psi_{2, j} (x_1, x_2) = \int_{- \infty}^{x_2} \Psi_{2, j, 2} (x_1, y_2)
     d y_2 . \]
  With arguments similar to the proof for $\Psi_{1, j}$, we check that
  $\Psi_{2, j} \in C^1 (\mathbbm{R}^2, \mathbbm{C})$ with $\partial_{x_k}
  \Psi_{2, j} = \Psi_{2, j, k}$,
  \[ | \Psi_{2, j} | \leqslant \frac{K (\sigma, \sigma') \| h \|_{\otimes
     \otimes, \sigma', \infty}}{(1 + \tilde{r})^{1 + \sigma}}, \]
  as well as
  \[ | \nabla \Psi_{2, j} | \leqslant \frac{K (\sigma, \sigma') \| h
     \|_{\otimes \otimes, \sigma', \infty}}{(1 + \tilde{r})^{2 + \sigma}} . \]
  Finally, since $\partial_{x_1} \Psi_{2, 2} = \Psi_{2, 2, 1} = \Psi_{2, 1, 2}
  = \partial_{x_2} \Psi_{2, 1} \in L^2 (\mathbbm{R}^2, \mathbbm{C})$ (by
  taking their Fourier transforms, it follows from $R_{j, k} = R_{k, j}$,
  $L_{j, k} = L_{k, j}$ and $\hat{K}_j \xi_k = \hat{K}_k \xi_j$), we have, as
  before, that
  \[ \int_{- \infty}^{+ \infty} \Psi_{2, 2} (x_1, y_2) d y_2 = 0. \]
  We define

  \[ \psi_2 (x_1, x_2) = \int_{+ \infty}^{x_2} \Psi_{2, 2} (x_1, y_2) d y_2,
  \]
  and thus, if $x_2 < 0$,
  \[ \psi_2 (x_1, x_2) = \int_{- \infty}^{x_2} \nobracket \Psi_{2, 2} (x_1,
     y_2 \nobracket) d y_2 . \]
  We check, as previously, by integration from infinity, that $\psi_2 \in C^2
  (\mathbbm{R}^2, \mathbbm{C})$, $\partial^2_{x_j x_k} \psi_2 = \Psi_{2, j,
  k}$, and
  \[ | \nabla^2 \psi_2 | \leqslant \frac{K (\sigma, \sigma') \| h \|_{\otimes
     \otimes, \sigma', \infty}}{(1 + \tilde{r})^{2 + \sigma}}, \]
  \[ | \nabla^{} \psi_2 | \leqslant \frac{K (\sigma, \sigma') \| h \|_{\otimes
     \otimes, \sigma', \infty}}{(1 + \tilde{r})^{1 + \sigma}}, \]
  as well as (if $\sigma > 0$)
  \[ | \psi_2 | \leqslant \frac{K (\sigma, \sigma') \| h \|_{\otimes \otimes,
     \sigma', \infty}}{(1 + \tilde{r})^{\sigma}} . \]
  Remark that if $h$ satisfies $\forall (x_1, x_2) \in \mathbbm{R}^2, h (x_1,
  x_2) = \overline{h (x_1, - x_2)}$, then by the definition of $\psi_1$ and
  $\psi_2$ above, for $\psi = \psi_1 + i \psi_2$, we have that $\forall (x_1,
  x_2) \in \mathbbm{R}^2, \psi (x_1, x_2) = \overline{\psi (x_1, - x_2)}$.
  Therefore, in the case $\sigma \in] - 1, 0 [$, since $\forall (x_1, x_2) \in
  \mathbbm{R}^2, \psi_2 (x_1, x_2) = - \psi_2 (x_1, - x_2)$, we have $\psi_2
  (x_1, 0) = 0$, and we integrate $\nabla \psi_2$ from the line $\{ x_2 = 0
  \}$ instead of infinity to show that $| \psi_2 | \leqslant \frac{K (\sigma,
  \sigma') \| h \|_{\otimes \otimes, \sigma', \infty}}{(1 +
  \tilde{r})^{\sigma}}$.
  
  \
  
  We deduce that, in either cases, $\psi = \psi_1 + i \psi_2 \in
  \mathcal{E}_{\otimes, \sigma}^{\infty}$, and it satisfies
  \[ \| \psi \|_{\otimes, \sigma, \infty} \leqslant K (\sigma, \sigma') \| h
     \|_{\otimes \otimes, \sigma', \infty} . \]
  Now, let us show that $- i c \partial_{x_2} \psi - \Delta \psi +
  2\mathfrak{R}\mathfrak{e} (\psi) = h$. From the computations at the
  beginning of subsection \ref{gpkernel}, we check that the Fourier transform
  (in the distribution sense) of both side of the equation are equals on $\{
  \xi \in \mathbbm{R}^2, \xi \neq 0 \}$ (remark that they are both in $L^p
  (\mathbbm{R}^2, \mathbbm{C})$ for some $p > 2$ large enough). This implies
  that
  \[ \tmop{Supp} \left( \widehat{- i c \partial_{x_2} \psi - \Delta \psi +
     2\mathfrak{R}\mathfrak{e} (\psi) - h} \right) \subset \{ 0 \}, \]
  and thus $- i c \partial_{x_2} \psi - \Delta \psi +
  2\mathfrak{R}\mathfrak{e} (\psi) - h = P \in \mathbbm{C} [X_1, X_2]$. With
  the decay estimates on $\psi$ and $h$, we check that $P$ is bounded and goes
  to $0$ at infinity (since $\sigma, \sigma' > - 1$), thus $P = 0$.
  
  Finally, if $\tilde{\psi} \in \mathcal{E}^{\infty}_{\otimes, \sigma}$
  satisfies $- i c \partial_{x_2} \tilde{\psi} - \Delta \tilde{\psi} +
  2\mathfrak{R}\mathfrak{e} (\tilde{\psi}) = h$, then $\psi - \tilde{\psi} \in
  C^2 (\mathbbm{R}^2, \mathbbm{C})$ and
  \[ (- i c \partial_{x_2} - \Delta + 2\mathfrak{R}\mathfrak{e}) (\psi -
     \tilde{\psi}) = 0. \]
  With the computations at the beginning of subsection \ref{gpkernel}, since
  $\psi - \tilde{\psi}$ is a tempered distribution, we check that $\tmop{Supp}
  \widehat{\psi - \tilde{\psi}} \subset \{ 0 \}$, therefore $\psi -
  \tilde{\psi} = P \in \mathbbm{C} [X_1, X_2]$. If $\sigma > 0$, since $\psi -
  \tilde{\psi}$ goes to $0$ at infinity, $P = 0$. If $\sigma \in] - 1, 0 [$,
  then $P = i \lambda$ for some $\lambda \in \mathbbm{R}$
  ($\mathfrak{R}\mathfrak{e} (\psi - \tilde{\psi}) \rightarrow 0$ at infinity
  and $\tilde{r}^{- \sigma} \mathfrak{I}\mathfrak{m} (\psi - \tilde{\psi})$ is
  bounded), and by the symmetry on $\psi, \tilde{\psi}$ we have in that case,
  $\lambda = 0$. This shows the uniqueness of a solution in
  $\mathcal{E}^{\infty}_{\otimes, \sigma}$ (with the symmetry if $\sigma \in]
  - 1, 0 [$), and thus concludes the proof of this lemma.
\end{proof}

\subsection{ Reduction of the problem}

\subsubsection{Inversion of the linearized operator}\label{2512809}

One of the key element in the inversion of the linearized operator is the
computation of the kernel for only one vortex. The kernel of the linearized
operator around one vortex has been studied in {\cite{DP}}, with the following
result.

\begin{theorem}[Theorem 1.2 of {\cite{DP}}]
  \label{delpino}Consider the linearized operator around one vortex of degree
  $\varepsilon = \pm 1$,
  \[ L_{V_{\varepsilon}} (\Phi) \assign - \Delta \Phi - (1 - | V_{\varepsilon}
     |^2) \Phi + 2\mathfrak{R}\mathfrak{e} (\overline{V_{\varepsilon}} \Phi)
     \overline{V_{\varepsilon}} . \]
  Suppose that
  \[ \| \Phi \|_{H_{V_{\varepsilon}}}^2 \assign \int_{\mathbbm{R}^2} | \nabla
     \Phi |^2 + (1 - | V_{\varepsilon} |^2) | \Phi |^2
     +\mathfrak{R}\mathfrak{e}^2 (\overline{V_{\varepsilon}} \Phi) < + \infty
  \]
  and
  \[ L_{V_{\varepsilon}} (\Phi) = 0. \]
  Then, there exist two constants $c_1, c_2 \in \mathbbm{R}$ such that
  \[ \Phi = c_1 \partial_{x_1} V_{\varepsilon} + c_2 \partial_{x_2}
     V_{\varepsilon} . \]
\end{theorem}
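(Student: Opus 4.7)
The plan is to exploit the translational and rotational symmetries of the linearized problem around a single vortex. First, by differentiating the stationary equation $\Delta V_\varepsilon + (1 - |V_\varepsilon|^2)V_\varepsilon = 0$ in $x_1$ and $x_2$, one sees directly that $\partial_{x_1}V_\varepsilon, \partial_{x_2}V_\varepsilon \in \ker L_{V_\varepsilon}$, and using Lemma \ref{lemme3} one verifies that both have finite $\|\cdot\|_{H_{V_\varepsilon}}$ norm: the cancellation $\mathfrak{R}\mathfrak{e}(\overline{V_\varepsilon}\partial_{x_j}V_\varepsilon) = \tfrac{1}{2}\partial_{x_j}|V_\varepsilon|^2 = O(1/r^3)$ and the decay $1-|V_\varepsilon|^2 = O(1/r^2)$ exactly compensate the non-integrable $|\partial_{x_j}V_\varepsilon| \simeq 1/r$ in the weighted integrals. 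It remains to show that these two functions span the entire kernel.

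The natural route is a Fourier decomposition. Writing $\Phi = V_\varepsilon \psi$ away from the origin (as in the computation leading to Lemma \ref{lemma7}), the equation $L_{V_\varepsilon}\Phi = 0$ becomes
\[
-\Delta\psi - 2\tfrac{\nabla V_\varepsilon}{V_\varepsilon}\cdot\nabla\psi + 2|V_\varepsilon|^2\,\mathfrak{R}\mathfrak{e}(\psi) = 0,
\]
with $\nabla V_\varepsilon/V_\varepsilon = (\rho_1'/\rho_1)\vec{e}_r + (i\varepsilon/r)\vec{e}_\theta$ in polar coordinates centered at the vortex. Expanding $\mathfrak{R}\mathfrak{e}(\psi)$ and $\mathfrak{I}\mathfrak{m}(\psi)$ in Fourier series in $\theta$, the equation decouples mode by mode into a family of second-order ODE systems for the radial profiles $(a_n(r), b_n(r))$, indexed by $n \in \mathbb{Z}$. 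The finite-energy hypothesis, translated into integrability conditions on each $(a_n, b_n)$, together with smoothness of $V_\varepsilon\psi$ at the origin, fixes boundary conditions at $r = 0$ and $r = +\infty$.

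The core of the proof is then a mode-by-mode ODE analysis. For $n = 0$ the system reduces to a scalar radial problem which admits no non-trivial finite-energy solution (a Sturmian/energy argument using the asymptotics from Lemma \ref{lemme3}). For $|n|=1$ one obtains a two-dimensional solution space, which one identifies with the span of $\partial_{x_1}V_\varepsilon,\partial_{x_2}V_\varepsilon$ via the explicit formula from Lemma \ref{dervor}:
\[
\psi_{\partial_{x_1}V_\varepsilon} = \frac{\rho_1'}{\rho_1}\cos\theta - \frac{i\varepsilon}{r}\sin\theta,
\]
together with its $\partial_{x_2}$-analog. For $|n|\geq 2$ one shows that no non-trivial finite-energy solution exists, and assembling the modes yields the claim.

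The main obstacle is the analysis for $|n| \geq 2$: the term $2|V_\varepsilon|^2\mathfrak{R}\mathfrak{e}(\psi)$ couples $a_n$ and $b_n$ in a non-trivial way, so the two radial equations cannot be treated separately. The standard trick is to multiply by $\overline{\psi_n}$, integrate, and exploit the pointwise positivity of $2|V_\varepsilon|^2 - n^2/r^2$ for large $r$ when $|n|\geq 2$, combined with indicial analysis near $r = 0$ (using $\rho_1 \sim \kappa r$ from Lemma \ref{lemme3}) to exclude singular branches; the finite-energy constraint then forces the only compatible solution to be zero. This weighted energy identity for the higher modes is the technically delicate part of the argument.
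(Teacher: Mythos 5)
First, a point of reference: the paper does not prove this statement at all --- it is imported verbatim as Theorem 1.2 of {\cite{DP}}, so the ``paper's own proof'' is the citation. Your overall architecture (exhibit $\partial_{x_1}V_{\varepsilon},\partial_{x_2}V_{\varepsilon}$ as explicit finite-energy kernel elements, write $\Phi=V_{\varepsilon}\psi$, decompose in Fourier modes in $\theta$, and eliminate each mode by a radial analysis) is indeed the strategy of that reference and of Mironescu's earlier work, so the plan is the right one. Note also that the $\mathfrak{R}\mathfrak{e}(\psi)$ term couples the $e^{in\theta}$ and $e^{-in\theta}$ components of $\psi$, so the decomposition must be organized in coupled pairs (equivalently, cosine/sine expansions of the real and imaginary parts), producing coupled ODE systems rather than scalar equations; your $(a_n,b_n)$ notation suggests you intend this, but the later positivity argument has to be run on the coupled system.

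The genuine gaps are in the two hard mode classes. For $|n|\geqslant 2$, the claimed mechanism --- ``pointwise positivity of $2|V_{\varepsilon}|^2-n^2/r^2$ for large $r$'' --- does not discriminate between modes and proves nothing: that quantity is positive at infinity for \emph{every} $n$ (since $|V_{\varepsilon}|\rightarrow 1$ and $n^2/r^2\rightarrow 0$) and negative near the origin for every $n\neq 0$ (since $|V_{\varepsilon}|^2\sim\kappa^2r^2$). The argument that works is a comparison of quadratic forms: if $Q_n$ denotes the energy restricted to the $n$-th mode, then $Q_n(u)\geqslant Q_1(u)+(n^2-1)\int_0^{\infty}|u|^2r^{-2}\,r\,dr$, and $Q_1\geqslant 0$ because the mode-one system admits the nonvanishing solution coming from $\partial_{x_j}V_{\varepsilon}$ (a Jacobi-field/Sturm comparison); the strictly positive remainder then forces $u\equiv 0$. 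For $|n|=1$, the assertion that ``one obtains a two-dimensional solution space, which one identifies with the translations'' is precisely the nondegeneracy to be proved, not an argument: the mode-one system has a four-dimensional solution space, and reducing the admissible (finite-energy, regular at $r=0$) solutions to the two translation modes is the delicate step. It is delicate exactly because $\partial_{x_j}V_{\varepsilon}\sim 1/r$, so $\int|\Phi|^2/r^2$ diverges logarithmically on the kernel, the naive Hardy inequality fails, and {\cite{DP}} must substitute the refined Hardy-type inequality that gives their paper its title. As written, your proposal asserts the conclusion in the $|n|=1$ case and uses an inoperative positivity claim in the $|n|\geqslant 2$ case, so the sketch does not yet constitute a proof.
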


This result describes the kernel of $L_{V_{\varepsilon}}$ that will appear in
the proof of Proposition \ref{invertop}. It shows that the kernel in
$H_{V_{\varepsilon}} \assign \left\{ \Phi \in H_{\tmop{loc}}^1
(\mathbbm{R}^2), \| \Phi \|_{H_{V_{\varepsilon}}} < + \infty \right\}$
contains only the two elements we expect: $\partial_{x_1} V_{\varepsilon},
\partial_{x_2} V_{\varepsilon}$, which are due to the invariance by
translation of $(\tmop{GP})$. One of the directions will be killed by the
symmetry and the other one by the orthogonality.

Now, we shall invert the linear part $\eta L (\Phi) + (1 - \eta) V L' (\Psi)$.
We recall that $\Phi = V \Psi$. We first state an a priori estimate result. We
recall the definitions, for $\sigma, \sigma' \in] 0, 1 [$,
\[ \mathcal{E}_{\ast, \sigma, d} = \]
\[ \{ \Phi = V \Psi \in C^2 (\mathbbm{R}^2, \mathbbm{C}), \| \Psi \|_{\ast,
   \sigma, d} < + \infty ; \langle \Phi, Z_d \rangle = 0 ; \forall x \in
   \mathbbm{R}^2, \Psi (x_1, x_2) = \overline{\Psi (x_1, - x_2)} = \Psi (-
   x_1, x_2) \} \]
and
\[ \mathcal{E}_{\ast \ast, \sigma', d} = \{ V h \in C^1 (\mathbbm{R}^2,
   \mathbbm{C}), \| h \|_{\ast \ast, \sigma', d} < + \infty \}, \]
with
\begin{eqnarray*}
  \| \Psi \|_{\ast, \sigma, d} & = & \| V \Psi \|_{C^2 (\{ \tilde{r} \leqslant
  3 \})}\\
  & + & \| \tilde{r}^{1 + \sigma} \Psi_1 \|_{L^{\infty} (\{ \tilde{r}
  \geqslant 2 \})} + \| \tilde{r}^{2 + \sigma} \nabla \Psi_1 \|_{L^{\infty}
  (\{ \tilde{r} \geqslant 2 \})} + \| \tilde{r}^{2 + \sigma} \nabla^2 \Psi_1
  \|_{L^{\infty} (\{ \tilde{r} \geqslant 2 \})}\\
  & + & \| \tilde{r}^{\sigma} \Psi_2 \|_{L^{\infty} (\{ \tilde{r} \geqslant 2
  \})} + \| \tilde{r}^{1 + \sigma} \nabla \Psi_2 \|_{L^{\infty} (\{ \tilde{r}
  \geqslant 2 \})} + \| \tilde{r}^{2 + \sigma} \nabla^2 \Psi_2 \|_{L^{\infty}
  (\{ \tilde{r} \geqslant 2 \})},\\
  &  & \\
  \| h \|_{\ast \ast, \sigma', d} & = & \| V h \|_{C^1 (\{ \tilde{r} \leqslant
  3 \})}\\
  & + & \| \tilde{r}^{1 + \sigma'} h_1 \|_{L^{\infty} (\{ \tilde{r} \geqslant
  2 \})} + \| \tilde{r}^{2 + \sigma'} \nabla h_1 \|_{L^{\infty} (\{ \tilde{r}
  \geqslant 2 \})}\\
  & + & \| \tilde{r}^{2 + \sigma'} h_2 \|_{L^{\infty} (\{ \tilde{r} \geqslant
  2 \})} + \| \tilde{r}^{2 + \sigma'} \nabla h_2 \|_{L^{\infty} (\{ \tilde{r}
  \geqslant 2 \})} .
\end{eqnarray*}
\begin{proposition}
  \label{invertop}For $1 > \sigma' > \sigma > 0$, consider the problem, in the
  distribution sense
  \[ \left\{ \begin{array}{l}
       \eta L (\Phi) + (1 - \eta) V L' (\Psi) = V h\\
       \Phi = V \Psi \in \mathcal{E}_{\ast, \sigma}, V h \in \mathcal{E}_{\ast
       \ast, \sigma'} .
     \end{array} \right. \]
  Then, there exist constants $c_0 (\sigma, \sigma') > 0$ small and $C
  (\sigma, \sigma') > 0$ depending only on $\sigma$ and $\sigma'$, such that,
  for any solution of this problem with $0 < c \leqslant c_0 (\sigma,
  \sigma')$, $\frac{1}{2} < c d < 2$, it holds
  \[ \| \Psi \|_{\ast, \sigma, d} \leqslant C (\sigma, \sigma') \| h \|_{\ast
     \ast, \sigma', d} . \]
\end{proposition}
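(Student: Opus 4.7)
The plan is a contradiction argument of Lyapunov--Schmidt type. If the conclusion fails, there exist sequences $c_n \to 0$, $d_n$ with $c_n d_n \in (1/2, 2)$ (so $d_n \to \infty$), and $\Psi_n \in \mathcal{E}_{\ast,\sigma,d_n}$, $h_n$ satisfying the equation with $\|\Psi_n\|_{\ast,\sigma,d_n} = 1$ and $\|h_n\|_{\ast\ast,\sigma',d_n} \to 0$. The aim is to show, along a subsequence, $\|\Psi_n\|_{\ast,\sigma,d_n} \to 0$, contradicting the normalisation. The argument splits into an inner analysis near each vortex centre and an outer analysis in the exterior region, glued through the decomposition (\ref{P3zelda}).

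For the inner analysis, translate around $V_1$ by $\tilde\Phi_n(y) \assign \Phi_n(y + d_n \overrightarrow{e_1})$. The bound $\|V\Psi_n\|_{C^2(\{\tilde r\leq 3\})}\leq 1$ coupled with interior elliptic regularity for the equation $L(\Phi_n)=Vh_n$ (which holds where $\eta = 1$) gives $C^2_{\tmop{loc}}$ compactness. By Lemma~\ref{infest}, the coefficients of $L$ translated around $V_1$ converge to those of $L_{V_1}$, and $c_n\partial_{x_2}V$, $Vh_n$ vanish locally; a limit $\tilde\Phi_\infty^{(1)}$ therefore solves $L_{V_1}(\tilde\Phi_\infty^{(1)})=0$, and the weighted decay in $\|\Psi_n\|_{\ast,\sigma,d_n}$ ensures $\tilde\Phi_\infty^{(1)} \in H_{V_1}$. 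Theorem~\ref{delpino} then yields $\tilde\Phi_\infty^{(1)} = a_1 \partial_{x_1}V_1 + b_1\partial_{x_2}V_1$, with an analogous expression near $V_{-1}$. The symmetry $\Psi_n(x_1,x_2)=\overline{\Psi_n(x_1,-x_2)}$ passes to the limit and, since $\partial_{x_2}V_{\pm 1}$ has the opposite $x_2$-parity, forces $b_1=b_{-1}=0$; the symmetry $\Psi_n(-x_1,x_2)=\Psi_n(x_1,x_2)$ couples the two vortex limits, fixing $a_{-1}$ in terms of $a_1$. Finally, the orthogonality $\langle \Phi_n, Z_{d_n}\rangle=0$, together with $Z_{d_n}=\partial_d V\,(\tilde\eta(4r_1)+\tilde\eta(4r_{-1}))$ and $\partial_d V = -\partial_{x_1}V_1\,V_{-1}+\partial_{x_1}V_{-1}\,V_1$, becomes in the limit (using Lemma~\ref{infest} for $|V_{\pm 1}|\to 1$ away from their centres) an equation of the form $a_1\int_{\mathbbm{R}^2}|\partial_{x_1}V_1|^2=0$; hence $a_1=0$, both inner limits vanish, and $\Phi_n\to 0$ in $C^2_{\tmop{loc}}$ around each vortex.

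For the outer analysis, fix $R$ large and a smooth cutoff $\chi$ equal to $0$ on $\{\tilde r\leq R\}$ and $1$ on $\{\tilde r\geq 2R\}$. On $\{\tilde r\geq R\}$ where $\eta=0$, the equation $VL'(\Psi_n)=Vh_n$ rewrites as
\[ (-ic_n\partial_{x_2} - \Delta + 2\mathfrak{R}\mathfrak{e})\Psi_n = h_n + 2(1-|V|^2)\mathfrak{R}\mathfrak{e}(\Psi_n) + 2\frac{\nabla V}{V}\cdot\nabla\Psi_n. \]
Multiplying by $\chi$ gives
\[ (-ic_n\partial_{x_2}-\Delta+2\mathfrak{R}\mathfrak{e})(\chi\Psi_n) = \mathcal{H}_n, \]
where $\mathcal{H}_n$ collects $\chi h_n$, the two lower-order perturbations above, and the commutator $-[\Delta,\chi]\Psi_n - ic_n(\partial_{x_2}\chi)\Psi_n$ supported in $\{R\leq\tilde r\leq 2R\}$ and small by the inner step. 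Lemma~\ref{nonmodV} gives $(1-|V|^2)=O(\tilde r^{-2})$ and $\nabla V/V=O(\tilde r^{-1})$, so these terms contribute to $\|\mathcal{H}_n\|_{\ast\ast,\sigma',d_n}$ a factor $R^{\sigma-\sigma'}$ times $\|\Psi_n\|_{\ast,\sigma,d_n}$, small when $R$ is large since $\sigma'>\sigma$. The symmetries on $\Psi_n$ give $\int_{\mathbbm{R}^2}\mathfrak{I}\mathfrak{m}(\mathcal{H}_n)=0$, so Lemma~\ref{P3starest} applies and yields $\|\chi\Psi_n\|_{\otimes,\sigma,\infty}\leq K(\sigma,\sigma')\|\mathcal{H}_n\|_{\otimes\otimes,\sigma',\infty}$. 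Combining with (\ref{P3zelda}) and the inner step and letting $n\to\infty$, we obtain $\|\Psi_n\|_{\ast,\sigma,d_n}\to 0$, the required contradiction.

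The principal obstacle will be the absorption step in the outer analysis: the terms $(1-|V|^2)\mathfrak{R}\mathfrak{e}(\Psi)$ and $(\nabla V/V)\cdot\nabla\Psi$ are borderline perturbations of $-ic\partial_{x_2}-\Delta+2\mathfrak{R}\mathfrak{e}$, admissible as sources in Lemma~\ref{P3starest} only thanks to the strict inequality $\sigma'>\sigma$ which produces the small factor $R^{\sigma-\sigma'}$. One must choose $R$ so that this factor beats the constant $K(\sigma,\sigma')$, and then $c_0(\sigma,\sigma')$ small enough that the inner smallness dominates the remaining outer error. A secondary delicate point is the passage to the limit in the orthogonality: the slow decay of $\partial_d V$ is precisely the reason for the cutoff in the definition of $Z_d$, which ensures the dominant contributions localise near the two vortex centres.
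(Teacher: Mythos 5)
Your overall strategy is the paper's: a compactness/contradiction argument with an inner step (translate to each vortex, extract a limit in the kernel of $L_{V_1}$, kill it by Theorem~\ref{delpino} plus the symmetries and the orthogonality against $Z_d$) and an outer step (cut off, reduce to $-ic\partial_{x_2}-\Delta+2\mathfrak{R}\mathfrak{e}$, invert by Lemma~\ref{P3starest}, glue with (\ref{P3zelda})). The inner step as you describe it is sound. However, there is a genuine gap in your outer absorption, precisely at the term $\frac{\nabla V}{V}\cdot\nabla\Psi_n$. You bound it using only $|\nabla V/V|=O(\tilde r^{-1})$ together with $|\nabla\Psi_n|=O(\tilde r^{-(1+\sigma)})$ (the imaginary part of $\nabla\Psi_n$ is the slow one), which gives $|\frac{\nabla V}{V}\cdot\nabla\Psi_n|=O(\tilde r^{-(2+\sigma)})$. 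But the target norm $\|\cdot\|_{\otimes\otimes,\sigma',\infty}$ demands that the \emph{imaginary part} of the source decay like $\tilde r^{-(2+\sigma')}$ with $\sigma'>\sigma$, i.e.\ strictly faster than your bound provides: $\tilde r^{2+\sigma'}\cdot\tilde r^{-(2+\sigma)}=\tilde r^{\sigma'-\sigma}\to\infty$. So with only the crude bound the term is not even admissible as a source, let alone small. Closing this requires the structural fact (Lemma~\ref{lemme3}) that the leading order of $\nabla V_{\pm1}/V_{\pm1}$ is purely imaginary, so that $\mathfrak{R}\mathfrak{e}(\nabla V/V)=O(\tilde r^{-3})$ while $\mathfrak{I}\mathfrak{m}(\nabla V/V)=O(\tilde r^{-1})$; one then decomposes
\[ \mathfrak{I}\mathfrak{m}\left(\frac{\nabla V}{V}\cdot\nabla\tilde\Psi\right)=\mathfrak{I}\mathfrak{m}\left(\frac{\nabla V}{V}\right)\cdot\mathfrak{R}\mathfrak{e}(\nabla\tilde\Psi)+\mathfrak{R}\mathfrak{e}\left(\frac{\nabla V}{V}\right)\cdot\mathfrak{I}\mathfrak{m}(\nabla\tilde\Psi), \]
and each product pairs a slowly decaying factor with a fast one, giving $O(\tilde r^{-(3+\sigma)})$ and hence a factor $R^{\sigma'-\sigma-1}=o_{R\to\infty}(1)$ against the weight $\tilde r^{2+\sigma'}$. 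This anisotropy between real and imaginary parts is the whole reason the weighted spaces are set up the way they are, and it cannot be bypassed by modulus bounds.

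A smaller but related point: you attribute the smallness of the perturbation terms to "the strict inequality $\sigma'>\sigma$ which produces the small factor $R^{\sigma-\sigma'}$". After the correct decomposition, the gains are of the form $R^{\sigma'-\sigma-1}$ or $R^{\sigma'-\sigma-2}$, which are small because $\sigma'-\sigma<1$, not because $\sigma'>\sigma$. The strict inequality $\sigma'>\sigma$ is consumed elsewhere, namely inside Lemma~\ref{P3starest} itself: the inversion of $-ic\partial_{x_2}-\Delta+2\mathfrak{R}\mathfrak{e}$ via the Gross--Pitaevskii kernels loses weight (the convolution estimates of Lemma~\ref{P3kerest1} require $\alpha'<\alpha$), so one must feed it a source with exponent $\sigma'$ to recover a solution with exponent $\sigma$.
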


\begin{proof}
  This proof is similar to the ones done in {\cite{DP2}} for the inversion of
  their linearized operator. The main difference is that we have a transport
  term. Fix $1 > \sigma' > \sigma > 0$. We argue by contradiction. Suppose
  that for given $1 > \sigma' > \sigma > 0$, there is no threshold $c_0
  (\sigma, \sigma') > 0$ such that, if $0 < c \leqslant c_0 (\sigma, \sigma')$
  we have $\| \Psi \|_{\ast, \sigma, d} \leqslant C (\sigma, \sigma') \| h
  \|_{\ast \ast, \sigma', d}$. We can then find a sequence of $c_n \rightarrow
  0$ (and so $d_n \rightarrow \infty$), functions $\Phi_n = V \Psi_n \in
  \mathcal{E}_{\ast, \sigma}$ and $V h_n \in \mathcal{E}_{\ast \ast, \sigma'}$
  solutions of the problem and such that
  \[ \| \Psi_n \|_{\ast, \sigma, d_n} = 1 \nocomma \]
  and
  \[ \| h_n \|_{\ast \ast, \sigma', d_n} \rightarrow 0. \]
  We look in the region $\Sigma \assign \{ x_1 \geqslant 0 \}$ thanks to the
  symetry $\Psi (x_{1,} x_2) = \Psi (- x_1, x_2)$. The orthogonality condition
  of $\mathcal{E}_{\ast, \sigma}$ becomes $2\mathfrak{R}\mathfrak{e}
  \int_{\Sigma} \overline{\Phi_n} Z_{d_n} = 0$.
  
\
  
  \begin{tmindent}
    Step 1.  Inner estimates.
  \end{tmindent}

\  
  
  The problem can be written (using $V L' (\Psi_n) = - (E - i c_n
  \partial_{x_2} V) \Psi_n + L (\Phi_n)$ from Lemma \ref{lemma7}) as
  \[ V h_n = L (\Phi_n) - (1 - \eta) (E - i c_n \partial_{x_2} V) \Psi_n . \]
  First, we recall that $V$ and $E$ are depending on $n$. The sequence
  $(\Phi_n (. + d_n \overrightarrow{e_1}))_{n \in \mathbbm{N}}$ is
  equicontinuous and bounded ($1 = \| \Psi_n \|_{\ast, \sigma, d}$ controls
  $\Phi_n$ and its derivatives in $L^{\infty} (\mathbbm{R}^2)$ uniformly in
  $n$).
  
  Such a function $\Phi_n$, as a solution of
  \begin{equation}
    \Delta \Phi_n = - (1 - | V |^2) \Phi_n + 2\mathfrak{R}\mathfrak{e}
    (\bar{V} \Phi_n) V - i c \partial_{x_2} \Phi_n - (1 - \eta) (E - i c_n
    \partial_{x_2} V) \Psi_n - V h_n \label{jaune}
  \end{equation}
  in the distribution sense, by Theorem 8.8 of {\cite{MR1814364}} is
  $H^2_{\tmop{loc}} (\mathbbm{R}^2)$ (since the right hand side is $C^0
  (\mathbbm{R}^2)$). Furthermore, still by Theorem 8.8 of {\cite{MR1814364}},
  we have, for $x \in \mathbbm{R}^2$,
  \[ \| \Phi_n \|_{H^2 (B (x, 1))} \leqslant K (\| \Phi_n \|_{H^1 (B (x, 2))}
     + \| \Delta \Phi_n \|_{L^2 (B (x, 2))}) . \]
  By $\| \Psi_n \|_{\ast, \sigma, d} = 1$, the quantities $\| \Phi_n
  \|_{L^{\infty} (B (x, 2))}$, $\| \nabla \Phi_n \|_{L^{\infty} (B (x, 2))}$
  and $\| \Delta \Phi_n \|_{L^{\infty} (B (x, 2))}$ are bounded by a constant
  independent of $n$. Therefore, $(\Phi_n)_{n \in \mathbbm{N}}$ is bounded in
  $H^2_{\tmop{loc}} (\mathbbm{R}^2)$.
  
  We deduce, by compact embedding, that there exists a function
  $\Phi_{\nosymbol}$ such that $\Phi_n (. + d_n \overrightarrow{e_1})
  \rightarrow \Phi$ in $H^1_{\tmop{loc}} (\mathbbm{R}^2)$ (up to a
  subsequence).
  
  \
  
  Now, since $L (\Phi_n) = - \Delta \Phi_n - (1 - | V |^2) \Phi_n +
  2\mathfrak{R}\mathfrak{e} (\bar{V} \Phi_n) V - i c \partial_{x_2} \Phi_n$,
  we have, in the weak sense,
  \[ \Delta \Phi_n + V h_n = - (1 - | V |^2) \Phi_n +
     2\mathfrak{R}\mathfrak{e} (\bar{V} \Phi_n) V - i c_n \partial_{x_2}
     \Phi_n - (1 - \eta) (E - i c_n \partial_{x_2} V) \Psi_n, \]
  therefore $\Delta \Phi_n (. + d_n \overrightarrow{e_1}) + V h_n (. + d_n
  \overrightarrow{e_1})$ is equicontinuous and bounded uniformly and then, by
  Ascoli's Theorem, up to a subsequence converges to a limit $l$ in
  $\mathcal{C}^0_{\tmop{loc}} (\mathbbm{R}^2)$. Since $V h_n (. + d_n
  \overrightarrow{e_1}) \rightarrow 0$ in $\mathcal{C}^0_{\tmop{loc}}
  (\mathbbm{R}^2)$ by $\| h_n \|_{\ast \ast, \sigma', d} \rightarrow 0$ and
  $\Delta \Phi_n (. + d_n \overrightarrow{e_1}) \rightarrow \Delta \Phi$ in
  the distribution sense, this limit must be $\Delta \Phi$ (in the $H^{-
  1}_{\tmop{loc}} (\mathbbm{R}^2)$ sense).
  
  \
  
  We have locally uniformly that $V h_n (. + d_n \overrightarrow{e_1})
  \rightarrow 0$ because $\| h_n \|_{\ast \ast, \sigma', d} \rightarrow 0$ and
  $| V | \leqslant 1$, and we have, from Lemma \ref{infest}, that $E (y + d_n
  \overrightarrow{e_1}) \rightarrow 0$ and $V (y + d_n \overrightarrow{e_1})
  \rightarrow V_1 (y)$ when $n \rightarrow \infty$ locally uniformly. Lastly,
  $\partial_{x_2} \Phi_n$ and $(1 - \eta) \partial_{x_2} V \Psi_n$ are
  uniformly bounded in $\mathbbm{R}^2$ independently of $n$. Therefore when we
  take the locally uniform limit when $d_n \rightarrow \infty$ in
  \[ (V h_n) \nobracket (y + d_n \overrightarrow{e_1} \nobracket) = (L
     (\Phi_n)) (y + d_n \overrightarrow{e_1}) - ((1 - \eta) (E - i c_n
     \partial_{x_2} V) \Psi_n) (y + d_n \overrightarrow{e_1}), \]
  we have (in the distribution sense)
  \[ L_{V_1} (\Phi) = 0. \]
  Using $\partial_d V (. + d \overrightarrow{e_1}) \rightarrow -
  \partial_{x_1} V_1 (.)$ locally uniformly from Lemma \ref{infest}, we show
  that
  \[ 0 = 2\mathfrak{R}\mathfrak{e} \int_{\Sigma} \overline{\Phi_n} Z_d
     \rightarrow 2 \langle \nobracket \Phi | \nobracket \tilde{\eta} (. / 4)
     \partial_{x_1} V_1 \rangle \nobracket \]
  since $Z_d$ is compactly supported around $0$ when we take the equation in
  $y + d_n \overrightarrow{e_1}$. The problem at the limit $n \rightarrow
  \infty$ becomes (in the $H^{- 1}_{\tmop{loc}} (\mathbbm{R}^2)$ sense)
  \[ \left\{ \begin{array}{l}
       L_{V_1} (\Phi) = 0\\
       \langle \nobracket \Phi \left| \tilde{\eta} \left( \frac{.}{4} \right)
       \partial_{x_1} V_1 \rangle \nobracket = 0 \right.,
     \end{array} \right. \]
  with $\Phi = V_1 \Psi$ (since $V (y + d \overrightarrow{e_1}) \rightarrow
  V_1 (y)$ from Lemma \ref{infest}).
  
  Let us show that $\| \Phi \|_{H_{V_1}} < + \infty$. For that, we will show
  that
  \[ \int_{B (d_n \overrightarrow{e_1}, d_n^{1 / 2})} | \nabla \Phi_n |^2 +
     \frac{| \Phi_n |^2}{(1 + r_1)^2} +\mathfrak{R}\mathfrak{e}^2
     (\overline{V_1} (. - d_n \overrightarrow{e_1}) \Phi_n) \leqslant K
     (\sigma), \]
  where $K (\sigma) > 0$ is independent of $n$, which shall imply (by Lemma
  \ref{nonmodV})
  \[ \| \Phi \|^2_{H_{V_1}} \leqslant \limsup_{n \rightarrow \infty} \int_{B
     (d_n \overrightarrow{e_1}, d_n^{1 / 2})} | \nabla \Phi_n |^2 + \frac{|
     \Phi_n |^2}{(1 + r_1)^2} +\mathfrak{R}\mathfrak{e} (\overline{V_1} (. -
     d_n \overrightarrow{e_1}) \Phi_n)^2 \leqslant K (\sigma) < + \infty . \]
  First, $\Phi_n \in C^2 (\mathbbm{R}^2)$ hence $\Phi_n \in H_{\tmop{loc}}^1
  (\mathbbm{R}^2)$. We have
  \[ | \nabla \Phi_n |^2 \leqslant 2 | \nabla V_1 |^2 | \Psi_n |^2 + 2 |
     \nabla \Psi_n |^2 | V_1 |^2, \]
  with $| \nabla V_1 |^2 = O_{r_1 \rightarrow \infty} \left( \frac{1}{r_1^2}
  \right) \tmop{by} \tmop{Lemma} \ref{dervor}, \tmop{and}, \tmop{in} B (d_n
  \overrightarrow{e_1}, d_n^{1 / 2}), | \Psi_n |^2 \leqslant \frac{C}{(1 +
  r_1)^{2 \sigma}} \| \Psi_n \|_{\ast, \sigma, d}^2, | \nabla \Psi_n |^2
  \leqslant \frac{C}{(1 + r_1)^{2 + 2 \sigma}} \| \Psi_n \|_{\ast, \sigma,
  d}^2$. Therefore since $\| \Psi_n \|_{\ast, \sigma, d} \leqslant 1$,
  \[ \int_{B (d_n \overrightarrow{e_1}, d_n^{1 / 2})} | \nabla \Phi_n |^2
     \leqslant \int_{B (d_n \overrightarrow{e_1}, d_n^{1 / 2})} \frac{K}{(1 +
     r_1)^{2 + 2 \sigma}} \leqslant K (\sigma) . \]
  In addition, in $B (d_n \overrightarrow{e_1}, d_n^{1 / 2})$, $| \Phi_n |^2 =
  | V_1 |^2 | \Psi_n |^2 \leqslant \frac{K}{(1 + r_1)^{2 \sigma}} \| \Psi_n
  \|_{\ast, \sigma, d_n}^2$ hence $\frac{| \Phi_n |^2}{(1 + r_1)^2} \leqslant
  \frac{K}{(1 + r_1)^{2 + 2 \sigma}}$ and
  \[ \int_{B (d_n \overrightarrow{e_1}, d_n^{1 / 2})} \frac{| \Phi_n |^2}{(1 +
     r_1)^2} \leqslant \int_{B (d_n \overrightarrow{e_1}, d_n^{1 / 2})}
     \frac{K}{(1 + r_1)^{2 + 2 \sigma}} \leqslant K (\sigma) . \]
  Lastly, still in $B (d_n \overrightarrow{e_1}, d_n^{1 / 2})$, by Lemma
  \ref{nonmodV},
  \[ \mathfrak{R}\mathfrak{e} (\overline{V_1} \Phi_n)^2 = | V_1 |^4
     \mathfrak{R}\mathfrak{e} (V_{- 1} \Psi_n)^2 \leqslant | V_1 |^4
     (\mathfrak{R}\mathfrak{e} (\Psi_n)^2 + (1 - | V_{- 1} |^2) | \Psi_n |^2)
     \leqslant \frac{K}{(1 + r_1)^{2 + 2 \sigma}}, \]
  giving the same result. We then have $\| \Phi \|_{H_{V_1}} < + \infty$,
  therefore, we can apply Theorem \ref{delpino}. We deduce that
  \[ \Phi = c_1 \partial_{x_1} V_1 + c_2 \partial_{x_2} V_1 \]
  for some constants $c_1, c_2 \in \mathbbm{R}$.
  
  \
  
  Since $\forall x \in \mathbbm{R}^2, \Psi_n (x_{1,} x_2) = \overline{\Psi_n
  (x_1, - x_2)}$, we have $\forall y \in \mathbbm{R}^2, \Phi (y_1, y_2) =
  \overline{\Phi (y_1, - y_2)}$. The function $\partial_{x_1} V_1$ enjoys also
  this symmetry, therefore so does $c_2 \partial_{x_2} V_1$. It is possible
  only if $c_2 = 0$. The orthogonality condition then imposes
  \[ c_1  \int_{\Sigma \nosymbol} | \partial_{x_1} V_1 (y) |^2 \tilde{\eta}
     \left( \frac{y}{4} \right) d y = 0, \]
  implying that $c_1 = 0$. Hence
  \[ \Phi_n (. + d_n \overrightarrow{e_1}) \rightarrow 0 \]
  $\tmop{in} \mathcal{C}_{\tmop{loc}}^1 (\mathbbm{R}^2)$. By equation
  (\ref{jaune}) and standard elliptic estimates, this convergence also hold in
  $\mathcal{C}_{\tmop{loc}}^2 (\mathbbm{R}^2)$. The same proof works for the
  $z$ coordinate (around the center of the $- 1$ vortex). As a consequence,
  for any $R > 0$, we have
  \begin{equation}
    \| \Phi_n \|_{L^{\infty} (\{ \tilde{r}_{\nosymbol} \leqslant R \})} + \|
    \nabla \Phi_n \|_{L^{\infty} (\{ \tilde{r} \leqslant R \})} + \| \nabla^2
    \Phi_n \|_{L^{\infty} (\{ \tilde{r} \leqslant R \})}
    \rightarrow_{\nosymbol} 0 \label{ie}
  \end{equation}
  as $n \rightarrow \infty$. With this result, to obtain a contradiction
  (which will be $\| \Psi_n \|_{\ast, \sigma, d} \rightarrow 0$) we still need
  to have estimates near the infinity in space.
  
\
  
  \begin{tmindent}
    Step 2.  Outer computations.
  \end{tmindent}
  
 \ 
  
  Thanks to the previous step, we can take a cutoff to look only at the
  infinity in space. For $R \geqslant 4$, we define $\chi_R$ a smooth cutoff
  function with value $\chi_R (x) = 1$ if $\tilde{r} \geqslant R$ and $\chi_R
  (x) = 0$ if $\tilde{r} \leqslant \frac{R}{2}$, with $| \nabla \chi_R |
  \leqslant \frac{4}{R}$. We then define
  \[ \tilde{\Psi}_n \assign \chi_R \Psi_n, \]
  \[ \tilde{h}_n \assign \chi_R h_n \]
  and we choose $\chi_R$ such that $\tilde{\Psi}_n$ and $\tilde{h}_n$ enjoy
  the same symmetries than $\Psi_n$ and $h_n$ respectively. We compute on
  $\mathbbm{R}^2 \backslash (B (d_n \overrightarrow{e_1}, R) \cup B (- d_n
  \overrightarrow{e_1}, R))$:
  \[ \nabla \tilde{\Psi}_n = \nabla \chi_R \Psi_n + \chi_R \nabla \Psi_n =
     \nabla \Psi_n, \]
  \[ \Delta \tilde{\Psi}_n = \Delta \chi_R \Psi_n + 2 \nabla \chi_R \nabla
     \Psi_n + \chi_R \Delta \Psi_n = \Delta \Psi_n . \]
  We deduce that $\tilde{\Psi}_n \in \mathcal{E}^{\infty}_{\otimes, \sigma}$
  and $\tilde{h}_n \in \mathcal{E}^{\infty}_{\otimes \otimes, \sigma'}$ by
  (\ref{P3zelda}), since $\tilde{\Psi}_n \in C^2 (B (d_n \overrightarrow{e_1},
  R) \cup B (- d_n \overrightarrow{e_1}, R), \mathbbm{C})$, $\tilde{h}_n \in
  C^1 (B (d_n \overrightarrow{e_1}, R) \cup B (- d_n \overrightarrow{e_1}, R),
  \mathbbm{C})$ and, outside of $B (d_n \overrightarrow{e_1}, R) \cup B (- d_n
  \overrightarrow{e_1}, R)$, $\tilde{\Psi}_n = \Psi_n$ with $\| \Psi_n
  \|_{\ast, \sigma, d_n} = 1$, as well as $\tilde{h}_n = h_n$, with $\| h_n
  \|_{\ast \ast, \sigma', d_n} \rightarrow 0$ when $n \rightarrow \infty$. In
  particular,
  \[ \| \tilde{h}_n \|_{\otimes \otimes, \sigma', \infty} = o_{n \rightarrow
     \infty}^R (1), \]
  where $o_{n \rightarrow \infty}^R (1)$ is a sequence that, for fixed $R
  \geqslant 4$, goes to $0$ when $n \rightarrow \infty$ (it also depends on
  $\sigma$ and $\sigma'$).
  
  \
  
  Since $\chi_R = 1$ on $\mathbbm{R}^2 \backslash (B (d_n
  \overrightarrow{e_1}, R) \cup B (- d_n \overrightarrow{e_1}, R))$, we have
  there $L' (\tilde{\Psi}_n) = \tilde{h}_n$. Therefore, we can write that in
  $\mathbbm{R}^2$ that $L' (\tilde{\Psi}_n) = \tilde{h}_n + \tmop{Loc}
  (\Psi_n)$, with
  \[ \tmop{Loc} (\Psi_n) \assign - \frac{\chi_R \eta}{V} L (V \Psi_n) + (1 -
     \eta) (L' (\chi_R \Psi_n) - \chi_R L' (\Psi_n)), \]
  a term that is supported in $\mathbbm{R}^2 \backslash (B (d_n
  \overrightarrow{e_1}, R) \cup B (- d_n \overrightarrow{e_1}, R))$. By
  (\ref{ie}) and $\| h_n \|_{\ast \ast, \sigma', d_n} \rightarrow 0$ when $n
  \rightarrow \infty$, it satisfies
  \begin{eqnarray*}
    &  & \| \tmop{Loc} (\Psi_n) \|_{\otimes \otimes, \sigma', \infty}\\
    & \leqslant & K (R) \| \tmop{Loc} (\Psi_n) \|_{C^1 (\mathbbm{R}^2
    \backslash (B (d_n \overrightarrow{e_1}, R) \cup B (- d_n
    \overrightarrow{e_1}, R)))}\\
    & \leqslant & K (R) \| \Phi_n \|_{C^2 (\mathbbm{R}^2 \backslash (B (d_n
    \overrightarrow{e_1}, R) \cup B (- d_n \overrightarrow{e_1}, R)))}\\
    & = & o_{n \rightarrow \infty}^R (1) .
  \end{eqnarray*}
  We recall that $L' (\Psi) = - \Delta \Psi - 2 \frac{\nabla V}{V} . \nabla
  \Psi + 2 | V |^2 \mathfrak{R}\mathfrak{e} (\Psi) - i c \partial_{x_2} \Psi$,
  therefore
  \begin{equation}
    - \Delta \tilde{\Psi}_n - i c \partial_{x_2} \tilde{\Psi}_n +
    2\mathfrak{R}\mathfrak{e} (\tilde{\Psi}_n) = \tilde{h}_n + \tmop{Loc}
    (\Psi_n) + 2 \frac{\nabla V}{V} . \nabla \tilde{\Psi}_n + 2 (1 - | V |^2)
    \mathfrak{R}\mathfrak{e} (\tilde{\Psi}_n) \label{systss} .
  \end{equation}
  We define
  \[ \tilde{h}_n' \assign \tilde{h}_n + \tmop{Loc} (\Psi_n) + 2 \frac{\nabla
     V}{V} . \nabla \tilde{\Psi}_n + 2 (1 - | V |^2) \mathfrak{R}\mathfrak{e}
     (\tilde{\Psi}_n) . \]
  Let us show that $\tilde{h}'_n \in \mathcal{E}^{\infty}_{\otimes \otimes,
  \sigma'}$ with
  \[ \| \tilde{h}_n' \|_{\otimes \otimes, \sigma', \infty} \leqslant
     \nobracket o^R_{n \rightarrow \infty} (1) + o_{R \rightarrow \infty} (1
     \nobracket), \]
  where $o_{R \rightarrow \infty} (1)$ is a quantity that goes to $0$ when $R
  \rightarrow \infty$ (in particular, independently of $n$). By Lemma
  \ref{P3starest}, (the condition $\int_{\mathbbm{R}^2}
  \mathfrak{I}\mathfrak{m} (\tilde{h}_n') = 0$ is a consequence of the
  symmetries on $\tilde{h}_n$ and $\tilde{\Psi}_n$), this would imply, with
  equation (\ref{systss}) (and since $\tilde{\Psi}_n \in
  \mathcal{E}^{\infty}_{\otimes, \sigma}$), that
  \begin{equation}
    \| \tilde{\Psi} \|_{\otimes, \sigma, \infty} \leqslant \nobracket o^R_{n
    \rightarrow \infty} (1) + o_{R \rightarrow \infty} (1 \nobracket) .
    \label{shuffle2}
  \end{equation}
  This estimate has already been done for the terms $\tmop{Loc} (\Psi_n)$ and
  $\tilde{h}_n$. Therefore, we only have to check that
  \[ \left\| 2 \frac{\nabla V}{V} . \nabla \tilde{\Psi}_n + 2 (1 - | V |^2)
     \mathfrak{R}\mathfrak{e} (\tilde{\Psi}_n) \right\|_{\otimes \otimes,
     \sigma', \infty} \leqslant \nobracket o^R_{n \rightarrow \infty} (1) +
     o_{R \rightarrow \infty} (1 \nobracket) . \]

  First, remark that the term $(1 - | V |^2) \mathfrak{R}\mathfrak{e}
  (\tilde{\Psi}_n)$ is real-valued. By Lemma \ref{nonmodV},
  \[ | 1 - | V |^2 | + \nabla (| V |^2) \leqslant \frac{K}{(1 + \tilde{r})^2},
  \]
  and with (\ref{ie}), $\tilde{\Psi}_n = \Psi_n$ in $\{ \tilde{r} \geqslant R
  \}$, $\| \Psi_n \|_{\ast, \sigma} = 1$, $0 < \sigma < \sigma' < 1$,
  \begin{eqnarray*}
    &  & \| (1 + \tilde{r})^{1 + \sigma'} (1 - | V |^2)
    \mathfrak{R}\mathfrak{e} (\tilde{\Psi}_n) \|_{L^{\infty}
    (\mathbbm{R}^2)}\\
    & \leqslant & o_{n \rightarrow \infty}^R (1) + K \left\| \frac{(1 +
    \tilde{r})^{1 + \sigma'}}{(1 + \tilde{r})^{3 + \sigma}}
    \right\|_{L^{\infty} (\{ \tilde{r} \geqslant R \})}\\
    & \leqslant & \nobracket o^R_{n \rightarrow \infty} (1) + o_{R
    \rightarrow \infty} (1 \nobracket)
  \end{eqnarray*}
  and
  \begin{eqnarray*}
    &  & \| (1 + \tilde{r})^{2 + \sigma'} \nabla ((1 - | V |^2)
    \mathfrak{R}\mathfrak{e} (\tilde{\Psi})) \|_{L^{\infty} (\mathbbm{R}^2)}\\
    & \leqslant & \| (1 + \tilde{r})^{2 + \sigma'} \nabla (\nobracket | V
    |^2) \mathfrak{R}\mathfrak{e} (\tilde{\Psi} \nobracket) \|_{L^{\infty}
    (\mathbbm{R}^2)} + \| (1 + \tilde{r})^{2 + \sigma'} \nobracket (1 - | V
    |^2) \mathfrak{R}\mathfrak{e} (\nabla \tilde{\Psi} \nobracket)
    \|_{L^{\infty} (\mathbbm{R}^2)}\\
    & \leqslant & o_{n \rightarrow \infty}^R (1) + K \left( \left\| \frac{(1
    + \tilde{r})^{2 + \sigma'}}{(1 + \tilde{r})^{3 + \sigma}}
    \right\|_{L^{\infty} (\{ \tilde{r} \geqslant R \})} \right)\\
    & \leqslant & \nobracket o^R_{n \rightarrow \infty} (1) + o_{R
    \rightarrow \infty} (1 \nobracket) .
  \end{eqnarray*}
  This concludes the proof of
  \[ \| 2 (1 - | V |^2) \mathfrak{R}\mathfrak{e} (\tilde{\Psi}_n) \|_{\otimes
     \otimes, \sigma', \infty} \leqslant \nobracket o^R_{n \rightarrow \infty}
     (1) + o_{R \rightarrow \infty} (1 \nobracket) . \]
  Now, we compute
  \[ \frac{\nabla V}{V} (x) = \frac{\nabla V_1}{V_1} (y) + \frac{\nabla V_{-
     1}}{V_{- 1}} (z), \]
  and recall, by Lemma \ref{lemme3}, that $\nabla V_{\varepsilon} (x) = i
  \varepsilon V_{\varepsilon} (x) \frac{x^{\bot}}{| x |^2} + O \left(
  \frac{1}{r^3} \right)$ for $\varepsilon = \pm 1$. We deduce that, far from
  the vortices (for instance on $\mathbbm{R}^2 \backslash (B (d
  \overrightarrow{e_1}, 4) \cup B (- d \overrightarrow{e_1}, 4))$), we have
  \[ \frac{\nabla V}{V} (x) = i \left( \frac{y^{\bot}}{r_1^2} -
     \frac{z^{\bot}}{r_{- 1}^2} \right) + O_{r_1 \rightarrow \infty} \left(
     \frac{1}{r_1^3} \right) + O_{r_{- 1} \rightarrow \infty} \left(
     \frac{1}{r_{- 1}^3} \right) . \]
  In particular, the first order of $\frac{\nabla V}{V}$ is purely imaginary,
  and the next term is of order $\frac{1}{r_1^3} + \frac{1}{r_{- 1}^3}$. We
  check in particular, using Lemma \ref{nonmodV}, that on $\mathbbm{R}^2
  \backslash (B (d \overrightarrow{e_1}, 4) \cup B (- d \overrightarrow{e_1},
  4))$,
  \begin{eqnarray*}
    &  & \left| (1 + \tilde{r}) \mathfrak{I}\mathfrak{m} \left( \frac{\nabla
    V}{V} \right) \right| + \left| (1 + \tilde{r})^3 \mathfrak{R}\mathfrak{e}
    \left( \frac{\nabla V}{V} \right) \right|\\
    & + & \left| (1 + \tilde{r})^2 \nabla \mathfrak{I}\mathfrak{m} \left(
    \frac{\nabla V}{V} \right) \right| + \left| (1 + \tilde{r})^3 \nabla
    \mathfrak{R}\mathfrak{e} \left( \frac{\nabla V}{V} \right) \right|\\
    & \leqslant & K.
  \end{eqnarray*}
  Therefore, with $R \geqslant 4$, equation (\ref{ie}), $\tilde{\Psi}_n =
  \Psi_n$ in $\{ \tilde{r} \geqslant R \}$, $\| \Psi_n \|_{\ast, \sigma} = 1$
  and $0 < \sigma < \sigma' < 1$,
  \begin{eqnarray*}
    &  & \left\| (1 + \tilde{r})^{1 + \sigma'} \mathfrak{R}\mathfrak{e}
    \left( \frac{\nabla V}{V} . \nabla \tilde{\Psi} \right)
    \right\|_{L^{\infty} (\mathbbm{R}^2)}\\
    & \leqslant & o^R_{n \rightarrow \infty} (1) + K \left\| \frac{(1 +
    \tilde{r})^{1 + \sigma'}}{(1 + \tilde{r})^{2 + \sigma}}
    \right\|_{L^{\infty} (\{ \tilde{r} \geqslant R \})}\\
    & \leqslant & \nobracket o^R_{n \rightarrow \infty} (1) + o_{R
    \rightarrow \infty} (1 \nobracket),
  \end{eqnarray*}
  \begin{eqnarray*}
    &  & \left\| (1 + \tilde{r})^{2 + \sigma'} \nabla
    \mathfrak{R}\mathfrak{e} \left( \frac{\nabla V}{V} . \nabla \tilde{\Psi}
    \right) \right\|_{L^{\infty} (\mathbbm{R}^2)}\\
    & \leqslant & o^R_{n \rightarrow \infty} (1) + K \left\| \frac{(1 +
    \tilde{r})^{2 + \sigma'}}{(1 + \tilde{r})^{3 + \sigma}}
    \right\|_{L^{\infty} (\{ \tilde{r} \geqslant R \})}\\
    & \leqslant & \nobracket o^R_{n \rightarrow \infty} (1) + o_{R
    \rightarrow \infty} (1 \nobracket),
  \end{eqnarray*}
  \begin{eqnarray*}
    &  & \left\| (1 + \tilde{r})^{2 + \sigma'} \mathfrak{I}\mathfrak{m}
    \left( \frac{\nabla V}{V} . \nabla \tilde{\Psi} \right)
    \right\|_{L^{\infty} (\mathbbm{R}^2)}\\
    & \leqslant & \left\| (1 + \tilde{r})^{2 + \sigma'}
    \mathfrak{I}\mathfrak{m} \left( \frac{\nabla V}{V} \right)
    .\mathfrak{R}\mathfrak{e} (\nabla \tilde{\Psi}) \right\|_{L^{\infty}
    (\mathbbm{R}^2)} + \left\| (1 + \tilde{r})^{2 + \sigma'}
    \mathfrak{R}\mathfrak{e} \left( \frac{\nabla V}{V} \right)
    .\mathfrak{I}\mathfrak{m} (\nabla \tilde{\Psi}) \right\|_{L^{\infty}
    (\mathbbm{R}^2)}\\
    & \leqslant & o^R_{n \rightarrow \infty} (1) + K \left\| \frac{(1 +
    \tilde{r})^{2 + \sigma'}}{(1 + \tilde{r})^{3 + \sigma}}
    \right\|_{L^{\infty} (\{ \tilde{r} \geqslant R \})} + K \left\| \frac{(1 +
    \tilde{r})^{2 + \sigma'}}{(1 + \tilde{r})^{3 + \sigma}}
    \right\|_{L^{\infty} (\{ \tilde{r} \geqslant R \})}\\
    & \leqslant & \nobracket o^R_{n \rightarrow \infty} (1) + o_{R
    \rightarrow \infty} (1 \nobracket),
  \end{eqnarray*}
  and, with a similar decomposition,
  \[ \left\| (1 + \tilde{r})^{2 + \sigma'} \nabla \mathfrak{I}\mathfrak{m}
     \left( \frac{\nabla V}{V} . \nabla \tilde{\Psi} \right)
     \right\|_{L^{\infty} (\mathbbm{R}^2)} \leqslant \nobracket o^R_{n
     \rightarrow \infty} (1) + o_{R \rightarrow \infty} (1 \nobracket) . \]
  This conclude the proof of $\left\| 2 \frac{\nabla V}{V} . \nabla
  \tilde{\Psi} \right\|_{\otimes \otimes, \sigma', \infty} \leqslant
  \nobracket o^R_{n \rightarrow \infty} (1) + o_{R \rightarrow \infty} (1
  \nobracket),$ and thus of (\ref{shuffle2}).
 
 \ 
  
  \begin{tmindent}
    Step 3.  Conclusion.
  \end{tmindent}
 
 \ 
  
  We have $\| \Psi_n \|_{\ast, \sigma, d_n} \leqslant K (R) \| \Phi_n
  \|_{C^{^2} (\{ \tilde{r}_{\nosymbol} \leqslant R \})} + K \| \tilde{\Psi}_n
  \|_{\ast, \sigma, d_n}$ by (\ref{P3zelda}), therefore, with equations
  (\ref{ie}) and (\ref{shuffle2}),
  \[ \| \Psi_n \|_{\ast, \sigma, d_n} \leqslant \nobracket o^R_{n \rightarrow
     \infty} (1) + o_{R \rightarrow \infty} (1 \nobracket) . \]
  If we take $R$ large enough (depending on $\sigma, \sigma'$) so that $o_{R
  \rightarrow \infty} (1) \leqslant 1 / 10$ and then $n$ large enough
  (depending on $R, \sigma$ and $\sigma'$) so that $o^R_{n \rightarrow \infty}
  (1) \leqslant 1 / 10$, we have, for $n$ large, $\| \Psi_n \|_{\ast, \sigma,
  d_n} \leqslant 1 / 5$, which is in contradiction with
  \[ \| \Psi_n \|_{\ast, \sigma, d_n} = 1. \]
\end{proof}

\subsubsection{Existence of a solution}

At this point, we do not have existence of a solution to the linear problem
\[ \left\{ \begin{array}{l}
     \eta L (\Phi) + (1 - \eta) V L' (\Psi) = V h\\
     \Phi \in \mathcal{E}_{\ast, \sigma}, V h \in \mathcal{E}_{\ast \ast,
     \sigma'},
   \end{array} \right. \]
only an a priori estimate. The existence of a solution is done in Proposition
\ref{p2191802}, its proof being the purpose of this subsection. In
{\cite{DP2}}, the existence proof is done using mainly the fact that the
domain is bounded. We provide here a proof of existence by approximation on
balls of large radii for a particular Hilbertian norm. Given $c > 0$ and $a >
10 / c^2$, we define
\[ H_a \assign \]
\[ \left\{ \Phi = Q_c \Psi \in H^1_{\tmop{loc}} (B (0, a)), \| \Phi \|^2_{H_a}
   \assign \| \Phi \|^2_{H^1 (\{ \tilde{r} \leqslant 3 \})} + \int_{\{
   \tilde{r} \geqslant 2 \} \cap \{ r \leqslant a \}} | \nabla \Psi |^2
   +\mathfrak{R}\mathfrak{e}^2 (\Psi) + \frac{\mathfrak{I}\mathfrak{m}^2
   (\Psi)}{(1 + r)^{5 / 2}} \right\}, \]
and we also allow $a = + \infty$. We first state a result on functions in
$H_{\infty}$.

\begin{lemma}
  \label{funky}There exists $c_0 > 0$ such that, for $0 < c < c_0$, $0 <
  \sigma < \sigma' < 1$, $V h \in \mathcal{E}_{\ast \ast, \sigma'}$, if a
  function $\Phi \in H_{\infty} \cap C^1 (\mathbbm{R}^2)$ satisfies, in the
  weak sense,
  \[ \eta L (\Phi) + (1 - \eta) V L' (\Psi) = V h, \]
  and $\Phi = V \Psi$, $\langle V \Psi, Z_d \rangle = 0 ; \forall x \in
  \mathbbm{R}^2, \Psi (x_1, x_2) = \overline{\Psi (x_1, - x_2)} = \Psi (- x_1,
  x_2)$, then
  \[ \Phi \in \mathcal{E}_{\ast, \sigma} . \]
\end{lemma}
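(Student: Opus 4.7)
The aim is to upgrade the Hilbertian control $\Phi \in H_\infty$ into the pointwise weighted decay required by $\mathcal{E}_{*,\sigma,d}$; the symmetry and orthogonality conditions are already assumed. For the interior part, I would first use the identity $L(\Phi) = V L'(\Psi) + (E - ic\partial_{x_2}V)\Psi$ from Lemma \ref{lemma7} to rewrite the equation globally as $L(\Phi) = Vh + (1-\eta)(E - ic\partial_{x_2}V)\Psi$. Since $\Phi \in C^1(\mathbbm{R}^2)$ is assumed and $Vh \in \mathcal{E}_{**,\sigma'} \subset C^1$, the right-hand side lies in $C^1$, and standard elliptic regularity (Theorem 8.8 of \cite{MR1814364}) combined with a bootstrap upgrades $\Phi$ to $C^2_{\mathrm{loc}}(\mathbbm{R}^2)$. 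This controls $\|V\Psi\|_{C^2(\{\tilde r \leq 3\})}$, covering the local part of $\|\Psi\|_{*,\sigma,d}$.

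For the far-field decay, I would take a smooth cutoff $\chi_R$ equal to $1$ on $\{\tilde r \geq R\}$ and $0$ on $\{\tilde r \leq R/2\}$ for some large $R$, and set $\tilde\Psi := \chi_R \Psi$. Exactly as in the proof of Proposition \ref{invertop}, a direct computation yields
\[
(-ic\partial_{x_2} - \Delta + 2\mathfrak{Re})\tilde\Psi = \chi_R h + \mathrm{Loc}(\Psi) + 2\frac{\nabla V}{V}\cdot\nabla\tilde\Psi + 2(1-|V|^2)\mathfrak{Re}(\tilde\Psi),
\]
where $\mathrm{Loc}(\Psi)$ is supported in the annulus $\{R/2 \leq \tilde r \leq R\}$ (hence controlled by Step 1), while the remaining coefficients satisfy $|\nabla V/V| = O(1/\tilde r)$ with purely imaginary leading order, and $|1-|V|^2| = O(1/\tilde r^2)$.

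The key step — which I expect to be the main obstacle — is to extract an initial weighted $L^\infty$ bound $\tilde\Psi \in \mathcal{E}^\infty_{\otimes,\sigma_0}$ for some $\sigma_0 \in (-1,0)$ from the Hilbertian hypothesis $\Phi \in H_\infty$. The latter gives $\nabla\Psi, \mathfrak{Re}\Psi \in L^2(\{\tilde r \geq 2\})$ and $\mathfrak{Im}\Psi \in L^2((1+r)^{-5/2}\,dx)$, so $\mathfrak{Im}\Psi$ enjoys only a degenerate weighted $L^2$ bound. Applying standard interior elliptic regularity on unit balls centered in $\{\tilde r \geq R\}$ to the pointwise equation $L'(\Psi) = h$ (valid there since $\eta = 0$ and $V$ is non-vanishing) converts the $L^2$ integrability into pointwise estimates that are strongly asymmetric between $\mathfrak{Re}\Psi$ and $\mathfrak{Im}\Psi$; this asymmetry is precisely of the type measured by $\|\cdot\|_{\otimes,\sigma,\infty}$ and places $\tilde\Psi$ in some $\mathcal{E}^\infty_{\otimes,\sigma_0}$ with $\sigma_0 \in (-1,0)$.

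Finally, I would bootstrap using Lemma \ref{P3starest}. Its second case applies because the symmetry $\Psi(x_1,x_2) = \overline{\Psi(x_1,-x_2)}$ provides the required reality condition on the source, and each iteration gains decay from the perturbative coefficients — the imaginary part of $\frac{\nabla V}{V}\cdot\nabla\tilde\Psi$ gains one power, and $(1-|V|^2)\mathfrak{Re}\tilde\Psi$ gains two, exploiting that $\nabla V/V$ is purely imaginary at leading order. After finitely many iterations $\sigma_0$ passes zero, and a final application of the first case of Lemma \ref{P3starest} (whose integral condition $\int \mathfrak{Im}(\cdot) = 0$ is automatic from the same symmetry) yields $\tilde\Psi \in \mathcal{E}^\infty_{\otimes,\sigma}$. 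Combined with Step 1 via (\ref{P3zelda}), this gives $\|\Psi\|_{*,\sigma,d} < \infty$ and completes the proof.
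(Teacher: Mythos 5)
Your overall architecture — interior elliptic regularity near the vortices, a far-field cutoff reducing the problem to the constant-coefficient operator $- i c \partial_{x_2} - \Delta + 2\mathfrak{R}\mathfrak{e}$, and a final bootstrap through Lemma \ref{P3starest} exploiting that $\nabla V / V$ is purely imaginary at leading order — is the same as the paper's, and the endgame bootstrap is sound once an initial quantitative bound is in hand. The gap is precisely in the step you yourself identify as the main obstacle: producing $\tilde{\Psi} \in \mathcal{E}^{\infty}_{\otimes, \sigma_0}$ for some $\sigma_0 \in ]-1,0[$ from $\Phi \in H_{\infty}$. Interior elliptic regularity on unit balls cannot deliver this. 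From $\mathfrak{R}\mathfrak{e} (\Psi), \nabla \Psi \in L^2$ it yields at best $\| \Psi_1 \|_{L^{\infty} (B (x, 1))} = o_{| x | \rightarrow \infty} (1)$ with no rate and $| \nabla \Psi | \lesssim 1$, whereas membership in $\mathcal{E}^{\infty}_{\otimes, \sigma_0}$ with $\sigma_0 > - 1$ demands the quantitative rates $| \Psi_1 | \lesssim (1 + \tilde{r})^{- (1 + \sigma_0)}$ and $| \nabla \Psi_1 | + | \nabla^2 \Psi | \lesssim (1 + \tilde{r})^{- (2 + \sigma_0)}$ with $2 + \sigma_0 > 1$. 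Worse, the hypothesis only controls $\int \mathfrak{I}\mathfrak{m}^2 (\Psi) (1 + r)^{- 5 / 2}$, so local regularity allows $\mathfrak{I}\mathfrak{m} (\Psi)$ to grow like $(1 + | x |)^{5 / 4}$, which exceeds the growth $(1 + \tilde{r})^{- \sigma_0}$, $| \sigma_0 | < 1$, permitted by the norm. No purely local argument converts $L^2$ membership into pointwise decay rates; that conversion is where the real work of the lemma lies.

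The paper's mechanism is global and potential-theoretic. It cuts off at $r \geqslant 5 / c^2$ rather than $\tilde{r} \geqslant R$ — far from \emph{both} vortices, so that $| \nabla V / V | \leqslant K (c) / (1 + | x |)^2$ there rather than merely $O (1 / \tilde{r})$; your cutoff location loses this decay, and the convolution estimates below would then diverge. It then writes $\tilde{\Psi}_2 = \zeta + \tilde{\Psi}_2'$ with $\Delta \zeta$ equal to the imaginary-part source and $\partial_{x_j} \tilde{\Psi}_2' = c K_j \ast (\cdots)$, and estimates the resulting convolutions by pairing the explicit decay of the kernels $\frac{x - Y}{| x - Y |^2}$, $K_j$, $K_0$ against the $L^2$ norms of $\nabla \tilde{\Psi}$ and $\mathfrak{R}\mathfrak{e} (\tilde{\Psi})$ via Cauchy--Schwarz split over the regions $| x - Y | \leqslant | x | / 2$ and $| x - Y | \geqslant | x | / 2$. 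This is how the Hilbertian information becomes a pointwise rate (first for $\nabla \tilde{\Psi}_2$, then $\tilde{\Psi}_2$ by integration, then $\tilde{\Psi}_1$ via Lemma \ref{see}, then the second derivatives via interior Schauder estimates); only after that does the bootstrap you describe take over. You would need to supply this, or an equivalent global argument, for your proof to close.
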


See Appendix $\ref{AB5}$ for the proof of this result.

\

The next step is to construct a solution on a large ball in the space $H_a$.

\begin{lemma}
  \label{funky2}For $0 < \sigma' < 1$, there exists $c_0 (\sigma') > 0$ such
  that, for $0 < c < c_0 (\sigma')$, there exists $a_0 (c, \sigma') >
  \frac{10}{c^2}$ such that, for $V h \in \mathcal{E}_{\ast \ast, \sigma'}$,
  $a > a_0 (c, \sigma')$, the problem
  \[ \left\{ \begin{array}{l}
       \eta L (\Phi) + (1 - \eta) V L' (\Psi) = V h \quad \tmop{in} B (0, a)\\
       \Phi \in H_a, \Phi = V \Psi, \langle V \Psi, Z_d \rangle = 0 ; \forall
       x \in B (0, a), \Psi (x_1, x_2) = \overline{\Psi (x_1, - x_2)} = \Psi
       (- x_1, x_2)\\
       \Phi = 0 \quad \tmop{on} \partial B (0, a)\\
       \langle V h, Z_d \rangle = 0
     \end{array} \right. \]
  admits a unique solution, and furthermore, there exists $K (\sigma', c) > 0$
  independent of $a$ such that
  \[ \| \Phi \|_{H_a} \leqslant K (\sigma', c) \| h \|_{\ast \ast, \sigma'} .
  \]
\end{lemma}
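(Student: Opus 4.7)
I would approach this via a Lax--Milgram argument on the Hilbert space $H_a$, restricted to the closed subspace $X_a$ of functions satisfying the two discrete symmetries and the codimension-one constraint $\langle V\Psi, Z_d\rangle=0$. The natural bilinear form is
\[
B_a(\Phi,\Xi):=\mathfrak{Re}\int_{B(0,a)}[\eta L(\Phi)+(1-\eta)VL'(\Psi)]\,\overline{\Xi},
\]
with all Laplacian-like terms integrated by parts (the Dirichlet condition $\Phi=0$ on $\partial B(0,a)$ kills the boundary terms). The choice of test function $\bar\Xi$, rather than $|V|^2\overline{\Xi/V}$ globally, is dictated by the mixed perturbation ansatz: in the outer region, writing $\Xi=V\Xi'$, the integrand becomes $(1-\eta)|V|^2L'(\Psi)\overline{\Xi'}$, whose integration by parts yields the multiplicative energy $\int(1-\eta)|V|^2|\nabla\Psi|^2+2\int(1-\eta)|V|^4\mathfrak{Re}^2(\Psi)$; in the inner region, $\eta L(\Phi)\bar\Phi$ produces the additive $H^1$ energy of $\Phi$ plus lower-order terms. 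Since $|V|\to 1$ at infinity, these match the principal terms of $\|\Phi\|_{H_a}^2$.

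The crux is coercivity of $B_a$ on $X_a$ uniformly in $a$ for $c$ small. Near the vortex centres, the negative contribution $-\int\eta(1-|V|^2)|\Phi|^2$ is absorbed into the $H^1$ energy of $\Phi$ via a Hardy-type inequality. The transport term $-ic\partial_{x_2}$ is self-adjoint for the real scalar product and gives the diagonal contribution $\langle -ic\partial_{x_2}\Phi,\Phi\rangle=2c\int\mathfrak{Re}(\Phi)\,\partial_{x_2}\mathfrak{Im}(\Phi)$; expanding $\Phi=V\Psi$ produces cross-terms whose worst piece is of the form $c\int f\,\mathfrak{Im}(\Psi)^2$ with $f$ decaying like $1/r$, which the weight $(1+r)^{-5/2}$ in $\|\cdot\|_{H_a}$ is precisely calibrated to control by Young's inequality at the cost of a factor $c^2$. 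If the direct coercivity estimate proves delicate, I would fall back on a compactness/contradiction argument in the spirit of Proposition~\ref{invertop}: a normalised sequence with vanishing bilinear form would converge, after recentring near one of the vortices, to an element of the kernel of $L_{V_{\pm 1}}$; by Theorem~\ref{delpino} this kernel is spanned by $\partial_{x_j}V_{\pm 1}$, and the $x_1$-symmetry kills $\partial_{x_2}V_{\pm 1}$ while the orthogonality $\langle V\Psi, Z_d\rangle=0$, in the limit $d\to\infty$ using Lemma~\ref{infest}, kills $\partial_{x_1}V_{\pm 1}$, giving the contradiction.

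Granted coercivity and continuity of $B_a$ on $X_a$, Lax--Milgram produces a unique $\Phi\in X_a$ with $B_a(\Phi,\Xi)=\mathfrak{Re}\int Vh\,\overline{\Xi}$ for every $\Xi\in X_a$. Distributionally this reads $\eta L(\Phi)+(1-\eta)VL'(\Psi)=Vh+\lambda Z_d$ in $B(0,a)$, for some $\lambda\in\mathbbm{R}$ arising from the codimension-one constraint. To show $\lambda=0$, I pair with $Z_d$: the right-hand side reduces to $\lambda\|Z_d\|_{L^2}^2$ by the compatibility $\langle Vh, Z_d\rangle=0$, while the left-hand side is small for $d$ large because $Z_d$ coincides with $\partial_dV$ on its compact support and is, by Lemma~\ref{infest} combined with Theorem~\ref{delpino}, an approximate kernel element of the linearised operator; for $c$ small (hence $d$ large) this forces $\lambda=0$. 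Setting $\Xi=\Phi$ in the variational identity and combining coercivity on the left with the Cauchy--Schwarz bound $|\mathfrak{Re}\int Vh\,\overline{\Phi}|\lesssim\|h\|_{\ast\ast,\sigma'}\|\Phi\|_{H_a}$ on the right (where the weight $(1+r)^{-5/2}$ on $\mathfrak{Im}^2(\Psi)$ is dual to the decay $|h_2|\lesssim(1+\tilde r)^{-(2+\sigma')}$) yields the uniform bound $\|\Phi\|_{H_a}\leq K(\sigma',c)\|h\|_{\ast\ast,\sigma'}$. The principal obstacle throughout is coercivity: the sign-indefinite transport term, the degeneracy of $V$ at the two vortex points, and the slow decay of the phase $\mathfrak{Im}(\Psi)$ at infinity must be handled simultaneously, and the weight $(1+r)^{-5/2}$ in the definition of $\|\cdot\|_{H_a}$ is engineered precisely to make this balance possible.
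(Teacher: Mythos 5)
Your primary route --- Lax--Milgram via direct coercivity of $B_a$ --- has a gap that I do not think can be repaired. The quadratic form $\int |\nabla\Phi|^2-(1-|V|^2)|\Phi|^2+2\mathfrak{R}\mathfrak{e}^2(\bar V\Phi)$ underlying $\eta L+(1-\eta)VL'$ is the second variation of the Ginzburg--Landau energy at a pair of degree-one vortices: it is at best nonnegative and is degenerate along the translation modes $\partial_{x_j}V$. The constraint $\langle V\Psi,Z_d\rangle=0$ and the symmetries remove this approximate kernel only qualitatively, so there is no Hardy-type inequality absorbing $-\int\eta(1-|V|^2)|\Phi|^2$ into $\int|\nabla\Phi|^2$ with a constant strictly below $1$, and no coercivity constant uniform in $a$. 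This is exactly why the paper does not use Lax--Milgram: the a priori bound is obtained by a compactness/contradiction argument, and existence then follows from the Fredholm alternative (the equation is recast as $\Phi+\mathcal{K}(\Phi)=\mathcal{S}(h)$ with $\mathcal{K}$ compact in $\{\varphi\in H^1_0(B(0,a)),\ \langle\varphi,Z_d\rangle=0\}$, injectivity being supplied by the a priori estimate). In particular no Lagrange multiplier ever appears, so the $\lambda=0$ step of your plan is not needed --- which is fortunate, since your argument for it presupposes a bound on $\|\Phi\|_{H_a}$ that is only available after the multiplier is removed.

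Your fallback contradiction argument is the right idea but is calibrated on the wrong parameter. In this lemma $c$ (hence $d$) is \emph{fixed} and only the radius $a$ tends to infinity; the constant $K(\sigma',c)$ may depend on $c$. The paper therefore takes $a_n\to\infty$ at fixed $c$, obtains a limit $\Phi\in H_\infty$ solving the homogeneous whole-plane problem, and kills it by combining Lemma \ref{funky} --- which upgrades $\Phi\in H_\infty$ to $\Phi\in\mathcal{E}_{\ast,\sigma}$, an indispensable step since Proposition \ref{invertop} only applies in that space --- with the a priori estimate of Proposition \ref{invertop}. Your proposal instead recentres at one vortex and sends $d\to\infty$ to invoke Theorem \ref{delpino}; at fixed $c$ the recentred limit retains both vortices, so Theorem \ref{delpino} does not apply here, and the $d\to\infty$ limit belongs to the proof of Proposition \ref{invertop}, not of this lemma. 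Finally, vanishing of the local limit is not yet the contradiction: one still needs the quantitative energy estimates on the annulus $B(0,a_n)\setminus B(0,5/c^2)$ (testing the system against $\tilde\Psi_1$ and $\tilde\Psi_2$, the weighted Hardy inequality controlling $\int|\tilde\Psi_2|^2(1+r)^{-5/2}$ by $\|\nabla\tilde\Psi_2\|_{L^2}^2$, and the smallness in $c$ of the transport cross-term and of $\nabla V/V$ on that annulus) to contradict $\|\Phi_n\|_{H_{a_n}}=1$; your text gestures at these, but they are where the weight $(1+r)^{-5/2}$ actually does its work.
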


Here, $a > 10 / c^2$ is not necessary, the condition $a > 10 / c$ should be
enough. However, this simplifies some estimates in the proof, and it will be
enough for us here. Here, we require $\langle V h, Z_d \rangle = 0$ in order
to apply the Fredholm alternative in $\{ \varphi \in H^1_0 (B (0, a)), \langle
\varphi, Z_d \rangle = 0 \}$ to show the existence of a solution.

\

\begin{proof}
  We argue by contradiction on the estimation. Assuming the existence, take
  any $0 < \sigma' < 1$, and choose $c_0 (\sigma') > 0$ smaller than the one
  from Proposition \ref{invertop}, and $0 < c < c_0 (\sigma')$. Suppose that
  there exists a sequence $a_n > \frac{10}{c^2}$, $a_n \rightarrow \infty$,
  functions $\Phi_n \in H_{a_n}$, $\Phi_n = 0$ on $\partial B (0, a_n)$ and $V
  h_n \in \mathcal{E}_{\ast \ast, \sigma'}$ such that $\| \Phi_n \|_{H_{a_n}}
  = 1$, $\| h_n \|_{\ast \ast, \sigma'} \rightarrow 0$ and $\eta L (\Phi_n) +
  (1 - \eta) V L' (\Psi_n) = V h_n$ on $B (0, a_n)$. In particular, remark
  here that $c$ is independent of $n$, only the size of the ball grows. Our
  goal is to show that $\| \Phi_n \|_{H_{a_n}} = o^c_{n \rightarrow \infty}
  (1)$, where $o^c_{n \rightarrow \infty} (1)$ is a quantity going to $0$ when
  $n \rightarrow \infty$ at fixed $c$, which leads to the contradiction.
  
  Following the same arguments as in step 1 of the proof of Proposition
  \ref{invertop}, we check that $\Phi_n \rightarrow \Phi$ in $C^2_{\tmop{loc}}
  (\mathbbm{R}^2)$ and $\eta L (\Phi) + (1 - \eta) V L' (\Psi) = 0$ in
  $\mathbbm{R}^2$. Furthermore, it is easy to check that, since $\| \Phi_n
  \|_{H_{a_n}} = 1$, we have $\| \Phi \|_{H_{\infty}} \leqslant 1$. Then, by
  Lemma \ref{funky}, since the orthogonality and the symmetries pass at the
  limit, this implies that $\Phi \in \mathcal{E}_{\ast, \sigma}$ for any $0 <
  \sigma < \sigma'$, and therefore, by Proposition \ref{invertop}, $\Phi = 0$.
  
  We deduce that $\| \Phi_n \|_{C^2 (B (0, 10 / c^2))} = o^c_{n \rightarrow
  \infty} (1)$. Now, we use the same cutoff as in the proof of Lemma
  \ref{funky}, and we have the system on $\tilde{\Psi}_n = \tilde{\Psi}_1 + i
  \tilde{\Psi}_2$, with $\tilde{h}_n = \tilde{h}_1 + i \tilde{h}_2$ (see
  equation (\ref{systnew})):
  \[ \left\{ \begin{array}{l}
       \Delta \tilde{\Psi}_1 - 2 \tilde{\Psi}_1 - c \partial_{x_2}
       \tilde{\Psi}_2 = - \tilde{h}_1 - 2\mathfrak{R}\mathfrak{e} \left(
       \frac{\nabla V}{V} . \nabla \tilde{\Psi}_n \right) + \tmop{Loc}_1
       (\Psi_n) - 2 (1 - | V |^2) \tilde{\Psi}_1\\
       \Delta \tilde{\Psi}_2 + c \partial_{x_2} \tilde{\Psi}_1 = - \tilde{h}_2
       - 2\mathfrak{I}\mathfrak{m} \left( \frac{\nabla V}{V} . \nabla
       \tilde{\Psi}_n \right) + \tmop{Loc}_2 (\Psi_n) .
     \end{array} \right. \]
  Now, multiplying the first equation by $\tilde{\Psi}_1$ and integrating on
  $\Omega = B (0, a) \backslash B (0, 5 / c^2)$, we have
  \[ \int_{\Omega} (\Delta \tilde{\Psi}_1 - 2 \tilde{\Psi}_1) \tilde{\Psi}_1 =
  \]
  \[ \int_{\Omega} \left( c \partial_{x_2} \tilde{\Psi}_2 - \tilde{h}_1 -
     2\mathfrak{R}\mathfrak{e} \left( \frac{\nabla V}{V} . \nabla
     \tilde{\Psi}_n \right) + \tmop{Loc}_1 (\Psi_n) - 2 (1 - | V |^2)
     \tilde{\Psi}_1 \right) \tilde{\Psi}_1 . \]
  We integrate by parts. Recall that $\| \Phi_n \|_{C^2 (B (0, 10 / c^2))} =
  o^c_{n \rightarrow \infty} (1)$ and $\Phi_n = V \Psi_n = 0$ on $\partial B
  (0, a_n)$, thus
  \[ \int_{\Omega} \Delta \tilde{\Psi}_1 \tilde{\Psi}_1 = - \int_{\Omega} |
     \nabla \tilde{\Psi}_1 |^2 + o^c_{n \rightarrow \infty} (1) . \]
  Furthermore, since $V h_n \in \mathcal{E}_{\ast \ast, \sigma'}$, we check
  easily that $\| \tilde{h}_1 \|_{L^2 (\Omega)} \leqslant o^{\sigma'}_{c
  \rightarrow 0} (1)$, and we compute with Lemma \ref{nonmodV} and $\| \Phi_n
  \|_{C^2 (B (0, 10 / c^2))} = o^c_{n \rightarrow \infty} (1)$ that, since for
  $x \in \Omega, r \geqslant 5 / c^2$,
  \[ \left\| \frac{\nabla V}{V} \right\|_{L^{\infty} (\Omega)} + \|
     \tmop{Loc}_1 (\Psi_n) \|_{L^{\infty} (\Omega)} + \| \tmop{Loc}_2 (\Psi_n)
     \|_{L^{\infty} (\Omega)} + \| (1 - | V |^2) \|_{L^{\infty} (\Omega)}
     \leqslant o_{c \rightarrow 0} (1) + o^c_{n \rightarrow \infty} (1) . \]
  This allows us to estimate the right hand side: by Cauchy-Schwarz,
  \[ \| \nabla \tilde{\Psi}_1 \|^2_{L^2 (\Omega)} + 2 \| \tilde{\Psi}_1
     \|^2_{L^2 (\Omega)} \leqslant \]
  \[ c \| \nabla \tilde{\Psi}_2 \|_{L^2 (\Omega)} \| \tilde{\Psi}_1 \|_{L^2
     (\Omega)} + (o_{c \rightarrow 0} (1) + o^c_{n \rightarrow \infty} (1))
     (\| \nabla \tilde{\Psi}_n \|_{L^2 (\Omega)} + \| \tilde{\Psi}_1 \|_{L^2
     (\Omega)}) + o^c_{n \rightarrow \infty} (1) . \]
  Now, we multiply the second equation by $\tilde{\Psi}_2$, and we integrate
  on $\Omega$. By integration by parts, we check
  \[ \| \nabla \tilde{\Psi}_2 \|^2_{L^2 (\Omega)} \leqslant \]
  \[ c \left| \int_{\Omega} \partial_{x_2} \tilde{\Psi}_1 \tilde{\Psi}_2
     \right| + \left| \int_{\Omega} \tilde{h}_2 \tilde{\Psi}_2 \right| + 2
     \int_{\Omega} \left| \mathfrak{I}\mathfrak{m} \left( \frac{\nabla V}{V} .
     \nabla \tilde{\Psi}_n \right) \tilde{\Psi}_2 \right| + \int_{\Omega} |
     \tmop{Loc}_2 (\Psi_n) \tilde{\Psi}_2 | + o^c_{n \rightarrow \infty} (1) .
  \]
  By integration by parts, since $\| \Phi_n \|_{C^2 (B (0, 10 / c^2))} =
  o^c_{n \rightarrow \infty} (1)$ and $\Phi_n = 0$ on $\partial B (0, a_n)$,
  we have
  \[ c \left| \int_{\Omega} \partial_{x_2} \tilde{\Psi}_1 \tilde{\Psi}_2
     \right| \leqslant o^c_{n \rightarrow \infty} (1) + c \left| \int_{\Omega}
     \tilde{\Psi}_1 \partial_{x_2} \tilde{\Psi}_2 \right| \leqslant o^c_{n
     \rightarrow \infty} (1) + c \| \tilde{\Psi}_1 \|_{L^2 (\Omega)} \| \nabla
     \tilde{\Psi}_2 \|_{L^2 (\Omega)} . \]
  We recall that $| \tilde{\Psi}_2 | = o^c_{n \rightarrow \infty} (1)$ on
  $\partial B (0, 5 / c^2)$, therefore
  \begin{eqnarray*}
    \int_{r = 5 / c^2}^a \frac{| \tilde{\Psi}_2 |^2}{r^{2 + \sigma'}} r d r &
    = & \frac{- 1}{\sigma'} \int_{r = 5 / c^2}^a \partial_r \left(
    \frac{1}{r^{\sigma'}} \right) | \tilde{\Psi}_2 |^2 d r\\
    & \leqslant & \frac{K (c)}{\sigma'} | \tilde{\Psi}_2 |^2 (5 / c^2) +
    \frac{2}{\sigma'} \int_{r = 5 / c^2}^a \frac{1}{r^{\sigma'}} | \nabla
    \tilde{\Psi}_2 | | \tilde{\Psi}_2 | d r\\
    & \leqslant & o^{c, \sigma'}_{n \rightarrow \infty} (1) +
    \frac{2}{\sigma'} \sqrt{\int_{r = 5 / c^2}^a | \nabla \tilde{\Psi}_2 |^2 r
    d r \int_{r = 5 / c^2}^a \frac{| \tilde{\Psi}_2 |^2}{r^{2 + \sigma'}} r d
    r} .
  \end{eqnarray*}
  We deduce that
  \[ \int_{r = 5 / c^2}^a \frac{| \tilde{\Psi}_2 |^2}{r^{2 + \sigma'}} r d r
     \leqslant o^{c, \sigma'}_{n \rightarrow \infty} (1) + \frac{K}{\sigma'}
     \int_{r = 5 / c^2}^a | \nabla \tilde{\Psi}_2 |^2 r d r, \]
  and therefore
  \[ \left| \int_{\Omega} \frac{| \tilde{\Psi}_2 |^2}{(1 + | x |)^{2 +
     \sigma'}} \right| \leqslant o^{c, \sigma'}_{n \rightarrow \infty} (1) +
     \frac{K}{\sigma'} \| \nabla \tilde{\Psi}_2 \|^2_{L^2 (\Omega)} . \]
  Since $V h_n \in \mathcal{E}_{\ast \ast, \sigma'}$, we estimate, by
  Cauchy-Schwarz, that
  \[ \left| \int_{\Omega} \tilde{h}_2 \tilde{\Psi}_2 \right| \leqslant o_{c
     \rightarrow 0} (1) \sqrt{\int_{\Omega} \frac{| \tilde{\Psi}_2 |^2}{(1 + |
     x |)^{2 + \sigma'}}} \leqslant o^{\sigma'}_{c \rightarrow 0} (1) \|
     \nabla \tilde{\Psi}_2 \|_{L^2 (\Omega)} + o^{c, \sigma}_{n \rightarrow
     \infty} (1) . \]
  Furthermore, since $\tmop{Loc}_2 (\Psi_n)$ is supported in $B (0, 10 / c^2)$
  and $\| \Phi_n \|_{C^1 (B (0, 10 / c^2))} = o^c_{n \rightarrow \infty} (1)$,
  we check that
  \[ \int_{\Omega} | \tmop{Loc}_2 (\Psi_n) \tilde{\Psi}_2 | \leqslant o^c_{n
     \rightarrow \infty} (1) . \]
  Finally, from Lemma \ref{dervor}, we check that, in $\mathbbm{R}^2$,
  \[ \left| \frac{\nabla V}{V} \right| \leqslant K \left| i \left(
     \frac{y^{\bot}}{| y |^2} - \frac{z^{\bot}}{| z |^2} \right) \right| +
     \frac{K}{c (1 + | x |)^2} \leqslant \frac{K}{c (1 + | x |)^2}, \]
  and thus, by Cauchy-Schwarz,
  \begin{eqnarray*}
    \int_{\Omega} \left| \mathfrak{I}\mathfrak{m} \left( \frac{\nabla V}{V} .
    \nabla \tilde{\Psi}_n \right) \tilde{\Psi}_2 \right| & \leqslant & \|
    \nabla \tilde{\Psi}_n \|_{L^2 (\Omega)} \sqrt{\int_{\Omega} \left|
    \frac{\nabla V}{V} \right|^2 | \tilde{\Psi}_2 |^2}\\
    & \leqslant & \frac{K \| \nabla \tilde{\Psi}_n \|_{L^2 (\Omega)}}{c}
    \sqrt{\int_{\Omega} \frac{| \tilde{\Psi}_2 |^2}{(1 + | x |)^4}} .
  \end{eqnarray*}
  In $\Omega$, $| x | \geqslant 5 / c^2$, thus
  \[ \int_{\Omega} \frac{| \tilde{\Psi}_2 |^2}{(1 + | x |)^4} \leqslant c^{2
     (2 - \sigma')} \int_{\Omega} \frac{| \tilde{\Psi}_2 |^2}{(1 + | x |)^{2 +
     \sigma'}} \leqslant c^{2 (2 - \sigma')} K (\sigma') \| \nabla
     \tilde{\Psi}_2 \|^2_{L^2 (\Omega)} + o^c_{n \rightarrow \infty} (1), \]
  hence
  \[ \int_{\Omega} \left| \mathfrak{I}\mathfrak{m} \left( \frac{\nabla V}{V} .
     \nabla \tilde{\Psi}_n \right) \tilde{\Psi}_2 \right| \leqslant
     o^{\sigma'}_{c \rightarrow 0} (1) \| \nabla \tilde{\Psi}_2 \|_{L^2
     (\Omega)} + o^c_{n \rightarrow \infty} (1) . \]
  We conclude that
  \begin{eqnarray*}
    &  & \| \nabla \tilde{\Psi}_1 \|^2_{L^2 (\Omega)} + 2 \| \tilde{\Psi}_1
    \|^2_{L^2 (\Omega)}\\
    & \leqslant & c \| \nabla \tilde{\Psi}_2 \|_{L^2 (\Omega)} \|
    \tilde{\Psi}_1 \|_{L^2 (\Omega)} + (o_{c \rightarrow 0} (1) + o^c_{n
    \rightarrow \infty} (1)) (\| \nabla \tilde{\Psi}_n \|_{L^2 (\Omega)} + \|
    \tilde{\Psi}_1 \|_{L^2 (\Omega)}) + o^c_{n \rightarrow \infty} (1),
  \end{eqnarray*}
  and
  \[ \| \nabla \tilde{\Psi}_2 \|_{L^2 (\Omega)}^2 \leqslant o^c_{n \rightarrow
     \infty} (1) + c \| \tilde{\Psi}_1 \|_{L^2 (\Omega)} \| \nabla
     \tilde{\Psi}_2 \|_{L^2 (\Omega)} + o^{\sigma'}_{c \rightarrow 0} (1) \|
     \nabla \tilde{\Psi}_2 \|_{L^2 (\Omega)}, \]
  therefore
  \[ \| \nabla \tilde{\Psi}_1 \|_{L^2 (\Omega)} + \| \tilde{\Psi}_1 \|_{L^2
     (\Omega)} + \| \nabla \tilde{\Psi}_2 \|_{L^2 (\Omega)} \leqslant o^c_{n
     \rightarrow \infty} (1) + o^{\sigma'}_{c \rightarrow 0} (1) . \]
  We have shown that for any $\sigma' > 0$,
  \[ \left| \int_{\Omega} \frac{| \tilde{\Psi}_2 |^2}{(1 + | x |)^{2 +
     \sigma'}} \right| \leqslant \left( o^{c, \sigma'}_{n \rightarrow \infty}
     (1) + \frac{K}{\sigma'} \| \nabla \tilde{\Psi}_2 \|^2_{L^2 (\Omega)}
     \right), \]
  thus
  \[ \left| \int_{\Omega} \frac{| \tilde{\Psi}_2 |^2}{(1 + | x |)^{5 / 2}}
     \right| \leqslant o^c_{n \rightarrow \infty} (1) + o_{c \rightarrow 0}
     (1) . \]
  Together with $\| \Phi_n \|_{C^2 (B (0, 10 / c^2))} = o^c_{n \rightarrow
  \infty} (1)$, this is in contradiction with $\| \Phi_n \|_{H_{a_n}} = 1$.
  
  This concludes the proof of the estimation. Now, for the existence, we argue
  by Fredholm's alternative in $\{ \varphi \in H^1_0 (B (0, a)), \langle
  \varphi, Z_d \rangle = 0 \}$, and we remark that the norms $\| . \|_{H_a}$
  and $\| . \|_{H^1}$ are equivalent on $B (0, a)$. By Riesz's representation
  theorem, the elliptic equation $\eta L (\Phi) + (1 - \eta) V L' (\Psi) = V
  h$ can be rewritten in the operational form $\Phi +\mathcal{K} (\Phi)
  =\mathcal{S} (h)$ where $\mathcal{K}$ is a compact operator in $H^1_0 (B (0,
  a))$, and it has no kernel in $H_a$ (i.e. in $\{ \varphi \in H^1_0 (B (0,
  a)), \langle \varphi, Z_d \rangle = 0 \}$) by the estimation we just showed.
  Therefore, there exists a unique solution $\Phi \in H_a$, and it then
  satisfies
  \[ \| \Phi \|_{H_a} \leqslant K (\sigma', c) \| h \|_{\ast \ast, \sigma'} .
  \]
\end{proof}

\begin{proposition}
  \label{p2191802}Consider the problem, for $0 < \sigma < \sigma' < 1$,
  \[ \left\{ \begin{array}{l}
       \eta L (\Phi) + (1 - \eta) V L' (\Psi) = V h\\
       V h \in \mathcal{E}_{\ast \ast, \sigma'}, \langle V h, Z_d \rangle = 0.
     \end{array} \right. \]
  Then, there exist constants $c_0 (\sigma, \sigma') > 0$ small and $C
  (\sigma, \sigma') > 0$ depending only on $\sigma, \sigma'$, such that, for
  $0 < c \leqslant c_0 (\sigma, \sigma')$ and $V h \in \mathcal{E}_{\ast \ast,
  \sigma'}$ with $\langle V h, Z_d \rangle = 0$, there exists $\Phi \in
  \mathcal{E}_{\ast, \sigma}$, $\Phi = V \Psi$ solution of this problem, with
  \[ \| \Psi \|_{\ast, \sigma, d} \leqslant C (\sigma, \sigma') \| h \|_{\ast
     \ast, \sigma', d} . \]
\end{proposition}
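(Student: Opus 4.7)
The plan is to combine the existence result on large balls (Lemma~\ref{funky2}) with the uniform a priori estimate (Proposition~\ref{invertop}) via a limiting procedure $a\to\infty$, using Lemma~\ref{funky} to promote the limit from $H_\infty$ to $\mathcal{E}_{*,\sigma}$.

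First, fix $c_0(\sigma,\sigma')>0$ smaller than both thresholds given by Lemma~\ref{funky2} (for $\sigma'$) and Proposition~\ref{invertop} (for the pair $\sigma<\sigma'$), and take $0<c\leq c_0(\sigma,\sigma')$ with $\tfrac{1}{2}<cd<2$. For each $a>a_0(c,\sigma')$, Lemma~\ref{funky2} yields a unique $\Phi_a=V\Psi_a\in H_a$ satisfying
\[
\eta L(\Phi_a)+(1-\eta)VL'(\Psi_a)=Vh\ \text{in}\ B(0,a),\quad \Phi_a=0\ \text{on}\ \partial B(0,a),
\]
with the prescribed symmetries, $\langle V\Psi_a,Z_d\rangle=0$, and $\|\Phi_a\|_{H_a}\leq K(\sigma',c)\|h\|_{**,\sigma'}$. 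I extend $\Phi_a$ by $0$ outside $B(0,a)$.

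Next, I pass to a limit $a_n\to\infty$. At fixed $c$, the bound on $\|\Phi_{a_n}\|_{H_{a_n}}$ gives a uniform $H^1_{\mathrm{loc}}(\mathbb{R}^2)$ bound; writing the equation as in (\ref{jaune}) and applying the interior $H^2$ estimate of Theorem 8.8 of \cite{MR1814364}, the sequence is bounded in $H^2_{\mathrm{loc}}$, hence pre-compact in $C^1_{\mathrm{loc}}(\mathbb{R}^2)$ by Sobolev embedding and in $C^2_{\mathrm{loc}}$ after bootstrapping with the continuity of the right-hand side. Extract a subsequence $\Phi_{a_n}\to\Phi_\infty=V\Psi_\infty$ in $C^2_{\mathrm{loc}}(\mathbb{R}^2)$; since each $a_n$ eventually exceeds any given $R$, the equation $\eta L(\Phi_\infty)+(1-\eta)VL'(\Psi_\infty)=Vh$ holds on all of $\mathbb{R}^2$. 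The pointwise symmetries pass to the limit, and the orthogonality $\langle \Phi_\infty,Z_d\rangle=0$ survives because $Z_d$ is compactly supported. Finally, by Fatou's lemma (or weak lower semicontinuity of the $H_\infty$ seminorm),
\[
\|\Phi_\infty\|_{H_\infty}\leq \liminf_{n\to\infty}\|\Phi_{a_n}\|_{H_{a_n}}\leq K(\sigma',c)\|h\|_{**,\sigma'}<+\infty,
\]
so $\Phi_\infty\in H_\infty\cap C^1(\mathbb{R}^2)$.

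With this, Lemma~\ref{funky} applies and gives $\Phi_\infty\in\mathcal{E}_{*,\sigma}$. Finally, since $\Phi_\infty\in\mathcal{E}_{*,\sigma}$, $Vh\in\mathcal{E}_{**,\sigma'}$ and $c\leq c_0(\sigma,\sigma')$ with $\tfrac{1}{2}<cd<2$, Proposition~\ref{invertop} replaces the $c$-dependent bound by the uniform estimate
\[
\|\Psi_\infty\|_{*,\sigma,d}\leq C(\sigma,\sigma')\|h\|_{**,\sigma',d},
\]
with $C(\sigma,\sigma')$ independent of $c$. The main obstacle is the second step: one has to make sure that all the structural features (symmetries, orthogonality, membership in $H_\infty$) pass to the limit and that the limit is genuinely a solution on the whole plane; the key conceptual point is that Lemma~\ref{funky2} is used only to produce \emph{some} $\Phi_\infty$, while the final, $c$-uniform estimate comes exclusively from Proposition~\ref{invertop}.
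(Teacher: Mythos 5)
Your proposal is correct and follows essentially the same route as the paper: existence on large balls via Lemma~\ref{funky2}, a diagonal/compactness limit $a_n\to\infty$ preserving the equation, symmetries, orthogonality and the $H_\infty$ bound, then Lemma~\ref{funky} to land in $\mathcal{E}_{\ast,\sigma}$ and Proposition~\ref{invertop} for the $c$-uniform estimate. The paper's own proof is a terser version of exactly this argument.
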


\begin{proof}
  By Lemma \ref{funky2}, For $a > a_0 (c, \sigma')$, there exists a solution
  to the problem
  \[ \left\{ \begin{array}{l}
       \eta L (\Phi_a) + (1 - \eta) V L' (\Psi_a) = V h \quad \tmop{on} B (0,
       a)\\
       \Phi_a \in H_a, \Phi_a = V \Psi_a, \langle V \Psi_a, Z_d \rangle = 0 ;
       \forall x \in B (0, a), \Psi_a (x_1, x_2) = \overline{\Psi_a (x_1, -
       x_2)} = \Psi_a (- x_1, x_2)\\
       \Phi_a = 0 \quad \tmop{on} \partial B (0, a)\\
       \langle h, Z_d \rangle = 0
     \end{array} \right. \]
  with $\| \Phi_a \|_{H_a} \leqslant K (\sigma', c) \| h \|_{\ast \ast,
  \sigma'}$. Taking a sequence of values $a_n > a_0$ going to infinity, we can
  construct by a diagonal argument a function $\Phi \in H^1_{\tmop{loc}}
  (\mathbbm{R}^2)$ which satisfies in the distribution sense
  \[ \eta L (\Phi) + (1 - \eta) V L' (\Phi) = V h \]
  (hence $\Phi \in C^2 (\mathbbm{R}^2)$ by standard elliptic arguments), such
  that
  \[ \| \Phi \|_{H_{\infty}} \leqslant \limsup_{n \rightarrow \infty} \|
     \Phi_n \|_{H_{a_n}} \leqslant K (\sigma', c) \| h \|_{\ast \ast,
     \sigma'}, \]
  thus $\Phi \in H_{\infty}$, and $\Phi = V \Psi$, $\langle V \Psi, Z_d
  \rangle = 0 ; \forall x \in \mathbbm{R}^2, \Psi (x_1, x_2) = \overline{\Psi
  (x_1, - x_2)} = \Psi (- x_1, x_2)$. From Lemma \ref{funky}, we deduce that
  $\Phi \in \mathcal{E}_{\ast, \sigma}$, and is thus a solution to the
  problem. Furthermore, by Proposition \ref{invertop}, $\| \Psi \|_{\ast,
  \sigma, d} \leqslant C (\sigma, \sigma') \| h \|_{\ast \ast, \sigma', d} .$
  Still by Proposition \ref{invertop}, this solution is unique in
  $\mathcal{E}_{\ast, \sigma'}$.
\end{proof}

\subsubsection{Estimates for the contraction in the orthogonal
space}\label{2532809}

We showed in Proposition \ref{p2191802} that the operator $\nobracket \eta L
(.) + (1 - \eta) V L' (. / V \nobracket)$ is invertible from
$\mathcal{E}_{\ast \ast, \sigma', d} \cap \{ \langle ., Z_d \rangle = 0 \}$ to
$\mathcal{E}_{\ast, \sigma, d}$. The operator $(\eta L (.) + (1 - \eta) V L'
(. / V))^{- 1}$ is the one that, for a given $V h \in \mathcal{E}_{\ast \ast,
\sigma', d}$ such that $\langle V h, Z_d \rangle = 0$, returns the unique
function $\Phi = V \Psi \in \mathcal{E}_{\ast, \sigma, d}$ such that $\eta L
(\Phi) + (1 - \eta) V L' (\Psi) = V h$, and this function satisfies the
estimate $\| \Psi \|_{\ast, \sigma, d} \leqslant C (\sigma, \sigma') \| h
\|_{\ast \ast, \sigma', d}$.

Now, we define (for any $\Phi \in C^0 (\mathbbm{R}^2, \mathbbm{C})$)
\[ \Pi_d^{\bot} (\Phi) \assign \Phi - \langle \Phi, Z_d \rangle \frac{Z_d}{\|
   Z_d \|_{L^2 (\mathbbm{R}^2)}^2}, \]
the projection on the orthogonal of $Z_d$. We want to apply a fixed-point
theorem on the functional
\[ (\eta L (.) + (1 - \eta) V L' (. / V))^{- 1} (\Pi_d^{\bot} (- F (. / V))) :
   \mathcal{E}_{\ast, \sigma} \rightarrow \mathcal{E}_{\ast, \sigma}, \]
and for that we need some estimates on the function $\Pi_d^{\bot} o F
(. / V) : \mathcal{E}_{\ast, \sigma} \rightarrow \{ V h \in \mathcal{E}_{\ast
\ast, \sigma'}, \langle V h, Z_d \rangle = 0 \}$. The function $F$ contains
the source term $E - i c \partial_{x_2} V$ and nonlinear terms. The source
term requires a precise computation (see Lemma \ref{fineestimate}) to show its
smallness in the spaces of invertibility. The nonlinear terms will be small if
we do the contraction in an area with small $\Psi$ (which is the case since we
will do it in the space of function $\Phi = V \Psi \in \mathcal{E}_{\ast,
\sigma}$ such that $\| \Psi \|_{\ast, \sigma, d} \leqslant K_0 (\sigma,
\sigma') c^{1 - \sigma'}$ for a well chosen constant $K_0 (\sigma, \sigma') >
0$). This subsection is devoted to the proof of the following result.

\begin{proposition}
  \label{contractionest}For $0 < \sigma < \sigma' < 1$, there exist constants
  $K_0 (\sigma, \sigma'), c_0 (\sigma, \sigma') > 0$ depending only on
  $\sigma, \sigma'$ such that for $0 < c < c_0 (\sigma, \sigma')$, the
  function (from $\mathcal{E}_{\ast, \sigma, d}$ to $\mathcal{E}_{\ast,
  \sigma, d}$)
  \[ \Phi \mapsto (\eta L (.) + (1 - \eta) V L' (. / V))^{- 1} (\Pi_d^{\bot}
     (- F (\Phi / V))) \]
  is a contraction in the space of functions $\Phi = V \Psi \in
  \mathcal{E}_{\ast, \sigma, d}$ such that $\| \Psi \|_{\ast, \sigma, d}
  \leqslant K_0 (\sigma, \sigma') c^{1 - \sigma'}$. As such, by the
  contraction mapping theorem, it admits a unique fixed point $\Phi \in
  \mathcal{E}_{\ast, \sigma, d}$ in $\{ \Phi \in \mathcal{E}_{\ast, \sigma,
  d}, \| \Psi \|_{\ast, \sigma, d} \leqslant K_0 (\sigma, \sigma') c^{1 -
  \sigma'} \}$, and there exists $\lambda (c, d) \in \mathbbm{R}$ such that
  \[ \eta L (\Phi) + (1 - \eta) V L' (\Psi) + F (\Psi) = \lambda (c, d) Z_d \]
  in the distribution sense.
\end{proposition}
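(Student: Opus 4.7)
The plan is to apply Banach's fixed point theorem to the operator
\[
T(\Phi) \assign (\eta L(\cdot) + (1-\eta) V L'(\cdot/V))^{-1}\bigl(\Pi_d^{\bot}(-F(\Phi/V))\bigr)
\]
on the closed ball $B_{K_0} \assign \{\Phi = V\Psi \in \mathcal{E}_{\ast,\sigma,d} : \|\Psi\|_{\ast,\sigma,d} \leqslant K_0 c^{1-\sigma'}\}$, with $K_0$ and $c_0$ to be chosen. By construction $\Pi_d^{\bot}(-F(\Phi/V))$ is orthogonal to $Z_d$ and inherits the symmetries of $F$, so Proposition \ref{p2191802} applies as soon as $F(\Phi/V)$ is shown to lie in $\mathcal{E}_{\ast\ast,\sigma',d}$, yielding
\[
\|T(\Phi)/V\|_{\ast,\sigma,d} \leqslant C(\sigma,\sigma')\|F(\Phi/V)\|_{\ast\ast,\sigma',d},
\]
since $\Pi_d^{\bot}$ is bounded on $\mathcal{E}_{\ast\ast,\sigma',d}$ ($Z_d$ being compactly supported).

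Writing $F(\Psi) = (E - ic\partial_{x_2}V) + N(\Psi)$ with $N(\Psi) \assign V(1-\eta)(-\nabla\Psi\cdot\nabla\Psi + |V|^2 S(\Psi)) + R(\Psi)$, two estimates are needed. First, $\|N(\Psi)\|_{\ast\ast,\sigma',d} \leqslant K(\sigma,\sigma')\|\Psi\|_{\ast,\sigma,d}^2$: the bound on $R(\Psi)$ is the quadratic estimate of Lemma \ref{lemma7}, while the two outer terms are controlled by combining $|S(\Psi)| \leqslant K|\mathfrak{R}\mathfrak{e}(\Psi)|^2$ (valid for $\Psi$ small) with the weighted decay of $\nabla\Psi$ and $\mathfrak{R}\mathfrak{e}(\Psi)$ encoded in the $\ast$ norm, which beats the $\ast\ast$ weights as $0 < \sigma < \sigma' < 1$. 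Second, $\|E - ic\partial_{x_2}V\|_{\ast\ast,\sigma',d} \leqslant K(\sigma,\sigma') c^{1-\sigma'}$, proved by a two-regime argument: on $\{2 \leqslant \tilde r \leqslant 1/c\}$ the real part of $(E-ic\partial_{x_2}V)/V$ has size $O(c/\tilde r)$ (Lemma \ref{fineest} for $-ic\partial_{x_2}V$ and \eqref{E2} together with Lemma \ref{nonmodV} for $E$, using that near one vortex the gradient of the other is of order $1/d \sim c$), whereas on $\{\tilde r \geqslant 1/c\}$ the real part has size $O(1/\tilde r^2)$; both bounds, tested against the weight $\tilde r^{1+\sigma'}$, equal exactly $c^{1-\sigma'}$ at the transition scale $\tilde r = 1/c$ (using $cd \sim 1$), while the imaginary part $O(c/\tilde r^3)$ from Lemma \ref{fineest} is automatically even smaller.

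Combining the two estimates, for $\Phi \in B_{K_0}$ one obtains
\[
\|T(\Phi)/V\|_{\ast,\sigma,d} \leqslant C(\sigma,\sigma')\bigl(K_1 c^{1-\sigma'} + K_2 K_0^2 c^{2(1-\sigma')}\bigr),
\]
which is $\leqslant K_0 c^{1-\sigma'}$ provided $K_0 \geqslant 2C(\sigma,\sigma')K_1$ and $c_0$ is small enough that $C(\sigma,\sigma')K_2 K_0 c_0^{1-\sigma'} \leqslant 1/2$; so $T$ sends $B_{K_0}$ into itself. For the contraction property, $F(\Psi_1)-F(\Psi_2) = N(\Psi_1)-N(\Psi_2)$ (the source cancels), and the same quadratic weighted bounds applied to the difference give $\|N(\Psi_1) - N(\Psi_2)\|_{\ast\ast,\sigma',d} \leqslant K K_0 c^{1-\sigma'}\|\Psi_1-\Psi_2\|_{\ast,\sigma,d}$, so $T$ is a contraction of ratio $\leqslant 1/2$ after shrinking $c_0$ again if necessary. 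Banach's theorem produces a unique fixed point $\Phi \in B_{K_0}$; writing $\Pi_d^{\bot}(h) = h - (\langle h, Z_d\rangle/\|Z_d\|_{L^2(\mathbbm{R}^2)}^2) Z_d$ and substituting into $T(\Phi) = \Phi$ yields $\eta L(\Phi) + (1-\eta)VL'(\Psi) + F(\Psi) = \lambda(c,d)Z_d$ with $\lambda(c,d) \assign \langle F(\Psi), Z_d\rangle/\|Z_d\|_{L^2(\mathbbm{R}^2)}^2 \in \mathbbm{R}$ (real by the very definition of $\langle\cdot,\cdot\rangle = \mathfrak{R}\mathfrak{e}\int\cdot\overline{\cdot}$).

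The main obstacle is the source estimate: a crude pointwise bound on $E - ic\partial_{x_2}V$ yields only $O(1)$ in the $\ast\ast$ norm, because $\mathfrak{R}\mathfrak{e}((E-ic\partial_{x_2}V)/V)$ carries a $\tilde r^{-2}$ contribution at infinity that only barely beats the weight $\tilde r^{1+\sigma'}$. Extracting the required $c^{1-\sigma'}$ smallness means using the sharper $O(c/\tilde r)$ behaviour near each vortex (where the factor $c$ comes from the gradient of the faraway vortex), the far-field $O(1/\tilde r^2)$ decay, and the constraint $cd \sim 1$ which forces the two regimes to match precisely at $\tilde r = 1/c$ and both produce $c^{1-\sigma'}$ against the weight. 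This is the delicate calculation alluded to in the text preceding the proposition, and is the structural reason why the transport term cannot be treated as a mere perturbative source (cf. Remark \ref{normsetup2}).
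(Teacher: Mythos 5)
Your proposal is correct and follows essentially the same route as the paper: the paper likewise reduces the proposition to the source-term bound $\|(E-ic\partial_{x_2}V)/V\|_{\ast\ast,\sigma',d}\leqslant C_1(\sigma')c^{1-\sigma'}$ (Lemma \ref{fineestimate}, whose computation $(r_1/r_{-1})^{\sigma'}r_{-1}^{-(1-\sigma')}\leqslant Kc^{1-\sigma'}$ is exactly your two-regime matching at $\tilde r\sim 1/c$) together with the quadratic estimates on $R(\Psi)$ and on the outer nonlinearity (Lemmas \ref{L2141904} and \ref{L2151904}), then applies the contraction mapping theorem with $K_0=C(\sigma,\sigma')(C_1(\sigma')+1)$ and reads off $\lambda(c,d)=\langle F(\Psi),Z_d\rangle/\|Z_d\|_{L^2(\mathbbm{R}^2)}^2$ from the definition of $\Pi_d^{\bot}$. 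Your identification of the leading real-valued parts of $E/V$ and $ic\partial_{x_2}V/V$ as the delicate point matches the paper's treatment.
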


We recall that, from the definition of $\mathcal{E}_{\ast, \sigma, d}$ in
subsection $\ref{normsetup}$, $\Phi \in \mathcal{E}_{\ast, \sigma, d}$ implies
that $\langle \Phi, Z_d \rangle = 0$, which is the origin of the fact that
$\eta L (\Phi) + (1 - \eta) V L' (\Psi) + F (\Psi)$ is not zero, but only
proportional to $Z_d$.

We start with some estimates on the terms contained in $F (\Psi)$. These are
done in the following three lemmas.

\begin{lemma}
  \label{fineestimate} For any $0 < \sigma' < 1$, there exists a constant $C_1
  (\sigma') > 0$ depending only on $\sigma'$ such that
  \[ \left\| \frac{i c \partial_{x_{2 \nosymbol}} V}{V} \right\|_{\ast \ast,
     \sigma', d} + \left\| \frac{E}{V} \right\|_{\ast \ast, \sigma', d}
     \leqslant C_1 (\sigma') c^{1 - \sigma'} . \]
\end{lemma}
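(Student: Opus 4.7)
The plan is to bound the two quantities separately by combining the explicit asymptotic expansions of Lemmas \ref{dervor}, \ref{lemme3}, and \ref{fineest} with the geometric inequality
\[ r_1 r_{-1} \geq \tilde r \cdot \max(d, \tilde r), \]
which follows from $\tilde r = \min(r_1, r_{-1})$ and the triangle inequality $r_1 + r_{-1} \geq 2d$ (so the larger of $r_1, r_{-1}$ is at least $\max(d, \tilde r)$), together with the standing relation $cd \asymp 1$ coming from $\tfrac{1}{2c} < d < \tfrac{2}{c}$.

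In the inner region $\{\tilde r \leq 3\}$, $V \cdot (ic\partial_{x_2}V/V) = ic\partial_{x_2} V$, and Lemma \ref{nonmodV} immediately gives $\|ic\partial_{x_2}V\|_{C^1} \leq Kc$. For $V \cdot (E/V) = E$, if (say) $\tilde r = r_1 \leq 3$, then $r_{-1} \geq 2d - 3 \gtrsim 1/c$, so by Lemma \ref{lemme3}, $|\nabla V_{-1}|, (1-|V_{-1}|^2) \leq Kc$, and the formula $E = -2\nabla V_1 \cdot \nabla V_{-1} + (1-|V_1|^2)(1-|V_{-1}|^2) V_1 V_{-1}$ yields $\|E\|_{C^1(\{\tilde r \leq 3\})} \leq Kc$. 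Since $c \leq c^{1-\sigma'}$ for $c \leq 1$, this closes the $C^1$-part of the $\ast\ast$-norm.

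In the outer region $\{\tilde r \geq 2\}$, Lemma \ref{fineest} gives
\[ \frac{ic\partial_{x_2}V}{V} = \frac{2cd(x_1^2 - d^2 - x_2^2)}{r_1^2 r_{-1}^2} + O\!\left(\frac{c}{r_1^3} + \frac{c}{r_{-1}^3}\right), \]
with the leading term real. Using $|x_1^2 - d^2 - x_2^2| \leq |x_1-d||x_1+d| + x_2^2 \leq 2 r_1 r_{-1}$ and the geometric inequality, the real part $h_1$ is bounded by $Kcd/(r_1 r_{-1})$, which gives $Kc/\tilde r$ when $\tilde r \leq d$ and $Kcd/\tilde r^2$ when $\tilde r \geq d$; comparison with the target weight $\tilde r^{-(1+\sigma')}$ amounts in both regimes to $(c\tilde r)^{\sigma'} \lesssim 1$ respectively $(cd)^{\sigma'} \lesssim 1$, both valid since $cd \asymp 1$, $\tilde r \leq 2d$ in the first regime and $\tilde r \geq d$ in the second. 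The imaginary part $h_2$ satisfies the cruder bound $|h_2| \leq Kc/\tilde r^3$, and $\tilde r^{2+\sigma'} \cdot c/\tilde r^3 = c\tilde r^{\sigma'-1} \leq c \leq c^{1-\sigma'}$ since $\tilde r \geq 2$ and $\sigma' < 1$. The same scheme handles $E/V$, written as
\[ \frac{E}{V} = -2\frac{\nabla V_1}{V_1} \cdot \frac{\nabla V_{-1}}{V_{-1}} + (1-|V_1|^2)(1-|V_{-1}|^2), \]
and Lemma \ref{lemme3} expands $\nabla V_\varepsilon/V_\varepsilon = i\varepsilon\vec e_{\theta_\varepsilon}/r_\varepsilon + O(1/r_\varepsilon^3)$, yielding a real leading term $\sim \cos(\theta_1 - \theta_{-1})/(r_1 r_{-1})$ which is bounded exactly as above, imaginary corrections of order $1/(r_1^3 r_{-1}) + 1/(r_1 r_{-1}^3)$ already controlled by the same two-regime argument, and a further real correction $(1-|V_1|^2)(1-|V_{-1}|^2) = O(1/(r_1^2 r_{-1}^2))$ which is even smaller. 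The gradient bounds $\|\tilde r^{2+\sigma'}\nabla h_j\|_{L^\infty(\{\tilde r \geq 2\})}$ are obtained by differentiating the explicit expressions and using $|\nabla(1/r_\varepsilon)| \leq K/r_\varepsilon^2$ together with the estimates on $\nabla((\cos\theta_1)/r_1 - (\cos\theta_{-1})/r_{-1})$ obtained in the proof of Lemma \ref{nonmodV}, which gain one power of $1/\tilde r$ in all of the bounds above.

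The main obstacle is the sharp matching of the two regimes $\tilde r \leq d$ and $\tilde r \geq d$: the target rate $c^{1-\sigma'}/\tilde r^{1+\sigma'}$ is saturated exactly at the crossover scale $\tilde r \sim d \sim 1/c$, so any cruder bound (using only $r_1 r_{-1} \geq d\tilde r$ or only $r_1 r_{-1} \geq \tilde r^2$) fails. One must genuinely use the inequality $r_1 r_{-1} \geq \tilde r \max(d,\tilde r)$ together with $cd \asymp 1$ to interpolate between the near-field behavior dominated by a single vortex and the far-field behavior where both vortices contribute comparably.
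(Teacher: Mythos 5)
Your proof is correct and follows essentially the same strategy as the paper: isolate the real-valued leading terms of $E/V$ and $ic\partial_{x_2}V/V$ (of size $\asymp 1/(r_1r_{-1})$, resp. $cd/(r_1r_{-1})$), check that the imaginary parts decay one order faster, and close the weighted estimate using $cd\asymp 1$. The only cosmetic difference is that the paper organizes the crossover by splitting into half-planes and writing $r_1^{1+\sigma'}/(r_1r_{-1})=(r_1/r_{-1})^{\sigma'}r_{-1}^{-(1-\sigma')}$, whereas you split by $\tilde r\lessgtr d$ via $r_1r_{-1}\geq\tilde r\max(d,\tilde r)$; these are equivalent.
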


\begin{proof}
  We have defined the norm
  \[ \| h \|_{\ast \ast, \sigma', d} = \| V h \|_{C^1 (\{ \tilde{r} \leqslant
     3 \})} + \| \tilde{r}^{1 + \sigma'} h_1 \|_{L^{\infty} (\{ \tilde{r}
     \geqslant 2 \})} + \| \tilde{r}^{2 + \sigma'} h_2 \|_{L^{\infty} (\{
     \tilde{r} \geqslant 2 \})} + \| \tilde{r}^{2 + \sigma'} \nabla h
     \|_{L^{\infty} (\{ \tilde{r} \geqslant 2 \})}, \]
  thus we separate two areas for the computation: the first one is where
  $\tilde{r} \leqslant 3$ which will be easy and then far from the vortices,
  i.e. in $\{ \tilde{r} \geqslant 2 \}$, where the division by $V$ is not a
  problem.
  
\ 
  
  \begin{tmindent}
    Step 1.  Estimates for $E$.
  \end{tmindent}
  
\
  
  In (\ref{E2}), we showed that
  \[ E = (1 - | V_1 |^2) (1 - | V_{- 1} |^2) V_1 V_{- 1} - 2 \nabla V_1 .
     \nabla V_{- 1} . \]
  Near $V_1$, i.e. in $B (d \overrightarrow{e_1}, 3)$, we have from Lemma
  \ref{lemme3},
  \[ \| (1 - | V_{- 1} |^2) \|_{C^1 (\{ r_1 \leqslant 3 \})} \leqslant K c^2
     \quad \tmop{and} \quad \| \nabla V_{- 1} \|_{C^1 (\{ r_1 \leqslant 3 \})}
     \leqslant K c, \]
  hence
  \begin{equation}
    \left\| \frac{E}{V} V \right\|_{C^1 (\{ r_1 \leqslant 3 \})} \leqslant K c
    \leqslant o^{\sigma'}_{c \rightarrow 0} (1) c^{1 - \sigma'}, \label{iee3}
  \end{equation}
  where $o^{\sigma'}_{c \rightarrow 0} (1)$ is a quantity that for a fixed
  $\sigma' > 0$, goes to $0$ when $c \rightarrow 0$. By symmetry, the result
  holds in the area where $\tilde{r} \leqslant 3$.
  
  \
  
  We now turn to the estimates for $\tilde{r} \geqslant 2$. The first term
  $(1 - | V_1 |^2) (1 - | V_{- 1} |^2) $ of $\frac{E}{V}$ is real valued.
  Using the definition of $r_1$ and $r_{- 1}$ from (\ref{notation}), in the
  right half-plane, where $r_1 \leqslant r_{- 1}$ and $r_{- 1} \geqslant d
  \geqslant \frac{K}{c}$, we have from Lemma \ref{lemme3}
  \[ \| r_1^{1 + \sigma'} (1 - | V_1 |^2) (1 - | V_{- 1} |^2) \|_{L^{\infty}
     (\{ 2 \leqslant r_1 \leqslant r_{- 1} \})} \leqslant K \left\|
     \frac{1}{r_1^{1 - \sigma'} r_{- 1}^2} \right\|_{L^{\infty} (\{2 \leqslant
     r_1 \leqslant r_{- 1} \})} \]
  and
  \[ \| r_1^2 (1 - | V_1 |^2) (1 - | V_{- 1} |^2) \|_{L^{\infty} (\{ 2
     \leqslant r_1 \leqslant r_{- 1} \})} \leqslant K. \]
  In this area, $\frac{1}{r_{- 1}^2} \leqslant K c^2$ and $\frac{1}{r_1^{1 -
  \sigma'}} \leqslant \frac{1}{2^{1 - \sigma'}}$, thus
  \[ \| r_1^{1 + \sigma'} (1 - | V_1 |^2) (1 - | V_{- 1} |^2) \|_{L^{\infty}
     (\{ 2 \leqslant r_1 \leqslant r_{- 1} \})} \leqslant K (\sigma') c^2
     \leqslant o^{\sigma'}_{c \rightarrow 0} (1) c^{1 - \sigma'} . \]
  By symmetry, the same result holds for the other half-plane, hence
  \begin{equation}
    \| \tilde{r}^{1 + \sigma'} (1 - | V_1 |^2) (1 - | V_{- 1} |^2)
    \|_{L^{\infty} (\{ \tilde{r} \geqslant 2 \})} \leqslant o^{\sigma'}_{c
    \rightarrow 0} (1) c^{1 - \sigma'} \label{iee4} .
  \end{equation}
  From Lemma \ref{lemme3}, we have
  \[ \nabla V_{\varepsilon} (x) = i \varepsilon V_{\varepsilon} (x)
     \frac{x^{\bot}}{r^2} + O \left( \frac{1}{r^3} \right), \]
  hence
  \[ \frac{\nabla V_1 . \nabla V_{- 1}}{V_1 V_{- 1}} = \frac{y^{\bot}
     .z^{\bot}}{r_1^2 r_{- 1}^2} + O \left( \frac{1}{r_1^3 r_{- 1}} \right) +
     O \left( \frac{1}{r_{- 1}^3 r_1} \right) . \]
  Remark that the first term is real-valued. We compute first in the right
  half-plane, where $r_1 \leqslant r_{- 1}$ and $r_{- 1} \geqslant d \geqslant
  \frac{K}{c}$,
  \[ \left\| r_1^{1 + \sigma'} \frac{y^{\bot} .z^{\bot}}{r_1^2 r_{- 1}^2}
     \right\|_{L^{\infty} (\{ 2 \leqslant r_1 \leqslant r_{- 1} \})} \leqslant
     \left\| \frac{r_1^{1 + \sigma'}}{r_1 r_{- 1}} \right\|_{L^{\infty} (\{ 2
     \leqslant r_1 \leqslant r_{- 1} \})} . \]
  Since
  \[ \frac{r_1^{1 + \sigma'}}{r_1 r_{- 1}} = \left( \frac{r_1}{r_{- 1}}
     \right)^{\sigma'} \frac{1}{r_{- 1}^{1 - \sigma'}} \leqslant K (\sigma')
     c^{1 - \sigma'}, \]
  we deduce
  \[ \left\| r_1^{1 + \sigma'} \frac{y^{\bot} .z^{\bot}}{r_1^2 r_{- 1}^2}
     \right\|_{L^{\infty} (\{ 2 \leqslant r_1 \leqslant r_{- 1} \})} \leqslant
     K (\sigma') c^{1 - \sigma'} \]
  and by symmetry,
  \begin{equation}
    \left\| \tilde{r}^{1 + \sigma'} \frac{y^{\bot} .z^{\bot}}{r_1^2 r_{- 1}^2}
    \right\|_{L^{\infty} (\{ \tilde{r} \geqslant 2 \})} \leqslant K (\sigma')
    c^{1 - \sigma'} . \label{iee5}
  \end{equation}
  For the last two terms $O \left( \frac{1}{r_1^3 r_{- 1}} \right) + O \left(
  \frac{1}{r_{- 1}^3 r_1} \right)$,we will show that in the right half-plane
  \begin{equation}
    \left\| r_1^{2 + \sigma'} \frac{1}{r_1^3 r_{- 1}} \right\|_{L^{\infty} (\{
    2 \leqslant r_1 \leqslant r_{- 1} \})} + \left\| r_1^{2 + \sigma'}
    \frac{1}{r_{- 1}^3 r_1} \right\|_{L^{\infty} (\{ 2 \leqslant r_1 \leqslant
    r_{- 1} \})} \leqslant o^{\sigma'}_{c \rightarrow 0} (1) c^{1 - \sigma'} .
    \label{iee1}
  \end{equation}
  This immediately implies
  \begin{equation}
    \left\| r_1^{1 + \sigma'} \frac{1}{r_1^3 r_{- 1}} \right\|_{L^{\infty} (\{
    2 \leqslant r_1 \leqslant r_{- 1} \})} + \left\| r_1^{1 + \sigma'}
    \frac{1}{r_{- 1}^3 r_1} \right\|_{L^{\infty} (\{ 2 \leqslant r_1 \leqslant
    r_{- 1} \})} \leqslant o^{\sigma'}_{c \rightarrow 0} (1) c^{1 - \sigma'} .
    \label{iee2}
  \end{equation}
  We compute in the right half-plane where $r_1 \leqslant r_{- 1}$ and $r_{-
  1} \geqslant d \geqslant \frac{K}{c}$, $\frac{1}{r_{- 1}} \leqslant K
  c^{\nosymbol}$ and $\frac{1}{r_1^{1 - \sigma'}} \leqslant K (\sigma')$, thus
  \[ r_1^{2 + \sigma'} \frac{1}{r_1^3 r_{- 1}}_{} = \frac{1}{r_1^{1 - \sigma'}
     r_{- 1}} \leqslant K c \leqslant o^{\sigma'}_{c \rightarrow 0} (1) c^{1 -
     \sigma'} . \]
  Furthermore, still in the right half-plane,
  \[ r_1^{2 + \sigma'}  \frac{1}{r_{- 1}^3 r_1} = \left( \frac{r_1}{r_{- 1}}
     \right)^{1 + \sigma'} \frac{1}{r_{- 1}^{2 - \sigma'}} \leqslant K
     (\sigma') c^{2 - \sigma'} \leqslant o^{\sigma'}_{c \rightarrow 0} (1)
     c^{1 - \sigma'} . \]
  Gathering (\ref{iee1}) to (\ref{iee2}) and using the symmetry for the left
  half-plane, we deduce with the previous esimates (\ref{iee3}), (\ref{iee4}),
  (\ref{iee5}) that
  \[ \left\| V \left( \frac{E}{V} \right) \right\|_{C^1 (\{ \tilde{r}
     \leqslant 3 \})} + \left\| \tilde{r}^{1 + \sigma'}
     \mathfrak{R}\mathfrak{e} \left( \frac{E}{V} \right) \right\|_{L^{\infty}
     (\{ \tilde{r} \geqslant 2 \})} + \left\| \tilde{r}^{2 + \sigma'}
     \mathfrak{I}\mathfrak{m} \left( \frac{E}{V} \right) \right\|_{L^{\infty}
     (\{ \tilde{r} \geqslant 2 \})} \leqslant K (\sigma') c^{1 - \sigma'} . \]
  Now, for the estimate on $\nabla \left( \frac{E}{V} \right)$ in $\{
  \tilde{r} \geqslant 2 \}$, we have from Lemma \ref{lemme3}, for $\tilde{r}
  \geqslant 2$,
  \[ | \nabla ((1 - | V_1 |^2) (1 - | V_{- 1} |^2)) | \leqslant | \nabla | V_1
     |^2 (1 - | V_{- 1} |^2) | + | (1 - | V_1 |^2) \nabla | V_{- 1} |^2 |
     \leqslant \frac{K}{r_1^3 r_{- 1}^2} + \frac{K}{r_1^2 r_{- 1}^3}, \]
  and
  \[ \left| \nabla \left( \frac{\nabla V_1 . \nabla V_{- 1}}{V_1 V_{- 1}}
     \right) \right| \leqslant \left| \nabla \left( \frac{\nabla V_1}{V_1 }
     \right) . \frac{\nabla V_{- 1}}{V_{- 1}} \right| + \left| \frac{\nabla
     V_1}{V_1} . \nabla \left( \frac{\nabla V_{- 1}}{V_{- 1} } \right) \right|
     \leqslant \frac{K}{r_1^2 r_{- 1}} + \frac{K}{r_1 r_{- 1}^2}, \]
  thus, with similar estimates as previously, we deduce
  \begin{equation}
    \left\| \tilde{r}^{2 + \sigma'} \nabla \left( \frac{E}{V} \right)
    \right\|_{L^{\infty} (\{ \tilde{r} \geqslant 2 \})} \leqslant K (\sigma')
    c^{1 - \sigma'} . \label{halplz}
  \end{equation}
  This concludes the proof of
  \[ \left\| \frac{E}{V} \right\|_{\ast \ast, \sigma', d} \leqslant C'_1
     (\sigma') c^{1 - \sigma'} \]
  for some constant $C_1' (\sigma') > 0$ depending only on $\sigma'$.
 
 \ 
  
  \begin{tmindent}
    Step 2.  Estimates for $i c \frac{\partial_{x_2} V}{V}$.
  \end{tmindent}

\  
  
  First, near the vortices, we have $| \partial_{x_2} V | + | \nabla
  \partial_{x_2} V | \leqslant K$ a universal constant, therefore
  \[ \left\| i c \frac{\partial_{x_2} V}{V} V \right\|_{C^1 (\{ \tilde{r}
     \leqslant 3 \})} \leqslant K c \leqslant o^{\sigma'}_{c \rightarrow 0}
     (1) c^{1 - \sigma'} . \]
  We now turn to the estimate for $\tilde{r} \geqslant 2$. Recall Lemma
  \ref{fineest}, stating that for a universal constant $C > 0$, since $r_1,
  r_{- 1} \geqslant 2$,
  \[ \left| i c \frac{\partial_{x_2} V}{V} - 2 c d \frac{x_1^2 - d^2 -
     x_2^2}{r_{1^{\nosymbol \nosymbol}}^2 r_{- 1}^2} \right| \leqslant C
     \left( \frac{c}{r_1^3} + \frac{c}{r_{- 1}^3} \right) . \]
  Remark that $2 c d \frac{x_1^2 - d^2 - x_2^2}{r_{1^{\nosymbol \nosymbol}}^2
  r_{- 1}^2}$ is real-valued. Using that $c d \leqslant 2$, that
  \[ | x_1^2 - d^2 | = | (x_1 - d) (x_1 + d) | \leqslant r_1 r_{- 1} \]
  and also that $x_2^2 \leqslant r_1 r_{- 1}$, we deduce that in the right
  half-plane, where $r_1 \leqslant r_{- 1}$ and $r_{- 1} \geqslant d \geqslant
  \frac{K}{c}$,
  \[ \left\| r_1^{1 + \sigma'} 2 c d \frac{x_1^2 - d^2 -
     x_2^2}{r_{1^{\nosymbol \nosymbol}}^2 r_{- 1}^2} \right\|_{L^{\infty} (\{
     2 \leqslant r_1 \leqslant r_{- 1} \})} \leqslant K \left\| \frac{r_1^{1 +
     \sigma'}}{r_1 r_{- 1}} \right\|_{L^{\infty} (\{ 2 \leqslant r_1 \leqslant
     r_{- 1} \})}, \]
  and since we have
  \[ \frac{r_1^{1 + \sigma'}}{r_1 r_{- 1}} = \left( \frac{r_1}{r_{- 1}}
     \right)^{\sigma'} \frac{1}{r_{- 1}^{1 - \sigma'}} \leqslant K (\sigma')
     c^{1 - \sigma \prime}, \]
  we infer
  \[ \left\| 2 r_1^{1 + \sigma'} c d \frac{x_1^2 - d^2 -
     x_2^2}{r_{1^{\nosymbol \nosymbol}}^2 r_{- 1}^2} \right\|_{L^{\infty} (\{
     2 \leqslant r_1 \leqslant r_{- 1} \})} \leqslant K (\sigma') c^{1 -
     \sigma'} . \]
  It is easy to check that in the right half-plane
  \[ r_1^{2 + \sigma'} \left( \frac{c}{r_1^3} + \frac{c}{r_{- 1}^3} \right)
     \leqslant K c \leqslant o^{\sigma'}_{c \rightarrow 0} (1) c^{1 - \sigma},
  \]
  and therefore by symmetry for the left half-plane,
  \begin{eqnarray*}
    &  & \left\| V \left( i c \frac{\partial_{x_2} V}{V} \right)
    \right\|_{C^1 (\{ \tilde{r} \leqslant 3 \})} + \left\| \tilde{r}^{1 +
    \sigma'} \mathfrak{R}\mathfrak{e} \left( i c \frac{\partial_{x_2} V}{V}
    \right) \right\|_{L^{\infty} (\{ \tilde{r} \geqslant 2 \})}\\
    & + & \left\| \tilde{r}^{2 + \sigma'} \mathfrak{I}\mathfrak{m} \left( i c
    \frac{\partial_{x_2} V}{V} \right) \right\|_{L^{\infty} (\{ \tilde{r}
    \geqslant 2 \})}\\
    & \leqslant & K (\sigma') c^{1 - \sigma'} .
  \end{eqnarray*}
  From the proof of Lemma \ref{fineest}, we check (using Lemma \ref{nonmodV})
  that, if $\tilde{r} \geqslant 1$,
  \[ \left| \nabla \left( i c \frac{\partial_{x_2} V}{V} - 2 c d \frac{x_1^2 -
     d^2 - x_2^2}{r_{1^{\nosymbol \nosymbol}}^2 r_{- 1}^2} \right) \right|
     \leqslant K \left( \frac{c}{r_1^3} + \frac{c}{r_{- 1}^3} \right) . \]
  With $\left| \nabla \left( \frac{1}{r_{\pm 1}} \right) \right| \leqslant
  \frac{K}{r_{\pm 1}^2}$ if $\tilde{r} \geqslant 1$ and similar computations
  as previously, we check that
  \[ \left| \nabla \left( 2 c d \frac{x_1^2 - d^2 - x_2^2}{r_{1^{\nosymbol
     \nosymbol}}^2 r_{- 1}^2} \right) \right| \leqslant K (\sigma') c^{1 -
     \sigma'} . \]
  Therefore, there exists $C_1'' (\sigma') > 0$ such that
  \[ \left\| i c \frac{\partial_{x_2} V}{V} \right\|_{\ast \ast, \sigma', d}
     \leqslant C''_1 (\sigma') c^{1 - \sigma'} . \]
  We conclude by taking $C_1 (\sigma') = \max (C'_1 (\sigma'), C_1''
  (\sigma'))$.
\end{proof}

\begin{lemma}
  \label{L2141904}For $0 < \sigma < \sigma' < 1$, for $\Phi = V \Psi, \Phi' =
  V \Psi' \in \mathcal{E}_{\ast, \sigma, d}$ such that $\| \Psi \|_{\ast,
  \sigma, d}, \| \Psi' \|_{\ast, \sigma, d} \leqslant C_0$ with $C_0$ defined
  in Lemma \ref{lemma7}, if there exists $K_{} (\sigma, \sigma') > 0$ such
  that $\| \Psi \|_{\ast, \sigma, d}, \| \Psi' \|_{\ast, \sigma, d} \leqslant
  K (\sigma, \sigma') c^{1 - \sigma'}$, then
  \[ \left\| \frac{R (\Psi)}{V} \right\|_{\ast \ast, \sigma', d} \leqslant
     o^{\sigma'}_{c \rightarrow 0} (1) c^{1 - \sigma'} \]
  and
  \[ \left\| \frac{R (\Psi') - R (\Psi)}{V} \right\|_{\ast \ast, \sigma', d}
     \leqslant o^{\sigma'}_{c \rightarrow 0} (1) \| \Psi' - \Psi \|_{\ast,
     \sigma, d}, \]
  where the $o^{\sigma, \sigma'}_{c \rightarrow 0} (1)$ is a quantity that,
  for fixed $\sigma$ and $\sigma'$, goes to $0$ when $c \rightarrow 0$.
\end{lemma}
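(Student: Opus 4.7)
The proof plan rests on the crucial observation, recorded in Lemma~\ref{lemma7}, that $R(\Psi)$ is supported where $\eta \neq 0$, which by construction of $\tilde\eta$ equals $\{\tilde r < 2\}$. Consequently $R(\Psi)/V$ vanishes on $\{\tilde r \geq 2\}$, so in the norm $\|\cdot\|_{\ast\ast,\sigma',d}$ every term localized in $\{\tilde r \geq 2\}$ drops out, and the only surviving contribution $\|V\cdot R(\Psi)/V\|_{C^1(\{\tilde r \leq 3\})}$ equals $\|R(\Psi)\|_{C^1(\{\tilde r \leq 3\})}$ (so the apparent division by the vanishing $V$ never enters the estimate).

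Granting this reduction, the first inequality follows directly from the pointwise bound of Lemma~\ref{lemma7}. Since $\|\Phi\|_{C^2(\mathbbm{R}^2)} \leq \|\Psi\|_{\ast,\sigma,d} \leq K(\sigma,\sigma')c^{1-\sigma'}$ (which is $\leq C_0$ for $c$ small enough), one gets
\[
\|R(\Psi)/V\|_{\ast\ast,\sigma',d} \leq C\|\Phi\|^2_{C^2(\{\tilde r \leq 2\})} \leq CK^2(\sigma,\sigma')\, c^{2(1-\sigma')} = o^{\sigma'}_{c\to 0}(1)\, c^{1-\sigma'},
\]
using $c^{1-\sigma'}\to 0$ as $c\to 0$.

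For the Lipschitz-type bound, I would first upgrade Lemma~\ref{lemma7} to
\[
|R(\Psi')-R(\Psi)| + |\nabla(R(\Psi')-R(\Psi))| \leq C\bigl(\|\Phi\|_{C^2(\{\tilde r \leq 2\})} + \|\Phi'\|_{C^2(\{\tilde r \leq 2\})}\bigr)\|\Phi'-\Phi\|_{C^2(\{\tilde r \leq 2\})},
\]
valid whenever $\|\Phi\|_{C^2}, \|\Phi'\|_{C^2} \leq C_0$. This is obtained by applying the mean value theorem to the segment $\Phi_t := t\Phi' + (1-t)\Phi$, using that $R$, as written explicitly in Appendix~\ref{AA}, is a finite sum of smooth terms at least quadratic in $\Phi,\bar\Phi$ and their first two derivatives, so that $dR(\Phi_t)$ vanishes linearly with $\|\Phi_t\|_{C^2}$, while the exponential factors in $S(\Psi)$ remain uniformly controlled by the bound $C_0$. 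Combining with the same localization argument and with $\|\Psi\|_{\ast,\sigma,d}, \|\Psi'\|_{\ast,\sigma,d} \leq K(\sigma,\sigma')c^{1-\sigma'}$ yields
\[
\|(R(\Psi')-R(\Psi))/V\|_{\ast\ast,\sigma',d} \leq 2CK(\sigma,\sigma')\,c^{1-\sigma'}\|\Psi'-\Psi\|_{\ast,\sigma,d} = o^{\sigma'}_{c\to 0}(1)\,\|\Psi'-\Psi\|_{\ast,\sigma,d},
\]
which is the desired bound. The only real work is the Lipschitz upgrade of Lemma~\ref{lemma7}, which is a straightforward bookkeeping exercise given the explicit form of $R$ and requires no new analytic ingredient.
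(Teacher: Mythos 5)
Your proposal is correct and follows essentially the same route as the paper: localize via the support of $R(\Psi)$ in $\{\eta\neq 0\}$ so that only the $C^1(\{\tilde r\leqslant 3\})$ part of the $\ast\ast$-norm survives, apply the quadratic bound from Lemma~\ref{lemma7} together with $\|\Psi\|_{\ast,\sigma,d}\leqslant K(\sigma,\sigma')c^{1-\sigma'}$ for the first estimate, and use the at-least-quadratic structure of $R$ (your mean-value/segment argument is the same bookkeeping the paper alludes to) for the Lipschitz estimate. The only cosmetic point is that the control of $\|\Phi\|_{C^2(\{\tilde r\leqslant 3\})}$ is literally the first term of $\|\Psi\|_{\ast,\sigma,d}$, so no global $C^2(\mathbbm{R}^2)$ bound is needed.
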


\begin{proof}
  Since $\eta \neq 0$ only in the domain where $\| . \|_{\ast \ast, \sigma',
  d} = \| V. \|_{C^1 (\{ \tilde{r} \leqslant 3 \})}$ and $\| . \|_{\ast,
  \sigma, d} = \| V . \|_{C^2 (\{ \tilde{r} \leqslant 3 \})}$, we will work
  only with these two norms. Recall from Lemma \ref{lemma7} that $R (\Psi)$ is
  supported in $\{ \eta \neq 0 \}$ and
  \[ | R (\Psi) | + | \nabla R (\Psi) | \leqslant C \| \Phi \|^2_{C^2 (\{
     \nobracket \tilde{r} \leqslant 2 \} \nobracket)} \]
  since $\| \Psi \|_{\ast, \sigma, d} \leqslant C_0$. We deduce
  \[ \left\| \frac{R (\Psi)}{V} \right\|_{\ast \ast, \sigma', d} = \| R (\Psi)
     \|_{C^1 (\{ \tilde{r} \leqslant 3 \})} \leqslant K (\sigma') c^{2 - 2
     \sigma'} \leqslant o^{\sigma'}_{c \rightarrow 0} (1) c^{1 - \sigma'} . \]
  Furthermore, using the definition of $R (\Psi)$ in the proof of Lemma
  \ref{lemma7} we check that every term is at least quadratic in $\Psi$ (or
  its real or imaginary part), therefore, with $\| \Psi \|_{\ast, \sigma, d},
  \| \Psi' \|_{\ast, \sigma, d} \leqslant C_0$, $R (\Psi') - R (\Psi)$ can be
  estimated by
  \begin{eqnarray*}
    \left\| \frac{R (\Psi') - R (\Psi)}{V} \right\|_{\ast \ast, \sigma', d} &
    = & \| R (\Psi') - R (\Psi) \|_{C^1 (\{ \tilde{r} \leqslant 3 \})}\\
    & \leqslant & K (\| \Psi \|_{\ast, \sigma, d} + \| \Psi' \|_{\ast,
    \sigma, d}) \| \Psi' - \Psi \|_{\ast, \sigma, d}\\
    & \leqslant & o_{c \rightarrow 0} (1) \| \Psi' - \Psi \|_{\ast, \sigma,
    d} .
  \end{eqnarray*}
\end{proof}

\begin{lemma}
  \label{L2151904}For $0 < \sigma < \sigma' < 1$, for $\Phi = V \Psi, \Phi' =
  V \Psi' \in \mathcal{E}_{\ast, \sigma, d}$ such that $\| \Psi \|_{\ast,
  \sigma, d}, \| \Psi' \|_{\ast, \sigma, d} \leqslant C_0$ with $C_0$ defined
  in Lemma \ref{lemma7}, if there exists $K_{} (\sigma, \sigma') > 0$ such
  that $\| \Psi \|_{\ast, \sigma, d}, \| \Psi' \|_{\ast, \sigma, d} \leqslant
  K (\sigma, \sigma') c^{1 - \sigma'}$, then
  \[ \| (1 - \eta) (- \nabla \Psi . \nabla \Psi + | V |^2 S (\Psi)) \|_{\ast
     \ast, \sigma', d} \leqslant o^{\sigma, \sigma'}_{c \rightarrow 0} (1)
     c^{1 - \sigma'}, \]
  \[ \| (1 - \eta) (- \nabla \Psi' . \nabla \Psi' + \nabla \Psi . \nabla \Psi
     + | V |^2 (S (\Psi') - S (\Psi))) \|_{\ast \ast, \sigma', d} \leqslant
     o^{\sigma, \sigma'}_{c \rightarrow 0} (1) \| \Psi' - \Psi \|_{\ast,
     \sigma, d} . \]
\end{lemma}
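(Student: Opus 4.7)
The factor $(1-\eta)$ vanishes on $\{\tilde{r} \leqslant 1\}$, so both quantities to estimate are supported in $\{\tilde{r} \geqslant 1\}$. I split the $\|\cdot\|_{\ast\ast,\sigma',d}$ norm into its near part $\|V\cdot\|_{C^1(\{\tilde{r}\leqslant 3\})}$ and its decay part on $\{\tilde{r}\geqslant 2\}$. On the annulus $\{1\leqslant \tilde{r}\leqslant 3\}$, $|V|$ is bounded away from $0$ by a universal constant (since $\rho_1(r_1)\geqslant \rho_1(1)>0$ for $r_1\geqslant 1$), so $\|V\Psi\|_{C^2(\{\tilde{r}\leqslant 3\})}\leqslant \|\Psi\|_{\ast,\sigma,d}$ gives pointwise control of $\Psi$, $\nabla\Psi$, $\nabla^2\Psi$ by $K\|\Psi\|_{\ast,\sigma,d}$. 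Since $S(\Psi)=e^{2\Psi_1}-1-2\Psi_1$ is quadratic in $\Psi_1$ near $0$ (using $\|\Psi\|_{\ast,\sigma,d}\leqslant C_0$), and $\nabla\Psi\cdot\nabla\Psi$ is quadratic in $\nabla\Psi$, the $C^1$ norm of $V(1-\eta)(-\nabla\Psi\cdot\nabla\Psi+|V|^2S(\Psi))$ on $\{\tilde{r}\leqslant 3\}$ is bounded by $K\|\Psi\|_{\ast,\sigma,d}^2$, which by hypothesis is $\leqslant K\,K(\sigma,\sigma')^2 c^{2(1-\sigma')} = o^{\sigma,\sigma'}_{c\rightarrow 0}(1)\,c^{1-\sigma'}$.

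On the far region $\{\tilde{r}\geqslant 2\}$, I decompose $\nabla\Psi\cdot\nabla\Psi = (\partial_{x_1}\Psi)^2+(\partial_{x_2}\Psi)^2$ into
\[
\mathfrak{R}\mathfrak{e}(\nabla\Psi\cdot\nabla\Psi) = |\nabla\Psi_1|^2-|\nabla\Psi_2|^2, \qquad \mathfrak{I}\mathfrak{m}(\nabla\Psi\cdot\nabla\Psi) = 2\,\nabla\Psi_1\cdot\nabla\Psi_2.
\]
The asymmetric decays $|\nabla\Psi_1|\leqslant \|\Psi\|_{\ast,\sigma,d}/\tilde{r}^{2+\sigma}$ and $|\nabla\Psi_2|\leqslant \|\Psi\|_{\ast,\sigma,d}/\tilde{r}^{1+\sigma}$ from $\|\cdot\|_{\ast,\sigma,d}$ yield $|\mathfrak{R}\mathfrak{e}(\nabla\Psi\cdot\nabla\Psi)|\leqslant K\|\Psi\|_{\ast,\sigma,d}^2/\tilde{r}^{2+2\sigma}$ and $|\mathfrak{I}\mathfrak{m}(\nabla\Psi\cdot\nabla\Psi)|\leqslant K\|\Psi\|_{\ast,\sigma,d}^2/\tilde{r}^{3+2\sigma}$. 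Meanwhile $|V|^2S(\Psi)\in\mathbbm{R}$ with $|S(\Psi)|\leqslant K\Psi_1^2\leqslant K\|\Psi\|_{\ast,\sigma,d}^2/\tilde{r}^{2+2\sigma}$. Multiplying by the weights $\tilde{r}^{1+\sigma'}$ (real part) and $\tilde{r}^{2+\sigma'}$ (imaginary part) produces bounds of order $\|\Psi\|_{\ast,\sigma,d}^2/\tilde{r}^{1+2\sigma-\sigma'}$, uniformly bounded on $\{\tilde{r}\geqslant 2\}$ since $\sigma'<1<1+2\sigma$. The gradient estimates go through identically: I differentiate the quadratic/bilinear expressions, use $|\nabla^2\Psi_j|\leqslant \|\Psi\|_{\ast,\sigma,d}/\tilde{r}^{2+\sigma}$, and use $|\nabla(|V|^2)|=O(\tilde{r}^{-3})$ from Lemma \ref{nonmodV}; every resulting term decays at least as $\tilde{r}^{-(3+2\sigma)}$ and is tamed by the $\tilde{r}^{2+\sigma'}$ weight.

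The contraction estimate is obtained by the polarization identity $\nabla\Psi'\cdot\nabla\Psi'-\nabla\Psi\cdot\nabla\Psi = \nabla(\Psi'-\Psi)\cdot\nabla(\Psi'+\Psi)$, together with the mean-value representation
\[
S(\Psi')-S(\Psi) = 2(\Psi_1'-\Psi_1)\int_0^1 \bigl(e^{2\Psi_1+2t(\Psi_1'-\Psi_1)}-1\bigr)\,dt,
\]
which is bounded by $K(|\Psi_1|+|\Psi_1'|)\,|\Psi_1'-\Psi_1|$ since $\|\Psi\|_{\ast,\sigma,d},\|\Psi'\|_{\ast,\sigma,d}\leqslant C_0$. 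Rerunning the previous decay analysis with one factor replaced by (the decays of) $\Psi'-\Psi$ and the other by $\|\Psi\|_{\ast,\sigma,d}+\|\Psi'\|_{\ast,\sigma,d}\leqslant 2K(\sigma,\sigma')\,c^{1-\sigma'}$ produces the second bound with prefactor $o^{\sigma,\sigma'}_{c\rightarrow 0}(1)\,\|\Psi'-\Psi\|_{\ast,\sigma,d}$. The main bookkeeping subtlety is matching the asymmetric decays of $\Psi_1$ versus $\Psi_2$ (which differ by one power of $\tilde{r}$) against the asymmetric weights for the real and imaginary parts of $\|\cdot\|_{\ast\ast,\sigma',d}$; once the balance $1+2\sigma-\sigma'>0$ is identified, every contribution ends up uniformly bounded on the far region and the smallness comes purely from the quadratic nature of the nonlinearities combined with the hypothesis $\|\Psi\|_{\ast,\sigma,d}\leqslant K(\sigma,\sigma')c^{1-\sigma'}$.
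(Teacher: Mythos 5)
Your proposal is correct and follows essentially the same route as the paper: split the $\|\cdot\|_{\ast\ast,\sigma',d}$ norm into the region $\{\tilde r\leqslant 3\}$ (where $|V|$ is bounded below on $\{1-\eta\neq 0\}$ and the quadratic structure gives $c^{2(1-\sigma')}$) and $\{\tilde r\geqslant 2\}$ (where the asymmetric decays of $\nabla\Psi_1$, $\nabla\Psi_2$ against the weights $\tilde r^{1+\sigma'}$, $\tilde r^{2+\sigma'}$ balance because $1+2\sigma-\sigma'>0$), then polarize for the Lipschitz bound. The only cosmetic difference is your integral mean-value formula for $S(\Psi')-S(\Psi)$ in place of the paper's power-series expansion, which yields the same bound.
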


\begin{proof}
  As done in Lemma \ref{L2141904}, we check easily that
  \[ \| (1 - \eta) (\nabla \Psi . \nabla \Psi + | V |^2 S (\Psi)) V \|_{C^1
     (\{ \tilde{r} \leqslant 3 \})} \leqslant K (\sigma, \sigma') c^{1 -
     \sigma'} \| \Phi \|_{C^2 (\{ \tilde{r} \leqslant 3 \})}, \]
  since in the area where $(1 - \eta) \neq 0$, $C_1 \leqslant | V | \leqslant
  1$ for a universal constant $C_1 > 0$, $\Phi = V \Psi$ and using $\| V \Psi
  \|_{C^1 (\{ \tilde{r} \leqslant 3 \})} \leqslant K (\sigma, \sigma') c^{1 -
  \sigma'}$.
  
  \
  
  We then estimate (with $\eta = 0$ in $\{ \tilde{r} \geqslant 2 \}$)
  \[ \| \tilde{r}^{1 + \sigma'} \mathfrak{R}\mathfrak{e} (\nabla \Psi . \nabla
     \Psi) \|_{L^{\infty} (\{ \tilde{r} \geqslant 2 \})} \leqslant K \| \Psi
     \|^2_{\ast, \sigma, d} \left\| \frac{\tilde{r}^{1 +
     \sigma'}}{\tilde{r}^{2 + 2 \sigma}} \right\|_{L^{\infty} (\{ \tilde{r}
     \geqslant 2 \})} \leqslant K (\sigma, \sigma') c^{2 - 2 \sigma'}
     \leqslant o^{\sigma, \sigma'}_{c \rightarrow 0} (1) c^{1 - \sigma'}, \]
  and
  \begin{eqnarray*}
    \| \tilde{r}^{2 + \sigma'} \mathfrak{I}\mathfrak{m} (\nabla \Psi . \nabla
    \Psi) \|_{L^{\infty} (\{ \tilde{r} \geqslant 2 \})} & \leqslant & 2 \|
    \tilde{r}^{2 + \sigma'} \mathfrak{I}\mathfrak{m} (\nabla \Psi)
    .\mathfrak{R}\mathfrak{e} (\nabla \Psi) \|_{L^{\infty} (\{ \tilde{r}
    \geqslant 2 \})}\\
    & \leqslant & K \| \Psi \|^2_{\ast, \sigma, d} \left\| \frac{\tilde{r}^{2
    + \sigma'}}{\tilde{r}^{3 + 2 \sigma}} \right\|_{L^{\infty} (\{ \tilde{r}
    \geqslant 2 \})}\\
    & \leqslant & o^{\sigma, \sigma'}_{c \rightarrow 0} (1) c^{1 - \sigma'},
  \end{eqnarray*}
  and we check that with similar computations, that
  \[ \| \tilde{r}^{2 + \sigma'} \nabla (\nabla \Psi . \nabla \Psi)
     \|_{L^{\infty} (\{ \tilde{r} \geqslant 2 \})} \leqslant o^{\sigma,
     \sigma'}_{c \rightarrow 0} (1) c^{1 - \sigma'}, \]
  thus
  \[ \| (1 - \eta) (- \nabla \Psi . \nabla \Psi) \|_{\ast \ast, \sigma', d}
     \leqslant o^{\sigma, \sigma'}_{c \rightarrow 0} (1) c^{1 - \sigma'} . \]
  Now, since $(1 - \eta) (- \nabla \Psi' . \nabla \Psi' + \nabla \Psi . \nabla
  \Psi) = - (1 - \eta) (\nabla (\Psi' - \Psi) . \nabla (\Psi' + \Psi))$, with
  similar computations (and $\| \Psi' + \Psi \|_{\ast, \sigma, d} \leqslant 2
  K (\sigma, \sigma') c^{1 - \sigma'}$), we have
  \[ \| (1 - \eta) (- \nabla \Psi' . \nabla \Psi' + \nabla \Psi . \nabla \Psi)
     \|_{\ast \ast, \sigma', d} \leqslant o^{\sigma, \sigma'}_{c \rightarrow
     0} (1) \| \Psi' - \Psi \|_{\ast, \sigma, d} . \]
  Finally, recall that
  \[ S (\Psi) = e^{2\mathfrak{R}\mathfrak{e} (\Psi)} - 1 -
     2\mathfrak{R}\mathfrak{e} (\Psi) . \]
  Moreover, $e^{2\mathfrak{R}\mathfrak{e} (\Psi)} - 1 -
  2\mathfrak{R}\mathfrak{e} (\Psi)$ is real-valued and for $\tilde{r}
  \geqslant 2$, if $\| \Psi \|_{\ast, \sigma, d} \leqslant C_0$,
  \[ | \tilde{r}^{1 + \sigma'} | V |^2 (e^{2\mathfrak{R}\mathfrak{e} (\Psi)} -
     1 - 2\mathfrak{R}\mathfrak{e} (\Psi)) | \leqslant K | \tilde{r}^{1 +
     \sigma'} \mathfrak{R}\mathfrak{e}^2 (\Psi) | \leqslant K (\sigma,
     \sigma') \| \Psi \|_{\ast, \sigma, d}^2 \leqslant o^{\sigma, \sigma'}_{c
     \rightarrow 0} (1) c^{1 - \sigma'}, \]
  and with Lemma \ref{nonmodV},
  \begin{eqnarray*}
    &  & | \tilde{r}^{2 + \sigma'} \nabla (| V |^2
    (e^{2\mathfrak{R}\mathfrak{e} (\Psi)} - 1 - 2\mathfrak{R}\mathfrak{e}
    (\Psi))) |\\
    & \leqslant & 2 | \tilde{r}^{2 + \sigma'} \nabla \mathfrak{R}\mathfrak{e}
    (\Psi) (e^{2\mathfrak{R}\mathfrak{e} (\Psi)} - 1) | + 2 | \tilde{r}^{2 +
    \sigma'} \nabla (| V |^2) \nobracket (e^{2\mathfrak{R}\mathfrak{e} (\Psi)}
    - 1 - 2\mathfrak{R}\mathfrak{e} (\Psi) \nobracket) |\\
    & \leqslant & K \left( | \tilde{r}^{2 + \sigma'} \nabla
    \mathfrak{R}\mathfrak{e} (\Psi) \mathfrak{R}\mathfrak{e} (\Psi) | + \left|
    \frac{\tilde{r}^{2 + \sigma'}}{\tilde{r}^3} \mathfrak{R}\mathfrak{e}^2
    (\Psi) \right| \right)\\
    & \leqslant & K (\sigma, \sigma') \| \Psi \|_{\ast, \sigma, d}^2 \left\|
    \frac{\tilde{r}^{2 + \sigma'}}{\tilde{r}^{3 + 2 \sigma}}
    \right\|_{L^{\infty} (\{ \tilde{r} \geqslant 2 \})}\\
    & \leqslant & o^{\sigma, \sigma'}_{c \rightarrow 0} (1) c^{1 - \sigma'},
  \end{eqnarray*}
  hence
  \[ \| (1 - \eta) | V |^2 S (\Psi) \|_{\ast \ast, \sigma', d} \leqslant
     o^{\sigma, \sigma'}_{c \rightarrow 0} (1) c^{1 - \sigma'} . \]
  With similar comutations on
  \[ | V |^2 (S (\Psi') - S (\Psi)) = 2 | V |^2 (\mathfrak{R}\mathfrak{e}
     (\Psi') -\mathfrak{R}\mathfrak{e} (\Psi)) \sum_{n = 2}^{+ \infty} 2^{n -
     1} \sum_{k = 0}^{n - 1} \frac{\mathfrak{R}\mathfrak{e} (\Psi)^{n - 1 - k}
     \mathfrak{R}\mathfrak{e} (\Psi')^k}{n!}, \]
  we conclude with
  \[ \| (1 - \eta) (| V |^2 (S (\Psi') - S (\Psi))) \|_{\ast \ast, \sigma', d}
     \leqslant o^{\sigma, \sigma'}_{c \rightarrow 0} (1) \| \Psi' - \Psi
     \|_{\ast, \sigma, d} . \]
\end{proof}

Now, we end the proof of Proposition \ref{contractionest}

\

\begin{proof}[of Proposition \ref{contractionest}]
  We take the constants $C (\sigma, \sigma')$ defined in Proposition
  \ref{invertop} and $C_1 (\sigma')$ from Lemma \ref{fineestimate}. We then
  define $K_0 (\sigma, \sigma') \assign C (\sigma, \sigma') (C_1 (\sigma') +
  1)$.
  
  To apply the contraction mapping theorem, we need to show that for $\Phi = V
  \Psi, \Phi' = V \Psi' \in \mathcal{E}_{\ast, \sigma, d}$ with
  \[ \| \Psi \|_{\ast, \sigma, d}, \| \Psi' \|_{\ast, \sigma, d} \leqslant K_0
     (\sigma, \sigma') c^{1 - \sigma'}, \]
  we have for small $c > 0$,
  \begin{equation}
    \left\| \frac{F (\Psi)}{V} \right\|_{\ast \ast, \sigma', d} \leqslant
    \frac{K_0 (\sigma, \sigma')}{C (\sigma, \sigma')} c^{1 - \sigma'}
    \label{zest1}
  \end{equation}
  and
  \begin{equation}
    \left\| \frac{F (\Psi') - F (\Psi)}{V} \right\|_{\ast \ast, \sigma', d}
    \leqslant o^{\sigma, \sigma'}_{c \rightarrow 0} (1) \| \Psi' - \Psi
    \|_{\ast, \sigma, d} \label{zest2} .
  \end{equation}
  If these estimates hold, using Proposition \ref{invertop}, we have that the
  closed ball $B_{\| . \|_{\ast, \sigma, d}} (0, K_0 (\sigma, \sigma') c^{1 -
  \sigma'})$ is stable by $\Phi \mapsto V (\eta L (V.) + (1 - \eta) V L'
  (.))^{- 1} (\Pi_d^{\bot} (- F (\Phi / V)))$ and this operator is a
  contraction in the ball (for $c$ small enough, depending on $\sigma,
  \sigma'$), hence we can apply the contraction mapping theorem.
  
  From Lemma \ref{lemma7}, we have
  \[ F (\Psi) = E - i c \partial_{x_2} V + V (1 - \eta) (- \nabla \Psi .
     \nabla \Psi + | V |^2 S (\Psi)) + R (\Psi) . \]
  By Lemmas \ref{fineestimate} to \ref{L2151904}, we have, given that $c$ is
  small enough (depending only on $\sigma, \sigma'$), that both $\left(
  \ref{zest1} \right)$ and $\left( \ref{zest2} \right)$ hold. Therefore,
  defining $c_0 (\sigma, \sigma') > 0$ small enough such that all the required
  conditions on $c$ are satisfied if $c < c_0 (\sigma, \sigma')$, we end the
  proof of Proposition \ref{contractionest}.
  
  We have therefore constructed a function $\Phi = V \Psi \in
  \mathcal{E}_{\ast, \sigma, d}$ such that
  \[ \Phi = (\eta L (.) + (1 - \eta) V L' (. / V))^{- 1} (\Pi_d^{\bot} (- F
     (\Phi / V))) . \]
  Therefore, by definition of the operator $(\eta L (.) + (1 - \eta) V L' (. /
  V))^{- 1}$, we have, in the distribution sense,
  \[ \eta L (\Phi) + (1 - \eta) V L' (\Psi) = \Pi_d^{\bot} (- F (\Phi / V)),
  \]
  and thus, there exists $\lambda (c, d) \in \mathbbm{R}$ such that
  \[ \eta L (\Phi) + (1 - \eta) V L' (\Psi) + F (\Psi) = \lambda (c, d) Z_d .
  \]
\end{proof}

At this point, we have the existence of a function $\Phi = V \Psi \in
\mathcal{E}_{\ast, \sigma, d}$ depending on $c, d$ and a priori $\sigma,
\sigma'$, such that $\| \Psi \|_{\ast, \sigma, d} \leqslant K (\sigma,
\sigma') c^{1 - \sigma'}$ and
\begin{equation}
  \eta L (\Phi) + (1 - \eta) V L' (\Psi) + F (\Psi) = \lambda (c, d) Z_d
  \label{azz220}
\end{equation}
in the distribution sense for some $\lambda (c, d) \in \mathbbm{R}$. By using
elliptic regularity, we show easily that $\Phi \in C^{\infty} (\mathbbm{R}^2,
\mathbbm{C})$ and that (\ref{azz220}) is verified in the strong sense. The
goal is now to show that we can take $\lambda (c, d) = 0$ for a good choice of
$d$, but first we need a better estimate on $\Phi$ using the parameters
$\sigma$ and $\sigma'$. We denote by $\Phi_{\sigma, \sigma'} = V \Psi_{\sigma,
\sigma'}$ the solution obtained by Proposition \ref{contractionest} for the
values $\sigma < \sigma'$.

\begin{corollary}
  \label{sigmarem}For $0 < \sigma_1 < \sigma_1' < 1$, $0 < \sigma_2 <
  \sigma_2' < 1$, there exists $c_0 (\sigma_1, \sigma_1', \sigma_2, \sigma_2')
  > 0$ such that for $0 < c < c_0 (\sigma_1, \sigma_1', \sigma_2, \sigma_2')$,
  $\Phi_{\sigma_1, \sigma'_1} = V \Psi_{\sigma_1, \sigma_1'} = V
  \Psi_{\sigma_2, \sigma_2'} = \Phi_{\sigma_2, \sigma_2'}$. We can thus take
  any values of $\sigma, \sigma'$ with $\sigma < \sigma'$ and the estimate
  \[ \| \Psi \|_{\ast, \sigma, d} \leqslant K (\sigma, \sigma') c^{1 -
     \sigma'} \]
  holds for $0 < c < c_0 (\sigma, \sigma')$. In particular, for $c$ small
  enough,
  \[ \| \Phi \|_{C^2 (\{ \tilde{r} \leqslant 3 \})} \leqslant K c^{3 / 4} . \]
\end{corollary}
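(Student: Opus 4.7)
The plan is to exploit the fact that the contraction operator
\[ T(\Phi) \assign (\eta L + (1-\eta) V L' \circ V^{-1})^{-1}(\Pi_d^{\bot}(-F(\Phi / V))) \]
from Proposition \ref{contractionest} is intrinsically defined by inverting a single linear problem, so it yields one and the same function regardless of the pair $(\sigma, \sigma')$ used to verify its contractive properties. Both $\Phi_{\sigma_1, \sigma_1'}$ and $\Phi_{\sigma_2, \sigma_2'}$ will then arise as limits of the \emph{same} sequence of iterates starting from $0$.

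Without loss of generality assume $\sigma_1 \leqslant \sigma_2$; since $\tilde r^{1+\sigma_1} \leqslant \tilde r^{1+\sigma_2}$ on $\{\tilde r \geqslant 2\}$ (and likewise for the other weights), the embedding $\mathcal{E}_{\ast, \sigma_2, d} \hookrightarrow \mathcal{E}_{\ast, \sigma_1, d}$ is continuous. Denote by $B_i \subset \mathcal{E}_{\ast, \sigma_i, d}$ the closed ball of radius $K_0(\sigma_i, \sigma_i') c^{1-\sigma_i'}$ on which Proposition \ref{contractionest} gives the contraction. For any $\Phi = V\Psi \in B_1 \cap B_2$, Lemmas \ref{fineestimate}, \ref{L2141904} and \ref{L2151904} yield $F(\Psi)/V \in \mathcal{E}_{\ast\ast, \sigma_1', d} \cap \mathcal{E}_{\ast\ast, \sigma_2', d}$. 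Proposition \ref{p2191802} then produces a solution of the associated linear problem in each $\mathcal{E}_{\ast, \sigma_i, d}$; but the solution produced in $\mathcal{E}_{\ast, \sigma_2, d}$ also lies in $\mathcal{E}_{\ast, \sigma_1, d}$ by the embedding, so the a priori estimate of Proposition \ref{invertop} applied to the difference forces it to coincide with the $\mathcal{E}_{\ast, \sigma_1, d}$-solution. Thus $T(\Phi)$ is a single well-defined function, and it lies in $B_1 \cap B_2$ by the ball-stability afforded by Proposition \ref{contractionest} for each pair.

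Starting from $\Phi^{(0)} = 0 \in B_1 \cap B_2$ and iterating $\Phi^{(n+1)} = T(\Phi^{(n)})$ therefore produces a single sequence of functions in $B_1 \cap B_2$. The contraction property of $T$ on $B_i$ forces $\Phi^{(n)} \to \Phi_{\sigma_i, \sigma_i'}$ in $\| \cdot \|_{\ast, \sigma_i, d}$, and each of these norms dominates uniform convergence on compact sets (through the $C^2$-control near the vortices) and pointwise convergence at infinity (through the weighted $L^\infty$ tails). Hence both limits agree pointwise with the common pointwise limit of $\Phi^{(n)}$, giving $\Phi_{\sigma_1, \sigma_1'} = \Phi_{\sigma_2, \sigma_2'}$. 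It then suffices to set $c_0(\sigma_1, \sigma_1', \sigma_2, \sigma_2') \assign \min(c_0(\sigma_1, \sigma_1'), c_0(\sigma_2, \sigma_2'))$ using the thresholds of Proposition \ref{contractionest}.

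For the last estimate, picking for instance $\sigma = 1/8$ and $\sigma' = 1/4$, the common fixed point $\Phi$ satisfies $\| \Psi \|_{\ast, \sigma, d} \leqslant K(\sigma, \sigma') c^{1-\sigma'} = K c^{3/4}$, and the definition of $\| \cdot \|_{\ast, \sigma, d}$ directly bounds $\|\Phi\|_{C^2(\{\tilde r \leqslant 3\})} = \| V\Psi \|_{C^2(\{\tilde r \leqslant 3\})}$ by $\| \Psi \|_{\ast, \sigma, d}$, whence $\|\Phi\|_{C^2(\{\tilde r \leqslant 3\})} \leqslant K c^{3/4}$. The main subtlety lies in the step carried out in paragraph two: attempting instead to bootstrap the decay of a single $\Phi_{\sigma_1, \sigma_1'}$ directly up to the $(\sigma_2, \sigma_2')$-level would create a circular dependence between the non-linear $F$-terms (whose control requires decay) and the decay being estimated, whereas running the iteration from zero bootstraps both decay rates jointly from the very first iterate.
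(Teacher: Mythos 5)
Your argument is correct and rests on the same mechanism as the paper's own (very terse) proof: the inclusion $\mathcal{E}_{\ast, \sigma_2, d} \subset \mathcal{E}_{\ast, \sigma_1, d}$ for $\sigma_1 < \sigma_2$ combined with uniqueness of the fixed point of Proposition \ref{contractionest}; your Picard-iteration framing is just a more explicit way of invoking that uniqueness.
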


\begin{proof}
  This is because for $\sigma_1 < \sigma_2$, $\mathcal{E}_{\ast, \sigma_2}
  \subset \mathcal{E}_{\ast, \sigma_1}$ hence the fixed point for $\sigma_2$
  (for any $\sigma_2' > \sigma_2$) yields the same value of $\Psi$ as the
  fixed point for $\sigma_1$ for $c$ small enough (for any $\sigma_1' >
  \sigma_1$). In particular, this implies also that $\lambda (c, d)$ is
  independent of $\sigma, \sigma'$ (for $c$ small enough).
\end{proof}

\subsection{Estimation on the Lagrange multiplier $\lambda (c,
d)$}\label{orthocond}

To finish the construction of a solution of $(\tmop{TW}_c)$, we need to find a
link between $d$ and $c$ such that $\lambda (c, d) = 0$ in (\ref{azz220}).
Here, we give an estimate of $\lambda (c, d)$ for small values of $c$.

\begin{proposition}
  \label{dinc}For $\lambda (c, d), \Phi = V \Psi$ defined in the equation of
  Proposition \ref{contractionest}, namely
  \[ \eta L (\Phi) + (1 - \eta) V L' (\Psi) + F (\Psi) = \lambda (c, d) Z_d,
  \]
  we have, for any $0 < \sigma < 1$,
  \[ \lambda (c, d) \int_{\mathbbm{R}^2} | \partial_d V |^2 \eta = \pi \left(
     \frac{1}{d} - c \right) + O^{\sigma}_{c \rightarrow 0} (c^{2 - \sigma}) .
  \]
\end{proposition}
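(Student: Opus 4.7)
The strategy is to take the real $L^2$-scalar product of the equation
\[
\eta L(\Phi) + (1-\eta)VL'(\Psi) + F(\Psi) = \lambda(c,d)\, Z_d
\]
with a suitable localization of $\partial_d V$ (the slow $1/(1+\tilde r)$ decay of $\partial_d V$ from Lemma~\ref{ddVest} forces one to use a compactly supported test function), identify the main contributions on each side as $c\to 0$, and show that all other terms are $O^\sigma_{c\to 0}(c^{2-\sigma})$.

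First, combining Lemma~\ref{lemma7} with the definition of $F(\Psi)$, the left-hand side can be rewritten as
\[
L(\Phi) + \eta(E - ic\partial_{x_2}V) + V(1-\eta)(-\nabla\Psi\cdot\nabla\Psi + |V|^2 S(\Psi)) + R(\Psi) - (1-\eta)(E - ic\partial_{x_2}V)\Psi.
\]
The key fact is that $\partial_d V$ is almost in the kernel of $L$: differentiating $-\Delta V - (1-|V|^2)V = E$ with respect to $d$ gives $L_V(\partial_d V) = \partial_d E$, hence $L(\partial_d V) = \partial_d E - ic\partial_{x_2}\partial_d V$. Since $L$ is symmetric for $\langle\cdot,\cdot\rangle$, pairing $L(\Phi)$ against the localized $\partial_d V$ transfers the operator onto the test function, producing only the small contributions $\partial_d E$ (controlled via Lemma~\ref{nonmodV}) and commutators with the cutoff (supported in $\{\tilde r\leqslant 2\}$, hence uniformly bounded), each multiplied by $\Phi$, which is of size $O(c^{3/4})$ by Corollary~\ref{sigmarem}. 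Likewise, the quadratic remainders $R(\Psi)$, $|V|^2 S(\Psi)$, $\nabla\Psi\cdot\nabla\Psi$, and $(1-\eta)(E-ic\partial_{x_2}V)\Psi$ are shown to contribute at most $O(c^{2-\sigma})$ by Lemmas~\ref{L2141904},~\ref{L2151904}, and~\ref{fineestimate}. On the right-hand side, because $\eta\equiv 1$ on the support of $\tilde\eta(4r_1)+\tilde\eta(4r_{-1})$ (as $d$ is large), the pairing produces exactly $\lambda(c,d)\int|\partial_d V|^2\,\eta$ up to an error of the same order.

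The core of the proof is then the explicit computation of
\[
\big\langle \eta(E - ic\partial_{x_2}V),\ \partial_d V\big\rangle.
\]
Using $E = -2\nabla V_1\cdot\nabla V_{-1} + (1-|V_1|^2)(1-|V_{-1}|^2)V_1V_{-1}$ from~\eqref{E2} and $\partial_d V = -\partial_{x_1}V_1\cdot V_{-1} + V_1\cdot\partial_{x_1}V_{-1}$, the integral splits as a contribution near each vortex core plus an intermediate region. Near $d\vec{e}_1$ one uses Lemma~\ref{infest} and the far-field expansion $\nabla V_{-1}(x)\simeq -iV_{-1}(x)x^\perp/|x|^2$ from Lemma~\ref{lemme3} to identify $\langle E,\eta\partial_d V\rangle = \pi/d + O(c^{2-\sigma})$, which is the classical dipole interaction between opposite vortices at distance $2d$. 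For the transport part, the expansion from Lemma~\ref{fineest}, namely $ic\,\partial_{x_2}V/V \simeq 2cd(x_1^2 - d^2 - x_2^2)/(r_1^2 r_{-1}^2)$ in the far field, combined with the explicit form of $\partial_d V$ near each core, yields $\langle -ic\partial_{x_2}V,\eta\partial_d V\rangle = -\pi c + O(c^{2-\sigma})$, the Magnus force. Adding the two contributions produces the right-hand side $\pi(1/d - c) + O(c^{2-\sigma})$.

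The principal difficulty is precisely these two explicit leading-order computations: since $|\partial_d V|\simeq 1/(1+\tilde r)$, neither $\int E\,\partial_d V$ nor $\int \partial_{x_2}V\,\partial_d V$ is absolutely convergent in any natural way, and the $\pi/d$ and $\pi c$ coefficients emerge only after delicate cancellations between the behaviour near the two vortex centres and in the intermediate annulus. The symmetries $\Psi(x_1,x_2) = \Psi(-x_1,x_2) = \overline{\Psi(x_1,-x_2)}$ together with the fact that the leading order of $E$ and $ic\partial_{x_2}V$ are real-valued with the explicit form given by Lemmas~\ref{lemme3} and~\ref{fineest} are essential to carry out the computation by integration on large discs and passing to the limit, while Corollary~\ref{sigmarem} guarantees that all corrections fall into the acceptable error $O^\sigma_{c\to 0}(c^{2-\sigma})$.
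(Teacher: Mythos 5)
Your strategy is essentially the paper's: pair the equation with $\partial_d V$, transfer $L$ onto the test function, show that everything except the source term is $O^{\sigma}_{c\to 0}(c^{2-\sigma})$, and extract $\pi/d$ from $\langle E,\partial_d V\rangle$ and $-\pi c$ from $\langle -ic\partial_{x_2}V,\partial_d V\rangle$. Your identity $L_V(\partial_d V)=\partial_d E$ (hence $L(\partial_d V)=\partial_d E - ic\partial_{x_2}\partial_d V$) is a clean substitute for the paper's explicit computation of $\tilde L(V_{-1}\partial_{x_1}V_1)$ in Step~7, and it gives the same bound $|L(\partial_d V)|\leqslant Kc(1+\tilde r)^{-2}$.

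Two points deserve attention. First, the paper does \emph{not} localize the test function here: it pairs against the full $\partial_d V$ and checks that the boundary terms in the integration by parts vanish, because after exploiting the real-part structure the integrands are absolutely convergent --- e.g.\ $\mathfrak{R}\mathfrak{e}(\partial_{x_1}V_{-1}\overline{V_{-1}})=O(r_{-1}^{-3})$ since the leading order of $\partial_{x_1}V_{-1}\overline{V_{-1}}$ is purely imaginary, so $|\partial_{x_1}V_1|^2\,\mathfrak{R}\mathfrak{e}(\partial_{x_1}V_{-1}\overline{V_{-1}})\lesssim c^{2-\sigma}(1+\tilde r)^{-3-\sigma}$. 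Your localization is a workable alternative (it is what the paper does later, in Section~\ref{qcc1}), but then the cutoff radius must grow with $d$ (e.g.\ like $d^{\varepsilon}$) and the commutator/boundary terms must be estimated; a fixed-size localization would not capture the leading order. Second, and more seriously, your decomposition carries a spurious $\eta$: in $F(\Psi)$ the source term is $E-ic\partial_{x_2}V$ with coefficient $1$, not $\eta(E-ic\partial_{x_2}V)$. If you really computed $\langle\eta(E-ic\partial_{x_2}V),\partial_d V\rangle$ with the ansatz cutoff $\eta$ (supported in $\{\tilde r\leqslant 2\}$), the key radial integral $\int |V_1|^2\frac{\rho_1'}{r_1\rho_1}=\pi[\rho_1^2]_0^{R}$ would be truncated at $R=2$ and yield $\pi\rho_1(2)^2/d$ rather than $\pi/d$: the constant $\pi$ only emerges by integrating out to $r_1\sim d^{1/2}$, where $\rho_1^2\to 1$. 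The rest of your description (splitting near each core, using the symmetries and the real-valuedness of the leading orders of $E/V$ and $ic\partial_{x_2}V/V$) makes clear you intend the full integral, so this reads as a slip, but it must be corrected for the constants to come out right.
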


We will take the scalar product of $\eta L (\Phi) + (1 - \eta) V L' (\Psi) + F
(\Psi) - \lambda (c, d) Z_d$ with $\partial_d V$. We will show in the proof
that in the term $\langle \eta L (\Phi) + (1 - \eta) V L' (\Psi) + F (\Psi),
\partial_d V \rangle$, the largest contribution come from the source term $E -
i c \partial_{x_2} V$in $F (\Psi)$. We will show that $\langle E, \partial_d V
\rangle \simeq \frac{\pi}{d}$ and $\langle - i c \partial_{x_2} V, \partial_d
V \rangle \simeq - \pi c$, so that, at the leading order, $\lambda (c, d)
\thicksim K \left( \frac{1}{d} - c \right)$. In the proof, steps 1, 2 and 7
show that the terms other than $E - i c \partial_{x_2} V$ are of lower order,
and steps 3-6 compute exactly the contribution of these leading order terms.

\

\begin{proof}
  Recall from Lemma \ref{lemma7} that $L (\Phi) = (E - i c \partial_{x_2} V)
  \Psi + V L' (\Psi)$, hence we write the equation under the form
  \[ L (\Phi) - (1 - \eta)  (E - i c \partial_{x_2} V) \Psi + F (\Psi) =
     \lambda (c, d) Z_d . \]
  We want to take the scalar product with $\partial_d V$. We will compute the
  terms $(1 - \eta) E \Psi$ (step 1), $F (\Psi)$ (steps 2 to 6) and in step 7
  we will show that we can do an integration by parts for $\langle L (\Phi),
  Z_d \rangle$ and compute its contribution.
  
  \
  
  We have by definition $Z_d = \eta \partial_d V$, hence
  \[ \langle Z_d, \partial_d V \rangle = \int_{\mathbbm{R}^2} | \partial_d V
     |^2 \eta \]
  which is finite and independent of $d$ since $\eta = 0$ outside $\{
  \tilde{r} \leqslant 2 \}$. Recall that $\| \Psi \|_{\ast, \sigma} \leqslant
  K (\sigma, \sigma') c^{1 - \sigma'}$ where
  \begin{eqnarray*}
    \| \Psi \|_{\ast, \sigma} & = & \| V \Psi \|_{C^2 (\{ \tilde{r} \leqslant
    3 \})} + \| \tilde{r}^{1 + \sigma} \Psi_1 \|_{L^{\infty} (\{ \tilde{r}
    \geqslant 2 \})} + \| \tilde{r}^{2 + \sigma} \nabla \Psi_1 \|_{L^{\infty}
    (\{ \tilde{r} \geqslant 2 \})}\\
    & + & \| \tilde{r}^{\sigma} \Psi_2 \|_{L^{\infty} (\{ \tilde{r} \geqslant
    2 \})} + \| \tilde{r}^{1 + \sigma} \nabla \Psi_2 \|_{L^{\infty} (\{
    \tilde{r} \geqslant 2 \})} + \| \tilde{r}^{2 + \sigma} \nabla^2 \Psi
    \|_{L^{\infty} (\{ \tilde{r} \geqslant 2 \})},
  \end{eqnarray*}
  which we will heavily use with several values of $\sigma, \sigma'$ in the
  following computations, in particular for $\sigma \in] 0, 1 [$, the estimate
  \[ \| \Psi \|_{\ast, \sigma / 2, d} \leqslant K (\sigma) c^{1 - \sigma} . \]
  
\ 
 
  \begin{tmindent}
    Step 1.  We have $\langle (1 - \eta) (E - i c \partial_{x_2} V) \Psi,
    \partial_d V \rangle = O_{c \rightarrow 0}^{\sigma} (c^{2 - \sigma})$.
  \end{tmindent}
  
\
  
  From Lemma \ref{ddVest}, we have
  \begin{equation}
    | \partial_d V | \leqslant \frac{K}{1 + \tilde{r}} . \label{ddv}
  \end{equation}
  In (\ref{E2}), we showed that
  \[ E = - 2 \nabla V_1 . \nabla V_{- 1} + (1 - | V_1 |^2) (1 - | V_{- 1} |^2)
     V_1 V_{- 1}, \]
  hence, with Lemmas \ref{lemme3} and \ref{fineest} (estimating $i c
  \partial_{x_2} V$ as in step 2 of the proof of Lemma \ref{fineestimate}), we
  have
  \[ | E - i c \partial_{x_2} V | \leqslant \frac{K c}{1 + \tilde{r}} \]
  by using $| \nabla V_1 | \leqslant \frac{K}{1 + \tilde{r}}$, $| \nabla V_{-
  1} | \leqslant \frac{K}{d} \leqslant K c$ and $| 1 - | V_{- 1} |^2 |
  \leqslant K c^2$ in the right half-plane and the symmetric estimate in the
  other one. We also have, in $\{ 1 - \eta \neq 0 \}$,
  \[ | \Psi | \leqslant K \frac{\| \Psi \|_{\ast, \sigma / 2, d}}{(1 +
     \tilde{r})^{\sigma / 2}} \leqslant \frac{K (\sigma) c^{1 - \sigma}}{(1 +
     \tilde{r})^{\sigma / 2}}, \]
  hence
  \[ | \langle (1 - \eta) (E - i c \partial_{x_2} V) \Psi, \partial_d V
     \rangle | \leqslant K (\sigma)  \int_{\mathbbm{R}^2} \frac{c^{2 -
     \sigma}}{(1 + \tilde{r})^{2 + \sigma / 2}} = O_{c \rightarrow 0}^{\sigma}
     (c^{2 - \sigma}) . \]
  
\  
  
  \begin{tmindent}
    Step 2.  We have $\langle F (\Psi), \partial_d V \rangle = \langle E - i c
    \partial_{x_2} V, \partial_d V \rangle + O_{c \rightarrow 0}^{\sigma}
    (c^{2 - \sigma})$.
  \end{tmindent}
  
\

  In this step, we want to show that the nonlinear terms in $F (\Psi)$ are
  negligible. Recall that
  \[ F (\Psi) = E - i c \partial_{x_2} V + R (\Psi) + V (1 - \eta) (- \nabla
     \Psi . \nabla \Psi + | V |^2 S (\Psi)) . \]
  We first show that
  \[ \langle R (\Psi), \partial_d V \rangle = O_{c \rightarrow 0}^{\sigma}
     (c^{2 - \sigma}) . \]
  Indeed, $R (\Psi)$ is localized in $\{ \tilde{r} \leqslant 2 \}$ and $| R
  (\Psi) | \leqslant C \| \Phi \|_{C^1 (\{ \tilde{r} \leqslant 3 \})}^2$
  (since $\| \Psi \|_{\ast, \sigma, d} \leqslant C_0$, see Lemma
  \ref{lemma7}), and using that in $\{ \tilde{r} \leqslant 3 \}$, $| \Phi | +
  | \nabla \Phi | \leqslant K (\sigma) c^{1 - \sigma / 2}$ yields
  \[ | R (\Psi) | \leqslant c \| \partial_{x_2} \Phi \|_{C^0 (\{ \tilde{r}
     \leqslant 3 \})} + C \| \Phi \|^2_{C^1 (\{ \tilde{r} \leqslant 3 \})} =
     O_{c \rightarrow 0}^{\sigma} (c^{2 - \sigma}) . \]
  Now, we use $\| \Psi \|_{\ast, \sigma / 2, d} \leqslant K (\sigma) c^{1 -
  \sigma}$ to estimate, in $\{ 1 - \eta \neq 0 \}$,
  \[ | \nabla \Psi . \nabla \Psi | \leqslant \frac{K \| \Psi \|_{\ast, \sigma,
     d}^2}{(1 + \tilde{r})^{2 + \sigma}} \leqslant \frac{K (\sigma) c^{2 -
     \sigma}}{(1 + \tilde{r})^{2 + \sigma}}, \]
  therefore
  \[ | \langle - \nabla \Psi . \nabla \Psi V (1 - \eta), \partial_d V \rangle
     | \leqslant K c^{2 - \sigma} \int_{\mathbbm{R}^2} \frac{1}{(1 +
     \tilde{r})^{3 + \sigma}} = O_{c \rightarrow 0}^{\sigma} (c^{2 - \sigma})
     . \]
  The same argument can be made for
  \[ | \langle - | V |^2 S (\Psi) V (1 - \eta), \partial_d V \rangle | = O_{c
     \rightarrow 0}^{\sigma} (c^{2 - \sigma}) \]
  by using $S (\Psi) = e^{2\mathfrak{R}\mathfrak{e} (\Psi)} - 1 -
  2\mathfrak{R}\mathfrak{e} (\Psi)$ and the fact that it is real-valued.
  
\
  
  \begin{tmindent}
    Step 3.  We have $\langle E - i c \partial_{x_2} V, \partial_d V \rangle =
    - 2 \int_{\{ x_1 \geqslant 0 \}} \mathfrak{R}\mathfrak{e} \left( (E - i c
    \partial_{x_2} V) \overline{\partial_{x_1} V_1 V_{- 1}} \right) + O_{c
    \rightarrow 0}^{\sigma} (c^{2 - \sigma})$.
  \end{tmindent}
 
 \ 
  
  The goal of this step is to simplify the computation by using the symmetry.
  By symmetry, we can only look in the right half-plane:
  \[ \langle E - i c \partial_{x_2} V, \partial_d V \rangle = 2 \int_{\{ x_1
     \geqslant 0 \}} \mathfrak{R}\mathfrak{e} ((E - i c \partial_{x_2} V)
     \overline{\partial_d V}) . \]
  Recall that $\partial_d V = - \partial_{x_1} V_1 V_{- 1} + \partial_{x_1}
  V_{- 1} V_1$, hence we need to show that
  \[ \int_{\{ x_1 \geqslant 0 \}} \mathfrak{R}\mathfrak{e} \left( (E - i c
     \partial_{x_2} V) \overline{\partial_{x_1} V_{- 1} V_1} \right) = O_{c
     \rightarrow 0}^{\sigma} (c^{2 - \sigma}) . \]
  We compute
  \begin{eqnarray*}
    \int_{\{ x_1 \geqslant 0 \}} \mathfrak{R}\mathfrak{e} \left( (E - i c
    \partial_{x_2} V) \overline{\partial_{x_1} V_{- 1} V_1} \right) & = &
    \int_{\{ x_1 \geqslant 0 \}} \mathfrak{R}\mathfrak{e} \left( \left(
    \frac{E - i c \partial_{x_2} V}{V} | V |^2 \right)
    \overline{\frac{\partial_{x_1} V_{- 1}}{V_{- 1}}} \right)\\
    & = & \int_{\{ x_1 \geqslant 0 \}} \mathfrak{R}\mathfrak{e} \left(
    \frac{E - i c \partial_{x_2} V}{V} | V |^2 \right)
    \mathfrak{R}\mathfrak{e} \left( \frac{\partial_{x_1} V_{- 1}}{V_{- 1}}
    \right)\\
    & + & \int_{\{ x_1 \geqslant 0 \}} \mathfrak{I}\mathfrak{m} \left(
    \frac{E - i c \partial_{x_2} V}{V} | V |^2 \right)
    \mathfrak{I}\mathfrak{m} \left( \frac{\partial_{x_1} V_{- 1}}{V_{- 1}}
    \right) .
  \end{eqnarray*}
  In the right half-plane, we have $d \leqslant r_{- 1}$ and $\tilde{r}
  \leqslant r_1$, hence
  \[ \left| \mathfrak{R}\mathfrak{e} \left( \frac{\partial_{x_1} V_{- 1}}{V_{-
     1}} \right) \right| \leqslant \frac{K}{r^3_{- 1}} \leqslant \frac{K c^{1
     - \sigma / 2}}{(1 + \tilde{r})^{2 + \sigma / 2}}, \]
  \[ \left| \mathfrak{I}\mathfrak{m} \left( \frac{\partial_{x_1} V_{- 1}}{V_{-
     1}} \right) \right| \leqslant \frac{K}{r_{- 1}} \leqslant \frac{K c^{1 -
     \sigma / 2}}{(1 + \tilde{r})^{\sigma / 2}}, \]
  from Lemma \ref{lemme3}. Moreover,
  \[ \left| \mathfrak{R}\mathfrak{e} \left( \frac{E - i c \partial_{x_2} V}{V}
     | V |^2 \right) \right| \leqslant \frac{K c^{1 - \sigma / 2}}{(1 +
     \tilde{r})^{1 + \sigma / 2}}, \]
  \[ \left| \mathfrak{I}\mathfrak{m} \left( \frac{E - i c \partial_{x_2} V}{V}
     | V |^2 \right) \right| \leqslant \frac{K c^{1 - \sigma / 2}}{(1 +
     \tilde{r})^{2 + \sigma / 2}}, \]
  from Lemma \ref{fineestimate}. We thus deduce that
  \[ \left| \int_{\{ x_1 \geqslant 0 \}} \mathfrak{R}\mathfrak{e} \left( (E -
     i c \partial_{x_2} V) \overline{\partial_{x_1} V_{- 1} V_1} \right)
     \right| \leqslant K c^{1 - \sigma / 2}  \int_{\mathbbm{R}^2} \frac{c^{1 -
     \sigma / 2}}{(1 + \tilde{r})^{2 + \sigma}} = O_{c \rightarrow 0}^{\sigma}
     (c^{2 - \sigma}) . \]

\
  
  \begin{tmindent}
    Step 4.  We have
    \[ \int_{\{ x_1 \geqslant 0 \}} \mathfrak{R}\mathfrak{e} \left( E
       \overline{\partial_{x_1} V_1 V_{- 1}} \right) = - 2 \int_{\{ x_1
       \geqslant 0 \}} \mathfrak{R}\mathfrak{e} \left( \partial_{x_2} V_1
       \overline{\partial_{x_1} V_1} \partial_{x_2} V_{- 1} \overline{V_{- 1}}
       \right) + O_{c \rightarrow 0}^{\sigma} (c^{2 - \sigma}) . \]
  \end{tmindent}
  
\ 
  
  The goal of this step is to compute the part of $E$ that produces the higher
  order term. Recall from (\ref{E2}) that
  \[ E = - 2 \nabla V_1 . \nabla V_{- 1} + (1 - | V_1 |^2) (1 - | V_{- 1} |^2)
     V_1 V_{- 1} \]
  and since
  \[ | (1 - | V_1 |^2) (1 - | V_{- 1} |^2) | \leqslant \frac{K c^2}{(1 +
     \tilde{r})^2} \]
  by Lemma \ref{lemme3}, we deduce
  \[ \int_{\{ x_1 \geqslant 0 \}} \mathfrak{R}\mathfrak{e} \left( (1 - | V_1
     |^2) (1 - | V_{- 1} |^2) V_1 V_{- 1} \overline{\partial_{x_1} V_1 V_{-
     1}} \right) = O_{c \rightarrow 0}^{\sigma} (c^{2 - \sigma}) . \]
  Now we compute the first contribution from $- 2 \nabla V_1 . \nabla V_{- 1}
  = - 2 \partial_{x_1} V_1 \partial_{x_1} V_{- 1} - 2 \partial_{x_2} V_1
  \partial_{x_2} V_{- 1}$,
  \[ \int_{\{ x_1 \geqslant 0 \}} \mathfrak{R}\mathfrak{e} \left( (- 2
     \partial_{x_1} V_1 \partial_{x_1} V_{- 1}) \overline{\partial_{x_1} V_1
     V_{- 1}} \right) = - 2 \int_{\{ x_1 \geqslant 0 \}} | \partial_{x_1} V_1
     |^2 \mathfrak{R}\mathfrak{e} (\partial_{x_1} V_{- 1} \overline{V_{- 1}})
     . \]
  From Lemma \ref{lemme3} we have
  \[ \mathfrak{R}\mathfrak{e} (\partial_{x_1} V_{- 1} \overline{V_{- 1}}) = O
     \left( \frac{1}{r_{- 1}^3} \right) \]
  since the main part in $\partial_{x_1} V_{- 1} \overline{V_{- 1}}$ is purely
  imaginary. Using $r_1 \leqslant r_{- 1} \tmop{and} r_{- 1} \geqslant d
  \geqslant \frac{K}{c}$ in the right half-plane, we have $\frac{1}{r_{- 1}^3}
  \leqslant \frac{K c^{2 - \sigma}}{(1 + \tilde{r})^{1 + \sigma}}$ and, noting
  that $| \partial_{x_1} V_1 |^2 \leqslant \frac{K}{(1 + \tilde{r})^2}$, we
  obtain
  \[ \int_{\{ x_1 \geqslant 0 \}} | \partial_{x_1} V_1 |^2  |
     \mathfrak{R}\mathfrak{e} (\partial_{x_1} V_{- 1} \overline{V_{- 1}}) |
     \leqslant K c^{2 - \sigma} \int_{\{ x_1 \geqslant 0 \}} \frac{1}{(1 +
     \tilde{r})^{3 + \sigma}} = O_{c \rightarrow 0} (c^{5 / 4}) . \]
  Finally, the second contribution from $- 2 \nabla V_1 . \nabla V_{- 1}$ is
  \[ \int_{\{ x_1 \geqslant 0 \}} \mathfrak{R}\mathfrak{e} \left( (- 2
     \partial_{x_2} V_1 \partial_{x_2} V_{- 1}) \overline{\partial_{x_1} V_1
     V_{- 1}} \right) = - 2 \int_{\{ x_1 \geqslant 0 \}}
     \mathfrak{R}\mathfrak{e} \left( \partial_{x_2} V_1
     \overline{\partial_{x_1} V_1} \partial_{x_2} V_{- 1} \overline{V_{- 1}}
     \right) \]
  which concludes the proof of this step.
  
\ 
  
  \begin{tmindent}
    Step 5.  We have $\int_{\{ x_1 \geqslant 0 \}} \mathfrak{R}\mathfrak{e}
    \left( E \overline{\partial_{x_1} V_1 V_{- 1}} \right) = \frac{\pi}{d} +
    O_{c \rightarrow 0}^{\sigma} (c^{2 - \sigma})$.
  \end{tmindent}
  
\  
  
  By Lemma \ref{lemme3}, we have
  \[ \partial_{x_2} V_{- 1} \overline{V_{- 1}} = - i | V_{- 1} |^2 \frac{y_1 +
     2 d}{r_{- 1}^2} + O \left( \frac{1}{r_{- 1}^3} \right) . \]
  The $O \left( \frac{1}{r_{- 1}^3} \right)$ yielding a term which is a $O_{c
  \rightarrow 0}^{\sigma} (c^{2 - \sigma})$ as in step 4, therefore
  \[ \int_{\{ x_1 \geqslant 0 \}} \mathfrak{R}\mathfrak{e} \left( (- 2
     \partial_{x_2} V_1 \partial_{x_2} V_{- 1}) \overline{\partial_{x_1} V_1
     V_{- 1}} \right) = 2 \int_{\{ x_1 \geqslant 0 \}}
     \mathfrak{R}\mathfrak{e} \left( i \partial_{x_2} V_1
     \overline{\partial_{x_1} V_1} \right) | V_{- 1} |^2 \frac{y_1 + 2 d}{r_{-
     1}^2} + O_{c \rightarrow 0}^{\sigma} (c^{2 - \sigma}) . \]
  Now we compute in polar coordinate around $d \overrightarrow{e_1}$, writing
  $V_1 = \rho_1 (r_1) e^{i \theta_1}$. From Lemma \ref{dervor}, we have
  \[ \partial_{x_1} V_1 = \left( \cos (\theta_1) \frac{\rho'_1 (r_1)}{\rho_1
     (r_1)} - \frac{i}{r_1} \sin (\theta_1) \right) V_1, \]
  \[ \partial_{x_2} V_1 = \left( \sin (\theta_1) \frac{\rho'_1 (r_1)}{\rho_1
     (r_1)} + \frac{i}{r_1} \cos (\theta_1) \right) V_1 . \]
  We then compute
  \[ \mathfrak{R}\mathfrak{e} \left( i \partial_{x_2} V_1
     \overline{\partial_{x_1} V_1} \right) = - | V_1 |^2 \left( \cos^2
     (\theta_1) \frac{\rho'_1}{r_1 \rho_1} + \sin^2 (\theta_1)
     \frac{\rho'_1}{r_1 \rho_1} \right) = - | V_1 |^2 \frac{\rho'_1}{r_1
     \rho_1} . \]
  From Lemma \ref{lemme3}, we have $\rho'_1 (r_1) = O_{r_1 \rightarrow \infty}
  \left( \frac{1}{r_1^3} \right)$. As a consequence
  \begin{eqnarray*}
    &  & \left| \int_{\{ x_1 \geqslant 0 \}} | V_1 |^2 \frac{\rho'_1}{r_1
    \rho_1} | V_{- 1} |^2 \frac{y_1 + 2 d}{r_{- 1}^2} - \int_{\{ r_1 \leqslant
    d^{1 / 2} \}} | V_1 |^2 \frac{\rho'_1}{r_1 \rho_1} | V_{- 1} |^2 \frac{y_1
    + 2 d}{r_{- 1}^2} \right|\\
    & \leqslant & K c^{2 - \sigma} \int_{\{ r_1 \geqslant d^{1 / 2} \}}
    \frac{1}{(1 + \tilde{r})^{2 + 2 \sigma}}
  \end{eqnarray*}
  because when $x_1 \geqslant 0 \tmop{and} r_1 \geqslant d^{1 / 2}$, we have
  $\left| | V_1 |^2 \frac{\rho'_1}{r_1 \rho_1} | V_{- 1} |^2 \frac{y_1 + 2
  d}{r_{- 1}^2} \right| \leqslant \frac{K c^{2 - \sigma}}{(1 + \tilde{r})^{2 +
  2 \sigma}}$. We deduce that
  \[ \int_{\{ x_1 \geqslant 0 \}} | V_1 |^2 \frac{\rho'_1}{r_1 \rho_1} | V_{-
     1} |^2 \frac{y_1 + 2 d}{r_{- 1}^2} = \int_{\{ r_1 \leqslant d^{1 / 2} \}}
     | V_1 |^2 \frac{\rho'_1}{r_1 \rho_1} | V_{- 1} |^2 \frac{y_1 + 2 d}{r_{-
     1}^2} + O_{c \rightarrow 0}^{\sigma} (c^{2 - \sigma}) . \]
  In the ball $\{ r_1 \leqslant d^{1 / 2} \}$, we have
  \[ r_{- 1}^2 = 4 d^2 \left( 1 + O_{d \rightarrow \infty} \left( \frac{1}{d}
     \right) \right) \quad \tmop{and} \quad | V_{- 1} |^2 = 1 + O \left(
     \frac{1}{d^2} \right) \]
  therefore
  \[ \int_{\{ x_1 \geqslant 0 \}} | V_1 |^2 \frac{\rho'_1}{r_1 \rho_1} | V_{-
     1} |^2 \frac{y_1 + 2 d}{r_{- 1}^2} = \frac{1}{4 d^2}  \int_{\{ r_1
     \leqslant d^{1 / 2} \}} | V_1 |^2 \frac{\rho'_1}{r_1 \rho_1} (y_1 + 2 d)
     + O_{c \rightarrow 0}^{\sigma} (c^{2 - \sigma}) . \]
  Since $y_1 = r_1 \cos (\theta_1)$, by integration in polar coordinates we
  have
  \[ \int_{\{ r_1 \leqslant d^{1 / 2} \}} | V_1 |^2 \frac{\rho'_1}{r_1 \rho_1}
     y_1 = 0 \]
  hence
  \[ \int_{\{ x_1 \geqslant 0 \}} \mathfrak{R}\mathfrak{e} \left( E
     \overline{\partial_{x_1} V_1 V_{- 1}} \right) = \frac{1}{d}  \int_{\{ r_1
     \leqslant d^{1 / 2} \}} | V_1 |^2 \frac{\rho'_1}{r_1 \rho_1} + O_{c
     \rightarrow 0}^{\sigma} (c^{2 - \sigma}) . \]
  Remark that $| V_1 |^2 = \rho_1^2$ hence
  \[ \int_{\{ r_1 \leqslant d^{1 / 2} \}} | V_1 |^2 \frac{\rho'_1}{r_1 \rho_1}
     = 2 \pi \int_0^{d^{1 / 2}} \rho_1 \rho_1' d r_1 = \pi [\rho_1^2]_0^{d^{1
     / 2}} = \pi + O_{d \rightarrow \infty} \left( \frac{1}{d^{}} \right) \]
  Since $\rho_1 = 1 + O \left( \frac{1}{r_1^2} \right)$ when $r_1 \rightarrow
  \infty$ and $\rho_1 (0) = 0$ by Lemma \ref{lemme3}. Therefore, as claimed,
  \[ \int_{\{ x_1 \geqslant 0 \}} \mathfrak{R}\mathfrak{e} \left( E
     \overline{\partial_{x_1} V_1 V_{- 1}} \right) = \frac{\pi}{d} + O_{c
     \rightarrow 0}^{\sigma} (c^{2 - \sigma}) . \]
  Notice that we have shown in particular that
  \begin{equation}
    \int_{\mathbbm{R}^2} \mathfrak{R}\mathfrak{e} \left( i \partial_{x_2} V_1
    \overline{\partial_{x_1} V_1} \right) | V_{- 1} |^2 = - \pi +
    O^{\sigma}_{c \rightarrow 0} (c^{1 - \sigma}) . \label{bleu}
  \end{equation}
  
\  
  
  \begin{tmindent}
    Step 6.  We have $\int_{\{ x_1 \geqslant 0 \}} \mathfrak{R}\mathfrak{e}
    \left( - i c \partial_{x_2} V \overline{\partial_{x_1} V_1 V_{- 1}}
    \right) = - \pi c + O_{c \rightarrow 0}^{\sigma} (c^{2 - \sigma})$.
  \end{tmindent}
  
\  
  
  We are left with the computation of
  \[ \int_{\{ x_1 \geqslant 0 \}} \mathfrak{R}\mathfrak{e} \left( - i c
     \partial_{x_2} V \overline{\partial_{x_1} V_1 V_{- 1}} \right) = \]
  \begin{equation}
    \int_{\{ x_1 \geqslant 0 \}} \mathfrak{R}\mathfrak{e} \left( - i c
    \partial_{x_2} V_1 \overline{\partial_{x_1} V_1} \right) | V_{- 1} |^2 +
    \int_{\{ x_1 \geqslant 0 \}} \mathfrak{R}\mathfrak{e} \left( - i c
    \partial_{x_2} V_{- 1} V_1 \overline{\partial_{x_1} V_1 V_{- 1}} \right)
    \label{noir}
  \end{equation}
  since $\partial_{x_2} V = \partial_{x_2} V_1 V_{- 1} + \partial_{x_2} V_{-
  1} V_1$. For the second term in (\ref{noir}), we compute
  \[ - c \int_{\{ x_1 \geqslant 0 \}} \mathfrak{R}\mathfrak{e} \left( i
     \partial_{x_2} V_{- 1} V_1 \overline{\partial_{x_1} V_1 V_{- 1}} \right)
     = c \int_{\{ x_1 \geqslant 0 \}} \mathfrak{R}\mathfrak{e} \left(
     \overline{\partial_{x_1} V_1} V_1 \right) | V_{- 1} |^2 \frac{y_1 + 2
     d}{r_{- 1}^2} + O_{c \rightarrow 0}^{\sigma} (c^{2 - s}) \]
  in view of the relation
  \[ i \partial_{x_2} V_{- 1} \overline{V_{- 1}} = - | V_{- 1} |^2 \frac{y_1 +
     2 d}{r_{- 1}^2} + O \left( \frac{1}{r_{- 1}^3} \right) \]
  from Lemma \ref{lemme3} and the fact that \ $\int_{\{ x_1 \geqslant 0 \}}
  c^{} O \left( \frac{1}{r_{- 1}^3} \right) = O_{c \rightarrow 0}^{\sigma}
  (c^{2 - \sigma})$ (as in step 4). Now recall from Lemma \ref{dervor} that
  \[ \partial_{x_1} V_1 = \left( \cos (\theta_1) \frac{\rho'_1 (r_1)}{\rho_1
     (r_1)} - \frac{i}{r_1} \sin (\theta_1) \right) V_1 \]
  therefore
  \[ \mathfrak{R}\mathfrak{e} \left( \overline{\partial_{x_1} V_1} V_1
     \right) = \cos (\theta_1) \frac{\rho'_1}{\rho_1} | V_1 |^2 . \]
  In particular, $\left| \mathfrak{R}\mathfrak{e} \left(
  \overline{\partial_{x_1} V_1} V_1 \right) \right| \leqslant \frac{K}{1 +
  r_1^3}$ is integrable. Furthermore,$\left| | V_{- 1} |^2 \frac{y_1 + 2
  d}{r_{- 1}^2} \right| = O_{c \rightarrow 0} (c)$ in the right half-plane,
  therefore
  \[ - c \int_{\{ x_1 \geqslant 0 \}} \mathfrak{R}\mathfrak{e} \left( i
     \partial_{x_2} V_{- 1} V_1 \overline{\partial_{x_1} V_1 V_{- 1}} \right)
     = O_{c \rightarrow 0} (c^2) = O_{c \rightarrow 0}^{\sigma} (c^{2 -
     \sigma}) . \]
  The first contribution in (\ref{noir}) is
  \[ c \int_{\{ x_1 \geqslant 0 \}} \mathfrak{R}\mathfrak{e} \left( i
     \partial_{x_2} V_1 \overline{\partial_{x_1} V_1} \right) | V_{- 1} |^2 =
     c \int_{\{ x_1 \geqslant 0 \}} \mathfrak{R}\mathfrak{e} \left( i
     \partial_{x_2} V_1 \overline{\partial_{x_1} V_1} \right) + O_{c
     \rightarrow 0}^{\sigma} (c^{2 - \sigma}) \]
  using that $| V_{- 1} |^2 = 1 + O \left( \frac{1}{r_{- 1}^2} \right)$. From
  (\ref{bleu}), we have
  \[ \int_{\{ x_1 \geqslant 0 \}} \mathfrak{R}\mathfrak{e} \left( i
     \partial_{x_2} V_1 \overline{\partial_{x_1} V_1} \right) = - \pi +
     O^{\sigma}_{c \rightarrow 0} (c^{1 - \sigma}) . \]
  This conclude the proof of step 6, and combining step 4, 5 and 6 we deduce
  \[ \int_{\{ x_1 \geqslant 0 \}} \mathfrak{R}\mathfrak{e} \left( (E - i c
     \partial_{x_2} V) \overline{\partial_{x_1} V_1 V_{- 1}} \right) = \pi
     \left( \frac{1}{d} - c \right) + O_{c \rightarrow 0}^{\sigma} (c^{2 -
     \sigma}) . \]
  
\ 
 
  \begin{tmindent}
    Step 7.  We have $\langle L (\Phi), \partial_d V \rangle = O_{c
    \rightarrow 0}^{\sigma} (c^{2 - \sigma})$.
  \end{tmindent}
 
 \ 
  
  We want to compute, by integration by parts, that
  \[ \langle L (\Phi), \partial_d V \rangle = \langle \Phi, L (\partial_d V)
     \rangle . \]
  First, we recall that the left hand side is well defined, because we showed
  in the previous steps that all the other terms are bounded, therefore this
  one is also bounded. We have
  \[ \int_{B (0, R)} \mathfrak{R}\mathfrak{e} (\Delta \Phi
     \overline{\partial_d V}) = \int_{\partial B (0, R)}
     \mathfrak{R}\mathfrak{e} (\nabla \Phi \overline{\partial_d V}) . \vec{n}
     -\mathfrak{R}\mathfrak{e} (\Phi \overline{\nabla \partial_d V}) . \vec{n}
     + \int_{B (0, R)} \mathfrak{R}\mathfrak{e} (\Phi \overline{\Delta
     \partial_d V}), \]
  and
  \[ | \mathfrak{R}\mathfrak{e} (\nabla \Phi \overline{\partial_d V}) | + |
     \mathfrak{R}\mathfrak{e} (\Phi \overline{\nabla \partial_d V}) |
     \leqslant \frac{K}{(1 + \tilde{r})^{2 + 1 / 2}}, \]
  therefore
  \[ \int_{\partial B (0, R)} \mathfrak{R}\mathfrak{e} (\nabla \Phi
     \overline{\partial_d V}) . \vec{n} -\mathfrak{R}\mathfrak{e} (\Phi
     \overline{\nabla \partial_d V}) . \vec{n} = o_{R \rightarrow \infty} (1)
  \]
  and the integration by parts holds.
  
  \
  
  Recall that
  \[ L (h) = - \Delta h - (1 - | V |^2) h + 2\mathfrak{R}\mathfrak{e} (\bar{V}
     h) V - i c \partial_{x_2} h \]
  and
  \[ L_{V_1} (h) = - \Delta h - (1 - | V_1 |^2) h + 2\mathfrak{R}\mathfrak{e}
     (\overline{V_1} h) V_1 . \]
  From Lemma \ref{ddVest} and $\| \Psi \|_{\ast, \sigma / 2} \leqslant K
  (\sigma) c^{1 - \sigma}$, we check easily that
  \[ | \langle \Phi, - i c \partial_{x_2} \partial_d V \rangle | \leqslant
     \int_{\mathbbm{R}^2} \frac{K (\sigma) c^{2 - \sigma}}{(1 + \tilde{r})^{2
     + \sigma / 2}} = O_{c \rightarrow 0}^{\sigma} (c^{2 - \sigma}) . \]
  We therefore focus on the remaining part, with the operator
  \[ \tilde{L} (h) \assign - \Delta h - (1 - | V |^2) h +
     2\mathfrak{R}\mathfrak{e} (\bar{V} h) V - i c \partial_{x_2} h. \]
  We remark that we have $L_{V_1} (\partial_{x_1} V_1) = 0$, since
  $\partial_{x_1} (- \Delta V_1 - (1 - | V_1 |^2) V_1) = 0$. Recall that
  $\partial_d V = - \partial_{x_1} V_1 V_{- 1} + \partial_{x_1} V_{- 1} V_1$
  and let us compute
  \[ \tilde{L} (V_{- 1} h) = L_{V_1} (h) V_{- 1} - \Delta (V_{- 1} h) + \Delta
     h V_{- 1} + (| V |^2 - | V_1 |^2) h V_{- 1} + 2\mathfrak{R}\mathfrak{e}
     (\overline{V_1} h) (1 - | V_{- 1} |^2) V, \]
  therefore, using the equation or $V_{- 1}$,
  \[ \tilde{L} (V_{- 1} h) = L_{V_1} (h) V_{- 1} - 2 \nabla V_{- 1} . \nabla h
     + (1 - | V_{- 1} |^2) (1 - | V_1 |^2) V_{- 1} h +
     2\mathfrak{R}\mathfrak{e} (\overline{V_1} h) (1 - | V_{- 1} |^2) V. \]
  Taking $h = \partial_{x_1} V_1$ then yields
  \[ \tilde{L} (V_{- 1} \partial_{x_1} V_1) = - 2 \nabla V_{- 1} . \nabla
     \partial_{x_1} V_1 + (1 - | V_{- 1} |^2) (1 - | V_1 |^2) V_{- 1}
     \partial_{x_1} V_1 + 2\mathfrak{R}\mathfrak{e} (\overline{V_1}
     \partial_{x_1} V_1) (1 - | V_{- 1} |^2) V. \]
  Remark that $| \nabla V_{- 1} . \nabla \partial_{x_1} V_1 | \leqslant
  \frac{K}{(1 + r_1) (1 + r_{- 1})^2}$, $| (1 - | V_{- 1} |^2) (1 - | V_1 |^2)
  V_{- 1} \partial_{x_1} V_1 | \leqslant \frac{K}{(1 + r_1)^3 (1 + r_{-
  1})^2}$ and $| 2\mathfrak{R}\mathfrak{e} (\overline{V_1} \partial_{x_1} V_1)
  (1 - | V_{- 1} |^2) V | \leqslant \frac{K}{(1 + r_1)^3 (1 + r_{- 1})^2}$ for
  a universal constant $K > 0$ by Lemma \ref{lemme3}, therefore
  \[ \langle \Phi, \tilde{L} (\partial_{x_1} V_1 V_{- 1}) \rangle = O_{c
     \rightarrow 0}^{\sigma} (c^{2 - \sigma}) . \]
  Exchanging the roles of $V_1$ and $V_{- 1}$, we have similarly
  \[ \tilde{L} (V_1 \partial_{x_1} V_{- 1}) = - 2 \nabla V_1 . \nabla
     \partial_{x_1} V_{- 1} + (1 - | V_{- 1} |^2) (1 - | V_1 |^2) V_1
     \partial_{x_1} V_{- 1} . \]
  We then conclude that
  \[ \langle \tilde{L} (\Phi), \partial_d V \rangle = O_{c \rightarrow
     0}^{\sigma} (c^{2 - \sigma}), \]
  which end the proof of this step. Notice that we have shown
  \begin{equation}
    | L (\partial_d V) | \leqslant \frac{K c}{(1 + \tilde{r})^2} \label{bleu2}
  \end{equation}
  because $\frac{1}{(1 + r_1) (1 + r_{- 1})} \leqslant \frac{K c}{(1 +
  \tilde{r})}$ in the whole space.

 \ 
  
  \begin{tmindent}
    Step 8.  Conclusion.
  \end{tmindent}
 
 \ 
  
  Adding all the results obtained in steps 1 to 7, we deduce
  \[ \lambda (c, d) \int_{\mathbbm{R}^2} | \partial_d V |^2 \eta = \pi \left(
     \frac{1}{d} - c \right) + O_{c \rightarrow 0}^{\sigma} (c^{2 - \sigma}) .
  \]
\end{proof}

At this point, we cannot conclude that there exists $d$ such that $\lambda (c,
d) = 0$. For that, we need to show that the $O_{c \rightarrow 0}^{\sigma}
(c^{2 - \sigma})$ is continuous with respect to $c$ and $d$. This will be
shown in section \ref{n3}.

\section{Construction and properties of the travelling wave}\label{n3}

Given $0 < \sigma < \sigma' < 1$, $d, c > 0$ satisfying $\frac{1}{2 c} < d <
\frac{2}{c}$ and $c < c_0 (\sigma, \sigma')$ defined in Proposition
\ref{contractionest}, we define $\Phi_{c, d} = V \Psi_{c, d} \in
\mathcal{E}_{\ast, \sigma, d}$ the fonction constructed by the contraction
mapping theorem in Proposition \ref{contractionest}. From Corollary
\ref{sigmarem}, for any $0 < \sigma < \sigma' < 1$, this function satisfies,
for $c < c_0 (\sigma, \sigma')$, that
\[ \| \Psi_{c, d} \|_{\ast, \sigma_1, d} \leqslant K (\sigma, \sigma') c^{1 -
   \sigma'} . \]
With equation (\ref{azz220}) and Proposition \ref{dinc}, if we show that
$\Phi_{c, d}$ is a continuous function of $c$ and $d$, then there exists $c_0
> 0$ such that, for any $0 < c < c_0$, by the intermediate value theorem,
there exists $d_c > 0$ such that $\lambda (c, d_c) = 0$. This would conclude
the construction of the travelling wave, and is done in subsection \ref{31}.
In subsection \ref{th1proof}, we compute some estimates on $Q_c$ which will be
usefull for understanding the linearized operator around $Q_c$. We also show
there that $Q_c$ is a travelling wave solution with finite energy.

\subsection{Proof that $\Phi_{c, d}$ is a $C^1$ function of $c$ and
$d$}\label{31}

To end the construction of the travelling wave, we only need the continuity of
$\Phi_{c, d}$ with respect to $c$ and $d$. But for the construction of the
$C^1$ branch of travelling wave in section \ref{qcc1}, we need its
differentiability.

\subsubsection{Setup of the problem}

From Proposition \ref{contractionest}, the function $\Phi_{c, d}$ is defined
by the implicit equation on $\mathcal{E}_{\ast, \sigma, d}$
\[ (\eta L (.) + (1 - \eta) V L' (. / V))^{- 1} (\Pi_d^{\bot} (- F (\Phi_{c,
   d} / V))) + \Phi_{c, d} = 0, \]
where $(\eta L (.) + (1 - \eta) V L' (. / V))^{- 1}$ is the linear operator
from $\mathcal{E}_{\ast \ast, \sigma', d} \cap \{ \langle ., Z_d \rangle = 0
\}$ to $\mathcal{E}_{\ast, \sigma, d}$, that, for a function $V h \in
\mathcal{E}_{\ast \ast, \sigma', d}$ with $\langle V h, Z_d \rangle = 0$,
yields the unique function $\Phi = V \Psi \in \mathcal{E}_{\ast, \sigma, d}$
such that
\[ \eta L (\Phi) + (1 - \eta) V L' (\Psi) = V h \]
in the distribution sense. We recall the quantity $Z_d (x) = \partial_d V (x)
(\tilde{\eta} (4 r_1) + \tilde{\eta} (4 r_{- 1}))$ defined in subsection
\ref{normsetup} and we have defined the projection
\[ \Pi_d^{\bot} (\Phi) = \Phi - \langle \Phi, Z_d \rangle \frac{Z_d}{\| Z_d
   \|_{L^2 (\mathbbm{R}^2)}^2} . \]
We want to show that $(c, d) \mapsto \Phi_{c, d}$ is of class $C^1$ from
values of $c, d$ such that $0 < c < c_0 (\sigma)$ and $\frac{1}{2 d} < c <
\frac{2}{d}$ to $\mathcal{E}_{\ast, \sigma, d}$. The first obstacle is that
$\mathcal{E}_{\ast, \sigma, d}$ depends on $d$ (through $\tilde{r}$), both in
the weights in $\| . \|_{\ast, \sigma, d}$ and in the orthogonality required:
$\langle \Phi, Z_d \rangle = 0$. To be able to use the implicit function
theorem, we first need to write an equation on $\Phi$ in a space that does not
depend on $d$. The norm $\| . \|_{\ast, \sigma, d}$ depends on $d$ (through
$\tilde{r}$):
\begin{eqnarray*}
  \| \Psi \|_{\ast, \sigma, d} & = & \| V \Psi \|_{C^2 (\{ \tilde{r} \leqslant
  3 \})} + \| \tilde{r}^{1 + \sigma} \Psi_1 \|_{L^{\infty} (\{ \tilde{r}
  \geqslant 2 \})} + \| \tilde{r}^{2 + \sigma} \nabla \Psi_1 \|_{L^{\infty}
  (\{ \tilde{r} \geqslant 2 \})}\\
  & + & \| \tilde{r}^{\sigma} \Psi_2 \|_{L^{\infty} (\{ \tilde{r} \geqslant 2
  \})} + \| \tilde{r}^{1 + \sigma} \nabla \Psi_2 \|_{L^{\infty} (\{ \tilde{r}
  \geqslant 2 \})} + \| \tilde{r}^{2 + \sigma} \nabla^2 \Psi \|_{L^{\infty}
  (\{ \tilde{r} \geqslant 2 \})} .
\end{eqnarray*}
For $d_{\circledast} \in \mathbbm{R}, d_{\circledast} \geqslant 10$ and $d \in
\mathbbm{R}$ such that $| d - d_{\circledast} | < \delta$ for some small
$\delta > 0$ (that we will fix later on), we define
\begin{eqnarray*}
  \| \Phi \|_{\circledast, \sigma, d_{\circledast}} & \assign & \| \Phi
  \|_{C^2 (\{ \tilde{r}_{\circledast} \leqslant 3 \})} + \left\|
  \tilde{r}_{\circledast}^{1 + \sigma} \mathfrak{R}\mathfrak{e} \left(
  \frac{\Phi}{V_{\circledast \nosymbol}} \right) \right\|_{L^{\infty} (\{
  \tilde{r}_{\circledast} \geqslant 2 \})} + \left\|
  \tilde{r}_{\circledast}^{2 + \sigma} \nabla \mathfrak{R}\mathfrak{e} \left(
  \frac{\Phi}{V_{\circledast \nosymbol}} \right) \right\|_{L^{\infty} (\{
  \tilde{r}_{\circledast} \geqslant 2 \})}\\
  & + & \left\| \tilde{r}_{\circledast}^{\sigma} \mathfrak{I}\mathfrak{m}
  \left( \frac{\Phi}{V_{\circledast \nosymbol}} \right) \right\|_{L^{\infty}
  (\{ \tilde{r}_{\circledast} \geqslant 2 \})} + \left\|
  \tilde{r}_{\circledast}^{1 + \sigma} \nabla \mathfrak{I}\mathfrak{m} \left(
  \frac{\Phi}{V_{\circledast \nosymbol}} \right) \right\|_{L^{\infty} (\{
  \tilde{r}_{\circledast} \geqslant 2 \})}\\
  & + & \left\| \tilde{r}_{\circledast}^{2 + \sigma} \nabla^2 \left(
  \frac{\Phi}{V_{\circledast \nosymbol}} \right) \right\|_{L^{\infty} (\{
  \tilde{r}_{\circledast} \geqslant 2 \})},
\end{eqnarray*}
where $V_{\circledast} = V_1 (x - d_{\circledast} \overrightarrow{e_1}) V_{-
1} (x + d_{\circledast} \overrightarrow{e_1})$ and $\tilde{r}_{\circledast} =
\min (r_{1, \circledast}, r_{- 1, \circledast})$ with $r_{1, \circledast} = |
x - d_{\circledast} \overrightarrow{e_1} |, r_{- 1, \circledast} = | x +
d_{\circledast} \overrightarrow{e_1} |$. Then, for $\Phi = V \Psi \in
\mathcal{E}_{\ast, \sigma, d}$ ($V$ taken in $d$),
\begin{equation}
  K_1 \| \Psi \|_{\ast, \sigma, d} \leqslant \| \Phi_{\nosymbol}
  \|_{\circledast, \sigma, d_{\circledast}} \leqslant K_2 \| \Psi \|_{\ast,
  \sigma, d} \label{322504}
\end{equation}
where $K_{1, 2} > 0$ are absolute when $| d - d_{\circledast} | < \delta$.
Indeed, we check with simple geometric arguments that if
$\tilde{r}_{\circledast} \geqslant 1$, $V$ taken in $d$, then $\tilde{r}
\geqslant 1 / 2$ and we have
\begin{equation}
  \left| \frac{V}{V_{\circledast}} - 1 \right| \leqslant \frac{K}{(1 +
  \tilde{r})} \quad \tmop{and} \quad \left| \nabla \left(
  \frac{V}{V_{\circledast}} \right) \right| \leqslant \frac{K}{(1 +
  \tilde{r})^2} \label{312504}
\end{equation}
for a universal constant $K > 0$. Moreover, we have, for instance, if
$\tilde{r}_{\circledast} \geqslant 2$ (hence $\tilde{r}_{\circledast}
\leqslant 2 \tilde{r}$),
\begin{eqnarray*}
  \left| \tilde{r}_{\circledast}^{1 + \sigma} \mathfrak{R}\mathfrak{e} \left(
  \frac{\Phi}{V_{\circledast \nosymbol}} \right) \right| & \leqslant & \left|
  \tilde{r}_{\circledast}^{1 + \sigma} \mathfrak{R}\mathfrak{e} \left(
  \frac{\Phi}{V_{\nosymbol}} \right) \right| + \left|
  \tilde{r}_{\circledast}^{1 + \sigma} \mathfrak{R}\mathfrak{e} \left(
  \frac{\Phi}{V_{\nosymbol}} \left( \frac{V}{V_{\circledast}} - 1 \right)
  \right) \right|\\
  & \leqslant & K \| \Psi \|_{\ast, \sigma, d} + K \left|
  r_{\circledast}^{\sigma} \frac{\Phi}{V} \right| \leqslant K \| \Psi
  \|_{\ast, \sigma, d} .
\end{eqnarray*}
Using (\ref{312504}), we can estimate similarly all the terms in
(\ref{322504}).

We define similarly, for $g = V_{\circledast} (g_1 + i g_2) \in C^1
(\mathbbm{R}^2)$, $\sigma' > 0$
\[ \| g \|_{\circledast \circledast, \sigma', d_{\circledast}} \assign \| g
   \|_{C^1 (\{ \tilde{r}_{\circledast} \leqslant 3 \})} + \|
   \tilde{r}_{\circledast}^{1 + \sigma'} g_1 \|_{L^{\infty} (\{
   \tilde{r}_{\circledast} \geqslant 2 \})} + \| \tilde{r}_{\circledast}^{2 +
   \sigma'} g_2 \|_{L^{\infty} (\{ \tilde{r}_{\circledast} \geqslant 2 \})} +
   \| \tilde{r}_{\circledast}^{2 + \sigma'} \nabla g \|_{L^{\infty} (\{
   \tilde{r}_{\circledast} \geqslant 2 \})} . \]
We have that there exist $C_1, C_2 > 0$ universal constants such that, for $0
< \sigma' < 1$ and any $d, d_{\circledast} \geqslant 10$ with $| d -
d_{\circledast} | < \delta$, for any $V h \in \mathcal{E}_{\ast \ast, \sigma',
d}$, $g = V h$,
\[ C_1 \| h \|_{\ast \ast, \sigma', d} \leqslant \| g_{\nosymbol}
   \|_{\circledast \circledast, \sigma', d_{\circledast}} \leqslant C_2 \| h
   \|_{\ast \ast, \sigma', d} . \]
We define the spaces, for $\sigma, \sigma' > 0$,
\[ \mathcal{E}_{\circledast, \sigma, d_{\circledast}} \assign \]
\[ \{ \Phi \in C^2 (\mathbbm{R}^2, \mathbbm{C}), \| \Phi \|_{\circledast,
   \sigma, d_{\circledast}} < + \infty, \langle \Phi, Z_{d_{\circledast}}
   \rangle = 0, \forall x \in \mathbbm{R}^2, \Phi (x_1, x_2) = \overline{\Phi
   (x_1, - x_2)} = \Phi (- x_1, x_2) \} \]
and
\[ \mathcal{E}_{\circledast \circledast, \sigma', d_{\circledast}} \assign \{
   g \in C^1 (\mathbbm{R}^2, \mathbbm{C}), \| g \|_{\circledast \circledast,
   \sigma', d_{\circledast}} < + \infty \} . \]
We infer that, from Proposition \ref{invertop}, that the operator
\[ (\eta L (.) + (1 - \eta) V L' (. / V))^{- 1} o \Pi_d^{\bot} \]
goes from $\mathcal{E}_{\circledast \circledast, \sigma', d_{\circledast} }$
to $\mathcal{E}_{\circledast, \sigma, d_{\circledast}}$, and that (for $0 <
\sigma < \sigma' < 1$)
\[ \interleave (\eta L (.) + (1 - \eta) V L' (. / V))^{- 1} o \Pi_d^{\bot}
   \interleave_{\mathcal{E}_{\circledast \circledast, \sigma', d_{\circledast}
   } \rightarrow \mathcal{E}_{\circledast, \sigma, d_{\circledast}}} \]
is bounded independently of $c, d$ and $d_{\circledast}$ if $| d -
d_{\circledast} | < \delta$. Indeed, the norms $\| . \|_{\ast, \sigma, d}$ and
$\| . \|_{\circledast, \sigma, d_{\circledast}}$ are equivalent, as well as
the norms $\| . \|_{\ast \ast, \sigma', d}$ and $\| . \|_{\circledast
\circledast, \sigma', d_{\circledast}}$ for any $\sigma, \sigma' > 0$. About
the orthogonality, we replaced $\langle \Phi, Z_d \rangle = 0$ by $\langle
\Phi, Z_{d_{\circledast}} \rangle = 0$. This does not change the proof of
Proposition \ref{invertop}, since when we argue by contradiction, if for a
universal constant $| \lambda | \leqslant \delta$ we took the orthogonality
$\langle \Phi, Z_{d + \lambda} \rangle = 0$ instead of $\langle \Phi, Z_d
\rangle = 0$, the proof does not change, given that $\delta$ is small enough
(independently of $d$). To be specific, we have to take $\delta$ small enough
such that $\langle \partial_{x_1} V_1, \partial_{x_1} V_1 (. + \lambda)
\rangle > 0$ for all $\lambda \in] - \delta, \delta [$.

Therefore, we take a sequence $\mathcal{D}^{(n)} > 0$ going to infinity such
that $| \mathcal{D}^{(n + 1)} -\mathcal{D}^{(n)} | < \delta / 2$, and for any
given $d$ large enough, there exists $k (d)$ such that $d \in] D^{(k (d))} -
\delta / 2, D^{(k (d))} + \delta / 2 [$, and the proof of Proposition
\ref{invertop} holds with the orthogonality $\langle \Phi, Z_{D^{(k (d))}}
\rangle = 0$ for any value of $d$ in $] D^{(k (d))} - \delta / 2, D^{(k (d))}
+ \delta / 2 [$. We denote $D^{(k (d))} = d_{\circledast}$. The inversion of
the linearized operator then holds for $d \in] D^{(n)} - \delta / 2, D^{(n)} +
\delta / 2 [$ with $D^{(n)} = d_{\circledast}$, for all $n \in \mathbbm{N}$
large enough.

Furthermore, the contraction arguments given in the proof of Proposition
\ref{contractionest} still hold (because the norms are equivalent), hence we
can define $\Phi_{c, d}$ by a fixed point argument if $\frac{1}{2 d} \leqslant
c \leqslant \frac{2}{d}$ and $| d - d_{\circledast} | < \delta$ in the space
$\mathcal{E}_{\circledast, \sigma, d_{\circledast}}$ that does not depend on
$d$.

We want to emphasize the fact that we change a little the definition of the
spaces compared to section \ref{sec2}. In particular, for $\Phi = V \Psi$, the
norm $\| . \|_{\circledast, \sigma, d_{\circledast}}$ is on the function
$\Phi$, and before, for $\| . \|_{\ast, \sigma, d}$, it was on $\Psi$. This is
because $V$ depends on $d$, and we want to remove any dependence on $d$. The
same remark holds for $\| . \|_{\circledast \circledast, \sigma',
d_{\circledast}}$ and $\| . \|_{\circledast \circledast, \sigma', d}$ (with $g
= V h$).

\

We continue, and we define
\[ H (\Phi, c, d) \assign (\eta L (.) + (1 - \eta) V L' (. / V))^{- 1} (-
   \Pi_d^{\bot} (F (\Phi / V))) + \Phi . \]
The function $\Phi_{c, d} \in \mathcal{E}_{\circledast, \sigma,
d_{\circledast}}$ is defined, for $\frac{1}{2 d} < c < \frac{2}{d}$ and $| d -
d_{\circledast} | < \delta$, by being the only solution in a ball of
$\mathcal{E}_{\circledast, \sigma, d_{\circledast}}$ (with a radius depending
on $\sigma, \sigma', d_{\circledast}$ and $c$ but not $d$) to the implicit
equation on $\Phi$: $H (\Phi, c, d) = 0$. This means that we shall be able to
use the implicit function theorem in the space $\mathcal{E}_{\circledast,
\sigma, d_{\circledast}}$ on the equation $H (\Phi, c, d) = 0$ to show that
$\Phi_{c, d}$ is a $C^1$ function of $d$ in $\mathcal{E}_{\circledast, \sigma,
d_{\circledast}}$ (for values of $d$ such that $\frac{1}{2 d} < c <
\frac{2}{d}$ and $| d - d_{\circledast} | < \delta$). We want to differentiate
this equation with respect to $\Phi$ at a fixed $c$ and $d$, and show that we
can invert the operator obtained when we take $\Phi$ close to $\Phi_{c, d}$.
Since $(\eta L (.) + (1 - \eta) V L' (. / V))^{- 1}$ and $\Pi_d^{\bot}$ are
linear operators that do not depend on $\Phi$, it is easy to check that $H
(\Phi, c, d)$ is differentiable with respect to $\Phi$, and we compute
\[ d_{\Phi} H (\Phi, c, d) (\varphi) = (\eta L (.) + (1 - \eta) V L' (. /
   V))^{- 1} (\Pi_d^{\bot} (- d_{\Psi} F (\varphi / V))) + \varphi . \]
To show that $d_{\Phi} H (\Phi, c, d) : \mathcal{E}_{\circledast, \sigma,
d_{\circledast}} \rightarrow \mathcal{E}_{\circledast, \sigma,
d_{\circledast}}$ and that it is invertible, it is enough to check that
\begin{equation}
  \interleave (\eta L (.) + (1 - \eta) V L' (. / V))^{- 1} (\Pi_d^{\bot}
  (d_{\Psi} F (. / V)))_{} \interleave_{\mathcal{E}_{\circledast, \sigma,
  d_{\circledast}} \rightarrow \mathcal{E}_{\circledast, \sigma,
  d_{\circledast}}} = o^{\sigma}_{c \rightarrow 0} (1), \label{roug}
\end{equation}
which implies that $d_{\Phi} H (\Phi, c, d)$ is a small perturbation of
$\tmop{Id}$ for small values of $c$ (at fixed $\sigma$). From Proposition
\ref{invertop}, we have that $\interleave (\eta L (.) + (1 - \eta) V L' (. /
V))^{- 1} o \Pi_d^{\bot} \interleave_{\mathcal{E}_{\circledast
\circledast, \sigma', d_{\circledast}} \rightarrow \mathcal{E}_{\circledast,
\sigma, d_{\circledast}}}$ is bounded independently of $d$ and
$d_{\circledast}$ if $| d - d_{\circledast} | < \delta$, thus it is enough to
check that, for some $\sigma' > \sigma$ (we will take $\sigma' = \frac{1 +
\sigma}{2} > \sigma$),
\[ \interleave d_{\Psi} F (.) \interleave_{\mathcal{E}_{\circledast, \sigma,
   d_{\circledast}} \rightarrow \mathcal{E}_{\circledast \circledast, \sigma',
   d_{\circledast}}} = o^{\sigma, \sigma'}_{c_{} \rightarrow 0} (1) . \]
This is a consequence of the following lemma (for functions $\Phi = V \Psi$
such that $\| \Psi \|_{\ast, \sigma, d} = o^{\sigma}_{c_{} \rightarrow 0}
(1)$, which is the case if $\Phi$ is near $\Phi_{c, d}$ since $\| \Psi_{c, d}
\|_{\ast, \sigma, d} \leqslant K (\sigma, \sigma') c^{1 - \sigma'}$), where we
do the computations with the $\ast -$norms since they are equivalent, with
uniform constants, to the $\circledast$-norms. We define
\[ \gamma (\sigma) \assign \frac{1 + \sigma}{2} > \sigma . \]
\begin{lemma}
  \label{lemme20}There exists $C > 0$ such that, for $0 < \sigma < 1$ and
  functions $\Phi = V \Psi, \varphi = V \psi \in \mathcal{E}_{\ast, \sigma,
  d}$, if $\frac{1}{2 d} < c < \frac{2}{d}$ and $\| \Psi \|_{\ast, \sigma, d}
  \leqslant 1$, then
  \[ \| d_{\Psi} F (\psi)_{} \|_{\ast \ast, \gamma (\sigma), d_{}} \leqslant C
     \| \Psi \|_{\ast, \sigma, d} \| \psi \|_{\ast, \sigma, d} . \]
\end{lemma}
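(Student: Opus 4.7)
The plan is to differentiate $F$ explicitly in $\Psi$, then use the same kind of weighted bookkeeping as in Lemmas \ref{L2141904} and \ref{L2151904} but being a bit more careful, because we lose a factor $c^{1-\sigma'}$ and must instead extract the bilinear factor $\|\Psi\|_{\ast,\sigma,d}\|\psi\|_{\ast,\sigma,d}$. From Lemma \ref{lemma7},
\[
F(\Psi) = E - ic\,\partial_{x_2}V + V(1-\eta)\bigl(-\nabla\Psi\cdot\nabla\Psi + |V|^2 S(\Psi)\bigr) + R(\Psi),
\]
with $S(\Psi)=e^{2\mathfrak{Re}(\Psi)}-1-2\mathfrak{Re}(\Psi)$. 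The first two terms do not depend on $\Psi$, and $R$ is at least quadratic in $\Phi=V\Psi$ with support in $\{\tilde r\leq 2\}$. Thus
\[
d_\Psi F(\psi) = -2V(1-\eta)\,\nabla\Psi\cdot\nabla\psi + V(1-\eta)|V|^2 \, 2\mathfrak{Re}(\psi)\bigl(e^{2\mathfrak{Re}(\Psi)}-1\bigr) + d_\Psi R(\psi).
\]
I will treat the three pieces separately, always reading the $\ast\ast$-norm as computed on $h = d_\Psi F(\psi)/V$ (well defined since $1-\eta$ vanishes where $V$ does and $|V|$ is bounded below on $\{\tilde r \geq 1\}$).

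For the $R$-piece, I use the bilinear structure: $R$ is a finite sum of monomials of order $\geq 2$ in $(\Phi,\bar\Phi)$ and their first derivatives, multiplied by smooth cutoffs supported in $\{\tilde r\leq 2\}$. Differentiating once and using $\|\Psi\|_{\ast,\sigma,d}\leq 1$ gives
\[
|d_\Psi R(\psi)|+|\nabla d_\Psi R(\psi)|\leq C\,\|\Phi\|_{C^2(\{\tilde r\leq 3\})}\,\|\varphi\|_{C^2(\{\tilde r\leq 3\})}\leq C\,\|\Psi\|_{\ast,\sigma,d}\,\|\psi\|_{\ast,\sigma,d},
\]
which is exactly the part $\|V\cdot(d_\Psi R(\psi)/V)\|_{C^1(\{\tilde r\leq 3\})}$ of $\|\cdot\|_{\ast\ast,\gamma(\sigma),d}$. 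The rest of the $\ast\ast$-norm vanishes since $R$ is supported in $\{\tilde r\leq 2\}$.

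The main technical piece is the far-field term $(1-\eta)(-2\nabla\Psi\cdot\nabla\psi + |V|^2 d_\Psi S(\psi))$. Splitting $\Psi=\Psi_1+i\Psi_2$, the $\ast$-norm gives the decays $|\nabla\Psi_1|,|\nabla^2\Psi_j|\lesssim \|\Psi\|_{\ast,\sigma,d}\tilde r^{-(2+\sigma)}$ and $|\nabla\Psi_2|\lesssim \|\Psi\|_{\ast,\sigma,d}\tilde r^{-(1+\sigma)}$ (and analogously for $\psi$). Writing out real and imaginary parts of $\nabla\Psi\cdot\nabla\psi$, the worst real contribution is $\nabla\Psi_2\cdot\nabla\psi_2$ which decays like $\tilde r^{-(2+2\sigma)}$, while the imaginary part is the cross combination and decays like $\tilde r^{-(3+2\sigma)}$. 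Checking the weights of $\|\cdot\|_{\ast\ast,\gamma(\sigma),d}$: for the real part we need $1+\gamma(\sigma)\leq 2+2\sigma$, and for the imaginary part $2+\gamma(\sigma)\leq 3+2\sigma$; both reduce to $\gamma(\sigma)\leq 1+2\sigma$, which holds trivially. The gradient of this product loses one power of $\tilde r$ but also demands weight $2+\gamma(\sigma)$, and the same inequality closes. For $|V|^2 d_\Psi S(\psi)=2|V|^2\mathfrak{Re}(\psi)(e^{2\mathfrak{Re}(\Psi)}-1)$, Taylor expansion combined with $\|\Psi\|_{\ast,\sigma,d}\leq 1$ (so $|\mathfrak{Re}(\Psi)|$ is bounded) gives $|d_\Psi S(\psi)|\lesssim |\mathfrak{Re}(\psi)||\mathfrak{Re}(\Psi)|\lesssim \|\Psi\|_{\ast,\sigma,d}\|\psi\|_{\ast,\sigma,d}\tilde r^{-(2+2\sigma)}$; this is purely real and fits the weight $1+\gamma(\sigma)$ under the same constraint $\gamma(\sigma)\leq 1+2\sigma$. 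The gradient picks up an extra $\tilde r^{-1}$ either from $\nabla|V|^2$ (via Lemma \ref{nonmodV}), from $\nabla\mathfrak{Re}(\psi)$, or from $\nabla\mathfrak{Re}(\Psi)$, and the weight $2+\gamma(\sigma)$ is absorbed in the same way.

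The only real obstacle is verifying the exponent $\gamma(\sigma)=(1+\sigma)/2$ is compatible with every term simultaneously. This is what motivates the choice: $\gamma(\sigma)$ must satisfy $\sigma<\gamma(\sigma)<1$ (so the $\ast\ast$-norm is strictly stronger than needed to lie in the target of Proposition \ref{invertop}, while still weaker than the decay of the bilinear output), and the value $(1+\sigma)/2$ is the midpoint, which is the simplest choice making the required inequality $\gamma(\sigma)\leq 1+2\sigma$ hold with margin. Once all three pieces are combined, one obtains the claimed bound with a universal constant $C$ independent of $c$ and $d$ (the hypothesis $\tfrac{1}{2d}<c<\tfrac{2}{d}$ is only used implicitly through the uniform estimates on $V$ from Lemma \ref{nonmodV}).
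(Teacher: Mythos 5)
Your proposal is correct and follows essentially the same route as the paper's proof: the same explicit differentiation of $F$, the same $C^2$-bound on the localized quadratic remainder $d_\Psi R(\psi)$, and the same weighted bookkeeping for the far-field terms, with the key point being that the worst decays ($\tilde r^{-(2+2\sigma)}$ for the real parts, $\tilde r^{-(3+2\sigma)}$ for the imaginary cross terms) are compatible with the weights $1+\gamma(\sigma)$ and $2+\gamma(\sigma)$ because $\gamma(\sigma)<1\leq 1+2\sigma$. The only minor imprecision is describing $R$ as a sum of monomials (it also contains factors like $(\eta+(1-\eta)e^{\Psi})^{-1}$), but these are harmless under the hypothesis $\|\Psi\|_{\ast,\sigma,d}\leq 1$, exactly as in the paper.
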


\begin{proof}
  Recall from Lemma \ref{lemma7} that
  \[ F (\Psi) = E - i c \partial_{x_2} V + V (1 - \eta) (- \nabla \Psi .
     \nabla \Psi + | V |^2 S (\Psi)) + R (\Psi) \]
  with $S (\Psi) = e^{2\mathfrak{R}\mathfrak{e} (\Psi)} - 1 -
  2\mathfrak{R}\mathfrak{e} (\Psi)$ and $R (\Psi)$ at least quadratic in
  $\Phi$ and supported in $\{ \tilde{r} \leqslant 2 \}$. We compute
  \[ d_{\Psi} F (\psi) = V (1 - \eta) (- 2 \nabla \Psi . \nabla \psi + | V |^2
     d S (\psi)) + d_{\Psi} R (\psi) . \]
  We recall the condition $\frac{1}{2 d} < c < \frac{2}{d}$. For the term
  $d_{\Psi} R (\psi)$, since $R$ is a sum of terms at least quadratic in
  $\Phi$ and is supported in $\{ \tilde{r} \leqslant 2 \}$ (see the proof of
  Lemma \ref{lemma7}), when we differentiate, every term has $\Psi$ or $\nabla
  \Psi$ as a factor. Therefore,
  \begin{eqnarray*}
    \| d_{\Psi} R (\psi)_{} \|_{\ast \ast, \gamma (\sigma), d} & \leqslant & K
    \| \Phi \|_{C^2 (\{ \widetilde{r_{}} \leqslant 2 \})} \| V \psi \|_{C^2
    (\{ \widetilde{r_{}} \leqslant 2 \})}\\
    & \leqslant & K \| \Psi \|_{\ast, \sigma, d} \| \psi \|_{\ast, \sigma, d}
    .
  \end{eqnarray*}
  Now, for $\mathfrak{R}\mathfrak{e} (\nabla \Psi . \nabla \psi)$, since
  $\sigma > 0, \gamma (\sigma) < 1$, we estimate
  \begin{eqnarray*}
    \| \tilde{r}^{1 + \gamma (\sigma)} \mathfrak{R}\mathfrak{e} (\nabla \Psi .
    \nabla \psi) \|_{L^{\infty} (\{ \tilde{r} \geqslant 2 \})} & \leqslant &
    \| \tilde{r}^{1 + \gamma (\sigma)} | \nabla \Psi | \times | \nabla \psi |
    \|_{L^{\infty} (\{ \tilde{r} \geqslant 2 \})}\\
    & \leqslant & K \| \Psi \|_{\ast, \sigma, d} \| \psi \|_{\ast, \sigma, d}
    \left\| \frac{\tilde{r}^{1 + \gamma (\sigma)}}{\tilde{r}^{2 + 2 \sigma}}
    \right\|_{L^{\infty} (\{ \tilde{r} \geqslant 2 \})}\\
    & \leqslant & K \| \Psi \|_{\ast, \sigma, d} \| \psi \|_{\ast, \sigma, d}
    .
  \end{eqnarray*}
  Similarly,
  \begin{eqnarray*}
    \| \tilde{r}^{2 + \gamma (\sigma)} \mathfrak{I}\mathfrak{m} (\nabla \Psi .
    \nabla \psi) \|_{L^{\infty} (\{ \tilde{r} \geqslant 2 \})} & \leqslant &
    \| \tilde{r}^{2 + \gamma (\sigma)} \nabla \mathfrak{R}\mathfrak{e} \Psi .
    \nabla \mathfrak{I}\mathfrak{m} \psi \|_{L^{\infty} (\{ \tilde{r}
    \geqslant 2 \})}\\
    & + & \| \tilde{r}^{2 + \gamma (\sigma)} \nabla \mathfrak{I}\mathfrak{m}
    \Psi . \nabla \mathfrak{R}\mathfrak{e} \psi \|_{L^{\infty} (\{ \tilde{r}
    \geqslant 2 \})}\\
    & \leqslant & K \| \Psi \|_{\ast, \sigma, d} \| \psi \|_{\ast, \sigma, d}
    \left\| \frac{\tilde{r}^{2 + \gamma (\sigma)}}{\tilde{r}^{3 + 2 \sigma}}
    \right\|_{L^{\infty} (\{ \tilde{r} \geqslant 2 \})}\\
    & \leqslant & K \| \Psi \|_{\ast, \sigma, d} \| \psi \|_{\ast, \sigma, d}
    .
  \end{eqnarray*}
  With similar computation, we check that
  \[ \| \tilde{r}^{2 + \gamma (\sigma)} \nabla (\nabla \Psi . \nabla \psi)
     \|_{L^{\infty} (\{ \tilde{r} \geqslant 2 \})} \leqslant K \| \Psi
     \|_{\ast, \sigma, d} \| \psi \|_{\ast, \sigma, d} . \]
  Finally, we have
  \[ d_{\Psi} S (\psi) = 2\mathfrak{R}\mathfrak{e} (\psi)
     (e^{2\mathfrak{R}\mathfrak{e} (\Psi)} - 1), \]
  a real-valued term, and since $\| \Psi \|_{\ast, \sigma, d} \leqslant 1$, we
  estimate
  \begin{eqnarray*}
    \| \tilde{r}^{1 + \gamma (\sigma)} \mathfrak{R}\mathfrak{e} (\psi)
    (e^{2\mathfrak{R}\mathfrak{e} (\Psi)} - 1) \|_{L^{\infty} (\{ \tilde{r}
    \geqslant 2 \})} & \leqslant & K \| \tilde{r}^{1 + \gamma (\sigma)}
    \mathfrak{R}\mathfrak{e} (\psi) \mathfrak{R}\mathfrak{e} (\Psi)
    \|_{L^{\infty} (\{ \tilde{r} \geqslant 2 \})}\\
    & \leqslant & K \| \psi \|_{\ast, \sigma, d} \| \Psi \|_{\ast, \sigma, d}
    \left\| \frac{\tilde{r}^{1 + \gamma (\sigma)}}{\tilde{r}^{2 + 2 \sigma}}
    \right\|_{L^{\infty} (\{ \tilde{r} \geqslant 2 \})}\\
    & \leqslant & K \| \Psi_{c, d} \|_{\ast, \sigma, d} \| \psi \|_{\ast,
    \sigma, d},
  \end{eqnarray*}
  as well as
  \begin{eqnarray*}
    \| \tilde{r}^{2 + \gamma (\sigma)} \nabla (\mathfrak{R}\mathfrak{e} (\psi)
    (e^{2\mathfrak{R}\mathfrak{e} (\Psi)} - 1)) \|_{L^{\infty} (\{ \tilde{r}
    \geqslant 2 \})} & \leqslant & K \| \tilde{r}^{2 + \gamma (\sigma)}
    \mathfrak{R}\mathfrak{e} (\nabla \psi) \mathfrak{R}\mathfrak{e} (\Psi)
    \|_{L^{\infty} (\{ \tilde{r} \geqslant 2 \})}\\
    & + & K \| \tilde{r}^{2 + \gamma (\sigma)} \mathfrak{R}\mathfrak{e}
    (\psi) \mathfrak{R}\mathfrak{e} (\nabla \Psi) \|_{L^{\infty} (\{ \tilde{r}
    \geqslant 2 \})}\\
    & \leqslant & K \| \Psi_{c, d} \|_{\ast, \sigma, d} \| \psi \|_{\ast,
    \sigma, d} \left\| \frac{\tilde{r}^{2 + \gamma (\sigma)}}{\tilde{r}^{3 + 2
    \sigma}} \right\|_{L^{\infty} (\{ \tilde{r} \geqslant 2 \})}\\
    & \leqslant & K \| \Psi_{c, d} \|_{\ast, \sigma, d} \| \psi \|_{\ast,
    \sigma, d} .
  \end{eqnarray*}
  These estimates imply
  \[ \| d_{\Psi} F (\psi)_{} \|_{\ast \ast, \gamma (\sigma), d_{}} \leqslant C
     \| \Psi_{c, d} \|_{\ast, \sigma, d} \| \psi \|_{\ast, \sigma, d} . \]
\end{proof}

\subsubsection{Proof of the differentiabilities of $\Phi_{c, d}$ with respect
of $c$ and $d$}

We shall now show that $c \mapsto \Phi_{c, d}$ is $C^1$ and compute estimates
on $\partial_c \Psi_{c, d}$ at fixed $d$, and then show that $d \mapsto
\Phi_{c, d}$ is $C^1$ at fixed $c$ and estimate $\partial_d \Phi_{c, d}$.
These estimates will be usefull in subsection \ref{37}. For $d \mapsto
\Phi_{c, d}$, we will use the implicit function theorem (see Lemma
\ref{dderpsy}), but we start here with the derivation with respect to $c$.

\begin{lemma}
  \label{rest31}For $0 < \sigma < 1$, there exists $c_0 (\sigma) > 0$ such
  that, at fixed $d > \frac{1}{2 c_0 (\sigma)}$,
  \[ c \mapsto \Phi_{c, d} \in C^1 \left( \left] \frac{1}{2 d}, \frac{2}{d}
     \right[ \cap] 0, c_0 (\sigma) [, \mathcal{E}_{\ast, \sigma, d} \right) .
  \]
\end{lemma}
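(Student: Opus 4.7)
The plan is to apply the implicit function theorem at fixed $d$ to the map
\[ H:\mathcal{E}_{\ast,\sigma,d}\times\bigl]0,c_0(\sigma)\bigr[\longrightarrow \mathcal{E}_{\ast,\sigma,d},\qquad H(\Phi,c)\assign \bigl(\eta L(\cdot)+(1-\eta)VL'(\cdot/V)\bigr)^{-1}\bigl(-\Pi_d^{\bot}F(\Phi/V)\bigr)+\Phi, \]
whose zero in a small ball is exactly $\Phi_{c,d}$ by Proposition \ref{contractionest} (at fixed $d$ the spaces $\mathcal{E}_{\ast,\sigma,d}$ and $\mathcal{E}_{\circledast,\sigma,d_{\circledast}}$ with $d_{\circledast}=d$ coincide, so there is no artefact from moving the reference distance). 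The invertibility of $d_{\Phi}H(\Phi_{c,d},c)$ is the content of the discussion leading to Lemma \ref{lemme20}: combining the bound $\interleave (\eta L+(1-\eta)VL')^{-1}\circ\Pi_{d}^{\bot}\interleave=O(1)$ from Proposition \ref{invertop} with $\interleave d_{\Psi}F\interleave_{\mathcal{E}_{\ast,\sigma,d}\to\mathcal{E}_{\ast\ast,\gamma(\sigma),d}}=o^{\sigma}_{c\to 0}(1)$, one gets $d_{\Phi}H(\Phi_{c,d},c)=\mathrm{Id}+o^{\sigma}_{c\to 0}(1)$ in operator norm, hence invertible for $c<c_{0}(\sigma)$.

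What remains is to establish that $c\mapsto H(\Phi,c)$ is $C^{1}$. The $c$-dependence enters in two places. First, $F_{c}(\Psi)=F_{0}(\Psi)-ic\partial_{x_{2}}V$ is affine in $c$, so that part is trivially $C^{1}$ with derivative $-i\partial_{x_{2}}V$. Second, the linear operator $A_{c}\assign \eta L_{c}+(1-\eta)VL'_{c}$ depends on $c$ through the transport term, and one needs to differentiate $c\mapsto A_{c}^{-1}$. Writing $A_{c'}-A_{c}=(c'-c)M$ with $M(\Phi)\assign -i\partial_{x_{2}}\Phi$, the second resolvent identity
\[ A_{c'}^{-1}-A_{c}^{-1}=(c'-c)\,A_{c'}^{-1}\circ M\circ A_{c}^{-1} \]
formally yields $\partial_{c}A_{c}^{-1}=-A_{c}^{-1}\circ M\circ A_{c}^{-1}$.

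The main technical obstacle is that $M$ does not improve decay: a direct computation using Lemma \ref{fineest} and the definition of $\|\cdot\|_{\ast,\sigma,d}$ shows that for $\Phi=V\Psi\in\mathcal{E}_{\ast,\sigma,d}$, the function $-i\partial_{x_{2}}\Phi/V=-i(\partial_{x_{2}}V/V)\Psi-i\partial_{x_{2}}\Psi$ lies in $\mathcal{E}_{\ast\ast,\sigma,d}$ but \emph{not}, in general, in $\mathcal{E}_{\ast\ast,\sigma',d}$ for any $\sigma'>\sigma$, whereas Proposition \ref{invertop} requires a strict gain $\sigma'>\sigma$ to invert. I circumvent this by exploiting Corollary \ref{sigmarem}: the fixed point $\Phi_{c,d}$ actually lies in $\mathcal{E}_{\ast,\sigma_{0},d}$ for every $\sigma_{0}\in(\sigma,1)$. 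Fix $\sigma_{0}\assign(1+\sigma)/2\in(\sigma,1)$. Then $M\Phi_{c,d}$ and, more generally, $M\Phi$ for $\Phi$ in a neighbourhood of $\Phi_{c,d}$ in $\mathcal{E}_{\ast,\sigma_{0},d}$ lie in $\mathcal{E}_{\ast\ast,\sigma_{0},d}$ with a uniform bound; applying Proposition \ref{invertop} with the pair $(\sigma,\sigma_{0})$ then puts $A_{c}^{-1}M\Phi$ back into $\mathcal{E}_{\ast,\sigma,d}$ with the required uniform operator bound. The same bookkeeping handles $A_{c'}^{-1}\circ M\circ A_{c}^{-1}$ by starting from the target space of one inversion and going through an intermediate $\sigma_{1}\in(\sigma,\sigma_{0})$.

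Once $c\mapsto A_{c}^{-1}$ and $c\mapsto F_{c}$ are shown $C^{1}$ in the appropriate operator/functional norms (continuity of the derivative following from the same resolvent identity applied twice, together with Lemma \ref{lemme20}), the map $H$ is $C^{1}$ jointly in $(\Phi,c)$ in a neighbourhood of $(\Phi_{c,d},c)$ with boundedly invertible $\partial_{\Phi}H$. The implicit function theorem then produces a $C^{1}$ branch $c\mapsto\tilde{\Phi}(c)\in\mathcal{E}_{\ast,\sigma,d}$ with $H(\tilde{\Phi}(c),c)=0$. Uniqueness of the contraction fixed point in Proposition \ref{contractionest} identifies $\tilde{\Phi}(c)=\Phi_{c,d}$, completing the proof. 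As a by-product, one obtains the explicit formula $\partial_{c}\Phi_{c,d}=-d_{\Phi}H(\Phi_{c,d},c)^{-1}(\partial_{c}H(\Phi_{c,d},c))$, which will be exploited in subsection \ref{37}.
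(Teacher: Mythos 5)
Your proposal is correct and follows essentially the same route as the paper: the paper also differentiates the fixed-point equation (via explicit difference quotients rather than an abstract implicit function theorem), inverts $d_{\Phi}H=\mathrm{Id}+o^{\sigma}_{c\to 0}(1)$ using Lemma \ref{lemme20}, and—crucially—compensates the decay lost by the transport term $-i\partial_{x_2}$ exactly as you do, by exploiting that the fixed point lies in the better space with exponent $\gamma(\sigma)=(1+\sigma)/2$ and applying Proposition \ref{invertop} with the pair $(\sigma,\gamma(\sigma))$. The only presentational difference is that the paper first establishes continuity of $c\mapsto\Phi_{c,d}$ by a separate compactness argument before passing to the derivative, whereas you obtain it as a by-product of the implicit function theorem together with uniqueness of the contraction fixed point.
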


Remark that, at fixed $d$, $\partial_c \Phi_{c, d} = V \partial_c \Psi_{c,
d}$.

\begin{proof}
  In this proof, we consider a fixed $d > \frac{1}{2 c_0 (\sigma)}$. We
  define, for $c \in \mathbbm{R}$ such that $\frac{1}{2 d} < c < \frac{2}{d}$
  and $0 < c < c_0 (\sigma)$, the operator
  \[ \mathbbm{H}_c : \Phi \mapsto (\eta L (.) + (1 - \eta) V L' (. / V))^{- 1}
     (\Pi_d^{\bot} (F (\Phi / V))) \]
  from $\mathcal{E}_{\circledast, \sigma, d_{\circledast}}$ to
  $\mathcal{E}_{\circledast, \sigma, d_{\circledast}}$. The dependency on $c$
  is coming from both $F$ and $(\eta L (.) + (1 - \eta) V L' (. / V))^{- 1}$,
  and in this proof, we will add a subscript on these functions giving the
  value of $c$ where it is taken. Take $c' \in \mathbbm{R}$ such that
  $\frac{1}{2 d} < c' < \frac{2}{d}$ and $0 < c' < c_0 (\sigma)$, and let us
  show that
  \[ \| \mathbbm{H}_{c + \varepsilon} (\Phi_{c', d}) -\mathbbm{H}_c (\Phi_{c',
     d}) \|_{\circledast, \sigma, d_{\circledast}} = o_{\varepsilon
     \rightarrow 0}^{\sigma, c} (1) . \]
  In particular, remark that we look for a convergence uniform in $c'$. By
  definition of the operator $(\eta L (.) + (1 - \eta) V L' (. / V))^{- 1}$,
  the function $\mathbbm{H}_{c + \varepsilon} (\Phi_{c', d})$ (in
  $\mathcal{E}_{\circledast, \sigma, d_{\circledast}}$) is such that, in the
  distribution sense,
  \[ \left( \eta L (.) + (1 - \eta) V L' \left( \frac{.}{V} \right) \right)_{c
     + \varepsilon} (\mathbbm{H}_{c + \varepsilon} (\Phi_{c', d})) =
     \Pi_d^{\bot} (F_{c + \varepsilon} (\Phi_{c', d} / V)) . \]
  Since $\Phi_{c', d} \in C^{\infty} (\mathbbm{R}^2)$, we have that
  $\mathbbm{H}_{c + \varepsilon} (\Phi_{c', d}) \in C^{\infty}
  (\mathbbm{R}^2)$ and the equation is satisfied in the strong sense.
  Furtheremore, since $\Pi_d^{\bot} (F_{c + \varepsilon} (\Phi_{c', d} / V))
  \in \mathcal{E}_{\circledast \circledast, \frac{2 + \sigma}{3},
  d_{\circledast} }$ by Lemmas \ref{fineestimate} to \ref{L2151904} with $\|
  \Pi_d^{\bot} (F_{c + \varepsilon} (\Phi_{c', d} / V)) \|_{\circledast
  \circledast, \frac{2 + \sigma}{3}, d_{\circledast}} \leqslant K (\sigma)$
  (since $\Phi_{c', d} \in \mathcal{E}_{\circledast, \frac{2 + \sigma}{3},
  d_{\circledast} }$ with $\| \Phi_{c', d} \|_{\circledast, \frac{2 +
  \sigma}{3}, d_{\circledast}} \leqslant K (\sigma)$), we have, by Lemma
  \ref{funky}, that $\mathbbm{H}_{c + \varepsilon} (\Phi_{c', d}) \in
  \mathcal{E}_{\circledast, \gamma (\sigma), d_{\circledast}}$ (since $\gamma
  (\sigma) < \frac{2 + \sigma}{3}$) with, fom Proposition \ref{invertop}, $\|
  \mathbbm{H}_{c + \varepsilon} (\Phi_{c', d}) \|_{\circledast, \gamma
  (\sigma), d_{\circledast}} \leqslant K (\sigma)$. We check similarly that
  \[ \left( \eta L (.) + (1 - \eta) V L' \left( \frac{.}{V} \right) \right)_c
     (\mathbbm{H}_c (\Phi_{c', d})) = \Pi_d^{\bot} (F_c (\Phi_{c', d} / V)) .
  \]
  Now, from the definitions of $L$ and $L'$ from Lemma \ref{lemma7}, we have
  \begin{eqnarray*}
    \left( \eta L (.) + (1 - \eta) V L' \left( \frac{.}{V} \right) \right)_{c
    + \varepsilon} (\mathbbm{H}_{c + \varepsilon} (\Phi_{c', d})) & = & \left(
    \eta L (.) + (1 - \eta) V L' \left( \frac{.}{V} \right) \right)_c
    (\mathbbm{H}_{c + \varepsilon} (\Phi_{c', d}))\\
    & - & i \varepsilon \eta \partial_{x_2} \mathbbm{H}_{c + \varepsilon}
    (\Phi_{c', d})\\
    & - & i \varepsilon (1 - \eta) V \partial_{x_2} \left(
    \frac{\mathbbm{H}_{c + \varepsilon} (\Phi_{c', d})}{V} \right),
  \end{eqnarray*}
  and therefore
  \begin{eqnarray*}
    &  & \left( \eta L (.) + (1 - \eta) V L' \left( \frac{.}{V} \right)
    \right)_c (\mathbbm{H}_{c + \varepsilon} (\Phi_{c', d}) -\mathbbm{H}_c
    (\Phi_{c', d}))\\
    & = & - (\Pi_d^{\bot} (F_{c + \varepsilon} (\Phi_{c', d} / V) - F_c
    (\Phi_{c', d} / V)))\\
    & - & i \varepsilon \left( \eta \partial_{x_2} \mathbbm{H}_{c +
    \varepsilon} (\Phi_{c', d}) + (1 - \eta) V \partial_{x_2} \left(
    \frac{\mathbbm{H}_{c + \varepsilon} (\Phi_{c', d})}{V} \right) \right) .
  \end{eqnarray*}
  We check, using $\mathbbm{H}_{c + \varepsilon} (\Phi_{c', d}) \in
  \mathcal{E}_{\circledast, \gamma (\sigma), d_{\circledast}}$, $\|
  \mathbbm{H}_{c + \varepsilon} (\Phi_{c', d}) \|_{\circledast, \gamma
  (\sigma), d_{\circledast}} \leqslant K (\sigma)$ that
  \[ i \varepsilon \left( \eta \partial_{x_2} \mathbbm{H}_{c + \varepsilon}
     (\Phi_{c', d}) + (1 - \eta) V \partial_{x_2} \left( \frac{\mathbbm{H}_{c
     + \varepsilon} (\Phi_{c', d})}{V} \right) \right) \in
     \mathcal{E}_{\circledast \circledast, \gamma (\sigma), d_{\circledast} },
  \]
  with
  \[ \left\| i \varepsilon \left( \eta \partial_{x_2} \mathbbm{H}_{c +
     \varepsilon} (\Phi_{c', d}) + (1 - \eta) V \partial_{x_2} \left(
     \frac{\mathbbm{H}_{c + \varepsilon} (\Phi_{c', d})}{V} \right) \right)
     \right\|_{\circledast \circledast, \gamma (\sigma), d_{\circledast}}
     \leqslant K (\sigma) \varepsilon . \]
  In particular, by Proposition \ref{invertop} (from $\mathcal{E}_{\circledast
  \circledast, \gamma (\sigma), d_{\circledast} }$ to
  $\mathcal{E}_{\circledast, \sigma, d_{\circledast}}$), we have
  \begin{eqnarray*}
    &  & \| \mathbbm{H}_{c + \varepsilon} (\Phi_{c', d}) -\mathbbm{H}_c
    (\Phi_{c', d}) \|_{\circledast, \sigma, d_{\circledast}}\\
    & \leqslant & K (\sigma) \| \Pi_d^{\bot} (F_{c + \varepsilon} (\Phi_{c',
    d} / V) - F_c (\Phi_{c', d} / V)) \|_{\circledast \circledast, \gamma
    (\sigma), d_{\circledast}}\\
    & + & K (\sigma) \varepsilon .
  \end{eqnarray*}
  We recall that
  \[ F_c (\Psi) = E - i c \partial_{x_2} V + V (1 - \eta) (- \nabla \Psi .
     \nabla \Psi + | V |^2 S (\Psi)) + R_c (\Psi), \]
  therefore
  \[ F_{c + \varepsilon} (\Phi_{c', d} / V) - F_c (\Phi_{c', d} / V) = - i
     \varepsilon \partial_{x_2} V + R_{c + \varepsilon} (\Phi_{c', d} / V) -
     R_c (\Phi_{c', d} / V) . \]
  By Lemma \ref{fineest} (for $i \partial_{x_2} V$) and the definition of
  $R_c$ (in the proof of Lemma \ref{lemma7}), we check that, for any $0 <
  \sigma < 1$, since $\| \Psi_{c', d} \|_{\ast, \sigma, d} \leqslant K
  (\sigma) c_0 (\sigma)^{1 - \gamma (\sigma)} \leqslant K (\sigma)$,
  \[ \| \Pi_d^{\bot} (F_{c + \varepsilon} (\Phi_{c', d} / V) - F_c (\Phi_{c',
     d} / V)) \|_{\circledast \circledast, \sigma, d_{\circledast}} \leqslant
     K (\sigma) \frac{\varepsilon}{c} . \]
  We conclude that
  \[ \| \mathbbm{H}_{c + \varepsilon} (\Phi_{c', d}) -\mathbbm{H}_c (\Phi_{c',
     d}) \|_{\circledast, \sigma, d_{\circledast}} = o_{\varepsilon
     \rightarrow 0}^{\sigma, c} (1), \]
  thus $\mathbbm{H}_{c + \varepsilon} (\Phi_{c', d}) \rightarrow \mathbbm{H}_c
  (\Phi_{c', d})$ when $\varepsilon \rightarrow 0$ in
  $\mathcal{E}_{\circledast, \sigma, d_{\circledast}}$ uniformly in $c'$. We
  remark that it is also uniform in $d$ in any compact set of $] 0, c_0
  (\sigma) [$.
  
  The next step is to show that $c \mapsto \Phi_{c, d}$ is a continuous
  function in $\mathcal{E}_{\ast, \sigma, d}$. Take $\varepsilon_n$ a sequence
  such that $\varepsilon_n \rightarrow 0$ when $n \rightarrow \infty$, then
  $\| \Phi_{c + \varepsilon_n, d} \|_{\ast, \sigma, d} \leqslant K_0 (\sigma,
  \sigma') (c + \varepsilon_n)^{1 - \sigma'}$ (for $K_0 (\sigma, \sigma')$ the
  constant in Proposition \ref{contractionest}), and (in the strong sense)
  \[ \left( \eta L (.) + (1 - \eta) V L' \left( \frac{.}{V} \right) \right)_{c
     + \varepsilon_n} (\Phi_{c + \varepsilon_n, d}) + \Pi_d^{\bot} (F_{c +
     \varepsilon_n} (\Phi_{c + \varepsilon_n, d} / V)) = 0. \]
  With the same arguments as in step 1 of the proof of Proposition
  \ref{invertop}, we check that, up to a subsequence, $\Phi_{c +
  \varepsilon_n} \rightarrow \Phi$ locally uniformly in $\mathbbm{R}^2$ for
  some function $\Phi \in \mathcal{E}_{\ast, \sigma, d}$ such that $\| \Phi
  \|_{\ast, \sigma, d} \leqslant K_0 (\sigma, \sigma') c^{1 - \sigma'}$. Then,
  since
  \[ \mathbbm{H}_{c + \varepsilon_n} (\Phi_{c + \varepsilon_n, d}) + \Phi_{c +
     \varepsilon_n, d} = 0, \]
  by taking the limit when $n \rightarrow \infty$, up to a subsequence, since
  $\mathbbm{H}_{c + \varepsilon} (\Phi_{c', d}) \rightarrow \mathbbm{H}_c
  (\Phi_{c', d})$ when $\varepsilon \rightarrow 0$ in $\mathcal{E}_{\ast,
  \sigma, d}$ (the norm is equivalent to the one of $\mathcal{E}_{\circledast,
  \sigma, d_{\circledast}}$) uniformly in $c'$, we have
  \[ \mathbbm{H}_c (\Phi) + \Phi = 0. \]
  But then, $\Phi \in \mathcal{E}_{\ast, \sigma, d}$, $\| \Phi \|_{\ast,
  \sigma, d} \leqslant K_0 (\sigma, \sigma') c^{1 - \sigma'}$ and
  $\mathbbm{H}_c (\Phi) + \Phi = H (\Phi, c, d) = 0$. By Proposition
  \ref{contractionest}, this implies that $\Phi = \Phi_{c, d}$, therefore
  $\Phi_{c, d}$ is an accumulation point of $\Phi_{c + \varepsilon_n, d}$. It
  is the only accumulation point, since any other will also satisfy $\Phi \in
  \mathcal{E}_{\ast, \sigma, d}$, $\| \Phi \|_{\ast, \sigma, d} \leqslant K_0
  (\sigma, \sigma') c^{1 - \sigma'}$ and $H (\Phi, c, d) = 0$. Therefore,
  $\Phi_{c + \varepsilon_n, d} \rightarrow \Phi_{c, d}$ in $\mathcal{E}_{\ast,
  \sigma, d}$, hence $c \mapsto \Phi_{c, d}$ is a continuous function in
  $\mathcal{E}_{\ast, \sigma, d}$.
  
  Now, let us show that it is a $C^1$ function in $\mathcal{E}_{\ast, \sigma,
  d}$. Since $\mathbbm{H}_c (\Phi_{c, d}) + \Phi_{c, d} = 0$, we have
  \[ \begin{array}{lll}
       &  & \left( \eta L (.) + (1 - \eta) V L' \left( \frac{.}{V} \right)
       \right)_c (\Phi_{c + \varepsilon, d} - \Phi_{c, d})\\
       & = & - (\Pi_d^{\bot} (F_{c + \varepsilon} (\Phi_{c + \varepsilon, d}
       / V) - F_c (\Phi_{c, d} / V)))\\
       & - & i \varepsilon \left( \eta \partial_{x_2} \Phi_{c + \varepsilon,
       d} + (1 - \eta) V \partial_{x_2} \left( \frac{\Phi_{c + \varepsilon,
       d}}{V} \right) \right) .
     \end{array} \]
  Furthermore, from $\| \Pi_d^{\bot} (F_{c + \varepsilon} (\Phi_{c', d} / V) -
  F_c (\Phi_{c', d} / V)) \|_{\circledast \circledast, \sigma,
  d_{\circledast}} \leqslant K (\sigma, c) \varepsilon$ and
  \[ \left\| i \varepsilon \left( \eta \partial_{x_2} \Phi_{c + \varepsilon,
     d} + (1 - \eta) V \partial_{x_2} \left( \frac{\Phi_{c + \varepsilon,
     d}}{V} \right) \right) \right\|_{\circledast \circledast, \sigma,
     d_{\circledast}} \leqslant K (\sigma, c) \varepsilon, \]
  we deduce that $\| \Phi_{c + \varepsilon, d} - \Phi_{c, d} \|_{\circledast,
  \sigma, d_{\circledast}} \leqslant K (\sigma, c) \varepsilon$.
  
  From the definition of $F$, we infer that
  \begin{eqnarray*}
    \nobracket F_{c + \varepsilon} (\Phi_{c + \varepsilon, d} / V) - F_c
    (\Phi_{c, d} / V \nobracket) & = & - i \varepsilon \partial_{x_2} V\\
    & + & V (1 - \eta) (- \nabla \Psi_{c + \varepsilon, d} . \nabla \Psi_{c +
    \varepsilon, d} + \nabla \Psi_{c, d} . \nabla \Psi_{c, d})\\
    & + & V (1 - \eta) | V |^2 (S (\Psi_{c + \varepsilon, d}) - S (\Psi_{c,
    d}))\\
    & + & R_{c + \varepsilon} (\Psi_{c + \varepsilon, d}) - R_c (\Psi_{c, d})
    .
  \end{eqnarray*}
  Now, regrouping the terms of $\Pi_d^{\bot} (d_{\Psi} F_c ((\Phi_{c +
  \varepsilon, d} \nobracket - \nobracket \Phi_{c, d}) / V))$ and using $\|
  \Phi_{c + \varepsilon, d} - \Phi_{c, d} \|_{\circledast, \sigma,
  d_{\circledast}} \leqslant K (\sigma, c) \varepsilon$ for the remaining
  nonlinear terms (which will be at least quadratic in $\Phi_{c + \varepsilon,
  d} - \Phi_{c, d}$, since $F$ is $C^{\infty}$ with respect to $\Psi$), as
  well as the fact that $c \mapsto R_c \in C^{\infty} (] 0, c_0 (\sigma) [,
  C^1 (\mathbbm{R}^2))$, for any $0 < \sigma < 1$,
  \begin{eqnarray*}
    \Pi_d^{\bot} (F_{c + \varepsilon} (\Phi_{c + \varepsilon, d} / V) - F_c
    (\Phi_{c, d} / V)) & = & \Pi_d^{\bot} (d_{\Psi} F_c ((\Phi_{c +
    \varepsilon, d} \nobracket - \nobracket \Phi_{c, d}) / V))\\
    & + & \varepsilon \Pi_d^{\bot} (- i \partial_{x_2} V)\\
    & + & O^{\sigma, c}_{\| . \|_{\ast \ast, \sigma, d}} (\varepsilon^2),
  \end{eqnarray*}
  where $O^{\sigma, c}_{\| . \|_{\ast \ast, \sigma, d}} (\varepsilon^2)$ is a
  quantity going to $0$ as $\varepsilon^2$ when $\varepsilon \rightarrow 0$ in
  the norm $\| . \|_{\ast \ast, \sigma, d}$ at fixed $\sigma, c$. We deduce
  that
  \[ \begin{array}{lll}
       &  & \left( \tmop{Id} + \left( \eta L (.) + (1 - \eta) V L' \left(
       \frac{.}{V} \right) \right)_c^{- 1} (\Pi_d^{\bot} (d_{\Psi} F_c (. /
       V))) \right) ((\Phi_{c + \varepsilon, d} \nobracket - \nobracket
       \Phi_{c, d}))\\
       & = & \left( \eta L (.) + (1 - \eta) V L' \left( \frac{.}{V} \right)
       \right)_c^{- 1} \left( - \varepsilon \Pi_d^{\bot} (- i \partial_{x_2}
       V) - i \varepsilon \left( \eta \partial_{x_2} \Phi_{c + \varepsilon, d}
       + (1 - \eta) V \partial_{x_2} \left( \frac{\Phi_{c + \varepsilon,
       d}}{V} \right) \right) \right)\\
       & + & \left( \eta L (.) + (1 - \eta) V L' \left( \frac{.}{V} \right)
       \right)_c^{- 1} (O^{\sigma, c}_{\| . \|_{\ast \ast, \sigma, d}}
       (\varepsilon^2)),
     \end{array} \]
  and we have shown that $\left( \tmop{Id} + \left( \eta L (.) + (1 - \eta) V
  L' \left( \frac{.}{V} \right) \right)_c^{- 1} \left( \Pi_d^{\bot} \left(
  \frac{1}{V} d_{\Phi} F_c (. / V) \right) \right) \right)$ is invertible from
  $\mathcal{E}_{\circledast, \sigma, d_{\circledast}}$ to
  $\mathcal{E}_{\circledast, \sigma, d_{\circledast}}$ (with an operator norm
  equal to $1 + o_{c \rightarrow 0}^{\sigma} (1)$ if taken in $\Phi = \Phi_{c,
  d}$, see Lemma \ref{lemme20}). Furthermore, $\Phi_{c, d}$ is continuous with
  respect to $c$ in $\mathcal{E}_{\circledast, \gamma (\sigma),
  d_{\circledast}}$ (with the same computations as previously, replacing
  $\sigma$ by $\gamma (\sigma)$), therefore
  \[ \eta \partial_{x_2} \Phi_{c + \varepsilon, d} + (1 - \eta) V
     \partial_{x_2} \left( \frac{\Phi_{c + \varepsilon, d}}{V} \right)
     \rightarrow \eta \partial_{x_2} \Phi_{c, d} + (1 - \eta) V \partial_{x_2}
     \left( \frac{\Phi_{c, d}}{V} \right) \]
  in $\mathcal{E}_{\circledast \circledast, \gamma (\sigma), d_{\circledast}}$
  when $\varepsilon \rightarrow 0$. We deduce that $c \mapsto \Phi_{c, d}$ is
  $C^1$ in $\mathcal{E}_{\circledast, \sigma, d_{\circledast}}$ (and therefore
  in $\mathcal{E}_{\ast, \sigma, d}$).
\end{proof}

Now, we show the differentiablity of $\Phi_{c, d}$ with respect to $d$.

\begin{lemma}
  \label{dderpsy}For $0 < \sigma < 1$, there exists $c_0 (\sigma) > 0$ such
  that, for $0 < c < c_0 (\sigma)$,
  \[ d \mapsto \Phi_{c, d} \in C^1 \left( \left] \frac{1}{2 c}, \frac{2}{c}
     \right[ \cap \left] d_{\circledast} - \frac{\delta}{2}, d_{\circledast} +
     \frac{\delta}{2} \right[, \mathcal{E}_{\circledast, \sigma,
     d_{\circledast}} \right) . \]
\end{lemma}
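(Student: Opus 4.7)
The plan is to apply the implicit function theorem to the equation $H(\Phi, c, d) = 0$ at fixed $c$, taking full advantage of the fact that the spaces $\mathcal{E}_{\circledast, \sigma, d_{\circledast}}$ and $\mathcal{E}_{\circledast\circledast, \sigma', d_{\circledast}}$ are independent of $d$ (they only depend on the reference point $d_{\circledast}$, which is fixed throughout). This is precisely the reason the $\circledast$-spaces were introduced in subsection \ref{31}, and it is what makes it legitimate to differentiate with respect to $d$ inside a fixed Banach space.

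First I would prove that $d \mapsto \Phi_{c, d}$ is continuous in $\mathcal{E}_{\circledast, \sigma, d_{\circledast}}$, arguing exactly as in the continuity step of Lemma \ref{rest31}: for any sequence $d_n \to d$, the functions $\Phi_{c, d_n}$ are uniformly bounded in $\mathcal{E}_{\circledast, \gamma(\sigma), d_{\circledast}}$ by Proposition \ref{contractionest}; they converge (up to subsequence) locally uniformly to a limit $\Phi$; the equation $H(\Phi, c, d) = 0$ passes to the limit (since $V$, $E$, $\eta$, $Z_d$ depend smoothly on $d$ by Lemmas \ref{lemme3}, \ref{nonmodV}, \ref{infest}, \ref{ddVest}); and the uniqueness in the fixed-point ball forces $\Phi = \Phi_{c, d}$.

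Second, I would establish that $(c, d) \mapsto H(\cdot, c, d)$ is $C^1$ in $d$ as a map into $\mathcal{E}_{\circledast, \sigma, d_{\circledast}}$. The three $d$-dependent ingredients are:
\begin{itemize}
\item the cutoff $\eta$ and the potential $V$, both of which are smooth in $d$ with derivatives controlled via Lemma \ref{ddVest} and explicit translation of $\tilde\eta$;
\item the nonlinear source $F$, for which $\partial_d F$ decomposes into $\partial_d(E - ic\partial_{x_2} V)$ (controlled by Lemma \ref{ddVest} applied to the computation of Lemma \ref{fineestimate}) plus terms involving $\partial_d V$ and $\partial_d \eta$ times quantities already estimated in Lemmas \ref{L2141904}--\ref{L2151904};
\item the projection $\Pi_d^{\bot}$, whose $d$-dependence is entirely through $Z_d = \partial_d V\,(\tilde\eta(4r_1) + \tilde\eta(4r_{-1}))$, which is differentiable in $d$ by Lemma \ref{ddVest} (bound on $\partial_d^2 V$ and $\partial_d^2 \nabla V$).
\end{itemize}
For the inverse operator $(\eta L + (1-\eta) V L'(\cdot/V))^{-1}$, I would not differentiate directly but rather use the resolvent identity
\[ A_d^{-1} - A_{d_0}^{-1} = A_d^{-1}(A_{d_0} - A_d) A_{d_0}^{-1}, \]
together with the uniform invertibility from Proposition \ref{invertop} (valid for all $d$ with $|d - d_{\circledast}| < \delta$) and the $d$-differentiability of $A_d = \eta L + (1 - \eta) V L'(\cdot/V)$ as an operator from $\mathcal{E}_{\circledast, \sigma, d_{\circledast}}$ to $\mathcal{E}_{\circledast\circledast, \sigma', d_{\circledast}}$.

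Third, I conclude via the implicit function theorem: the estimate \eqref{roug} combined with Lemma \ref{lemme20} shows that $d_{\Phi} H(\Phi_{c, d}, c, d) = \mathrm{Id} + O^{\sigma}_{c \to 0}(1)$ in operator norm, hence is invertible for $c < c_0(\sigma)$ sufficiently small. The implicit function theorem then yields $d \mapsto \Phi_{c, d} \in C^1$ with
\[ \partial_d \Phi_{c, d} = - \bigl( d_{\Phi} H(\Phi_{c, d}, c, d) \bigr)^{-1} \partial_d H(\Phi_{c, d}, c, d). \]
The main technical obstacle is verifying the $C^1$ dependence of the family of linear operators $A_d$ in the operator norm from $\mathcal{E}_{\circledast, \sigma, d_{\circledast}}$ to $\mathcal{E}_{\circledast\circledast, \sigma', d_{\circledast}}$, and consequently of $A_d^{-1}$; this reduces to checking that the $d$-derivative of each coefficient of $L$ and $L'$ (namely $\partial_d(1-|V|^2)$, $\partial_d V$, $\partial_d(\nabla V / V)$, $\partial_d |V|^2$, and the $d$-dependence of the cutoff $\eta$) lies in the correct weighted $L^\infty$-norm, which is essentially contained in Lemmas \ref{nonmodV} and \ref{ddVest} applied one order higher.
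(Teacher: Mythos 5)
Your architecture is the paper's: work in the $d$-independent spaces $\mathcal{E}_{\circledast, \sigma, d_{\circledast}}$, show that $H (\Phi, c, d) =\mathbbm{H}_d (\Phi) + \Phi$ is $C^1$ in $d$, and apply the implicit function theorem after noting (via Lemma \ref{lemme20}) that $d_{\Phi} H$ is a small perturbation of the identity. The resolvent identity you invoke for the inverse operator is exactly what the paper does implicitly: it writes the equation for $\mathbbm{H}_{d + \varepsilon} (\Phi) -\mathbbm{H}_d (\Phi)$ with the operator frozen at $d$, puts all the $\varepsilon$-dependence on the right-hand side, and applies the uniform a priori bound of Proposition \ref{invertop}. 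Your preliminary continuity step and your treatment of $Z_d$, $\eta$ and the nonlinear terms are consistent with the paper.

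The one step you underestimate is the claim that $\partial_d F$, and specifically $\partial_d (E - i c \partial_{x_2} V)$, lies in $\mathcal{E}_{\circledast \circledast, \sigma', d_{\circledast}}$ with norm $O (c^{1 - \sigma'})$; this is needed before the inverse operator of Proposition \ref{invertop} can be applied to $\partial_d H$. Lemma \ref{ddVest} only gives $| \partial_{x_2} \partial_d V | \leqslant K (1 + \tilde{r})^{- 2}$, so the naive bound on the imaginary part is $\| c \tilde{r}^{2 + \sigma'} \mathfrak{I}\mathfrak{m} (i \partial_{x_2} \partial_d V / V) \|_{L^{\infty}} \leqslant K c \tilde{r}^{\sigma'}$, which is not even finite. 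The paper must exhibit an explicit algebraic cancellation: it expands $\mathfrak{I}\mathfrak{m} (i \partial_{x_2} \partial_d V / V)$ into the combinations $\frac{\cos (\theta_1) \sin (\theta_1)}{r_1^2} - \frac{\cos (\theta_{- 1}) \sin (\theta_{- 1})}{r_{- 1}^2}$ and $\frac{2 d x_2}{(r_1 r_{- 1})^2}$ and checks term by term (equations (\ref{horrible})--(\ref{382901})) that each piece carries a factor $d$ in the numerator and enough powers of $r_{- 1}$ in the denominator to yield $c^{1 - \sigma'}$ after weighting by $\tilde{r}^{2 + \sigma'}$. This is the analogue, for the $d$-derivative, of the cancellation already required in Lemma \ref{fineest} for $i c \partial_{x_2} V / V$ itself; it does not follow from Lemma \ref{ddVest} ``applied one order higher'', and without it your third step cannot be carried out.
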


We recall that $\delta > 0$ is defined at the beginning of this subsection.

\

We check easily by standard elliptic regularity arguments that $\partial_c
\Phi_{c, d} \in C^{\infty} (\mathbbm{R}^2, \mathbbm{C})$. Furthermore, $c
\mapsto \Phi_{c, d}$ is $C^1$ with values in $\mathcal{E}_{\ast, \sigma, d}$,
therefore $\partial_c \nabla \Phi_{c, d}$ is well defined (in $C^0
(\mathbbm{R}^2, \mathbbm{C})$). Let us show that it is equal to $\nabla
\partial_c \Phi_{c, d}$. For $\varphi \in C^{\infty}_c (\mathbbm{R}^2,
\mathbbm{C})$, we have, by derivation under an integral, that
\begin{eqnarray*}
  \int_{\mathbbm{R}^2} \partial_c \nabla \Phi_{c, d} \varphi & = & \partial_c
  \int_{\mathbbm{R}^2} \nabla \Phi_{c, d} \varphi\\
  & = & - \partial_c \int_{\mathbbm{R}^2} \Phi_{c, d} \nabla \varphi\\
  & = & - \int_{\mathbbm{R}^2} \partial_c \Phi_{c, d} \nabla \varphi\\
  & = & \int_{\mathbbm{R}^2} \nabla \partial_c \Phi_{c, d} \varphi .
\end{eqnarray*}
Therefore $\partial_c \nabla \Phi_{c, d} = \nabla \partial_c \Phi_{c, d}$ in
the distribution sense, and thus in the strong sense. Furthermore, thanks to
the equation $\eta L (\Phi_{c, d}) + (1 - \eta) V L' (\Psi_{c, d}) + F
(\Psi_{c, d}) = \lambda (c, d) Z_d$, we can isolate $\Delta \Phi_{c, d}$ as in
(\ref{jaune}), and show in particular that it is a $C^1$ function of $c$. By
similar arguments as for the gradient, we can show that $\partial_c \Delta
\Phi_{c, d} = \Delta \partial_c \Phi_{c, d}$. Furthermore, the same proof
holds if we differentiate $\Phi_{c, d}$ with respect to $d$. We can therefore
inverse derivatives in position and derivatives with respect to $c$ or $d$ on
$\Phi_{c, d}$.

Let us also show that $(c, d) \mapsto \partial_c \Phi_{c, d}$ is a continuous
function from $\Omega \assign \left\{ (c, d) \in \mathbbm{R}^2, 0 < c < c_0
(\sigma), \frac{1}{2 c} < d < \frac{2}{c} \right\}$ to $\mathcal{E}_{\ast,
\sigma, d}$. With the same compactness argument used in the proof of the
continuity of $c \mapsto \Phi_{c, d}$, we can show that $(c, d) \mapsto
\Phi_{c, d}$ is continuous from $\Omega$ to $\mathcal{E}_{\ast, \sigma, d}$.
From the proof of Lemma \ref{rest31}, we have that
\begin{eqnarray*}
  &  & \left( \tmop{Id} + \left( \eta L (.) + (1 - \eta) V L' \left(
  \frac{.}{V} \right) \right)^{- 1} (\Pi_d^{\bot} (d_{\Psi} F (. / V)))
  \right) (\partial_c \Phi_c)\\
  & = & \Pi_d^{\bot} (\partial_c F (\Phi_{c, d} / V)) - i \eta \partial_{x_2}
  \Phi_{c, d} + (1 - \eta) V \partial_{x_2} \left( \frac{\Phi_{c, d}}{V}
  \right) .
\end{eqnarray*}
Since $(c, d) \mapsto \Phi_{c, d}$ is continuous from $\Omega$ to
$\mathcal{E}_{\ast, \sigma, d}$, and that the dependence on $(c, d)$ of the
other terms of the right-hand side is explicit, we check that $\Pi_d^{\bot}
(\partial_c F (\Phi_{c, d} / V)) - i \eta \partial_{x_2} \Phi_{c, d} + (1 -
\eta) V \partial_{x_2} \left( \frac{\Phi_{c, d}}{V} \right)$ is continuous
from $\Omega$ to $\mathcal{E}_{\ast \ast, \gamma (\sigma), d}$. We check also
that $(c, d) \mapsto \left( \tmop{Id} + \left( \eta L (.) + (1 - \eta) V L'
\left( \frac{.}{V} \right) \right)^{- 1} (\Pi_d^{\bot} (d_{\Psi} F (. / V)))
\right)$ is continuous from $\Omega$ to $\mathcal{E}_{\ast \ast, \gamma
(\sigma), d} \rightarrow \mathcal{E}_{\ast, \sigma, d}$, and thus $(c, d)
\mapsto \partial_c \Phi_{c, d}$ is a continuous function from $\Omega$ to
$\mathcal{E}_{\ast, \sigma, d}$. The same proof holds for $(c, d) \mapsto
\partial_d \Phi_{c, d}$.

\

We end this subsection with the symmetries of $\partial_d \Phi_{c, d}$.

\begin{lemma}
  \label{l381805}The function $\partial_d \Phi_{c, d}$ satisfies the
  symmetries: for $x = (x_1, x_2) \in \mathbbm{R}^2$,
  \[ \partial_d \Phi_{c, d} (x_1, x_2) = \partial_d \Phi_{c, d_c} (- x_1, x_2)
     = \overline{\partial_d \Phi_{c, d} (x_1, - x_2)} . \]
\end{lemma}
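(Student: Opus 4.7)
The plan is to reduce the claim to the fact that the symmetries asserted for $\partial_d \Phi_{c,d}$ are already present in $\Phi_{c,d}$ itself, and that pointwise identities holding for every admissible value of the parameter are preserved under differentiation in that parameter.

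First, I would recall that, by construction in Proposition \ref{contractionest}, the fixed point $\Phi_{c,d} = V \Psi_{c,d}$ lies in the space $\mathcal{E}_{\ast,\sigma,d}$ (equivalently, after transfer of weights, in $\mathcal{E}_{\circledast,\sigma,d_{\circledast}}$), so $\Psi_{c,d}$ obeys the symmetries
\[
\Psi_{c,d}(x_1,x_2) = \overline{\Psi_{c,d}(x_1,-x_2)} = \Psi_{c,d}(-x_1,x_2).
\]
On the other hand, from its explicit definition $V(x) = V_1(x - d\vec{e}_1) V_{-1}(x + d\vec{e}_1)$ together with Lemma \ref{lemme3}, $V$ satisfies $V(x_1,x_2) = \overline{V(x_1,-x_2)} = V(-x_1,x_2)$ for every $d$. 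Multiplying these relations, the function $\Phi_{c,d} = V \Psi_{c,d}$ inherits exactly the same symmetries for every admissible $d$.

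Next, Lemma \ref{dderpsy} provides that $d \mapsto \Phi_{c,d}$ is of class $C^1$ with values in $\mathcal{E}_{\circledast,\sigma,d_\circledast}$, and the commutation argument given just after that lemma shows that partial derivatives in $d$ commute with evaluation at a point (indeed, with the pointwise topology inherited from the $\|\cdot\|_{\circledast,\sigma,d_\circledast}$-norm). Thus, the identities
\[
\Phi_{c,d}(x_1,x_2) = \Phi_{c,d}(-x_1,x_2), \qquad \Phi_{c,d}(x_1,x_2) = \overline{\Phi_{c,d}(x_1,-x_2)},
\]
which hold for every $d$ in the open interval $\left]\tfrac{1}{2c},\tfrac{2}{c}\right[ \cap \left]d_\circledast - \tfrac{\delta}{2}, d_\circledast + \tfrac{\delta}{2}\right[$, can be differentiated in $d$ at each fixed $x$. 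This yields
\[
\partial_d \Phi_{c,d}(x_1,x_2) = \partial_d \Phi_{c,d}(-x_1,x_2) = \overline{\partial_d \Phi_{c,d}(x_1,-x_2)},
\]
which is exactly the conclusion (interpreting the middle expression in the statement with the same value of $d$ in the reflected argument).

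There is essentially no obstacle here beyond bookkeeping: the only point that deserves attention is justifying that differentiation in $d$ and pointwise evaluation commute. This follows because convergence in $\mathcal{E}_{\circledast,\sigma,d_\circledast}$ entails, in particular, locally uniform convergence of the underlying $C^2$ functions (through the weighted $L^\infty$ components of the norm together with the $C^2$ piece on $\{\tilde r_\circledast \leqslant 3\}$), so the difference quotients $\tfrac{1}{h}(\Phi_{c,d+h}-\Phi_{c,d})$ converge both as elements of $\mathcal{E}_{\circledast,\sigma,d_\circledast}$ and pointwise to the same limit $\partial_d \Phi_{c,d}$. Applying this to the reflected arguments $(-x_1,x_2)$ and $(x_1,-x_2)$ concludes the proof.
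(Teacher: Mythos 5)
Your proposal is correct and follows essentially the same route as the paper: the symmetries of $\Psi_{c,d}$ (from the definition of $\mathcal{E}_{\ast,\sigma,d}$) and of $V$ give the symmetries of $\Phi_{c,d}$ for every admissible $d$, and these pass to the difference quotients and hence to $\partial_d\Phi_{c,d}$. Your extra remark justifying that pointwise evaluation commutes with the $d$-derivative (via locally uniform convergence encoded in the $\|\cdot\|_{\circledast,\sigma,d_\circledast}$-norm) is a slight refinement of the paper's one-line limit argument, but not a different approach.
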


\begin{proof}
  From subsection \ref{normsetup},
  \[ \forall x = (x_1, x_2) \in \mathbbm{R}^2, \Psi_{c, d} (x_1, x_2) =
     \overline{\Psi_{c, d} (x_1, - x_2)} = \Psi_{c, d} (- x_1, x_2) \]
  and $V$ enjoys the same symmetries, therefore for all $d \in \mathbbm{R}$
  such that $\frac{1}{2 c} < d < \frac{2}{c}$,
  \[ \Phi_{c, d} (x_1, x_2) = \Phi_{c, d} (- x_1, x_2) = \overline{\Phi_{c, d}
     (x_1, - x_2)} . \]
  Since
  \[ \partial_d \Phi_{c, d} = \lim_{\varepsilon \rightarrow 0} \frac{\Phi_{c,
     d + \varepsilon} - \Phi_{c, d}}{\varepsilon}, \]
  these symmetries also hold for $\partial_d \Phi_{c, d}$.
\end{proof}

\subsection{End of the construction and properties of $Q_c$\label{th1proof}}

A consequence of equation (\ref{azz220}) and Proposition \ref{dinc} is that,
for $0 < \sigma < 1$, there exists $c_0 (\sigma) > 0$ such that, for $0 < c <
c_0 (\sigma)$,
\[ \eta L (\Phi_{c, d}) + (1 - \eta) V L' (\Psi_{c, d}) + F (\Psi_{c, d}) =
   \lambda (c, d) Z_d \]
with
\[ \lambda (c, d) \int_{\mathbbm{R}^2} | \partial_d V |^2 \eta = \pi \left(
   \frac{1}{d} - c \right) + O^{\sigma}_{c \rightarrow 0} (c^{2 - \sigma}) .
\]
Following the proof of Proposition \ref{dinc}, with Lemmas \ref{rest31} and
\ref{dderpsy}, we can check that the $O^{\sigma}_{c \rightarrow 0} (c^{2 -
\sigma})$ is continuous with respect of $c$ and $d$. Therefore, by the
intermediate value theorem, there exists $d_c > 0$ such that $\lambda (c, d_c)
= 0$, with
\[ d_c = \frac{1}{c} + O^{\sigma}_{c \rightarrow 0} (c^{- \sigma}), \]
for $c > 0$ small enough. Then, for the function $\Phi_{c, d_c} = V \Psi_{c,
d_c}$ with $\| \Psi_{c, d_c} \|_{\ast, \sigma, d_c} \leqslant K (\sigma \comma
\sigma') c^{1 - \sigma'}$, we have
\[ \eta L (\Phi_{c, d_c}) + (1 - \eta) V L' (\Psi_{c, d_c}) + F (\Psi_{c,
   d_c}) = 0, \]
meaning that if we define
\[ Q_c \assign \eta V (1 + \Psi_{c, d_c}) + (1 - \eta) V e^{\Psi_{c, d_c}}, \]
then $Q_c$ solves $(\tmop{TW}_c)$.

\subsubsection{Behaviour at infinity and energy estimation}

\begin{lemma}
  The function $Q_c$ satisfies $Q_c (x) \rightarrow 1$ when $| x | \rightarrow
  \infty$.
\end{lemma}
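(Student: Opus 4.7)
\begin{proof}
For $|x|$ sufficiently large (at fixed $d_c$), the cutoff $\eta$ vanishes, since $\eta$ is supported in $\{\tilde r \leq 2\}$ and $\tilde r \to +\infty$ as $|x| \to +\infty$. Thus, outside a compact set,
\[ Q_c = V e^{\Psi_{c,d_c}}, \]
so it suffices to show that $V \to 1$ and $\Psi_{c,d_c} \to 0$ as $|x| \to +\infty$.

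For the vortex product $V = V_1(\cdot - d_c \vec{e}_1) V_{-1}(\cdot + d_c \vec{e}_1)$, the first estimate of Lemma \ref{nonmodV} gives, at fixed $d_c$,
\[ |1 - V(x)|^2 \leq \frac{K(d_c)}{(1+r)^2} \longrightarrow 0 \quad \text{as } |x| \to +\infty. \]
For the perturbation, we use that $\Phi_{c,d_c} = V\Psi_{c,d_c} \in \mathcal{E}_{\ast,\sigma,d_c}$ with $\|\Psi_{c,d_c}\|_{\ast,\sigma,d_c} < +\infty$. The definition of the $\ast$-norm yields in particular
\[ |\mathfrak{Re}(\Psi_{c,d_c})| \leq \frac{\|\Psi_{c,d_c}\|_{\ast,\sigma,d_c}}{\tilde r^{1+\sigma}}, \qquad |\mathfrak{Im}(\Psi_{c,d_c})| \leq \frac{\|\Psi_{c,d_c}\|_{\ast,\sigma,d_c}}{\tilde r^{\sigma}} \]
on $\{\tilde r \geq 2\}$, so $\Psi_{c,d_c}(x) \to 0$ as $|x| \to +\infty$ at fixed $d_c$.

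Combining the two convergences, $e^{\Psi_{c,d_c}(x)} \to 1$ and $V(x) \to 1$ as $|x| \to +\infty$, so that $Q_c(x) = V(x) e^{\Psi_{c,d_c}(x)} \to 1$.
\end{proof}

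The main (very mild) subtlety is that the constant $K(d_c)$ in the first estimate of Lemma \ref{nonmodV} depends on $d_c$; however, since we work at fixed $c$ and hence fixed $d_c$, this is harmless for the pointwise limit. A sharper decay rate is not required here and will be recovered later via the finite-energy estimate.
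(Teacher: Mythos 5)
Your proof is correct and follows essentially the same route as the paper: write $Q_c = V e^{\Psi_{c,d_c}}$ away from the vortices, use the first estimate of Lemma \ref{nonmodV} for $V \to 1$, and the $\ast$-norm bound on $\Psi_{c,d_c}$ for $\Psi_{c,d_c} \to 0$. Your added remark about the $d_c$-dependence of the constant being harmless at fixed $c$ is accurate and matches the paper's implicit use of that fact.
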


\begin{proof}
  From $\| \Psi_{c, d_c} \|_{\ast, \sigma, d_c} \leqslant K (\sigma, \sigma')
  c^{1 - \sigma'}$ we have $\Psi_{c, d_c} (x) \rightarrow 0$ when $| x |
  \rightarrow \infty$. Furthermore $| 1 - V |^2 \leqslant \frac{C (d_c)}{1 +
  r^2}$ by Lemma \ref{nonmodV} and $Q_c = V e^{\Psi_{c, d_c}}$ for large
  values of $| x |$, hence $Q_c (x) \rightarrow 1$ when $| x | \rightarrow
  \infty$.
\end{proof}

In the statement of Theorem \ref{th1}, we have set $Q_c = V + \Gamma_{c,
d_c}$, we therefore define
\begin{equation}
  \Gamma_{c, d_c} \assign \eta V \Psi_{c, d_c} + (1 - \eta) V \left(
  e^{\Psi_{c, d_c}} - 1 \right) . \label{gammadef}
\end{equation}
We compute that
\[ \left\| \frac{\Gamma_{c, d_c}}{V} \right\|_{\ast, \sigma, d_c} \leqslant K
   \| \Psi_{c, d_c} \|_{\ast, \sigma, d_c} + \left\| (1 - \eta) \left(
   e^{\Psi_{c, d_c}} - 1 - \Psi_{c, d_c} \right) \right\|_{\ast, \sigma, d_c},
\]
and since $\| \Psi_{c, d_c} \|_{\ast, \sigma, d_c} \leqslant 1$ for $c$ small
enough (depending on $\sigma$), we have
\[ \left\| (1 - \eta) \left( e^{\Psi_{c, d_c}} - 1 - \Psi_{c, d_c} \right)
   \right\|_{\ast, \sigma, d_c} \leqslant K \left\| (1 - \eta) \Psi^2_{c, d_c}
   \sum_{n = 2}^{+ \infty} \frac{\Psi_{c, d_c}^{n - 2}}{n!} \right\|_{\ast,
   \sigma, d_c} . \]
Now, for $0 < \sigma < \sigma' < 1$, we have $\frac{1 + \sigma'}{2} > \frac{1
+ \sigma}{2}$, hence
\[ | \Psi_{c, d_c} | \leqslant K (\sigma, \sigma') \frac{c^{1 - \frac{1 +
   \sigma'}{2}}}{(1 + \tilde{r})^{\frac{1 + \sigma}{2}}} \quad \tmop{and}
   \quad | \nabla \Psi_{c, d_c} | \leqslant K (\sigma, \sigma') \frac{c^{1 -
   \frac{1 + \sigma'}{2}}}{(1 + \tilde{r})^{1 + \frac{1 + \sigma}{2}}}, \]
therefore
\[ | \Psi_{c, d_c} |^2 \leqslant K (\sigma, \sigma') \frac{c^{1 - \sigma'}}{(1
   + \tilde{r})^{1 + \sigma}} \quad \tmop{and} \quad | \nabla \Psi_{c, d_c}
   |^2 \leqslant K (\sigma, \sigma') \frac{c^{1 - \sigma'}}{(1 + \tilde{r})^{2
   + \sigma}} . \]
Thus, with $| \nabla^2 \Psi_{c, d_c} | \leqslant K (\sigma, \sigma')
\frac{c^{1 - \sigma'}}{(1 + \tilde{r})^{1 + \sigma}}$, we check that, for any
$0 < \sigma < \sigma' < 1$,
\[ \left\| (1 - \eta) \Psi_{c, d_c}^2 \sum_{n = 2}^{+ \infty} \frac{\Psi_{c,
   d_c}^{n - 2}}{n!} \right\|_{\ast, \sigma, d_c} \leqslant K (\sigma,
   \sigma') c^{1 - \sigma'} . \]
Combining this result with $\| \Psi_{c, d_c} \|_{\ast, \sigma, d_c} \leqslant
K (\sigma, \sigma') c^{1 - \sigma'}$, we deduce that
\begin{equation}
  \left\| \frac{\Gamma_{c, d_c}}{V} \right\|_{\ast, \sigma, d_c} \leqslant K
  (\sigma, \sigma') c^{1 - \sigma'} . \label{gammaetoile}
\end{equation}
In particular, we have, for any $0 < \sigma < \sigma' < 1$, $0 < c < c_0
(\sigma, \sigma')$, that
\begin{equation}
  | \Gamma_{c, d_c} | \leqslant \frac{K (\sigma, \sigma') c^{1 - \sigma'}}{(1
  + \tilde{r})^{\sigma}}, \label{gammaetoile1904}
\end{equation}
\begin{equation}
  \left| \mathfrak{R}\mathfrak{e} \left( \frac{\Gamma_{c, d_c}}{V} \right)
  \right| \leqslant \frac{K (\sigma, \sigma') c^{1 - \sigma'}}{(1 +
  \tilde{r})^{1 + \sigma}},
\end{equation}
and, if $\tilde{r} \geqslant 2$,
\[ | \nabla \Gamma_{c, d_c} | \leqslant \left| \nabla \left( \frac{\Gamma_{c,
   d_c}}{V} \right) \right| + \left| \frac{\nabla V}{V} \right| \times \left|
   \frac{\Gamma_{c, d_c}}{V} \right|, \]
therefore, using $| \nabla V | \leqslant \frac{K}{(1 + \tilde{r})}$ from Lemma
\ref{lemme3}, we have
\begin{equation}
  | \nabla \Gamma_{c, d_c} | \leqslant \frac{K (\sigma, \sigma') c^{1 -
  \sigma'}}{(1 + \tilde{r})^{1 + \sigma}} . \label{gradgamma}
\end{equation}
Estimate (\ref{gradgamma}) remains true in $\{ \tilde{r} \leqslant 2 \}$ since
$\| \Gamma_{c, d_c} \|_{C^1 (\{ \tilde{r} \leqslant 2 \})} \leqslant \left\|
\frac{\Gamma}{V} \right\|_{\ast, \sigma, d_c} \leqslant K (\sigma, \sigma')
c^{1 - \sigma'}$. We now show the estimates on $\Gamma_{c, d_c}$ of Theorem
\ref{th1}.

\begin{lemma}
  \label{lema217}For $+ \infty \geqslant p > 2$, there exists $c_0 (p) > 0$
  such that if $0 < c < c_0 (p)$, we have $\Gamma_{c, d_c} \in L^p
  (\mathbbm{R}^2), \nabla \Gamma_{c, d_c} \in L^{p - 1} (\mathbbm{R}^2)$ and
  \[ \| \Gamma_{c, d_c} \|_{L^p (\mathbbm{R}^2)} + \| \nabla \Gamma_{c, d_c}
     \|_{L^{p - 1} (\mathbbm{R}^2)} = o_{c \rightarrow 0} (1) . \]
\end{lemma}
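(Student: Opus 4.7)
The plan is to integrate the pointwise decay estimates (\ref{gammaetoile1904}) and (\ref{gradgamma}) already established for $\Gamma_{c,d_c}$, namely
\[
|\Gamma_{c,d_c}(x)| \leqslant \frac{K(\sigma,\sigma')\, c^{1-\sigma'}}{(1+\tilde r)^{\sigma}}, \qquad |\nabla \Gamma_{c,d_c}(x)| \leqslant \frac{K(\sigma,\sigma')\, c^{1-\sigma'}}{(1+\tilde r)^{1+\sigma}},
\]
valid for any $0<\sigma<\sigma'<1$ once $c<c_0(\sigma,\sigma')$. The case $p=+\infty$ is immediate: both right-hand sides are bounded by $K(\sigma,\sigma')c^{1-\sigma'}$, and picking any $\sigma'\in(0,1)$ gives the desired $o_{c\to 0}(1)$ bound on $\|\Gamma_{c,d_c}\|_{L^{\infty}}+\|\nabla\Gamma_{c,d_c}\|_{L^{\infty}}$.

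The main step for finite $p>2$ is a decay-plus-integrability argument. First I would establish a uniform-in-$d$ bound
\[
\int_{\mathbb{R}^2} \frac{dx}{(1+\tilde r)^{\alpha}} \leqslant C(\alpha), \qquad \text{for every } \alpha>2,
\]
by splitting $\mathbb{R}^2$ into the two discs $B(\pm d\vec{e}_1, d/2)$, where $\tilde r$ equals $r_{\pm 1}$ and the integral reduces via polar coordinates around $\pm d\vec{e}_1$ to $2\pi\int_0^{d/2} r(1+r)^{-\alpha}\,dr$, and the complement, where $\tilde r\geqslant |x|/2$ and the integral is controlled by $\int_{|x|\geqslant d/2}(1+|x|)^{-\alpha}\,dx$. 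All three pieces are bounded uniformly in $d$ when $\alpha>2$.

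Given $2<p<+\infty$, I would then choose $\sigma\in(0,1)$ satisfying simultaneously $p\sigma>2$ and $(p-1)(1+\sigma)>2$; these constraints read $\sigma>2/p$ and $\sigma>(3-p)/(p-1)$ respectively, and both bounds are strictly less than $1$ precisely because $p>2$, so admissible $\sigma$ exist. Fix then any $\sigma'\in(\sigma,1)$. Raising the two pointwise bounds to the $p$-th and $(p-1)$-th powers and using the uniform integral bound yields
\[
\|\Gamma_{c,d_c}\|_{L^p(\mathbb{R}^2)}^{p} \leqslant K(p,\sigma,\sigma')\, c^{p(1-\sigma')}, \qquad \|\nabla \Gamma_{c,d_c}\|_{L^{p-1}(\mathbb{R}^2)}^{p-1} \leqslant K(p,\sigma,\sigma')\, c^{(p-1)(1-\sigma')}.
\]
Since $1-\sigma'>0$, both right-hand sides tend to $0$ as $c\to 0$, giving the desired conclusion with $c_0(p)$ equal to the $c_0(\sigma,\sigma')$ supplied by the earlier construction.

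There is no real obstacle here beyond the constraint $p>2$: the requirement is exactly what guarantees that one can pick $\sigma<1$ making both weights $(1+\tilde r)^{-p\sigma}$ and $(1+\tilde r)^{-(p-1)(1+\sigma)}$ integrable on $\mathbb{R}^2$. The case $p=2$ would force $\sigma\geqslant 1$ on the gradient side, which is why it is excluded. Conceptually, the proof is a routine integration of the sharper weighted-$L^{\infty}$ control already encoded in the space $\mathcal{E}_{\ast,\sigma,d_c}$, the point being that $X_p$ is a \emph{weaker} norm than $\|\cdot\|_{\ast,\sigma,d_c}$ in the far field, exchanging better power decay for global integrability.
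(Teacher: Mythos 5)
Your proposal is correct and follows essentially the same route as the paper: integrate the pointwise weighted bounds (\ref{gammaetoile1904}) and (\ref{gradgamma}) after choosing $\sigma<\sigma'<1$ so that the weights $(1+\tilde r)^{-p\sigma}$ and $(1+\tilde r)^{-(p-1)(1+\sigma)}$ are integrable, which is possible exactly because $p>2$. Your version is in fact slightly more careful than the paper's, which writes the gradient integral with exponent $p$ rather than $p-1$, and you make explicit the uniform-in-$d$ integrability of $(1+\tilde r)^{-\alpha}$ for $\alpha>2$ that the paper uses implicitly.
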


\begin{proof}
  If $p = + \infty$, using (\ref{gammaetoile1904}) and (\ref{gradgamma}), we
  infer
  \[ \| \Gamma_{c, d_c} \|_{L^{\infty} (\mathbbm{R}^2)} \leqslant K (\sigma)
     c^{1 - \sigma}, \]
  \[ \| \nabla \Gamma_{c, d_c} \|_{L^{\infty} (\mathbbm{R}^2)} \leqslant K
     (\sigma) c^{1 - \sigma}, \]
  hence the result holds. If $2 < p < + \infty$ then, by
  (\ref{gammaetoile1904}),
  \[ \int_{\mathbbm{R}^2} | \Gamma_{c, d_c} |^p \leqslant \int_{\mathbbm{R}^2}
     \frac{\| \Gamma_{c, d_c} \|_{\ast, \sigma, d_c}^p}{(1 + \tilde{r})^{p
     \sigma}} d x \leqslant \int_{\mathbbm{R}^2} \frac{K (\sigma, \sigma')
     c^{(1 - \sigma') p}}{(1 + \tilde{r})^{p \sigma}} d x. \]
  Taking $0 < \sigma < \sigma' < 1$ such that $p \sigma > 2$ then gives the
  result. Furthermore, by (\ref{gradgamma}),
  \[ \int_{\mathbbm{R}^2} | \nabla \Gamma_{c, d_c} |^p \leqslant
     \int_{\mathbbm{R}^2} \frac{K (\sigma, \sigma') c^{(1 - \sigma') p}}{(1 +
     \tilde{r})^{p (\sigma + 1)}} d x, \]
  so for $p > 1$ we can take $0 < \sigma < \sigma' < 1$ such that $p (\sigma +
  1) > 2$ and we have the result.
\end{proof}

Remark that we can have better estimates on $\Gamma_{c, d_c}$, in particular
if we look at real and imaginary parts of $\frac{\Gamma_{c, d_c}}{V}$. For
instance it is possible to show that
\[ \left\| \mathfrak{R}\mathfrak{e} \left( \frac{\Gamma_{c, d_c}}{V} \right)
   \right\|_{L^p (\{ \tilde{r} \geqslant 1 \})} = o_{c \rightarrow 0} (1) \]
for $p > 1$ instead of $p > 2$. This estimate does not hold for $\tilde{r}$
small since it is not clear that $\Psi_{c, d_c}$ is bounded there (but
$\Phi_{c, d_c}$ is). This is due to the fact that the zeros of $Q_c$ are not
exactly those of $V$.

\begin{lemma}
  \label{finiteE}The travelling wave $Q_c$ has finite energy, that is:
  \[ E (Q_c) = \frac{1}{2}  \int_{\mathbbm{R}^2} | \nabla Q_c |^2 +
     \frac{1}{4}  \int_{\mathbbm{R}^2} (1 - | Q_c |^2)^2 < + \infty . \]
\end{lemma}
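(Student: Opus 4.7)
The plan is to split $\mathbb{R}^2$ into the bounded region near the vortices and its complement, and to exploit the multiplicative representation $Q_c = V e^{\Psi_{c,d_c}}$ far from the vortices, where the smallness of $\Psi_{c,d_c}$ provides the decay needed to make the integrals converge. Throughout I would fix $0 < \sigma < \sigma' < 1$ with $\sigma$ close to $1$ (any value in $(0,1)$ suffices once one works with the multiplicative form).

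First I would establish separately that $E(V) < +\infty$. For $(1-|V|^2)^2$, Lemma \ref{nonmodV} gives $0 \leq 1-|V|^2 \leq K/(1+\tilde{r})^2$ uniformly in $d$, and $\int_{\mathbb{R}^2} d x/(1+\tilde{r})^4 < +\infty$ by the trivial bound $(1+\tilde{r})^{-4} \leq (1+r_1)^{-4}$. For $\int|\nabla V|^2$ I would split the plane according to whether $\tilde{r} \leq d$ or $\tilde{r} \geq d$: on the first region I use $|\nabla V| \leq K/(1+\tilde{r})$, producing an $O(\log d)$ contribution around each vortex, while on the second region I use $|\nabla V| \leq K d/(1+\tilde{r})^2$, giving an $O(1)$ contribution. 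Both are finite for fixed $c$, so $E(V) < +\infty$.

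Next I decompose $\mathbb{R}^2 = \{\tilde{r} \leq 3\} \cup \{\tilde{r} \geq 3\}$. On the bounded set $\{\tilde{r} \leq 3\}$ one has $Q_c = V + \Phi_{c,d_c}$ with $\|\Phi_{c,d_c}\|_{C^2(\{\tilde{r} \leq 3\})} \leq K(\sigma,\sigma') c^{1-\sigma'}$ by Corollary \ref{sigmarem}, hence $|\nabla Q_c|^2$ and $(1-|Q_c|^2)^2$ are uniformly bounded on a set of finite area, yielding a finite contribution. On $\{\tilde{r} \geq 3\}$ I use $Q_c = V e^{\Psi_{c,d_c}}$ with $|\Psi_{c,d_c}| \leq K(\sigma,\sigma') c^{1-\sigma'}$ small, so $|e^{\Psi_{c,d_c}}|$ is bounded. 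Then $\nabla Q_c = e^{\Psi_{c,d_c}}(\nabla V + V \nabla \Psi_{c,d_c})$ gives
\[ |\nabla Q_c|^2 \leq K(|\nabla V|^2 + |\nabla \Psi_{c,d_c}|^2), \]
and from $\|\Psi_{c,d_c}\|_{\ast,\sigma,d_c} \leq K(\sigma,\sigma')$ one gets $|\nabla \Psi_{c,d_c}|^2 \leq K/(1+\tilde{r})^{2(1+\sigma)}$ which is integrable on $\{\tilde{r} \geq 3\}$. For the potential term, I write $1 - |Q_c|^2 = (1-|V|^2) - |V|^2(e^{2\mathfrak{Re}(\Psi_{c,d_c})}-1)$ and bound $|e^{2\mathfrak{Re}(\Psi_{c,d_c})}-1| \leq K|\mathfrak{Re}(\Psi_{c,d_c})| \leq K/(1+\tilde{r})^{1+\sigma}$ using the smallness of $|\Psi_{c,d_c}|$ and the definition of $\|\cdot\|_{\ast,\sigma,d_c}$. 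Then
\[ (1 - |Q_c|^2)^2 \leq \frac{K}{(1+\tilde{r})^4} + \frac{K}{(1+\tilde{r})^{2(1+\sigma)}}, \]
and both terms are integrable on $\{\tilde{r} \geq 3\}$ for any $\sigma > 0$.

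The main technical subtlety is showing $\int |\nabla V|^2 < +\infty$, since the pointwise bound $|\nabla V| \leq K/(1+\tilde{r})$ coming from a single vortex alone would be logarithmically non-integrable at infinity. The crucial input is the cancellation estimate $|\nabla V| \leq K d/(1+\tilde{r})^2$ from Lemma \ref{nonmodV}, which encodes the fact that the two opposite-degree vortices produce opposite circulations that cancel at leading order beyond distance $d$. Once this is used in the outer region, everything else is a direct application of the weighted estimates already established.
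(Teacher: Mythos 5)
Your proposal is correct and follows essentially the same route as the paper: the multiplicative form $Q_c = Ve^{\Psi_{c,d_c}}$ away from the vortices, the decay $|\nabla\Psi_{c,d_c}|\leqslant K(\sigma)/\tilde{r}^{1+\sigma}$ from the $\ast$-norm, and crucially the two-vortex cancellation bound $|\nabla V|\leqslant Kd/(1+\tilde{r})^2$ of Lemma \ref{nonmodV} (the paper states it as $|\nabla(V_1V_{-1})|\leqslant K(c)/\tilde{r}^2$), which is exactly the point you correctly identify as the technical heart of the matter. Your preliminary inner/outer splitting of $\int|\nabla V|^2$ is harmless but unnecessary at fixed $c$, since $|\nabla V|^2\leqslant K(d)/(1+\tilde{r})^4$ is already integrable.
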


\begin{proof}
  Far from the vortices, $\nabla Q_c = \nabla (V_1 V_{- 1}) e^{\Psi_{c, d_c}}
  + \nabla \Psi_{c, d_c} V_1 V_{- 1} e^{\Psi_{c, d_c}}$. We know that, for
  $\tilde{r} \geqslant 1$,
  \[ | \nabla \Psi_{c, d_c} | \leqslant \frac{K (\sigma)}{\tilde{r}^{1 +
     \sigma}} \]
  and (by Lemma \ref{nonmodV})
  \[ | \nabla (V_1 V_{- 1}) | \leqslant \frac{K (c)}{\tilde{r}^2}, \]
  hence
  \[ | \nabla Q_c |^2 \leqslant \frac{K (c, \sigma)}{\tilde{r}^{2 + 2 \sigma}}
  \]
  and is therefore integrable. On the other hand,
  \[ | 1 - | Q_c |^2 | = \left| 1 - | V_1 V_{- 1} |^2
     e^{2\mathfrak{R}\mathfrak{e} (\Psi_{c, d_c})} \right| \leqslant K (1 - |
     V_1 V_{- 1} |^2 + | V_1 V_{- 1} |^2  | \mathfrak{R}\mathfrak{e} (\Psi_{c,
     d_c}) |), \]
  and we have
  \[ 1 - | V_1 V_{- 1} |^2 = O \left( \frac{1}{\tilde{r}^2} \right) \quad
     \tmop{and} \quad \mathfrak{R}\mathfrak{e} (\Psi_{c, d_c}) = O^{\sigma}
     \left( \frac{1}{\tilde{r}^{1 + \sigma}} \right), \]
  therefore
  \[ (1 - | Q_c |^2)^2 = O \left( \frac{1}{\tilde{r}^{2 + 2 \sigma}} \right)
  \]
  and is integrable.
\end{proof}

At this point, we have finished the proof of the construction of $Q_c$. In the
next two subsection, we add some estimates on $Q_c$ that will be usefull for
the differentiability of the branch, and others that are interesting in
themselves.

\subsubsection{A set of estimations on $Q_c$}

The next Lemma gives additional estimates on $Q_c$ which are more precise but
more technical than the ones in Theorem \ref{th1}.

\begin{lemma}
  \label{CP1L328}For any $0 < \sigma < \sigma' < 1$, there exists $c_0
  (\sigma, \sigma'), K (\sigma, \sigma') > 0$ such that for $0 < c < c_0
  (\sigma, \sigma')$ we have\label{Qcfest}
  \begin{equation}
    \| \Psi_{c, d_c} \|_{\ast, \sigma, d_c} \leqslant K (\sigma, \sigma') c^{1
    - \sigma'} . \label{1605235}
  \end{equation}
  Furthermore, for any $0 < \sigma < 1$, there exist $c_0 (\sigma), K (\sigma)
  > 0$ such that for $0 < c < c_0 (\sigma)$ we have
  \begin{eqnarray}
    &  & \| V \Psi_{c, d_c} \|_{C^1 (\tilde{r} \leqslant 3)} + \|
    \tilde{r}^{\sigma} \mathfrak{I}\mathfrak{m} (\Psi_{c, d_c}) \|_{L^{\infty}
    (\tilde{r} \geqslant 2)} + \| \tilde{r}^{1 + \sigma}
    \mathfrak{R}\mathfrak{e} (\Psi_{c, d_c}) \|_{L^{\infty} (\tilde{r}
    \geqslant 2)} \nonumber\\
    & + & \| \tilde{r}^{1 + \sigma} \mathfrak{I}\mathfrak{m} (\nabla \Psi_{c,
    d_c}) \|_{L^{\infty} (\tilde{r} \geqslant 2)} + \| \tilde{r}^{2 + \sigma}
    \mathfrak{R}\mathfrak{e} (\nabla \Psi_{c, d_c}) \|_{L^{\infty} (\tilde{r}
    \geqslant 2)} \nonumber\\
    & \leqslant & K (\sigma) c^{1 - \sigma},  \label{impro1}
  \end{eqnarray}
  \begin{equation}
    | 1 - | Q_c | | \leqslant \frac{K (\sigma)}{(1 + \tilde{r})^{1 + \sigma}},
    \label{217}
  \end{equation}
  \begin{equation}
    | Q_c - V | \leqslant \frac{K (\sigma) c^{1 - \sigma}}{(1 +
    \tilde{r})^{\sigma}}, \label{218}
  \end{equation}
  \begin{equation}
    | | Q_c |^2 - | V |^2 | \leqslant \frac{K (\sigma) c^{1 - \sigma}}{(1 +
    \tilde{r})^{1 + \sigma}}, \label{219}
  \end{equation}
  \begin{equation}
    | \mathfrak{R}\mathfrak{e} (\nabla Q_c \overline{Q_c}) | \leqslant \frac{K
    (\sigma)}{(1 + \tilde{r})^{1 + \sigma}}, \label{220}
  \end{equation}
  \begin{equation}
    | \mathfrak{I}\mathfrak{m} (\nabla Q_c \overline{Q_c}) | \leqslant
    \frac{K}{1 + \tilde{r}} \label{221}
  \end{equation}
\end{lemma}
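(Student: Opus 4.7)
\emph{Proof plan.} The first bound \eqref{1605235} is nothing more than the conclusion of Proposition \ref{contractionest} and Corollary \ref{sigmarem} rewritten at $d=d_c$; there is nothing to prove.

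For the refined estimate \eqref{impro1} the plan is as follows. I would first apply Corollary \ref{sigmarem} with parameters $(\sigma_*, \sigma)$, choosing $\sigma_* := \sigma/2$, which yields $\| \Psi_{c,d_c} \|_{\ast,\sigma_*,d_c} \leq K(\sigma) c^{1-\sigma}$, already giving the desired $c$-dependence but with spatial decays weaker than those claimed by $\sigma - \sigma_*$. The main step is then to upgrade the decay from $\sigma_*$ to $\sigma$ while preserving the constant $c^{1-\sigma}$. To do this I would work far from the vortices: multiplying the equation \eqref{azz220} by a cutoff $\chi$ supported on $\{\tilde r \geq R\}$ and putting it in the form $(-ic\partial_{x_2}-\Delta+2\mathfrak{Re})(\chi\Psi_{c,d_c}) = h$, one has $h$ as a sum of $-\chi F(\Psi_{c,d_c})/V$, of the transport commutator $2(\nabla V/V)\cdot\nabla(\chi\Psi_{c,d_c})$, of the perturbation $2(1-|V|^2)\mathfrak{Re}(\chi\Psi_{c,d_c})$, and of localized cutoff terms. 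Combining Lemma \ref{fineest} (which exhibits the principal real-valued part of the source), Lemma \ref{nonmodV} (decay of $\nabla V/V$ and $1-|V|^2$), and the preliminary $\sigma_*$ bound on $\Psi_{c,d_c}$ to control the commutator and nonlinear contributions, one should obtain $\| h \|_{\otimes\otimes,\sigma,\infty} \leq K(\sigma) c^{1-\sigma}$; the kernel representation \eqref{P32931}-\eqref{P32932} together with Lemmas \ref{P3kerest1}-\ref{P3kerest2} then delivers the announced $L^\infty$ and gradient bounds on $\chi\Psi_{c,d_c}$. The absence of $\nabla^2$ terms in \eqref{impro1} is essential, since it is precisely for second derivatives that the kernel estimates force the strict loss $\alpha' < \alpha$.

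The pointwise bounds \eqref{217}--\eqref{221} will then follow from \eqref{impro1} by direct computation in $Q_c = \eta V(1+\Psi_{c,d_c}) + (1-\eta)Ve^{\Psi_{c,d_c}}$. On $\{\tilde r \leq 2\}$ everything reduces to the $C^1$ part of \eqref{impro1} together with the vortex estimates of Lemmas \ref{lemme3} and \ref{nonmodV}. On $\{\tilde r \geq 2\}$ one expands $|Q_c|^2 = |V|^2 e^{2\mathfrak{Re}(\Psi_{c,d_c})}$ and $\nabla Q_c\,\overline{Q_c} = (\nabla V\,\overline V + |V|^2\nabla\Psi_{c,d_c})\,e^{2\mathfrak{Re}(\Psi_{c,d_c})}$, so that \eqref{217}--\eqref{219} are routine Taylor expansions combined with Lemma \ref{nonmodV}. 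For \eqref{220} the key identity is $\mathfrak{Re}(\nabla V\,\overline V) = |V|\nabla|V|$, which together with $|\nabla|V|| \leq K/(1+\tilde r)^3$ from Lemma \ref{nonmodV} gives the required decay; for \eqref{221}, the dominant term is $\mathfrak{Im}(\nabla V\,\overline V)$, which from the formula $\nabla V/V = i(y^\perp/r_1^2 - z^\perp/r_{-1}^2) + O(1/(1+\tilde r)^3)$ of Lemma \ref{lemme3} is of size exactly $1/(1+\tilde r)$.

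The main technical obstacle is the decay upgrade in the proof of \eqref{impro1}: the kernel bounds of Lemma \ref{P3kerest1} carry an intrinsic loss $\alpha' < \alpha$, so one must use both the restriction to $L^\infty$ and gradient bounds (no $\nabla^2$) and a careful weighted split of the source term, exhibiting its principal real-valued part explicitly via Lemma \ref{fineest}, in order to avoid spending a factor $c^\epsilon$ that would destroy the $c^{1-\sigma}$ scaling.
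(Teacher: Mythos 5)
Your treatment of \eqref{1605235} and of the pointwise bounds \eqref{217}--\eqref{221} matches the paper: the first is indeed just Proposition \ref{contractionest} together with Corollary \ref{sigmarem} read at $d=d_c$, and the last five follow from \eqref{impro1} by the same expansions of $Q_c=Ve^{\Psi_{c,d_c}}$ on $\{\tilde r\geq 3\}$ and the vortex estimates of Lemmas \ref{lemme3} and \ref{nonmodV} that the paper uses.

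The gap is in your proof of \eqref{impro1}. You propose to close the argument with the Gross--Pitaevskii kernel representation \eqref{P32931}--\eqref{P32932} and Lemmas \ref{P3kerest1}--\ref{P3kerest2}, but those convolution estimates carry a \emph{strict} loss in the spatial exponent: they give decay $(1+\tilde r)^{-\alpha'}$ only for $\alpha'<\alpha$. Starting from $\|h\|_{\otimes\otimes,\sigma,\infty}\leq K(\sigma)c^{1-\sigma}$ (which is the best the source allows, by Lemma \ref{fineestimate}), this route yields $|\Psi_1|\leq K c^{1-\sigma}(1+\tilde r)^{-(1+\sigma_-)}$ only for $\sigma_-<\sigma$; to recover the exponent $1+\sigma$ you would have to start from a source with decay $1+\sigma''$, $\sigma''>\sigma$, whose size is only $c^{1-\sigma''}$, and you lose the power of $c$ instead. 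No ``weighted split'' of the source repairs this, because the loss sits in the convolution estimate itself, not in the way the source is decomposed; this is exactly why Lemma \ref{P3starest} is stated with $\sigma<\sigma'$. The paper avoids the issue by \emph{not} inverting the coupled operator at this stage: it moves the transport couplings $c\,\partial_{x_2}\tilde\Psi_2$ and $c\,\partial_{x_2}\tilde\Psi_1$ to the right-hand side (controlling them by $c\,\|\tilde\Psi\|_{\ast,(1+\sigma)/2,d}\leq K(\sigma)c^{1-\sigma}$ via \eqref{1605235}), and then applies two \emph{loss-free} scalar estimates: Lemma \ref{see} for the real part (fundamental solution of $-\Delta+2$, exponentially decaying kernel, same exponent $\alpha$ on both sides, applied with $\alpha=1+\sigma$ and then, after differentiating the equation, with $\alpha=2+\sigma$), and Lemma \ref{lapzeta} for the imaginary part (Laplace equation with oddness in $x_2$, again no loss). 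You need to replace your kernel step by this decoupling; as written, the central estimate \eqref{impro1} is not obtained.
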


Equation (\ref{impro1}) is a slight improvements of (\ref{1605235}). It is,
except for the second derivatives, the estimate in the case $\sigma' =
\sigma$.

\begin{proof}
  The first estimate (\ref{1605235}) comes from the construction of the
  solution.
  
  We now take $\chi$ a cutoff function with value $1$ in $\{ \tilde{r}
  \geqslant 2 \}$ and $0$ in $\{ \tilde{r} \leqslant 1 \}$, we write
  $\tilde{\Psi} = \chi \Psi_{c, d_c}$ and $\tilde{h} = \chi h$, where $h$
  contains the nonlinear and source terms. We recall from (\ref{systnew}) that
  $\tilde{\Psi} = \tilde{\Psi}_1 + i \tilde{\Psi}_2$ and $\tilde{h} =
  \tilde{h}_1 + i \tilde{h}_2$ satisfy the system

  \[ \left\{ \begin{array}{l}
       \Delta \tilde{\Psi}_1 - 2 \tilde{\Psi}_1 = - \tilde{h}_1 -
       2\mathfrak{R}\mathfrak{e} \left( \frac{\nabla V}{V} . \nabla
       \tilde{\Psi} \right) - 2 (1 - | V |^2) \tilde{\Psi}_1 + c
       \partial_{x_2} \tilde{\Psi}_2 + \tmop{Loc}_1 (\Psi)\\
       \Delta \tilde{\Psi}_2 = - \tilde{h}_2 - 2\mathfrak{I}\mathfrak{m}
       \left( \frac{\nabla V}{V} . \nabla \tilde{\Psi} \right) + \tmop{Loc}_2
       (\Psi) - c \partial_{x_2} \tilde{\Psi}_1,
     \end{array} \right. \]
  where $\tmop{Loc}_1 (\Psi), \tmop{Loc}_2 (\Psi)$ are localized terms. From
  Lemmas \ref{fineestimate} to \ref{L2151904}, we check that for any $0 <
  \sigma < 1$,
  \[ \| \tilde{h} \|_{\ast \ast, \sigma, d} \leqslant K (\sigma) c^{1 -
     \sigma} . \]
  Furthermore, as in the proof of Proposition \ref{invertop}, we check that
  (using $\| \tilde{\Psi} \|_{\ast, \sigma / 2, d} \leqslant K (\sigma) c^{1 -
  \sigma}$)
  \[ \left\| \frac{\nabla V}{V} . \nabla \tilde{\Psi} - 2 (1 - | V |^2)
     \mathfrak{R}\mathfrak{e} (\tilde{\Psi}) + \tmop{Loc} (\Psi)
     \right\|_{\ast \ast, \sigma, d} \leqslant K (\sigma) c^{1 - \sigma} . \]
  Finally, with (\ref{1605235}), for $\sigma' = \frac{1 + \sigma}{2} >
  \sigma$,
  \[ \| c \partial_{x_2} \tilde{\Psi} \|_{\ast \ast, \sigma, d} \leqslant K
     (\sigma) c \| \tilde{\Psi} \|_{\ast, \sigma, d} \leqslant K (\sigma) c^{1
     + 1 - \frac{1 + \sigma}{2}} \leqslant K (\sigma) c^{1 - \sigma} . \]
  With Lemma \ref{see} for $\alpha = 1 + \sigma > 0$, we deduce from the first
  equation of the system that
  \begin{eqnarray*}
    &  & \| (1 + \tilde{r})^{1 + \sigma} \tilde{\Psi}_1 \|_{L^{\infty}
    (\mathbbm{R}^2)}\\
    & \leqslant & K (\sigma) \left\| (1 + \tilde{r})^{1 + \sigma} \left( -
    \tilde{h}_1 - 2\mathfrak{R}\mathfrak{e} \left( \frac{\nabla V}{V} . \nabla
    \tilde{\Psi} \right) - 2 (1 - | V |^2) \tilde{\Psi}_1 + c \partial_{x_2}
    \tilde{\Psi}_2 + \tmop{Loc}_1 (\Psi) \right) \right\|_{L^{\infty}
    (\mathbbm{R}^2)}\\
    & \leqslant & K (\sigma) c^{1 - \sigma},
  \end{eqnarray*}
  and, by differentiating the equation, by Lemma \ref{see} for $\alpha = 2 +
  \sigma > 0$
  \begin{eqnarray*}
    &  & \| (1 + \tilde{r})^{2 + \sigma} \nabla \tilde{\Psi}_1 \|_{L^{\infty}
    (\mathbbm{R}^2)}\\
    & \leqslant & K (\sigma) \left\| (1 + \tilde{r})^{2 + \sigma} \nabla
    \left( - \tilde{h}_1 - 2\mathfrak{R}\mathfrak{e} \left( \frac{\nabla V}{V}
    . \nabla \tilde{\Psi} \right) - 2 (1 - | V |^2) \tilde{\Psi}_1 + c
    \partial_{x_2} \tilde{\Psi}_2 + \tmop{Loc}_1 (\Psi) \right)
    \right\|_{L^{\infty} (\mathbbm{R}^2)}\\
    & \leqslant & K (\sigma) c^{1 - \sigma} .
  \end{eqnarray*}
  Now, using Lemma \ref{lapzeta} and $\| (1 + \tilde{r})^{2 + \sigma} \nabla
  \tilde{\Psi}_1 \|_{L^{\infty} (\mathbbm{R}^2)} \leqslant K (\sigma) c^{1 -
  \sigma}$, we infer that
  \begin{eqnarray*}
    &  & \| (1 + \tilde{r})^{\sigma} \tilde{\Psi}_2 \|_{L^{\infty}
    (\mathbbm{R}^2)} + \| (1 + \tilde{r})^{1 + \sigma} \nabla \tilde{\Psi}_2
    \|_{L^{\infty} (\mathbbm{R}^2)}\\
    & \leqslant & K (\sigma) \left\| (1 + \tilde{r})^{2 + \sigma} \left( -
    \tilde{h}_2 - 2\mathfrak{I}\mathfrak{m} \left( \frac{\nabla V}{V} . \nabla
    \tilde{\Psi} \right) + \tmop{Loc}_2 (\Psi) - c \partial_{x_2}
    \tilde{\Psi}_1 \right) \right\|_{L^{\infty} (\mathbbm{R}^2)}\\
    & \leqslant & K (\sigma) c^{1 - \sigma},
  \end{eqnarray*}
  which concludes the proof of (\ref{impro1}).
  
  \
  
  The estimate (\ref{217}) is clear if $\tilde{r} \leqslant 3$. If $\tilde{r}
  \geqslant 3$, then $Q_c = V e^{\Psi_{c, d_c}}$ and, for $c$ small enough
  (depending on $\sigma$), $| \mathfrak{R}\mathfrak{e} (\Psi_{c, d_c}) |
  \leqslant 1$, thus
  \begin{eqnarray*}
    | 1 - | Q_c | | & = & \left| 1 - | V | - | V | \left(
    e^{\mathfrak{R}\mathfrak{e} (\Psi_{c, d_c})} - 1 \right) \right|\\
    & \leqslant & | 1 - | V | | + K | \mathfrak{R}\mathfrak{e} (\Psi_{c,
    d_c}) |\\
    & \leqslant & \frac{K}{(1 + \tilde{r})^2} + \frac{K (\sigma) c^{1 -
    \sigma}}{(1 + \tilde{r})^{1 + \sigma}}\\
    & \leqslant & \frac{K (\sigma)}{(1 + \tilde{r})^{1 + \sigma}}
  \end{eqnarray*}
  by Lemma \ref{nonmodV} and (\ref{impro1}). For (\ref{218}), if $\tilde{r}
  \geqslant 3$, we compute
  \[ | Q_c - V | = | V | \times \left| e^{\Psi_{c, d_c}} - 1 \right| \leqslant
     C | \Psi_{c, d_c} | \leqslant \frac{K (\sigma) c^{1 - \sigma}}{(1 +
     \tilde{r})^{\sigma}} \]
  and if $\tilde{r} \leqslant 3$, $| Q_c - V | \leqslant C \| \Psi_{c, d_c}
  \|_{\ast, \sigma, d_c}$ and the estimate (\ref{218}) holds. Similarly, for
  $\tilde{r} \geqslant 3$,
  \[ | | Q_c |^2 - | V |^2 | \leqslant | V |^2 \left|
     e^{2\mathfrak{R}\mathfrak{e} (\Psi_{c, d_c})} - 1 \right| \leqslant
     \frac{K (\sigma) c^{1 - \sigma}}{(1 + \tilde{r})^{1 + \sigma}} \]
  and for the same reason if $\tilde{r} \leqslant 3$ the estimate (\ref{219})
  holds. Inequalities (\ref{220}) and (\ref{221}) are clear if $\tilde{r}
  \leqslant 3$ and we compute, for $\tilde{r} \geqslant 3$,
  \[ \nabla Q_c \overline{Q_c} = \nabla \left( V e^{\Psi_{c, d_c}} \right)
     \bar{V} e^{\bar{\Psi}_{c, d_c}} = \nabla V \bar{V}
     e^{2\mathfrak{R}\mathfrak{e} (\Psi c, d_c)} + | V |^2 \nabla \Psi_{c,
     d_c} e^{2\mathfrak{R}\mathfrak{e} (\Psi_{c, d_c})} . \]
  We have $\left| e^{2\mathfrak{R}\mathfrak{e} (\Psi_{c, d_c})} \right|
  \leqslant 1$ for $c$ small enough and by Lemma \ref{lemme3} we have $|
  \mathfrak{I}\mathfrak{m} (\nabla V \bar{V}) | \leqslant \frac{K}{1 +
  \tilde{r}}$ and $| \mathfrak{R}\mathfrak{e} (\nabla V \bar{V}) | \leqslant
  \frac{K}{(1 + \tilde{r})^3} .$ Combining it with $| \nabla \Psi_{c, d_c} |
  \leqslant \frac{K (\sigma) c^{1 - \sigma}}{(1 + \tilde{r})^{1 + \sigma}}$
  from (\ref{impro1}), estimates (\ref{220}) and (\ref{221}) hold.
\end{proof}

\subsubsection{Estimations on derivatives of $\Phi_{c, d}$ with respect to $c$
and $d$ at $d = d_c$.}

We cannot easily compute $\partial_d \Psi_{c, d | d = d_c \nobracket}$ because
of issues locally around the vortices (due to the fact that $\Psi_{c, d}$ is
unbounded near $\tilde{r} = 0$, and changing $d$ change the position of the
vortices). We shall prove instead an estimate on $\partial_d \Phi_{c, d | d =
d_c \nobracket}$, as well as an estimate on $\partial_c \Psi_{c, d | d = d_c
\nobracket}$.

\begin{lemma}
  \label{nl310}For any $0 < \sigma < \sigma' < 1, c \in \mathbbm{R}$ such that
  $\frac{1}{2 d} < c < \frac{2}{d}$ and $0 < c < c_0 (\sigma, \sigma')$, we
  have
  \[ \| \partial_c \Psi_{c, d | d = d_c \nobracket} \|_{\ast, \sigma, d}
     \leqslant K (\sigma, \sigma') c^{- \sigma'} \]
  and
  \[ \left\| \frac{\partial_d \Phi_{c, d}}{V}_{| d = d_c \nobracket}
     \right\|_{\ast, \sigma, d_c} \leqslant K (\sigma, \sigma') c^{1 -
     \sigma'}, \]
  with $K (\sigma, \sigma') > 0$ depending only on $\sigma, \sigma'$.
\end{lemma}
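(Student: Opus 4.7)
The plan is to differentiate the implicit equation
\[
\eta L(\Phi_{c,d}) + (1-\eta) V L'(\Psi_{c,d}) + F(\Psi_{c,d}) = \lambda(c,d) Z_d
\]
with respect to $c$ at fixed $d$ and with respect to $d$ at fixed $c$, and then to invert the resulting linear problem by combining Proposition \ref{p2191802} with the fact, proved in Lemma \ref{lemme20}, that $d_\Psi F$ is an $o^{\sigma,\sigma'}_{c\to 0}(1)$-small perturbation from $\mathcal{E}_{\ast,\sigma,d}$ into $\mathcal{E}_{\ast\ast,\gamma(\sigma),d}$. The orthogonality $\langle\partial_c\Phi_{c,d},Z_{d_\circledast}\rangle=\langle\partial_d\Phi_{c,d},Z_{d_\circledast}\rangle=0$ is automatic from differentiating $\langle\Phi_{c,d},Z_{d_\circledast}\rangle=0$, so the projection $\Pi^\perp_d$ will behave well, and a Lagrange multiplier absorbs the component along $Z_d$.

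\textbf{Estimate on $\partial_c\Psi_{c,d}$.} At fixed $d$, the $c$-dependence of $L$, $L'$ and $F$ enters only through the transport term $-ic\partial_{x_2}$ and the source $-ic\partial_{x_2}V$ inside $F$. Differentiating yields
\[
\eta L(\partial_c\Phi_{c,d}) + (1-\eta) V L'(\partial_c\Psi_{c,d}) + d_\Psi F(\partial_c\Psi_{c,d}) = \mathcal{R}_c + \partial_c\lambda(c,d)\, Z_d,
\]
where $\mathcal{R}_c$ regroups $-i\partial_{x_2}V$, $i\eta\partial_{x_2}\Phi_{c,d}$, $i(1-\eta)V\partial_{x_2}\Psi_{c,d}$ and the explicit $c$-derivative of $R(\Psi_{c,d})$. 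By Lemma \ref{fineestimate}, $\|i\partial_{x_2}V/V\|_{\ast\ast,\sigma',d}\leq C_1(\sigma')c^{-\sigma'}$; the other pieces of $\mathcal{R}_c/V$ are controlled by $\|\Psi_{c,d}\|_{\ast,\sigma,d}\leq K(\sigma,\sigma')c^{1-\sigma'}$ together with Lemma \ref{dervor}, so they are $O(c^{1-\sigma'})$. Projecting by $\Pi^\perp_d$, applying the inverse of $\eta L(\cdot)+(1-\eta)VL'(\cdot/V)$ from Proposition \ref{p2191802}, and absorbing the $d_\Psi F$ term using Lemma \ref{lemme20} as a small perturbation of the identity, we obtain $\|\partial_c\Psi_{c,d}\|_{\ast,\sigma,d}\leq K(\sigma,\sigma')c^{-\sigma'}$, valid for $d=d_c$ in particular.

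\textbf{Estimate on $\partial_d\Phi_{c,d}$ at $d=d_c$.} Here the dependence on $d$ enters in $V$ itself (hence in $L$, $L'$, $F$, $\eta$ and $Z_d$). Since $\lambda(c,d_c)=0$, differentiating the equation with respect to $d$ at $d=d_c$ gives a linear equation for $\partial_d\Phi_{c,d_c}$ of the form
\[
\eta L(\partial_d\Phi) + (1-\eta) V L'(\partial_d\Phi/V) + d_\Psi F(\partial_d\Phi/V) = \mathcal{R}_d + \partial_d\lambda(c,d_c)\, Z_{d_c},
\]
where $\mathcal{R}_d$ collects $\partial_d E$, $ic\partial_d\partial_{x_2}V$, the terms produced by the $d$-derivative of the coefficients $V$, $|V|^2$, $\nabla V/V$ in $L,L'$, and the contribution from $\partial_d(1-\eta)(-\nabla\Psi\cdot\nabla\Psi+|V|^2 S(\Psi))$. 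Using Lemma \ref{ddVest} together with straightforward $d$-derivatives of the estimates carried out in Lemmas \ref{nonmodV}, \ref{fineest} and \ref{fineestimate} (the key structural point being that $\partial_d V$ decays like $V$ divided by $(1+\tilde r)$, so no decay is lost), one shows $\|\mathcal{R}_d/V\|_{\ast\ast,\sigma',d_c}\leq K(\sigma,\sigma')c^{1-\sigma'}$. Inverting as above produces $\|\partial_d\Phi_{c,d}/V|_{d=d_c}\|_{\ast,\sigma,d_c}\leq K(\sigma,\sigma')c^{1-\sigma'}$.

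\textbf{Main obstacle.} The delicate step is the $\partial_d$-estimate, because one must simultaneously differentiate $\Phi$, $V$, $\eta$ and the weights $\tilde r$ in the norm while keeping track of the fact that $\Psi$ may be singular at the vortices, whereas $\Phi=V\Psi$ is smooth; this is precisely why the statement is phrased on $\partial_d\Phi/V$ rather than on $\partial_d\Psi$. The check that every $d$-derivative of the source and of the linear coefficients yields norms of order $c^{1-\sigma'}$ (and not worse) requires re-running the inner/outer decomposition arguments of Lemma \ref{fineestimate} with an extra $\partial_d$, exploiting the explicit structure of $V=V_1(x-d\vec e_1)V_{-1}(x+d\vec e_1)$ and the bounds of Lemma \ref{ddVest}.
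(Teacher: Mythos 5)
Your strategy coincides with the paper's: differentiate the fixed-point equation in $c$ (resp.\ $d$), observe that $\mathrm{Id}+(\eta L+(1-\eta)VL'(\cdot/V))^{-1}\Pi_d^{\bot}d_{\Psi}F$ is invertible with norm $1+o_{c\to0}(1)$ by Lemma \ref{lemme20} and Proposition \ref{invertop}, and reduce everything to estimating the explicit source terms; for $\partial_c$ the dominant term is indeed $-i\partial_{x_2}V$ (the paper packages the whole right-hand side as $-i\partial_{x_2}Q_c$ using $\lambda(c,d_c)=0$), giving $c^{-\sigma'}$, and for $\partial_d$ one needs all $d$-derivatives of the source to be $O(c^{1-\sigma'})$.

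The one point where your justification is too thin to stand as written is the bound on $\partial_d(ic\,\partial_{x_2}V)/V$ in the $\ast\ast$-norm. The heuristic ``$\partial_d V$ decays like $V/(1+\tilde r)$, so no decay is lost'' does not suffice here: the $\ast\ast$-norm puts the weight $\tilde r^{2+\sigma'}$ on the \emph{imaginary} part, and a single vortex contributes to $\mathfrak{I}\mathfrak{m}(i\partial_{x_2}\partial_d V/V)$ a term of size $\sin\theta_1\cos\theta_1/r_1^2$, so that $c\,\tilde r^{2+\sigma'}\cdot\tilde r^{-2}=c\,\tilde r^{\sigma'}$ is unbounded. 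What saves the estimate is an exact algebraic cancellation between the $+1$ and $-1$ vortex contributions (the analogue, one derivative higher, of the cancellation in Lemma \ref{fineest}), which produces an extra factor of $d$ in the numerator against two extra powers of $r_1 r_{-1}$ in the denominator; the paper carries this out explicitly in step 2 of the proof of Lemma \ref{dderpsy} (equations (\ref{horrible})--(\ref{382901})). You gesture at this in your last paragraph by invoking the structure of $V$ and Lemma \ref{fineest}, so the idea is present, but the claim that these are ``straightforward $d$-derivatives'' of earlier estimates should be replaced by that computation: without it the weighted bound on the imaginary part simply fails. The remaining source terms ($\partial_d E$, the $d$-derivatives of the coefficients of $L,L'$, the nonlinear and cutoff terms) are controlled as you say by Lemma \ref{ddVest} and the product structure.
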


See Appendix \ref{AC2} for the proof of this result.

\section{Differentiability of the branch $c \mapsto Q_c$}\label{qcc1}

The goal of this section is to prove that the constructed branch is $C^1$, and
to give the leading order term of $\partial_c Q_c$ as $c \rightarrow 0$. The
result is the following one.

\begin{proposition}
  \label{prop16}For any $+ \infty \geqslant p > 2$, there exists $c_0 (p) > 0$
  such that
  \[ c \mapsto Q_c - 1 \in C^1 (] 0, c_0 (p) [, X_p), \]
  with the estimate
  \[ \left\| \partial_c Q_c + \left( \frac{1 + o_{c \rightarrow 0} (1)}{c^2}
     \right) \partial_d (V_1 (. - d \overrightarrow{e_1}) V_{- 1} (. + d
     \overrightarrow{e_1}))_{| d = d_c \nobracket} \right\|_{X_{^p}} = o_{c
     \rightarrow 0} \left( \frac{1}{c^2} \right) . \]
\end{proposition}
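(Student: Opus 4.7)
The plan is to apply the implicit function theorem to $\lambda(c,d)=0$ in order to obtain the $C^1$ dependence $c\mapsto d_c$, differentiate the decomposition $Q_c = V(\cdot;d_c) + \Gamma_{c,d_c}$, and then translate the weighted $L^\infty$ estimates of Lemma \ref{nl310} into $L^p$ bounds showing that all the error terms are of smaller order than $1/c^2$. First, the joint $C^1$ regularity of $\Phi_{c,d}$ from Lemmas \ref{rest31} and \ref{dderpsy}, combined with the fact that $\lambda(c,d)$ can be read off by testing equation \eqref{azz220} against $Z_d$, shows that $\lambda\in C^1$ in $(c,d)$. To apply the implicit function theorem at $(c,d_c)$ we must isolate the leading order of $\partial_d\lambda(c,d_c)$: retracing the proof of Proposition \ref{dinc} while differentiating in $d$ and using Lemma \ref{nl310} to absorb the contributions of $\partial_d\Phi_{c,d}$ as lower order corrections, one gets
\[
\partial_d\lambda(c,d_c)\int_{\mathbb{R}^2}|\partial_dV|^2\eta = -\frac{\pi}{d_c^{\,2}} + o(c^2), \qquad \partial_c\lambda(c,d_c)\int_{\mathbb{R}^2}|\partial_dV|^2\eta = -\pi + o(1).
\]
Since $d_c\sim 1/c$, $\partial_d\lambda$ is of order $-\pi c^2$, hence nonzero for small $c$, and the implicit function theorem yields
\[
d'_c = -\frac{\partial_c\lambda(c,d_c)}{\partial_d\lambda(c,d_c)} = -\frac{1+o_{c\to 0}(1)}{c^2}.
\]

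Next, setting $\nu(c):=-d'_c = (1+o(1))/c^2$ and differentiating $Q_c = V(\cdot;d_c) + \Gamma_{c,d_c}$ in $c$ by the chain rule gives
\[
\partial_c Q_c + \nu(c)\,\partial_d V\big|_{d=d_c} = \bigl(\partial_c\Gamma_{c,d}\bigr)\big|_{d=d_c} + d'_c\,\bigl(\partial_d\Gamma_{c,d}\bigr)\big|_{d=d_c},
\]
so the task reduces to $L^p$ bounds on the right-hand side. For the first term, Lemma \ref{nl310} gives $\|\partial_c\Psi_{c,d}|_{d=d_c}\|_{\ast,\sigma,d_c}\leqslant K(\sigma,\sigma')c^{-\sigma'}$; differentiating $\Gamma=\eta V\Psi + (1-\eta)V(e^{\Psi}-1)$ in $c$ (with $d=d_c$ fixed, so $V$ is inert), bounding $|\partial_c\Gamma|\leqslant K|V||\partial_c\Psi|$, and choosing $\sigma$ with $p(1+\sigma)>2$ gives $\|\partial_c\Gamma\|_{L^p}\leqslant K c^{-\sigma'}$, which is $o(1/c^2)$ because $\sigma'<1<2$. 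For the second term, the estimate $\|\partial_d\Phi_{c,d}|_{d=d_c}/V\|_{\ast,\sigma,d_c}\leqslant K c^{1-\sigma'}$ from Lemma \ref{nl310}, after carefully splitting $\partial_d\Gamma$ into its contributions from $\partial_d\Phi$ and from the $d$-dependence of $V$ (using Lemma \ref{ddVest} for the latter), yields $\|\partial_d\Gamma\|_{L^p}=o(1)$, whence $|d'_c|\|\partial_d\Gamma\|_{L^p}=o(1/c^2)$. Combining, the claimed expansion in $L^p$ follows.

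Finally, the $C^1$ regularity of $c\mapsto Q_c-1$ as an $X_p$-valued map is obtained from the continuity of $c\mapsto \partial_c\Phi_{c,d_c}$ and $c\mapsto \partial_d\Phi_{c,d}|_{d=d_c}$ in $X_p$, which follows from the continuity statements in Lemmas \ref{rest31} and \ref{dderpsy} together with the same weight-to-$L^p$ translations used above (and the analogous control of the gradient in $L^{p-1}$, valid for $p>2$ thanks to $|\nabla\partial_dV|\lesssim (1+\tilde{r})^{-2}$ in Lemma \ref{ddVest}).

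The main obstacle will be the precise computation of $\partial_d\lambda(c,d_c)$ at leading order $-\pi/d_c^{\,2}$. Proposition \ref{dinc} was already a delicate cancellation between contributions from $E$ and from $-ic\partial_{x_2}V$; differentiating it in $d$ requires redoing each of the steps 3--6 of that proof with the additional $d$-derivative distributed onto $V_1(\cdot-d\vec e_1)$, $V_{-1}(\cdot+d\vec e_1)$ and $\eta=\eta(d)$, and then showing that the derivatives of the error terms $\langle(1-\eta)(E-ic\partial_{x_2}V)\Psi,\partial_dV\rangle$, $\langle F(\Psi)-(E-ic\partial_{x_2}V),\partial_dV\rangle$, and the integration-by-parts remainder $\langle L(\Phi),\partial_dV\rangle$ are still of order $c^{2-\sigma}$ (relative to the $c^2$ size of the leading term) after one differentiation. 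The key ingredient making this feasible is Lemma \ref{nl310}, which provides exactly the regularity on $\partial_d\Phi_{c,d}|_{d=d_c}$ needed to reproduce the absorbing estimates.
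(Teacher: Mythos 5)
Your overall architecture (implicit function theorem on $\lambda(c,d)=0$, chain rule on $Q_c=V(\cdot;d_c)+\Gamma_{c,d_c}$, then translation of weighted $L^\infty$ bounds into $X_p$ bounds) matches the paper, and your treatment of the last two steps — the decomposition $\partial_cQ_c+\nu(c)\partial_dV = \partial_c\Gamma_{c,d_c}+d_c'\,\partial_d\Gamma_{c,d_c}$ together with Lemma \ref{nl310} and the weight-to-$L^p$ argument of Lemma \ref{lema217} — is essentially Lemma \ref{finallemma}. The gap is in the step you yourself flag as the main obstacle, and it is not merely technical: your proposed route for it fails.

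Concretely, you claim that retracing Proposition \ref{dinc} while differentiating in $d$, using Lemma \ref{nl310} to absorb the contributions of $\partial_d\Phi_{c,d}$, yields $\partial_d\lambda(c,d_c)\int|\partial_dV|^2\eta = -\pi/d_c^{\,2}+o(c^2)$. But Lemma \ref{nl310} only gives $\|\partial_d\Phi_{c,d}/V\|_{\ast,\sigma,d_c}\leqslant K(\sigma,\sigma')c^{1-\sigma'}$ — the \emph{same} size as $\|\Psi_{c,d_c}\|_{\ast,\sigma,d_c}$ — so differentiating each error term of Proposition \ref{dinc} (e.g.\ $\langle L(\Phi),\partial_dV\rangle$, the nonlinear terms, the cross terms in step 3) reproduces errors of absolute size $O^{\sigma}_{c\to0}(c^{2-\sigma})$, which \emph{dominates} the leading term $-\pi/d_c^{\,2}\sim -\pi c^2$. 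The paper states this explicitly in step~1 of the proof of Proposition \ref{derg}: "The main contribution is at this point not enough to beat the error terms. Therefore, showing that $\partial_d\lambda(c,d)\neq 0$ is not simple here." The entire apparatus of subsections \ref{32}--\ref{36} exists to repair exactly this: one localizes the implicit equation to $B(\pm d\vec e_1,d^{\varepsilon'})$, uses $\partial_{x_1}(\mathrm{TW}_c(Q_{c,d_c}))=0$ to replace $\partial_d$ by $\partial_{z_1}=\partial_{x_1}+\partial_d$, and then proves the genuinely new estimate of Proposition \ref{derg}, $\|\partial_{z_1}\Psi_{c,d}|_{d=d_c}\|_{\ast,B'_{d_c}}=o(c^{1+\lambda})$ with $\lambda>0$, via a fresh compactness/contradiction argument in the localized norms $\|\cdot\|_{\ast,B'_d}$, $\|\cdot\|_{\ast\ast,B_d}$. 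It is only this extra power of $c$ on the combination $\partial_{x_1}\Phi+\partial_d\Phi$ near each vortex (not on $\partial_d\Phi$ alone, which is genuinely of size $c^{1-\sigma'}$) that pushes the error terms below $1/d_c^{\,2}$ in subsection \ref{36}. Without an ingredient of this type your computation of $\partial_d\lambda$ cannot close, and with it the implicit function theorem and the rest of your argument go through as you describe.
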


Proposition \ref{prop16}, together with subsection \ref{th1proof}, ends the
proof of Theorem \ref{th1}. Subsections \ref{32} to \ref{38} are devoted to
the proof of Proposition \ref{prop16}.

In this section, to make the dependances on $c$ and $d$ clear, we use the
following notations. We denote $\Phi_{c, d}, \Psi_{c, d}$ and $\Gamma_{c, d}$
in order to emphasize the dependence of $\Phi, \Psi$ and $\Gamma$ in
Proposition \ref{contractionest} on $c$ and $d$. A value of $d$ that makes
$\lambda (c, d) = 0$ in Proposition \ref{dinc} is written $d_c$. We will show
later on that there exist one and only one value of $d_c$ satisfying this in
$\left] \frac{c}{2}, 2 c \right[$. With these notations, $Q_c = V_1 (. - d_c
\overrightarrow{e_1}) V_{- 1} (. + d_c \overrightarrow{e_1}) + \Gamma_{c,
d_c}$ is the solution of $(\tmop{TW}_c)$ we constructed in section \ref{n3}.

In subsection \ref{31} we showed that $\Phi_{c, d}$ is a $C^1$ function of
both $c$ and $d$. We also have computed estimates for the derivatives of
$\Phi_{c, d}$ with respect to $c$ and $d$ in Lemma \ref{nl310}, that will be
usefull here.

The goal is to show that $d_c$ is a $C^1$ function of $c$. We will do this by
the implicit function theorem, but this requires a lot of computations. In
particular, in Proposition \ref{dinc}, $d_c$ was choosen so that
\[ \langle L (\Phi_{c, d}) - (1 - \eta) (E - i c \partial_{x_2} V) \Psi_{c, d}
   + F (\Psi_{c, d}), \partial_d V \rangle = 0, \]
but we may equivalently define it by the implicit equation
\[ \int_{B (d \overrightarrow{e_1}, d^{\varepsilon'}) \cup B (- d
   \overrightarrow{e_1}, d^{\varepsilon'})} \mathfrak{R}\mathfrak{e} ((L
   (\Phi_{c, d}) - (1 - \eta) E \Psi_{c, d} + F (\Psi_{c, d}))
   \overline{\partial_d V}) = 0. \]
This is the same equation but the scalar product is not taken on the whole
space but only on $B (d \overrightarrow{e_1}, d^{\varepsilon'}) \cup B (- d
\overrightarrow{e_1}, d^{\varepsilon'})$ for some $0 < \varepsilon' < 1$ (we
will take $\varepsilon' = 13 / 24$ but this value is purely technical, other
values are possible). The only reason why we take it in the whole space in
Lemma \ref{dinc} was because of the boundary terms that will appear in the
integration by parts when we write
\[ \langle L (\Phi), \partial_d V \rangle = \langle \Phi, L (\partial_d V)
   \rangle . \]
With the boundary terms on the boundary of $B (\pm d \overrightarrow{e_1},
d^{\varepsilon'})$, $\varepsilon' > 0$, we are far enough from the vortices to
make them small enough for our estimations. Thanks to this we can separate
what happens near the vortex $V_1$ from what happens near the vortex $V_{- 1}$
because now the integrals are in two well separated domain, one around each
vortex. We use this in subsection \ref{32}. We need to differentiate the
equation with respect to $d$. If we write $Q_{c, d} = V + \Gamma_{c, d}$, then
$\partial_d Q_c = \partial_d V + \partial_d (\Gamma_{c, d})$. The term
$\partial_d V$ is easy to compute and to understand: we just move both
vortices in opposite direction. But $\partial_d \Gamma_{c, d}$ is very
difficult to understand, and our estimations on $\Gamma_{c, d_c}$ are not
enough to compute easily what happens with sufficient precision to control its
contribution. We would rather write $Q_{c, d}$ under the form
\[ Q_{c, d} (x) = (V_1 (x - d \overrightarrow{e_1}) + \tilde{\Gamma}_1 (x - d
   \overrightarrow{e_1})) + (V_{- 1} (x + d \overrightarrow{e_1}) +
   \tilde{\Gamma}_{- 1} (x + d \overrightarrow{e_1})) + \tmop{Err} \]
where $\tilde{\Gamma}_1 (x - d \overrightarrow{e_1})$ is centered near $V_1$,
is small and is here because of the existence of $V_{- 1}$ far away. Then the
term we understand is
\[ \partial_{x_1 + d} (V_1 (x - d \overrightarrow{e_1}) + \tilde{\Gamma}_1 (x
   - d \overrightarrow{e_1})) \]
which is what changes near the center of $V_1$ when we move only the other
vortex. This can be computed more easily and that is what we do in subsection
\ref{34}. This term is easy to compute only near the vortex $V_1$, and that is
one of the reasons we work only on $B (d \overrightarrow{e_1},
d^{\varepsilon'})$. The main contribution to the variation of the position of
$V_{- 1}$ is as expected from the source term $E - i c \partial_{x_2} V$. This
is the computation of subsection \ref{35}.

Furthermore, most estimations boils down to what happen near each vortex, see
for instance the contribution of $E$ in step 5 of the proof of Proposition
\ref{dinc}, where we separate the contribution far from both vortices and
close to them. By integrating only on $B (d \overrightarrow{e_1},
d^{\varepsilon'})$ we reduce the number of estimations we need to do.
Moreover, in such a ball the contribution of the vortex $V_{- 1}$ and its
derivatives are easy to compute, see subsection \ref{33}.

Subsection \ref{36} gathers all the estimations needed to show that only the
contribution from the source term is of leading order. Subsection \ref{37} and
\ref{38} are easy computations using previous subsections to compute the first
order term of $\partial_c Q_c$.

The main and most difficult part is subsection \ref{34}. We want to show that
$\partial_{x_1 + d} (\tilde{\Gamma}_1 (x - d \overrightarrow{e_1}))$ is much
smaller than $\tilde{\Gamma}_1 (x - d \overrightarrow{e_1})$, i.e. that the
derivative with respect to $x_1 + d$ gives us additional smallness in $c$. For
this we do a proof by contradiction which follows closely what was done in the
proof of Proposition \ref{invertop}.

\

We define the following differential operators:
\[ \partial_{y_1} \assign \partial_{x_1} - \partial_d, \]
\[ \partial_{z_1} \assign \partial_{x_1} + \partial_d . \]
These notations follow the definitions of $y_1 = x_1 - d$ and $z_1 = x_1 + d$
from (\ref{notation}). The derivative in $d$ is taken at fixed $c$. The
function $\partial_d \Phi_{c, d}$ is the derivative of $\Phi$ with respect to
$d$ at fixed $c$ and we shall use the notation
\[ \partial_d \Phi_{c, d_c} \assign \partial_d \Phi_{c, d | d = d_c
   \nobracket}, \]
and similarly for $\partial_d \Gamma_{c, d_c}$ and $\partial_d \Psi_{c, d_c}$.
The derivatives $\partial_{y_1}$ and $\partial_{z_1}$ behave naturally on
function depending on $x$ and $d$ only through $y$ or $z$, as shown in the
following lemma.

\begin{lemma}
  \label{deryz}For any $\mathfrak{F} \in C^1 (\mathbbm{R}^2, \mathbbm{C}),$ we
  have
  \[ \partial_{y_1} (\mathfrak{F} (z)) = \partial_{z_1} (\mathfrak{F} (y)) = 0
  \]
  and
  \[ \partial_{y_1} (\mathfrak{F} (y)) = 2 \partial_{x_1} \mathfrak{F} (y), \]
  \[ \partial_{z_1} (\mathfrak{F} (z)) = 2 \partial_{x_1} \mathfrak{F} (z) .
  \]
\end{lemma}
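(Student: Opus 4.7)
The statement is a pure chain-rule identity, so the plan is very short. The key observation is that, from the definitions in \eqref{notation}, the map $(x,d)\mapsto y=(x_1-d,x_2)$ satisfies $\partial_{x_1}y_1=1$, $\partial_d y_1=-1$, and both components are independent of $x_2$ in the $d$ direction; symmetrically, $(x,d)\mapsto z=(x_1+d,x_2)$ satisfies $\partial_{x_1}z_1=1$, $\partial_d z_1=+1$. So the whole lemma reduces to applying the chain rule to $\mathfrak{F}(y)$ and $\mathfrak{F}(z)$ viewed as functions of $(x_1,x_2,d)$, and then recognising the combinations $\partial_{x_1}\mp\partial_d$.

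Concretely, I would first compute, for $\mathfrak{F}\in C^1(\mathbbm{R}^2,\mathbbm{C})$:
\[
\partial_{x_1}\bigl(\mathfrak{F}(y)\bigr)=\partial_{x_1}\mathfrak{F}(y),\qquad \partial_{d}\bigl(\mathfrak{F}(y)\bigr)=-\partial_{x_1}\mathfrak{F}(y),
\]
\[
\partial_{x_1}\bigl(\mathfrak{F}(z)\bigr)=\partial_{x_1}\mathfrak{F}(z),\qquad \partial_{d}\bigl(\mathfrak{F}(z)\bigr)=+\partial_{x_1}\mathfrak{F}(z).
\]
Substituting these identities into the definitions $\partial_{y_1}:=\partial_{x_1}-\partial_d$ and $\partial_{z_1}:=\partial_{x_1}+\partial_d$ immediately yields the four claimed equalities: the cross terms $\partial_{y_1}(\mathfrak{F}(z))$ and $\partial_{z_1}(\mathfrak{F}(y))$ cancel, whereas $\partial_{y_1}(\mathfrak{F}(y))$ and $\partial_{z_1}(\mathfrak{F}(z))$ double up to $2\partial_{x_1}\mathfrak{F}(y)$ and $2\partial_{x_1}\mathfrak{F}(z)$ respectively.

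There is no real obstacle here: the only thing to be careful about is to keep track of the sign of $\partial_d y_1=-1$ versus $\partial_d z_1=+1$, which is exactly what makes the asymmetric definitions of $\partial_{y_1}$ and $\partial_{z_1}$ natural. The $C^1$ regularity of $\mathfrak{F}$ is just what is needed to justify the chain rule rigorously, and the computation is independent of $c$ and of the constraint $\tfrac{1}{2c}<d<\tfrac{2}{c}$. Thus the proof reduces to one line per identity, and the main value of the lemma is notational: it shows that $\partial_{y_1}$ and $\partial_{z_1}$ act like ``derivatives only in the first coordinate of $y$'' and ``only in the first coordinate of $z$'' respectively, which will be convenient in the forthcoming decomposition of $\partial_d Q_c$ into contributions localised around each vortex.
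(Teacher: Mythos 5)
Your proof is correct and is essentially identical to the paper's: both arguments consist of applying the chain rule to $\mathfrak{F}(x_1\mp d,x_2)$, noting $\partial_d y_1=-1$ and $\partial_d z_1=+1$, and substituting into the definitions $\partial_{y_1}=\partial_{x_1}-\partial_d$ and $\partial_{z_1}=\partial_{x_1}+\partial_d$. Nothing is missing.
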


\begin{proof}
  We compute
  \[ \partial_{y_1} (\mathfrak{F} (z)) = \partial_{x_1} (\mathfrak{F} (x_1 +
     d, x_2)) - \partial_d (\mathfrak{F} (x_1 + d, x_2)) = \partial_{x_1}
     \mathfrak{F} (z) - \partial_{x_1} \mathfrak{F} (z) = 0. \]
  Similarly we have $\partial_{z_1} (\mathfrak{F} (y)) = 0.$ Moreover,
  \[ \partial_{y_1} (\mathfrak{F} (y)) = \partial_{x_1} (\mathfrak{F} (x_1 -
     d, x_2)) - \partial_d (\mathfrak{F} (x_1 - d, x_2)) = \partial_{x_1}
     \mathfrak{F} (y) + \partial_{x_1} \mathfrak{F} (y) = 2 \partial_{x_1}
     \mathfrak{F} (y) \]
  and similarly, $\partial_{z_1} (\mathfrak{F} (z)) = 2 \partial_{x_1}
  \mathfrak{F} (z)$.
\end{proof}

We have an estimate on $\partial_d \Phi_{c, d_{} | d = d_c \nobracket}$, but
it is not enough to show that $d_c$ is a $C^1$ function of $c$. The main idea
of the proof is to compute an estimate on $\partial_{z_1} \Phi_{c, d_c} =
\partial_{x_1} \Phi_{c, d_c} + \partial_d \Phi_{c, d_c}$ near the vortex $V_1$
which is better than the ones on $\partial_{x_1} \Phi_{c, d_c}$ and
$\partial_d \Phi_{c, d_c}$. In particular we will have $\partial_{z_1}
\Phi_{c, d_c} = o_{c \rightarrow 0} (c^{1 + \lambda})$ for some $\lambda > 0$
instead of $o_{c \rightarrow 0} (c^{1 - \sigma})$ for $\sigma > 0$. This
estimate is done in Proposition \ref{derg}. First, we compute a first rough
estimate on $\partial_{z_1} \Psi_{c, d}$ which is a corollary of Lemma
\ref{dderpsy}.

\begin{corollary}
  \label{cor361405}For $\chi$ a smooth cutoff function with value 1 in $\{
  r_{- 1} \geqslant 3 \}$ and $0$ in $\{ r_{- 1} \leqslant 2 \}$, for $0 <
  \sigma < \sigma' < 1$, there exist $c_0 (\sigma, \sigma') > 0$ such that,
  for $0 < c < c_0 (\sigma, \sigma')$, we have
  \begin{eqnarray*}
    &  & \| V \chi \partial_{z_1} \Psi_{c, d | d = d_c \nobracket} \|_{C^1
    (\{ \tilde{r} \leqslant 3 \})}\\
    & + & \| \tilde{r}^{1 + \sigma} \mathfrak{R}\mathfrak{e} (\partial_{z_1}
    \Psi_{c, d | d = d_c \nobracket}) \|_{L^{\infty} (\{ \tilde{r} \geqslant 2
    \})} + \| \tilde{r}^{2 + \sigma} \nabla \mathfrak{R}\mathfrak{e}
    (\partial_{z_1} \Psi_{c, d | d = d_c \nobracket}) \|_{L^{\infty} (\{
    \tilde{r} \geqslant 2 \})}\\
    & + & \| \tilde{r}^{\sigma} \mathfrak{I}\mathfrak{m} (\partial_{z_1}
    \Psi_{c, d | d = d_c \nobracket}) \|_{L^{\infty} (\{ \tilde{r} \geqslant 2
    \})} + \| \tilde{r}^{1 + \sigma} \nabla \mathfrak{I}\mathfrak{m}
    (\partial_{z_1} \Psi_{c, d | d = d_c \nobracket}) \|_{L^{\infty} (\{
    \tilde{r} \geqslant 2 \})}\\
    & \leqslant & K (\sigma, \sigma') c^{1 - \sigma'} .
  \end{eqnarray*}
\end{corollary}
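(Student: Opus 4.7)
The plan is to decompose $\partial_{z_1}\Psi_{c,d\,|\,d=d_c} = \partial_{x_1}\Psi_{c,d_c} + \partial_d\Psi_{c,d\,|\,d=d_c}$ and reuse the bounds already available: Corollary \ref{sigmarem} for $\|\Psi_{c,d_c}\|_{\ast,\sigma,d_c}\leqslant K(\sigma,\sigma')c^{1-\sigma'}$, and Lemma \ref{nl310} for $\|\partial_d\Phi_{c,d\,|\,d=d_c}/V\|_{\ast,\sigma,d_c}\leqslant K(\sigma,\sigma')c^{1-\sigma'}$. The cutoff $\chi$ is there precisely to exclude the ball around $-d_c\overrightarrow{e_1}$ where $V$ vanishes and $\partial_d\Psi$ has no good meaning; near $d_c\overrightarrow{e_1}$, I rely instead on the identity
\[
V\,\partial_{z_1}\Psi = \partial_{z_1}\Phi - \frac{\partial_{z_1}V}{V}\,\Phi,
\]
together with the key observation from Lemma \ref{deryz} that $\partial_{z_1}V = V_1(y)\cdot 2\partial_{x_1}V_{-1}(z)$, so $\partial_{z_1}V/V = 2\partial_{x_1}V_{-1}(z)/V_{-1}(z)$ is $C^\infty$ in a neighborhood of $d_c\overrightarrow{e_1}$ and of size $O(1/r_{-1})=O(c)$ there.

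For the $C^1(\{\tilde r\leqslant 3\})$ bound on $V\chi\partial_{z_1}\Psi_{c,d_c}$, I split $\{\tilde r\leqslant 3\}$ into the disk around $d_c\overrightarrow{e_1}$ (where $\chi\equiv 1$ for $c$ small), the transition annulus $\{2\leqslant r_{-1}\leqslant 3\}$ (where $V$ is bounded away from zero), and $\{r_{-1}\leqslant 2\}$ (where $V\chi\partial_{z_1}\Psi$ vanishes identically). On the first piece I use the displayed identity: $\partial_{z_1}\Phi = \partial_{x_1}\Phi_{c,d_c}+\partial_d\Phi_{c,d_c}$ is $C^1$-controlled by the $C^2$ part of $\|\Psi_{c,d_c}\|_{\ast,\sigma,d_c}$ and by Lemma \ref{nl310}, while $(\partial_{z_1}V/V)\Phi$ is the product of a small smooth factor and $\Phi_{c,d_c}$. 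On the annulus, $\Psi$ itself is $C^2$ because $V$ is bounded below, and one estimates $\chi\partial_{z_1}\Psi$ directly from the same two bounds.

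For the weighted $L^\infty$ estimates in $\{\tilde r\geqslant 2\}$, where $|V|$ is bounded below, I decompose $\partial_d\Psi = \partial_d\Phi/V - (\partial_d V/V)\Psi$. Lemma \ref{nl310} handles the first piece directly in $\|\cdot\|_{\ast,\sigma,d_c}$. For the corrector, Lemmas \ref{dervor}--\ref{ddVest} give a finer split $\partial_d V/V = A+iB$ with $|A|\leqslant K/(1+\tilde r)^3$ (this is what the bound $|\mathfrak{Re}(\bar V\partial_d V)|\leqslant K/(1+\tilde r)^3$ yields) and $|B|\leqslant K/(1+\tilde r)$; multiplying by $\Psi_1+i\Psi_2$ with $|\Psi_1|\leqslant Kc^{1-\sigma'}/(1+\tilde r)^{1+\sigma}$ and $|\Psi_2|\leqslant Kc^{1-\sigma'}/(1+\tilde r)^{\sigma}$ gives real and imaginary parts with precisely the decays $(1+\tilde r)^{-(1+\sigma)}$ and $(1+\tilde r)^{-(2+\sigma)}$ demanded by the $\ast$-norm. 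Adding $\partial_{x_1}\Psi_{c,d_c}$, which is trivially controlled by $\|\Psi_{c,d_c}\|_{\ast,\sigma,d_c}$, closes the pointwise estimates; the gradient estimates follow by differentiating the same identities and invoking the analogous weighted bounds on $\nabla\Psi_{c,d_c}$ and $\nabla(\partial_d\Phi_{c,d_c}/V)$.

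The proof is essentially bookkeeping, and the only delicate point is the real/imaginary split of $(\partial_d V/V)\Psi$: without the improved bound $|\mathfrak{Re}(\bar V\partial_d V)|\leqslant K/(1+\tilde r)^3$ from Lemma \ref{ddVest}, the real part of $(\partial_d V/V)\Psi$ would only decay like $(1+\tilde r)^{-\sigma}$, which is too slow for the weight $1+\sigma$ required by $\|\cdot\|_{\ast,\sigma,d_c}$. All the substantive work has already been done in Lemma \ref{nl310}.
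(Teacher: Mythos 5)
Your proof is correct and follows essentially the same route as the paper's: decompose $\partial_{z_1}\Psi_{c,d_c}=\partial_{x_1}\Psi_{c,d_c}+\partial_d\Psi_{c,d_c}$, exploit $\partial_{z_1}V_1=0$ so that $\partial_{z_1}$ creates no new singularity at $d_c\overrightarrow{e_1}$ (the cutoff $\chi$ handling $-d_c\overrightarrow{e_1}$), and in $\{\tilde r\geqslant 2\}$ combine Lemma \ref{nl310} with $\|\Psi_{c,d_c}\|_{\ast,\sigma,d_c}\leqslant K(\sigma,\sigma')c^{1-\sigma'}$ and Lemma \ref{ddVest}. One minor remark: the refined bound $|\mathfrak{R}\mathfrak{e}(\bar V\partial_d V)|\leqslant K/(1+\tilde r)^3$ is not actually needed for the real part, since the crude bound $|\partial_d V/V|\leqslant K/(1+\tilde r)$ on $\{\tilde r\geqslant 2\}$ together with $|\Psi_{c,d_c}|\leqslant K(\sigma,\sigma')c^{1-\sigma'}(1+\tilde r)^{-\sigma}$ already gives the required decay $(1+\tilde r)^{-1-\sigma}$.
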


\begin{proof}
  Remark that $V_1 \partial_d \Psi_{c, d}$ might not be bounded near $d
  \overrightarrow{e_1}$, but $V_1 \partial_{z_1} \Psi_{c, d}$ is, since, by
  Lemma \ref{deryz}, $\partial_{z_1} V_1 = 0$ hence
  \[ V_1 \partial_{z_1} \Psi_{c, d} = \partial_{z_1} \Phi_{c, d} = \partial_d
     \Phi_{c, d} + \partial_{x_1} \Phi_{c, d}, \]
  with $\partial_d \Phi_{c, d}$ bounded by Lemma \ref{dderpsy}. We take a
  cutoff $\chi$ to avoid the fact that $V_{- 1} \partial_{z_1} \Psi_{c, d}$ is
  not necessary bounded near $- d \overrightarrow{e_1}$. In particular, with
  these remarks, we easily check, with Lemma \ref{dderpsy}, that
  \[ \| V \chi \partial_{z_1} \Psi_{c, d | d = d_c \nobracket} \|_{C^1 (\{
     \tilde{r} \leqslant 3 \})} \leqslant K (\sigma, \sigma') c^{1 - \sigma'}
     . \]
  We now focus on the region $\{ \tilde{r} \geqslant 2 \}$. From the
  definition of $\partial_{z_1}$, we have that
  \[ \partial_{z_1} \Psi_{c, d_{| d = d_c \nobracket}} = \partial_d \Psi_{c,
     d_c} + \partial_{x_1} \Psi_{c, d_c} . \]
  We compute
  \[ \partial_d \Psi_{c, d_c} = \frac{\partial_d \Phi_{c, d_c}}{V} +
     \frac{\partial_d V}{V} \Psi_{c, d_c}, \]
  and from Lemma \ref{dderpsy}, we have
  \[ \left\| \frac{\partial_d \Phi_{c, d_c}}{V} \right\|_{\ast, \sigma, d_c}
     \leqslant K (\sigma, \sigma') c^{1 - \sigma'} . \]
  From Lemma \ref{ddVest}, we have
  \[ | \partial_d V | \leqslant \frac{K}{(1 + \tilde{r})} \]
  and
  \[ | \nabla \partial_d V | \leqslant \frac{K}{(1 + \tilde{r})^2}, \]
  and together with $\| \Psi_{c, d_c} \|_{\ast, \sigma, d_c} \leqslant K
  (\sigma, \sigma') c^{1 - \sigma'}$, we check that
  \[ \begin{array}{lll}
       &  & \left\| \tilde{r}^{1 + \sigma} \mathfrak{R}\mathfrak{e} \left(
       \frac{\partial_d V}{V} \Psi_{c, d_c} \right) \right\|_{L^{\infty} (\{
       \tilde{r} \geqslant 2 \})} + \left\| \tilde{r}^{2 + \sigma} \nabla
       \mathfrak{R}\mathfrak{e} \left( \frac{\partial_d V}{V} \Psi_{c, d_c}
       \right) \right\|_{L^{\infty} (\{ \tilde{r} \geqslant 2 \})}\\
       & + & \left\| \tilde{r}^{\sigma} \mathfrak{I}\mathfrak{m} \left(
       \frac{\partial_d V}{V} \Psi_{c, d_c} \right) \right\|_{L^{\infty} (\{
       \tilde{r} \geqslant 2 \})} + \left\| \tilde{r}^{1 + \sigma} \nabla
       \mathfrak{I}\mathfrak{m} \left( \frac{\partial_d V}{V} \Psi_{c, d_c}
       \right) \right\|_{L^{\infty} (\{ \tilde{r} \geqslant 2 \})}\\
       & \leqslant & K (\sigma, \sigma') c^{1 - \sigma'} .
     \end{array} \]
  Finally, for the contribution of $\partial_{x_1} \Psi_{c, d_c}$, using $\|
  \Psi_{c, d_c} \|_{\ast, \sigma, d_c} \leqslant K (\sigma, \sigma') c^{1 -
  \sigma'}$, we show that, with some margin,
  \[ \begin{array}{lll}
       &  & \| \tilde{r}^{1 + \sigma} \mathfrak{R}\mathfrak{e}
       (\partial_{x_1} \Psi_{c, d_c}) \|_{L^{\infty} (\{ \tilde{r} \geqslant 2
       \})} + \| \tilde{r}^{2 + \sigma} \nabla \mathfrak{R}\mathfrak{e}
       (\partial_{x_1} \Psi_{c, d_c}) \|_{L^{\infty} (\{ \tilde{r} \geqslant 2
       \})}\\
       & + & \| \tilde{r}^{\sigma} \mathfrak{I}\mathfrak{m} (\partial_{x_1}
       \Psi_{c, d_c}) \|_{L^{\infty} (\{ \tilde{r} \geqslant 2 \})} + \|
       \tilde{r}^{1 + \sigma} \nabla \mathfrak{I}\mathfrak{m} (\partial_{x_1}
       \Psi_{c, d_c}) \|_{L^{\infty} (\{ \tilde{r} \geqslant 2 \})}\\
       & \leqslant & K (\sigma, \sigma') c^{1 - \sigma'},
     \end{array} \]
  which ends the proof of this corollary.
\end{proof}

\subsection{Recasting the implicit equation defining $d_c$}\label{32}

At this point, we do not know if $d_c$ is uniquely defined for $c > 0$. We
denote by $d_c$ a value defined by the implicit equation on $d$:
\[ \langle \tmop{TW}_c (Q_{c, d}), \partial_d V \rangle = 0, \]
where
\[ Q_{c, d} \assign V + \Gamma_{c, d}, \]
with $\Gamma_{c, d} = \eta V \Psi_{c, d} + (1 - \eta) V (e^{\Psi_{c, d}} -
1)$, which is a $C^1$ function of $d$ and $c$ in $\mathcal{E}_{\ast, \sigma,
d}$ thanks to subsection \ref{31}. Remark that $d_c$ is also defined by the
implicit equation for $0 < \varepsilon' < 1$:
\[ \int_{B (d \overrightarrow{e_1}, d^{\varepsilon'}) \cup B (- d
   \overrightarrow{e_1}, d^{\varepsilon'})} \mathfrak{R}\mathfrak{e}
   (\overline{\partial_d V} \tmop{TW}_c (Q_{c, d})) = 0, \]
that we will use instead because of the reasons explained at the begining of
section \ref{qcc1}. We can check easily that $\partial_d Q_{c, d}, \partial_c
Q_{c, d} \in C^{\infty} (\mathbbm{R}^2)$ (by looking at the equations they
satisfy in the distribution sense and using standard elliptic regularity
arguments), and furthermore, that $d \mapsto \partial_d Q_{c, d}$ and $c
\mapsto \partial_c Q_c$ are continuous functions (on their domain of
definition in $C^{\infty}_{\tmop{loc}} (\mathbbm{R}^2)$ for instance). From
now on, we take any $0 < \varepsilon' < 1$, but we will fix its value later
on. We want to differentiate this quantity with respect to $d$ and take the
result at a value $d_c$ such that $\tmop{TW}_c (Q_{c, d_c}) = 0$ in
$\mathbbm{R}^2$. In particular, we have
\[ \partial_d \int_{B (d \overrightarrow{e_1}, d^{\varepsilon'}) \cup B (- d
   \overrightarrow{e_1}, d^{\varepsilon'})} \mathfrak{R}\mathfrak{e}
   (\overline{\partial_d V} \tmop{TW}_c (Q_{c, d}))_{| d = d_c \nobracket} =
\]
\[ \int_{B (d \overrightarrow{e_1}, d^{\varepsilon'}) \cup B (- d
   \overrightarrow{e_1}, d^{\varepsilon'})} \mathfrak{R}\mathfrak{e}
   (\overline{\partial_d V} \partial_d (\tmop{TW}_c (Q_{c, d})))_{| \nobracket
   d = d_c} . \]
Now, by symmetry, we remark that
\[ \int_{B (d \overrightarrow{e_1}, d^{\varepsilon'}) \cup B (- d
   \overrightarrow{e_1}, d^{\varepsilon'})} \mathfrak{R}\mathfrak{e}
   (\overline{\partial_d V} \partial_d (\tmop{TW}_c (Q_{c, d}))) = 2 \int_{B
   (d \overrightarrow{e_1}, d^{\varepsilon'})} \mathfrak{R}\mathfrak{e}
   (\overline{\partial_d V} \partial_d (\tmop{TW}_c (Q_{c, d}))) . \]
We will use the two operators we have already defined:
\[ \partial_{y_1} = \partial_{x_1} - \partial_d \quad \tmop{and} \quad
   \partial_{z_1} = \partial_{x_1} + \partial_d . \]
Since $\tmop{TW}_c (Q_{c, d_c}) = 0$ everywhere in $\mathbbm{R}^2$, we
therefore have $\partial_{x_1} (\tmop{TW}_c (Q_{c, d_c})) = 0$, hence, at $d =
d_c$,
\[ \partial_d (\tmop{TW}_c (Q_{c, d})) = \partial_{z_1} (\tmop{TW}_c (Q_{c,
   d})) . \]
We write
\[ \tmop{TW}_c (Q_{c, d}) = \tmop{TW}_c (V) + L_{} (\Gamma_{c, d}) +
   \tmop{NL}_V (\Gamma_{c, d}), \]
with
\[ L_{} (\Gamma_{c, d}) = - \Delta \Gamma_{c, d} - i c \partial_{x_2}
   \Gamma_{c, d} - (1 - | V |^2) \Gamma_{c, d} + 2\mathfrak{R}\mathfrak{e}
   (\bar{V} \Gamma_{c, d}) V \]
and
\[ \tmop{NL}_V (\Gamma_{c, d}) \assign 2\mathfrak{R}\mathfrak{e} (\bar{V}
   \Gamma_{c, d}) \Gamma_{c, d} + | \Gamma_{c, d} |^2 (V + \Gamma_{c, d}) . \]
We compute
\[ \partial_{z_1} (\tmop{TW}_c (Q_{c, d})) = \partial_{z_1} (\tmop{TW}_c (V))
   + L (\partial_{z_1} \Gamma_{c, d}) + (\partial_{z_1} L) (\Gamma_{c, d}) +
   \partial_{z_1} (\tmop{NL}_V (\Gamma_{c, d})), \]
therefore, at $d = d_c$,
\[ \partial_d \int_{B (d \overrightarrow{e_1}, d^{\varepsilon'})}
   \mathfrak{R}\mathfrak{e} (\overline{\partial_d V} \tmop{TW}_c (Q_{c, d})) =
   \int_{B (d \overrightarrow{e_1}, d^{\varepsilon'})}
   \mathfrak{R}\mathfrak{e} (\overline{\partial_d V} \partial_{z_1}
   (\tmop{TW}_c (V))) \]
\[ + \int_{B (d \overrightarrow{e_1}, d^{\varepsilon'})}
   \mathfrak{R}\mathfrak{e} (\overline{\partial_d V} L (\partial_{z_1}
   \Gamma_{c, d})) + \int_{B (d \overrightarrow{e_1}, d^{\varepsilon'})}
   \mathfrak{R}\mathfrak{e} (\overline{\partial_d V} (\partial_{z_1} L)
   (\Gamma_{c, d})) \]
\begin{equation}
  + \int_{B (d \overrightarrow{e_1}, d^{\varepsilon'})}
  \mathfrak{R}\mathfrak{e} (\overline{\partial_d V} \partial_{z_1}
  (\tmop{NL}_V (\Gamma_{c, d}))) \label{pouet3}
\end{equation}
since the boundary term is $0$ (when the differentiation is on the $d$ in $B
(d \overrightarrow{e_1}, d^{\varepsilon'})$) because $\tmop{TW}_c (Q_{c, d_c})
= 0$. We need to estimate those four terms at $d = d_c$, and that is the goal
of the next subsections. Subsections \ref{33} and \ref{34} yield estimates on
the derivatives of $V_{- 1}$ and $\partial_{z_1} \Psi_{c, d}$ respectively in
$B'_d \assign B (d \overrightarrow{e_1}, d^{\varepsilon'})$. Subsection
\ref{35} is about the estimation of
\[ \int_{B (d \overrightarrow{e_1}, d^{\varepsilon'})}
   \mathfrak{R}\mathfrak{e} (\overline{\partial_d V} \partial_{z_1}
   (\tmop{TW}_c (V))) \]
which will be the leading order term, and subsection \ref{36} shows that all
the other terms are smaller for $d_c$ large enough.

\subsection{Estimates on the derivatives of $V_{- 1}$ in $B (d
\protect\overrightarrow{e_1}, d^{\varepsilon})$}\label{33}

\begin{lemma}
  \label{V-1est}For $0 < \varepsilon < 1$, in $B (d \overrightarrow{e_1},
  d^{\varepsilon})$, with the $O (.)$ being always real valued, we have
  \[ \partial_{x_1} V_{- 1} = \left( O_{d \rightarrow \infty} \left(
     \frac{1}{d^3} \right) + i O_{d \rightarrow \infty} \left( \frac{1}{d^{2 -
     \varepsilon}} \right) \right) V_{- 1}, \]
  \[ \partial_{x_2} V_{- 1} = \left( O_{d \rightarrow \infty} \left(
     \frac{1}{d^{4 - \varepsilon}} \right) + i O_{d \rightarrow \infty} \left(
     \frac{1}{d} \right) \right) V_{- 1}, \]
  \[ \partial_{x_1 x_1} V_{- 1} = \left( O_{d \rightarrow \infty} \left(
     \frac{1}{d^{4 - 2 \varepsilon}} \right) + i O_{d \rightarrow \infty}
     \left( \frac{1}{d^{3 - \varepsilon}} \right) \right) V_{- 1}, \]
  \[ \partial_{x_1 x_2} V_{- 1} = \left( O_{d \rightarrow \infty} \left(
     \frac{1}{d^{3 - \varepsilon}} \right) + \frac{i}{4 d^2}  \left( 1 + O_{d
     \rightarrow \infty} \left( \frac{1}{d^{1 - \varepsilon}} \right) \right)
     \right) V_{- 1} . \]
\end{lemma}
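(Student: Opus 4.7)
The plan is to apply the explicit polar-coordinate formulas of Lemma \ref{dervor} (transferred from $V_1$ to $V_{-1}$ by the rule $i\to-i$, $y\to z$, $r_1\to r_{-1}$, $\theta_1\to\theta_{-1}$) at points $x=d\overrightarrow{e_1}+y$ with $|y|\leqslant d^{\varepsilon}$. In this ball the relevant polar quantities with respect to the $V_{-1}$ vortex (centered at $-d\overrightarrow{e_1}$) satisfy $z=x+d\overrightarrow{e_1}=2d\overrightarrow{e_1}+y$, so
\[
r_{-1}=2d\bigl(1+O(d^{\varepsilon-1})\bigr),\qquad \cos(\theta_{-1})=1+O(d^{2\varepsilon-2}),\qquad \sin(\theta_{-1})=O(d^{\varepsilon-1}),
\]
all of these $O(\cdot)$ being real valued. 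These are just geometric expansions from $|2d\overrightarrow{e_1}+y|$ and the definition of $\theta_{-1}$.

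Then I would plug into the asymptotics of Lemma \ref{lemme3} applied at $r=r_{-1}\sim 2d$: with $u_{-1}:=\rho_1'(r_{-1})/\rho_1(r_{-1})$, since $\rho_1(r_{-1})=1+O(1/d^2)$ and $\rho_1'(r_{-1})=O(1/d^3)$, one obtains $u_{-1}=O(1/d^3)$, and similarly $u_{-1}'=o(1/d^3)$ using $\rho_1''(r_{-1})=o(1/d^3)$. For the first derivatives, the formulas of Lemma \ref{dervor} become, for $V_{-1}$,
\[
\partial_{x_1}V_{-1}=\Bigl(\cos(\theta_{-1})u_{-1}+\tfrac{i}{r_{-1}}\sin(\theta_{-1})\Bigr)V_{-1},\quad
\partial_{x_2}V_{-1}=\Bigl(\sin(\theta_{-1})u_{-1}-\tfrac{i}{r_{-1}}\cos(\theta_{-1})\Bigr)V_{-1}.
\]
Substituting the geometric expansions and the estimate on $u_{-1}$ yields the two first-derivative lines of the lemma, tracking real and imaginary parts separately: the real part of $\partial_{x_1}V_{-1}/V_{-1}$ is controlled by $u_{-1}$ alone (hence $O(1/d^3)$), and the imaginary part by $\sin(\theta_{-1})/r_{-1}=O(1/d^{2-\varepsilon})$; symmetrically for $\partial_{x_2}V_{-1}$, the real part comes from $\sin(\theta_{-1})u_{-1}=O(1/d^{4-\varepsilon})$ and the imaginary part from $-\cos(\theta_{-1})/r_{-1}=-1/(2d)(1+O(d^{\varepsilon-1}))$, giving the $O(1/d)$ estimate (with the explicit sign irrelevant for the statement).

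For the second derivatives I would use the full formulas of Lemma \ref{dervor}, again with $i\to-i$, and expand term by term. For $\partial_{x_1 x_1}V_{-1}$, the real coefficient is
\[
\cos^2(\theta_{-1})(u_{-1}^2+u_{-1}')+\sin^2(\theta_{-1})\bigl(u_{-1}/r_{-1}-1/r_{-1}^2\bigr),
\]
whose dominant contribution comes from the $\sin^2(\theta_{-1})/r_{-1}^2=O(1/d^{4-2\varepsilon})$ term, the other pieces being of strictly smaller order; the imaginary coefficient $-2\sin(\theta_{-1})\cos(\theta_{-1})(1/r_{-1}^2-u_{-1}/r_{-1})$ is $O(1/d^{3-\varepsilon})$. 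For $\partial_{x_1 x_2}V_{-1}$ the computation is analogous: the real coefficient combines $\sin(\theta_{-1})\cos(\theta_{-1})$ factors that give $O(1/d^{3-\varepsilon})$, while the imaginary coefficient $-\cos(2\theta_{-1})(1/r_{-1}^2-u_{-1}/r_{-1})$ has $\cos(2\theta_{-1})=1+O(d^{2\varepsilon-2})$ and $1/r_{-1}^2=1/(4d^2)(1+O(d^{\varepsilon-1}))$, so that it equals $-\frac{1}{4d^2}(1+O(d^{\varepsilon-1}))$, yielding exactly the stated expression up to sign (which I would just keep track of carefully).

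The only delicate bookkeeping step, and the one I expect to be the main place to slip, is separating the real and imaginary pieces cleanly, since several contributions have mixed orders and the claimed bounds for the real parts of the second derivatives rely on combining the smallness of $\sin(\theta_{-1})$ with the $1/r_{-1}^2$ decay, rather than on $u_{-1}^2+u_{-1}'$; everything else is a direct substitution of Lemmas \ref{lemme3} and \ref{dervor}.
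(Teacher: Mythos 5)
Your proposal follows exactly the paper's route: plug the polar formulas of Lemma \ref{dervor} (with $i\to-i$, $r_1\to r_{-1}$, $\theta_1\to\theta_{-1}$) into the geometric expansions $r_{-1}=2d(1+O(d^{\varepsilon-1}))$, $\sin(\theta_{-1})=O(d^{\varepsilon-1})$, $\cos(\theta_{-1})=1+O(d^{2\varepsilon-2})$, together with $u_{-1}=O(1/d^3)$, and sort real from imaginary contributions term by term. All of that matches the paper's proof.

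There is, however, one quantitative step that does not close as written. You take $u_{-1}'=o(1/d^3)$ from the bound $\rho_1''(r)=o(1/r^3)$ of Lemma \ref{lemme3}. But in the real coefficient of $\partial_{x_1x_1}V_{-1}/V_{-1}$, namely $\cos^2(\theta_{-1})(u_{-1}^2+u_{-1}')+\sin^2(\theta_{-1})(u_{-1}/r_{-1}-1/r_{-1}^2)$, the term $u_{-1}'$ enters with coefficient $\cos^2(\theta_{-1})\approx 1$, and the claimed bound for this real part is $O(1/d^{4-2\varepsilon})$. For $\varepsilon<1/2$ one has $4-2\varepsilon>3$, so $o(1/d^3)$ is \emph{not} of strictly smaller order than $O(1/d^{4-2\varepsilon})$, contrary to what you assert; the $u_{-1}'$ piece would then dominate your claimed bound. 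The paper avoids this by deriving the sharper estimate $u'=O(1/d^4)$ from the ODE satisfied by $\rho_{-1}$ (coming from $-\Delta V_{-1}-(1-|V_{-1}|^2)V_{-1}=0$), rather than from the crude $\rho''=o(1/r^3)$ asymptotic. With that replacement your argument is complete; note that the other occurrences of $u_{-1}'$ (in $\partial_{x_1x_2}V_{-1}$) carry an extra factor $\sin(\theta_{-1})\cos(\theta_{-1})$ and are harmless even with your weaker bound.
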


\begin{proof}
  Recall from Lemma \ref{dervor} that, with $u = \frac{\rho_{- 1}' (r_{-
  1})}{\rho_{- 1} (r_{- 1})}$,
  \[ \partial_{x_1} V_{- 1} = \left( \cos (\theta_{- 1}) u + \frac{i}{r_{- 1}}
     \sin (\theta_{- 1}) \right) V_{- 1}, \]
  \[ \partial_{x_2} V_{- 1} = \left( \sin (\theta_{- 1}) u - \frac{i}{r_{- 1}}
     \cos (\theta_{- 1}) \right) V_{- 1}, \]
  \[ \partial_{x_1 x_1} V_{- 1} = \left( \cos^2 (\theta_{- 1}) (u^2 + u') +
     \sin^2 (\theta_{- 1}) \left( \frac{u}{r_{- 1}} - \frac{1}{r_{- 1}^2}
     \right) - 2 i \sin (\theta_{- 1}) \cos (\theta_{- 1}) \left(
     \frac{1}{r_{- 1}^2} - \frac{u}{r_{- 1}} \right) \right) V_{- 1} \]
  and
  \[ \partial_{x_1 x_2} V_{- 1} = \left( \sin (\theta_{- 1}) \cos^{\nosymbol}
     (\theta_{- 1}) \left( u^2 + u' + \frac{1}{r_{- 1}^2} - \frac{u}{r_{- 1}}
     \right) + i \cos (2 \theta_{- 1}) \left( \frac{1}{r_{- 1}^2} -
     \frac{u}{r_{- 1}} \right) \right) V_{- 1} . \]
  In the ball $B (d \overrightarrow{e_1}, d^{\varepsilon})$, we have, by Lemma
  \ref{lemme3}, that $\frac{1}{r_{- 1}} \leqslant \frac{K}{d}$,
  \[ u = O_{d \rightarrow \infty} \left( \frac{1}{d^3} \right) \quad
     \tmop{and} \quad \sin (\theta_{- 1}) = O_{d \rightarrow \infty} \left(
     \frac{1}{d^{1 - \varepsilon}} \right), \]
  the last one is because for $(y_1, y_2) \in B (d \overrightarrow{e_1},
  d^{\varepsilon})$, we have $| y_2 | \leqslant d^{\varepsilon}$ hence
  \[ | \sin (\theta_{- 1}) | = \frac{| y_2 |}{r_{- 1}} \leqslant \frac{K}{d^{1
     - \varepsilon}} . \]
  We also compute in the same way that
  \[ \cos (\theta_{- 1}) = \sqrt{1 - \sin^2 (\theta_{- 1})} = 1 + O_{d
     \rightarrow \infty} \left( \frac{1}{d^{2 - 2 \varepsilon}} \right) . \]
  With the equation on $\rho_{- 1}$ coming fom $- \Delta V_{- 1} - (1 - | V_{-
  1} |^2) V_{- 1} = 0$, we check easily that
  \[ u' = O_{d \rightarrow \infty} \left( \frac{1}{d^4} \right) \]
  as well (or see {\cite{HH}}). Finally, we estimate
  \[ \cos (2 \theta_{- 1}) = 1 - 2 \sin^2 (\theta_{- 1}) = 1 + O_{d
     \rightarrow \infty} \left( \frac{1}{d^{2 - 2 \varepsilon}} \right) \]
  and
  \[ \frac{1}{r_{- 1}^2} = (2 d + O_{d \rightarrow \infty}
     (d^{\varepsilon}))^{- 2} = \frac{1}{4 d^2} + O_{d \rightarrow \infty}
     \left( \frac{1}{d^{3 - \varepsilon}} \right) . \]
  With this estimations, we end the proof of the lemma.
\end{proof}

\subsection{Estimate on $\partial_{z_1} \Psi_{c, d}$ in $B (d
\protect\overrightarrow{e_1}, d^{\varepsilon'})$}\label{34}

We define the following norms for $\Psi = \Psi_1 + i \Psi_2$ and $h = h_1 + i
h_2$, $0 < \alpha < 1, 0 < \varepsilon' < \varepsilon < 1$:
\begin{eqnarray*}
  \| \Psi \|_{\ast, B'_d} & \assign & \| V \Psi \|_{C^1 (\{ r_1 \leqslant 2
  \})}\\
  & + & \| r_1^{1 - \alpha} \Psi_1 \|_{L^{\infty} (\{ d^{\varepsilon'}
  \geqslant r_1 \geqslant 2 \})} + \| r_1^{1 - \alpha} \nabla \Psi_1
  \|_{L^{\infty} (\{ d^{\varepsilon'} \geqslant r_1 \geqslant 2 \})}\\
  & + & \| r_1^{- \alpha} \Psi_2 \|_{L^{\infty} (\{ d^{\varepsilon'}
  \geqslant r_1 \geqslant 2 \})} + \| r_1^{1 - \alpha} \nabla \Psi_2
  \|_{L^{\infty} (\{ d^{\varepsilon'} \geqslant r_1 \geqslant 2 \})}
\end{eqnarray*}
and
\begin{eqnarray*}
  \| h \|_{\ast \ast, B_d} & \assign & \| V h \|_{C^0 (\{ r_1 \leqslant 3
  \})}\\
  & + & \| r_1^{1 - \alpha} h_1 \|_{L^{\infty} (\{ d^{\varepsilon} \geqslant
  r_1 \geqslant 2 \})} + \| r_1^{2 - \alpha} h_2 \|_{L^{\infty} (\{
  d^{\varepsilon} \geqslant r_1 \geqslant 2 \})} .
\end{eqnarray*}
They are the norms $\| . \|_{\ast, - \alpha, d}$ and $\| . \|_{\ast \ast, -
\alpha, d}$ of subsection \ref{normsetup}, but without the second derivatives,
less decay on the gradient of the real part for $\| . \|_{\ast, B'_d}$, and
only on $B'_d = B (d \overrightarrow{e_1}, d^{\varepsilon'})$ for $\| .
\|_{\ast, B'_d}$ and \ on $B_d \assign B (d \overrightarrow{e_1},
d^{\varepsilon})$ for $\| . \|_{\ast \ast, B_d}$. The other main difference
with the previous norms is that we require less decay (we take $- \alpha < 0$
instead of $\sigma > 0$ in the decay) in space, which here, since the norms
are only in $\{ r_1 \leqslant d^{\varepsilon} \}$, can be compensated by some
smallness in $c$.

\

From Corollary \ref{cor361405}, we have that $\| \partial_{z_1} \Psi_{c, d_c}
\|_{\ast, B'_{d_c}} < + \infty$. We want to show the following proposition.

\begin{proposition}
  \label{derg}For $0 < \alpha < 1, 0 < \varepsilon' < \varepsilon < 1, 0 <
  \lambda < 1$, if
  \[ \lambda < (1 + \alpha) \varepsilon', \]
  \[ \lambda + (1 - \alpha) \varepsilon' < 2 \varepsilon - \varepsilon' \]
  and
  \[ \lambda < 2 - \varepsilon (2 - \alpha), \]
  we have
  \[ \| \partial_{z_1} \Psi_{c, d | d = d_c \nobracket} \|_{\ast, B'_{d_c}} =
     o_{c \rightarrow 0} (c^{1 + \lambda_{}}) . \]
\end{proposition}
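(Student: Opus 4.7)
The plan is to argue by contradiction, exploiting the identity $\partial_{z_1} V_1(y) = 0$ from Lemma~\ref{deryz}, so that the operator $\partial_{z_1} = \partial_{x_1} + \partial_d$ annihilates everything built only from the positive vortex and only \emph{sees} the slowly-decaying influence of $V_{-1}$ inside the ball $B'_d = B(d\vec{e_1}, d^{\varepsilon'})$, where Lemma~\ref{V-1est} provides strong algebraic smallness in $1/d$. Concretely, I would first differentiate the defining identity
\[
\eta L(\Phi_{c,d}) + (1-\eta) V L'(\Psi_{c,d}) + F(\Psi_{c,d}) = \lambda(c,d) Z_d
\]
with respect to $z_1$ at $d = d_c$, obtaining a linear equation on $\Phi' := V \partial_{z_1}\Psi_{c,d_c}$ of the form
\[
\eta L(\Phi') + (1-\eta) V L'(\partial_{z_1}\Psi_{c,d_c}) = \mathcal{R}_{c,d_c} + \partial_d\lambda(c,d_c) Z_{d_c},
\]
where $\mathcal{R}_{c,d_c}$ gathers all commutators of $\partial_{z_1}$ with the coefficients of $L, L', F$ and with the cutoffs. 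Each such commutator necessarily contains at least one factor $\partial_{x_1} V_{-1}(z)$ or $\partial_{x_1}\eta$ localized near the far vortex, so Lemmas~\ref{V-1est} and~\ref{CP1L328} combined give $\|\mathcal{R}_{c,d_c}/V\|_{**,B_d} = O(c^{2-\sigma'})$, which is $o(c^{1+\lambda})$ once $\sigma'$ is chosen so that $\lambda + \sigma' < 1$. The term $\partial_d\lambda(c,d_c)Z_{d_c}$ is handled by differentiating the expansion of Proposition~\ref{dinc} and is localized near the vortex centers, contributing at the same controlled order.

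Then, suppose for contradiction that there exist sequences $c_n \to 0$ with $d_n = d_{c_n}$ and $\|\partial_{z_1}\Psi_{c_n,d_n}\|_{*,B'_{d_n}}/c_n^{1+\lambda} \to \infty$, and normalize $\psi_n := \partial_{z_1}\Psi_{c_n,d_n}/\|\partial_{z_1}\Psi_{c_n,d_n}\|_{*,B'_{d_n}}$, so that $\|\psi_n\|_{*,B'_{d_n}} = 1$. After translating by $d_n \vec{e_1}$, the rescaled right-hand side of the equation for $V\psi_n$ tends to zero locally uniformly, so the elliptic compactness argument of Step~1 in the proof of Proposition~\ref{invertop} yields, along a subsequence, a limit $\psi_\infty$ satisfying $L_{V_1}(V_1\psi_\infty) = 0$ with finite $\|\cdot\|_{H_{V_1}}$-norm (verified exactly as in that proof, using $\|\psi_n\|_{*,B'_{d_n}} = 1$ and the explicit decay weights). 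Theorem~\ref{delpino} gives $V_1\psi_\infty = c_1 \partial_{x_1}V_1 + c_2 \partial_{x_2}V_1$. The $x_2$-antisymmetry coming from Lemma~\ref{l381805} rules out $c_2 \neq 0$, and the $\partial_{x_1}V_1$ direction is eliminated by differentiating the orthogonality $\langle \Phi_{c,d}, Z_d\rangle = 0$ in $z_1$ and passing to the limit: the resulting constraint, combined with the localization of $Z_{d}$ near the vortex centers, forces $c_1 = 0$ and hence $\psi_\infty = 0$ locally uniformly.

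Finally, the outer step on the annulus $\{2 \leq r_1 \leq d^{\varepsilon'}\}$. Using a cutoff $\chi$ that vanishes in a fixed neighborhood of $d\vec{e_1}$ and in $\{r_1 \geq d^{\varepsilon'}\}$, the function $\chi\psi_n$ solves an equation whose right-hand side (in the $\|\cdot\|_{**,B_d}$ norm) has two contributions: an interior one of size $o(c^{1+\lambda})$ from $\mathcal{R}_{c,d_c}$, and a boundary one at $r_1 \sim d^{\varepsilon'}$ which is estimated by the rough bound $\|\partial_{z_1}\Psi_{c,d_c}\|_{*,B'_{d_c}} \leq Kc^{1-\sigma'}$ of Corollary~\ref{cor361405}. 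Applying the elliptic estimates of Lemmas~\ref{see} and~\ref{P3starest} on this annulus with the weights $r_1^{1-\alpha}$, $r_1^{-\alpha}$ of $\|\cdot\|_{*,B'_d}$, the three numerical conditions $\lambda < (1+\alpha)\varepsilon'$, $\lambda + (1-\alpha)\varepsilon' < 2\varepsilon - \varepsilon'$, and $\lambda < 2 - \varepsilon(2-\alpha)$ are exactly what is needed for each boundary loss and each geometric power of $d$ arising in the estimates to remain $o(c^{1+\lambda})$; combined with the inner convergence to zero of the previous step, this contradicts $\|\psi_n\|_{*,B'_{d_n}} = 1$.

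The main obstacle will be the second step: verifying that the differentiated orthogonality, after the translation $x\mapsto x+d_n\vec{e_1}$, provides a nondegenerate condition that truly rules out the $\partial_{x_1}V_1$ direction in the limit, and making precise the splitting between the interior error $\mathcal{R}_{c,d_c}$, the Lagrange-multiplier term $\partial_d\lambda(c,d_c)Z_{d_c}$ (which one must estimate by differentiating Proposition~\ref{dinc}), and the boundary losses at $r_1 \sim d^{\varepsilon'}$. The bookkeeping of the three conditions on $(\alpha,\varepsilon',\varepsilon,\lambda)$ comes out naturally once the boundary and weight powers are tabulated, but ensuring they can all be satisfied simultaneously (for some positive $\lambda$) is what dictates the specific form of the hypotheses in the statement.
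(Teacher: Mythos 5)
Your architecture is the same as the paper's: differentiate the fixed-point equation in $z_1$, exploit $\partial_{z_1}V_1=0$ so that only the far vortex is "seen", rerun the contradiction/compactness scheme of Proposition~\ref{invertop} in the localized norms $\|\cdot\|_{\ast,B'_d}$, $\|\cdot\|_{\ast\ast,B_d}$, kill the kernel via Theorem~\ref{delpino}, the $x_2$-symmetry and the differentiated orthogonality, and finish with weighted elliptic estimates on the annulus. Two points, however, need repair. First, a logical slip: the negation of $\|\partial_{z_1}\Psi_{c,d_c}\|_{\ast,B'_{d_c}}=o(c^{1+\lambda})$ is that $d_n^{1+\lambda}\|\partial_{z_1}\Psi_n\|_{\ast,B'_{d_n}}\geqslant\delta$ along a subsequence, not that the ratio tends to infinity; contradicting the latter proves nothing. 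The fix is immediate, but as stated the contradiction hypothesis is wrong.

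The substantive gap is your claim that $\|\mathcal{R}_{c,d_c}/V\|_{\ast\ast,B_d}=O(c^{2-\sigma'})$ "because each commutator contains a factor $\partial_{x_1}V_{-1}$", with the three numerical hypotheses reserved for the outer elliptic step. This mislocates where the hypothesis $\lambda<2-\varepsilon(2-\alpha)$ is consumed and, as justified, the estimate does not hold. The norm $\|\cdot\|_{\ast\ast,B_d}$ carries the weight $r_1^{2-\alpha}$ on the imaginary part over the whole ball $\{r_1\leqslant d^{\varepsilon}\}$, and the worst term in $\partial_{z_1}(\mathrm{TW}_c(V))$, namely $\mathfrak{I}\mathfrak{m}\bigl(4\nabla V_1\cdot\nabla\partial_{x_1}V_{-1}/V\bigr)$, contains a contribution of size $K/((1+r_1)\,d^{3-\varepsilon})$ (by Lemmas~\ref{dervor} and~\ref{V-1est}); against the weight this gives $r_1^{1-\alpha}d^{-(3-\varepsilon)}\leqslant d^{\varepsilon(2-\alpha)-3}$, which is $o(c^{1+\lambda})$ precisely when $\lambda<2-\varepsilon(2-\alpha)$ — and is \emph{not} $O(c^{2-\sigma'})$ with $\sigma'<1-\lambda$ unless that same inequality holds. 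So the right-hand side estimate is itself one of the places where the hypotheses are spent (the paper's Steps 3.1--3.2), and your bookkeeping must track the amplification by $d^{\varepsilon(2-\alpha)}$ coming from the weight over the large ball. Relatedly, the bound $\|h_n\|_{\ast\ast,B_{d_n}}=o(c^{1+\lambda})$ alone does not suffice for the outer step: the convolution kernels $K_0$, $K_j$ and the logarithmic kernel integrate over all of $\mathbbm{R}^2$, so you also need the cruder global and half-plane estimates on $h_n$ outside $B_{d_n}$ (the paper's Step~4), and it is the gap $\varepsilon'<\varepsilon$ together with $\lambda+(1-\alpha)\varepsilon'<2\varepsilon-\varepsilon'$ that absorbs those far-field contributions — not a boundary term at $r_1\sim d^{\varepsilon'}$.
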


Such a choice of parameters $(\lambda, \alpha, \varepsilon, \varepsilon')$
exists, we can take for instance $\alpha = 1 / 2, \lambda = 3 / 4, \varepsilon
= 19 / 24$ and $\varepsilon' = 13 / 24$. Furthermore, with this particular
choice of parameters, we also have
\begin{equation}
  \lambda + (1 - \alpha) \varepsilon' > 1, \label{3142305}
\end{equation}
which will be usefull later on. These conditions are bounds on how much
additional smallness we can have on $\partial_{z_1} \Psi_{c, d}$ near $d_c
\overrightarrow{e_1}$.

The main goal of this proposition is to have a decay in $c$ better than $O_{c
\rightarrow 0} (c)$, which is not obvious from the estimates we have done
until now. The estimate on $\partial_{z_1} \Psi_{c, d | d = d_c \nobracket}$
from Corollary \ref{cor361405} will not be enough in the computation of
$\partial_c d_c$ for the nonlinear terms. The proof of Proposition \ref{derg}
follows closely the proof of the inversibility of the linearized operator in
Proposition \ref{invertop}. We want to invert the same linearized operator,
but with a different norm, which is better locally around the vortex $V_1$.

The reason why we take $B_d$ a little bigger than $B'_d$ is to make the
elliptic estimates of step 2 in Proposition \ref{invertop} work here too. The
main idea of this proposition is to show that if we move $V_{- 1}$ a little,
then locally around $V_1$ the change is very small. We now start the proof of
Proposition \ref{derg}.

\

\begin{proof}
  First, we remark that in $B_d$, since $\varepsilon < 1$, $\tilde{r} = r_1$.

\  
  
  \begin{tmindent}
    Step 1.  Computation of the equation on $\partial_{z_1} \Psi_{c, d}$.
  \end{tmindent}
  
\ 
  
  Recall that $\Phi_{c, d}$ solves the equation (with $\Phi_{c, d} = V
  \Psi_{c, d}$)
  \[ \eta L (\Phi_{c, d}) + (1 - \eta) V L' (\Psi_{c, d}) + F (\Psi_{c, d}) =
     \lambda (c, d) Z_d, \]
  and we recall that $\lambda (c, d) = \frac{\langle F (\Psi_{c, d}), Z_d
  \rangle}{\| Z_d \|_{L^2 (\mathbbm{R}^2)}^2}$, and we check easily, with
  Lemma \ref{dderpsy}, that it is a $C^1$ function of $d$. The equation on
  $\Phi_{c, d}$ holds for any $x \in \mathbbm{R}^2$ and any $d \in
  \mathbbm{R}, \frac{1}{2 d} < c < \frac{2}{d}$, hence
  \[ \partial_{z_1} (\eta L (\Phi_{c, d}) + (1 - \eta) V L' (\Psi_{c, d}) +
     \Pi_d^{\bot} (F (\Psi_{c, d})) - \lambda (c, d) Z_d) = 0. \]
  We compute
  \begin{eqnarray*}
    \partial_{z_1} (\lambda (c, d) Z_d) & = & (\partial_{x_1} + \partial_d)
    (\lambda (c, d) Z_d)\\
    & = & \partial_d \lambda (c, d) Z_d + \lambda (c, d_{}) \partial_{z_1}
    Z_d,
  \end{eqnarray*}
  and we recall, from the proof of Proposition \ref{dinc} that
  \[ \lambda (c, d) \int_{\mathbbm{R}^2} | \partial_d V |^2 \eta^2 = \pi
     \left( \frac{1}{d} - c \right) + O_{c \rightarrow 0}^{\sigma} (c^{2 -
     \sigma}) . \]
  With Lemma \ref{dderpsy} and Corollary \ref{cor361405}, as well as Lemma
  \ref{ddVest}, we infer that the terms contributing to the $O_{c \rightarrow
  0}^{\sigma} (c^{2 - \sigma})$ are such that, when differentiated with
  respect to $d$, their contributions are still a $O_{c \rightarrow
  0}^{\sigma} (c^{2 - \sigma})$. Indeed, if the derivative with respect to $d$
  fall on a $\Psi_{c, d}$, then by Lemma \ref{dderpsy} and Corollary
  \ref{cor361405}, the same estimates used in the proof of Proposition
  \ref{dinc} still hold. If the derivative fall on a term depending on $V$, by
  Lemma \ref{ddVest}, we gain some decay in the integrals. We deduce that,
  since $\lambda (c, d_c) = 0$,
  \[ \partial_d \lambda (c, d)_{| d = d_c \nobracket} = \frac{- \pi}{d_c^2} +
     O_{c \rightarrow 0}^{\sigma} (c^{2 - \sigma}) = O_{c \rightarrow
     0}^{\sigma} (c^{2 - \sigma}) . \]
  Here, we see why the fact that $d$ is differentiable with respect to $c$ is
  not obvious. The main contribution is at this point not enough to beat the
  error terms. Therefore, showing that $\partial_d \lambda (c, d) \neq 0$ is
  not simple here. This is why we need improved estimations on $\partial_{z_1}
  \Psi_{c, d_c}$, that will give us the fact that the error terms are a
  $O^{\varepsilon}_{c \rightarrow 0} (c^{2 + \varepsilon})$ for some
  $\varepsilon > 0$.
  
  \
  
  Now, writing
  \[ \tmop{TW}_c (Q_{c, d}) = \eta L (\Phi_{c, d}) + (1 - \eta) V L' (\Psi_{c,
     d}) + F (\Psi_{c, d}), \]
  (with the notations of Lemma \ref{lemma7}), we have (since $\lambda (c, d_c)
  = 0$)
  \[ (\partial_{z_1} (\tmop{TW}_c (Q_{c, d}))_{} - \partial_d \lambda (c, d)
     Z_d)_{| d = d_c \nobracket} = 0. \]
  We recall that
  \[ F (\Psi_{c, d}) = E - i c \partial_{x_2} V + V (1 - \eta) (- \nabla
     \Psi_{c, d} . \nabla \Psi_{c, d} + | V |^2 S (\Psi_{c, d})) + R (\Psi_{c,
     d}), \]
  where $R (\Psi_{c, d})$ is a sum of terms at least quadratic in $\Psi_{c,
  d}$ or $\Phi_{c, d}$ localized in the area where $\eta \neq 0$.
  
  We compute
  \begin{eqnarray*}
    \partial_{z_1} (\tmop{TW}_c (Q_{c, d})) & = & \eta L (V \partial_{z_1}
    \Psi_{c, d}) + (1 - \eta) V L' (\partial_{z_1} \Psi_{c, d})\\
    & + & \eta \partial_{z_1} L (\Phi_{c, d}) + (1 - \eta) V \partial_{z_1}
    L' (\Psi_{c, d}) + \partial_{z_1} (E - i c \partial_{x_2} V)\\
    & + & \eta L (\partial_{z_1} V \Psi_{c, d}) + (1 - \eta) \partial_{z_1} V
    L' (\Psi_{c, d})\\
    & + & \partial_{z_1} \eta (L (\Phi_{c, d}) - V L' (\Psi_{c, d}) - i c
    \partial_{x_2} \Phi_{c, d})\\
    & - & \partial_{z_1} \eta V (- i c \partial_{x_2} \Psi_{c, d} - \nabla
    \Psi_{c, d} . \nabla \Psi_{c, d} + | V |^2 S (\Psi_{c, d}))\\
    & + & \partial_{z_1} (R (\Psi_{c, d}))\\
    & + & \partial_{z_1} V (1 - \eta) (- i c \partial_{x_2} \Psi_{c, d} -
    \nabla \Psi_{c, d} . \nabla \Psi_{c, d} + | V |^2 S (\Psi_{c, d}))\\
    & + & V (1 - \eta) \partial_{z_1} (- i c \partial_{x_2} \Psi_{c, d} -
    \nabla \Psi_{c, d} . \nabla \Psi_{c, d} + | V |^2 S (\Psi_{c, d})) .
  \end{eqnarray*}
  We regroup the terms in the following way. We define
  \[ \mathcal{L} (\partial_{z_1} \Psi_{c, d}) \assign \eta L (V \partial_{z_1}
     \Psi_{c, d}) + (1 - \eta) V L' (\partial_{z_1} \Psi_{c, d}), \]
  which is the same linearized operator we have inverted in Proposition
  \ref{invertop} (taken in $\partial_{z_1} \Psi_{c, d}$), and we define the
  operator
  \[ \mathcal{L}_{\partial_{z_1}} (\Psi_{c, d}) \assign \eta \partial_{z_1} L
     (\Phi_{c, d}) + (1 - \eta) V \partial_{z_1} L' (\Psi_{c, d}) + \eta L
     (\partial_{z_1} V \Psi_{c, d}) + (1 - \eta) \partial_{z_1} V L' (\Psi_{c,
     d}) . \]
  We already have shown that $\tmop{TW}_c (V) = E - i c \partial_{x_2} V$,
  therefore
  \[ \partial_{z_1} (\tmop{TW}_c (V)) = \partial_{z_1} (E - i c \partial_{x_2}
     V) . \]
  We define the local error
  \[ \tmop{Err}_{\tmop{loc}} \assign \partial_{z_1} (R (\Psi_{c, d})) -
     \partial_d \lambda (c, d) Z_d, \]
  the far away error
  \[ \tmop{Err}_{\tmop{far}} \assign \partial_{z_1} V (1 - \eta) (- \nabla
     \Psi_{c, d} . \nabla \Psi_{c, d} + | V |^2 S (\Psi)) \]
  and the nonlinear terms
  \[ \tmop{NL}_{\partial_{z_1}} (\Psi_{c, d}) \assign V (1 - \eta)
     \partial_{z_1} (- \nabla \Psi_{c, d} . \nabla \Psi_{c, d} + | V |^2 S
     (\Psi_{c, d})) . \]
  Finally, we write the cutoff error
  \[ \tmop{Err}_{\tmop{cut}} \assign \partial_{z_1} \eta (L (\Phi_{c, d}) - V
     L' (\Psi_{c, d}) + i c \partial_{x_2} \Psi_{c, d} + \nabla \Psi_{c, d} .
     \nabla \Psi_{c, d} - | V |^2 S (\Psi_{c, d})) \]
  which is supported in the area $\{ 2 \leqslant r_{- 1} \leqslant 3 \}$, and
  in particular is zero in $B (d_c \overrightarrow{e_1}, d_c^{\varepsilon})$.
  With these definitions, we have, at $d = d_c$,
  \begin{eqnarray*}
    &  & (\partial_{z_1} (\eta L (\Phi_{c, d}) + (1 - \eta) V L' (\Psi_{c,
    d}) + F (\Psi_{c, d})) - \partial_d \lambda (c, d) Z_d)_{| d = d_c
    \nobracket}\\
    & = & \mathcal{L} (\partial_{z_1} \Psi_{c, d})_{| d = d_c \nobracket}\\
    & + & \left( \partial_{z_1} (\tmop{TW}_c (V))
    +\mathcal{L}_{\partial_{z_1}} (\Psi_{c, d}) + \tmop{NL}_{\partial_{z_1}}
    (\Psi_{c, d}) \right)_{| d = d_c \nobracket}\\
    & + & (\tmop{Err}_{\tmop{loc}} + \tmop{Err}_{\tmop{far}} +
    \tmop{Err}_{\tmop{cut}})_{| d = d_c \nobracket} .
  \end{eqnarray*}
  The equation satisfied by $\partial_{z_1} \Psi_{c, d}$ at $d = d_c$ is
  therefore
  \[ \left( \mathcal{L} (\partial_{z_1} \Psi_{c, d}) + \partial_{z_1}
     (\tmop{TW}_c (V)) +\mathcal{L}_{\partial z_1} (\Psi_{c, d}) +
     \tmop{NL}_{\partial_{z_1}} (\Psi_{c, d}) + \tmop{Err}_{\tmop{loc}} +
     \tmop{Err}_{\tmop{far}} + \tmop{Err}_{\tmop{cut}} \right)_{| d = d_c
     \nobracket} = 0. \]

\
  
  \begin{tmindent}
    Step 2.  Beginning of the contradiction argument.
  \end{tmindent}
  
\ 
  
  Now, suppose that the result of Proposition \ref{derg} is false. The scheme
  of this proof is the same as in Proposition \ref{invertop}. Then, there
  exist an absolute constant $\delta > 0$ and sequences $\partial_{z_1}
  \Psi_n$, $c_n \rightarrow 0$, $d_n \rightarrow \infty$ such that
  \[ d_n^{1 + \lambda_{}} \| \partial_{z_1} \Psi_{n | d = d_n \nobracket}
     \|_{\ast, B'_{d_n}} \geqslant \delta, \]
  where we write $d_n = d_{c_n}$ (a value such that $\lambda (c_n, d_n) = 0$
  in Proposition \ref{dinc}). We have just shown that $\Psi_n$ (where we omit
  the subscripts in $d_n, c_n$) satisfies
  \begin{eqnarray*}
    \mathcal{L} (\partial_{z_1} \Psi_n) + \partial_{z_1} (\tmop{TW}_{c_n} (V))
    +\mathcal{L}_{\partial z_1} (\Psi_n) + \tmop{NL}_{\partial_{z_1}} (\Psi_n)
    + \tmop{Err}_{\tmop{loc}} + \tmop{Err}_{\tmop{far}} +
    \tmop{Err}_{\tmop{cut}} & = & 0.
  \end{eqnarray*}
  The function
  \[ \frac{(V \partial_{z_1} \Psi_n) (. - d_n \overrightarrow{e_1})}{\|
     \partial_{z_1} \Psi_n \|_{\ast, B'_{d_n}}} \]
  converges locally uniformly up to a subsequence to a limit $\mathfrak{G}$,
  since it is bounded in $\| . \|_{\ast, B'_{\lambda}}$ for any $\lambda > 0$
  (for the same reasons that $\Psi_n \rightarrow \Psi$ locally uniformly in
  the beginning of the proof of Proposition \ref{invertop}).
  
  \
  
  The equation on $\partial_{z_1} \Psi_n$ is
  \begin{equation}
    \mathcal{L} (\partial_{z_1} \Psi_n) + V h_n = 0, \label{crapaud}
  \end{equation}
  with
  \begin{eqnarray*}
    V h_n & \assign & \partial_{z_1} (\tmop{TW}_{c_n} (V))
    +\mathcal{L}_{\partial z_1} (\Psi_n) + \tmop{NL}_{\partial_{z_1}} (\Psi_n)
    + \tmop{Err}_{\tmop{loc}} + \tmop{Err}_{\tmop{far}} +
    \tmop{Err}_{\tmop{cut}} .
  \end{eqnarray*}
  The goal of Proposition \ref{invertop} was to estimate $\| \Psi \|_{\ast,
  \sigma, d}$ with $\| h \|_{\ast \ast, \sigma', d}$ for the equation
  $\mathcal{L} (\Psi) = h$ if $d$ is large enough (given an orthogonality
  condition on $\Psi$). Here we do the same thing, but localized in space, and
  with a very particular $h_n$ that we will estimate. To continue as in the
  proof of Proposition \ref{invertop}, we want to show that
  \[ \frac{h_n (. - d_n \overrightarrow{e_1})}{\| \partial_{z_1} \Psi_n
     \|_{\ast, B'_{d_n}}} \rightarrow 0 \]
  in $\mathcal{C}^0_{\tmop{loc}}$ so that we get at the limit (following the
  $+ 1$ vortex) in (\ref{crapaud})
  \[ L_{V_1} (\mathfrak{G}) = 0, \]
  using the sames techniques as in the proof of Proposition \ref{invertop}. It
  will be enough for that to show that
  \begin{equation}
    \left\| \frac{h_n}{\| \partial_{z_1} \Psi_n \|_{\ast, B'_{d_n}}}
    \right\|_{\ast \ast, B_{d_n}} \rightarrow 0 \label{e31}
  \end{equation}
  and we will also use this estimate later on. Remark that here, the problem
  is no longer symmetric in $x_1$, in particular, we cannot use the same
  argument near the $- 1$ vortex, but it is not needed.
  
\
  
  \begin{tmindent}
    Step 3.  Proof of (\ref{e31}).
  \end{tmindent}
  
\ 
  
  Recall the definition of $\| . \|_{\ast \ast, B_{d_n}}$:
  \[ \begin{array}{lll}
       \| h \|_{\ast \ast, B_{d_n}} & = & \| V h \|_{C^0 (\{ r_1 \leqslant 3
       \})}\\
       & + & \| r_1^{1 - \alpha} h_1 \|_{L^{\infty} (\{ d_n^{\varepsilon}
       \geqslant r_1 \geqslant 2 \})} + \| r_1^{2 - \alpha} h_2 \|_{L^{\infty}
       (\{ d_n^{\varepsilon} \geqslant r_1 \geqslant 2 \})} .
     \end{array} \]

  Since
  \[ d_n^{1 + \lambda_{}} \| \partial_{z_1} \Psi_{n | d = d_n \nobracket}
     \|_{\ast, B'_{d_n}} \geqslant \delta, \]
  we have
  \[ \frac{1}{\| \partial_{z_1} \Psi_n \|_{\ast, B'_{d_n}}} \leqslant
     \frac{1}{\delta c_n^{1 + \lambda_{}}}, \]
  therefore it is enough to show that
  \begin{equation}
    \| h_n \|_{\ast \ast, B_{d_n}} = o_{c_n \rightarrow 0} (c_n^{1 + \lambda})
    \label{3133001}
  \end{equation}
  to have (\ref{e31}). We recall that
  \begin{eqnarray*}
    V h_n & = & \partial_{z_1} (\tmop{TW}_{c_n} (V)) +\mathcal{L}_{\partial
    z_1} (\Psi_n) + \tmop{NL}_{\partial_{z_1}} (\Psi_n) +
    \tmop{Err}_{\tmop{loc}} + \tmop{Err}_{\tmop{far}} +
    \tmop{Err}_{\tmop{cut}} .
  \end{eqnarray*}
  The contribution of $\partial_{z_1} (\tmop{TW}_{c_n} (V))$ will be
  established in step 3.1, $\mathcal{L}_{\partial_{z_1}} (\Psi_n)$ in step
  3.2, $\tmop{NL}_{\partial_{z_1}} (\Psi_n)$ in step 3.3, and finally,
  $\tmop{Err}_{\tmop{loc}} + \tmop{Err}_{\tmop{far}} +
  \tmop{Err}_{\tmop{cut}}$ in step 3.4.
  
\
  
  \begin{tmindent}
    Step 3.1.  Proof of $\left\| \frac{\partial_{z_1} \tmop{TW}_{c_n} (V)}{V}
    \right\|_{\ast \ast, B_{d_n}} = o_{c_n \rightarrow 0} (c_n^{1 +
    \lambda})$.
  \end{tmindent}
 
 \ 
  
  Recall from (\ref{E2}) that
  \[ \tmop{TW}_c (V) = E - i c \partial_{x_2} V = - 2 \nabla V_1 . \nabla V_{-
     1} + (1 - | V_1 |^2) (1 - | V_{- 1} |^2) V_1 V_{- 1} - i c \partial_{x_2}
     V, \]
  therefore, with Lemma \ref{deryz}, we have
  \[ \partial_{z_1} (\tmop{TW}_c (V)) = - 4 \nabla V_1 . \nabla \partial_{x_1}
     V_{- 1} + 2 (1 - | V_1 |^2) V_1 \partial_{x_1} ((1 - | V_{- 1} |^2) V_{-
     1}) - 2 i c \partial_{x_2} (V_1 \partial_{x_1} V_{- 1}) . \]
  We now estimate this quantity at $d = d_n$. We have, in $\{ r_1 \leqslant
  d_n^{\varepsilon} \}$,
  \[ | (1 - | V_1 |^2) V_1 \partial_{x_1} ((1 - | V_{- 1} |^2) V_{- 1}) |
     \leqslant \frac{K}{1 + r_1^2} \times \frac{1}{d_n^3}, \]
  and using $\lambda < 1$, $\alpha > 0$, we deduce
  \[ \left\| \frac{(1 - | V_1 |^2) V_1 \partial_{x_1} ((1 - | V_{- 1} |^2)
     V_{- 1})}{V} \right\|_{\ast \ast, B_{d_n}} = o_{c_n \rightarrow 0}
     (c_n^{1 + \lambda}) . \]
  We compute with Lemmas \ref{dervor} and \ref{V-1est} that
  \[ \mathfrak{R}\mathfrak{e} \left( \frac{4 \nabla V_1 . \nabla
     \partial_{x_1} V_{- 1}}{V} \right) = 4\mathfrak{R}\mathfrak{e} \left(
     \frac{\nabla V_1}{V_1} \right) .\mathfrak{R}\mathfrak{e} \left(
     \frac{\nabla \partial_{x_1} V_{- 1}}{V_{- 1}} \right) -
     4\mathfrak{I}\mathfrak{m} \left( \frac{\nabla V_1}{V_1} \right)
     .\mathfrak{I}\mathfrak{m} \left( \frac{\nabla \partial_{x_1} V_{-
     1}}{V_{- 1}} \right), \]
  leading to
  \[ | V | \left| \mathfrak{R}\mathfrak{e} \left( \frac{4 \nabla V_1 . \nabla
     \partial_{x_1} V_{- 1}}{V} \right) \right| \leqslant \frac{K}{(1 + r_1^3)
     d_n^{3 - \varepsilon}} + \frac{K}{(1 + r_1) d_n^2} \]
  for a universal constant $K$. Since $\lambda < 1$ and $\alpha > 0$, we have
  \[ \left\| \mathfrak{R}\mathfrak{e} \left( \frac{4 \nabla V_1 . \nabla
     \partial_{x_1} V_{- 1}}{V} \right) \right\|_{\ast \ast, B_{d_n}} = o_{c_n
     \rightarrow 0} (c_n^{1 + \lambda}) . \]
  Similarly, we have, in $\{ r_1 \leqslant d_n^{\varepsilon} \}$,
  \[ | V | \left| \mathfrak{I}\mathfrak{m} \left( \frac{4 \nabla V_1 . \nabla
     \partial_{x_1} V_{- 1}}{V} \right) \right| \leqslant \frac{K}{(1 + r_1^3)
     d_n^2} + \frac{K}{(1 + r_1) d_n^{3 - \varepsilon}} . \]
  Therefore, using
  \[ \frac{1}{d_n} \leqslant \frac{K}{(1 + r_1)^{1 / \varepsilon}}, \]
  since we are in $B_{d_n} = B (d_n \overrightarrow{e_1}, d_n^{\varepsilon})$,
  and
  \[ \lambda < 2 - \varepsilon (2 - \alpha), \]
  which is one of the hypothesis of the lemma, we have
  \[ \left\| i\mathfrak{I}\mathfrak{m} \left( \frac{4 \nabla V_1 . \nabla
     \partial_{x_1} V_{- 1}}{V} \right) \right\|_{\ast \ast, B_{d_n}} = o_{c_n
     \rightarrow 0} (c_n^{1 + \lambda}) . \]

  Now, for 2$i c_n \partial_{x_2} (V_1 \partial_{x_1} V_{- 1}) = 2 i c_n
  \partial_{x_2} V_1 \partial_{x_1} V_{- 1} + 2 i c_n \partial_{x_1 x_2} V_{-
  1} V_1$, we estimate similarly (still using Lemma \ref{dervor} and
  \ref{V-1est})
  \[ \left| \mathfrak{R}\mathfrak{e} \left( \frac{i c_n \partial_{x_2} V_1
     \partial_{x_1} V_{- 1}}{V} \right) \right| \leqslant \frac{K}{(1 + r_1^3)
     d_n^{3 - \varepsilon}} + \frac{K}{(1 + r_1) d_n^4}, \]
  \[ \left| \mathfrak{I}\mathfrak{m} \left( \frac{i c_n \partial_{x_2} V_1
     \partial_{x_1} V_{- 1}}{V} \right) \right| \leqslant \frac{K}{(1 + r_1^3)
     d_n^4} + \frac{K}{(1 + r_1) d_n^{3 - \varepsilon}}, \]
  therefore, using $\frac{1}{d_n} \leqslant \frac{K}{(1 + r_1)^{1 /
  \varepsilon}}$, we have, under the condition
  \[ \lambda < 2 - \varepsilon (2 - \alpha) \]
  for the imaginary part (as for the previous term) and with no condition for
  the real part (since $\alpha > 0, \lambda < 1$), that
  \[ \left\| \frac{2 i c_n \partial_{x_2} V_1 \partial_{x_1} V_{- 1}}{V}
     \right\|_{\ast \ast, B_{d_n}} = o_{c_n \rightarrow 0} (c_n^{1 + \lambda})
     . \]
  We then compute (still using Lemma \ref{dervor} and \ref{V-1est})
  \[ | V | \left| \mathfrak{R}\mathfrak{e} \left( \frac{i c_n \partial_{x_1
     x_2} V_{- 1} V_1}{V} \right) \right| \leqslant \frac{K}{d_n^3}, \]
  \[ | V | \left| \mathfrak{I}\mathfrak{m} \left( \frac{i c_n \partial_{x_1
     x_2} V_{- 1} V_1}{V} \right) \right| \leqslant \frac{K}{d_n^{4 -
     \varepsilon}}, \]
  therefore, using $\frac{1}{d_n} \leqslant \frac{K}{(1 + r_1)^{1 /
  \varepsilon}},$ we have, under the conditions
  \[ \lambda < 2 - \varepsilon (1 - \alpha) \quad \tmop{and} \quad \lambda < 3
     - \varepsilon (3 - \alpha), \]
  which are met since
  \[ \lambda < 2 - \varepsilon (2 - \alpha) = 2 - \varepsilon (1 - \alpha) -
     \varepsilon < 2 - \varepsilon (1 - \alpha), \]
  and $\lambda < 2 - \varepsilon (2 - \alpha) = 3 - \varepsilon (3 - \alpha) -
  1 + \varepsilon < 3 - \varepsilon (3 - \alpha)$, that
  \[ \left\| \frac{i c \partial_{x_2} (V_1 \partial_{x_1 x_2} V_{- 1})}{V}
     \right\|_{\ast \ast, B_{d_n}} = o_{c_n \rightarrow 0} (c_n^{1 + \lambda})
     . \]
  This concludes the proof of step 3.1.
  
\
  
  \begin{tmindent}
    Step 3.2.  Proof of $\left\| \frac{\mathcal{L}_{\partial_{z_1}}
    (\Psi_n)}{V} \right\|_{\ast \ast, B_{d_n}} = o_{c_n \rightarrow 0} (c_n^{1
    + \lambda})$.
  \end{tmindent}
  
\
  
  We have defined
  \[ \mathcal{L}_{\partial_{z_1}} (\Psi_n) = \eta (\partial_{z_1} L) (\Phi_n)
     + (1 - \eta) V (\partial_{z_1} L') (\Psi_n) + \eta L ((\partial_{z_1} V)
     \Psi_n) + (1 - \eta) \partial_{z_1} V L' (\Psi_n) . \]
  We recall from Lemma \ref{lemma7} that
  \[ L' (\Psi_n) = - \Delta \Psi_n - 2 \frac{\nabla V}{V} . \nabla \Psi_n + 2
     | V |^2 \mathfrak{R}\mathfrak{e} (\Psi_n) - i c_n \partial_{x_2} \Psi_n,
  \]
  \[ L (\Phi_n) = - \Delta \Phi_n - (1 - | V |^2) \Phi_n +
     2\mathfrak{R}\mathfrak{e} (\bar{V} \Phi_n) V - i c_n \partial_{x_2}
     \Phi_n, \]
  hence
  \[ (\partial_{z_1} L) (\Phi_n) = 4\mathfrak{R}\mathfrak{e} (\overline{V_{-
     1}} \partial_{x_1} V_{- 1}) \Phi_n + 4\mathfrak{R}\mathfrak{e} \left(
     \overline{\partial_{x_1} V_{- 1} V_1} \Phi_n \right) V +
     4\mathfrak{R}\mathfrak{e} (\bar{V} \Phi_n) V_1 \partial_{x_1} V_{- 1} .
  \]
  We shall now estimate all these terms one by one.
  
  Since $\eta \partial_{z_1} L (\Phi_n)$ is compactly supported in $\{
  \tilde{r} \leqslant 2 \}$ and $\| . \|_{\ast \ast, B_{d_n}}$ looks at the
  function only on $\{ r_1 \leqslant d^{\varepsilon} \}$, using Lemma
  \ref{V-1est} ($\nabla V_{- 1} = O_{c \rightarrow 0} (c)$) and $\| \Psi_n
  \|_{\ast, \frac{1 - \lambda}{4}, d_n} \leqslant K (\lambda) c^{\frac{1 +
  \lambda}{2}}$, we check that
  \[ \left\| \frac{\eta \partial_{z_1} L (\Phi_n)}{V} \right\|_{\ast \ast,
     B_{d_n}} = o_{c_n \rightarrow 0} (c_n^{1 + \lambda}) . \]
  With the same arguments, we also check that
  \[ \left\| \frac{\eta L (\partial_{z_1} V \Psi_n)}{V} \right\|_{\ast \ast,
     B_{d_n}} = o_{c_n \rightarrow 0} (c_n^{1 + \lambda}) . \]
  Now, with $\| \Psi_n \|_{\ast, \sigma, d_n} \leqslant K (\sigma, \sigma')
  c_n^{1 - \sigma'}$, we check that for any $0 < \sigma < \sigma' < 1$,
  \[ | L' (\Psi_n) | \leqslant \frac{K (\sigma, \sigma')}{(1 + r_1)^{1 +
     \sigma} d_n^{1 - \sigma'}}, \]
  therefore, with Lemma \ref{V-1est}, we have
  \[ | (1 - \eta) \partial_{z_1} V L' (\Psi_n) | \leqslant \frac{K (\sigma,
     \sigma')}{(1 + r_1)^{1 + \sigma} d_n^{3 - \varepsilon - \sigma'}} . \]
  In particular, we check that if
  \[ \lambda < 2 - \varepsilon (2 - \alpha), \]
  we can take $\sigma, \sigma'$ such that $0 < \sigma < \sigma' < \frac{2 -
  \varepsilon (2 - \alpha) - \lambda}{1 - \varepsilon}$, hence
  \[ \left\| \frac{(1 - \eta) \partial_{z_1} V L' (\Psi_n)}{V} \right\|_{\ast
     \ast, B_{d_n}} = o_{c_n \rightarrow 0} (c_n^{1 + \lambda}) . \]
  Finally, we estimate
  \[ | \partial_{z_1} L' (\Psi_n) | \leqslant K \left| \partial_{x_1}
     \frac{\nabla V_{- 1}}{V_{- 1}} . \nabla \Psi_n \right| + K |
     \mathfrak{R}\mathfrak{e} (\partial_{x_1} V_{- 1} \overline{V_{- 1}})
     \mathfrak{R}\mathfrak{e} (\Psi_n) | . \]
  With Lemma \ref{V-1est} and $\| \Psi_n \|_{\ast, \sigma, d_n} \leqslant K
  (\sigma, \sigma') c^{1 - \sigma}_{n'}$ (from (\ref{1605235})), we check that
  \[ | (1 - \eta) V \partial_{z_1} L' (\Psi_n) | \leqslant \frac{K (\sigma,
     \sigma') (1 - \eta)}{r_1^{1 + \sigma} d_n^{4 - \varepsilon - \sigma'}},
  \]
  therefore, with the same condition as for the previous term, namely
  \[ \lambda < 2 - (2 - \alpha) \varepsilon, \]
  we infer, taking $\sigma < \sigma'$ small enough,
  \[ \left\| \frac{(1 - \eta) V \partial_{z_1} L' (\Psi_n)}{V} \right\|_{\ast
     \ast, B_{d_n}} = o_{c_n \rightarrow 0} (c_n^{1 + \lambda}) . \]
  This concludes the proof of step 3.2.
  
\ 
  
  \begin{tmindent}
    Step 3.3.  Proof of $\left\| \frac{\tmop{NL}_{\partial_{z_1}} (\Psi_{n
    \nosymbol})}{V} \right\|_{\ast \ast, B_{d_n}} = o_{c_n \rightarrow 0}
    (c_n^{1 + \lambda})$.
  \end{tmindent}
 
 \ 
  
  We recall
  \[ \tmop{NL}_{\partial_{z_1}} (\Psi_n) = V (1 - \eta) \partial_{z_1} (-
     \nabla \Psi_n . \nabla \Psi_n + | V |^2 S (\Psi_n)), \]
  with $S (\Psi_n) = e^{2\mathfrak{R}\mathfrak{e} (\Psi_n)} - 1 -
  2\mathfrak{R}\mathfrak{e} (\Psi_n)$. We compute
  \begin{eqnarray*}
    \partial_{z_1} (- \nabla \Psi_n . \nabla \Psi_n + | V |^2 S (\Psi_n)) & =
    & - 2 \nabla \partial_{z_1} \Psi_n . \nabla \Psi_n\\
    & + & 4\mathfrak{R}\mathfrak{e} (\partial_{x_1} V_{- 1} \overline{V_{-
    1}}) S (\Psi_n)\\
    & + & | V |^2 \partial_{z_1} S (\Psi_n) .
  \end{eqnarray*}
  Now, with Corollary \ref{cor361405} and (\ref{1605235}), we check that, for
  any $0 < \sigma < \sigma' < 1$, $r_1 \geqslant 2$,
  \[ | \nabla \partial_{z_1} \Psi_n . \nabla \Psi_n | \leqslant \frac{K
     (\sigma, \sigma')}{r_1^{2 + 2 \sigma} d_n^{2 - 2 \sigma'}}, \]
  \[ | 4\mathfrak{R}\mathfrak{e} (\partial_{x_1} V_{- 1} \overline{V_{- 1}}) S
     (\Psi_n) + | V |^2 \partial_{z_1} S (\Psi_n) | \leqslant \frac{K (\sigma,
     \sigma')}{r_1^{2 + 2 \sigma} d_n^{2 - 2 \sigma'}}, \]
  therefore, taking $\sigma < \sigma' < \frac{1 - \lambda}{2}$, we check that
  \[ \| (1 - \eta) (- 2 \nabla \partial_{z_1} \Psi_n . \nabla \Psi_n +
     4\mathfrak{R}\mathfrak{e} (\partial_{x_1} V_{- 1} \overline{V_{- 1}}) S
     (\Psi_n) + | V |^2 \partial_{z_1} S (\Psi_n)) \|_{\ast \ast, B_{d_n}} =
     o_{c_n \rightarrow 0} (c_n^{1 + \lambda}) . \]
  The proof of step 3.3 is complete.
 
 \ 
  
  \begin{tmindent}
    Step 3.4.  Proof of \ $\left\| \frac{\tmop{Err}_{\tmop{loc}} +
    \tmop{Err}_{\tmop{far}} + \tmop{Err}_{\tmop{cut}}}{V} \right\|_{\ast \ast,
    B_{d_n}} = o_{c_n \rightarrow 0} (c_n^{1 + \lambda})$.
  \end{tmindent}
  
\
  
  We recall
  \[ \tmop{Err}_{\tmop{cut}} = \partial_{z_1} \eta (L (\Phi_n) - V L' (\Psi_n)
     + i c \partial_{x_2} \Psi_n + \nabla \Psi_n . \nabla \Psi_n - | V |^2 S
     (\Psi_n)), \]
  \[ \tmop{Err}_{\tmop{loc}} = \partial_{z_1} (R (\Psi_n)) - \partial_d
     \lambda (c_n, d_n) Z_{d_n}, \]
  \[ \tmop{Err}_{\tmop{far}} = \partial_{z_1} V (1 - \eta) (- \nabla \Psi_n .
     \nabla \Psi_n + | V |^2 S (\Psi_n)) . \]
  $\tmop{Err}_{\tmop{cut}}$ is compactly supported in $\{ r_{- 1} \leqslant 2
  \}$, therefore $\tmop{Err}_{\tmop{cut}} = 0$ in $B_{d_n}$, hence
  \[ \left\| \frac{\tmop{Err}_{\tmop{cut}}}{V} \right\|_{\ast \ast, B_{d_n}} =
     0. \]
  Now, $\tmop{Err}_{\tmop{loc}}$ is supported in $\{ r_1 \leqslant 2 \}$, and
  from Lemma \ref{lemma7}, we know that $R (\Psi_n)$ is a sum of terms at
  least quadratic in $\Psi_n$ or $\Phi_n$ localized in the area where $\eta
  \neq 0$. Therefore, from Corollary \ref{cor361405} and (\ref{impro1}), we
  check that
  \[ | \partial_{z_1} (R (\Psi_n)) | \leqslant \frac{K (\sigma)}{d_n^{2 - 2
     \sigma}}, \]
  and we have check in step 1 that $| \partial_d \lambda (c_n, d_n) | = O_{c_n
  \rightarrow 0}^{\sigma} (c_n^{2 - \sigma})$. Thus, taking $\sigma < \frac{1
  - \lambda}{2}$,
  \[ \left\| \frac{\tmop{Err}_{\tmop{loc}}}{V} \right\|_{\ast \ast, B_{d_n}} =
     o_{c_n \rightarrow 0} (c_n^{1 + \lambda}) . \]
  From (\ref{1605235}), we check that, for any $1 > \sigma' > \sigma > 0$, in
  $\{ r_1 \leqslant d_n^{\varepsilon} \}$,
  \[ | - \nabla \Psi_n . \nabla \Psi_n + | V |^2 S (\Psi_n) | \leqslant
     \frac{K (\sigma, \sigma')}{(1 + r_1)^{2 + 2 \sigma} d_n^{2 - 2 \sigma'}},
  \]
  and from Lemma \ref{V-1est}, we have there
  \[ | \partial_{z_1} V | \leqslant \frac{K}{d_n^{2 - \varepsilon}}, \]
  therefore, choosing $\sigma < \sigma'$ small enough, we have
  \[ \left\| \frac{\partial_{z_1} V}{V} (1 - \eta) (- \nabla \Psi_n . \nabla
     \Psi_n + | V |^2 S (\Psi_n)) \right\|_{\ast \ast, B_{d_n}} = o_{c_n
     \rightarrow 0} (c_n^{1 + \lambda}) . \]
  This ends the proof of step 3.4 and hence of (\ref{e31}).

\  
  
  \begin{tmindent}
    Step 4.  Three additional estimates on $h_n$.
  \end{tmindent}

\  
  
  This step is devoted to the proof of the following three estimates:
  \begin{equation}
    \| V h_n \|_{L^{\infty} (\{ \tilde{r} \leqslant 3 \})} + \| \tilde{r}^{1 +
    \sigma} \mathfrak{R}\mathfrak{e} (h_n) \|_{L^{\infty} (\{ \tilde{r}
    \geqslant 2 \})} + \| \tilde{r}^{2 + \sigma} \mathfrak{I}\mathfrak{m}
    (h_n) \|_{L^{\infty} (\{ \tilde{r} \geqslant 2 \})} \leqslant K (\sigma,
    \sigma') c_n^{1 - \sigma'} \label{3130310} .
  \end{equation}
  In the right half-plane, we want to show that
  \begin{equation}
    | h_n | \leqslant \frac{K (\sigma) c_n^{1 + \sigma}}{(1 + r_1)}
    \label{3140410},
  \end{equation}
  and, in the left half-plane,
  \begin{equation}
    | h_n | \leqslant \frac{K (\sigma) c_n^{1 - \sigma}}{(1 + r_{- 1})^2} .
    \label{3173101}
  \end{equation}
  Observe that $h_n$ is not symmetrical with respect to $x_1$ because of the
  cutoff. Recall that
  \[ V h_n = \partial_{z_1} (\tmop{TW}_{c_n} (V)) +\mathcal{L}_{\partial z_1}
     (\Psi_n) + \tmop{NL}_{\partial_{z_1}} (\Psi_n) + \tmop{Err}_{\tmop{loc}}
     + \tmop{Err}_{\tmop{far}} + \tmop{Err}_{\tmop{cut}} . \]
  We complete estimates done in the previous step to show that
  (\ref{3130310}), (\ref{3140410}) and (\ref{3173101}) hold.

\
  
  \begin{tmindent}
    Step 4.1.  Estimates for $\partial_{z_1} (\tmop{TW}_{c_n} (V))$.
  \end{tmindent}

\  
  
  From Step 3.1, we have
  \[ \partial_{z_1} (\tmop{TW}_c (V)) = - 4 \nabla V_1 . \nabla \partial_{x_1}
     V_{- 1} + 2 (1 - | V_1 |^2) V_1 \partial_{x_1} ((1 - | V_{- 1} |^2) V_{-
     1}) - 2 i c \partial_{x_2} (V_1 \partial_{x_1} V_{- 1}) . \]
  In view of Lemma \ref{lemme3}, equation (\ref{230210}) and the estimate $(1
  + r_1) (1 + r_{- 1}) \geqslant d_n (1 + \tilde{r})$, we have
  \[ \left\| \frac{\partial_{z_1} (\tmop{TW}_c (V))}{V} \right\|_{\ast \ast,
     \sigma, d_n} \leqslant K (\sigma) c^{1 - \sigma}_n . \]
  Furthermore, in the left half-plane, with Lemma \ref{lemme3} and equation
  (\ref{230210}), we check easily that
  \[ | \partial_{z_1} (\tmop{TW}_c (V)) | \leqslant \frac{K c_n^{}}{(1 +
     r_1)^2} . \]
  Furthermore, in the right half-plane, we have $\frac{1}{(1 + r_{- 1})}
  \leqslant K c_n$, therefore, still using Lemma \ref{lemme3} and equation
  (\ref{230210}), we check that
  \[ | \partial_{z_1} (\tmop{TW}_c (V)) | \leqslant \frac{K c_n^2}{(1 + r_1)}
     . \]

\
  
  \begin{tmindent}
    Step 4.2.  Estimates for $\mathcal{L}_{\partial_{z_1}} (\Psi_n)$.
  \end{tmindent}
  
\
  
  We have, from Step 3.2, that
  \[ \mathcal{L}_{\partial_{z_1}} (\Psi_n) = \eta \partial_{z_1} L (\Phi_n) +
     (1 - \eta) V \partial_{z_1} L' (\Psi_n) + \eta L (\partial_{z_1} V
     \Psi_n) + (1 - \eta) \partial_{z_1} V L' (\Psi_n), \]
  with
  \[ (\partial_{z_1} L) (\Phi_n) = 4\mathfrak{R}\mathfrak{e} (\overline{V_{-
     1}} \partial_{x_1} V_{- 1}) \Phi_n + 4\mathfrak{R}\mathfrak{e} \left(
     \overline{\partial_{x_1} V_{- 1} V_1} \Phi_n \right) V +
     4\mathfrak{R}\mathfrak{e} (\bar{V} \Phi_n) V_1 \partial_{x_1} V_{- 1}, \]
  \[ L' (\Psi_n) = - \Delta \Psi_n - 2 \frac{\nabla V}{V} . \nabla \Psi_n + 2
     | V |^2 \mathfrak{R}\mathfrak{e} (\Psi_n) - i c_n \partial_{x_2} \Psi_n
  \]
  and
  \[ | \partial_{z_1} L' (\Psi_n) | \leqslant K \left| \partial_{x_1}
     \frac{\nabla V_{- 1}}{V_{- 1}} . \nabla \Psi_n \right| + K |
     \mathfrak{R}\mathfrak{e} (\partial_{x_1} V_{- 1} \overline{V_{- 1}})
     \mathfrak{R}\mathfrak{e} (\Psi_n) | . \]
  Similarly as in Step 4.1, every local term (in the area $\{ \eta \neq 0 \}$)
  satisfies the two estimates, using $\| \Psi_n \|_{\ast, \frac{1 -
  \sigma}{2}, d_n} \leqslant K (\sigma) c_n^{\sigma}$. The two nonlocal terms
  are $(1 - \eta) V \partial_{z_1} L' (\Psi_n)$ and $(1 - \eta) \partial_{z_1}
  V L' (\Psi_n)$. For the first term, in view of Lemma \ref{lemme3}, the
  previous estimate and equations (\ref{230210}), (\ref{impro1}), we check
  that
  \begin{eqnarray*}
    &  & \| V (1 - \eta) \partial_{z_1} L' (\Psi_n) \|_{L^{\infty} (\{
    \tilde{r} \leqslant 3 \})}\\
    & + & \| \tilde{r}^{1 + \sigma} \mathfrak{R}\mathfrak{e} ((1 - \eta)
    \partial_{z_1} L' (\Psi_n)) \|_{L^{\infty} (\{ \tilde{r} \geqslant 2 \})}
    + \| \tilde{r}^{2 + \sigma} \mathfrak{I}\mathfrak{m} ((1 - \eta)
    \partial_{z_1} L' (\Psi_n)) \|_{L^{\infty} (\{ \tilde{r} \geqslant 2
    \})}\\
    & \leqslant & K (\sigma, \sigma') c^{1 - \sigma'}_n
  \end{eqnarray*}
  and, in the left-half plane,
  \[ | (1 - \eta) V \partial_{z_1} L' (\Psi_n) | \leqslant \frac{K (\sigma)
     c_n^{1 - \sigma}}{(1 + r_{- 1})^2} \]
  Furthermore, using now $\| \Psi_n \|_{\ast, \frac{1 - \sigma}{2}, d_n}
  \leqslant K (\sigma) c^{\sigma}_n$, we check that, in the right half-plane,
  \[ | (1 - \eta) V \partial_{z_1} L' (\Psi_n) | \leqslant \frac{K (\sigma)
     c_n^{1 + \sigma}}{(1 + r_1)} . \]
  Finally, for the term $(1 - \eta) \partial_{z_1} V L' (\Psi_n)$, we use $\|
  \Psi_n \|_{\ast, \sigma, d_n} \leqslant K (\sigma, \sigma') c_n^{1 -
  \sigma'}$ and (\ref{impro1}) to check that
  \[ | L' (\Psi_n) | \leqslant \frac{K (\sigma) c^{1 - \sigma'}_n}{(1 +
     \tilde{r})^{1 + \sigma}} . \]
  Combining this estimate with $| \partial_{z_1} V | \leqslant \frac{K}{(1 +
  \tilde{r})}$, we show that
  \[ \left\| (1 + \tilde{r})^{2 + \sigma} \left( (1 - \eta)
     \frac{\partial_{z_1} V}{V} L' (\Psi_n) \right) \right\|_{L^{\infty}
     (\mathbbm{R}^2)} \leqslant K (\sigma, \sigma') c^{1 - \sigma'}_n, \]
  and, in the left half-plane,
  \[ | (1 - \eta) \partial_{z_1} V L' (\Psi_n) | \leqslant \frac{K (\sigma)
     c_n^{1 - \sigma}}{(1 + r_{- 1})^2} . \]
  Furtherore, using $\| \Psi_n \|_{\ast, \frac{1 - \sigma}{2}, d_n} \leqslant
  K (\sigma) c_n^{\sigma}$ and (\ref{impro1}), we also have the estimate
  \[ | L' (\Psi_n) | \leqslant \frac{K (\sigma) c^{\sigma}_n}{(1 +
     \tilde{r})}, \]
  and using $| \partial_{z_1} V | \leqslant K c_n$ in the right half-plane, we
  estimate in this same area that
  \[ | (1 - \eta) \partial_{z_1} V L' (\Psi_n) | \leqslant \frac{K (\sigma)
     c^{1 + \sigma}_n}{(1 + \tilde{r})} . \]

\ 
 
  \begin{tmindent}
    Step 4.3.  Estimates for $\tmop{NL}_{\partial_{z_1}} (\Psi_{n
    \nosymbol})$.
  \end{tmindent}
 
 \ 
  
  From Step 3.3,
  \[ \tmop{NL}_{\partial_{z_1}} (\Psi_n) = V (1 - \eta) \partial_{z_1} (-
     \nabla \Psi_n . \nabla \Psi_n + | V |^2 S (\Psi_n)) . \]
  Using equation (\ref{impro1}) for $\frac{1 + \sigma}{2}$ and Corollary
  \ref{cor361405} (also for $\frac{1 + \sigma}{2}$), we check without
  difficulties that
  \begin{eqnarray*}
    &  & \left\| \tmop{NL}_{\partial_{z_1}} (\Psi_n) \right\|_{L^{\infty} (\{
    \tilde{r} \leqslant 3 \})}\\
    & + & \left\| \tilde{r}^{1 + \sigma} \mathfrak{R}\mathfrak{e} \left(
    \tmop{NL}_{\partial_{z_1}} (\Psi_n) / V \right) \right\|_{L^{\infty} (\{
    \tilde{r} \geqslant 2 \})} + \left\| \tilde{r}^{2 + \sigma}
    \mathfrak{I}\mathfrak{m} \left( \tmop{NL}_{\partial_{z_1}} (\Psi_n) / V
    \right) \right\|_{L^{\infty} (\{ \tilde{r} \geqslant 2 \})}\\
    & \leqslant & K (\sigma) c^{1 - \sigma}_n,
  \end{eqnarray*}
  and, with, some margin, that in the left half-plane,
  \[ \left| \tmop{NL}_{\partial_{z_1}} (\Psi_n) \right| \leqslant \frac{K
     (\sigma) c_n^{1 - \sigma}}{(1 + r_{- 1})^2} . \]
  Now, using $\| \Psi_n \|_{\ast, \frac{1 - \sigma}{4}, d_n} \leqslant K
  (\sigma) c^{\frac{1 + \sigma}{2}}_n$ and Corollary \ref{cor361405} (for
  $\frac{1 - \sigma}{2}$), we have, in the right half-plane,
  \[ \left| \tmop{NL}_{\partial_{z_1}} (\Psi_n) \right| \leqslant \frac{K
     (\sigma) c^{1 + \sigma}_n}{(1 + \tilde{r})} . \]

\
  
  \begin{tmindent}
    Step 4.4.  Estimates for \ $\tmop{Err}_{\tmop{loc}} +
    \tmop{Err}_{\tmop{far}} + \tmop{Err}_{\tmop{cut}}$.
  \end{tmindent}
  
\
  
  For $\tmop{Err}_{\tmop{loc}} = \partial_{z_1} (R (\Psi_n)) - \partial_d
  \lambda (c_n, d_n) Z_{d_n}$, the same computations as in Step 4.3 yield the
  estimates (because this term is compactly supported in the area $\{ \eta
  \neq 0 \}$) needed for (\ref{3130310}) to (\ref{3173101}).
  
  For $\tmop{Err}_{\tmop{cut}} = \partial_{z_1} \eta (L (\Phi_n) - V L'
  (\Psi_n) + i c \partial_{x_2} \Psi_n + \nabla \Psi_n . \nabla \Psi_n - | V
  |^2 S (\Psi_n))$, this term is compactly supported near the vortex $- 1$,
  hence is $0$ in the right half-plane. Furthermore, using $\| \Psi_n
  \|_{\ast, \sigma / 2, d_n} \leqslant K (\sigma) c_n^{1 - \sigma}$, we check
  easily that
  \[ \| \tmop{Err}_{\tmop{cut}} / V \|_{\ast \ast, \sigma, d_n} \leqslant K
     (\sigma) c_n^{1 - \sigma} \]
  and, since it is compactly supported, in the left half-plane,
  \[ | \tmop{Err}_{\tmop{cut}} | \leqslant \frac{K (\sigma) c_n^{1 -
     \sigma}}{(1 + r_{- 1})^2} \]
  Finally, for $\tmop{Err}_{\tmop{far}} = \partial_{z_1} V (1 - \eta) (-
  \nabla \Psi_n . \nabla \Psi_n + | V |^2 S (\Psi_n))$, from (\ref{impro1}) we
  have
  \[ | (1 - \eta) (- \nabla \Psi_n . \nabla \Psi_n + | V |^2 S (\Psi_n)) |
     \leqslant \frac{K (\sigma) c_n^{1 - \sigma}}{(1 + \tilde{r})^{2 +
     \sigma}}, \]
  and we conclude as in Step 4.2.
  
  \
  
  This concludes the proof of estimates (\ref{3130310}), (\ref{3140410}) and
  (\ref{3173101}).
  
\
  
  \begin{tmindent}
    Step 5.  Inner estimates.
  \end{tmindent}
  
\
  
  By the estimation we have just proved, we have in particular
  \[ \frac{h_n (. - d_n \overrightarrow{e_1})}{\| \partial_{z_1} \Psi_n
     \|_{\ast, B'_{d_n}}} \rightarrow 0 \]
  in $\mathcal{C}^0_{\tmop{loc}}$ (which corresponds to follow the $+ 1$
  vortex). Therefore, at the limit, in the distribution sense,
  \[ L_{V_1} (\mathfrak{G}) = 0 \]
  in all $\mathbbm{R}^2$. If we show that $\langle \mathfrak{G}, \chi
  \partial_{x_1} V_1 \rangle = 0$ for $\chi$ a cutoff near $0$, we can then
  use Theorem \ref{delpino} to show, similarly as in the proof of Proposition
  \ref{invertop}, that $\mathfrak{G}= 0$ since
  \[ \left\| \frac{(V \partial_{z_1} \Psi_n) (. - d_n
     \overrightarrow{e_1})}{\| \partial_{z_1} \Psi_n \|_{\ast, B'_{d_n}}}
     \right\|_{\ast, B_{d_n}} = 1, \]
  hence $\| \mathfrak{G} \|_{H_{V_1}} < + \infty$. We recall that, by
  construction, we have $\langle \Phi_{c, d}, Z_d \rangle = 0$. By symmetry,
  this implies that $\langle \Phi_{c, d}, \eta (y) \partial_d V \rangle = 0$.
  Both $\Phi_{c, d}$ and $\eta (y) \partial_d V$ are $C^1$ with respect to
  $d$, and therefore
  \[ 0 = \partial_d \langle \Phi_{c, d}, \eta (y) \partial_d V \rangle =
     \langle \partial_d \Phi_{c, d}, \eta (y) \partial_d V \rangle + \langle
     \Phi_{c, d}, \partial_d (\eta (y) \partial_d V) \rangle . \]
  Furthermore, $\langle \partial_{x_1} \Phi_{c, d}, \eta (y) \partial_d V
  \rangle = - \langle \Phi_{c, d}, \partial_{x_1} \eta (y) \partial_d V
  \rangle$, thus
  \[ \langle \partial_{z_1} \Phi_{c, d}, \eta (y) \partial_d V \rangle = -
     \langle \Phi_{c, d}, \eta (y) \partial_{z_1} \partial_d V \rangle, \]
  and we check easily that $| \eta (y) \partial_{z_1} \partial_d V | \leqslant
  K c \eta (y)$, therefore, since $\| \Psi_{c, d} \|_{\ast, \sigma, d}
  \leqslant K (\sigma, \sigma') c^{1 - \sigma'}$, we have $| \langle
  \partial_{z_1} \Phi_{c, d}, \eta (y) \partial_d V \rangle | \leqslant K
  (\sigma, \sigma') c^{2 - \sigma'}$, and thus, taking $0 < \sigma' < 1 -
  \lambda$, for $c_n$ and $d_n$, $n \rightarrow \infty$, we infer that
  $\langle \mathfrak{G}, \eta \partial_{x_1} V_1 \rangle = 0$.
  
  We continue as in the proof of Proposition \ref{invertop}. The fact that
  $\mathfrak{G}= 0$ gives us that for any $R > 0$, we have
  \[ \frac{\| V \partial_{z_1} \Psi_n \|_{L^{\infty} (\{ r_1 \leqslant R \})}
     + \| \nabla (V \partial_{z_1} \Psi_n) \|_{L^{\infty} (\{ r_1 \leqslant R
     \})}}{\| \partial_{z_1} \Psi_n \|_{\ast, B'_{d_n}}}
     \rightarrow_{\nosymbol} 0. \]

\
  
  \begin{tmindent}
    Step 6.  Outer computations.
  \end{tmindent}
  
\
  
  We have the same outer computations as in step 2 of the proof of Proposition
  \ref{invertop}, but with $\mathcal{Y}_n = \frac{\partial_{z_1} \Psi_n}{\|
  \partial_{z_1} \Psi_n \|_{\ast, B_{d_n}}}$ playing the role of $\Psi_n$ and
  $\mathcal{H}_n = \frac{h_n}{\| \partial_{z_1} \Psi_n \|_{\ast, B_{d_n}}}$
  playing the role of $h_n$, since they satisfy the same equation. We showed
  in (\ref{e31}) that
  \[ \| \mathcal{H}_n \|_{\ast \ast, B_{d_n}} = o_{n \rightarrow \infty} (1),
  \]
  and the system of equation is, with $\mathcal{Y}_n =\mathcal{Y}_1 +
  i\mathcal{Y}_2$ and $\mathcal{H}_n =\mathcal{H}_1 + i\mathcal{H}_2$,
  \[ \left\{ \begin{array}{l}
       \Delta \mathcal{Y}_1 - 2 | V |^2 \mathcal{Y}_1 = -\mathcal{H}_1 -
       2\mathfrak{R}\mathfrak{e} \left( \frac{\nabla V}{V} . \nabla
       \mathcal{Y}_n \right) + c \partial_{x_2} \mathcal{Y}_2\\
       \Delta \mathcal{Y}_2 + c \partial_{x_2} \mathcal{Y}_1 = -\mathcal{H}_2
       - 2\mathfrak{I}\mathfrak{m} \left( \frac{\nabla V}{V} . \nabla
       \mathcal{Y}_n \right) .
     \end{array} \right. \]
  Recall the two balls $B_{d_n} = B (d_n \overrightarrow{e_1},
  d_n^{\varepsilon})$ and $B'_{d_n} = B (d_n \overrightarrow{e_1},
  d_n^{\varepsilon'})$. We have, as in the proof of Proposition
  \ref{invertop}, outside $\{ r_1 \leqslant R \}$ but in $B'_{d_n}$, that $\|
  \mathcal{Y}_n \|_{\ast, B'_{d_n}} = 1$ and $\| \mathcal{H}_n \|_{\ast \ast,
  B_{d_n}} = o_{n \rightarrow \infty} (1)$, therefore
  \begin{equation}
    | \Delta \mathcal{Y}_1 - 2\mathcal{Y}_1 | \leqslant \frac{o_{R \rightarrow
    \infty} (1) + o^R_{n \rightarrow \infty} (1)}{(1 + r_1)^{1 - \alpha}}
    \label{azz36}
  \end{equation}
  and
  \begin{equation}
    | \Delta \mathcal{Y}_2 + c \partial_{x_2} \mathcal{Y}_1 | \leqslant
    \frac{o_{R \rightarrow \infty} (1) + o^R_{n \rightarrow \infty} (1)}{(1 +
    r_1)^{2 - \alpha}} . \label{azz37}
  \end{equation}
  We want to extend these estimates in $B_{d_n} = B (d_n \overrightarrow{e_1},
  d_n^{\varepsilon})$ and not only on $B'_{d_n} = (d_n \overrightarrow{e_1},
  d_n^{\varepsilon'})$. Since $\| \mathcal{H}_n \|_{\ast \ast, B_{d_n}} = o_{n
  \rightarrow \infty} (1)$ from (\ref{e31}), the estimates on $\mathcal{H}_1$
  and $\mathcal{H}_2$ are already on $B_{d_n}$, leaving $c \partial_{x_2}
  \mathcal{Y}_2$ and the real and imaginary parts of $\frac{\nabla V}{V} .
  \nabla \mathcal{Y}_n$ to estimate.
  
  First, we check that, in $B_{d_n} \backslash B'_{d_n}$,
  \[ | c_n \partial_{x_2} \mathcal{Y}_2 | \leqslant \frac{d_n^{1 + \lambda}
     c_n^{2 - \sigma}}{(1 + r_1)^{1 + \sigma}} = \frac{o_{n \rightarrow
     \infty} (1)}{(1 + r_1)^{1 - \sigma}} \]
  taking $\sigma > 0$ small enough. We use $\mathcal{Y}_n =
  \frac{\partial_{z_1} \Psi_n}{\| \partial_{z_1} \Psi_n \|_{\ast, B'_{d_n}}}$,
  $\frac{1}{\| \partial_{z_1} \Psi_n \|_{\ast, B'_{d_n}}} \leqslant K d_n^{1 +
  \lambda}$ and Corollary \ref{cor361405} to compute, for any $1 > \sigma >
  0$,
  \[ \left| \mathfrak{R}\mathfrak{e} \left( \frac{\nabla V}{V} . \nabla
     \mathcal{Y}_n \right) \right| \leqslant \left| \frac{\nabla V}{V} \right|
     \times | \nabla \mathcal{Y}_n | \leqslant \frac{K (\sigma) d_n^{1 +
     \lambda}}{r_1^{2 + \sigma} d_n^{1 - \sigma}} \leqslant \frac{K
     (\sigma)}{r_1^{2 + \sigma} d_n^{- \sigma - \lambda}} . \]
  In $B_{d_n} \backslash B'_{d_n}$, we have $r_1 \geqslant
  d_n^{\varepsilon'}$, therefore
  \[ \left| \mathfrak{R}\mathfrak{e} \left( \frac{\nabla V}{V} . \nabla
     \mathcal{Y}_n \right) \right| \leqslant \frac{K (\sigma)}{r_1^{1 -
     \alpha} d_n^{- \sigma - \lambda + (1 + \alpha + \sigma) \varepsilon'}} .
  \]
  Since we assume
  \[ \lambda < (1 + \alpha) \varepsilon', \]
  then we can choose $\sigma > 0$ small such that $- \sigma - \lambda + (1 +
  \alpha + \sigma) \varepsilon' > 0$ and deduce, in $B_{d_n} \backslash
  B'_{d_n}$, that
  \[ \left| \mathfrak{R}\mathfrak{e} \left( \frac{\nabla V}{V} . \nabla
     \mathcal{Y}_n \right) \right| \leqslant \frac{o_{n \rightarrow \infty}
     (1)}{r_1^{1 - \alpha}} . \]
  This result shows that (\ref{azz36}) holds on $B_{d_n}$. Now, we compute
  \[ \left| \mathfrak{I}\mathfrak{m} \left( \frac{\nabla V}{V} . \nabla
     \mathcal{Y}_n \right) \right| \leqslant \left| \mathfrak{R}\mathfrak{e}
     \left( \frac{\nabla V}{V} \right) .\mathfrak{I}\mathfrak{m} (\nabla
     \mathcal{Y}_n) \right| + \left| \mathfrak{R}\mathfrak{e} (\nabla
     \mathcal{Y}_n) .\mathfrak{I}\mathfrak{m} \left( \frac{\nabla V}{V}
     \right) \right|, \]
  and with Corollary \ref{cor361405}, Lemma \ref{dervor} and \ref{V-1est}, we
  estimate
  \[ \left| \mathfrak{R}\mathfrak{e} \left( \frac{\nabla V}{V} \right)
     .\mathfrak{I}\mathfrak{m} (\nabla \mathcal{Y}_n) \right| \leqslant K
     (\sigma) \left( \frac{1}{d_n^3} + \frac{1}{r_1^3} \right) \frac{d_n^{1 +
     \lambda}}{r_1^{1 + \sigma} d_n^{1 - \sigma}} \]
  and
  \[ \left| \mathfrak{R}\mathfrak{e} (\nabla \mathcal{Y}_n)
     .\mathfrak{I}\mathfrak{m} \left( \frac{\nabla V}{V} \right) \right|
     \leqslant K (\sigma) \frac{d_n^{1 + \lambda} }{r_1^{2 + \sigma} d_n^{1 -
     \sigma}} \left( \frac{1}{d_n^{2 - \varepsilon}} + \frac{1}{r_1} \right) .
  \]
  In $B_{d_n} \backslash B'_{d_n}$, we have $d_n^{\varepsilon} \geqslant r_1
  \geqslant d_n^{\varepsilon'}$, and with similar estimates as for the
  previous term, we check that, since $\lambda < (1 + \alpha) \varepsilon'$,
  we have
  \[ \lambda < (2 + \alpha) \varepsilon', \]
  for the first term, and
  \[ \lambda < (1 + \alpha) \varepsilon' \]
  for the second one. We can find $\sigma > 0$ such that
  \[ \left| \mathfrak{I}\mathfrak{m} \left( \frac{\nabla V}{V} . \nabla
     \mathcal{Y}_n \right) \right| \leqslant \frac{o_{n \rightarrow \infty}
     (1)}{(1 + r_1)^{2 - \alpha}} \]
  in $B_{d_n} \backslash B'_{d_n}$. We deduce that (\ref{azz37}) holds on
  $B_{d_n}$. Additionally, we will use (from Lemma \ref{dderpsy}) for $0 <
  \sigma < \sigma' < 1$,
  \begin{eqnarray}
    &  & \| V \chi \mathcal{Y}_n \|_{C^1 (\{ \tilde{r} \leqslant 3 \})}
    \nonumber\\
    & + & \| \tilde{r}^{1 + \sigma} \mathfrak{R}\mathfrak{e} (\mathcal{Y}_n)
    \|_{L^{\infty} (\{ \tilde{r} \geqslant 2 \})} + \| \tilde{r}^{1 + \sigma}
    \nabla \mathfrak{R}\mathfrak{e} (\mathcal{Y}_n) \|_{L^{\infty} (\{
    \tilde{r} \geqslant 2 \})} \nonumber\\
    & + & \| \tilde{r}^{\sigma} \mathfrak{I}\mathfrak{m} (\mathcal{Y}_n)
    \|_{L^{\infty} (\{ \tilde{r} \geqslant 2 \})} + \| \tilde{r}^{1 + \sigma}
    \nabla \mathfrak{I}\mathfrak{m} (\mathcal{Y}_n) \|_{L^{\infty} (\{
    \tilde{r} \geqslant 2 \})} \nonumber\\
    & \leqslant & K (\sigma, \sigma') c_n^{1 - \sigma'} d^{1 + \lambda}_n
    \nonumber\\
    & \leqslant & K (\sigma, \sigma') d_n^{\lambda + \sigma''}
    \label{3150310} 
  \end{eqnarray}
  and from (\ref{3130310}),
  \begin{equation}
    \| V\mathcal{H}_n \|_{L^{\infty} (\{ \tilde{r} \leqslant 3 \})} + \|
    \tilde{r}^{1 + \sigma} \mathfrak{R}\mathfrak{e} (\mathcal{H}_n)
    \|_{L^{\infty} (\{ \tilde{r} \geqslant 2 \})} + \| \tilde{r}^{2 + \sigma}
    \mathfrak{I}\mathfrak{m} (\mathcal{H}_n) \|_{L^{\infty} (\{ \tilde{r}
    \geqslant 2 \})} \leqslant K (\sigma, \sigma') d_n^{\lambda + \sigma''}
    \label{3160310}
  \end{equation}
  to do estimates outside of $B_{d_n}$. These estimates are not optimal (in
  particular in the smallness in $c_n$) but we will only use them on parts far
  away from the center of $V_1$. Thanks to (\ref{3140410}), we have a slightly
  better estimate in the right half-plane, that is, for $0 < \sigma < 1$,
  \begin{equation}
    | \mathcal{H}_n | \leqslant K | h_n | d^{1 + \lambda}_n \leqslant \frac{K
    (\sigma) d_n^{\lambda - \sigma}}{(1 + r_1)} . \label{3200410}
  \end{equation}
 
\ 
  
  \begin{tmindent}
    Step 7.  Elliptic estimates.
  \end{tmindent}
 
 \ 
  
  We follow the proof of Proposition \ref{invertop}. At this point, we have on
  $\mathcal{Y}_n$ that $\| \mathcal{Y}_n \|_{\ast, B'_{d_n}} = 1$, $\|
  V\mathcal{Y}_n \|_{L^{\infty} (\{ r_{1 \nosymbol} \leqslant R \})} + \|
  \nabla (V\mathcal{Y}_n) \|_{L^{\infty} (\{ r_1 \leqslant R \})}
  \rightarrow_{\nosymbol} 0$ as $n \rightarrow \infty$ for any $R > 1$, and
  with $\mathcal{Y}_n = \mathcal{Y}_1 + i\mathcal{Y}_2$,
  \[ | \Delta \mathcal{Y}_2 + c \partial_{x_2} \mathcal{Y}_1 | \leqslant
     \frac{o_{R \rightarrow \infty} (1) + o^R_{n \rightarrow \infty} (1)}{(1 +
     r_1)^{2 - \alpha}}, \]
  \[ | \Delta \mathcal{Y}_1 - 2 | V |^2 \mathcal{Y}_1 | \leqslant \frac{o_{R
     \rightarrow \infty} (1) + o^R_{n \rightarrow \infty} (1)}{(1 + r_1)^{1 -
     \alpha}} . \]
  We want to show that $\| \mathcal{Y}_n \|_{\ast, B'_{d_n}} = o_{R
  \rightarrow \infty} (1) + o^R_{n \rightarrow \infty} (1)$. We want to use
  similar elliptic estimates as in the proof of Proposition \ref{invertop},
  but we have to show that they still work if we only have the estimate in
  $B_{d_n} = B (d_n \overrightarrow{e_1}, d_n^{\varepsilon})$ and we want the
  final estimates in $B'_{d_n} = B (d_n \overrightarrow{e_1},
  d_n^{\varepsilon'})$, with $\varepsilon' < \varepsilon$.
 
 \ 
  
  \begin{tmindent}
    Step 7.1.  Elliptic estimate for \ensuremath{\mathcal{Y}}\tmrsub{2}.
  \end{tmindent}
  
 \
  
  We start by solving the following problem in $\mathbbm{R}^2$:
  \[ \Delta \zeta = f, \]
  with
  \[ f \assign -\mathcal{H}_2 - 2\mathfrak{I}\mathfrak{m} \left( \frac{\nabla
     V}{V} . \nabla \mathcal{Y}_n \right), \]
  which is odd in $x_2$ (the derivation with respect to $z_1$ breaks the
  symmetry on $x_1$, but not on $x_2$) and satisfies
  \[ | f | \leqslant \frac{o_{R \rightarrow \infty} (1) + o^R_{n \rightarrow
     \infty} (1)}{(1 + r_1)^{2 - \alpha}} \]
  in $B_{d_n} = B (d_n \overrightarrow{e_1}, d_n^{\varepsilon})$, and, from
  (\ref{3150310}) and (\ref{3160310}),

  \begin{equation}
    | f | \leqslant \frac{K (\sigma, \sigma') d_n^{\lambda + \sigma'}}{(1 +
    \tilde{r})^{2 + \sigma}} \label{3192305}
  \end{equation}
  in $\mathbbm{R}^2$, for any $1 > \sigma' > \sigma > 0$. Similarly as in the
  proof of Lemma \ref{lapzeta}, we write, for $x \in B (d_n
  \overrightarrow{e_1}, d_n^{\varepsilon'})$,
  \begin{eqnarray}
    \nabla \zeta (x) & = & \frac{1}{2 \pi} \int_{\mathbbm{R}^2} \frac{x - Y}{|
    x - Y |^2} f (Y) d Y \label{3201705} . 
  \end{eqnarray}
  By symmetry (see in particular Lemma \ref{l381805}), we have
  \[ \int_{B (d_n \overrightarrow{e_1}, 2 | x - d_n \overrightarrow{e_1} |)} f
     (Y) d Y = 0, \]
  hence
  \begin{eqnarray*}
    &  & \int_{B (d_n \overrightarrow{e_1}, d_n^{\varepsilon})} \frac{x -
    Y}{| x - Y |^2} f (Y) d Y\\
    & = & \int_{B (d_n \overrightarrow{e_1}, d_n^{\varepsilon})} f (Y) \left(
    \frac{x - Y}{| x - Y |^2} -\tmmathbf{1}_{\{ | Y - d_n \overrightarrow{e_1}
    | \leqslant 2 | x - d_n \overrightarrow{e_1} | \}} \frac{x - d_n
    \overrightarrow{e_1}}{| x - d_n \overrightarrow{e_1} |^2} \right) d Y,
  \end{eqnarray*}
  and then, we infer
  \begin{eqnarray*}
    & \left| \frac{1}{2 \pi} \int_{B (d_n \overrightarrow{e_1},
    d_n^{\varepsilon})} f (Y) \left( \frac{x - Y}{| x - Y |^2}
    -\tmmathbf{1}_{\{ | Y - d_n \overrightarrow{e_1} | \leqslant 2 | x - d_n
    \overrightarrow{e_1} | \}} \frac{x - d_n \overrightarrow{e_1}}{| x - d_n
    \overrightarrow{e_1} |^2} \right) d Y \right| & \\
    \leqslant & \int_{B (d_n \overrightarrow{e_1}, d_n^{\varepsilon})}
    \frac{(o_{R \rightarrow \infty} (1) + o^R_{n \rightarrow \infty} (1))}{(1
    + | Y |)^{2 - \alpha}} \left| \frac{x - Y}{| x - Y |^2} -\tmmathbf{1}_{\{
    | Y - d_n \overrightarrow{e_1} | \leqslant 2 | x - d_n
    \overrightarrow{e_1} | \}} \frac{x - d_n \overrightarrow{e_1}}{| x - d_n
    \overrightarrow{e_1} |^2} \right| d Y. & 
  \end{eqnarray*}
  We do the same change of variable $Z = Y - d_n \overrightarrow{e_1}$ as in
  the proof of lemma \ref{lapzeta}, and we are now at
  \[ \begin{array}{lll}
       & \left| \frac{1}{2 \pi} \int_{B (d_n \overrightarrow{e_1},
       d_n^{\varepsilon})} f (Y) \left( \frac{x - Y}{| x - Y |^2}
       -\tmmathbf{1}_{\{ | Y - d_n \overrightarrow{e_1} | \leqslant 2 | x -
       d_n \overrightarrow{e_1} | \}} \frac{x - d_n \overrightarrow{e_1}}{| x
       - d_n \overrightarrow{e_1} |^2} \right) d Y \right| & \\
       \leqslant & \int_{B (0, d_n^{\varepsilon})} \frac{(o_{R \rightarrow
       \infty} (1) + o^R_{n \rightarrow \infty} (1))}{(1 + | Z |)^{2 -
       \alpha}} \left| \frac{x - d_n \overrightarrow{e_1} - Z}{| x - d_n
       \overrightarrow{e_1} - Z |^2} -\tmmathbf{1}_{\{ | Z | \leqslant 2 | x -
       d_n \overrightarrow{e_1} | \}} \frac{x - d_n \overrightarrow{e_1}}{| x
       - d_n \overrightarrow{e_1} |^2} \right| d Z. & 
     \end{array} \]
  We want to follow the same computations as in the proof of Lemma
  \ref{lapzeta}, but now $\frac{1}{(1 + | Z |)^{2 - \alpha}}$ is no longer
  integrable, and this is why we added the function $\tmmathbf{1}_{\{ | Z |
  \leqslant 2 | x - d_n \overrightarrow{e_1} | \}}$. If $| Z | \geqslant 2 | x
  - d_n \overrightarrow{e_1} |$, then $| x - d_n \overrightarrow{e_1} - Z |
  \geqslant | Z | / 2$ and
  \begin{eqnarray*}
    &  & \int_{B (0, d_n^{\varepsilon}) \cap \{ | Z | \geqslant 2 | x - d_n
    \overrightarrow{e_1} | \}} \frac{(o_{R \rightarrow \infty} (1) + o^R_{n
    \rightarrow \infty} (1))}{(1 + | Z |)^{2 - \alpha}} \left| \frac{x - d_n
    \overrightarrow{e_1} - Z}{| x - d_n \overrightarrow{e_1} - Z |^2} \right|
    d Z\\
    & \leqslant & \int_{B (0, d_n^{\varepsilon}) \cap \{ | Z | \geqslant 2 |
    x - d_n \overrightarrow{e_1} | \}} \frac{(o_{R \rightarrow \infty} (1) +
    o^R_{n \rightarrow \infty} (1))}{(1 + | Z |)^{2 - \alpha} | Z |} d Z\\
    & \leqslant & \frac{\nobracket o_{R \rightarrow \infty} (1) + o^R_{n
    \rightarrow \infty} (1 \nobracket)}{(1 + | x - d_n \overrightarrow{e_1}
    |)^{1 - \alpha}} .
  \end{eqnarray*}
  Then, in $\{ | Z | \leqslant 2 | x - d_n \overrightarrow{e_1} | \}$, we
  follow exactly the same computation as in the proof of the proof of Lemma
  \ref{lapzeta} for the remaining part of the integral, and we conclude that
  \begin{eqnarray*}
    &  & \left| \frac{1}{2 \pi} \int_{B (d_n \overrightarrow{e_1},
    d_n^{\varepsilon})} f (Y) \left( \frac{x - Y}{| x - Y |^2}
    -\tmmathbf{1}_{\{ | Y - d_n \overrightarrow{e_1} | \leqslant 2 | x - d_n
    \overrightarrow{e_1} | \}} \frac{x - d_n \overrightarrow{e_1}}{| x - d_n
    \overrightarrow{e_1} |^2} \right) d Y \right|\\
    & \leqslant & \frac{\nobracket o_{R \rightarrow \infty} (1) + o^R_{n
    \rightarrow \infty} (1 \nobracket)}{(1 + | x - d_n \overrightarrow{e_1}
    |)^{1 - \alpha}} .
  \end{eqnarray*}

  We are left with the estimation of (after a translation)
  \[ \int_{\mathbbm{R}^2 \backslash B (0, d_n^{\varepsilon})} \frac{| f (Z +
     d_n \overrightarrow{e_1}) |}{| Z - (x - d_n \overrightarrow{e_1}) |} d Z.
  \]
  By symmetry (see Lemma \ref{l381805}), we have
  
  \
  \[ \int_{\mathbbm{R}^2 \backslash B (0, d_n^{\varepsilon})} \frac{f (Z +
     d_n \overrightarrow{e_1})}{| Z |} d Z = 0, \]
  therefore
  \begin{eqnarray*}
    \left| \int_{\mathbbm{R}^2 \backslash B (0, d_n^{\varepsilon})} \frac{f (Z
    + d_n \overrightarrow{e_1})}{| Z - (x - d_n \overrightarrow{e_1}) |} d Z
    \right| & = & \left| \int_{\mathbbm{R}^2 \backslash B (0,
    d_n^{\varepsilon})} f (Z + d_n \overrightarrow{e_1}) \left( \frac{1}{| Z -
    (x - d_n \overrightarrow{e_1}) |} - \frac{1}{| Z |} \right) d Z \right| .
  \end{eqnarray*}
  Since $| x - d_n \overrightarrow{e_1} | \leqslant d_n^{\varepsilon'} \ll
  d_n^{\varepsilon} \leqslant | Z |$, we have, for $Z \in \mathbbm{R}^2
  \backslash B (0, d_n^{\varepsilon})$,
  \[ \left| \frac{1}{| Z - (x - d_n \overrightarrow{e_1}) |} - \frac{1}{| Z |}
     \right| \leqslant \frac{K | x - d_n \overrightarrow{e_1} |}{| Z |^2}
     \leqslant \frac{K d_n^{\varepsilon'}}{d_n^{2 \varepsilon}}, \]
  thus, with (\ref{3192305}),
  \begin{eqnarray*}
    &  & \left| \int_{\mathbbm{R}^2 \backslash B (0, d_n^{\varepsilon})} f (Z
    + d_n \overrightarrow{e_1}) \left( \frac{1}{| Z - (x - d_n
    \overrightarrow{e_1}) |} - \frac{1}{| Z |} \right) d Z \right|\\
    & \leqslant & \frac{K (\sigma, \sigma') d_n^{\varepsilon' + \lambda +
    \sigma'}}{d_n^{2 \varepsilon}} \left| \int_{\mathbbm{R}^2 \backslash B
    (d_n \overrightarrow{e_1}, d_n^{\varepsilon})} \frac{1}{(1 + \tilde{r})^{2
    + \sigma}} \right|\\
    & \leqslant & K (\sigma, \sigma') d_n^{\varepsilon' + \lambda - 2
    \varepsilon + \sigma'} .
  \end{eqnarray*}
  In particular, we have
  \[ \left| \int_{\mathbbm{R}^2 \backslash B (0, d_n^{\varepsilon})} f (Y +
     d_n \overrightarrow{e_1}) \left( \frac{1}{| Y - (x - d_n
     \overrightarrow{e_1}) |} - \frac{1}{| Y |} \right) d Y \right| \leqslant
     \frac{o_{n \rightarrow \infty} (1)}{(1 + | x - d_n \overrightarrow{e_1}
     |)^{1 - \alpha}} \]
  if, since $| x - d_n \overrightarrow{e_1} | \leqslant d_n^{\varepsilon'}$,
  \[ K (\sigma, \sigma') d_n^{\varepsilon' + \lambda - 2 \varepsilon +
     \sigma'} \leqslant \frac{o_{n \rightarrow \infty} (1)}{d_n^{\varepsilon'
     (1 - \alpha)}}, \]
  hence, since we make the assumption
  \[ \lambda + \varepsilon' (1 - \alpha) < 2 \varepsilon - \varepsilon', \]
  we can find $\sigma' > 0$ such that, for $x \in B (d_n \overrightarrow{e_1},
  d_n^{\varepsilon})$,
  \begin{equation}
    | \nabla \zeta (x) | \leqslant \frac{\nobracket o_{R \rightarrow \infty}
    (1) + o^R_{n \rightarrow \infty} (1 \nobracket)}{(1 + | x - d_n
    \overrightarrow{e_1} |)^{1 - \alpha}} . \label{aend}
  \end{equation}
  Using Lemma \ref{lapzeta} and (\ref{3192305}), we also have, in all
  $\mathbbm{R}^2$ this time, that
  \begin{equation}
    | \nabla \zeta (x) | \leqslant \frac{K (\sigma, \sigma') d_n^{\lambda +
    \sigma'}}{(1 + \tilde{r})^{1 + \sigma}} . \label{3230410}
  \end{equation}
  Here, we cannot integrate from infinity (since the estimate is only on a
  ball) to get an estimation on $\zeta$, but this will be dealt with later on.
  
  \
  
  Now, we define $\mathcal{Y}'_2 \assign \mathcal{Y}_2 - \zeta$, and we have,
  for $j \in \{ 1, 2 \}$,
  \[ \partial_{x_j} \mathcal{Y}'_2 = K_j \ast f', \]
  where
  \[ f' \assign -\mathcal{H}_1 - 2\mathfrak{R}\mathfrak{e} \left( \frac{\nabla
     V}{V} . \nabla \mathcal{Y}_n \right) - (1 - | V |^2) \mathcal{Y}_1 - c
     \partial_{x_2} \zeta . \]
  We first estimate the convolution in $B (d_n \overrightarrow{e_1},
  d_n^{\varepsilon})$. With $\| \mathcal{Y}_n \|_{\ast, B'_{d_n}} = 1$, we
  check that, with some margin in $B (d_n \overrightarrow{e_1},
  d_n^{\varepsilon'})$,
  \[ \left| 2\mathfrak{R}\mathfrak{e} \left( \frac{\nabla V}{V} . \nabla
     \mathcal{Y}_n \right) - (1 - | V |^2) \mathcal{Y}_1 \right| \leqslant
     \frac{o_{R \rightarrow \infty} (1)}{(1 + r_1)^{3 / 2 - \alpha}} . \]
  Now, we have shown in step 6 that
  \[ \left| \mathfrak{R}\mathfrak{e} \left( \frac{\nabla V}{V} . \nabla
     \mathcal{Y}_n \right) \right| \leqslant \frac{o_{n \rightarrow \infty}
     (1)}{r_1^{1 - \alpha + \sigma''}} \]
  for some $\sigma'' > 0$. In $B (d_n \overrightarrow{e_1}, d_n^{\varepsilon})
  \backslash B (d_n \overrightarrow{e_1}, d_n^{\varepsilon'})$, we have
  \[ | (1 - | V |^2) \mathcal{Y}_1 | \leqslant \frac{d_n^{\lambda +
     \sigma'}}{r_1^{3 + \sigma}} \leqslant \frac{d_n^{\lambda + \sigma' - (2 +
     \alpha - \sigma'') \varepsilon'}}{r_1^{1 - \alpha + \sigma''}} =
     \frac{o_{n \rightarrow \infty} (1)}{r_1^{1 - \alpha + \sigma''}} \]
  given that $\sigma'$ and $\sigma''$ are small enough since $\lambda - (2 +
  \alpha) \varepsilon' < 0$. Therefore, following the proof of Lemma
  \ref{P3kerest1} (only changing the integral from $\mathbbm{R}^2$ to $B (d_n
  \overrightarrow{e_1}, d_n^{\varepsilon})$), we check with the same
  computations (since we have some margin $\sigma'' > 0$ on the decay) that
  \[ \left| \int_{B (d_n \overrightarrow{e_1}, d_n^{\varepsilon})} K_j (x - Y)
     \left( 2\mathfrak{R}\mathfrak{e} \left( \frac{\nabla V}{V} . \nabla
     \mathcal{Y}_n \right) - (1 - | V |^2) \mathcal{Y}_1 \right) (Y) d Y
     \right| \leqslant \frac{o_{R \rightarrow \infty} (1)}{(1 + r_1)^{1 -
     \alpha}} . \]
  Now, using (\ref{aend}), we check that, following the proof of Lemma
  \ref{P3kerest1} (using H{\"o}lder inequality instead of Cauchy-Schwarz in
  the last estimate to make sur that the two integrals are well defined, this
  does not change the final estimate),
  \[ \left| \int_{B (d_n \overrightarrow{e_1}, d_n^{\varepsilon})} K_j (x - Y)
     (c \partial_{x_2} \zeta) (Y) d Y \right| \leqslant \frac{c (\nobracket
     o_{R \rightarrow \infty} (1) + o^R_{n \rightarrow \infty} (1
     \nobracket))}{(1 + r_1)^{1 - \alpha - 1 / 10}} . \]
  And, since $x \in B (d_n \overrightarrow{e_1}, d_n^{\varepsilon'})$, $c (1 +
  r_1)^{1 / 10} \leqslant K$, therefore
  \[ \left| \int_{B (d_n \overrightarrow{e_1}, d_n^{\varepsilon})} K_j (x -
     Y) (c \partial_{x_2} \zeta) (Y) d Y \right| \leqslant \frac{\nobracket
     o_{R \rightarrow \infty} (1) + o^R_{n \rightarrow \infty} (1
     \nobracket)}{(1 + r_1)^{1 - \alpha}} . \]
  For the last remaining term, we use (\ref{3140410}) with $\sigma =
  \frac{\lambda + 1}{2}$ to estimate
  \[ | \mathcal{H}_1 | \leqslant \frac{o^R_{n \rightarrow 0} (1)}{(1 + r_1)},
  \]
  and then, from Lemma \ref{P3kerest1} (only changing the integral from
  $\mathbbm{R}^2$ to $B (d_n \overrightarrow{e_1}, d_n^{\varepsilon})$ in the
  proof), we infer
  \[ \left| \int_{B (d_n \overrightarrow{e_1}, d_n^{\varepsilon})} K_j (x - Y)
     \mathcal{H}_1 (Y) d Y \right| \leqslant \frac{\nobracket o_{R \rightarrow
     \infty} (1) + o^R_{n \rightarrow \infty} (1 \nobracket)}{(1 + r_1)^{1 -
     \alpha}} . \]
  Combining these estimates, we have shown that
  \[ \left| \int_{B (d_n \overrightarrow{e_1}, d_n^{\varepsilon})} K_j (x - Y)
     f' (Y) d Y \right| \leqslant \frac{\nobracket o_{R \rightarrow \infty}
     (1) + o^R_{n \rightarrow \infty} (1 \nobracket)}{(1 + r_1)^{1 - \alpha}}
     . \]

  Now, we focus on the left half-plane. From (\ref{3173101}), we have
  \[ | \mathcal{H}_1 | \leqslant \frac{K (\sigma) c_n^{1 - \sigma} d^{1 +
     \lambda}_n}{(1 + r_{- 1})^2} . \]
  Furthermore, we check, using (\ref{3150310}) and (\ref{3230410}) that, in
  the left half-plane,
  \[ \left| - 2\mathfrak{R}\mathfrak{e} \left( \frac{\nabla V}{V} . \nabla
     \mathcal{Y}_n \right) - (1 - | V |^2) \mathcal{Y}_1 \right| \leqslant
     \frac{K (\sigma, \sigma') d_n^{1 + \lambda} c_n^{1 - \sigma'}}{(1 + r_{-
     1})^{2 + \sigma}} \]
  and
  \[ | c_n \partial_{x_2} \zeta | \leqslant \frac{K (\sigma, \sigma')
     d_n^{\lambda + \sigma'} c_n}{(1 + r_{- 1})^{1 + \sigma}} . \]

  We have by Theorem \ref{P3gravejatftw} (since $x \in B (d_n
  \overrightarrow{e_1}, d_n^{\varepsilon'})$) that $| K_j (x - Y) | \leqslant
  \frac{K}{d^{\beta}_n (1 + \tilde{r} (Y))^{2 - \beta}}$ for $Y$ in the left
  half-plane, for any $0 \leqslant \beta \leqslant 2$. Therefore, taking
  $\beta = 2 - \sigma$, we have
  \[ \left| \int_{\{ y_1 \leqslant 0 \}} K_j (x - Y) \mathcal{H}_1 (Y) d Y
     \right| \leqslant \int_{\mathbbm{R}^2} \frac{K (\sigma, \sigma')
     d_n^{\lambda + \sigma + \sigma' - 2}}{(1 + \tilde{r})^{2 + \sigma}}
     \leqslant \frac{K (\sigma, \sigma') d_n^{\lambda + \sigma + \sigma' - 2 +
     (1 - \alpha) \varepsilon'}}{(1 + | x - d_n \overrightarrow{e_1} |)^{1 -
     \alpha}} . \]
  Taking $\beta = 2$, we have
  \begin{eqnarray*}
    &  & \left| \int_{\{ Y_1 \leqslant 0 \}} K_j (x - Y) \left( -
    2\mathfrak{R}\mathfrak{e} \left( \frac{\nabla V}{V} . \nabla \mathcal{Y}_n
    \right) - (1 - | V |^2) \mathcal{Y}_1 \right) (Y) d Y \right|\\
    & \leqslant & \int_{\mathbbm{R}^2} \frac{K (\sigma, \sigma') d_n^{\lambda
    + \sigma' - 2}}{(1 + \tilde{r})^{2 + \sigma}}\\
    & \leqslant & \frac{K (\sigma, \sigma') d_n^{\lambda + \sigma' - 2 + (1 -
    \alpha) \varepsilon'}}{(1 + | x - d_n \overrightarrow{e_1} |)^{1 -
    \alpha}},
  \end{eqnarray*}
  and finally, taking $\beta = 1$, we estimate
  \[ \left| \int_{\{ Y_1 \leqslant 0 \}} K_j (x - Y) c_n \partial_{x_2} \zeta
     (Y) d Y \right| \leqslant \int_{\mathbbm{R}^2} \frac{K (\sigma, \sigma')
     d_n^{\lambda + \sigma' - 2}}{(1 + \tilde{r})^{2 + \sigma}} \leqslant
     \frac{K (\sigma, \sigma') d_n^{\lambda + \sigma' - 2 + (1 - \alpha)
     \varepsilon'}}{(1 + | x - d_n \overrightarrow{e_1} |)^{1 - \alpha}} . \]
  Thus, taking $\sigma' > \sigma > 0$ small enough, since $\lambda - 2 + (1 -
  \alpha) \varepsilon' < 0$, we have
  \[ \left| \int_{\{ Y_1 \leqslant 0 \}} K_j (x - Y) f' (Y) d Y \right|
     \leqslant \frac{o_{n \rightarrow 0} (1)}{(1 + | x - d_n
     \overrightarrow{e_1} |)^{1 - \alpha}} . \]

  We are left with the estimation in $\Omega \assign \{ Y_1 \geqslant 0 \}
  \backslash B (d_n \overrightarrow{e_1}, d_n^{\varepsilon})$. We infer that,
  in $\Omega$, we have, for $0 < \sigma < \sigma' < 1$
  \[ | f' | \leqslant \frac{K (\sigma') d_n^{\lambda - \sigma'}}{(1 + r_1)} +
     \frac{K (\sigma) d_n^{\lambda + \sigma}}{(1 + r_1)^{2 + \sigma}} . \]
  Indeed, from equation (\ref{3200410}) and (\ref{3230410}), we have $|
  \mathcal{H}_1 - c \partial_{x_2} \zeta | \leqslant \frac{K (\sigma)
  d_n^{\lambda + \sigma}}{(1 + r_1)}$, and using (\ref{3150310}), we check
  that
  \[ \left| 2\mathfrak{R}\mathfrak{e} \left( \frac{\nabla V}{V} . \nabla
     \mathcal{Y}_n \right) - (1 - | V |^2) \mathcal{Y}_1 \right| \leqslant
     \frac{K (\sigma) d_n^{\lambda + \sigma}}{(1 + \tilde{r})^{2 + \sigma}} .
  \]
  Now, for $y \in \Omega$, $x \in B (d_n \overrightarrow{e_1},
  d_n^{\varepsilon'})$, we have from Theorem \ref{P3gravejatftw} that
  \[ | K_j (x - Y) | \leqslant \frac{K}{d_n^{2 \varepsilon}} \]
  and
  \[ | K_j (x - Y) | \leqslant \frac{K}{(1 + \tilde{r} (Y))^{3 / 2}
     d_n^{\varepsilon / 2}} . \]
  We deduce that, for $x \in B (d_n \overrightarrow{e_1},
  d_n^{\varepsilon'})$,
  \begin{eqnarray*}
    \int_{\Omega} | K_j (x - Y) | \frac{K (\sigma') d_n^{\lambda -
    \sigma'}}{(1 + r_1 (Y))} d Y & \leqslant & K (\sigma') d_n^{\lambda -
    \sigma' - \varepsilon / 2} \int_{\mathbbm{R}^2} \frac{K}{(1 + \tilde{r}
    (Y))^{5 / 2}} d Y\\
    & \leqslant & \frac{K (\sigma') d_n^{\lambda - \sigma' + (1 - \alpha)
    \varepsilon' - \varepsilon / 2}}{(1 + | x - d_n \overrightarrow{e_1} |)^{1
    - \alpha}} = \frac{o_{n \rightarrow 0} (1)}{(1 + | x - d_n
    \overrightarrow{e_1} |)^{1 - \alpha}}
  \end{eqnarray*}
  taking $\sigma' < 1$ large enough (since $\lambda + (1 - \alpha)
  \varepsilon' - 1 - \varepsilon / 2 < 0$), and
  \[ \begin{array}{lll}
       \int_{\Omega} | K_j (x - Y) | \frac{K (\sigma) d_n^{\lambda +
       \sigma}}{(1 + \tilde{r} (Y))^{2 + \sigma}} d Y & \leqslant & K (\sigma)
       d_n^{\lambda + \sigma - 2 \varepsilon} \int_{\mathbbm{R}^2} \frac{1}{(1
       + \tilde{r} (Y))^{2 + \sigma}} d Y\\
       & \leqslant & \frac{K (\sigma) d_n^{\lambda + \sigma + (1 - \alpha)
       \varepsilon' - 2 \varepsilon}}{(1 + | x - d_n \overrightarrow{e_1}
       |)^{1 - \alpha}} = \frac{o_{n \rightarrow 0} (1)}{(1 + | x - d_n
       \overrightarrow{e_1} |)^{1 - \alpha}}
     \end{array} \]
  taking $\sigma > 0$ small enough (since $\lambda + (1 - \alpha) \varepsilon'
  - 2 \varepsilon < 0$). We deduce that, for $x \in B (d_n
  \overrightarrow{e_1}, d_n^{\varepsilon'})$,
  \[ | \partial_{x_j} \mathcal{Y}'_2 | = | K_j \ast f' | \leqslant \frac{o_{n
     \rightarrow 0} (1) + o_{R \rightarrow \infty} (1)}{(1 + | x - d_n
     \overrightarrow{e_1} |)^{1 - \alpha}} . \]
  With (\ref{aend}), we have shown that
  \[ | \partial_{x_j} \mathcal{Y}_2 | \leqslant \frac{o_{n \rightarrow 0} (1)
     + o_{R \rightarrow \infty} (1)}{(1 + | x - d_n \overrightarrow{e_1} |)^{1
     - \alpha}} . \]
  Now, since $| \mathcal{Y}_2 | + | \nabla \mathcal{Y}_2 | = o_{R \rightarrow
  \infty} (1)$ in $B (d_n \overrightarrow{e_1}, 10)$, by integration from $d_n
  \overrightarrow{e_1}$, we check that, since $\alpha > 0$,
  \[ | \mathcal{Y}_2 | \leqslant \frac{o_{n \rightarrow 0} (1) + o_{R
     \rightarrow \infty} (1)}{(1 + | x - d_n \overrightarrow{e_1} |)^{-
     \alpha}} . \]

\
  
  \begin{tmindent}
    Step 7.2.  Elliptic estimate for $\mathcal{Y}_1$.
  \end{tmindent}
  
\
  
  For $\mathcal{Y}_1$ we also use the function $K_0$ and we have
  \[ \mathcal{Y}_1 = \frac{1}{2 \pi} K_0 \left( \sqrt{2} | . | \right) \ast
     (- \Delta \mathcal{Y}_1 + 2\mathcal{Y}_1), \]
  therefore
  \begin{eqnarray*}
    | \mathcal{Y}_1 | (x) & \leqslant & \int_{\tilde{B}_{d_n} (x)} \frac{1}{2
    \pi} K_0 \left( \sqrt{2} | x - Y | \right) | (- \Delta \mathcal{Y}_1 +
    2\mathcal{Y}_1) (Y) | d Y\\
    & + & \int_{\mathbbm{R} \backslash \tilde{B}_{d_n} (x)} \frac{1}{2 \pi}
    K_0 \left( \sqrt{2} | x - Y | \right) | (- \Delta \mathcal{Y}_1 +
    2\mathcal{Y}_1) (Y) | d Y,
  \end{eqnarray*}
  where $B_{d_n} (x) = B (x - d_n \overrightarrow{e_1}, d_n^{\varepsilon})$.
  The first term can be computed as in the proof of Lemma \ref{see}, and for
  the second term, in $\mathbbm{R} \backslash B_{d_n}$, we have
  \[ K_0 \left( \sqrt{2} | x | \right) \leqslant K e^{- d_n^{\varepsilon / 2}}
     e^{- | x |^{1 / 2}} \]
  from Lemma \ref{Kzero}, which, with (\ref{3150310}) and (\ref{3160310}),
  make the term integrable and a $o_{d_n \rightarrow \infty} (e^{-
  d_n^{\varepsilon / 4}})$, which is enough to show that
  \[ | \nabla \mathcal{Y}_1 | + | \mathcal{Y}_1 | \leqslant \frac{o_{n
     \rightarrow \infty} (1) + o_{R \rightarrow \infty} (1)}{(1 + r_1)^{1 -
     \alpha}} . \]

\
  
  \begin{tmindent}
    Step 8.  Conclusion.
  \end{tmindent}

\  
  
  We conclude that there is a contradiction, as in the end of the proof of
  Proposition \ref{invertop}. This ends the proof of Proposition \ref{derg}.
\end{proof}

In the rest of this chapter, we take $\alpha, \varepsilon, \varepsilon',
\lambda$ such that they satisfy the conditions of Proposition \ref{derg}, and
\[ \lambda + (1 - \alpha) \varepsilon' > 1. \]
\subsection{Proof of $\int_{B (d \protect\overrightarrow{e_1}, d^{\varepsilon'})}
\mathfrak{R}\mathfrak{e} (\overline{\partial_d V} \partial_{z_1} (\tmop{TW}_c
(V)))_{| d = d_c \nobracket} = \frac{- \pi}{d_c^2} + o_{d_c \rightarrow
\infty} \left( \frac{1}{d_c^2} \right)$}\label{35}

From (\ref{E2}), the equation on $V$ is
\[ \tmop{TW}_c (V) = E - i c \partial_{x_2} V = - 2 \nabla V_1 . \nabla V_{-
   1} + (1 - | V_1 |^2) (1 - | V_{- 1} |^2) V_1 V_{- 1} - i c \partial_{x_2}
   (V_1 V_{- 1}) . \]
We use Lemma \ref{deryz} to compute
\[ \partial_{z_1} V = \partial_{x_1} V_1 V_{- 1} + \partial_{x_1} V_{- 1} V_1
   - (- \partial_{x_1} V_1 V_{- 1} + \partial_{x_1} V_{- 1} V_1) = 2
   \partial_{x_1} V_1 V_{- 1} . \]
Therefore
\[ \partial_{z_1} \tmop{TW}_c (V) = - 4 \nabla V_1 . \nabla \partial_{x_1}
   V_{- 1} + 2 (1 - | V_1 |^2) V_1 \partial_{x_1} ((1 - | V_{- 1} |^2) V_{-
   1}) - 2 i c \partial_{x_2} (V_1 \partial_{x_1} V_{- 1}), \]
and then
\begin{eqnarray*}
  \int_{B (d \overrightarrow{e_1}, d^{\varepsilon'})} \mathfrak{R}\mathfrak{e}
  (\overline{\partial_d V} \partial_{z_1} (\tmop{TW}_c (V))) & = & - 4 \int_{B
  (d \overrightarrow{e_1}, d^{\varepsilon'})} \mathfrak{R}\mathfrak{e}
  (\overline{\partial_d V} \nabla V_1 . \nabla \partial_{x_1} V_{- 1})\\
  & + & 2 \int_{B (d \overrightarrow{e_1}, d^{\varepsilon'})}
  \mathfrak{R}\mathfrak{e} (\overline{\partial_d V} (1 - | V_1 |^2) V_1
  \partial_{x_1} ((1 - | V_{- 1} |^2) V_{- 1}))\\
  & - & 2 \int_{B (d \overrightarrow{e_1}, d^{\varepsilon'})}
  \mathfrak{R}\mathfrak{e} (\overline{\partial_d V} i c \partial_{x_2} (V_1
  \partial_{x_1} V_{- 1})) .
\end{eqnarray*}
We want to compute this quantity at $d = d_c$. We omit the subscript and use
only $d$ in this proof. In fact, it works for any $d$ such that $\frac{1}{2 d}
\leqslant c \leqslant \frac{2}{d}$.

\

\begin{tmindent}
  Step 1.  Proof of $\int_{B (d \overrightarrow{e_1}, d^{\varepsilon'})}
  \mathfrak{R}\mathfrak{e} (\overline{\partial_d V} (1 - | V_1 |^2) V_1
  \partial_{x_1} ((1 - | V_{- 1} |^2) V_{- 1})) = o_{d \rightarrow \infty}
  \left( \frac{1}{d^2} \right)$.
\end{tmindent}

\

First remark that $\partial_{x_1} ((1 - | V_{- 1} |^2) V_{- 1}) = O_{d
\rightarrow \infty} \left( \frac{1}{d^3} \right)$ in $B (d
\overrightarrow{e_1}, d^{\varepsilon'})$ by Lemma \ref{V-1est} and
\[ (1 - | V_1 |^2) V_1 \overline{\partial_d V} = O_{r_1 \rightarrow \infty}
   \left( \frac{1}{r_1^3} \right) \]
therefore
\[ \int_{B (d \overrightarrow{e_1}, d^{\varepsilon'})}
   \mathfrak{R}\mathfrak{e} (\overline{\partial_d V} (1 - | V_1 |^2) V_1
   \partial_{x_1} ((1 - | V_{- 1} |^2) V_{- 1})) = o_{d \rightarrow \infty}
   \left( \frac{1}{d^2} \right) . \]

\

\begin{tmindent}
  Step 2.  Proof of $\int_{B (d \overrightarrow{e_1}, d^{\varepsilon'})}
  \mathfrak{R}\mathfrak{e} (\overline{\partial_d V} i c \partial_{x_2} (V_1
  \partial_{x_1} V_{- 1})) = o_{d \rightarrow \infty} \left( \frac{1}{d^2}
  \right)$.
\end{tmindent}

\

Now we compute
\[ i c \partial_{x_2} (V_1 \partial_{x_1} V_{- 1}) = i c \partial_{x_2} V_1
   \partial_{x_1} V_{- 1} + i c \partial_{x_1 x_2} V_{- 1} V_1, \]
hence the equality
\begin{eqnarray*}
  \int_{B (d \overrightarrow{e_1}, d^{\varepsilon'})} \mathfrak{R}\mathfrak{e}
  (\overline{\partial_d V} i c \partial_{x_2} (V_1 \partial_{x_1} V_{- 1})) &
  = & - c \int_{B (d \overrightarrow{e_1}, d^{\varepsilon'})}
  \mathfrak{R}\mathfrak{e} \left( \overline{\partial_{x_1} V_1 V_{- 1}} i
  \partial_{x_2} V_1 \partial_{x_1} V_{- 1} \right)\\
  & - & c \int_{B (d \overrightarrow{e_1}, d^{\varepsilon'})}
  \mathfrak{R}\mathfrak{e} \left( \overline{\partial_{x_1} V_1 V_{- 1}} i
  \partial_{x_1 x_2} V_{- 1} V_1 \right)\\
  & + & c \int_{B (d \overrightarrow{e_1}, d^{\varepsilon'})}
  \mathfrak{R}\mathfrak{e} \left( \overline{\partial_{x_1} V_{- 1} V_1} i
  \partial_{x_2} V_1 \partial_{x_1} V_{- 1} \right)\\
  & + & c \int_{B (d \overrightarrow{e_1}, d^{\varepsilon'})}
  \mathfrak{R}\mathfrak{e} \left( \overline{\partial_{x_1} V_{- 1} V_1} i
  \partial_{x_1 x_2} V_{- 1} V_1 \right) . \label{zert}
\end{eqnarray*}
Now, using Lemma \ref{V-1est}, we estimate the first term of this equality,
\[ c \left| \int_{B (d \overrightarrow{e_1}, d^{\varepsilon'})}
   \mathfrak{R}\mathfrak{e} \left( \overline{\partial_{x_1} V_1 V_{- 1}} i
   \partial_{x_2} V_1 \partial_{x_1} V_{- 1} \right) \right| \leqslant \]
\[ c \int_{B (d \overrightarrow{e_1}, d^{\varepsilon'})} \left|
   \overline{\partial_{x_1} V_1} \partial_{x_2} V_1 \right| \times |
   \overline{V_{- 1}} \partial_{x_1} V_{- 1} | \leqslant K \int_{B (d
   \overrightarrow{e_1}, d^{\varepsilon'})} \frac{1}{(1 + r_1^2)} 
   \frac{1}{d^{3 - \varepsilon'}} \leqslant \frac{K \ln
   (d^{\varepsilon'})}{d^{3 - \varepsilon'}} . \]
Since $\varepsilon' > 0$, we have
\[ c \int_{B (d \overrightarrow{e_1}, d^{\varepsilon'})}
   \mathfrak{R}\mathfrak{e} \left( \overline{\partial_{x_1} V_1 V_{- 1}} i
   \partial_{x_2} V_1 \partial_{x_1} V_{- 1} \right) = o_{d \rightarrow
   \infty} \left( \frac{1}{d^2} \right) . \label{pouet} \]
Using Lemma \ref{V-1est}, for the second term of the equality, we have
\begin{eqnarray*}
  \left| c \int_{B (d \overrightarrow{e_1}, d^{\varepsilon'})}
  \mathfrak{R}\mathfrak{e} \left( \overline{\partial_{x_1} V_1 V_{- 1}} i
  \partial_{x_1 x_2} V_{- 1} V_1 \right) \right| & \leqslant & \left| c
  \int_{B (d \overrightarrow{e_1}, d^{\varepsilon'})} \mathfrak{I}\mathfrak{m}
  \left( \overline{\partial_{x_1} V_1} V_1 \right) \mathfrak{R}\mathfrak{e}
  (\partial_{x_1 x_2} V_{- 1} \overline{V_{- 1}}) \right|\\
  & + & \left| c \int_{B (d \overrightarrow{e_1}, d^{\varepsilon'})}
  \mathfrak{R}\mathfrak{e} \left( \overline{\partial_{x_1} V_1} V_1 \right)
  \mathfrak{I}\mathfrak{m} (\partial_{x_1 x_2} V_{- 1} \overline{V_{- 1}})
  \right|\\
  & \leqslant & \int_{B (d \overrightarrow{e_1}, d^{\varepsilon'})}
  \frac{K}{(1 + r_1) d^{4 - \varepsilon'}} \leqslant \frac{K}{d^{4 - 2
  \varepsilon'}} = o_{d \rightarrow \infty} \left( \frac{1}{d^2} \right)
\end{eqnarray*}
since $c \leqslant \frac{2}{d}$ and $\varepsilon' < 1$. For the third term of
the equality, we obtain similarly
\begin{eqnarray*}
  \left| c \int_{B (d \overrightarrow{e_1}, d^{\varepsilon'})}
  \mathfrak{R}\mathfrak{e} \left( \overline{\partial_{x_1} V_{- 1} V_1} i
  \partial_{x_2} V_1 \partial_{x_1} V_{- 1} \right) \right| & \leqslant &
  \left| c \int_{B (d \overrightarrow{e_1}, d^{\varepsilon'})}
  \mathfrak{I}\mathfrak{m} (\overline{V_1} \partial_{x_2} V_1)
  \mathfrak{R}\mathfrak{e} \left( \partial_{x_1} V_{- 1}
  \overline{\partial_{x_1} V_{- 1}} \right) \right|\\
  & + & \left| c \int_{B (d \overrightarrow{e_1}, d^{\varepsilon'})}
  \mathfrak{R}\mathfrak{e} (\overline{V_1} \partial_{x_2} V_1)
  \mathfrak{I}\mathfrak{m} \left( \partial_{x_1} V_{- 1}
  \overline{\partial_{x_1} V_{- 1}} \right) \right|\\
  & \leqslant & \int_{B (d \overrightarrow{e_1}, d^{\varepsilon'})}
  \frac{K}{(1 + r_1) d^{5 - 2 \varepsilon'}} = o_{d \rightarrow \infty} \left(
  \frac{1}{d^2} \right) .
\end{eqnarray*}
Finally, for the last term of the equality,
\begin{eqnarray*}
  \left| c \int_{B (d \overrightarrow{e_1}, d^{\varepsilon'})}
  \mathfrak{R}\mathfrak{e} \left( \overline{\partial_{x_1} V_{- 1} V_1} i
  \partial_{x_1 x_2} V_{- 1} V_1 \right) \right| & \leqslant & \left| c
  \int_{B (d \overrightarrow{e_1}, d^{\varepsilon'})} \mathfrak{I}\mathfrak{m}
  (\overline{V_1} V_1) \mathfrak{R}\mathfrak{e} \left( \partial_{x_1 x_2} V_{-
  1} \overline{\partial_{x_1} V_{- 1}} \right) \right|\\
  & + & \left| c \int_{B (d \overrightarrow{e_1}, d^{\varepsilon'})}
  \mathfrak{R}\mathfrak{e} (\overline{V_1} V_1) \mathfrak{I}\mathfrak{m}
  \left( \partial_{x_1 x_2} V_{- 1} \overline{\partial_{x_1} V_{- 1}} \right)
  \right|\\
  & \leqslant & \int_{B (d \overrightarrow{e_1}, d^{\varepsilon'})}
  \frac{K}{d^{5 - \varepsilon'}} \leqslant \frac{K}{d^{5 - 3 \varepsilon'}} =
  o_{d \rightarrow \infty} \left( \frac{1}{d^2} \right) .
\end{eqnarray*}
This conclude the proof of step 2.

\

\begin{tmindent}
  Step 3.  Proof of $\int_{B (d \overrightarrow{e_1}, d^{\varepsilon'})}
  \mathfrak{R}\mathfrak{e} (\overline{\partial_d V} (- 4 \nabla V_1 . \nabla
  \partial_{x_1} V_{- 1})) = - \frac{\pi}{d^2} + o_{d \rightarrow \infty}
  \left( \frac{1}{d^2} \right)$.
\end{tmindent}

\

We have
\[ - 4 \nabla V_1 . \nabla \partial_{x_1} V_{- 1} = - 4 \partial_{x_1} V_1
   \partial_{x_1 x_1} V_{- 1} - 4 \partial_{x_2} V_1 \partial_{x_1 x_2} V_{-
   1} . \]
Remark that using $| \partial_d V | \leqslant \frac{K}{(1 + r_1)}$ and Lemma
\ref{V-1est} once again,
\[ \left| \int_{B (d \overrightarrow{e_1}, d^{\varepsilon'})}
   \mathfrak{R}\mathfrak{e} (\overline{\partial_d V} \partial_{x_1} V_1
   \partial_{x_1 x_1} V_{- 1}) \right| \leqslant \int_{B (d
   \overrightarrow{e_1}, d^{\varepsilon'})} \frac{K}{(1 + r_1^2) d^{3 -
   \varepsilon'}} = o_{d \rightarrow \infty} \left( \frac{1}{d^2} \right) . \]
Moreover,
\[ - 4 \int_{B (d \overrightarrow{e_1}, d^{\varepsilon'})}
   \mathfrak{R}\mathfrak{e} (\overline{\partial_d V} \partial_{x_2} V_1
   \partial_{x_1 x_2} V_{- 1}) = \]
\begin{equation}
  4 \int_{B (d \overrightarrow{e_1}, d^{\varepsilon'})}
  \mathfrak{R}\mathfrak{e} \left( \overline{\partial_{x_1} V_1 V_{- 1}}
  \partial_{x_2} V_1 \partial_{x_1 x_2} V_{- 1} \right) - 4 \int_{B (d
  \overrightarrow{e_1}, d^{\varepsilon'})} \mathfrak{R}\mathfrak{e} \left(
  \overline{\partial_{x_{1 \nosymbol}} V_{- 1} V_1} \partial_{x_2} V_1
  \partial_{x_1 x_2} V_{- 1} \right) . \label{pouet2}
\end{equation}
For the first integral in (\ref{pouet2}), we write
\[ 4 \int_{B (d \overrightarrow{e_1}, d^{\varepsilon'})}
   \mathfrak{R}\mathfrak{e} \left( \overline{\partial_{x_1} V_1 V_{- 1}}
   \partial_{x_2} V_1 \partial_{x_1 x_2} V_{- 1} \right) = \]
\[ 4 \int_{B (d \overrightarrow{e_1}, d^{\varepsilon'})}
   \mathfrak{R}\mathfrak{e} \left( \overline{\partial_{x_1} V_1}
   \partial_{x_2} V_1 \right) \mathfrak{R}\mathfrak{e} (\overline{V_{- 1}}
   \partial_{x_1 x_2} V_{- 1}) -\mathfrak{I}\mathfrak{m} \left(
   \overline{\partial_{x_1} V_1} \partial_{x_2} V_1 \right)
   \mathfrak{I}\mathfrak{m} (\overline{V_{- 1}} \partial_{x_1 x_2} V_{- 1}) .
\]
For the first contribution, we have
\[ \left| \int_{B (d \overrightarrow{e_1}, d^{\varepsilon'})}
   \mathfrak{R}\mathfrak{e} \left( \overline{\partial_{x_1} V_1}
   \partial_{x_2} V_1 \right) \mathfrak{R}\mathfrak{e} (\overline{V_{- 1}}
   \partial_{x_1 x_2} V_{- 1}) \right| \leqslant \int_{B (d
   \overrightarrow{e_1}, d^{\varepsilon'})} \frac{K}{(1 + r_1^2) d^{3 -
   \varepsilon'}} = o_{d \rightarrow \infty} \left( \frac{1}{d^2} \right) . \]
For the second contribution, recall from Lemma \ref{dervor} that
\[ \partial_{x_1} V_1 = \left( \cos (\theta_1) u - \frac{i}{r_1} \sin
   (\theta_1) \right) V_1 \quad \tmop{and} \quad \partial_{x_2} V_1 = \left(
   \sin (\theta_1) u + \frac{i}{r_1} \cos (\theta_1) \right) V_1, \]
therefore
\[ \mathfrak{I}\mathfrak{m} \left( \overline{\partial_{x_1} V_1}
   \partial_{x_2} V_1 \right) = \frac{u}{r_1} | V_1 |^2, \]
and then, by Lemma \ref{V-1est},
\[ - 4 \int_{B (d \overrightarrow{e_1}, d^{\varepsilon'})}
   \mathfrak{I}\mathfrak{m} \left( \overline{\partial_{x_1} V_1}
   \partial_{x_2} V_1 \right) \mathfrak{I}\mathfrak{m} (\overline{V_{- 1}}
   \partial_{x_1 x_2} V_{- 1}) = - 4 \int_{B (d \overrightarrow{e_1},
   d^{\varepsilon'})} \frac{u}{r_1}  \frac{1}{4 d^2} | V_1 |^2 d r_1 + o_{d
   \rightarrow \infty} \left( \frac{1}{d^2} \right) \]
since
\[ \int_{B (d \overrightarrow{e_1}, d^{\varepsilon'})} \frac{u}{r_1} 
   \frac{1}{4 d^{2 + 1 / 4}} | V_1 |^2 d r_1 = o_{d \rightarrow \infty} \left(
   \frac{1}{d^2} \right) . \]
We compute, using $| V_1 |^2 = \rho_1^2$, $u = \frac{\rho_1'}{\rho_1}$ and
Lemma \ref{lemme3},
\[ - 4 \int_{B (d \overrightarrow{e_1}, d^{\varepsilon'})} \frac{u}{r_1} 
   \frac{| V_1 |^2}{4 d^2} d r_1 = \frac{- 2 \pi}{d^2} 
   \int_0^{d^{\varepsilon'}} \rho_1' (r_1) \rho (r_1) d r_1 = \frac{-
   \pi}{d^2} [\rho_1^2]_0^{d^{\varepsilon'}} = \frac{- \pi}{d^2} + o_{d
   \rightarrow \infty} \left( \frac{1}{d^2} \right) . \]
We obtain the estimate for the first integral in (\ref{pouet2}):
\[ 4 \int_{B (d \overrightarrow{e_1}, d^{\varepsilon'})}
   \mathfrak{R}\mathfrak{e} \left( \overline{\partial_{x_1} V_1 V_{- 1}}
   \partial_{x_2} V_1 \partial_{x_1 x_2} V_{- 1} \right) = \frac{- \pi}{d^2} +
   o_{d \rightarrow \infty} \left( \frac{1}{d^2} \right) . \]
For the second integral in (\ref{pouet2}), we estimate
\[ \left| \int_{B (d \overrightarrow{e_1}, d^{\varepsilon'})}
   \mathfrak{R}\mathfrak{e} \left( \overline{\partial_{x_{1 \nosymbol}} V_{-
   1} V_1} \partial_{x_2} V_1 \partial_{x_1 x_2} V_{- 1} \right) \right|
   \leqslant \int_{B (d \overrightarrow{e_1}, d^{\varepsilon'})} \frac{K}{(1 +
   r_1) d^{4 - \varepsilon'}} = o_{d \rightarrow \infty} \left( \frac{1}{d^2}
   \right) . \]
This ends the proof of this subsection.

\subsection{Proof of $\partial_d \int_{B (d \protect\overrightarrow{e_1},
d^{\varepsilon'}) \cup B (- d \protect\overrightarrow{e_1}, d^{\varepsilon'})}
\mathfrak{R}\mathfrak{e} (\overline{\partial_d V} \tmop{TW}_c (Q_{c, d}))_{| d
= d_c \nobracket} = \frac{- 2 \pi}{d_c^2} + o_{d_c \rightarrow \infty} \left(
\frac{1}{d_c^2} \right)$}\label{36}

In order to prove the result of this subsection, by using (\ref{pouet3}) and
the result of subsection \ref{35} we just have to show that at $d = d_c$,
\[ \int_{B (d \overrightarrow{e_1}, \varepsilon')} \mathfrak{R}\mathfrak{e}
   (\overline{\partial_d V} L (\partial_{z_1} \Gamma_{c, d_c})) + \int_{B (d
   \overrightarrow{e_1}, d^{\varepsilon'})} \mathfrak{R}\mathfrak{e}
   (\overline{\partial_d V} (\partial_{z_1} L) (\Gamma_{c, d_c})) \]
\[ + \int_{B (d \overrightarrow{e_1}, d^{\varepsilon'})}
   \mathfrak{R}\mathfrak{e} (\overline{\partial_d V} \partial_{z_1}
   (\tmop{NL}_V (\Gamma_{c, d_c}))) = o_{d_c \rightarrow \infty} \left(
   \frac{1}{d_c^2} \right) . \]
Similarly to subsection \ref{35}, we omit the subscript on $d_c$ in the proof.

\

\begin{tmindent}
  Step 1.  Proof of $\int_{B (d \overrightarrow{e_1}, d^{3 / 4})}
  \mathfrak{R}\mathfrak{e} (\overline{\partial_d V} L (\partial_{z_1}
  \Gamma_{c, d})) = o_{d \rightarrow \infty} \left( \frac{1}{d^2} \right)$.
\end{tmindent}

\

For this term, we want to do integration by parts and use that $L (\partial_d
V)$ is very small, but since the integral is not on the whole space, there are
the two boundary terms:
\[ \left| \int_{B (d \overrightarrow{e_1}, d^{\varepsilon'})}
   \mathfrak{R}\mathfrak{e} (\overline{\partial_d V} L (\partial_{z_1}
   \Gamma_{c, d})) \right| \leqslant \left| \int_{B (d \overrightarrow{e_1},
   d^{\varepsilon'})} \mathfrak{R}\mathfrak{e} \left( L (\partial_d V)
   \overline{\partial_{z_1} \Gamma_{c, d}} \right) \right| \]
\[ + \left| \int_{\partial B (d \overrightarrow{e_1}, d^{\varepsilon'})}
   \mathfrak{R}\mathfrak{e} (\overline{\partial_d V} \nabla \partial_{z_1}
   \Gamma_{c, d}) \right| + \left| \int_{\partial B (d \overrightarrow{e_1},
   d^{\varepsilon'})} \mathfrak{R}\mathfrak{e} (\overline{\nabla \partial_d V}
   \partial_{z_1} \Gamma_{c, d}) \right|, \]
where $\partial B (d \overrightarrow{e_1}, d^{\varepsilon'})$ is the boundary
of $B (d \overrightarrow{e_1}, d^{\varepsilon'})$. On $\partial B (d
\overrightarrow{e_1}, d^{\varepsilon'})$, we have
\[ \Gamma_{c, d} = V (e^{\Psi_{c, d}} - 1), \]
hence
\begin{equation}
  \partial_{z_1} \Gamma_{c, d} = 2 V_1 \partial_{x_1} V_{- 1} (e^{\Psi_{c, d}}
  - 1) + V \partial_{z_1} \Psi_{c, d} e^{\Psi_{c, d}} \label{99}
\end{equation}
and
\begin{eqnarray}
  \nabla \partial_{z_1} \Gamma_{c, d} & = & 2 \nabla V_1 \partial_{x_1} V_{-
  1} (e^{\Psi_{c, d}} - 1) + 2 V_1 \nabla \partial_{x_1} V_{- 1} (e^{\Psi_{c,
  d}} - 1) + 2 V_1 \partial_{x_1} V_{- 1} \nabla \Psi_{c, d} e^{\Psi_{c, d}}
  \nonumber\\
  & + & \nabla V \partial_{z_1} \Psi_{c, d} e^{\Psi_{c, d}} + V \nabla
  \partial_{z_1} \Psi_{c, d} e^{\Psi_{c, d}} + V \partial_{z_1} \Psi_{c, d}
  \nabla \Psi_{c, d} e^{\Psi_{c, d}} .  \label{3252305}
\end{eqnarray}
By Lemmas \ref{dervor} and \ref{V-1est}, Proposition \ref{derg} and
(\ref{impro1}), we infer on $\partial B (d \overrightarrow{e_1},
d^{\varepsilon'})$ that, for any $1 > \sigma > 0$,
\begin{equation}
  | \partial_{z_1} \Gamma_{c, d} | \leqslant \frac{K (\sigma)}{d^{2 -
  \varepsilon'} d^{1 - \sigma}} + \frac{K}{d^{1 + \lambda - \alpha
  \varepsilon'}} . \label{326235}
\end{equation}
Thus, still on $\partial B (d \overrightarrow{e_1}, d^{\varepsilon'})$, from
Lemma \ref{ddVest} we compute

\[ \left| \int_{\partial B (d \overrightarrow{e_1}, d^{\varepsilon'})}
   \mathfrak{R}\mathfrak{e} (\overline{\nabla \partial_d V} \partial_{z_1}
   \Gamma_{c, d}) \right| \leqslant \frac{K}{d^{\varepsilon'}} \left( \frac{K
   (\sigma)}{d^{2 - \varepsilon'} d^{1 - \sigma}} + \frac{K}{d^{1 + \lambda -
   \alpha \varepsilon'}} \right) \leqslant \frac{K (\sigma)}{d^{3 - \sigma}} +
   \frac{K}{d^{1 + \lambda + (1 - \alpha) \varepsilon'}} . \]
Since $3 - \sigma > 2$ and $\lambda + (1 - \alpha) \varepsilon' > 1$ by
(\ref{3142305}), we have

\[ \left| \int_{\partial B (d \overrightarrow{e_1}, d^{\varepsilon'})}
   \mathfrak{R}\mathfrak{e} (\overline{\nabla \partial_d V} \partial_{z_1}
   \Gamma_{c, d}) \right| = o_{d \rightarrow \infty} \left( \frac{1}{d^2}
   \right) . \]
For (\ref{3252305}), we estimate on $\partial B (d \overrightarrow{e_1},
d^{\varepsilon'})$, still using Lemmas \ref{dervor} and \ref{V-1est},
Proposition \ref{derg} and (\ref{impro1}), for any $1 > \sigma > 0$,
\[ | 2 \nabla V_1 \partial_{x_1} V_{- 1} (e^{\Psi_{c, d}} - 1) + 2 V_1 \nabla
   \partial_{x_1} V_{- 1} (e^{\Psi_{c, d}} - 1) + 2 V_1 \partial_{x_1} V_{- 1}
   \nabla \Psi_{c, d} e^{\Psi_{c, d}} | \leqslant \frac{K (\sigma)}{d^{3 -
   \sigma}}, \]
and
\[ | \nabla V \partial_{z_1} \Psi_{c, d} e^{\Psi_{c, d}} + V \nabla
   \partial_{z_1} \Psi_{c, d} e^{\Psi_{c, d}} + V \partial_{z_1} \Psi_{c, d}
   \nabla \Psi_{c, d} e^{\Psi_{c, d}} | \leqslant \frac{K}{d^{1 + \lambda + (1
   - \alpha) \varepsilon'}} + \frac{K (\sigma)}{e^{2 + \lambda + (1 - \alpha)
   \varepsilon' - \sigma}} . \]
In particular, from (\ref{3252305}), we can find $1 > \sigma > 0$ such that,
on $\partial B (d \overrightarrow{e_1}, d^{\varepsilon'})$,
\[ | \nabla \partial_{z_1} \Gamma_{c, d} | = o_{d \rightarrow \infty} \left(
   \frac{1}{d^2} \right), \]
thus
\[ \left| \int_{\partial B (d \overrightarrow{e_1}, d^{\varepsilon'})}
   \mathfrak{R}\mathfrak{e} (\overline{\partial_d V} \nabla \partial_{z_1}
   \Gamma_{c, d}) \right| = o_{d \rightarrow \infty} \left( \frac{1}{d^2}
   \right) . \]
From (\ref{bleu2}), we know that
\[ | L (\partial_d V) | \leqslant \frac{K}{(1 + \tilde{r}^2) d} . \]
Moreover, by Proposition \ref{derg}, we have $| \partial_{z_1} \Gamma_{c, d} |
\leqslant \frac{K}{d^{1 + \lambda - \alpha \varepsilon'}}$ in $B (d
\overrightarrow{e_1}, d^{\varepsilon'})$, which is enough to show that
\[ \int_{B (d \overrightarrow{e_1}, d^{\varepsilon'})}
   \mathfrak{R}\mathfrak{e} (L (\overline{\partial_d V}) \partial_{z_1}
   \Gamma_{c, d}) = o_{d \rightarrow \infty} \left( \frac{1}{d^2} \right) . \]

\

\begin{tmindent}
  Step 2.  Proof of $\int_{B (d \overrightarrow{e_1}, d^{\varepsilon'})}
  \mathfrak{R}\mathfrak{e} (\overline{\partial_d V} (\partial_{z_1} L)
  (\Gamma_{c, d})) = o_{d \rightarrow \infty} \left( \frac{1}{d^2} \right)$.
\end{tmindent}

\

We have
\[ (\partial_{z_1} L) (\Gamma_{c, d}) = 4\mathfrak{R}\mathfrak{e}
   (\overline{V_{- 1}} \partial_{x_1} V_{- 1}) \Gamma_{c, d} +
   4\mathfrak{R}\mathfrak{e} \left( \overline{\partial_{x_1} V_{- 1} V_1}
   \Gamma_{c, d} \right) V + 4\mathfrak{R}\mathfrak{e} (\bar{V} \Gamma_{c, d})
   V_1 \partial_{x_1} V_{- 1}, \]
thus
\begin{eqnarray}
  \int_{B (d \overrightarrow{e_1}, d^{\varepsilon'})} \mathfrak{R}\mathfrak{e}
  (\overline{\partial_d V} (\partial_{z_1} L) (\Gamma_{c, d})) & = & 4 \int_{B
  (d \overrightarrow{e_1}, d^{\varepsilon'})} \mathfrak{R}\mathfrak{e}
  (\overline{\partial_d V} \Gamma_{c, d}) \mathfrak{R}\mathfrak{e}
  (\overline{V_{- 1}} \partial_{x_1} V_{- 1}) \nonumber\\
  & + & 4 \int_{B (d \overrightarrow{e_1}, d^{\varepsilon'})}
  \mathfrak{R}\mathfrak{e} \left( \overline{\partial_{x_1} V_{- 1} V_1}
  \Gamma_{c, d} \right) \mathfrak{R}\mathfrak{e} (\overline{\partial_d V} V)
  \nonumber\\
  & + & 4 \int_{B (d \overrightarrow{e_1}, d^{\varepsilon'})}
  \mathfrak{R}\mathfrak{e} (\overline{\partial_d V} V_1 \partial_{x_1} V_{-
  1}) \mathfrak{R}\mathfrak{e} (\bar{V} \Gamma_{c, d}) .  \label{pouet4}
\end{eqnarray}
Using $| \partial_d V | \leqslant \frac{K}{1 + r_1}$,
\[ \mathfrak{R}\mathfrak{e} (\overline{V_{- 1}} \partial_{x_1} V_{- 1}) = O_{d
   \rightarrow \infty} \left( \frac{1}{d^3} \right) \quad \tmop{and} \quad |
   \Gamma_{c, d} | \leqslant \frac{K}{(1 + r_1)^{1 / 2} d^{1 / 2}} \]
from Lemma \ref{ddVest}, Lemma \ref{dervor} and (\ref{gammaetoile})
respectively, we may bound
\[ \left| \int_{B (d \overrightarrow{e_1}, d^{\varepsilon'})}
   \mathfrak{R}\mathfrak{e} (\overline{\partial_d V} \Gamma_{c, d})
   \mathfrak{R}\mathfrak{e} (\overline{V_{- 1}} \partial_{x_1} V_{- 1})
   \right| \leqslant \int_{B (d \overrightarrow{e_1}, d^{\varepsilon'})}
   \frac{K}{(1 + r_1^{\nosymbol})^{1 + 1 / 2} d^{3 + 1 / 2}} = o_{d
   \rightarrow \infty} \left( \frac{1}{d^2} \right) . \]
The second term of (\ref{pouet4}) is
\[ 4 \int_{B (d \overrightarrow{e_1}, d^{\varepsilon'})}
   \mathfrak{R}\mathfrak{e} \left( \overline{\partial_{x_1} V_{- 1} V_1}
   \Gamma_{c, d} \right) \mathfrak{R}\mathfrak{e} (\overline{\partial_d V} V)
   . \]
We compute that
\[ \left| \mathfrak{R}\mathfrak{e} \left( \overline{\partial_{x_1} V_{- 1}
   V_1} \Gamma_{c, d} \right) \right| \leqslant \frac{K}{(1 + r_1)^{1 / 8}
   d^{17 / 8}} \quad \tmop{and} \quad | \mathfrak{R}\mathfrak{e}
   (\overline{\partial_d V} V) | \leqslant \frac{K}{(1 + r_1)^3} \]
in $B (d \overrightarrow{e_1}, d^{\varepsilon'})$ using
\[ | \Gamma_{c, d} | \leqslant \frac{K}{(1 + r_1)^{1 / 8} d^{7 / 8}} \]
by (\ref{impro1}) and the definition of $\Gamma_{c, d}$. Therefore, since $17
/ 8 > 2$,
\[ \int_{B (d \overrightarrow{e_1}, d^{\varepsilon'})}
   4\mathfrak{R}\mathfrak{e} \left( \overline{\partial_{x_1} V_{- 1} V_1}
   \Gamma_{c, d} \right) \mathfrak{R}\mathfrak{e} (\overline{\partial_d V} V)
   = o_{d \rightarrow \infty} \left( \frac{1}{d^2} \right) . \]
The last term of (\ref{pouet4}) is
\[ \int_{B (d \overrightarrow{e_1}, d^{\varepsilon'})}
   4\mathfrak{R}\mathfrak{e} (\bar{V} \Gamma_{c, d}) \mathfrak{R}\mathfrak{e}
   (V_1 \partial_{x_1} V_{- 1} \overline{\partial_d V}) . \]
Recalling that
\[ | \mathfrak{R}\mathfrak{e} (\bar{V} \Gamma_{c, d}) | \leqslant K |
   \mathfrak{R}\mathfrak{e} (\Psi) | \leqslant \frac{K}{(1 +
   r^{\nosymbol}_1)^{1 + 1 / 8} d^{7 / 8}} \]
and
\[ | \mathfrak{R}\mathfrak{e} (V_1 \partial_{x_1} V_{- 1} \overline{\partial_d
   V}) | \leqslant \frac{K}{d^{5 / 4} (1 + r_1)}, \]
we deduce
\[ \int_{B (d \overrightarrow{e_1}, d^{\varepsilon'})}
   4\mathfrak{R}\mathfrak{e} (\bar{V} \Gamma_{c, d}) \mathfrak{R}\mathfrak{e}
   (V_1 \partial_{x_1} V_{- 1} \overline{\partial_d V}) = o_{d \rightarrow
   \infty} \left( \frac{1}{d^2} \right) . \]

\begin{tmindent}
  Step 3.  Proof of $\int_{B (d \overrightarrow{e_1}, d^{\varepsilon'})}
  \mathfrak{R}\mathfrak{e} (\overline{\partial_d V} \partial_{z_1}
  (\tmop{NL}_V (\Gamma_{c, d}))) = o_{d \rightarrow \infty} \left(
  \frac{1}{d^2} \right)$.
\end{tmindent}

Recall that
\begin{eqnarray*}
  \partial_{z_1} \tmop{NL}_V (\Gamma_{c, d \nosymbol \nosymbol}) & = &
  4\mathfrak{R}\mathfrak{e} \left( \overline{\partial_{x_1} V_{- 1} V_1}
  \Gamma_{c, d \nosymbol} \right) \Gamma_{c, d \nosymbol} +
  2\mathfrak{R}\mathfrak{e} (\bar{V} \partial_{z_1} \Gamma_{c, d \nosymbol})
  \Gamma_{c, d \nosymbol} + 2\mathfrak{R}\mathfrak{e} (\bar{V} \Gamma_{c, d
  \nosymbol}) \partial_{z_1} \Gamma_{\nosymbol c, d}\\
  &  & + 2\mathfrak{R}\mathfrak{e} (\overline{\Gamma_{c, d \nosymbol}}
  \partial_{z_1} \Gamma_{c, d \nosymbol}) (V + \Gamma_{\nosymbol c, d}) + |
  \Gamma_{c, d \nosymbol} |^2 (2 \partial_{x_1} V_{- 1} V_1 + \partial_{z_1}
  \Gamma_{\nosymbol c, d}) .
\end{eqnarray*}
We write
\[ \int_{B (d \overrightarrow{e_1}, d^{\varepsilon'})}
   \mathfrak{R}\mathfrak{e} (\overline{\partial_d V} \partial_{z_1}
   (\tmop{NL}_V (\Gamma_{c, d}))) = I_1 + I_2 + I_3 + I_4 + I_5, \]
with
\[ I_1 = \int_{B (d \overrightarrow{e_1}, d^{\varepsilon'})}
   4\mathfrak{R}\mathfrak{e} (\overline{\partial_d V} \Gamma_{c, d})
   \mathfrak{R}\mathfrak{e} \left( \overline{\partial_{x_1} V_{- 1} V_1}
   \Gamma_{c, d \nosymbol} \right), \]
\[ I_2 = \int_{B (d \overrightarrow{e_1}, d^{\varepsilon'})}
   2\mathfrak{R}\mathfrak{e} (\overline{\partial_d V} \Gamma_{c, d})
   \mathfrak{R}\mathfrak{e} (\bar{V} \partial_{z_1} \Gamma_{c, d \nosymbol}),
\]
\[ I_3 = \int_{B (d \overrightarrow{e_1}, d^{\varepsilon'})}
   2\mathfrak{R}\mathfrak{e} (\bar{V} \Gamma_{\nosymbol c, d})
   \mathfrak{R}\mathfrak{e} (\overline{\partial_d V} \partial_{z_1} \Gamma_{c,
   d}), \]
\[ I_4 = \int_{B (d \overrightarrow{e_1}, d^{\varepsilon'})}
   2\mathfrak{R}\mathfrak{e} (\overline{\Gamma_{c, d \nosymbol}}
   \partial_{z_1} \Gamma_{c, d \nosymbol}) \mathfrak{R}\mathfrak{e}
   (\overline{\partial_d V} V) + 2\mathfrak{R}\mathfrak{e}
   (\overline{\Gamma_{c, d \nosymbol}} \partial_{z_1} \Gamma_{c, d \nosymbol})
   \mathfrak{R}\mathfrak{e} (\overline{\partial_d V} \Gamma_{c, d \nosymbol}),
\]
\[ I_5 = \int_{B (d \overrightarrow{e_1}, d^{\varepsilon'})} | \Gamma_{c, d}
   |^2 \mathfrak{R}\mathfrak{e} (\overline{\partial_d V} (2 \partial_{x_1}
   V_{- 1} V_1 + \partial_{z_1} \Gamma_{c, d \nosymbol})) . \]

{\tmem{\quad Estimate for $I_1$.}}

\

We estimate, by using $| \Gamma_{c, d} | \leqslant \frac{K}{(1 + r_1)^{9 /
16} d^{7 / 16}}$ that
\[ \left| \mathfrak{R}\mathfrak{e} (\overline{\partial_d V} \Gamma_{c, d})
   \mathfrak{R}\mathfrak{e} \left( \overline{\partial_{x_1} V_{- 1} V_1}
   \Gamma_{c, d \nosymbol} \right) \right| \leqslant | \Gamma_{c, d} |^2
   \frac{K}{(1 + r_1) d^{5 / 4}} \leqslant \frac{K}{(1 + r_1)^{2 + 1 / 8}
   d^{17 / 8}} \]
Then, since $17 / 8 > 2$,
\[ \int_{B (d \overrightarrow{e_1}, d^{\varepsilon'})}
   4\mathfrak{R}\mathfrak{e} (\overline{\partial_d V} \Gamma_{c, d})
   \mathfrak{R}\mathfrak{e} \left( \overline{\partial_{x_1} V_{- 1} V_1}
   \Gamma_{\nosymbol c, d} \right) = o_{d \rightarrow \infty} \left(
   \frac{1}{d^2} \right) . \]

{\tmem{\quad Estimate for $I_2$.}}

\

From (\ref{99}), we have
\[ \partial_{z_1} \Gamma_{c, d} = 2 V_1 \partial_{x_1} V_{- 1} (e^{\Psi_{c,
   d}} - 1) + V \partial_{z_1} \Psi_{c, d} e^{\Psi_{c, d}}, \]
therefore, on $B (d \overrightarrow{e_1}, d^{\varepsilon'})$, by Lemma
\ref{V-1est}, Proposition \ref{derg} and (\ref{1605235}), for any $1 > \sigma
> 0$,
\[ | \mathfrak{R}\mathfrak{e} (\bar{V} \partial_{z_1} \Gamma_{\nosymbol c, d})
   | \leqslant \frac{K (\sigma)}{d^{3 - \varepsilon' - \sigma}} + \frac{K}{(1
   + r_1)^{1 - \alpha} d^{1 + \lambda}} + \frac{K (\sigma)}{d^{2 + \lambda -
   \sigma} (1 + r_1)^{- \alpha}} . \]
Combining this with
\[ | \mathfrak{R}\mathfrak{e} (\overline{\partial_d V} \Gamma_{c, d
   \nosymbol}) | \leqslant \frac{K (\sigma)}{(1 + r_1)^{} d^{1 - \sigma}} \]
since $| \Gamma_{c, d} | \leqslant \frac{K (\sigma)}{d^{1 - \sigma}}$, we
infer
\begin{eqnarray*}
  \left| \int_{B (d \overrightarrow{e_1}, d^{\varepsilon'})}
  2\mathfrak{R}\mathfrak{e} (\overline{\partial_d V} \Gamma_{c, d \nosymbol})
  \mathfrak{R}\mathfrak{e} (\bar{V} \partial_{z_1} \Gamma_{c, d \nosymbol})
  \right| & \leqslant & \left| \int_{B (d \overrightarrow{e_1},
  d^{\varepsilon'})} \frac{K (\sigma)}{(1 + r_1)^{} d^{4 - \varepsilon' - 2
  \sigma}} \right|\\
  & + & \left| \int_{B (d \overrightarrow{e_1}, d^{\varepsilon'})} \frac{K
  (\sigma)}{(1 + r_1)^{2 - \alpha} d^{2 + \lambda - \sigma}} \right|\\
  & + & \left| \int_{B (d \overrightarrow{e_1}, d^{\varepsilon'})} \frac{K
  (\sigma)}{(1 + r_1)^{1 - \alpha} d^{3 + \lambda - 2 \sigma} } \right|,
\end{eqnarray*}
and since $\lambda + (1 - \alpha) \varepsilon' > 1$, we conclude, taking
$\sigma > 0$ small enough,
\[ \int_{B (d \overrightarrow{e_1}, d^{\varepsilon'})}
   2\mathfrak{R}\mathfrak{e} (\overline{\partial_d V} \Gamma_{c, d \nosymbol})
   \mathfrak{R}\mathfrak{e} (\bar{V} \partial_{z_1} \Gamma_{c, d \nosymbol}) =
   o_{d \rightarrow \infty} \left( \frac{1}{d^2} \right) . \]

{\tmem{\quad Estimate for $I_3$.}}

\

We have from (\ref{99}) that
\[ \partial_{z_1} \Gamma_{c, d} = 2 V_1 \partial_{x_1} V_{- 1} (e^{\Psi_{c,
   d}} - 1) + V \partial_{z_1} \Psi_{c, d} e^{\Psi_{c, d}}, \]
therefore
\[ | \mathfrak{R}\mathfrak{e} (\overline{\partial_d V} \partial_{z_1}
   \Gamma_{c, d}) | \leqslant K \left( \frac{1}{(1 + r_1) d^{3 - 2
   \varepsilon'}} + \frac{1}{(1 + r_1)^{1 - \alpha} d^{1 + \lambda}} \right),
\]
and $| \mathfrak{R}\mathfrak{e} (\bar{V} \Gamma_{\nosymbol c, d}) | \leqslant
K | \mathfrak{R}\mathfrak{e} (\Psi_{c, d}) |$, hence
\[ | \mathfrak{R}\mathfrak{e} (\bar{V} \Gamma_{c, d \nosymbol}) | \leqslant
   \frac{K (\sigma)}{(1 + r_1)^{1 + \sigma} d^{1 - \sigma}}, \]
then
\begin{eqnarray*}
  \left| \int_{B (d \overrightarrow{e_1}, d^{\varepsilon'})}
  2\mathfrak{R}\mathfrak{e} (\overline{\partial_d V} \partial_{z_1} \Gamma_{c,
  d}) \mathfrak{R}\mathfrak{e} (\bar{V} \Gamma_{c, d \nosymbol}) \right| &
  \leqslant & \int_{B (d \overrightarrow{e_1}, d^{\varepsilon'})} \frac{K
  (\sigma)}{(1 + r_1)^{2 + \sigma} d^{4 - 2 \varepsilon' - \sigma}}\\
  & + & \int_{B (d \overrightarrow{e_1}, d^{\varepsilon'})} \frac{K
  (\sigma)}{(1 + r_1)^{2 + \sigma - \alpha} d^{2 + \lambda - \sigma}}\\
  & = & o_{d \rightarrow \infty} \left( \frac{1}{d^2} \right)
\end{eqnarray*}
by taking $\sigma > 0$ small enough and using $\lambda + (1 - \alpha)
\varepsilon' > 1$.

\

{\tmem{\quad Estimate for $I_4$.}}

\

Recall that
\[ | \mathfrak{R}\mathfrak{e} (\overline{\partial_d V} V) | \leqslant
   \frac{K}{(1 + r_1)^3}, \]
and we have
\[ | \mathfrak{R}\mathfrak{e} (\overline{\partial_d V} \nobracket \Gamma_{c,
   d}) \nobracket | \leqslant \frac{K}{(1 + r_1)^{1 + 6 / 8} d^{2 / 8}} \]
since $| \Gamma_{c, d} | \leqslant \frac{K}{(1 + r_1)^{1 + 6 / 8} d^{2 / 8}}$.
Therefore, with $\frac{1}{d} \leqslant \frac{K}{(1 + r_1)}$,
\[ | \mathfrak{R}\mathfrak{e} (\overline{\partial_d V} \nobracket V)
   +\mathfrak{R}\mathfrak{e} (\overline{\partial_d V} \nobracket \Gamma_{c,
   d}) \nobracket \nobracket | \leqslant \frac{K}{(1 + r_1)^2} \]
Now, we use $| \Gamma_{c, d} | \leqslant \frac{K (\sigma)}{(1 + r_1)^{\sigma}
d^{1 - \sigma}}$ and Proposition \ref{derg} to get
\[ | \mathfrak{R}\mathfrak{e} (\overline{\Gamma_{c, d \nosymbol}}
   \partial_{z_1} \Gamma_{\nosymbol c, d}) | \leqslant \frac{K}{(1 +
   r_1)^{\sigma - \alpha} d^{2 + \lambda - \sigma'}} . \]
We conclude as for the previous estimates,
\[ \int_{B (d \overrightarrow{e_1}, d^{\varepsilon'})} 2
   (\mathfrak{R}\mathfrak{e} (\overline{\partial_d V} V)
   +\mathfrak{R}\mathfrak{e} (\overline{\partial_d V} \Gamma_{c, d}))
   \mathfrak{R}\mathfrak{e} (\overline{\Gamma_{c, d \nosymbol}} \partial_{z_1}
   \Gamma_{c, d \nosymbol}) = o_{d \rightarrow \infty} \left( \frac{1}{d^2}
   \right) . \]

{\tmem{\quad Estimate for $I_5$.}}

\

We have, by Proposition \ref{derg},
\[ | \mathfrak{R}\mathfrak{e} (\overline{\partial_d V} (\partial_{x_1} V_{- 1}
   V_1 + \partial_{z_1} \Gamma_{\nosymbol c, d})) | \leqslant \frac{K}{(1 +
   r_1)} \left( \frac{1}{d^{2 - \varepsilon'}} + \frac{1}{(1 + r_1)^{1 -
   \alpha} d^{2 + \lambda - \sigma}} \right) \]
and using $| \Gamma_{c, d} | \leqslant \frac{K (\sigma)}{d^{1 - \sigma}}$, we
have
\[ | \Gamma_{c, d} |^2 \leqslant \frac{K (\sigma)}{d^{2 - 2 \sigma}} . \]
Therefore, for $\sigma > 0$ small enough, since $\lambda + (1 - \alpha)
\varepsilon' > 1$,
\[ \int_{B (d \overrightarrow{e_1}, d^{\varepsilon'})} | \Gamma_{c, d} |^2
   \mathfrak{R}\mathfrak{e} (\overline{\partial_d V} (2 \partial_{x_1} V_{- 1}
   V_1 + \partial_{z_1} \Gamma_{\nosymbol c, d})) = o_{d \rightarrow \infty}
   \left( \frac{1}{d^2} \right) \]
which concludes the estimates.

\subsection{Proof of $\partial_c d_c = - \frac{1}{c^2} + o_{c \rightarrow 0}
\left( \frac{1}{c^2} \right)$}\label{37}

Recall that $d_c$ is defined by the implicit equation
\[ \int_{B (d \overrightarrow{e_1}, d^{\varepsilon'}) \cup B (- d
   \overrightarrow{e_1}, d^{\varepsilon'})} \mathfrak{R}\mathfrak{e}
   (\overline{\partial_d V} \tmop{TW}_c (Q_{c, d})) = 0. \]
We showed in subsection \ref{36} that
\[ \partial_d \int_{B (d \overrightarrow{e_1}, d^{\varepsilon'}) \cup B (- d
   \overrightarrow{e_1}, d^{\varepsilon'})} \mathfrak{R}\mathfrak{e}
   (\overline{\partial_d V} \tmop{TW}_c (Q_{c, d}))_{| d = d_c \nobracket} =
   \frac{- 2 \pi}{d_c^2} + o_{d_c \rightarrow \infty} \left( \frac{1}{d_c^2}
   \right) . \]
Therefore, by the implicit function theorem,
\[ \partial_c d_c = \frac{\partial_c \int_{B (d \overrightarrow{e_1},
   d^{\varepsilon'}) \cup B (- d \overrightarrow{e_1}, d^{\varepsilon'})}
   \mathfrak{R}\mathfrak{e} (\overline{\partial_d V} \tmop{TW}_c (Q_{c,
   d}))_{| d = d_c \nobracket}}{\frac{- 2 \pi}{d_c^2} + o_{d_c \rightarrow
   \infty} \left( \frac{1}{d_c^2} \right)} . \]
We compute for
\[ \tmop{TW}_c (Q_{c, d}) = - i c \partial_{x_2} Q_{c, d} - \Delta Q_{c, d} -
   (1 - | Q_{c, d} |^2) Q_{c, d} \]
that, with $\partial_c Q_{c, d} = \partial_c (V + \Gamma_{c, d}) = \partial_c
\Gamma_{c, d}$ at fixed $d$, we have (still at fixed $d$)
\[ \partial_c (\tmop{TW}_c (Q_{c, d})) = - i \partial_{x_2} Q_{c, d} -
   L_{Q_{c, d}} (\partial_c \Gamma_{c, d}), \]
where
\[ L_{Q_{c, d}} (h) \assign - \Delta h - i c \partial_{x_2} h - (1 - | Q_{c,
   d} |^2) h + 2\mathfrak{R}\mathfrak{e} (\overline{Q_{c, d}} h) Q_{c, d} . \]
We are left with the computation of

\[ \partial_c \int_{B (d \overrightarrow{e_1}, d^{\varepsilon'}) \cup B (- d
   \overrightarrow{e_1}, d^{\varepsilon'})} \mathfrak{R}\mathfrak{e}
   (\overline{\partial_d V} \tmop{TW}_c (Q_{c, d}))_{| d = d_c \nobracket} =
\]
\[ - \int_{B (d \overrightarrow{e_1}, d^{\varepsilon'}) \cup B (- d
   \overrightarrow{e_1}, d^{\varepsilon'})} \mathfrak{R}\mathfrak{e}
   (\overline{\partial_d V} (i \partial_{x_2} Q_{c, d}))_{| d = d_c
   \nobracket} \]
\[ - \int_{B (d \overrightarrow{e_1}, d^{\varepsilon'}) \cup B (- d
   \overrightarrow{e_1}, d^{\varepsilon'})} \mathfrak{R}\mathfrak{e}
   (\overline{\partial_d V} L_{Q_c} (\partial_c \Gamma_{c, d}))_{| d = d_c
   \nobracket} . \]
As above, we omit the subscript in $d_c$ for the computations.

\

\begin{tmindent}
  Step 1.  Proof of $\int_{B (d \overrightarrow{e_1}, d^{\varepsilon'}) \cup B
  (- d \overrightarrow{e_1}, d^{\varepsilon'})} \mathfrak{R}\mathfrak{e}
  (\overline{\partial_d V} (- i \partial_{x_2} Q_c))_{| d = d_c \nobracket} =
  2 \pi + o_{c \rightarrow 0} (1)$.
\end{tmindent}

\

We have $\partial_{x_2} Q_c = \partial_{x_2} V + \partial_{x_2} \Gamma_{c,
d}$, hence
\[ - \int_{B (d \overrightarrow{e_1}, d^{\varepsilon'}) \cup B (- d
   \overrightarrow{e_1}, d^{\varepsilon'})} \mathfrak{R}\mathfrak{e}
   (\overline{\partial_d V} (i \partial_{x_2} Q_c)) = \]
\[ - \int_{B (d \overrightarrow{e_1}, d^{\varepsilon'}) \cup B (- d
   \overrightarrow{e_1}, d^{\varepsilon'})} \mathfrak{R}\mathfrak{e} (i
   \overline{\partial_d V} \nobracket \partial_{x_2} V)) - \int_{B (d
   \overrightarrow{e_1}, d^{\varepsilon'}) \cup B (- d \overrightarrow{e_1},
   d^{\varepsilon'})} \mathfrak{R}\mathfrak{e} (i \overline{\partial_d V}
   \partial_{x_2} \Gamma_{c, d}) . \]
Since
\[ | \partial_d V | \leqslant \frac{K}{(1 + r_1)} \]
and
\[ | \partial_{x_2} \Gamma_{c, d} | \leqslant \frac{K}{(1 + r_1)^{1 + 1 / 2}
   d^{1 / 2}}, \]
we have
\[ \int_{B (d \overrightarrow{e_1}, d^{\varepsilon'})}
   \mathfrak{R}\mathfrak{e} (i \overline{\partial_d V} \partial_{x_2}
   \Gamma_{c, d}) = o_{c \rightarrow 0} (1) . \]
Furthermore,
\[ - \int_{B (d \overrightarrow{e_1}, d^{\varepsilon'})}
   \mathfrak{R}\mathfrak{e} (i \overline{\partial_d V} \partial_{x_2} V) =
   \int_{B (d \overrightarrow{e_1}, d^{\varepsilon'})}
   \mathfrak{R}\mathfrak{e} \left( i \overline{\partial_{x_{1 \nosymbol}} V_1}
   \partial_{x_2} V_1 \right) + o_{c \rightarrow 0} (1), \]
and we already computed in (\ref{bleu}) that
\[ \int_{\mathbbm{R}^2} \mathfrak{R}\mathfrak{e} \left( i \partial_{x_2} V_1
   \overline{\partial_{x_1} V_1} \right) = - \pi + o_{c \rightarrow 0} (c^{1 /
   4}) \]
hence
\[ \int_{B (d \overrightarrow{e_1}, d^{\varepsilon'}) \cup B (- d
   \overrightarrow{e_1}, d^{\varepsilon'})} \mathfrak{R}\mathfrak{e}
   (\overline{\partial_d V} (- i \partial_{x_2} Q_c))_{| d = d_c \nobracket} =
   2 \pi + o_{c \rightarrow 0} (1) . \]

\

\begin{tmindent}
  Step 2.  Proof of $\int_{B (d \overrightarrow{e_1}, d^{\varepsilon'}) \cup B
  (- d \overrightarrow{e_1}, d^{\varepsilon'})} \mathfrak{R}\mathfrak{e}
  (\overline{\partial_d V} L_{Q_c} (\partial_c \Gamma_{c, d}))_{| d = d_c
  \nobracket} = o_{c \rightarrow 0} (1)$.
\end{tmindent}

\

From the definition of $\Gamma_{c, d}$, at fixed $d$, we have
\begin{equation}
  \partial_c \Gamma_{c, d} = \eta V \partial_c \Psi_{c, d} + (1 - \eta) V
  \partial_c \Psi_{c, d} e^{\Psi_{c, d}} . \label{3292305}
\end{equation}
We have, by definition,
\[ L_{Q_c} (\partial_c \Gamma_{c, d}) = - i c \partial_{x_2} \partial_c
   \Gamma_{c, d} - \Delta \partial_c \Gamma_{c, d} - (1 - | Q_c |^2)
   \partial_c \Gamma_{c, d} + 2\mathfrak{R}\mathfrak{e} (\overline{Q_c}
   \partial_c \Gamma_{c, d}) Q_c, \]
and using $| \partial_d V | \leqslant \frac{K}{(1 + r_1)}$ with $|
\partial_{x_2} \partial_c \Gamma_{c, d} | \leqslant \frac{K c^{- 1 / 2}}{(1 +
r_1)^{1 + 1 / 2}}$ since $\left\| \frac{\partial_c \Gamma_{c, d}}{V}
\right\|_{\ast, 1 / 2, d} \leqslant K c^{- 3 / 4}$ from Lemma \ref{nl310} and
(\ref{3292305}), we have
\[ \left| \int_{B (d \overrightarrow{e_1}, d^{\varepsilon'})}
   \mathfrak{R}\mathfrak{e} (\overline{\partial_d V} (- i c \partial_{x_2}
   \partial_c \Gamma_{c, d})) \right| \leqslant K \int_{B (d
   \overrightarrow{e_1}, d^{\varepsilon'})} \frac{c^{1 / 4}}{(1 + r_1)^{2 + 1
   / 2}} = o_{c \rightarrow 0} (1) . \]
The estimate on $B (- d \overrightarrow{e_1}, d^{\varepsilon'})$ is similar.

\

We define
\[ \tilde{L}_{Q_c} (h) \assign - \Delta h - (1 - | Q_c |^2) h +
   2\mathfrak{R}\mathfrak{e} (\overline{Q_c} h) Q_c \]
and we are then left with the computation of
\[ \int_{B (d \overrightarrow{e_1}, d^{\varepsilon'})}
   \mathfrak{R}\mathfrak{e} (\overline{\partial_d V}  \tilde{L}_{Q_c}
   (\partial_c \Gamma_{c, d})), \]
the part on $B (- d \overrightarrow{e_1}, d^{\varepsilon'})$ being
symmetrical. We want to put the linear operator onto $\partial_d V$ since
$\tilde{L}_{Q_c} (\partial_d V)$ is close to $L_V (\partial_d V)$ which is
itself small. We then integrate by parts:
\[ \left| \int_{B (d \overrightarrow{e_1}, d^{\varepsilon'})}
   \mathfrak{R}\mathfrak{e} (\overline{\partial_d V}  \tilde{L}_{Q_c}
   (\partial_c \Gamma_{c, d})) \right| \leqslant \left| \int_{B (d
   \overrightarrow{e_1}, d^{\varepsilon'})} \mathfrak{R}\mathfrak{e}
   (\tilde{L}_{Q_c} (\partial_d V)  \overline{\partial_c \Gamma_{c, d}})
   \right| \]
\[ + \left| \int_{\partial B (d \overrightarrow{e_1}, d^{\varepsilon'})}
   \mathfrak{R}\mathfrak{e} (\overline{\partial_d V} \nabla \partial_c
   \Gamma_{c, d}) \right| + \left| \int_{\partial B (d \overrightarrow{e_1},
   d^{\varepsilon'})} \mathfrak{R}\mathfrak{e} (\nabla \overline{\partial_d V}
   \partial_c \Gamma_{c, d}) \right| . \]
We have on $\partial B (d \overrightarrow{e_1}, d^{\varepsilon'})$, that $|
\partial_d V | \leqslant \frac{K}{d^{3 / 4}}, | \nabla \partial_d V |
\leqslant \frac{K}{d^{3 / 2}}$ from Lemma \ref{ddVest}. Moreover, by $\left\|
\frac{\partial_c \Gamma_{c, d}}{V} \right\|_{\ast, 1 / 2, d} \leqslant K
(\sigma) c^{- 1 / 2 - \sigma}$ from Lemma \ref{nl310} and (\ref{3292305}), we
deduce $| \nabla \partial_c \Gamma_{c, d} | \leqslant \frac{K (\sigma) d^{1 /
2 + \sigma}}{d^{(3 / 4) (3 / 2)}} \leqslant \frac{K (\sigma)}{d^{5 / 8 -
\sigma}}$ and $| \partial_c \Gamma_{c, d} | \leqslant \frac{K (\sigma) d^{1 /
2 - \sigma}}{d^{(3 / 4) (1 / 2)}} \leqslant K (\sigma) d^{1 / 8 - \sigma}$. We
then obtain, for $\sigma > 0$ small enough,
\[ \left| \int_{\partial B (d \overrightarrow{e_1}, d^{\varepsilon'})}
   \mathfrak{R}\mathfrak{e} (\overline{\partial_d V} \nabla \partial_c
   \Gamma_{c, d}) \right| \leqslant \int_{\partial B (d \overrightarrow{e_1},
   d^{\varepsilon'})} | \partial_d V | | \nabla \partial_c \Gamma_{c, d} |
   \leqslant d^{3 / 4} \frac{K (\sigma) d^{2 \sigma}}{d^{3 / 4} d^{5 / 8}} =
   o_{c \rightarrow 0} (1), \]
\[ \left| \int_{\partial B (d \overrightarrow{e_1}, d^{\varepsilon'})}
   \mathfrak{R}\mathfrak{e} (\nabla \overline{\partial_d V} \partial_c
   \Gamma_{c, d}) \right| \leqslant \int_{\partial B (d \overrightarrow{e_1},
   d^{\varepsilon'})} | \nabla \partial_d V | | \partial_c \Gamma_{c, d} |
   \leqslant d^{3 / 4} \frac{K (\sigma) d^{1 / 8 + \sigma}}{d^{3 / 2}} = o_{c
   \rightarrow 0} (1) . \]
Therefore,
\[ \int_{B (d \overrightarrow{e_1}, d^{\varepsilon'})}
   \mathfrak{R}\mathfrak{e} (\overline{\partial_d V}  \tilde{L}_{Q_c}
   (\partial_c \Gamma_{c, d})) = \int_{B (d \overrightarrow{e_1},
   d^{\varepsilon'})} \mathfrak{R}\mathfrak{e} (\tilde{L}_{Q_c} (\partial_d V)
   \overline{\partial_c \Gamma_{c, d}}) + o_{c \rightarrow 0} (1) . \]
Now, from (\ref{bleu2}), we have that that
\[ | L_V (\partial_d V) | \leqslant \frac{K}{(1 + \tilde{r}^2) d} \]
and by Lemma \ref{nl310} and (\ref{3292305}), we have $| \partial_c \Gamma_{c,
d} | \leqslant \frac{K d^{1 / 4}}{(1 + r_1)^{1 / 2}}$, hence
\[ \left| \int_{B (d \overrightarrow{e_1}, d^{\varepsilon'})}
   \mathfrak{R}\mathfrak{e} (L_V (\partial_d V)  \overline{\partial_c
   \Gamma_{c, d}}) \right| \leqslant K \int_{B (d \overrightarrow{e_1},
   d^{\varepsilon'})} \frac{1}{(1 + r_1)^{2 + 1 / 2} d^{1 / 4}} = o_{c
   \rightarrow 0} (1) . \]
We deduce from this that
\[ \int_{B (d \overrightarrow{e_1}, d^{\varepsilon'})}
   \mathfrak{R}\mathfrak{e} (\overline{\partial_d V}  \tilde{L}_{Q_c}
   (\partial_c \Gamma_{c, d})) = \int_{B (d \overrightarrow{e_1},
   d^{\varepsilon'})} \mathfrak{R}\mathfrak{e} ((\tilde{L}_{Q_c} - L_V)
   (\partial_d V)  \overline{\partial_c \Gamma_{c, d}}) + o_{c \rightarrow 0}
   (1) . \]
We have $\tilde{L}_{Q_c} (h) = - \Delta h - (1 - | Q_c |^2) h +
2\mathfrak{R}\mathfrak{e} (\overline{Q_c} h) Q_c$ and $L_V (h) = - \Delta h -
(1 - | V |^2) h + 2\mathfrak{R}\mathfrak{e} (\bar{V} h) V$, therefore
\[ (\tilde{L}_{Q_c} - L_V) (\partial_d V) = (| Q_c |^2 - | V |^2) \partial_d V
   + 2\mathfrak{R}\mathfrak{e} (\bar{V} \partial_d V) (Q_c - V) +
   2\mathfrak{R}\mathfrak{e} (\overline{Q_c - V} \partial_d V) Q_c . \]
We have by (\ref{219}) that $| | Q_c |^2 - | V |^2 | \leqslant \frac{K c^{3 /
4}}{(1 + \tilde{r})^{1 + 1 / 4}}$, hence
\[ \left| \int_{B (d \overrightarrow{e_1}, d^{\varepsilon'})}
   \mathfrak{R}\mathfrak{e} ( (| Q_c |^2 - | V |^2) \partial_d V
   \overline{\partial_c \Gamma_{c, d}}) \right| \leqslant K \int_{B (d
   \overrightarrow{e_1}, d^{\varepsilon'})} \frac{c^{1 / 4}}{(1 + r_1)^{2 + 3
   / 4}} = o_{c \rightarrow 0} (1) . \]
We have from (\ref{218}) that $| Q_c - V | \leqslant \frac{c^{3 / 4}}{(1 +
\tilde{r})^{1 / 4}}$, and, in $B (d \overrightarrow{e_1}, d^{\varepsilon'})$,
we have (by Lemmas \ref{lemme3} and \ref{dervor}) that $|
\mathfrak{R}\mathfrak{e} (\bar{V} \partial_d V) | \leqslant \frac{K}{(1 +
r_1)^3}$, therefore
\[ \left| \int_{B (d \overrightarrow{e_1}, d^{\varepsilon'})}
   \mathfrak{R}\mathfrak{e} (2\mathfrak{R}\mathfrak{e} (\bar{V} \partial_d V)
   (Q_c - V) \overline{\partial_c \Gamma_{c, d}}) \right| \leqslant K \int_{B
   (d \overrightarrow{e_1}, d^{\varepsilon'})} \frac{c^{1 / 4}}{(1 + r_1)^{3 +
   3 / 4}} = o_{c \rightarrow 0} (1) . \]
Finally, by using the same estimates, we have
\[ \left| \int_{B (d \overrightarrow{e_1}, d^{\varepsilon'})}
   \mathfrak{R}\mathfrak{e} (2\mathfrak{R}\mathfrak{e} (\overline{Q_c - V}
   \partial_d V) Q_c \overline{\partial_c \Gamma_{c, d}}) \right| \leqslant K
   \int_{B (d \overrightarrow{e_1}, d^{\varepsilon'})} \frac{c^{3 / 4}}{(1 +
   r_1)^{1 + 1 / 4}} | \mathfrak{R}\mathfrak{e} (Q_c \overline{\partial_c
   \Gamma_{c, d}}) | . \]
We compute
\[ \mathfrak{R}\mathfrak{e} (Q_c \overline{\partial_c \Gamma_{c, d}})
   =\mathfrak{R}\mathfrak{e} (V \overline{\partial_c \Gamma_{c, d}})
   +\mathfrak{R}\mathfrak{e} (\Gamma_{c, d} \overline{\partial_c \Gamma_{c,
   d}}) . \]
By using $\left\| \frac{\partial_c \Gamma_{c, d}}{V} \right\|_{\ast, 1 / 2, d}
\leqslant K (\sigma) c^{- 1 / 2 - \sigma}$ from Lemma \ref{rest31} and
(\ref{3292305}), we have $\left| \mathfrak{R}\mathfrak{e} \left( V
\overline{\partial_c \Gamma_{c, d_c}} \right) \right| \leqslant \frac{K
(\sigma) c^{- 1 / 2 - \sigma}}{(1 + r_1)^{3 / 2}}$. Furthermore, with $|
\Gamma_{c, d} | \leqslant \frac{K c^{1 / 2}}{(1 + r_1)^{1 / 2}}$, we have $|
\mathfrak{R}\mathfrak{e} (\Gamma_{c, d} \overline{\partial_c \Gamma_{c, d}}) |
\leqslant \frac{K (\sigma) c^{- \sigma}}{(1 + r_1)} .$ With these estimates,
we infer, taking $\sigma > 0$ small enough,
\[ \left| \int_{B (d \overrightarrow{e_1}, d^{\varepsilon'})}
   \mathfrak{R}\mathfrak{e} (2\mathfrak{R}\mathfrak{e} (\overline{Q_c - V}
   \partial_d V) Q_c \overline{\partial_c \Gamma_{c, d}}) \right| = o_{c
   \rightarrow 0} (1) \]
which ends the proof of
\[ \int_{B (d \overrightarrow{e_1}, d^{\varepsilon'})}
   \mathfrak{R}\mathfrak{e} ((\tilde{L}_{Q_c} - L_V) (\partial_d V) 
   \overline{\partial_c \Gamma_{c, d}}) = o_{c \rightarrow 0} (1) . \]

\

\begin{tmindent}
  Step 3.  Conclusion.
\end{tmindent}

\

We showed that
\[ \partial_c d_c = \frac{2 \pi + o_{c \rightarrow 0} (1)}{\frac{- 2
   \pi}{d_c^2} + o_{d_c \rightarrow \infty} \left( \frac{1}{d_c^2} \right)},
\]
therefore, with $d_c = \frac{1 + o_{c \rightarrow 0} (1)}{c}$ from Proposition
\ref{dinc} we have
\[ \partial_c d_c = - \frac{1 + o_{c \rightarrow 0} (1)}{c^2} . \]
As a result of subsection \ref{36}, at fixed $c$,
\[ \partial_d \int_{B (d \overrightarrow{e_1}, d^{\varepsilon'}) \cup B (- d
   \overrightarrow{e_1}, d^{\varepsilon'})} \mathfrak{R}\mathfrak{e}
   (\overline{\partial_d V} \tmop{TW}_c (Q_{c, d}))_{| d = d_c \nobracket}
   \neq 0 \]
for $c$ small enough. By the implicit function theorem, taking some $0 <
c_{\ast} < c_0 (\sigma)$, we can construct a $C^1$ branch $c \mapsto d_c$ in a
vicinity of $c_{\ast}$. We define $\mathbf{C}$ as the set of $c_{\ast} >
c_{\circledast} \geqslant 0$ such that there exists a $C^1$ branch $c \mapsto
d_c$ on $] c_{\circledast}, c_{\ast} [$. We have just shown that $\mathbf{C}$
is not empty. Let us suppose that $c_{\circledast} \assign \inf \mathbf{C}
\neq 0$. Then, $c \mapsto d_c$ is uniformly bounded on $] c_{\circledast},
c_{\ast} [$ in $C^1$ by subsection \ref{37}, and can therefore be extended by
continuity to $c_{\circledast}$, and we denote $d_{\circledast}$ its value
there. We can construct the perturbation $\Phi_{c_{\circledast},
d_{\circledast}}$ by continuity since $c, d \mapsto \Phi_{c, d}$ are $C^1$
functions in the Banach space $\{ \Phi \in C^1 (\mathbbm{R}^2, \mathbbm{C}),
\| \Phi \|_{\ast, \sigma, d_{\circledast}} < + \infty \}$ for its canonical
norm (which is equivalent to $\| . \|_{\ast, \sigma, d}$ for any $d \in
[d_{\circledast}, d_{c_{\ast}}]$). By passing to the limit, we have $\|
\Phi_{c_{\circledast}, d_{\circledast}} \|_{\ast, \sigma, d_{\circledast}}
\leqslant K_0 (\sigma, \sigma') c_{\circledast}^{1 - \sigma'}$ for $K_0
(\sigma, \sigma')$ defined in Proposition \ref{contractionest}. By continuity
of $\lambda$, we check that we have $\lambda (c_{\circledast},
d_{\circledast}) = 0$ (for the perturbation $\Phi_{c_{\circledast},
d_{\circledast}}$). Therefore, by the implicit function theorem, there exists
a unique branch $c \mapsto d_c$ in a vicinity of $(c_{\circledast},
d_{\circledast})$ such that $\lambda (c, d_c) = 0$. This branch, by
uniqueness, corresponds to the branch we had on $] c_{\circledast}, c_{\ast}
[$, and is also $C^1$ by the implicit function theorem. Therefore $\inf
\mathbf{C}< c_{\circledast}$, which is in contradiction with $c_{\circledast}
= \inf \mathbf{C}$, and thus $\inf \mathbf{C}= 0$.

In particular, the travelling wave $Q_c$ on this branch is uniquely defined by
this construction and is a $C^1$ function of $c$. Indeed, we shall now show
that there is only one choice of $d_c$ such that $\lambda (c, d_c) = 0$ in
$\left] \frac{1}{2 c}, \frac{2}{c} \right[$. If there exist $d_1 \neq d_2$ in
$\left] \frac{1}{2 c}, \frac{2}{c} \right[$ such that $\lambda (c, d_1) =
\lambda (c, d_2) = 0$, by Subsection \ref{36}, we have
\[ \partial_d (\lambda (c, d))_{| d = d_1 \nobracket} < 0 \quad \tmop{and}
   \quad \partial_d (\lambda (c, d))_{| d = d_2 \nobracket} < 0, \]
therefore, there exists $d'$ such that $\lambda (c, d') = 0$ and $\partial_d
(\lambda (c, d'))_{| d = d' \nobracket} \geqslant 0$, but then, since $\lambda
(c, d') = 0$, we have $\partial_d (\lambda (c, d))_{| d = d' \nobracket} < 0$,
which is in contradiction with $\partial_d (\lambda (c, d'))_{| d = d'
\nobracket} \geqslant 0$. Now that we have uniqueness in the choice of $d_c$
(in $\left] \frac{1}{2 c}, \frac{2}{c} \right[$), we have uniqueness of
$\Phi_{c, d}$ in the set
\[ \{ \Phi \in C^1 (\mathbbm{R}^2, \mathbbm{C}), \| \Phi \|_{\ast, \sigma,
   d_c} \leqslant K_0 (\sigma, \sigma') c^{1 - \sigma'} \} \]
for $K_0 (\sigma, \sigma') > 0$ defined in Proposition \ref{contractionest}.

\subsection{Proof of the estimate on $\partial_c Q_c$\label{38}}

We conclude the proof of Theorem \ref{th1} with the following lemma.

\begin{lemma}
  \label{finallemma}For any $0 < \sigma < 1$, there exist $c_0 (\sigma) > 0$
  such that for any $c < c_0 (\sigma)$,
  \[ \left\| \frac{\partial_c Q_c}{V} + \left( \frac{1 + o_{c \rightarrow 0}
     (1)}{c^2} \right) \frac{\partial_d V_{| d = d_c \nobracket}}{V}
     \right\|_{\ast, \sigma, d_c} = o_{c \rightarrow 0} \left( \frac{1}{c^2}
     \right) . \]
\end{lemma}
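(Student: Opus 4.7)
The plan is to use the chain rule on $Q_c = Q_{c,d_c} = V + \Gamma_{c,d_c}$, combine it with the value of $\partial_c d_c$ computed in Subsection \ref{37}, and then conclude by invoking the derivative estimates of Lemma \ref{nl310}. Since $V$ depends on $x$ and $d$ (but not on $c$), writing $Q_{c,d} = V + \Gamma_{c,d}$ and differentiating the composition $c \mapsto (c, d_c) \mapsto Q_{c, d_c}$ yields
\[
\partial_c Q_c \;=\; \partial_c \Gamma_{c,d}\big|_{d=d_c} \;+\; \partial_c d_c \cdot \bigl(\partial_d V + \partial_d \Gamma_{c,d}\bigr)\big|_{d=d_c}.
\]
Setting $\nu(c) = -\partial_c d_c$, the result of Subsection \ref{37} gives $\nu(c) = \frac{1+o_{c\to 0}(1)}{c^2}$, and the two contributions involving $\partial_d V$ cancel exactly:
\[
\partial_c Q_c + \nu(c)\,\partial_d V\big|_{d=d_c} \;=\; \partial_c \Gamma_{c,d_c} \;-\; \nu(c)\,\partial_d \Gamma_{c,d}\big|_{d=d_c}.
\]
It thus suffices to estimate each of these two terms in the $\|\cdot\|_{\ast,\sigma,d_c}$ norm by $o_{c\to 0}(1/c^2)$, after division by $V$.

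For the first term, I would differentiate $\Gamma_{c,d} = \eta V \Psi_{c,d} + (1-\eta) V (e^{\Psi_{c,d}} - 1)$ in $c$ at fixed $d$, obtaining $\partial_c \Gamma_{c,d}/V = \eta \partial_c \Psi_{c,d} + (1-\eta) e^{\Psi_{c,d}} \partial_c \Psi_{c,d}$. Since $\|\Psi_{c,d_c}\|_{\ast,\sigma,d_c} \leqslant K(\sigma,\sigma') c^{1-\sigma'}$ is small and $e^{\Psi_{c,d_c}}$ is therefore uniformly close to $1$ on the set $\{\tilde r \geqslant 2\}$, the factor $(1-\eta) e^{\Psi_{c,d_c}}$ is bounded in the relevant seminorms and we can estimate
\[
\left\|\frac{\partial_c \Gamma_{c,d_c}}{V}\right\|_{\ast,\sigma,d_c} \;\leqslant\; K \|\partial_c \Psi_{c,d_c}\|_{\ast,\sigma,d_c} \;\leqslant\; K(\sigma,\sigma')\, c^{-\sigma'},
\]
by Lemma \ref{nl310}. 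For any $\sigma' < 1$ this is $o(1/c^2)$.

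For the second term, I would similarly differentiate $\Gamma_{c,d}$ in $d$ at fixed $c$. Near the vortices, $\Gamma_{c,d} = \Phi_{c,d}$, so $\partial_d \Gamma_{c,d} = \partial_d \Phi_{c,d}$ there; far from them one has $\partial_d \Gamma_{c,d} = (\partial_d V)(e^{\Psi_{c,d}}-1) + V e^{\Psi_{c,d}} \partial_d \Psi_{c,d}$, and the identity $\partial_d \Phi_{c,d} = (\partial_d V)\Psi_{c,d} + V \partial_d \Psi_{c,d}$ lets us express $V\partial_d \Psi_{c,d}$ in terms of $\partial_d \Phi_{c,d}/V$ and $\Psi_{c,d}$. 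Using Lemma \ref{ddVest} to control $|\partial_d V|\leqslant K/(1+\tilde r)$ together with the bound $\|\partial_d \Phi_{c,d_c}/V\|_{\ast,\sigma,d_c} \leqslant K(\sigma,\sigma') c^{1-\sigma'}$ and the decay of $\Psi_{c,d_c}$ from Lemma \ref{CP1L328}, all these contributions can be absorbed into a single estimate
\[
\left\|\frac{\partial_d \Gamma_{c,d_c}}{V}\right\|_{\ast,\sigma,d_c} \;\leqslant\; K(\sigma,\sigma')\, c^{1-\sigma'}.
\]
Multiplying by $\nu(c) = O(1/c^2)$ then gives a bound $K(\sigma,\sigma') c^{-1-\sigma'}$, which is $o(1/c^2)$ since $\sigma' < 1$.

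The main obstacle is the bookkeeping of the second term, namely relating $\partial_d \Gamma_{c,d_c}/V$ uniformly (both near and far from the vortices) to the quantities actually estimated in Lemma \ref{nl310}, which are $\partial_d \Phi_{c,d_c}/V$ and $\partial_c \Psi_{c,d_c}$. Once the different pieces (coming from the cutoff $\eta$ and from the $e^{\Psi_{c,d_c}}$ factor on the outer region) are split and estimated in the $\|\cdot\|_{\ast,\sigma,d_c}$ norm using the bounds of Lemmas \ref{lemme3}, \ref{ddVest}, \ref{CP1L328} and \ref{nl310}, the conclusion follows by summing the two contributions displayed above, taking $\sigma'\in(\sigma,1)$ arbitrarily close to $\sigma$ to absorb constants.
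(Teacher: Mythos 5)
Your proposal is correct and follows essentially the same route as the paper: decompose $\partial_c Q_c$ by the chain rule through $d_c$, use $\partial_c d_c = -\frac{1+o_{c\to 0}(1)}{c^2}$ from Subsection \ref{37} to isolate the $\partial_d V$ term, and then bound $\|\partial_c\Gamma_{c,d_c}/V\|_{\ast,\sigma,d_c}\leqslant K c^{-\sigma'}$ and $|\partial_c d_c|\,\|\partial_d\Gamma_{c,d_c}/V\|_{\ast,\sigma,d_c}\leqslant K c^{-1-\sigma'}$ via Lemma \ref{nl310}, both of which are $o(1/c^2)$ since $\sigma'<1$. The extra bookkeeping you describe for relating $\partial_d\Gamma_{c,d_c}$ to $\partial_d\Phi_{c,d_c}$ on the inner and outer regions is exactly what the paper compresses into ``we show easily that.''
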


With this estimate and by using the same computations as in the proof of Lemma
\ref{lema217}, we show that
\[ \left\| \partial_c Q_c + \left( \frac{1 + o_{c \rightarrow 0} (1)}{c^2}
   \right) \partial_d (V_1 (. - d \overrightarrow{e_1}) V_{- 1} (. + d
   \overrightarrow{e_1}))_{| d = d_c \nobracket} \right\|_p = o_{c \rightarrow
   0} \left( \frac{1}{c^2} \right) . \]
for all $+ \infty \geqslant p > 2$ if $c$ is small enough, which ends the
proof of Theorem \ref{th1}.

\

\begin{proof}
  From subsection $\ref{36}$, we know that $Q_c$ is a $C^1$ function of $c$.
  We have $Q_c = V + \Gamma_{c, d_c}$, hence
  \[ \partial_c Q_c = \partial_c V + \partial_c (\Gamma_{c, d_c}) = \frac{- 1
     + o_{c \rightarrow 0} (1)}{c^2} \partial_d V + \partial_c (\Gamma_{c,
     d_c}), \]
  where we used $\partial_c V = \left( - \frac{1}{c^2} + o_{c \rightarrow 0}
  \left( \frac{1}{c^2} \right) \right) \partial_d V$ thanks to subsection
  \ref{37}. $\Gamma_{c, d_c}$ depends on $c$ directly and through $d_c$. We
  will write $\partial_c \Gamma_{c, d_c}$ for the derivatives with respect to
  $c$ but at a fixed $d_c$, and $\partial_d \Gamma_{c, d_c}$ for the derivate
  with respect to $d_c$ but at fixed $c$. In particular,
  \[ \partial_c (\Gamma_{c, d_c}) = \partial_c \Gamma_{c, d_c} + \partial_c
     d_c \partial_d \Gamma_{c, d_c} . \]
  From Lemma \ref{nl310} and (\ref{3292305}), we showed that
  \[ \left\| \frac{\partial_c \Gamma_{c, d_c}}{V} \right\|_{\ast, \sigma,
     d_c} \leqslant K (\sigma, \sigma') c^{- \sigma'}, \]
  and from Lemma \ref{dderpsy} with the definition of $\Gamma_{c, d}$, we show
  easily that
  \[ \left\| \frac{\partial_d \Gamma_{c, d_c}}{V} \right\|_{\ast, \sigma, d_c}
     \leqslant K (\sigma, \sigma') c^{1 - \sigma'} . \]
  Finally, from subsection \ref{37}, we have $\partial_c d_c = \frac{1 + o_{c
  \rightarrow 0} (1)}{c^2}$, therefore
  \[ \left\| \frac{\partial_c (\Gamma_{c, d_c})}{V} \right\|_{\ast, \sigma,
     d_c} \leqslant K (\sigma, \sigma') (c^{- \sigma'} + c^{- 2} (1 + o_{c
     \rightarrow 0} (1)) c^{1 - \sigma'}) = o_{c \rightarrow 0} \left(
     \frac{1}{c^2} \right) \]
  since $0 < \sigma < \sigma' < 1$.
\end{proof}

This concludes the proof of Lemma \ref{finallemma}, which itself concludes the
proof of Theorem \ref{th1}.

\appendix\section{Proof of Lemma \ref{lemma7}}\label{AA}

\begin{proof}
  First we show that $L (\Phi) = (E - i c \partial_{x_2} V) \Psi + L' (\Psi)
  V$. We use $\Phi = V \Psi$ in $L (\Phi)$ to compute
  \[ L (\Phi) = - \Delta V \Psi - \Delta \Psi V - 2 \nabla \Psi . \nabla V -
     (1 - | V |^2) V \Psi + 2 | V |^2 V\mathfrak{R}\mathfrak{e} (\Psi) - i c V
     \partial_{x_2} \Psi - i c \partial_{x_2} V \Psi . \]
  We have that $E = - \Delta V - (1 - | V |^2) V$ hence $(E - i c
  \partial_{x_2} V) \Psi = - \Delta V \Psi - (1 - | V |^2) V \Psi - i c
  \partial_{x_2} V \Psi$ and the remaining terms are exactly equal to $V L'
  (\Psi)$.
  
  We denote $\zeta \assign 1 + \Psi - e^{\Psi}$. Remark that $\zeta$ is at
  least quadratic in $\Psi$. We compute the different terms in
  $(\tmop{TW}_c)$:
  \[ - i c \partial_{x_2} v - \Delta v - (1 - | v |^2) v = 0 \]
  with
  \[ v = \eta V (1 + \Psi) + (1 - \eta) V e^{\Psi} . \]
  We have $v = V + \Phi - (1 - \eta) \zeta$. In general, our goal in this
  computation is to factorize any term when possible by $V (\eta + (1 - \eta)
  e^{\Psi})$ and compute the other terms, which will be supported in the area
  $\eta (1 - \eta) \neq 0$. First compute
  \[ \partial_{x_2} v = \]
  \[ \eta (\partial_{x_2} V (1 + \Psi) + \partial_{x_2} \Psi V) +
     \partial_{x_2} \eta V (1 + \Psi) + (1 - \eta) e^{\Psi} (\partial_{x_2} V
     + \partial_{x_2} \Psi V) - \partial_{x_2} \eta V e^{\Psi}, \]
  therefore
  \begin{equation}
    - i c \partial_{x_2} v = V (\eta + (1 - \eta) e^{\Psi}) \left( - i c
    \frac{\partial_{x_2} V}{V} - i c \partial_{x_2} \Psi \right) - i c \eta
    \partial_{x_2} V \Psi - i c \partial_{x_2} \eta V \zeta . \label{com1}
  \end{equation}
  For the second term, we compute
  \begin{eqnarray*}
    \Delta v & = & \Delta \eta V (1 + \Psi - e^{\Psi}) + 2 \nabla \eta .
    \nabla (V (1 + \Psi - e^{\Psi}))\\
    & + & \eta (\Delta V (1 + \Psi) + 2 \nabla V. \nabla \Psi + V \Delta
    \Psi)\\
    & + & (1 - \eta) (\Delta V e^{\Psi} + 2 \nabla V. \nabla \Psi e^{\Psi} +
    V (\Delta \Psi + \nabla \Psi . \nabla \Psi) e^{\Psi}),
  \end{eqnarray*}
  hence
  \begin{eqnarray}
    - \Delta v & = & V (\eta + (1 - \eta) e^{\Psi}) \left( - \frac{\Delta
    V}{V} - 2 \frac{\nabla V}{V} . \nabla \Psi - \Delta \Psi \right)
    \nonumber\\
    & - & \eta \Delta V \Psi - (1 - \eta) V \nabla \Psi . \nabla \Psi
    e^{\Psi} - V \Delta \eta \zeta - 2 \nabla \eta . \nabla (V \zeta) . 
    \label{com2}
  \end{eqnarray}
  Finally, let us write $A \assign V (1 + \Psi)$ and $B \assign V e^{\Psi}$,
  so that $v = \eta A + (1 - \eta) B$, and remark that $V \zeta = A - B$. We
  then have
  \[ (1 - | v |^2) v = (1 - \eta^2 | A |^2 - (1 - \eta)^2 | B |^2 - 2 \eta (1
     - \eta) \mathfrak{R}\mathfrak{e} (A \bar{B})) (\eta A + (1 - \eta) B) .
  \]
  We want to bring out the terms not related to the interaction between $A$
  and $B$, namely $\eta (1 - | A |^2) A + (1 - \eta) (1 - | B |^2) B$. We have
  \begin{eqnarray*}
    (1 - | v |^2) v & = & \eta (1 - | A |^2) A + \eta A [(1 - \eta^2) | A |^2
    - (1 - \eta)^2 | B |^2 - 2 \eta (1 - \eta) \mathfrak{R}\mathfrak{e} (A
    \bar{B})]\\
    & + & (1 - \eta) (1 - | B |^2) B + (1 - \eta) B [(1 - (1 - \eta)^2) | B
    |^2 - \eta^2 | A |^2 - 2 \eta (1 - \eta) \mathfrak{R}\mathfrak{e} (A
    \bar{B})] .
  \end{eqnarray*}
  Now, factorizing $\eta (1 - \eta)$ we get
  \begin{eqnarray*}
    (1 - | v |^2) v & = & \eta (1 - | A |^2) A + (1 - \eta) (1 - | B |^2) B\\
    & + & \eta (1 - \eta) [(1 + \eta) A | A |^2 - (1 - \eta) A | B |^2 - 2
    \eta A\mathfrak{R}\mathfrak{e} (A \bar{B})]\\
    & + & \eta (1 - \eta) [(2 - \eta) B | B |^2 - \eta B | A |^2 - 2 (1 -
    \eta) B\mathfrak{R}\mathfrak{e} (A \bar{B})] .
  \end{eqnarray*}
  Remark that the last two lines yield $0$ if we take $A = B$, since $V \zeta
  = A - B$, we can write
  \[ (1 - | v |^2) v = \eta (1 - | A |^2) A + (1 - \eta) (1 - | B |^2) B +
     \eta (1 - \eta) (V \zeta G (\Psi) + \overline{V \zeta} H (\Psi)) \]
  where $G, H$ are functions satisfying $| H (\Psi) |, | G (\Psi) |, | \nabla
  H (\Psi) |, | \nabla G (\Psi) | \leqslant C (1 + | \Psi | + | \nabla \Psi |
  + | e^{\Psi} | + | \nabla \Psi e^{\Psi} |)$ for some universal constant $C >
  0$. We recall that $A = V (1 + \Psi)$ hence
  \[ (1 - | A |^2) A = (1 - | V |^2 | 1 + \Psi |^2) V (1 + \Psi), \]
  therefore we get a constant (in $\Phi$), a linear and a nonlinear part in
  $\Psi$:
  \begin{eqnarray*}
    (1 - | A |^2) A & = & (1 - | V |^2) V + (1 - | V |^2) V \Psi - 2 | V |^2
    V\mathfrak{R}\mathfrak{e} (\Psi)\\
    &  & - 2 | V |^2 V\mathfrak{R}\mathfrak{e} (\Psi) \Psi - | V \Psi |^2 V
    (1 + \Psi) .
  \end{eqnarray*}
  We have $B = V e^{\Psi}$, hence
  \[ (1 - | B |^2) B = e^{\Psi} ((1 - | V |^2) V - 2\mathfrak{R}\mathfrak{e}
     (\Psi) | V |^2 V - | V |^2 V S (\Psi)), \]
  where $S (\Psi) = e^{2\mathfrak{R}\mathfrak{e} (\Psi)} - 1 -
  2\mathfrak{R}\mathfrak{e} (\Psi)$ is nonlinear in $\Psi$. We add these
  relations and obtain
  \begin{eqnarray}
    \eta (1 - | A |^2) A + (1 - \eta) (1 - | B |^2) B & = & V (\eta + (1 -
    \eta) e^{\Psi}) ((1 - | V |^2) - 2\mathfrak{R}\mathfrak{e} (\Psi) | V |^2)
    \nonumber\\
    & + & \eta (1 - \eta) (V \zeta G (\Psi) + \overline{V \zeta} H (\Psi))
    \nonumber\\
    & + & \eta ((1 - | V |^2) V \Psi - 2 | V |^2 V\mathfrak{R}\mathfrak{e}
    (\Psi) \Psi - | V \Psi |^2 V (1 + \Psi)) \nonumber\\
    & - & (1 - \eta) e^{\Psi} | V^{\nosymbol} |^2 V S (\Psi) .  \label{com3}
  \end{eqnarray}
  Now adding the computations (\ref{com1}), (\ref{com2}) and (\ref{com3}) in
  $- i c \partial_{x_2} v - \Delta v - (1 - | v |^2) v = 0$ yields
  \begin{eqnarray}
    V (\eta + (1 - \eta) e^{\Psi}) \left( \frac{E - i c \partial_{x_2} V}{V} +
    L' (\Psi) \right) &  &  \nonumber\\
    + \eta ((E - i c \partial_{x_2} V) \Psi + 2 | V |^2
    V\mathfrak{R}\mathfrak{e} (\Psi) \Psi + | V \Psi |^2 V (1 + \Psi))    &  &
    \nonumber\\
    + V (1 - \eta) e^{\Psi} (| V |^2 S (\Psi) - \nabla \Psi . \nabla \Psi) & 
    &  \nonumber\\
    - i c \partial_{x_2} \eta V \zeta - V \Delta \eta \zeta - 2 \nabla \eta .
    \nabla (V \zeta) - \eta (1 - \eta) (V \zeta G (Z) + \overline{V \zeta} H
    (\Psi)) & = & 0.  \label{rouge}
  \end{eqnarray}
  We divide by $\eta + (1 - \eta) e^{\Psi}$, which is allowed since $\eta + (1
  - \eta) e^{\Psi} = 1 + (1 - \eta) (e^{\Psi} - 1)$ and in $\{ \eta \neq 1
  \}$, $| \Psi | \leqslant \frac{| \Phi |}{| V |} \leqslant K \| \Phi
  \|_{L^{\infty} (\mathbbm{R}^2)} \leqslant K C_0$ by our assumption $\| \Phi
  \|_{L^{\infty} (\mathbbm{R}^2)} \leqslant C_0$, therefore, choosing $C_0$
  small enough, in $\{ \eta \neq 1 \}$, we have $| e^{\Psi} - 1 | \leqslant 1
  / 2$. We also remark that
  \[ \frac{(1 - \eta) e^{\Psi}}{(\eta + (1 - \eta) e^{\Psi})} = (1 - \eta) +
     \eta (1 - \eta) \left( \frac{e^{\Psi} - 1}{\eta + (1 - \eta) e^{\Psi}}
     \right), \]
  therefore (\ref{rouge}) become
  \begin{eqnarray*}
    E - i c \partial_{x_2} V + V L' (\Psi) &  & \\
    + V (1 - \eta) (- \nabla \Psi . \nabla \Psi + | V |^2 S (\Psi)) &  & \\
    + \frac{\eta}{(\eta + (1 - \eta) e^{\Psi})} ((E - i c \partial_{x_2} V)
    \Psi + 2 | V |^2 V\mathfrak{R}\mathfrak{e} (\Psi) \Psi + | V \Psi |^2 V (1
    + \Psi)) &  & \\
    + R_1 (\Psi) & = & 0,
  \end{eqnarray*}
  where
  \begin{eqnarray*}
    R_1 (\Psi) & \assign & \frac{1}{(\eta + (1 - \eta) e^{\Psi})} (- i c
    \partial_{x_2} \eta V \zeta - V \Delta \eta \zeta - 2 \nabla \eta . \nabla
    (V \zeta) - \eta (1 - \eta) (V \zeta G (\Psi) + \overline{V \zeta} H
    (\Psi)))\\
    & + & V \eta (1 - \eta) \left( \frac{e^{\Psi} - 1}{\eta + (1 - \eta)
    e^{\Psi}} \right) (- \nabla \Psi . \nabla \Psi + | V |^2 S (\Psi)) .
  \end{eqnarray*}
  Remark that $R_1 (\Psi)$ is nonzero only in the rings where $\eta (1 - \eta)
  \neq 0$, i.e. $1 \leqslant \tilde{r} \leqslant 2$, since every term has
  either $\partial_{x_2} \eta, \Delta \eta$ or $\eta (1 - \eta)$ as a factor.
  Furthermore they all have as an additional factor $\zeta, \nabla \zeta, S$
  or $\nabla \Psi . \nabla \Psi$. Hence, if we suppose that $| \Psi |, |
  \nabla \Psi |, | \nabla^2 \Psi | \leqslant K C_0$ in the rings (which is a
  consequence of $\Phi = V \Psi$ and $\| \Phi \|_{C^2 (\mathbbm{R}^2)}
  \leqslant C_0$), then those terms can be bounded by $C \| \Psi \|^2_{C^1 (\{
  1 \leqslant \tilde{r} \leqslant 2 \})}$. Therefore if $| \Psi |, | \nabla
  \Psi |, | \nabla^2 \Psi | \leqslant K C_0$ in the rings, then
  \[ | R_1 (\Psi) | + | \nabla R_1 (\Psi) | \leqslant K \| \Psi \|^2_{C^2 (\{
     1 \leqslant \tilde{r} \leqslant 2 \})} \leqslant K \| \Phi \|^2_{C^2 (\{
     \tilde{r} \leqslant 2 \})} \]
  for some universal constant $K > 0$, since in the rings, $V$ is bounded from
  below by a nonzero constant. Now, we use
  \[ \frac{\eta}{(\eta + (1 - \eta) e^{\Psi})} = \eta + \eta (1 - \eta) 
     \frac{1 - e^{\Psi}}{\eta + (1 - \eta) e^{\Psi}} \]
  to compute
  \[ \frac{\eta}{(\eta + (1 - \eta) e^{\Psi})} (E - i c \partial_{x_2} V) \Psi
     = \eta (E - i c \partial_{x_2} V) \Psi + R_2 (\Psi), \]
  where
  \[ R_2 (\Psi) \assign \eta (1 - \eta)  \frac{(1 - e^{\Psi}) (E - i c
     \partial_{x_2} V)}{\eta + (1 - \eta) e^{\Psi}} \Psi . \]
  We show easily that $R_2 (\Psi)$ satisfies the same estimates as $R_1
  (\Psi)$. Remark that, using $\Phi = V \Psi$,
  \begin{eqnarray*}
    \left| \frac{\eta}{(\eta + (1 - \eta) e^{\Psi})} (2 | V |^2
    V\mathfrak{R}\mathfrak{e} (\Psi) \Psi + | V \Psi |^2 V (1 + \Psi)) \right|
    & = & \\
    \left| \frac{\eta}{(\eta + (1 - \eta) e^{\Psi})}
    (2\mathfrak{R}\mathfrak{e} (\Phi \bar{V}) \Phi + | \Phi |^2 (V + \Phi))
    \right| & \leqslant & K \| \Phi \|^2_{C^1 (\{ \tilde{r} \leqslant 2 \})}
  \end{eqnarray*}
  and
  \[ \left| \nabla \left( \frac{\eta}{(\eta + (1 - \eta) e^{\Psi})}
     (2\mathfrak{R}\mathfrak{e} (\Phi \bar{V}) \Phi + | \Phi |^2 (V + \Phi))
     \right) \right| \leqslant K \| \Phi \|^2_{C^1 (\{ \tilde{r} \leqslant 2
     \})} \]
  if $\| \Phi \|_{L^{\infty} (\mathbbm{R}^2)} \leqslant C_0$ (so that the term
  in $e^{\Psi}$ is bounded) since $\eta \neq 0$ only if $\tilde{r} \leqslant
  2$. We define
  \[ R (\Psi) \assign R_1 (\Psi) + R_2 (\Psi) + \frac{\eta}{(\eta + (1 - \eta)
     e^{\Psi})} (2 | V |^2 V\mathfrak{R}\mathfrak{e} (\Psi) \Psi + | V \Psi
     |^2 V (1 + \Psi)), \]
  which satisfies
  \[ | R (\Psi) |, | \nabla (R (\Psi)) | \leqslant K \| \Phi \|^2_{C^2 (\{
     \tilde{r} \leqslant 2 \})} \]
  for some universal constant $K > 0$, provided that $\| \Phi \|_{C^2
  (\mathbbm{R}^2)} \leqslant C_0$. The equation (\ref{rouge}) then becomes
  \begin{eqnarray*}
    E - i c \partial_{x_2} V + V L' (\Psi) + V (1 - \eta) (- \nabla \Psi .
    \nabla \Psi + | V |^2 S (\Psi)) &  & \\
    + \eta (E - i c \partial_{x_2} V) \Psi + R (\Psi) & = & 0.
  \end{eqnarray*}
  Now we finish by using $- i c V \partial_{x_2} \Psi = - \eta i c V
  \partial_{x_2} \Psi - (1 - \eta) i c V \partial_{x_2} \Psi$ and
  \[ \partial_{x_2} V \Psi + \partial_{x_2} \Psi V = \partial_{x_2} \Phi \]
  to obtain
  \[ V L' (\Psi) + \eta (E - i c \partial_{x_2} V) \Psi - i c \eta
     \partial_{x_2} \Phi + V (1 - \eta) (- \nabla \Psi . \nabla \Psi + | V |^2
     S (\Psi)) + R (\Psi) = 0. \]
  Finally, since we have shown that $L (\Phi) = (E - i c \partial_{x_2} V)
  \Psi + L' (\Psi) V$, we infer
  \[ V L' (\Psi) + \eta (E - i c \partial_{x_2} V) \Psi = \eta L (\Phi) + (1 -
     \eta) V L' (\Psi) . \]
  The proof is complete.
\end{proof}

\section{Elliptic computations}\label{AB}

\subsection{Proof of Lemma \ref{lapzeta}\label{AB1}}

\begin{proof}
  The uniqueness of such a function $\zeta$ is a consequence of the fact that
  $\zeta$ is bounded (by $\forall x \in \mathbbm{R}^2, | \zeta (x) | \leqslant
  \frac{K \varepsilon_{f, \alpha}}{(1 + \tilde{r})^{\alpha}}$), the linearity
  of the Laplacian, and that the only weak solution to $\Delta \zeta = 0$ that
  tends to $0$ at infinity is $0$. We define
  \[ \zeta \assign G \ast f, \]
  where $G$ is the fundamental solution of the Laplacian in dimension 2,
  namely $G (x) \assign \frac{1}{2 \pi} \ln (| x |)$. Since $\| f (x) (1 +
  \tilde{r})^{2 + \alpha} \|_{L^{\infty} (\mathbbm{R}^2)} < + \infty$, we
  check that $\zeta$ is well defined. Let us show that $\zeta \in C^1
  (\mathbbm{R}^2, \mathbbm{C})$. If $f \in C^{\infty}_c (\mathbbm{R}^2)$,
  then, for $j \in \{ 1, 2 \}$,
  \begin{eqnarray*}
    \frac{\zeta (x + h \vec{e}_j) - \zeta (x)}{| h |} & = & \frac{1}{2 \pi}
    \int_{\mathbbm{R}^2} \nobracket \ln (| x - Y |)) \frac{f (Y + h \vec{e}_j)
    - f (Y)}{| h |} d Y\\
    & \rightarrow & \frac{1}{2 \pi} \int_{\mathbbm{R}^2} \nobracket \ln (| x
    - Y |)) \partial_{Y_j} f (Y) d Y
  \end{eqnarray*}
  when $| h | \rightarrow 0$. Then, for $\varepsilon > 0$,
  \[ \left| \frac{1}{2 \pi} \int_{B (x, \varepsilon)} \nobracket \ln (| x - Y
     |)) \partial_{Y_j} f (Y) d Y \right| \leqslant K \varepsilon^2 | \ln
     (\varepsilon) | \| \nabla f \|_{L^{\infty} (\mathbbm{R}^2)} \]
  and by integration by parts,
  \begin{eqnarray*}
    \frac{1}{2 \pi} \int_{\mathbbm{R}^2 \backslash B (x, \varepsilon)}
    \nobracket \ln (| x - Y |)) \partial_{Y_j} f (Y) d Y & = & \frac{1}{2 \pi}
    \int_{\mathbbm{R}^2 \backslash B (x, \varepsilon)} \frac{x_j - Y_j}{| x -
    Y |^2} f (Y) d Y\\
    & - & \frac{1}{2 \pi} \int_{\partial B (x, \varepsilon)} \nobracket \ln
    (| x - Y |)) f (Y) \vec{e}_j . \vec{\nu} d \sigma
  \end{eqnarray*}
  and since $\left| \frac{1}{2 \pi} \int_{\partial B (x, \varepsilon)}
  \nobracket \ln (| x - Y |)) f (Y) \vec{e}_j . \vec{\nu} d \sigma \right|
  \leqslant K \| f \|_{L^{\infty} (\mathbbm{R}^2)} \varepsilon | \ln
  (\varepsilon) |$, taking $\varepsilon \rightarrow 0$ we deduce that
  \[ \frac{\zeta (x + h \vec{e}_j) - \zeta (x)}{| h |} \rightarrow \frac{1}{2
     \pi} \int_{\mathbbm{R}^2} \nobracket \ln (| x - Y |)) \partial_{Y_j} f
     (Y) d Y = \frac{1}{2 \pi} \int_{\mathbbm{R}^2} \frac{x_j - Y_j}{| x - Y
     |^2} f (Y) d Y \]
  when $| h | \rightarrow 0$. This implies that, for $f \in C^{\infty}_c
  (\mathbbm{R}^2)$,
  \[ \nabla \zeta (x) = \frac{1}{2 \pi} \int_{\mathbbm{R}^2} \frac{x - Y}{| x
     - Y |^2} f (Y) d Y. \]
  Now, for $f \in C^0 (\mathbbm{R}^2, \mathbbm{C})$ such that $\| f (x) (1 +
  \tilde{r})^{2 + \alpha} \|_{L^{\infty} (\mathbbm{R}^2)} < + \infty$, we take
  $f_n \in C^{\infty}_c (\mathbbm{R}^2, \mathbbm{C})$ such that $f_n
  \rightarrow f$ in $L^3 (\mathbbm{R}^2)$ and $(1 + \tilde{r})^{\alpha / 2}
  f_n \rightarrow (1 + \tilde{r})^{\alpha / 2} f$ in $L^1 (\mathbbm{R}^2)$ (we
  check easily that $f \in L^3 (\mathbbm{R}^2)$ and $(1 + \tilde{r})^{\alpha /
  2} f \in L^1 (\mathbbm{R}^2)$). In particular, $f_n \rightarrow f$ in $L^1
  (\mathbbm{R}^2)$. Then, for $\zeta_n$ such that $\Delta \zeta_n = f_n$, we
  check that, by H{\"o}lder inequality,
  \[ \left| \nabla \zeta_n (x) - \frac{1}{2 \pi} \int_{\mathbbm{R}^2} \frac{x
     - Y}{| x - Y |^2} f (Y) d Y \right| \leqslant \frac{1}{2 \pi}
     \int_{\mathbbm{R}^2} \frac{| \nobracket f_n (Y) - f (Y \nobracket) |}{| x
     - Y |} d Y, \]
  \[ \int_{\{ | x - Y | \leqslant 1 \}} \frac{| \nobracket f_n (Y) - f (Y
     \nobracket) |}{| x - Y |} d Y \leqslant \| f_n - f \|_{L^3
     (\mathbbm{R}^2)} \left( \int_{\{ | x - Y | \leqslant 1 \}} \frac{d Y}{| x
     - Y |^{3 / 2}} \right)^{2 / 3} \leqslant K \| f_n - f \|_{L^3
     (\mathbbm{R}^2)} \]
  and
  \[ \int_{\{ | x - Y | \geqslant 1 \}} \frac{| \nobracket f_n (Y) - f (Y
     \nobracket) |}{| x - Y |} d Y \leqslant \| f_n - f \|_{L^1
     (\mathbbm{R}^2)}, \]
  therefore $\nabla \zeta_n \rightarrow \frac{1}{2 \pi} \int_{\mathbbm{R}^2}
  \frac{x - Y}{| x - Y |^2} f (Y) d Y$ uniformly in $\mathbbm{R}^2$.
  
  Similarly, we estimate
  \[ \left| \zeta_n (x) - \frac{1}{2 \pi} \int_{\mathbbm{R}^2} \ln (| x - Y |)
     f (Y) d Y \right| \leqslant \frac{1}{2 \pi} \int_{\mathbbm{R}^2} | f_n
     (Y) - f (Y) |  | \ln (| x - Y |) | d Y, \]
  \begin{eqnarray*}
    \int_{\{ | x - Y | \leqslant 1 \}} | f_n (Y) - f (Y) |  | \ln (| x - Y |)
    | d Y & \leqslant & \| f_n - f \|_{L^3 (\mathbbm{R}^2)} \left( \int_{\{ |
    x - Y | \leqslant 1 \}}  | \ln (| x - Y |) |^{3 / 2} d Y \right)^{2 / 3}\\
    & \leqslant & K \| f_n - f \|_{L^3 (\mathbbm{R}^2)}
  \end{eqnarray*}
  and
  \[ \int_{\{ | x - Y | \geqslant 1 \}} | f_n (Y) - f (Y) | | \ln (| x - Y |)
     | d Y \leqslant K \| (1 + \tilde{r})^{\alpha / 2} f_n - (1 +
     \tilde{r})^{\alpha / 2} f \|_{L^1 (\mathbbm{R}^2)}, \]
  thus $\zeta_n \rightarrow G \ast f = \zeta$ uniformly in $\mathbbm{R}^2$,
  which implies by differentiation of a sequence of functions, that $\zeta \in
  C^1 (\mathbbm{R}^2, \mathbbm{C})$ and
  \[ \nabla \zeta (x) = \frac{1}{2 \pi} \int_{\mathbbm{R}^2} \frac{x - Y}{| x
     - Y |^2} f (Y) d Y. \]

  We check that $\zeta$ satisfies
  \[ \Delta \zeta = f \]
  in the distribution sense. Indeed, for $\varphi \in C^{\infty}_c
  (\mathbbm{R}^2)$, (see {\cite{MR2597943}}, chapter 2, Theorem 1)
  \[ \int_{\mathbbm{R}^2} (G \ast f) \Delta \varphi = \int_{\mathbbm{R}^2} f
     (G \ast \Delta \varphi) = \int_{\mathbbm{R}^2} f \varphi . \]
  It is also easy to check that
  \[ \forall (x_1, x_2) \in \mathbbm{R}^2, \quad \zeta (x_1, x_2) = - \zeta
     (x_1, - x_2) . \]
  Now, if $| x - d \vec{e}_1 | \leqslant 1$, we check that
  \[ | \nabla \zeta (x) | \leqslant K \int_{\mathbbm{R}^2} \frac{1}{| Y |} | f
     (x - Y) | d Y \leqslant K \varepsilon_{f, \alpha} \int_{\mathbbm{R}^2}
     \frac{d Y}{| Y | (1 + \tilde{r} (Y - x))^{2 + \alpha}} \leqslant K
     \varepsilon_{f, \alpha}, \]
  and, similarly,
  \[ | \zeta (x) | \leqslant K \varepsilon_{f, \alpha}, \]
  which is enough to show the required estimate of this lemma for these values
  of $x$. We can make the same estimate if $| x + d \overrightarrow{e_1} |
  \leqslant 1$, we therefore suppose from now on that $| x - d
  \overrightarrow{e_1} |, | x + d \overrightarrow{e_1} | \geqslant 1$.
  
  First, let us show that
  \begin{equation}
    \int_{\{ Y_1 \geqslant 0 \}} f (Y) d Y = \int_{\{ Y_1 \leqslant 0 \}} f
    (Y) d Y = 0. \label{vert}
  \end{equation}
  The integrals are well defined because $| f (x) | \leqslant
  \frac{\varepsilon_{f, \alpha}}{(1 + \tilde{r})^{2 + \alpha}}$ and therefore
  $f$ is integrable. Since $f$ is odd with respect to $x_2$, (\ref{vert})
  holds. We deduce that
  \begin{eqnarray*}
    | \nabla \zeta (x) | & \leqslant & \frac{1}{2 \pi} \left| \int_{\{ Y_1
    \geqslant 0 \}} \left( \frac{x - Y}{| x - Y |^2} - \frac{x - d
    \overrightarrow{e_1}}{| x - d \overrightarrow{e_1} |^2} \right) f (Y) d Y
    \right|\\
    & + & \frac{1}{2 \pi} \left| \int_{\{ Y_1 \leqslant 0 \}} \left( \frac{x
    - Y}{| x - Y |^2} - \frac{x + d \overrightarrow{e_1}}{| x + d
    \overrightarrow{e_1} |^2} \right) f (Y) d Y \right| .
  \end{eqnarray*}
  Now, using $| f (x) | \leqslant \frac{\varepsilon_{f, \alpha}}{(1 +
  \tilde{r})^{2 + \alpha}}$, we estimate
  \[ \begin{array}{lll}
       2 \pi | \nabla \zeta (x) | & \leqslant & \varepsilon_{f, \alpha}
       \int_{\{ Y_1 \geqslant 0 \}} \left| \frac{x - Y}{| x - Y |^2} - \frac{x
       - d \overrightarrow{e_1}}{| x - d \overrightarrow{e_1} |^2} \right|
       \frac{d Y}{(1 + r_1 (Y))^{2 + \alpha}}\\
       & + & \varepsilon_{f, \alpha} \int_{\{ Y_1 \leqslant 0 \}} \left|
       \frac{x - Y}{| x - Y |^2} - \frac{x + d \overrightarrow{e_1}}{| x + d
       \overrightarrow{e_1} |^2} \right| \frac{d Y}{(1 + r_{- 1} (Y))^{2 +
       \alpha}} .
     \end{array} \]
  By the change of variable $Y = Z + d \overrightarrow{e_1}$, we have
  \begin{eqnarray*}
    &  & \int_{\{ Y_1 \geqslant 0 \}} \left| \frac{x - Y}{| x - Y |^2} -
    \frac{x - d \overrightarrow{e_1}}{| x - d \overrightarrow{e_1} |^2}
    \right| \frac{d Y}{(1 + r_1 (Y))^{2 + \alpha}}\\
    & = & \int_{\{ Z_1 \geqslant - d \}} \left| \frac{(x - d
    \overrightarrow{e_1}) - Z}{| (x - d \overrightarrow{e_1}) - Z |^2} -
    \frac{x - d \overrightarrow{e_1}}{| x - d \overrightarrow{e_1} |^2}
    \right| \frac{d Z}{(1 + | Z |)^{2 + \alpha}},\\
    & \leqslant & \int_{\mathbbm{R}^2} \left| \frac{(x - d
    \overrightarrow{e_1}) - Z}{| (x - d \overrightarrow{e_1}) - Z |^2} -
    \frac{x - d \overrightarrow{e_1}}{| x - d \overrightarrow{e_1} |^2}
    \right| \frac{d Z}{(1 + | Z |)^{2 + \alpha}} .
  \end{eqnarray*}
  Now, if $| Z | \geqslant 2 | x - d \overrightarrow{e_1} |$, by triangular
  inequality, we check that
  \[ \left| \frac{(x - d \overrightarrow{e_1}) - Z}{| (x - d
     \overrightarrow{e_1}) - Z |^2} - \frac{x - d \overrightarrow{e_1}}{| x -
     d \overrightarrow{e_1} |^2} \right| \leqslant \frac{K}{| x - d \vec{e}_1
     |}, \]
  hence
  \begin{eqnarray}
    &  & \int_{\{ | Z | \geqslant 2 | x - d \overrightarrow{e_1} | \}} \left|
    \frac{(x - d \overrightarrow{e_1}) - Z}{| (x - d \overrightarrow{e_1}) - Z
    |^2} - \frac{x - d \overrightarrow{e_1}}{| x - d \overrightarrow{e_1} |^2}
    \right| \frac{d Z}{(1 + | Z |)^{2 + \alpha}} \nonumber\\
    & \leqslant & \frac{K}{| x - d \overrightarrow{e_1} |} \int_{\{ | Z |
    \geqslant 2 | x - d \overrightarrow{e_1} | \}} \frac{d Z}{(1 + | Z |)^{2 +
    \alpha}} \leqslant \frac{K (\alpha)}{| x - d \overrightarrow{e_1} |^{1 +
    \alpha}} . \label{r1} 
  \end{eqnarray}
  We now work for $| Z | \leqslant 2 | x - d \overrightarrow{e_1} |$. We
  remark that
  \begin{eqnarray*}
    &  & \left| \frac{(x - d \overrightarrow{e_1}) - Z}{| (x - d
    \overrightarrow{e_1}) - Z |^2} - \frac{x - d \overrightarrow{e_1}}{| x - d
    \overrightarrow{e_1} |^2} \right| | x - d \overrightarrow{e_1} |^2 | (x -
    d \overrightarrow{e_1}) - Z |^2\\
    & = & | (x - d \overrightarrow{e_1}) (| x - d \overrightarrow{e_1} |^2 -
    | (x - d \overrightarrow{e_1}) - Z |^2) - Z | x - d \overrightarrow{e_1}
    |^2 |\\
    & = & | (x - d \overrightarrow{e_1}) (2 (x - d \overrightarrow{e_1}) .
    \bar{Z} - | Z |^2) - Z | x - d \overrightarrow{e_1} |^2 |\\
    & = & | ((x - d \overrightarrow{e_1}) - Z) (2 (x - d
    \overrightarrow{e_1}) . \bar{Z} - | Z |^2) - Z | (x - d
    \overrightarrow{e_1}) - Z |^2 |\\
    & = & | (x - d \overrightarrow{e_1}) - Z | | Z | \left| \frac{(x - d
    \overrightarrow{e_1}) - Z}{| (x - d \overrightarrow{e_1}) - Z |} \left( 2
    (x - d \overrightarrow{e_1}) . \frac{\bar{Z}}{| Z |} - | Z | \right) -
    \frac{Z}{| Z |} | (x - d \overrightarrow{e_1}) - Z | \right|,
  \end{eqnarray*}
  and we estimate
  \begin{eqnarray*}
    &  & \left| \frac{(x - d \overrightarrow{e_1}) - Z}{| (x - d
    \overrightarrow{e_1}) - Z |} \left( 2 (x - d \overrightarrow{e_1}) .
    \frac{\bar{Z}}{| Z |} - | Z | \right) - \frac{Z}{| Z |} | (x - d
    \overrightarrow{e_1}) - Z | \right|\\
    & \leqslant & 2 | x - d \overrightarrow{e_1} | + \left| \frac{(x - d
    \overrightarrow{e_1}) - Z}{| (x - d \overrightarrow{e_1}) - Z |} (- | Z |)
    - \frac{Z}{| Z |} | (x - d \overrightarrow{e_1}) - Z | \right| .
  \end{eqnarray*}
  Furthermore,
  \begin{eqnarray*}
    &  & \left| \frac{(x - d \overrightarrow{e_1}) - Z}{| (x - d
    \overrightarrow{e_1}) - Z |} (- | Z |) - \frac{Z}{| Z |} | (x - d
    \overrightarrow{e_1}) - Z | \right|^2 | Z |^2 | (x - d
    \overrightarrow{e_1}) - Z |^2\\
    & = & | ((x - d \overrightarrow{e_1}) - Z) | Z |^2 + Z | (x - d
    \overrightarrow{e_1}) - Z |^2 |^2\\
    & = & | (x - d \overrightarrow{e_1}) - Z |^2 | Z |^4 + | Z |^2 | (x - d
    \overrightarrow{e_1}) - Z |^4 + 2 (\nobracket x - d \overrightarrow{e_1} -
    Z) . \bar{Z} \nobracket | Z |^2 | (x - d \overrightarrow{e_1}) - Z |^2\\
    & = & | (x - d \overrightarrow{e_1}) - Z |^2 | Z |^2 (- | Z |^2 + | (x -
    d \overrightarrow{e_1}) - Z |^2 + 2 \nobracket (x - d
    \overrightarrow{e_1}) . \bar{Z}) \nobracket\\
    & = & | (x - d \overrightarrow{e_1}) - Z |^2 | Z |^2 | x - d
    \overrightarrow{e_1} |^2,
  \end{eqnarray*}
  therefore
  \[ \left| \frac{(x - d \overrightarrow{e_1}) - Z}{| (x - d
     \overrightarrow{e_1}) - Z |^2} - \frac{x - d \overrightarrow{e_1}}{| x -
     d \overrightarrow{e_1} |^2} \right| \leqslant \frac{3 | Z |}{| x - d
     \overrightarrow{e_1} | \times | (x - d \overrightarrow{e_1}) - Z |} . \]
  We deduce that
  \begin{eqnarray*}
    &  & \int_{\{ | Z | \leqslant 2 | x - d \overrightarrow{e_1} | \}} \left|
    \frac{(x - d \overrightarrow{e_1}) - Z}{| (x - d \overrightarrow{e_1}) - Z
    |^2} - \frac{x - d \overrightarrow{e_1}}{| x - d \overrightarrow{e_1} |^2}
    \right| \frac{d Z}{(1 + | Z |)^{2 + \alpha}}\\
    & \leqslant & \frac{3}{| x - d \overrightarrow{e_1} |} \int_{\{ | Z |
    \leqslant 2 | x - d \overrightarrow{e_1} | \}} \frac{| Z | d Z}{| (x - d
    \overrightarrow{e_1}) - Z | (1 + | Z |)^{2 + \alpha}} .
  \end{eqnarray*}
  We remark that, either $| (x - d \overrightarrow{e_1}) - Z | \geqslant
  \frac{| x - d \overrightarrow{e_1} |}{2}$, and then
  \begin{eqnarray*}
    &  & \int_{\{ | Z | \leqslant 2 | x - d \overrightarrow{e_1} | \} \cap
    \left\{ | (x - d \overrightarrow{e_1}) - Z | \geqslant \frac{| x - d
    \overrightarrow{e_1} |}{2} \right\}} \frac{| Z | d Z}{| (x - d
    \overrightarrow{e_1}) - Z | (1 + | Z |)^{2 + \alpha}}\\
    & \leqslant & \frac{2}{| x - d \overrightarrow{e_1} |} \int_{\{ | Z |
    \leqslant 2 | x - d \overrightarrow{e_1} | \} \cap \left\{ | (x - d
    \overrightarrow{e_1}) - Z | \geqslant \frac{| x - d \overrightarrow{e_1}
    |}{2} \right\}} \frac{| Z | d Z}{(1 + | Z |)^{2 + \alpha}}\\
    & \leqslant & \frac{K (\alpha)}{| x - d \overrightarrow{e_1} |^{\alpha}}
  \end{eqnarray*}
  since $\alpha < 1$, or $| (x - d \overrightarrow{e_1}) - Z | \leqslant
  \frac{| x - d \overrightarrow{e_1} |}{2}$, and then $| Z | \geqslant \frac{|
  x - d \overrightarrow{e_1} |}{2}$, therefore
  \begin{eqnarray*}
    &  & \int_{\{ | Z | \leqslant 2 | x - d \overrightarrow{e_1} | \} \cap
    \left\{ | (x - d \overrightarrow{e_1}) - Z | \leqslant \frac{| x - d
    \overrightarrow{e_1} |}{2} \right\}} \frac{| Z | d Z}{| (x - d
    \overrightarrow{e_1}) - Z | (1 + | Z |)^{2 + \alpha}}\\
    & \leqslant & \int_{\left\{ \frac{| x - d_{} \overrightarrow{e_1} |}{2}
    \leqslant | Z | \leqslant 2 | x - d \overrightarrow{e_1} | \right\}}
    \frac{| Z | d Z}{| (x - d \overrightarrow{e_1}) - Z | (1 + | Z |)^{2 +
    \alpha}}\\
    & \leqslant & \frac{K}{| x - d \overrightarrow{e_1} |^{2 + \alpha}}
    \int_{\{ | Z - (x - d \overrightarrow{e_1}) | \leqslant 3 | x - d
    \overrightarrow{e_1} | \}} \frac{| Z | d Z}{| (x - d \overrightarrow{e_1})
    - Z |}\\
    & \leqslant & \frac{K}{| x - d \overrightarrow{e_1} |^{\alpha}} .
  \end{eqnarray*}
  We conclude that
  \begin{equation}
    \int_{\{ | Z | \leqslant 2 | x - d \overrightarrow{e_1} | \}} \left|
    \frac{(x - d \overrightarrow{e_1}) - Z}{| (x - d \overrightarrow{e_1}) - Z
    |^2} - \frac{x - d \overrightarrow{e_1}}{| x - d \overrightarrow{e_1} |^2}
    \right| \frac{d Z}{(1 + | Z |)^{2 + \alpha}} \leqslant \frac{K (\alpha)}{|
    x - d \overrightarrow{e_1} |^{1 + \alpha}} . \label{r2}
  \end{equation}
  Combining (\ref{r1}) and (\ref{r2}), and by symmetry, we deduce that
  \begin{eqnarray*}
    &  & \int_{\{ Y_1 \geqslant 0 \}} \left| \frac{x - Y}{| x - Y |^2} -
    \frac{x - d \overrightarrow{e_1}}{| x - d \overrightarrow{e_1} |^2}
    \right| \frac{d Y}{(1 + r_1 (Y))^{2 + \alpha}}\\
    & + & \int_{\{ Y_1 \leqslant 0 \}} \left| \frac{x - Y}{| x - Y |^2} -
    \frac{x + d \overrightarrow{e_1}}{| x + d \overrightarrow{e_1} |^2}
    \right| \frac{d Y}{(1 + r_{- 1} (Y))^{2 + \alpha}}\\
    & \leqslant & \frac{K (\alpha)}{| x - d \overrightarrow{e_1} |^{1 +
    \alpha}} + \frac{K (\alpha)}{| x + d \overrightarrow{e_1} |^{1 +
    \alpha}}\\
    & \leqslant & \frac{K (\alpha)}{\tilde{r} (x)^{1 + \alpha}},
  \end{eqnarray*}
  and therefore (recall that $| x - d \overrightarrow{e_1} |, | x + d
  \overrightarrow{e_1} | \geqslant 1$),
  \[ | \nabla \zeta (x) | \leqslant \frac{K \varepsilon_{f, \alpha}}{(1 +
     \tilde{r} (x))^{1 + \alpha}} . \]

  Now, let us show that $\zeta (x) \rightarrow 0$ when $| x | \rightarrow
  \infty$. We recall that
  \[ \zeta (x) = \frac{1}{2 \pi} \int_{\mathbbm{R}^2} \ln (| x - Y |) f (Y) d
     Y, \]
  and since $\int_{\mathbbm{R}^2} f (Y) d Y = 0$, for large values of $x$ (in
  particular $| x | \gg d$),
  \[ \zeta (x) = \frac{1}{2 \pi} \int_{\mathbbm{R}^2} \ln \left( \frac{| x - Y
     |}{| x |} \right) f (Y) d Y. \]
  If $| x - Y | \leqslant 1$, then $| f (Y) | \leqslant \frac{K
  \varepsilon_{f, \alpha}}{(1 + | x |)^{2 + \sigma}}$, hence
  \begin{eqnarray*}
    \int_{\{ | x - Y | \leqslant 1 \}} \left| \ln \left( \frac{| x - Y |}{| x
    |} \right) f (Y) \right| & \leqslant & \frac{K \varepsilon_{f, \alpha}}{(1
    + | x |)^{2 + \sigma}} \int_{\{ | x - Y | \leqslant 1 \}} | \ln (| x - Y
    |) - \ln (| x |) |\\
    & \leqslant & \frac{K \varepsilon_{f, \alpha} (1 + \ln (| x |))}{(1 + | x
    |)^{2 + \sigma}} \rightarrow 0
  \end{eqnarray*}
  when $x \rightarrow \infty$. If $| x - Y | \geqslant 1$, then $\ln \left(
  \frac{| x - Y |}{| x |} \right) \rightarrow 0$ when $| x | \rightarrow
  \infty$ and we recall that $f$ is bounded in $L^{\infty}$. We have, for $| x
  | \geqslant 2$ that $| x - Y | \leqslant | x | (| y | + 2)$ and therefore,
  for $| x - Y | \geqslant 1, | x | \geqslant 2$, $\left| \ln \left( \frac{| x
  - Y |}{| x |} \right) \right| \leqslant K \ln (| y | + 2)$, hence
  \[ \left| 1_{\{ | x - Y | \geqslant 1 \}} \ln \left( \frac{| x - Y |}{| x |}
     \right) f (Y) \right| \leqslant K \ln (| Y | + 2) f (Y) \in L^1
     (\mathbbm{R}^2, \mathbbm{C}) . \]
  By dominated convergence theorem, we deduce that $\zeta (x) \rightarrow 0$
  when $| x | \rightarrow \infty$. Now, to estimate $\zeta$, we integrate from
  infinity. For instance, in the case $x_1 \geqslant 0, x_2 \geqslant 0$, we
  estimate
  \[ | \zeta (x) | \leqslant \left| \int_{x_2}^{+ \infty} \partial_{x_2} \zeta
     (x_1, t) d t \right| \leqslant K \varepsilon_{f, \alpha} \int_{x_2}^{+
     \infty} \frac{d t}{(1 + | x_1 - d \overrightarrow{e_1} | + t)^{1 +
     \alpha}} \leqslant \frac{K \varepsilon_{f, \alpha}}{\alpha (1 + \tilde{r}
     (x))^{\alpha}} . \]
\end{proof}

\subsection{Proof of Lemma \ref{see}\label{AB2}}

\begin{proof}
  The fundamental solution of $- \Delta + 2$ in $\mathbbm{R}^2$ is $\frac{1}{2
  \pi} K_0 \left( \sqrt{2} | . | \right)$ where $K_0$ is the modified Bessel
  function of the second kind with the properties described in Lemma
  \ref{Kzero}. Since $\Psi \in H^1 (\mathbbm{R}^2)$ and the equation $- \Delta
  + 2$ is strictly elliptic, we have
  \[ \Psi = \frac{1}{2 \pi} K_0 \left( \sqrt{2} | . | \right) \ast h, \]
  therefore (using $K_0 \geqslant 0$), for $x \in \mathbbm{R}^2$,
  \[ | \Psi (x) | \leqslant K \| (1 + \tilde{r})^{\alpha} h \|_{L^{\infty}
     (\mathbbm{R}^2)} \int_{\mathbbm{R}^2} K_0 \left( \sqrt{2} | x - Y |
     \right) \frac{d Y}{(1 + \tilde{r} (Y))^{\alpha}} . \]
  If $| x - d \overrightarrow{e_1} | \leqslant 1$ or $| x + d
  \overrightarrow{e_1} | \leqslant 1$, we have
  \[ \int_{\mathbbm{R}^2} K_0 \left( \sqrt{2} | x - Y | \right) \frac{1}{(1 +
     \tilde{r} (Y))^{\alpha}} d Y \leqslant \tmmathbf{} \int_{\mathbbm{R}^2}
     K_0 \left( \sqrt{2} | x - Y | \right) d Y \leqslant \int_{\mathbbm{R}^2}
     K_0 \left( \sqrt{2} | Y | \right) d Y \leqslant K, \]
  therefore the estimate holds. We now suppose that $| x - d
  \overrightarrow{e_1} |, | x + d \overrightarrow{e_1} | \geqslant 1$. We
  decompose
  \begin{eqnarray*}
    \int_{\mathbbm{R}^2} K_0 \left( \sqrt{2} | x - Y | \right) \frac{1}{(1 +
    \tilde{r} (Y))^{\alpha}} d Y & = & \int_{\{ Y_1 \geqslant 0 \}} K_0 \left(
    \sqrt{2} | x - Y | \right) \frac{d Y}{(1 + | Y - d \overrightarrow{e_1}
    |)^{\alpha}}\\
    & + & \int_{\{ Y_1 \leqslant 0 \}} K_0 \left( \sqrt{2} | x - Y | \right)
    \frac{d Y}{(1 + | Y + d_n \overrightarrow{e_1} |)^{\alpha}},
  \end{eqnarray*}
  and we estimate, by a change of variable,
  \[ \int_{\{ Y_1 \geqslant 0 \}} K_0 \left( \sqrt{2} | x - Y | \right)
     \frac{d Y}{(1 + | Y - d \overrightarrow{e_1} |)^{\alpha}} \leqslant
     \int_{\mathbbm{R}^2} K_0 \left( \sqrt{2} | Y | \right) \frac{d Y}{(1 + |
     x - d \overrightarrow{e_1} - Y |)^{\alpha}} . \]
  Now, if $| Y | \leqslant \frac{| x - d \overrightarrow{e_1} |}{2}$, by Lemma
  \ref{Kzero} we have
  \begin{eqnarray*}
    &  & \int_{\left\{ | Y | \leqslant \frac{| x - d \overrightarrow{e_1}
    |}{2} \right\}} K_0 \left( \sqrt{2} | Y | \right) \frac{d Y}{(1 + | x - d
    \overrightarrow{e_1} - Y |)^{\alpha}}\\
    & \leqslant & \frac{K}{(1 + | x - d \overrightarrow{e_1} |)^{\alpha}}
    \int_{\left\{ | Y | \leqslant \frac{| x - d \overrightarrow{e_1} |}{2}
    \right\}} K_0 \left( \sqrt{2} | Y | \right) d Y\\
    & \leqslant & \frac{K}{(1 + | x - d \overrightarrow{e_1} |)^{\alpha}} .
  \end{eqnarray*}
  If $| Y | \geqslant \frac{| x - d_n \overrightarrow{e_1} |}{2}$, by Lemma
  \ref{Kzero} we have
  \begin{eqnarray*}
    &  & \int_{\left\{ | Y | \geqslant \frac{| x - d \overrightarrow{e_1}
    |}{2} \right\}} K_0 \left( \sqrt{2} | Y | \right) \frac{d Y}{(1 + | x - d
    \overrightarrow{e_1} - Y |)^{\alpha}}\\
    & \leqslant & K e^{- | x - d \overrightarrow{e_1} | / 4} \int_{\left\{ |
    Y | \geqslant \frac{| x - d \overrightarrow{e_1} |}{2} \right\}} e^{- | Y
    | / 4} d Y\\
    & \leqslant & \frac{K (\alpha)}{(1 + | x - d \overrightarrow{e_1}
    |)^{\alpha}} .
  \end{eqnarray*}
  By symmetry, we have
  \[ \int_{\{ Y_1 \leqslant 0 \}} K_0 \left( \sqrt{2} | x - Y | \right)
     \frac{d Y}{(1 + | Y + d \overrightarrow{e_1} |)^{\alpha}} \leqslant
     \frac{K}{(1 + | x + d \overrightarrow{e_1} |)^{\alpha}}, \]
  and this shows that
  \begin{equation}
    | \Psi (x) | \leqslant \frac{K (\alpha) \| (1 + \tilde{r})^{\alpha} h
    \|_{L^{\infty} (\mathbbm{R}^2)}}{(1 + \tilde{r} (x))^{\alpha}} .
    \label{ett3}
  \end{equation}
  For $\nabla \Psi$, we have the similar integral form
  \[ \nabla \Psi = \frac{1}{2 \pi} \nabla \left( K_0 \left( \sqrt{2} | . |
     \right) \right) \ast h. \]
  Once again, we can show the estimate if $| x - d \overrightarrow{e_1} |
  \leqslant 1$ or $| x + d \overrightarrow{e_1} | \leqslant 1$, and otherwise,
  we estimate as previously
  \begin{eqnarray*}
    | \nabla \Psi (x) | & \leqslant & K \| (1 + \tilde{r})^{\alpha} h
    \|_{L^{\infty} (\mathbbm{R}^2)} \int_{\mathbbm{R}^2} \left| \nabla K_0
    \left( \sqrt{2} | x - Y | \right) \right| \frac{1}{(1 + \tilde{r}
    (Y))^{\alpha}} d Y\\
    & \leqslant & K \| (1 + \tilde{r})^{\alpha} h \|_{L^{\infty}
    (\mathbbm{R}^2)} \int_{\mathbbm{R}^2} \left. - K'_0 \left( \sqrt{2} | x -
    Y | \right) \right| \frac{1}{(1 + \tilde{r} (Y))^{\alpha}} d Y
  \end{eqnarray*}
  since $K_0' < 0$ (from Lemma \ref{Kzero}). Now, we can do the same
  computation as for the estimation of $| \Psi |$, using the properties of
  $K_0'$ instead of $K_0$ in Lemma \ref{Kzero}. The same proof works, since
  the two main ingredients were the integrability near $0$ and an exponential
  decay at infinity of $K_0$, and $- K_0'$ verifies this too. We deduce
  \begin{equation}
    | \nabla \Psi (x) | \leqslant \frac{C (\alpha) \| (1 + \tilde{r})^{\alpha}
    h \|_{L^{\infty} (\mathbbm{R}^2)}}{(1 + \tilde{r} (x))^{\alpha}} .
    \label{ett4}
  \end{equation}
\end{proof}

\subsection{Proof of Lemma \ref{P3kerest1}\label{AB3}}

\begin{proof}
  First, since $\alpha > 0$, $h \in L^p (\mathbbm{R}^2, \mathbbm{C})$ for some
  large $p > 1$ (depdending on $\alpha$), and $\nabla K, K \in L^q
  (\mathbbm{R}^2, \mathbbm{C})$ for any $\frac{4}{3} > q > 1$ by Theorem
  \ref{P3gravejatftw}, thus $K \ast h$ and $\nabla K \ast h$ are well defined.
  We only look at the estimates for $x \in \mathbbm{R}^2$ with $x_1 \geqslant
  0$. The case $x_1 \leqslant 0$ can be done similarly. In this case, we have
  $\tilde{r} (x) = | x - d_c \overrightarrow{e_1} |$.
  
  \
  
  We first look at the case $0 < \alpha < 2$. By Theorem \ref{P3gravejatftw}
  and the change of variables $z = x - y$, we have
  \begin{eqnarray}
    &  & | K \ast h | (x) \nonumber\\
    & \leqslant & C \| h (1 + \tilde{r})^{\alpha} \|_{L^{\infty}
    (\mathbbm{R}^2)} \int_{\mathbbm{R}^2} \frac{d y}{| x - y |^{1 / 2} (1 + |
    x - y |)^{3 / 2} (1 + \tilde{r} (y))^{\alpha}} \nonumber\\
    & \leqslant & C (\alpha) \| h (1 + \tilde{r})^{\alpha} \|_{L^{\infty}
    (\mathbbm{R}^2)} \int_{\{ y_1 \geqslant 0 \}} \frac{d y}{| x - y |^{1 / 2}
    (1 + | x - y |)^{3 / 2} (1 + | y - d \overrightarrow{e_1} |)^{\alpha}}
    \nonumber\\
    & + & C (\alpha) \| h (1 + \tilde{r})^{\alpha} \|_{L^{\infty}
    (\mathbbm{R}^2)} \int_{\{ y_1 \leqslant 0 \}} \frac{d y}{| x - y |^{1 / 2}
    (1 + | x - y |)^{3 / 2} (1 + | y + d \overrightarrow{e_1} |)^{\alpha}}
    \nonumber\\
    & \leqslant & C (\alpha) \| h (1 + \tilde{r})^{\alpha} \|_{L^{\infty}
    (\mathbbm{R}^2)} \int_{\mathbbm{R}^2} \frac{d z}{| z |^{1 / 2} (1 + | z
    |)^{3 / 2} (1 + | z - (x - d \overrightarrow{e_1}) |)^{\alpha}}
    \nonumber\\
    & + & C (\alpha) \| h (1 + \tilde{r})^{\alpha} \|_{L^{\infty}
    (\mathbbm{R}^2)} \int_{\mathbbm{R}^2} \frac{d z}{| z |^{1 / 2} (1 + | z
    |)^{3 / 2} (1 + | z - (x + d \overrightarrow{e_1}) |)^{\alpha}} . 
    \label{P3070133}
  \end{eqnarray}
  We focus on the estimation of $\int_{\mathbbm{R}^2} \frac{d z}{| z |^{1 / 2}
  (1 + | z |)^{3 / 2} (1 + | z - (x - d \overrightarrow{e_1}) |)^{\alpha}}$.
  If $| x - d \overrightarrow{e_1} | \leqslant 1$, since $\alpha > 0$,
  \[ \int_{\mathbbm{R}^2} \frac{d z}{| z |^{1 / 2} (1 + | z |)^{3 / 2} (1 + |
     z - (x - d \overrightarrow{e_1}) |)^{\alpha}} \leqslant C (\alpha)
     \int_{\mathbbm{R}^2} \frac{d z}{| z |^{1 / 2} (1 + | z |)^{3 / 2} (1 + |
     z |)^{\alpha}} \leqslant C (\alpha) . \]
  Now, for $| x - d \overrightarrow{e_1} | \geqslant 1$, we decompose
  \begin{eqnarray*}
    &  & \int_{\mathbbm{R}^2} \frac{d z}{| z |^{1 / 2} (1 + | z |)^{3 / 2} (1
    + | z - (x - d \overrightarrow{e_1}) |)^{\alpha}}\\
    & = & \int_{\left\{ | z | \leqslant \frac{| x - d \overrightarrow{e_1}
    |}{2} \right\}} \frac{d z}{| z |^{1 / 2} (1 + | z |)^{3 / 2} (1 + | z - (x
    - d \overrightarrow{e_1}) |)^{\alpha}}\\
    & + & \int_{\left\{ | z | \geqslant \frac{| x - d \overrightarrow{e_1}
    |}{2} \right\}} \frac{d z}{| z |^{1 / 2} (1 + | z |)^{3 / 2} (1 + | z - (x
    - d \overrightarrow{e_1}) |)^{\alpha}} .
  \end{eqnarray*}
  In $\left\{ | z | \leqslant \frac{| x - d \overrightarrow{e_1} |}{2}
  \right\}$, we have $| z - (x - d \overrightarrow{e_1}) | \geqslant \frac{| x
  - d \overrightarrow{e_1} |}{2}$ and $| z - (x - d \overrightarrow{e_1}) |
  \geqslant | z |$, thus, since $\alpha - \alpha' > 0$ and $| x - d
  \overrightarrow{e_1} | \geqslant 1$,
  \begin{eqnarray*}
    &  & \int_{\left\{ | z | \leqslant \frac{| x - d \overrightarrow{e_1}
    |}{2} \right\}} \frac{d z}{| z |^{1 / 2} (1 + | z |)^{3 / 2} (1 + | z - (x
    - d \overrightarrow{e_1}) |)^{\alpha}}\\
    & \leqslant & \frac{C}{| x - d \overrightarrow{e_1} |^{\alpha'}}
    \int_{\mathbbm{R}^2} \frac{d z}{| z |^{1 / 2} (1 + | z |)^{3 / 2} (1 + | z
    |)^{\alpha - \alpha'}}\\
    & \leqslant & \frac{C (\alpha - \alpha')}{| x - d \overrightarrow{e_1}
    |^{\alpha'}}\\
    & \leqslant & \frac{C (\alpha - \alpha', \alpha')}{(1 + | x - d
    \overrightarrow{e_1} |)^{\alpha'}} .
  \end{eqnarray*}
  In $\left\{ | z | \geqslant \frac{| x - d \overrightarrow{e_1} |}{2}
  \right\}$, we have $| z | \geqslant \frac{| z - (x - d \overrightarrow{e_1})
  |}{3}$ since
  \[ | z - (x - d \overrightarrow{e_1}) | \leqslant | z | + | x - d
     \overrightarrow{e_1} | \leqslant | z | + 2 | z | \leqslant 3 | z |, \]
  and $| z | \geqslant K (1 + | z |)$ since $| z | \geqslant \frac{| x - d
  \overrightarrow{e_1} |}{2} \geqslant \frac{1}{2}$. We then estimate, with $0
  < \alpha' < \alpha < 2$,
  \begin{eqnarray*}
    &  & \int_{\left\{ | z | \geqslant \frac{| x - d \overrightarrow{e_1}
    |}{2} \right\}} \frac{d z}{| z |^{1 / 2} (1 + | z |)^{3 / 2} (1 + | z - (x
    - d \overrightarrow{e_1}) |)^{\alpha}}\\
    & \leqslant & \frac{C}{(1 + | x - d \overrightarrow{e_1} |)^{\alpha'}}
    \int_{\left\{ | z | \geqslant \frac{| x - d \overrightarrow{e_1} |}{2}
    \right\}} \frac{d z}{(1 + | z |)^{2 - \alpha'} (1 + | z - (x - d
    \overrightarrow{e_1}) |)^{\alpha}}\\
    & \leqslant & \frac{C (\alpha, \alpha')}{(1 + | x - d
    \overrightarrow{e_1} |)^{\alpha'}} \int_{\mathbbm{R}^2} \frac{d z}{(1 + |
    z - (x - d \overrightarrow{e_1}) |)^{2 + \alpha - \alpha'}}\\
    & \leqslant & \frac{C (\alpha, \alpha')}{(1 + | x - d
    \overrightarrow{e_1} |)^{\alpha'}} .
  \end{eqnarray*}
  With similar computations, we check that, since $x_1 \geqslant 0$,
  \[ \int_{\mathbbm{R}^2} \frac{d z}{| z |^{1 / 2} (1 + | z |)^{3 / 2} (1 + |
     z - (x + d \overrightarrow{e_1}) |)^{\alpha}} \leqslant \frac{C (\alpha -
     \alpha', \alpha')}{(1 + | x + d \overrightarrow{e_1} |)^{\alpha'}}
     \leqslant \frac{C (\alpha - \alpha', \alpha')}{(1 + | x - d
     \overrightarrow{e_1} |)^{\alpha'}} . \]
  Therefore, for $0 < \alpha < 2$, we have
  \[ | K \ast h | \leqslant \frac{C (\alpha - \alpha', \alpha') \| h (1 +
     \tilde{r})^{\alpha} \|_{L^{\infty} (\mathbbm{R}^2)}}{(1 +
     \tilde{r})^{\alpha'}} . \]
  Now, if we consider $\nabla K$ instead of $K$ and $\alpha < 3$, a similar
  proof gives the result. The only change is that we now use $3 - \alpha' > 0$
  since $\alpha' < \alpha < 3$ in the estimate of the integral in $\left\{ | z
  | \geqslant \frac{| x - d \overrightarrow{e_1} |}{2} \right\}$, with the
  extra decay coming from $\nabla K$ instead of $K$.
  
  \
  
  We now look at the case $2 < \alpha < 3$ and $\int_{\mathbbm{R}^2} h = 0$.
  In particular, since $\alpha > 2$, we indeed have $h \in L^1
  (\mathbbm{R}^2)$. For $\tilde{r} (x) = | x - d \overrightarrow{e_1} |
  \leqslant 1$, the proof is the same as in the case $\alpha < 2$.
  
  We now suppose that $\tilde{r} (x) = | x - d \overrightarrow{e_1} |
  \geqslant 1$. Since $\int_{\mathbbm{R}^2} h = 0$ and $\forall x \in
  \mathbbm{R}^2, h (- x_1, x_2) = h (x_1, x_2)$, we have
  \[ \int_{\{ y_1 \leqslant 0 \}} h (y) d y = \int_{\{ y_1 \geqslant 0 \}} h
     (y) d y = 0, \]
  hence
  \[ \int_{\{ y_1 \leqslant 0 \}} K (x + d \overrightarrow{e_1}) h (y) d y =
     \int_{\{ y_1 \geqslant 0 \}} K (x - d \overrightarrow{e_1}) h (y) d y =
     0. \]
  Therefore, we decompose
  \begin{eqnarray*}
    &  & | (K \ast h) (x) |\\
    & = & \left| \int_{\mathbbm{R}^2} K (x - y) h (y) d y \right|\\
    & = & \left| \int_{\{ y_1 \geqslant 0 \}} (K (x - y) - K (x - d
    \overrightarrow{e_1})) h (y) d y \right| + \left| \int_{\{ y_1 \leqslant 0
    \}} (K (x - y) - K (x + d \overrightarrow{e_1})) h (y) d y \right|\\
    & \leqslant & \int_{\{ y_1 \geqslant 0 \} \cap \{ | y - d
    \overrightarrow{e_1} | \leqslant | x - d \overrightarrow{e_1} | / 2 \}} |
    K (x - y) - K (x - d \overrightarrow{e_1}) | | h (y) | d y\\
    & + & \int_{\{ y_1 \geqslant 0 \} \cap \{ | x - y | \leqslant | x - d
    \overrightarrow{e_1} | / 2 \}} | K (x - y) - K (x - d
    \overrightarrow{e_1}) | | h (y) | d y\\
    & + & \int_{\{ y_1 \geqslant 0 \} \cap \{ | x - y | \geqslant | x - d
    \overrightarrow{e_1} | / 2 \} \cap \{ | y - d \overrightarrow{e_1} |
    \geqslant | x - d \overrightarrow{e_1} | / 2 \}} | K (x - y) - K (x - d
    \overrightarrow{e_1}) | | h (y) | d y.\\
    & + & \int_{\{ y_1 \leqslant 0 \}} | \nobracket K (x - y) - K (x + d
    \overrightarrow{e_1} \nobracket) | | h (y) | d y.
  \end{eqnarray*}
  In $\{ y_1 \geqslant 0 \} \cap \{ | y - d \overrightarrow{e_1} | \leqslant |
  x - d \overrightarrow{e_1} | / 2 \}$, by Theorem \ref{P3gravejatftw},
  \begin{eqnarray*}
    &  & | K (x - y) - K (x - d \overrightarrow{e_1}) |\\
    & \leqslant & | K ((x - d \overrightarrow{e_1}) - (y - d
    \overrightarrow{e_1})) - K (x - d \overrightarrow{e_1}) |\\
    & \leqslant & | y - d \overrightarrow{e_1} | \left( \sup_{B (x - d
    \overrightarrow{e_1}, | x - d \overrightarrow{e_1} | / 2)} | \nabla K |
    \right)\\
    & \leqslant & \frac{C | y - d \overrightarrow{e_1} |}{(1 + | x - d
    \overrightarrow{e_1} |)^3} .
  \end{eqnarray*}
  With $| x - d \overrightarrow{e_1} | \geqslant 1$, $\alpha < 3$ and the fact
  that in $\{ y_1 \geqslant 0 \} \cap \{ | y - d \overrightarrow{e_1} |
  \leqslant | x - d \overrightarrow{e_1} | / 2 \}$, $\tilde{r} (y) = | y - d
  \overrightarrow{e_1} |$, we estimate
  \begin{eqnarray*}
    &  & \int_{\{ y_1 \geqslant 0 \} \cap \{ | y - d \overrightarrow{e_1} |
    \leqslant | x - d \overrightarrow{e_1} | / 2 \}} | K (x - y) - K (x - d
    \overrightarrow{e_1}) | | h (y) | d y\\
    & \leqslant & \int_{\{ | y - d \overrightarrow{e_1} | \leqslant | x - d
    \overrightarrow{e_1} | / 2 \}} \frac{C \| h (1 + \tilde{r})^{\alpha}
    \|_{L^{\infty} (\mathbbm{R}^2)} | y - d \overrightarrow{e_1} |}{(1 + | x -
    d \overrightarrow{e_1} |)^3 (1 + | y - d \overrightarrow{e_1} |)^{\alpha}}
    d y\\
    & \leqslant & \frac{C \| h (1 + \tilde{r})^{\alpha} \|_{L^{\infty}
    (\mathbbm{R}^2)}}{(1 + | x - d \overrightarrow{e_1} |)^3} \int_{\{ | y - d
    \overrightarrow{e_1} | \leqslant | x - d \overrightarrow{e_1} | / 2 \}}
    \frac{| y - d \overrightarrow{e_1} |}{(1 + | y - d \overrightarrow{e_1}
    |)^{\alpha}} d y\\
    & \leqslant & \frac{C \| h (1 + \tilde{r})^{\alpha} \|_{L^{\infty}
    (\mathbbm{R}^2)}}{(1 + | x - d \overrightarrow{e_1} |)^3} \int_{\{ | z |
    \leqslant | x - d \overrightarrow{e_1} | / 2 \}} \frac{| z |}{(1 + | z
    |)^{\alpha}} d z\\
    & \leqslant & \frac{C (\alpha) \| h (1 + \tilde{r})^{\alpha}
    \|_{L^{\infty} (\mathbbm{R}^2)}}{(1 + | x - d \overrightarrow{e_1} |)^3}
    \times \frac{1}{(1 + | x - d \overrightarrow{e_1} |)^{\alpha - 3}}\\
    & \leqslant & \frac{C (\alpha) \| h (1 + \tilde{r})^{\alpha}
    \|_{L^{\infty} (\mathbbm{R}^2)}}{(1 + | x - d \overrightarrow{e_1}
    |)^{\alpha}} .
  \end{eqnarray*}
  Now, in $\{ y_1 \geqslant 0 \} \cap \{ | x - y | \leqslant | x - d
  \overrightarrow{e_1} | / 2 \}$, we have $| y - d \overrightarrow{e_1} |
  \geqslant | x - d \overrightarrow{e_1} | / 2$, and thus
  \[ | h (y) | \leqslant \frac{C (\alpha) \| h (1 + \tilde{r})^{\alpha}
     \|_{L^{\infty} (\mathbbm{R}^2)}}{(1 + | x - d \overrightarrow{e_1}
     |)^{\alpha}} . \]
  We deduce that
  \begin{eqnarray*}
    &  & \int_{\{ y_1 \geqslant 0 \} \cap \{ | x - y | \leqslant | x - d
    \overrightarrow{e_1} | / 2 \}} | K (x - y) - K (x - d
    \overrightarrow{e_1}) | | h (y) | d y\\
    & \leqslant & \frac{C \| h (1 + \tilde{r})^{\alpha} \|_{L^{\infty}
    (\mathbbm{R}^2)}}{(1 + | x - d \overrightarrow{e_1} |)^{\alpha}} \int_{\{
    y_1 \geqslant 0 \} \cap \{ | x - y | \leqslant | x - d
    \overrightarrow{e_1} | / 2 \}} | K (x - y) - K (x - d
    \overrightarrow{e_1}) | d y\\
    & \leqslant & \frac{C \| h (1 + \tilde{r})^{\alpha} \|_{L^{\infty}
    (\mathbbm{R}^2)}}{(1 + | x - d \overrightarrow{e_1} |)^{\alpha}} \left(
    \int_{\{ | x - y | \leqslant | x - d \overrightarrow{e_1} | / 2 \}} | K (x
    - y) | d y + | K (x - d \overrightarrow{e_1}) | \int_{\{ | x - y |
    \leqslant | x - d \overrightarrow{e_1} | / 2 \}} d y \right)\\
    & \leqslant & \frac{C \| h (1 + \tilde{r})^{\alpha} \|_{L^{\infty}
    (\mathbbm{R}^2)}}{(1 + | x - d \overrightarrow{e_1} |)^{\alpha}} \left(
    \int_{\{ | z | \leqslant | x - d \overrightarrow{e_1} | / 2 \}} | K (z) |
    d z + | K (x - d \overrightarrow{e_1}) | | x - d \overrightarrow{e_1} |^2
    \right)\\
    & \leqslant & \frac{C \| h (1 + \tilde{r})^{\alpha} \|_{L^{\infty}
    (\mathbbm{R}^2)}}{(1 + | x - d \overrightarrow{e_1} |)^{\alpha}} (\ln (1 +
    | x - d \overrightarrow{e_1} |) + 1)\\
    & \leqslant & \frac{C (\alpha - \alpha') \| h (1 + \tilde{r})^{\alpha}
    \|_{L^{\infty} (\mathbbm{R}^2)}}{(1 + | x - d \overrightarrow{e_1}
    |)^{\alpha'}}
  \end{eqnarray*}
  since $| x - d \overrightarrow{e_1} | \geqslant 1$.
  
  Now, in $\{ y_1 \geqslant 0 \} \cap \{ | x - y | \geqslant | x - d
  \overrightarrow{e_1} | / 2 \} \cap \{ | y - d \overrightarrow{e_1} |
  \geqslant | x - d \overrightarrow{e_1} | / 2 \}$, we have
  \[ | K (x - y) - K (x - d \overrightarrow{e_1}) | \leqslant | K (x - y) | +
     | K (x - d \overrightarrow{e_1}) | \leqslant \frac{C}{(1 + | x - d
     \overrightarrow{e_1} |)^2} \]
  and
  \[ | h (y) | \leqslant \frac{\| h (1 + \tilde{r})^{\alpha} \|_{L^{\infty}
     (\mathbbm{R}^2)}}{(1 + | x - d \overrightarrow{e_1} |)^{\alpha}}, \]
  as well as
  \[ | h (y) | \leqslant \frac{\| h (1 + \tilde{r})^{\alpha} \|_{L^{\infty}
     (\mathbbm{R}^2)}}{(1 + | y - d \overrightarrow{e_1} |)^{\alpha}} . \]
  We deduce, since $\alpha - \alpha' > 0$, that
  \begin{eqnarray*}
    &  & \int_{\{ y_1 \geqslant 0 \} \cap \{ | x - y | \geqslant | x - d
    \overrightarrow{e_1} | / 2 \} \cap \{ | y - d \overrightarrow{e_1} |
    \geqslant | x - d \overrightarrow{e_1} | / 2 \}} | K (x - y) - K (x - d
    \overrightarrow{e_1}) | | h (y) | d y.\\
    & \leqslant & \frac{C \| h (1 + \tilde{r})^{\alpha} \|_{L^{\infty}
    (\mathbbm{R}^2)}}{(1 + | x - d \overrightarrow{e_1} |)^{2 + (\alpha' -
    2)}} \int_{\mathbbm{R}^2} \frac{d y}{(1 + | y - d \overrightarrow{e_1}
    |)^{\alpha - \alpha' + 2}}\\
    & \leqslant & \frac{C (\alpha - \alpha') \| h (1 + \tilde{r})^{\alpha}
    \|_{L^{\infty} (\mathbbm{R}^2)}}{(1 + | x - d \overrightarrow{e_1}
    |)^{\alpha'}} .
  \end{eqnarray*}
  We are left with the estimation of $\int_{\{ y_1 \leqslant 0 \}} |
  \nobracket K (x - y) - K (x + d \overrightarrow{e_1} \nobracket) | | h (y) |
  d y$. We decompose it,
  \begin{eqnarray*}
    &  & \int_{\{ y_1 \leqslant 0 \}} | \nobracket K (x - y) - K (x + d
    \overrightarrow{e_1} \nobracket) | | h (y) | d y\\
    & = & \int_{\{ y_1 \leqslant 0 \} \cap \left\{ | y + d
    \overrightarrow{e_1} | \leqslant \frac{| x + d \overrightarrow{e_1} |}{2}
    \right\}} | \nobracket K (x - y) - K (x + d \overrightarrow{e_1}
    \nobracket) | | h (y) | d y\\
    & + & \int_{\{ y_1 \leqslant 0 \} \cap \left\{ | y + d
    \overrightarrow{e_1} | \geqslant \frac{| x + d \overrightarrow{e_1} |}{2}
    \right\}} | \nobracket K (x - y) - K (x + d \overrightarrow{e_1}
    \nobracket) | | h (y) | d y.
  \end{eqnarray*}
  In $\{ y_1 \leqslant 0 \} \cap \left\{ | y + d \overrightarrow{e_1} |
  \leqslant \frac{| x + d \overrightarrow{e_1} |}{2} \right\}$, we have
  \[ | h (y) | \leqslant \frac{\| h (1 + \tilde{r})^{\alpha} \|_{L^{\infty}
     (\mathbbm{R}^2)}}{(1 + | y + d \overrightarrow{e_1} |)^{\alpha}}, \]
  and
  \begin{eqnarray*}
    &  & | \nobracket K (x - y) - K (x + d \overrightarrow{e_1} \nobracket)
    |\\
    & = & | \nobracket K ((x + d \overrightarrow{e_1}) - (y + d
    \overrightarrow{e_1})) - K (x + d \overrightarrow{e_1} \nobracket) |\\
    & \leqslant & | y + d \overrightarrow{e_1} | \sup_{B (x + d
    \overrightarrow{e_1}, | x + d \overrightarrow{e_1} | / 2)} | \nabla K |\\
    & \leqslant & \frac{C | y + d \overrightarrow{e_1} |}{(1 + | x + d
    \overrightarrow{e_1} |)^3},
  \end{eqnarray*}
  thus
  \begin{eqnarray*}
    &  & \int_{\{ y_1 \leqslant 0 \} \cap \left\{ | y + d
    \overrightarrow{e_1} | \leqslant \frac{| x + d \overrightarrow{e_1} |}{2}
    \right\}} | \nobracket K (x - y) - K (x + d \overrightarrow{e_1}
    \nobracket) | | h (y) | d y\\
    & \leqslant & \frac{C \| h (1 + \tilde{r})^{\alpha} \|_{L^{\infty}
    (\mathbbm{R}^2)}}{(1 + | x + d \overrightarrow{e_1} |)^3} \int_{\left\{ |
    y + d \overrightarrow{e_1} | \leqslant \frac{| x + d \overrightarrow{e_1}
    |}{2} \right\}} \frac{| y + d \overrightarrow{e_1} |}{(1 + | y + d
    \overrightarrow{e_1} |)^{\alpha}} d y\\
    & \leqslant & \frac{C \| h (1 + \tilde{r})^{\alpha} \|_{L^{\infty}
    (\mathbbm{R}^2)}}{(1 + | x + d \overrightarrow{e_1} |)^3} \times \frac{C
    (\alpha)}{(1 + | x + d \overrightarrow{e_1} |)^{\alpha - 3}}\\
    & \leqslant & \frac{C (\alpha) \| h (1 + \tilde{r})^{\alpha}
    \|_{L^{\infty} (\mathbbm{R}^2)}}{(1 + | x + d \overrightarrow{e_1}
    |)^{\alpha}}\\
    & \leqslant & \frac{C (\alpha) \| h (1 + \tilde{r})^{\alpha}
    \|_{L^{\infty} (\mathbbm{R}^2)}}{(1 + | x - d \overrightarrow{e_1}
    |)^{\alpha}}
  \end{eqnarray*}
  since $x_1 \geqslant 0$ (which implies that $| x + d \overrightarrow{e_1} |
  \geqslant | x - d \overrightarrow{e_1} |$).
  
  Finally, in $\{ y_1 \leqslant 0 \} \cap \left\{ | y + d \overrightarrow{e_1}
  | \geqslant \frac{| x + d \overrightarrow{e_1} |}{2} \right\}$, we first
  suppose that $| x - y | \geqslant \frac{| x + d \overrightarrow{e_1} |}{2}$,
  thus
  \[ | K (x - y) - K (x + d \overrightarrow{e_1}) | \leqslant | K (x - y) | +
     | K (x + d \overrightarrow{e_1}) | \leqslant \frac{C}{(1 + | x + d
     \overrightarrow{e_1} |)^2}, \]
  and we have
  \[ | h (y) | \leqslant \frac{K (\alpha) \| h (1 + \tilde{r})^{\alpha}
     \|_{L^{\infty} (\mathbbm{R}^2)}}{(1 + | x + d \overrightarrow{e_1}
     |)^{\alpha}}, \]
  as well as
  \[ | h (y) | \leqslant \frac{K (\alpha) \| h (1 + \tilde{r})^{\alpha}
     \|_{L^{\infty} (\mathbbm{R}^2)}}{(1 + | y + d \overrightarrow{e_1}
     |)^{\alpha}} . \]
  We therefore estimate, since $\alpha - \alpha' > 0$, $| x + d
  \overrightarrow{e_1} | \geqslant | x - d \overrightarrow{e_1} |$,
  \begin{eqnarray*}
    &  & \int_{\{ y_1 \leqslant 0 \} \cap \left\{ | y + d
    \overrightarrow{e_1} | \geqslant \frac{| x + d \overrightarrow{e_1} |}{2}
    \right\} \cap \left\{ | x - y | \geqslant \frac{| x + d
    \overrightarrow{e_1} |}{2} \right\}} | \nobracket K (x - y) - K (x + d
    \overrightarrow{e_1} \nobracket) | | h (y) | d y\\
    & \leqslant & \frac{C \| h (1 + \tilde{r})^{\alpha} \|_{L^{\infty}
    (\mathbbm{R}^2)}}{(1 + | x + d \overrightarrow{e_1} |)^{2 + (\alpha' -
    2)}} \int_{\mathbbm{R}^2} \frac{1}{(1 + | y + d \overrightarrow{e_1}
    |)^{\alpha - \alpha' + 2}}\\
    & \leqslant & \frac{C (\alpha - \alpha') \| h (1 + \tilde{r})^{\alpha}
    \|_{L^{\infty} (\mathbbm{R}^2)}}{(1 + | x + d \overrightarrow{e_1}
    |)^{\alpha'}}\\
    & \leqslant & \frac{C (\alpha - \alpha') \| h (1 + \tilde{r})^{\alpha}
    \|_{L^{\infty} (\mathbbm{R}^2)}}{(1 + | x - d \overrightarrow{e_1}
    |)^{\alpha'}} .
  \end{eqnarray*}
  The other case is when $| x - y | \leqslant \frac{| x + d
  \overrightarrow{e_1} |}{2}$, where we still have $| h (y) | \leqslant
  \frac{\| h (1 + \tilde{r})^{\alpha} \|_{L^{\infty} (\mathbbm{R}^2)}}{(1 + |
  x + d \overrightarrow{e_1} |)^{\alpha}}$ and we estimate
  \begin{eqnarray*}
    &  & \int_{\{ y_1 \leqslant 0 \} \cap \{ | x - y | \leqslant | x + d
    \overrightarrow{e_1} | / 2 \}} | K (x - y) - K (x + d
    \overrightarrow{e_1}) | | h (y) | d y\\
    & \leqslant & \frac{C \| h (1 + \tilde{r})^{\alpha} \|_{L^{\infty}
    (\mathbbm{R}^2)}}{(1 + | x + d \overrightarrow{e_1} |)^{\alpha}} \int_{\{
    y_1 \leqslant 0 \} \cap \{ | x - y | \leqslant | x + d
    \overrightarrow{e_1} | / 2 \}} | K (x - y) - K (x + d
    \overrightarrow{e_1}) | d y\\
    & \leqslant & \frac{C \| h (1 + \tilde{r})^{\alpha} \|_{L^{\infty}
    (\mathbbm{R}^2)}}{(1 + | x + d \overrightarrow{e_1} |)^{\alpha}} \left(
    \int_{\{ | x - y | \leqslant | x + d \overrightarrow{e_1} | / 2 \}} | K (x
    - y) | d y + | K (x + d \overrightarrow{e_1}) | \int_{\{ | x - y |
    \leqslant | x + d \overrightarrow{e_1} | / 2 \}} d y \right)\\
    & \leqslant & \frac{C \| h (1 + \tilde{r})^{\alpha} \|_{L^{\infty}
    (\mathbbm{R}^2)}}{(1 + | x + d \overrightarrow{e_1} |)^{\alpha}} \left(
    \int_{\{ | z | \leqslant | x + d \overrightarrow{e_1} | / 2 \}} | K (z) |
    d z + | K (x + d \overrightarrow{e_1}) | | x + d \overrightarrow{e_1} |^2
    \right)\\
    & \leqslant & \frac{C \| h (1 + \tilde{r})^{\alpha} \|_{L^{\infty}
    (\mathbbm{R}^2)}}{(1 + | x + d \overrightarrow{e_1} |)^{\alpha}} (\ln (1 +
    | x + d \overrightarrow{e_1} |) + 1)\\
    & \leqslant & \frac{C (\alpha - \alpha') \| h (1 + \tilde{r})^{\alpha}
    \|_{L^{\infty} (\mathbbm{R}^2)}}{(1 + | x + d \overrightarrow{e_1}
    |)^{\alpha'}}\\
    & \leqslant & \frac{C (\alpha - \alpha') \| h (1 + \tilde{r})^{\alpha}
    \|_{L^{\infty} (\mathbbm{R}^2)}}{(1 + | x - d \overrightarrow{e_1}
    |)^{\alpha'}},
  \end{eqnarray*}
  which concludes the estimates of this lemma.
\end{proof}

\subsection{Proof of Lemma \ref{P3kerest2}\label{AB4}}

\begin{proof}
  We recall from {\cite{MR2086751}}  that
  \begin{eqnarray}
    (R_{j, k} \ast h) (x) & = & \frac{1}{2 \pi} \int_{| x - y | \geqslant 1}
    \frac{\delta_{j, k} | x - y |^2 - 2 (x - y)_j (x - y)_k}{| x - y |^4} h
    (y) d y \nonumber\\
    & + & \frac{1}{2 \pi} \int_{| x - y | \leqslant 1} \frac{\delta_{j, k} |
    x - y |^2 - 2 (x - y)_j (x - y)_k}{| x - y |^4} (h (y) - h (x)) d y. 
    \label{P3Rjk}
  \end{eqnarray}
  As in the proof of Lemma \ref{P3kerest1}, we suppose $x_1 \geqslant 0$. It
  implies that $\tilde{r} (x) = | x - d \overrightarrow{e_1} |$. The proof can
  be done similarly if $x_1 \leqslant 0$.
  
  \
  
  First, we look at the case $0 < \alpha < 2$. We check that
  \begin{eqnarray*}
    &  & \left| \int_{| x - y | \geqslant 1} \frac{\delta_{j, k} | x - y |^2
    - 2 (x - y)_j (x - y)_k}{| x - y |^4} h (y) d y \right|\\
    & \leqslant & K \int_{| x - y | \geqslant 1} \frac{| h (y) | d y}{(1 + |
    x - y |)^2}\\
    & \leqslant & K \| h (1 + \tilde{r})^{\alpha} \|_{L^{\infty}
    (\mathbbm{R}^2)} \int_{\mathbbm{R}^2} \frac{d y}{(1 + | x - y |)^2 (1 +
    \tilde{r} (y))^{\alpha}} .
  \end{eqnarray*}
  The estimate of $\int_{\mathbbm{R}^2} \frac{d y}{(1 + | x - y |)^2 (1 +
  \tilde{r} (y))^{\alpha}}$ can be done exactly as the estimate of
  \[ \int_{\mathbbm{R}^2} \frac{d y}{| x - y |^{1 / 2} (1 + | x - y |)^{3 / 2}
     (1 + \tilde{r} (y))^{\alpha}} \]
  in the proof of Lemma \ref{P3kerest1} (see equation (\ref{P3070133}) and the
  proof below). We deduce that
  \[ \left| \int_{| x - y | \geqslant 1} \frac{\delta_{j, k} | x - y |^2 - 2
     (x - y)_j (x - y)_k}{| x - y |^4} h (y) d y \right| \leqslant \frac{K
     (\alpha, \alpha') \| h (1 + \tilde{r})^{\alpha} \|_{L^{\infty}
     (\mathbbm{R}^2)}}{(1 + | x - d \overrightarrow{e_1} |)^{\alpha'}} . \]
  Now, if $| x - y | \leqslant 1$, for $0 < \alpha < 3$, we have
  \[ | h (y) - h (x) | \leqslant | y - x | \sup_{B (x, 1)} | \nabla h |
     \leqslant | y - x | \frac{\| \nabla h (1 + \tilde{r})^{\alpha}
     \|_{L^{\infty} (\mathbbm{R}^2)}}{(1 + \tilde{r} (x))^{\alpha}}, \]
  thus
  \begin{eqnarray*}
    &  & \left| \int_{| x - y | \leqslant 1} \frac{\delta_{j, k} | x - y |^2
    - 2 (x - y)_j (x - y)_k}{| x - y |^4} (h (y) - h (x)) d y \right|\\
    & \leqslant & \frac{K \| \nabla h (1 + \tilde{r})^{\alpha} \|_{L^{\infty}
    (\mathbbm{R}^2)}}{(1 + \tilde{r} (x))^{\alpha}} \int_{| x - y | \leqslant
    1} \frac{1}{| x - y |^2} | y - x | d y\\
    & \leqslant & \frac{K \| \nabla h (1 + \tilde{r})^{\alpha} \|_{L^{\infty}
    (\mathbbm{R}^2)}}{(1 + \tilde{r} (x))^{\alpha}} .
  \end{eqnarray*}
  This concludes the proof of the estimate in the case $\alpha < 2$. We now
  suppose that $2 < \alpha < 3$ and $\int_{\mathbbm{R}^2} h = 0$. We already
  have estimate the second integral in (\ref{P3Rjk}) (since the computations
  were done for $0 < \alpha < 3$), and for the first integral, the case $| x -
  d \overrightarrow{e_1} | \leqslant 1$ is done as previously.
  
  We now suppose that $| x - d \overrightarrow{e_1} | \geqslant 1$. We are
  left with the estimation of
  \[ \int_{| x - y | \geqslant 1} \frac{\delta_{j, k} | x - y |^2 - 2 (x -
     y)_j (x - y)_k}{| x - y |^4} h (y) d y. \]
  We define $F_{j, k} (z) \assign \frac{\delta_{j, k} | z |^2 - 2 z_j z_k}{| z
  |^4}$ and we check easily that, for $| z | \geqslant 1$,
  \[ | F_{j, k} (z) | \leqslant \frac{K}{| z |^2} . \]
  Since $\forall x \in \mathbbm{R}^2, h (- x_1, x_2) = h (x_1, x_2)$ and
  $\int_{\mathbbm{R}^2} h = 0$, we have
  \[ \int_{\{ y_1 \geqslant 0 \}} F_{j, k} (x - d \overrightarrow{e_1}) h (y)
     d y + \int_{\{ y_1 \leqslant 0 \}} F_{j, k} (x + d \overrightarrow{e_1})
     h (y) d y = 0. \]
  Furthermore, we estimate (since $| x - d \overrightarrow{e_1} | \geqslant
  1$)
  \begin{eqnarray*}
    &  & \int_{\{ y_1 \geqslant 0 \} \cap \{ | x - y | \leqslant 1 \}} |
    F_{j, k} (x - d \overrightarrow{e_1}) h (y) | d y\\
    & \leqslant & | F_{j, k} (x - d \overrightarrow{e_1}) | \int_{\{ y_1
    \geqslant 0 \} \cap \{ | x - y | \leqslant 1 \}} | h (y) | d y\\
    & \leqslant & \frac{K}{(1 + | x - d \overrightarrow{e_1} |)^2} \int_{\{
    y_1 \geqslant 0 \} \cap \{ | x - y | \leqslant 1 \}} | h (y) | d y.
  \end{eqnarray*}
  Now, in $\{ y_1 \geqslant 0 \} \cap \{ | x - y | \leqslant 1 \}$, we check
  that $| h (y) | \leqslant \frac{K (\alpha) \| h (1 + \tilde{r})^{\alpha}
  \|_{L^{\infty} (\mathbbm{R}^2)}}{(1 + | x - d \overrightarrow{e_1}
  |)^{\alpha}}$ and thus
  \[ \int_{\{ y_1 \geqslant 0 \} \cap \{ | x - y | \leqslant 1 \}} | F_{j, k}
     (x - d \overrightarrow{e_1}) h (y) | d y \leqslant \frac{K (\alpha) \| h
     (1 + \tilde{r})^{\alpha} \|_{L^{\infty} (\mathbbm{R}^2)}}{(1 + | x - d
     \overrightarrow{e_1} |)^{2 + \alpha}} . \]
  Similarly, since $| x + d \overrightarrow{e_1} | \leqslant | x - d
  \overrightarrow{e_1} |$ since $x_1 \geqslant 0$,
  \[ \int_{\{ y_1 \leqslant 0 \} \cap \{ | x - y | \leqslant 1 \}} | F_{j, k}
     (x + d \overrightarrow{e_1}) h (y) | d y \leqslant \frac{K (\alpha) \| h
     (1 + \tilde{r})^{\alpha} \|_{C^0 (\mathbbm{R}^2)}}{(1 + | x - d
     \overrightarrow{e_1} |)^{2 + \alpha}} . \]
  Therefore, we estimate
  \begin{eqnarray*}
    &  & \left| \int_{| x - y | \geqslant 1} \frac{\delta_{j, k} | x - y |^2
    - 2 (x - y)_j (x - y)_k}{| x - y |^4} h (y) d y \right|\\
    & \leqslant & \int_{\{ y_1 \geqslant 0 \} \cap \{ | x - y | \geqslant 1
    \}} | F_{j, k} (x - y) - F_{j, k} (x - d \overrightarrow{e_1}) | | h (y) |
    d y\\
    & + & \int_{\{ y_1 \leqslant 0 \} \cap \{ | x - y | \geqslant 1 \}} |
    F_{j, k} (x - y) - F_{j, k} (x + d \overrightarrow{e_1}) | | h (y) | d y\\
    & + & \frac{K (\alpha) \| h (1 + \tilde{r})^{\alpha} \|_{C^0
    (\mathbbm{R}^2)}}{(1 + | x - d \overrightarrow{e_1} |)^{2 + \alpha}} .
  \end{eqnarray*}
  Now, we conclude as in the proof of Lemma \ref{P3kerest1} for the estimation
  of the two remaining integrals, replacing the function $K$ by $F_{j, k}$,
  and having the domain of all integrals restricted to $\{ | x - y | \geqslant
  1 \}$. We check that, in $\{ | z | \geqslant 1 \}$,
  \[ | F_{j, k} (z) | \leqslant \frac{K}{| z |^2} \leqslant \frac{K}{(1 + | z
     |)^2}, \]
  and, in $\{ | x - y | \geqslant 1 \}$,
  \[ | F_{j, k} (x - y) - F_{j, k} (x) | \leqslant \frac{K | y |}{(1 + | x
     |)^3} . \]
  With these estimates replacing Theorem \ref{P3gravejatftw}, we can do the
  proof of the estimates as in Lemma \ref{P3kerest1}, in the case $2 < \alpha
  < 3$ and $\int_{\mathbbm{R}^2} h = 0$.
\end{proof}

\subsection{Proof of Lemma \ref{funky}}\label{AB5}

\begin{proof}
  First, we check that, as a solution of $\eta L (\Phi) + (1 - \eta) V L'
  (\Psi) = V h$, $\Phi \in C^2 (\mathbbm{R}^2, \mathbbm{C})$ and
  \[ \| \Phi \|_{L^{\infty} (\{ r < 10 / c^2 \})} + \| \nabla \Phi
     \|_{L^{\infty} (\{ r < 10 / c^2 \})} + \| \nabla^2 \Phi \|_{L^{\infty}
     (\{ r < 10 / c^2 \})} \leqslant K (c, \| \Phi \|_{H_{\infty}}, \| h
     \|_{\ast \ast, \sigma'}) < + \infty . \]
  Since $\Phi \in C^2 (\mathbbm{R}^2, \mathbbm{C})$ and it satisfies the
  symmetries and the orthogonality condition, to show that $\Phi = V \Psi \in
  \mathcal{E}_{\ast, \sigma}$, we only have to show that $\| \Psi \|_{\ast,
  \sigma, d} < + \infty$. Now, similarly as in the proof of Proposition
  \ref{invertop}, we add a cutoff function $\chi_R$, writing $\tilde{\Psi} =
  \tilde{\Psi}_1 + i \tilde{\Psi}_2 = \chi_R \Psi, \tilde{h} = \tilde{h}_1 + i
  \tilde{h}_2 = \chi_R h$ but this time its value is $1$ if $r \geqslant 10 /
  c^2$ and $0$ if $r \leqslant 5 / c^2$. In particular, its support is far
  from both vortices. We check similarly that, with the same notations, we
  obtain the equation (\ref{systss}) that we write in real and imaginary
  parts:
  \begin{equation}
    \left\{ \begin{array}{l}
      \Delta \tilde{\Psi}_1 - 2 | V |^2 \tilde{\Psi}_1 = - \tilde{h}_1 -
      2\mathfrak{R}\mathfrak{e} \left( \frac{\nabla V}{V} . \nabla
      \tilde{\Psi} \right) + c \partial_{x_2} \tilde{\Psi}_2 + \tmop{Loc}_1
      (\Psi)\\
      \Delta \tilde{\Psi}_2 + c \partial_{x_2} \tilde{\Psi}_1 = - \tilde{h}_2
      - 2\mathfrak{I}\mathfrak{m} \left( \frac{\nabla V}{V} . \nabla
      \tilde{\Psi} \right) + \tmop{Loc}_2 (\Psi),
    \end{array} \right. \label{systnew}
  \end{equation}
  where $\tmop{Loc} (\Psi) = \tmop{Loc}_1 (\Psi) + i \tmop{Loc}_2 (\Psi)$, and
  this time the local terms is in $\{ 5 / c^2 \leqslant r \leqslant 10 / c^2
  \}$. Recall that $\tilde{\Psi} = 0$ on $\{ r \leqslant 5 / c^2 \}$. In
  particular, we look only at values of $x$ such that $| x | \geqslant 5 /
  c^2$. Now, we define a function $\zeta$, solution of $\Delta \zeta = -
  \tilde{h}_2 - 2\mathfrak{I}\mathfrak{m} \left( \frac{\nabla V}{V} . \nabla
  \tilde{\Psi} \right) + \tmop{Loc}_2 (\Psi)$ as in Lemma \ref{lapzeta}. With
  Lemma \ref{nonmodV} and $\nabla \tilde{\Psi} \in L^2 (\mathbbm{R}^2)$ (since
  $\Phi \in H_{\infty}$), we have $Y \mapsto (1 + \tilde{r})^{1 / 10} (\ln | x
  - Y |) \mathfrak{I}\mathfrak{m} \left( \frac{\nabla V}{V} . \nabla
  \tilde{\Psi} \right) (Y) \in L^1 (\mathbbm{R}^2)$ (hence $Y \mapsto (\ln | x
  - Y |) \mathfrak{I}\mathfrak{m} \left( \frac{\nabla V}{V} . \nabla
  \tilde{\Psi} \right) (Y) \in L^1 (\mathbbm{R}^2)$) and thus $\zeta$ is well
  defined. By H{\"o}lder inequality, we can check that
  $\mathfrak{I}\mathfrak{m} \left( \frac{\nabla V}{V} . \nabla \tilde{\Psi}
  \right) \in L^3 (\mathbbm{R}^2)$ . We check, with the same computations as
  in the proof of Lemma \ref{lapzeta} (with $\alpha = 1 / 10$ in the
  computations), that $\zeta \in C^1 (\mathbbm{R}^2)$ and that we have
  \[ | \nabla \zeta (x) | \leqslant \frac{1}{2 \pi} \int_{\mathbbm{R}^2}
     \frac{1}{| x - Y |} \left| - \tilde{h}_2 - 2\mathfrak{I}\mathfrak{m}
     \left( \frac{\nabla V}{V} . \nabla \tilde{\Psi} \right) + \tmop{Loc}_2
     (\Psi) \right| (Y) d Y, \]
  under the condition that $\nabla \tilde{\Psi} \in L^2 (\mathbbm{R}^2) \cap
  L^3 (\mathbbm{R}^2)$. With the upcoming estimates, we will check in
  particular that this condition is satisfied (by Sobolev embedding). From the
  proof of Lemma \ref{lapzeta}, we check that, since $V h \in
  \mathcal{E}_{\ast \ast, \sigma'}$ and $\frac{1 + \sigma}{2} < 1$,
  \[ \sup_{x \in \mathbbm{R}^2} (1 + | x |)^{\frac{1 + \sigma}{2}} \left.
     \int_{\mathbbm{R}^2} \frac{1}{| x - Y |} | - \tilde{h}_2 + \tmop{Loc}_2
     (\Psi) | (Y) d Y \right| < + \infty \]
  (here, its size may depend on \ $\sigma, \sigma', c, R, \| \Phi
  \|_{H_{\infty}}$ and $\| h \|_{\ast \ast, \sigma'}$). Now, from Lemma
  \ref{nonmodV}, we have, outside of $\{ \chi_R = 0 \}$ that $| \nabla V |
  \leqslant \frac{K (c)}{(1 + r)^2}$. We deduce
  \[ \int_{\mathbbm{R}^2} \frac{1}{| x - Y |} \left| \mathfrak{I}\mathfrak{m}
     \left( \frac{\nabla V}{V} . \nabla \tilde{\Psi} \right) \right| (Y) d Y
     \leqslant K (c, R) \int_{\mathbbm{R}^2} \frac{| \nabla \tilde{\Psi} |
     (Y)}{| x - Y | (1 + | Y |)^2} d Y. \]
  We focus now on the estimation of $\int_{\mathbbm{R}^2} \frac{| \nabla
  \tilde{\Psi} | (Y)}{| x - Y | (1 + | Y |)^2} d Y$. From {\cite{MR1814364}},
  Theorem 8.8, we check that $\| \nabla \tilde{\Psi} \|_{H^1 (\mathbbm{R}^2)}
  \leqslant K (c, R, \| \Phi \|_{H_{\infty}}, \| h \|_{\ast \ast, \sigma'})$.
  In particular, by Sobolev embedding, $\| \nabla \tilde{\Psi} \|_{L^3
  (\mathbbm{R}^2)} \leqslant K (c, R, \| \Phi \|_{H_{\infty}}, \| h \|_{\ast
  \ast, \sigma'})$. In the area $\{ | x - Y | \leqslant 1 \}$, we have $(1 + |
  Y |)^2 \geqslant K (1 + | x |)^2$ and therefore, by H{\"o}lder inequality,
  \begin{eqnarray*}
    \int_{\{ | x - Y | \leqslant 1 \}} \frac{| \nabla \tilde{\Psi} | (Y)}{| x
    - Y | (1 + | Y |)^2} d Y & \leqslant & \frac{K}{(1 + | x |)^2} \int_{\{ |
    x - y | \leqslant 1 \}} \frac{| \nabla \tilde{\Psi} | (Y)}{| x - Y |} d
    Y\\
    & \leqslant & \frac{K \| \nabla \tilde{\Psi} \|_{L^3 (\mathbbm{R}^2)}}{(1
    + | x |)^2} \left( \int_{\{ | x - Y | \leqslant 1 \}} \frac{d Y}{| x - Y
    |^{3 / 2}} \right)^{2 / 3}\\
    & \leqslant & \frac{K (c, R, \| \Phi \|_{H_{\infty}}, \| h \|_{\ast \ast,
    \sigma})}{(1 + | x |)^2} .
  \end{eqnarray*}
  In the area $\{ 1 \leqslant | x - Y | \leqslant | x | / 2 \}$, we have $| Y
  | \geqslant \frac{| x - Y |}{2}$ and $| Y | \geqslant \frac{| x |}{2}$,
  therefore, by Cauchy-Schwarz (since $\frac{1 + \sigma}{2} < 1$),
  \begin{eqnarray*}
    &  & \int_{\{ 1 \leqslant | x - Y | \leqslant | x | / 2 \}} \frac{|
    \nabla \tilde{\Psi} | (Y) d Y}{| x - Y | (1 + | Y |)^2}\\
    & \leqslant & \frac{K (\sigma, c, R)}{(1 + | x |)^{\frac{1 + \sigma}{2}}}
    \int_{\{ 1 \leqslant | x - Y | \leqslant | x | / 2 \}} \frac{| \nabla
    \tilde{\Psi} | (Y) d Y}{| x - Y | (1 + | x - Y |)^{2 - \left( \frac{1 +
    \sigma}{2} \right)}}\\
    & \leqslant & \frac{K (\sigma, c, R)}{(1 + | x |)^{\frac{1 + \sigma}{2}}}
    \sqrt{\int_{\{ 1 \leqslant | x - Y | \leqslant | x | / 2 \}} | \nabla
    \tilde{\Psi} |^2 (Y) d Y \int_{\{ 1 \leqslant | x - Y | \leqslant | x | /
    2 \}} \frac{d Y}{| x - Y |^{3 - \left( \frac{1 + \sigma}{2} \right)}}}\\
    & \leqslant & \frac{K (c, R, \sigma, \| \Phi \|_{H_{\infty}})}{(1 + | x
    |)^{\frac{1 + \sigma}{2}}} .
  \end{eqnarray*}
  Finally, in the area $\{ | x - Y | \geqslant | x | / 2 \}$, we estimate by
  Cauchy-Schwarz that
  \begin{eqnarray*}
    &  & \int_{\{ | x - Y | \geqslant | x | / 2 \}} \frac{| \nabla
    \tilde{\Psi} | (Y)}{| x - Y | (1 + | Y |)^2} d Y\\
    & \leqslant & \frac{K}{1 + | x |} \sqrt{\int_{\{ | x - Y | \geqslant | x
    | / 2 \}} | \nabla \tilde{\Psi} |^2 \int_{\{ | x - Y | \geqslant | x | / 2
    \}} \frac{d Y}{(1 + | Y |)^4}}\\
    & \leqslant & \frac{K (\| \Phi \|_{H_{\infty}})}{1 + | x |} .
  \end{eqnarray*}
  Combining these estimates, we conclude that
  \[ | \nabla \zeta | (x) \leqslant \frac{K (c, R, \sigma, \sigma', \| \Phi
     \|_{H_{\infty}}, \| h \|_{\ast \ast, \sigma})}{(1 + | x |)^{\frac{1 +
     \sigma}{2}}} . \]
  Now, we write $\tilde{\Psi}'_2 = \tilde{\Psi}_2 - \zeta$, and the system
  becomes
  \[ \left\{ \begin{array}{l}
       \Delta \tilde{\Psi}_1 - 2 \tilde{\Psi}_1 - c \partial_{x_2}
       \tilde{\Psi}_2' = - \tilde{h}_1 - 2\mathfrak{R}\mathfrak{e} \left(
       \frac{\nabla V}{V} . \nabla \tilde{\Psi} \right) + \tmop{Loc}_1 (\Psi)
       - c \partial_{x_2} \zeta - 2 (1 - | V |^2) \tilde{\Psi}_1\\
       \Delta \tilde{\Psi}_2' + c \partial_{x_2} \tilde{\Psi}_1 = 0.
     \end{array} \right. \]
  We deduce, as for equation (\ref{P32932}), that for $j \in \{ 1, 2 \}$,
  \[ \partial_{x_j} \tilde{\Psi}_2' = c K_j \ast \left( - \tilde{h}_1 -
     2\mathfrak{R}\mathfrak{e} \left( \frac{\nabla V}{V} . \nabla \tilde{\Psi}
     \right) + \tmop{Loc}_1 (\Psi) - c \partial_{x_2} \zeta - 2 (1 - | V |^2)
     \tilde{\Psi}_1 \right) . \]
  We check that, with Lemma \ref{P3kerest1} (for $1 > \alpha = \frac{1 +
  \sigma}{2} > 0$, $\alpha' = \sigma < \alpha$),
  \[ | K_j \ast (- \tilde{h}_1 + \tmop{Loc}_1 (\Psi) - c \partial_{x_2} \zeta)
     | \leqslant \frac{K (c, R, \sigma, \| \Phi \|_{H_{\infty}}, \| h \|_{\ast
     \ast, \sigma'})}{(1 + | x |)^{\sigma}}, \]
  since
  \[ | - \tilde{h}_1 + \tmop{Loc}_1 (\Psi) - c \partial_{x_2} \zeta |
     \leqslant \frac{K (c, R, \sigma, \| \Phi \|_{H_{\infty}}, \| h \|_{\ast
     \ast, \sigma'})}{(1 + | x |)^{\frac{1 + \sigma}{2}}} . \]
  Furthermore, from Lemma \ref{nonmodV}, outside of $\{ \chi_R = 0 \}$, $|
  \nabla V | \leqslant \frac{K (c)}{(1 + r)^2}$. We check, with Theorem
  \ref{P3gravejatftw}, that on $\left\{ | x - Y | \leqslant \frac{| x |}{2}
  \right\}$, we have $| Y | \geqslant \frac{| x |}{2}$ and
  \begin{eqnarray*}
    &  & \int_{\{ | x - Y | \leqslant | x | / 2 \}} \left| K_j (x - Y)
    \mathfrak{R}\mathfrak{e} \left( \frac{\nabla V}{V} . \nabla \tilde{\Psi}
    \right) (Y) \right| d Y\\
    & \leqslant & \frac{K (c, R)}{(1 + | x |)^2} \int_{\{ | x - Y | \leqslant
    | x | / 2 \}} \frac{| \nabla \tilde{\Psi} | (Y) d Y}{| x - Y |^{1 / 2} (1
    + | x - Y |)^{3 / 2}} .
  \end{eqnarray*}
  By Cauchy-Schwarz, we estimate
  \begin{eqnarray*}
    &  & \int_{\{ | x - Y | \leqslant | x | / 2 \}} \frac{| \nabla
    \tilde{\Psi} | (Y) d Y}{| x - Y |^{1 / 2} (1 + | x - Y |)^{3 / 2}}\\
    & \leqslant & \| \nabla \tilde{\Psi} \|_{L^2 (\mathbbm{R}^2)}
    \sqrt{\int_{\{ | x - Y | \leqslant | x | / 2 \}} \frac{d Y}{| x - Y | (1 +
    | x - Y |)^3}}\\
    & < & + \infty,
  \end{eqnarray*}
  and in $\left\{ | x - Y | \geqslant \frac{| x |}{2} \right\}$, we estimate
  \[ \int_{\{ | x - Y | \geqslant | x | / 2 \}} \left| K_j (x - Y)
     \mathfrak{R}\mathfrak{e} \left( \frac{\nabla V}{V} . \nabla \tilde{\Psi}
     \right) (Y) \right| \leqslant \frac{K (c, R)}{(1 + | x |)^2} \int_{\{ | x
     - Y | \leqslant | x | / 2 \}} \frac{| \nabla \tilde{\Psi} | (Y) d Y}{(1 +
     | Y |)^2}, \]
  and we conclude by Cauchy-Schwarz that
  \[ \int_{\{ | x - Y | \geqslant | x | / 2 \}} \left| K_j (x - Y)
     \mathfrak{R}\mathfrak{e} \left( \frac{\nabla V}{V} . \nabla \tilde{\Psi}
     \right) (Y) \right| d Y \leqslant \frac{K (c, R, \| \Phi
     \|_{H_{\infty}})}{(1 + | x |)^2} . \]
  Since $\| \tilde{\Psi}_1 \|_{L^2 (\mathbbm{R}^2)} \leqslant K (c, R, \| \Phi
  \|_{H_{\infty}})$, we estimate similarly
  \[ \int_{\mathbbm{R}^2} | K_j (x - Y) (1 - | V |^2) \tilde{\Psi}_1 (Y) | d Y
     \leqslant \frac{K (c, R, \| \Phi \|_{H_{\infty}})}{(1 + | x |)^2}, \]
  and we conclude that $| \partial_{x_j} \tilde{\Psi}_2' | \leqslant \frac{K
  (c, R, \| \Phi \|_{H_{\infty}})}{(1 + | x |)^2}$. Therefore, since
  $\tilde{\Psi}_2 = \zeta + \tilde{\Psi}'_2$,
  \[ | \nabla \tilde{\Psi}_2 | \leqslant \frac{K (c, R, \sigma, \sigma', \|
     \Phi \|_{H_{\infty}}, \| h \|_{\ast \ast, \sigma'})}{(1 + | x
     |)^{\sigma}} . \]
  By integration from the origin (using $\| \tilde{\Psi}_2 \|_{L^{\infty} (\{
  r < 10 / c^2 \})} \leqslant K (c, \| \Phi \|_{H_{\infty}}, \| h \|_{\ast
  \ast})$), we deduce also that
  \begin{equation}
    | \tilde{\Psi}_2 | \leqslant \frac{K (c, R, \sigma, \sigma', \| \Phi
    \|_{H_{\infty}}, \| h \|_{\ast \ast, \sigma'})}{(1 + | x |)^{- 1 +
    \sigma}} . \label{limitin}
  \end{equation}
  With these estimates and the equation
  \[ \Delta \tilde{\Psi}_1 - 2 \tilde{\Psi}_1 = - \tilde{h}_1 + c
     \partial_{x_2} \tilde{\Psi}_2 + \tmop{Loc}_1 (\Psi) -
     2\mathfrak{R}\mathfrak{e} \left( \frac{\nabla V}{V} . \nabla \tilde{\Psi}
     \right) - 2 (1 - | V |^2) \tilde{\Psi}_1, \]
  we check that $| - \tilde{h}_1 + c \partial_{x_2} \tilde{\Psi}_2 +
  \tmop{Loc}_1 (\Psi) | \leqslant \frac{K (c, R, \sigma, \sigma', \| \Phi
  \|_{H_{\infty}}, \| h \|_{\ast \ast, \sigma'})}{(1 + | x |)^{\sigma}}$, and
  by Lemma \ref{see} (for $\alpha = \sigma > 0$),
  \[ | \tilde{\Psi}_1 | + | \nabla \tilde{\Psi}_1 | \leqslant \frac{K (c, R,
     \sigma, \sigma', \| \Phi \|_{H_{\infty}}, \| h \|_{\ast \ast,
     \sigma'})}{(1 + | x |)^{\sigma}} \]
  (where the estimation for the terms $\mathfrak{R}\mathfrak{e} \left(
  \frac{\nabla V}{V} . \nabla \tilde{\Psi} \right)$ and $2 (1 - | V |^2)
  \Psi_1$ are similar to what has already been done since we only have $\nabla
  \tilde{\Psi}, \Psi_1 \in L^2 (\mathbbm{R}^2)$ at this point).
  
  \
  
  With this first set of estimates, looking at equation (\ref{systnew}), we
  have enough to show that
  \[ | \Delta \tilde{\Psi}_1 - 2 \tilde{\Psi}_1 - c \partial_{x_2}
     \tilde{\Psi}_2 | \leqslant \frac{K (c, R, \sigma, \| \Phi
     \|_{H_{\infty}}, \| h \|_{\ast \ast, \sigma'})}{(1 + | x |)^{1 + \sigma}}
  \]
  and
  \[ | \Delta \tilde{\Psi}_2 + c \partial_{x_2} \tilde{\Psi}_1 | \leqslant
     \frac{K (c, R, \sigma, \| \Phi \|_{H_{\infty}}, \| h \|_{\ast \ast,
     \sigma'})}{(1 + | x |)^{2 + \sigma}} . \]
  From the computations at the beginning of subsection \ref{gpkernel}, we have
  that, for $j \in \{ 1, 2 \}$,
  \[ \partial_{x_j} \tilde{\Psi}_1 = \partial_{x_j} K_0 \ast (\Delta
     \tilde{\Psi}_1 - 2 \tilde{\Psi}_1 - c \partial_{x_2} \tilde{\Psi}_2) + c
     K_j \ast (\Delta \tilde{\Psi}_2 + c \partial_{x_2} \tilde{\Psi}_1), \]
  therefore, by Lemma \ref{P3kerest1}, taking $\alpha = 1 + \sigma < 2$ and
  $\alpha' = 1 + \sigma' < \alpha$, we have
  \[ | \nabla \tilde{\Psi}_1 | \leqslant \frac{K (c, R, \sigma, \sigma', \|
     \Phi \|_{H_{\infty}}, \| h \|_{\ast \ast, \sigma'})}{(1 + | x |)^{1 +
     \sigma'}} . \]
  Furthermore, by Lemma \ref{P3kerest1}, $| K_j \ast (\Delta \tilde{\Psi}_2 +
  c \partial_{x_2} \tilde{\Psi}_1) | \leqslant \frac{K (c, R, \sigma, \sigma',
  \| \Phi \|_{H_{\infty}})}{(1 + | x |)^{2 + \sigma / 2}}$, hence, since for
  $x_j > 0$,
  \[ \tilde{\Psi}_1 = K_0 \ast (\Delta \tilde{\Psi}_1 - 2 \tilde{\Psi}_1 - c
     \partial_{x_2} \tilde{\Psi}_2) + c \int_{x_j}^{+ \infty} K_j \ast (\Delta
     \tilde{\Psi}_2 + c \partial_{x_2} \tilde{\Psi}_1) d y_j \]
  by integration from infinity, we also have (with a similar computation if
  $x_j < 0$)
  \[ | \tilde{\Psi}_1 | \leqslant \frac{K (c, R, \sigma, \sigma', \| \Phi
     \|_{H_{\infty}}, \| h \|_{\ast \ast, \sigma'})}{(1 + | x |)^{1 + \sigma /
     2}} . \]
  Now, using Theorem 8.10 from {\cite{MR1814364}}, we have for any $x \in
  \mathbbm{R}^2$ that
  \[ | \nabla^2 \tilde{\Psi} | (x) \leqslant K (\| \Delta \tilde{\Psi}
     \|_{L^{\infty} (B (x, 1))} + \| \tilde{\Psi} \|_{L^{\infty} (B (x, 1))} +
     \| \nabla \tilde{\Psi} \|_{L^{\infty} (B (x, 1))}), \]
  therefore (the limiting decay coming from (\ref{limitin}))
  \[ | \nabla^2 \tilde{\Psi} | \leqslant \frac{K (c, R, \sigma, \sigma', \|
     \Phi \|_{H_{\infty}}, \| h \|_{\ast \ast, \sigma'})}{(1 + | x |)^{- 1 +
     \sigma}} . \]
  With these estimates, we have that $\tilde{\Psi} \in \mathcal{E}_{\otimes, -
  3 + \sigma, \infty}$. Now, we define
  \[ \check{h} \assign \tilde{h} + 2 \frac{\nabla V}{V} . \nabla \tilde{\Psi}
     + 2 (1 - | V |^2) \mathfrak{R}\mathfrak{e} (\tilde{\Psi}) + \tmop{Loc}
     (\Psi), \]
  and we infer that, for any $\alpha \leqslant \sigma'$
  \begin{equation}
    \| \check{h} \|_{\otimes \otimes, \alpha, \infty} \leqslant K (\alpha, c,
    R, \sigma, \sigma', \delta, \| \Phi \|_{H_{\infty}}, \| h \|_{\ast \ast,
    \sigma'}) (1 + \| \tilde{\Psi} \|_{\otimes, \delta, \infty})
    \label{jenaimar}
  \end{equation}
  given that $\delta \geqslant - 2 + \alpha$. Indeed, we have that, for
  $\alpha \leqslant \sigma'$, $\| \tilde{h} \|_{\otimes \otimes, \alpha,
  \infty} \leqslant K (\alpha, \sigma') \| h \|_{\ast \ast, \sigma'}$, and
  \[ \| \tmop{Loc} (\Psi) \|_{\otimes \otimes, \alpha, \infty} \leqslant K (c,
     \alpha) \| \Phi \|_{C^2 (\{ r \leqslant 10 / c^2 \})} \leqslant K (c,
     \alpha, \| \Phi \|_{H_{\infty}}, \| h \|_{\ast \ast, \sigma'}) . \]
  We recall that $(1 - | V |^2) \mathfrak{R}\mathfrak{e} (\tilde{\Psi})$ is a
  real-valued term, and with Lemma \ref{nonmodV}, $0 < \sigma < \sigma' < 1$,
  we estimate
  \[ \| (1 + \tilde{r})^{1 + \alpha} (1 - | V |^2) \mathfrak{R}\mathfrak{e}
     (\tilde{\Psi}) \|_{L^{\infty} (\mathbbm{R}^2)} \leqslant K \left\|
     \frac{(1 + \tilde{r})^{1 + \alpha}}{(1 + \tilde{r})^{3 + \delta}}
     \right\|_{L^{\infty} (\mathbbm{R}^2)} \| \tilde{\Psi} \|_{\otimes,
     \delta, \infty} \leqslant K (\alpha, \delta) \| \tilde{\Psi} \|_{\otimes,
     \delta, \infty} \]
  if $1 + \alpha \geqslant 3 + \delta$ (which is a consequence of $\delta
  \geqslant - 2 + \alpha$), and
  \[ \| (1 + \tilde{r})^{2 + \alpha} \nabla ((1 - | V |^2)
     \mathfrak{R}\mathfrak{e} (\tilde{\Psi})) \|_{L^{\infty} (\mathbbm{R}^2)}
     \leqslant K \left\| \frac{(1 + \tilde{r})^{2 + \alpha}}{(1 +
     \tilde{r})^{4 + \delta}} \right\|_{L^{\infty} (\mathbbm{R}^2)} \|
     \tilde{\Psi} \|_{\otimes, \delta, \infty} \leqslant K (\alpha, \delta) \|
     \tilde{\Psi} \|_{\otimes, \delta, \infty} . \]
  Now, we estimate similarly (still using Lemma \ref{nonmodV})
  \[ \left\|  (1 + \tilde{r})^{1 + \alpha} \mathfrak{R}\mathfrak{e} \left(
     \frac{\nabla V}{V} . \nabla \tilde{\Psi} \right) \right\|_{L^{\infty}
     (\mathbbm{R}^2)} \leqslant K (c) \left\|  \frac{(1 + \tilde{r})^{1 +
     \alpha}}{(1 + \tilde{r})^{3 + \delta}} \right\|_{L^{\infty}
     (\mathbbm{R}^2)} \| \tilde{\Psi} \|_{\otimes, \delta, \infty} \leqslant K
     (c, \alpha, \delta) \| \tilde{\Psi} \|_{\otimes, \delta, \infty}, \]
  
  \[ \left\|  (1 + \tilde{r})^{2 + \alpha} \nabla \mathfrak{R}\mathfrak{e}
     \left( \frac{\nabla V}{V} . \nabla \tilde{\Psi} \right)
     \right\|_{L^{\infty} (\mathbbm{R}^2)} \leqslant K (c) \left\|  \frac{(1 +
     \tilde{r})^{2 + \alpha}}{(1 + \tilde{r})^{4 + \delta}}
     \right\|_{L^{\infty} (\mathbbm{R}^2)} \| \tilde{\Psi} \|_{\otimes,
     \delta, \infty} \leqslant K (c, \alpha, \delta) \| \tilde{\Psi}
     \|_{\otimes, \delta, \infty}, \]
  and since
  \[ \mathfrak{I}\mathfrak{m} \left( \frac{\nabla V}{V} . \nabla \tilde{\Psi}
     \right) =\mathfrak{I}\mathfrak{m} \left( \frac{\nabla V}{V} \right)
     .\mathfrak{R}\mathfrak{e} (\nabla \tilde{\Psi}) +\mathfrak{R}\mathfrak{e}
     \left( \frac{\nabla V}{V} \right) .\mathfrak{I}\mathfrak{m} (\nabla
     \tilde{\Psi}), \]
  with Lemma \ref{nonmodV} and estimate at the end of the proof of Proposition
  \ref{invertop}, we infer that
  \begin{eqnarray*}
    &  & \left\|  (1 + \tilde{r})^{2 + \alpha} \mathfrak{I}\mathfrak{m}
    \left( \frac{\nabla V}{V} . \nabla \tilde{\Psi} \right)
    \right\|_{L^{\infty} (\mathbbm{R}^2)}\\
    & \leqslant & \left\|  (1 + \tilde{r})^{2 + \alpha}
    \mathfrak{I}\mathfrak{m} \left( \frac{\nabla V}{V} \right)
    .\mathfrak{R}\mathfrak{e} (\nabla \tilde{\Psi}) \right\|_{L^{\infty}
    (\mathbbm{R}^2)} + \left\|  (1 + \tilde{r})^{2 + \alpha}
    \mathfrak{R}\mathfrak{e} \left( \frac{\nabla V}{V} \right)
    .\mathfrak{I}\mathfrak{m} (\nabla \tilde{\Psi}) \right\|_{L^{\infty}
    (\mathbbm{R}^2)}\\
    & \leqslant & K (c) \left\|  \frac{(1 + \tilde{r})^{2 + \alpha}}{(1 +
    \tilde{r})^{4 + \delta}} \right\|_{L^{\infty} (\mathbbm{R}^2)} \|
    \tilde{\Psi} \|_{\otimes, \delta, \infty} + K \left\| \frac{(1 +
    \tilde{r})^{2 + \alpha}}{(1 + \tilde{r})^{4 + \delta}}
    \right\|_{L^{\infty} (\mathbbm{R}^2)} \| \tilde{\Psi} \|_{\otimes, \delta,
    \infty}\\
    & \leqslant & K (c, \alpha, \delta) \| \tilde{\Psi} \|_{\otimes, \delta,
    \infty},
  \end{eqnarray*}
  and with similar estimates,
  \[ \left\|  (1 + \tilde{r})^{2 + \alpha} \nabla \mathfrak{I}\mathfrak{m}
     \left( \frac{\nabla V}{V} . \nabla \tilde{\Psi} \right)
     \right\|_{L^{\infty} (\mathbbm{R}^2)} \leqslant K (c, \alpha, \delta) \|
     \tilde{\Psi} \|_{\otimes, \delta, \infty} . \]
  This concludes the proof of (\ref{jenaimar}). With $\tilde{\Psi} \in
  \mathcal{E}_{\otimes, - 3 + \sigma, \infty}$, we therefore deduce that for
  $\varepsilon > 0$ a small constant, $\| \check{h} \|_{\otimes \otimes, - 1 +
  \sigma - \varepsilon, \infty} < + \infty$, hence $\check{h} \in
  \mathcal{E}_{\otimes \otimes, - 1 + \sigma - \varepsilon}$. With estimate
  (\ref{jenaimar}), Lemma \ref{P3starest} and
  \[ - \Delta \tilde{\Psi} - i c \partial_{x_2} \tilde{\Psi} +
     2\mathfrak{R}\mathfrak{e} (\tilde{\Psi}) = \check{h}, \]
  and with the symmetries on $\tilde{\Psi}$ and $\check{h}$, we can bootstrap
  our estimates on $\tilde{\Psi}$ and then on $\check{h}$, and we conclude
  that $\tilde{\Psi} \in \mathcal{E}_{\otimes, \sigma}$ (since $\sigma <
  \sigma'$).
\end{proof}

\section{Estimations for the differentiability}\label{AC}

\subsection{Proof of Lemma \ref{dderpsy}}

\begin{proof}
  We fix $0 < c < c_0 (\sigma)$. We define, for $d \in \left] \frac{1}{2 c},
  \frac{2}{c} \right[ \cap \left] d_{\circledast} - \frac{\delta}{2},
  d_{\circledast} + \frac{\delta}{2} \right[$, the function
  \[ \mathbbm{H}_d : \Phi \mapsto (\eta L (.) + (1 - \eta) V L' (. / V))_d^{-
     1} (\Pi_d^{\bot} (F_d (\Phi / V))) \]
  from $\mathcal{E}_{\circledast, \sigma, d_{\circledast}}$ to
  $\mathcal{E}_{\circledast, \sigma, d_{\circledast}}$, so that
  \[ H (\Phi, c, d) =\mathbbm{H}_d (\Phi) + \Phi . \]
  We took the same convention as in the proof of Lemma \ref{rest31}: we added
  a subscript in $d$ in the operators to describe at which values of $d$ this
  operator is taken.
  
  \
  
  \begin{tmindent}
    Step 1.  Differentiability of $\mathbbm{H}_d$ with respect to $d$.
  \end{tmindent}
  
\  
  
  To apply the implicit function theorem, we have to check that $H (\Phi, c,
  d)$ (or, equivalently $\mathbbm{H}_d (\Phi)$) is differentiable with respect
  to $d$, and that $\partial_d H (\Phi, c, d) \in \mathcal{E}_{\circledast,
  \sigma, d_{\circledast}}$. By definition of the operator $(\eta L (.) + (1 -
  \eta) V L' (. / V))^{- 1}$, we have, in the distribution sense,
  \[ \left( \eta L (\mathbbm{H}_{d + \varepsilon} (\Phi)) + (1 - \eta) V L'
     \left( \frac{\mathbbm{H}_{d + \varepsilon} (\Phi)}{V} \right) \right)_{d
     + \varepsilon} + \Pi_{d + \varepsilon}^{\bot} (F_{d + \varepsilon} (\Phi
     / V_{d + \varepsilon})) = 0 \]
  and
  \[ \left( \eta L (\mathbbm{H}_d (\Phi)) + (1 - \eta) V L' \left(
     \frac{\mathbbm{H}_d (\Phi)}{V} \right) \right)_d + \Pi_d^{\bot} (F_d
     (\Phi / V_d)) = 0. \]
  From Lemma \ref{lemma7}, we have, for any $\Phi = V_d \Psi \in
  \mathcal{E}_{\circledast, \sigma, d_{\circledast}}$ that
  \[ \left( \eta L (.) + (1 - \eta) V L' \left( \frac{.}{V} \right) \right)_d
     (\Phi) = L_d (\Phi) - (1 - \eta_d) (E - i c \partial_{x_2} V)_d \Psi, \]
  and with the definition of $L_d$ (in Lemma \ref{lemma7}), we check that, for
  any $\Phi \in \mathcal{E}_{\circledast, \sigma, d_{\circledast}}$, in the
  distribution sense,
  \begin{eqnarray*}
    &  & \left( \left( \eta L (.) + (1 - \eta) V L' \left( \frac{.}{V}
    \right) \right)_{d + \varepsilon} - \left( \eta L (.) + (1 - \eta) V L'
    \left( \frac{.}{V} \right) \right)_d \right) (\Phi)\\
    & = & (| V_{d + \varepsilon} |^2 - | V_d |^2) \Phi +
    2\mathfrak{R}\mathfrak{e} (\overline{V_{d + \varepsilon}} \Phi) V_{d +
    \varepsilon} - 2\mathfrak{R}\mathfrak{e} (\overline{V_d} \Phi) V_d\\
    & - & (1 - \eta_{d + \varepsilon}) (E - i c \partial_{x_2} V)_{d +
    \varepsilon} + (1 - \eta_d) (E - i c \partial_{x_2} V)_d .
  \end{eqnarray*}
  We therefore compute that, in the distribution sense,
  \begin{eqnarray*}
    &  & \left( \eta L (.) + (1 - \eta) V L' \left( \frac{.}{V} \right)
    \right)_d (\mathbbm{H}_{d + \varepsilon} (\Phi) -\mathbbm{H}_d (\Phi))\\
    & = & - ((| V_{d + \varepsilon} |^2 - | V_d |^2) \mathbbm{H}_{d +
    \varepsilon} (\Phi) + 2\mathfrak{R}\mathfrak{e} (\overline{V_{d +
    \varepsilon}} \mathbbm{H}_{d + \varepsilon} (\Phi)) V_{d + \varepsilon} -
    2\mathfrak{R}\mathfrak{e} (\overline{V_d} \mathbbm{H}_{d + \varepsilon}
    (\Phi)) V_d)\\
    & + & ((1 - \eta_{d + \varepsilon}) (E - i c \partial_{x_2} V)_{d +
    \varepsilon} - (1 - \eta_d) (E - i c \partial_{x_2} V)_d) \mathbbm{H}_{d +
    \varepsilon} (\Phi)\\
    & - & (\Pi_{d + \varepsilon}^{\bot} (F_{d + \varepsilon} (\Phi / V_{d +
    \varepsilon})) - \Pi_d^{\bot} (F_d (\Phi / V_d))) .
  \end{eqnarray*}
  Since
  \[ \partial_d^2 V = \partial_{x_1}^2 V_1 V_{- 1} + \partial_{x_1}^2 V_{- 1}
     V_1 - 2 \partial_{x_1} V_1 \partial_{x_1} V_{- 1}, \]
  with Lemmas \ref{lemme3}, \ref{ddVest} and equation (\ref{230210}), we check
  easily that
  \[ | V_{d + \varepsilon} |^2 - | V_d |^2 = \varepsilon \partial_d (| V |^2)
     + \frac{O_{\varepsilon \rightarrow 0}^{c, d} (\varepsilon^2)}{(1 +
     \tilde{r})^3} \]
  and
  \[ \nabla (| V_{d + \varepsilon} |^2) - \nabla (| V_d |^2) = \varepsilon
     \partial_d (\nabla | V |^2) + \frac{O_{\varepsilon \rightarrow 0}^{c, d}
     (\varepsilon^2)}{(1 + \tilde{r})^3} . \]
  It implies in particular that $(| V_{d + \varepsilon} |^2 - | V_d |^2)
  \mathbbm{H}_{d + \varepsilon} (\Phi) \in \mathcal{E}_{\circledast
  \circledast, \gamma (\sigma), d_{\circledast}}$, with
  \[ \| (| V_{d + \varepsilon} |^2 - | V_d |^2) \mathbbm{H}_{d + \varepsilon}
     (\Phi) \|_{\circledast \circledast, \gamma (\sigma), d_{\circledast}}
     \rightarrow 0 \]
  when $\varepsilon \rightarrow 0$. We check similarly
  \begin{eqnarray*}
    &  & 2\mathfrak{R}\mathfrak{e} (\overline{V_{d + \varepsilon}}
    \mathbbm{H}_{d + \varepsilon} (\Phi)) V_{d + \varepsilon} -
    2\mathfrak{R}\mathfrak{e} (\overline{V_d} \mathbbm{H}_{d + \varepsilon}
    (\Phi)) V_d\\
    & = & \varepsilon (2\mathfrak{R}\mathfrak{e} (\overline{\partial_d V}
    \mathbbm{H}_{d + \varepsilon} (\Phi)) V_d + 2\mathfrak{R}\mathfrak{e}
    (\overline{V_d} \mathbbm{H}_{d + \varepsilon} (\Phi)) \partial_d V_d) +
    O^{c, d}_{\| . \|_{\circledast \circledast, \gamma (\sigma),
    d_{\circledast}}} (\varepsilon^2),
  \end{eqnarray*}
  and that $2\mathfrak{R}\mathfrak{e} (\overline{\partial_d V} \mathbbm{H}_{d
  + \varepsilon} (\Phi)) V_d + 2\mathfrak{R}\mathfrak{e} (\overline{V_d}
  \mathbbm{H}_{d + \varepsilon} (\Phi)) \partial V_d \in
  \mathcal{E}_{\circledast \circledast, \gamma (\sigma), d_{\circledast}}$. We
  continue, still with Lemmas \ref{lemme3}, \ref{ddVest} and equation
  (\ref{230210}), we infer
  \begin{eqnarray*}
    &  & ((1 - \eta_{d + \varepsilon}) (E - i c \partial_{x_2} V)_{d +
    \varepsilon} - (1 - \eta_d) (E - i c \partial_{x_2} V)_d) \mathbbm{H}_{d +
    \varepsilon} (\Phi)\\
    & = & \varepsilon \partial_d ((1 - \eta_d) (E - i c \partial_{x_2} V)_d)
    \mathbbm{H}_{d + \varepsilon} (\Phi) + O^{c, d}_{\| . \|_{\circledast
    \circledast, \gamma (\sigma), d_{\circledast}}} (\varepsilon^2)
  \end{eqnarray*}
  and $\partial_d ((1 - \eta_d) (E - i c \partial_{x_2} V)_d) \mathbbm{H}_{d +
  \varepsilon} (\Phi) \in \mathcal{E}_{\circledast \circledast, \gamma
  (\sigma), d_{\circledast}}$. Finally, we recall that
  \[ F_d (\Psi) = (E - i c \partial_{x_2} V)_d + V_d (1 - \eta) (- \nabla
     \Psi . \nabla \Psi + | V |^2 S (\Psi)) + R_d (\Psi), \]
  and we check similarly that
  \[ \Pi_{d + \varepsilon}^{\bot} (F_{d + \varepsilon} (\Phi / V_{d +
     \varepsilon})) - \Pi_d^{\bot} (F_d (\Phi / V_d)) = \varepsilon \partial_d
     (\Pi_d^{\bot} (F_d (\Phi / V_d))) + O^{c, d}_{\| . \|_{\circledast
     \circledast, \gamma (\sigma), d_{\circledast}}} (\varepsilon^2) . \]
  We have
  \[ \partial_d (\Pi_d^{\bot} (F_d (\Phi / V_d))) = (\partial_d \Pi_d^{\bot})
     (F_d (\Phi / V_d)) + \Pi_d^{\bot} (\partial_d (F_d (\Phi / V_d))), \]
  and since $(\partial_d \Pi_d^{\bot}) (F_d (\Phi / V))$ is compactly
  supported, $(\partial_d \Pi_d^{\bot}) (F_d (\Phi / V)) \in
  \mathcal{E}_{\circledast \circledast, \gamma (\sigma), d_{\circledast}}$. We
  will check in the next step that $\partial_d (F_d (\Phi / V_d)) \in
  \mathcal{E}_{\circledast \circledast, \gamma (\sigma), d_{\circledast}}$.
  Let us suppose this result for now and finish the proof of the
  differentiability.
  
  Combining the different estimates, we have in particular that
  \[ \left( \eta L (.) + (1 - \eta) V L' \left( \frac{.}{V} \right) \right)_d
     (\mathbbm{H}_{d + \varepsilon} (\Phi) -\mathbbm{H}_d (\Phi)) \rightarrow
     0 \]
  in $\mathcal{E}_{\circledast \circledast, \gamma (\sigma), d_{\circledast}}$
  when $\varepsilon \rightarrow 0$. By Proposition \ref{invertop} (from
  $\mathcal{E}_{\circledast \circledast, \gamma (\sigma), d_{\circledast}}$ to
  $\mathcal{E}_{\circledast, \sigma, d_{\circledast}}$), this implies that
  \[ \mathbbm{H}_{d + \varepsilon} (\Phi) \rightarrow \mathbbm{H}_d (\Phi) \]
  in $\mathcal{E}_{\circledast, \sigma, d_{\circledast}}$ when $\varepsilon
  \rightarrow 0$. Now, taking the equation
  \begin{eqnarray*}
    &  & \left( \eta L (.) + (1 - \eta) V L' \left( \frac{.}{V} \right)
    \right)_d (\mathbbm{H}_{d + \varepsilon} (\Phi) -\mathbbm{H}_d (\Phi))\\
    & = & - ((| V_{d + \varepsilon} |^2 - | V_d |^2) \mathbbm{H}_{d +
    \varepsilon} (\Phi) + 2\mathfrak{R}\mathfrak{e} (\overline{V_{d +
    \varepsilon}} \mathbbm{H}_{d + \varepsilon} (\Phi)) V_{d + \varepsilon} -
    2\mathfrak{R}\mathfrak{e} (\overline{V_d} \mathbbm{H}_{d + \varepsilon}
    (\Phi)) V_d)\\
    & + & ((1 - \eta_{d + \varepsilon}) (E - i c \partial_{x_2} V)_{d +
    \varepsilon} - (1 - \eta_d) (E - i c \partial_{x_2} V)_d) \mathbbm{H}_{d +
    \varepsilon} (\Phi)\\
    & - & (\Pi_{d + \varepsilon}^{\bot} (F_{d + \varepsilon} (\Phi / V_{d +
    \varepsilon})) - \Pi_d^{\bot} (F_d (\Phi / V_d)))
  \end{eqnarray*}
  and dividing it by $\varepsilon$, and then taking $\varepsilon \rightarrow
  0$, we check that $d \mapsto \mathbbm{H}_d (\Phi)$ is a $C^1$ function in
  $\mathcal{E}_{\circledast, \sigma, d_{\circledast}}$, with
  \[ \partial_d H (\Phi, c, d) = \partial_d \mathbbm{H}_d (\Phi) = \left(
     \eta L (.) + (1 - \eta) V L' \left( \frac{.}{V} \right) \right)^{- 1} (G
     (d, \Phi)), \]
  with
  \begin{eqnarray*}
    G (d, \Phi) & \assign & \partial_d (| V |^2) \mathbbm{H}_d (\Phi) +
    2\mathfrak{R}\mathfrak{e} (\overline{\partial_d V} \mathbbm{H}_d (\Phi))
    V_d + 2\mathfrak{R}\mathfrak{e} (\overline{V_d} \mathbbm{H}_d (\Phi))
    \partial_d V_d\\
    & + & \partial_d ((1 - \eta_d) (E - i c \partial_{x_2} V)_d)
    \mathbbm{H}_d (\Phi) - \partial_d (\Pi_d^{\bot} (F_d (\Phi / V_d))) .
  \end{eqnarray*}
  By the implicit function theorem, with Lemma \ref{lemme20}, since $\|
  \Psi_{c, d_{}} \|_{\ast, \sigma, d} \leqslant K (\sigma, \sigma') c^{1 -
  \sigma'}$ this implies that, for $c$ small enough, $d \mapsto \Phi_{c, d}$
  is a $C^1$ function, and
  \[ \partial_d \Phi_{c, d} = - d_{\Phi} H^{- 1} (\partial_d H (\Phi_{c,
     d_{}}, d, c)) . \]
  Now, let us check that indeed $\partial_d (F_d (\Phi / V_d)) \in
  \mathcal{E}_{\circledast \circledast, \gamma (\sigma), d_{\circledast}}$ for
  $\Phi \in \mathcal{E}_{\circledast, \sigma, d_{\circledast}}$.
  
\
  
  \begin{tmindent}
    Step 2.  Proof of $\left\| \frac{\partial_d (F_d (\Phi / V_d))}{V}
    \right\|_{\ast \ast, \gamma (\sigma), d} \leqslant K (\sigma) c^{1 -
    \gamma (\sigma)} + K \| \Psi \|_{\ast, \sigma, d}$.
  \end{tmindent}
  
\
  
  By the equivalence of the $\ast$ and $\circledast$ norms, these estimates
  imply that $\partial_d (F_d (\Phi / V_d)) \in \mathcal{E}_{\circledast
  \circledast, \gamma (\sigma), d_{\circledast}}$. We suppose from now on that
  $\| \Psi \|_{\ast, \sigma, d} \leqslant 1$. From Lemma \ref{lemma7}, we have
  \[ F_d \left( \frac{\Phi}{V_d} \right) = (E - i c \partial_{x_2} V)_d + R_d
     \left( \frac{\Phi}{V_d} \right) + V_d (1 - \eta_d) \left( - \nabla \left(
     \frac{\Phi}{V_d} \right) . \nabla \left( \frac{\Phi}{V_d} \right) + | V_d
     |^2 S \left( \frac{\Phi}{V_d} \right) \right) . \]
  It is easy to check that at fixed $\Phi, c$,
  \[ \left\| \frac{\partial_d \left( R_d \left( \frac{\Phi}{V_d} \right)
     \right)}{V} \right\|_{\ast \ast, \gamma (\sigma), d} \leqslant K (\sigma)
     c^{1 - \gamma (\sigma)} + K \| \Psi \|_{\ast, \sigma, d}, \]
  since it is localized near the vortices. For the nonlinear part, we have
  \begin{eqnarray*}
    \frac{\partial_d \left( V (1 - \eta) \left( - \nabla \left( \frac{\Phi}{V}
    \right) . \nabla \left( \frac{\Phi}{V} \right) + | V |^2 S \left(
    \frac{\Phi}{V} \right) \right) \right)}{V} & = & \frac{\partial_d V}{V} (1
    - \eta) (- \nabla \Psi . \nabla \Psi + | V |^2 S (\Psi))\\
    & - & \partial_d \eta (- \nabla \Psi . \nabla \Psi + | V |^2 S (\Psi))\\
    & + & (1 - \eta)  \left( - 2 \nabla \Psi . \partial_d \left( \nabla
    \left( \frac{\Phi}{V_d} \right) \right) \right)\\
    & + & (1 - \eta) 2\mathfrak{R}\mathfrak{e} (\bar{V} \partial_d V) S
    (\Psi)\\
    & + & (1 - \eta) | V |^2 \partial_d \left( S \left( \frac{\Phi}{V_d}
    \right) \right) .
  \end{eqnarray*}
  For the first line, from Lemma \ref{ddVest}, $\| \Psi \|_{\ast, \sigma, d}
  \leqslant 1$ and the definition of $\| . \|_{\ast, \sigma, d}$, we have
  \begin{eqnarray*}
    \left| \frac{\partial_d V}{V} (1 - \eta) (- \nabla \Psi . \nabla \Psi + |
    V |^2 S (\Psi)) \right| & \leqslant & \frac{K \| \Psi \|_{\ast, \sigma,
    d}^2}{(1 + \tilde{r})^3} \leqslant \frac{K \| \Psi \|_{\ast, \sigma,
    d}}{(1 + \tilde{r})^3}
  \end{eqnarray*}
  and
  \[ \begin{array}{lll}
       \left| \nabla \left( \frac{\partial_d V}{V} (1 - \eta) (- \nabla \Psi .
       \nabla \Psi + | V |^2 S (\Psi)) \right) \right| & \leqslant & \frac{K
       \| \Psi \|_{\ast, \sigma, d}^2}{(1 + \tilde{r})^3} \leqslant \frac{K \|
       \Psi \|_{\ast, \sigma, d}}{(1 + \tilde{r})^3},
     \end{array} \]
  which is enough the estimate. Similarly, since $\partial_d \eta$ is
  compactly supported, we have
  \[ | \partial_d \eta (- \nabla \Psi . \nabla \Psi + | V |^2 S (\Psi)) | + |
     \nabla (\partial_d \eta (- \nabla \Psi . \nabla \Psi + | V |^2 S (\Psi)))
     | \leqslant \frac{K \| \Psi \|_{\ast, \sigma, d}^2}{(1 + \tilde{r})^3}
     \leqslant \frac{K \| \Psi \|_{\ast, \sigma, d}}{(1 + \tilde{r})^3} . \]
  Now, we develop
  \[ \partial_d \left( \nabla \left( \frac{\Phi}{V} \right) \right) = -
     \frac{\partial_d V \nabla \Phi}{V^2} - \frac{\nabla \partial_d V
     \Phi}{V^2} + \frac{\partial_d V \Phi \nabla V}{V^3}, \]
  and we check, with Lemma \ref{ddVest}, that
  \[ \left| (1 - \eta)  \left( - 2 \nabla \Psi . \partial_d \left( \nabla
     \left( \frac{\Phi}{V_d} \right) \right) \right) \right| \leqslant \frac{K
     \| \Psi \|_{\ast, \sigma, d}^2}{(1 + \tilde{r})^3} \leqslant \frac{K \|
     \Psi \|_{\ast, \sigma, d}}{(1 + \tilde{r})^3}, \]
  as well as
  \[ \left| \nabla \left( (1 - \eta)  \left( - 2 \nabla \Psi . \partial_d
     \left( \nabla \left( \frac{\Phi}{V_d} \right) \right) \right) \right)
     \right| \leqslant \frac{K \| \Psi \|_{\ast, \sigma, d}^2}{(1 +
     \tilde{r})^3} \leqslant \frac{K \| \Psi \|_{\ast, \sigma, d}}{(1 +
     \tilde{r})^3} . \]
  Since $| \mathfrak{R}\mathfrak{e} (\bar{V} \partial_d V) | \leqslant
  \frac{K}{(1 + \tilde{r})^3}$ from Lemma \ref{ddVest} and $| S (\Psi) |
  \leqslant K | \mathfrak{R}\mathfrak{e} (\Psi) |$ (since $\| \Psi \|_{\ast,
  \sigma, d} \leqslant 1$), we have similarly
  \[ | (1 - \eta) 2\mathfrak{R}\mathfrak{e} (\bar{V} \partial_d V) S (\Psi) |
     \leqslant \frac{K \| \Psi \|_{\ast, \sigma, d}}{(1 + \tilde{r})^3}, \]
  and finally, since
  \[ \partial_d \left( S \left( \frac{\Phi}{V_d} \right) \right) = -
     2\mathfrak{R}\mathfrak{e} \left( \frac{\Phi \partial_d V}{V^2} \right)
     (e^{2\mathfrak{R}\mathfrak{e} (\Psi)} - 1^{}) \]
  is real-valued, we check that
  \[ \left| \partial_d \left( S \left( \frac{\Phi}{V_d} \right) \right)
     \right| \leqslant \frac{K \| \Psi \|_{\ast, \sigma, d}^2}{(1 +
     \tilde{r})^{2 + 2 \sigma}} \leqslant \frac{K \| \Psi \|_{\ast, \sigma,
     d}}{(1 + \tilde{r})^{1 + \gamma (\sigma)}} \]
  and
  \[ \left| \nabla \partial_d \left( S \left( \frac{\Phi}{V_d} \right) \right)
     \right| \leqslant \frac{K \| \Psi \|_{\ast, \sigma, d}^2}{(1 +
     \tilde{r})^{3 + 2 \sigma}} \leqslant \frac{K \| \Psi \|_{\ast, \sigma,
     d}}{(1 + \tilde{r})^{2 + \gamma (\sigma)}} . \]
  and this is enough for the estimate. Finally, we will show that for any $0 <
  \sigma < 1$,
  \[ \left\| \frac{\partial_d (E - i c \partial_{x_2} V)}{V} \right\|_{\ast
     \ast, \sigma, d} \leqslant K (\sigma) c^{1 - \sigma}, \]
  which would conclude the proof of this step (taking $\gamma (\sigma)$
  instead of $\sigma$).
  
  Let us show first that
  \begin{equation}
    | \partial_d E | \leqslant \frac{K c^{1 - \sigma}}{(1 + \tilde{r})^{2 +
    \sigma}} . \label{ddEes}
  \end{equation}
  We have from (\ref{E2}) that
  \[ E = - 2 \nabla V_1 . \nabla V_{- 1} + (1 - | V_1 |^2) (1 - | V_{- 1} |^2)
     V_1 V_{- 1}, \]
  hence
  \[ \partial_d E = 2 \nabla \partial_{x_1} V_1 . \nabla V_{- 1} - 2 \nabla
     V_1 . \nabla \partial_{x_1} V_{- 1} + \partial_d ((1 - | V_1 |^2) (1 - |
     V_{- 1} |^2) V_1 V_{- 1}) . \]
  With Lemmas \ref{lemme3} and \ref{dervor}, we easily check that
  \[ | \nabla \partial_{x_1} V_1 . \nabla V_{- 1} | \leqslant \frac{K}{(1 +
     r_1)^2 (1 + r_{- 1})}, \]
  \[ | \nabla V_1 . \nabla \partial_{x_1} V_{- 1} | \leqslant \frac{K}{(1 +
     r_1) (1 + r_{- 1})^2} \]
  and
  \[ | \partial_d ((1 - | V_1 |^2) (1 - | V_{- 1} |^2) V_1 V_{- 1}) |
     \leqslant \frac{K}{(1 + r_1)^3 (1 + r_{- 1})^2} + \frac{K}{(1 + r_1)^2 (1
     + r_{- 1})^3} . \]
  In the right half-plane, where $r_1 \leqslant r_{- 1}$ and $r_{- 1}
  \geqslant d$, we use
  \[ \frac{1}{(1 + r_{- 1})^{1 - \sigma}} \leqslant K c^{1 - \sigma} \]
  and
  \[ \frac{1}{(1 + r_1)^{\alpha}} + \frac{1}{(1 + r_{- 1})^{\alpha}} \leqslant
     \frac{2}{(1 + \tilde{r})^{\alpha}} \]
  for $\alpha > 0$ on the three previous estimates to show that
  \[ | \partial_d E | \leqslant \frac{K c^{1 - \sigma}}{(1 + \tilde{r})^{2 +
     \sigma}} \]
  in the right half-plane. Similarly, the result holds in the left half-plane,
  and this proves (\ref{ddEes}). With similar computations, we can estimate
  $\nabla \left( \frac{\partial_d E}{V} \right)$ and show that
  \[ \left\| \frac{\partial_d E}{V} \right\|_{\ast \ast, \sigma, d} \leqslant
     K (\sigma) c^{1 - \sigma} . \]
  Let us now prove that
  \begin{equation}
    \left\| \frac{\partial_d (i c \partial_{x_2} V)}{V} \right\|_{\ast \ast,
    \sigma, d} \leqslant K (\sigma) c^{1 - \sigma} . \label{lrouge}
  \end{equation}
  We show easily that
  \[ \| i c \partial_{x_2} \partial_d V \|_{C^1 (\{ \tilde{r} \leqslant 3 \})}
     \leqslant K c \leqslant K c^{1 - \sigma}, \]
  and since $\partial_{x_2} \partial_d V = - \partial_{x_1 x_2} V_1 V_{- 1} +
  \partial_{x_1 x_2} V_{- 1} V_1 - \partial_{x_1} V_1 \partial_{x_2} V_{- 1} +
  \partial_{x_1} V_{- 1} \partial_{x_2} V_1$, by Lemma \ref{dervor} we have
  \[ | \partial_{x_2} \partial_d V | \leqslant \frac{K}{(1 + \tilde{r})^2}, |
     \nabla \partial_{x_2} \partial_d V | \leqslant \frac{K}{(1 +
     \tilde{r})^3} \]
  therefore
  \[ \left\| \tilde{r}^{1 + \sigma} \mathfrak{R}\mathfrak{e} \left( \frac{i c
     \partial_{x_2} \partial_d V}{V} \right) \right\|_{L^{\infty} (\{
     \tilde{r} \geqslant 2 \})} + \left\| \tilde{r}^{2 + \sigma} \nabla \left(
     \frac{i c \partial_{x_2} \partial_d V}{V} \right) \right\|_{L^{\infty}
     (\{ \tilde{r} \geqslant 2 \})} \leqslant K c \leqslant K c^{1 - \sigma} .
  \]
  This proves that (\ref{lrouge}) is true for the real part contribution. We
  are left with the proof of
  \[ \left\| c \tilde{r}^{2 + \sigma} \mathfrak{I}\mathfrak{m} \left( \frac{i
     \partial_{x_2} \partial_d V}{V} \right) \right\|_{L^{\infty} (\{
     \tilde{r} \geqslant 2 \})} \leqslant K (\sigma) c^{1 - \sigma}, \]
  which is more delicate and relies on some cancelations. We compute
  \[ \mathfrak{I}\mathfrak{m} \left( \frac{i \partial_{x_2} \partial_d V}{V}
     \right) = -\mathfrak{R}\mathfrak{e} \left( - \frac{\partial_{x_1 x_2}
     V_1}{V_1} + \frac{\partial_{x_1 x_2} V_{- 1}}{V_{- 1}} \right)
     -\mathfrak{R}\mathfrak{e} \left( - \frac{\partial_{x_1} V_1}{V_1}
     \frac{\partial_{x_2} V_{- 1}}{V_{- 1}} + \frac{\partial_{x_1} V_{-
     1}}{V_{- 1}} \frac{\partial_{x_2} V_1}{V_{- 1}} \right) . \]
  From Lemma \ref{dervor}, we have
  \[ \frac{\partial_{x_1} V_1}{V_1} = - \frac{i}{r_1} \sin (\theta_1) + O_{r_1
     \rightarrow \infty} \left( \frac{1}{r_1^3} \right) \]
  and the part in $O_{r_1 \rightarrow \infty} \left( \frac{1}{r_1^3} \right)$
  can be estimated as in the proof of Lemma \ref{fineestimate} for $\left\|
  \frac{i c \partial_{x_{2 \nosymbol}} V}{V} \right\|_{\ast \ast, \sigma, d}$.
  In particular, we will just compute the terms of order less than
  $\frac{1}{r_1^3}$ or $\frac{1}{r_{- 1}^3}$. From Lemma \ref{dervor}, we have
  also
  \[ \frac{\partial_{x_2} V_1}{V_1} = - \frac{i}{r_1} \cos (\theta_1) + O_{r_1
     \rightarrow \infty} \left( \frac{1}{r_1^3} \right) \]
  and
  \[ \mathfrak{R}\mathfrak{e} \left( \frac{\partial_{x_1 x_2} V_1}{V_1}
     \right) = \frac{\cos (\theta_1) \sin (\theta_1)}{r_1^2} + O_{r_1
     \rightarrow \infty} \left( \frac{1}{r_1^3} \right) . \]
  These two estimates hold by changing $i \rightarrow - i$, $\theta_1
  \rightarrow \theta_{- 1}$, $r_1 \rightarrow r_{- 1}$ and $V_1 \rightarrow
  V_{- 1}$. We then deduce that
  \begin{eqnarray}
    \mathfrak{I}\mathfrak{m} \left( \frac{i \partial_{x_2} \partial_d V}{V}
    \right) & = & - \left( - \frac{\cos (\theta_1) \sin (\theta_1)}{r_1^2} +
    \frac{\cos (\theta_{- 1}) \sin (\theta_{- 1})}{r_{- 1}^2} \right)
    \nonumber\\
    & - & \left( - \frac{\sin (\theta_1)}{r_1} \frac{\cos (\theta_{-
    1})}{r_{- 1}} + \frac{\sin (\theta_{- 1})}{r_{- 1}} \frac{\cos
    (\theta_1)}{r_1} \right) \nonumber\\
    & + & O_{r_1 \rightarrow \infty} \left( \frac{1}{r_1^3} \right) + O_{r_{-
    1} \rightarrow \infty} \left( \frac{1}{r_{- 1}^3} \right) . 
    \label{horrible}
  \end{eqnarray}
  We start with the second term of (\ref{horrible}) which is the easiest one.
  We use for $\epsilon = \pm 1$ that
  \[ \cos (\theta_{\epsilon}) = \frac{x_1 - d \epsilon}{r_{\epsilon}} \quad
     \tmop{and} \quad \sin (\theta_{\epsilon}) = \frac{x_2}{r_{\epsilon}} \]
  to compute
  \[ \sin (\theta_1) \cos (\theta_{- 1}) = \frac{(x_1 + d) x_2}{r_1 r_{- 1}}
  \]
  and
  \[ \sin (\theta_{- 1}) \cos (\theta_1) = \frac{(x_1 - d) x_2}{r_1 r_{- 1}},
  \]
  therefore
  \[ - \frac{\sin (\theta_1)}{r_1} \frac{\cos (\theta_{- 1})}{r_{- 1}} +
     \frac{\sin (\theta_{- 1})}{r_{- 1}} \frac{\cos (\theta_1)}{r_1} = \frac{2
     d x_2}{(r_1 r_{- 1})^2} . \]
  We have, in the right half-plane, where $r_1 \leqslant r_{- 1}$ and $r_{- 1}
  \geqslant d \geqslant \frac{K}{c}$,
  \[ \left| c \tilde{r}^{2 + \sigma} \frac{2 d x_2}{(r_1 r_{- 1})^2} \right| =
     2 \left| c d \frac{\tilde{r}^{2 + \sigma}}{r_1^2 r_{- 1}^{\sigma}}
     \frac{x_2}{r_{- 1}} \frac{1}{r_{- 1}^{1 - \sigma}} \right| \leqslant K
     c^{1 - \sigma} \]
  since $\frac{\tilde{r}^{2 + \sigma}}{r_1^2 r_{- 1}^{\sigma}} \leqslant 1$,
  $\frac{| x_2 |}{r_{- 1}} \leqslant 1 \tmop{and} c d \leqslant K$. Similarly,
  we have the same estimate in the left half-plane.
  
  Now for the first term of (\ref{horrible}), we have, for $\epsilon = \pm 1$,
  \[ \sin (\theta_{\epsilon}) \cos (\theta_{\epsilon}) = \frac{(x_1 - \epsilon
     d) x_2}{r_{\epsilon}^2} . \]
  Therefore,
  \[ - \frac{\cos (\theta_1) \sin (\theta_1)}{r_1^2} + \frac{\cos (\theta_{-
     1}) \sin (\theta_{- 1})}{r_{- 1}^2} = \frac{x_2}{(r_1 r_{- 1})^4} (r_1^4
     (x_1 + d) - r_{- 1}^4 (x_1 - d)) . \]
  We compute, for $\epsilon = \pm 1$,
  \[ r_{\epsilon}^4 = ((x_1 - \epsilon d)^2 + x_2^2)^2 = (x_1 - \epsilon d)^4
     + 2 (x_1 - \epsilon d)^2 x_2^2 + x_2^4, \]
  hence
  \begin{eqnarray*}
    &  & - \frac{\cos (\theta_1) \sin (\theta_1)}{r_1^2} + \frac{\cos
    (\theta_{- 1}) \sin (\theta_{- 1})}{r_{- 1}^2}\\
    & = & \frac{x_2}{(r_1 r_{- 1})^4} (x_1 - d) (x_1 + d) ((x_1 - d)^3 - (x_1
    + d)^3 + 2 x_2^2 ((x_1 - d) - (x_1 + d)))\\
    & + & \frac{x_2}{(r_1 r_{- 1})^4} x_2^4 (x_1 + d - (x_1 - d)) .
  \end{eqnarray*}
  We simplify this equation to
  \begin{equation}
    - \frac{\cos (\theta_1) \sin (\theta_1)}{r_1^2} + \frac{\cos (\theta_{-
    1}) \sin (\theta_{- 1})}{r_{- 1}^2} = \frac{- x_2 (x_1 - d) (x_1 +
    d)}{(r_1 r_{- 1})^4} (2 d^3 + 6 x_1^2 d - 4 x_2^2 d) + \frac{2 x_2^5
    d}{(r_1 r_{- 1})^4} . \label{382901}
  \end{equation}
  We now estimate separately each contribution of (\ref{382901}). We have, in
  the right half-plane, where $r_1 \leqslant r_{- 1}$ and $r_{- 1} \geqslant d
  \geqslant \frac{K}{c}$,
  \[ \left| c \tilde{r}^{2 + \sigma} \frac{2 x_2^5 d}{(r_1 r_{- 1})^4} \right|
     = 2 \left| c d \frac{x_2^5}{r_1^2 r_{- 1}^3} \frac{\tilde{r}^{2 +
     \sigma}}{r_1^2 r_{- 1}^{\sigma}} \frac{1}{r_{- 1}^{1 - \sigma}} \right|
     \leqslant K c^{1 - \sigma} \]
  since $| x_2 | \leqslant r_1, | x_2 | \leqslant r_{- 1}$ and
  $\frac{\tilde{r}^{2 + \sigma}}{r_1^2 r_{- 1}^{\sigma}} \leqslant 1$. Still
  in the right half-plane,
  \[ \left| c \tilde{r}^{2 + \sigma} \frac{x_2 (x_1 - d) (x_1 + d)}{(r_1 r_{-
     1})^4} 2 d^3 \right| = 2 \left| c d \frac{d^2}{r_{- 1}^2} \frac{(x_1 -
     d)}{r_1} \frac{(x_1 + d)}{r_{- 1}} \frac{x_2}{r_1} \frac{\tilde{r}^{2 +
     \sigma}}{r_1^2 r_{- 1}^{\sigma}} \frac{1}{r_{- 1}^{1 - \sigma}} \right|
     \leqslant K c^{1 - \sigma} \]
  since $d \leqslant K r_{- 1}$, $| x_1 - d | \leqslant r_1$ and $| x_1 + d |
  \leqslant r_{- 1}$. For the next term, we write $x_1^2 = x_1^2 - d^2 + d^2$
  in
  \[ \frac{x_2 (x_1 - d) (x_1 + d)}{(r_1 r_{- 1})^4} 6 x_1^2 d = \frac{x_2
     (x_1 - d) (x_1 + d)}{(r_1 r_{- 1})^4} 6 (x_1^2 - d^2) d + \frac{x_2 (x_1
     - d) (x_1 + d)}{(r_1 r_{- 1})^4} 6 d^3 . \]
  In the right half-plane, using $x_1^2 - d^2 = (x_1 - d) (x_1 + d)$,
  \[ \left| c \tilde{r}^{2 + \sigma} \frac{x_2 (x_1 - d) (x_1 + d)}{(r_1 r_{-
     1})^4} 6 (x_1^2 - d^2) d \right| = 6 \left| c d \frac{(x_1 - d)^2}{r_1^2}
     \frac{(x_1 + d)^2}{r_{- 1}^2} \frac{x_2}{r_{- 1}} \frac{\tilde{r}^{2 +
     \sigma}}{r_1^2 r_{- 1}^{\sigma}} \frac{1}{r_{- 1}^{1 - \sigma}} \right|
     \leqslant K c^{1 - \sigma} \]
  using previous estimates. We continue in the right half-plane with
  \[ \left| c \tilde{r}^{2 + \sigma} \frac{x_2 (x_1 - d) (x_1 + d)}{(r_1 r_{-
     1})^4} 6 d^3 \right| = 6 \left| c d \frac{(x_1 - d)^{}}{r_1} \frac{(x_1 +
     d)^{}}{r_{- 1}} \frac{d^2}{r_{- 1}^2} \frac{x_2}{r_1} \frac{\tilde{r}^{2
     + \sigma}}{r_1^2 r_{- 1}^{\sigma}} \frac{1}{r_{- 1}^{1 - \sigma}} \right|
     \leqslant K c^{1 - \sigma} \]
  and
  \[ \left| c \tilde{r}^{2 + \sigma} \frac{x_2 (x_1 - d) (x_1 + d)}{(r_1 r_{-
     1})^4} 4 x_2^2 d \right| = 4 \left| c d \frac{(x_1 - d)^{}}{r_1}
     \frac{(x_1 + d)^{}}{r_{- 1}} \frac{x_2^3}{r_1 r_{- 1}^2}
     \frac{\tilde{r}^{2 + \sigma}}{r_1^2 r_{- 1}^{\sigma}} \frac{1}{r_{- 1}^{1
     - \sigma}} \right| \leqslant K c^{1 - \sigma} \]
  using previous estimates. Similarly, all these estimates hold in the left
  half-plane, and for $\nabla \left( \frac{\partial_d (i c \partial_{x_2}
  V)}{V} \right)$, which ends the proof of
  \[ \left\| \frac{\partial_d (i c \partial_{x_2} V)}{V} \right\|_{\ast \ast,
     \sigma, d_c} \leqslant K c^{1 - \sigma} . \]
\end{proof}

\subsection{Proof of Lemma \ref{nl310}}\label{AC2}

\begin{proof}
  From the proof (and with the notations) of Lemma \ref{rest31},
  \[ \begin{array}{lll}
       &  & \left( \tmop{Id} + \left( \eta L (.) + (1 - \eta) V L' \left(
       \frac{.}{V} \right) \right)_c^{- 1} (\Pi_d^{\bot} (d_{\Psi} F_c (. /
       V))) \right) ((\Phi_{c + \varepsilon, d} \nobracket - \nobracket
       \Phi_{c, d}))\\
       & = & \left( \eta L (.) + (1 - \eta) V L' \left( \frac{.}{V} \right)
       \right)_c^{- 1} \left( - \varepsilon \Pi_d^{\bot} (- i \partial_{x_2}
       V) - i \varepsilon \left( \eta \partial_{x_2} \Phi_{c + \varepsilon, d}
       + (1 - \eta) V \partial_{x_2} \left( \frac{\Phi_{c + \varepsilon,
       d}}{V} \right) \right) \right)\\
       & + & \left( \eta L (.) + (1 - \eta) V L' \left( \frac{.}{V} \right)
       \right)_c^{- 1} (O^{\sigma, c}_{\| . \|_{\ast \ast, \sigma, d}}
       (\varepsilon^2)),
     \end{array} \]
  thus, taking $\varepsilon \rightarrow 0$, we deduce that (with Lemma
  \ref{rest31})
  \begin{eqnarray*}
    &  & \left( \tmop{Id} + \left( \eta L (.) + (1 - \eta) V L' \left(
    \frac{.}{V} \right) \right)^{- 1} (\Pi_d^{\bot} (d_{\Psi} F_c (. / V)))
    \right) (\partial_c \Phi_{c, d})\\
    & = & \left( \eta L (.) + (1 - \eta) V L' \left( \frac{.}{V} \right)
    \right)^{- 1} \left( \Pi_d^{\bot} (\partial_c F (\Phi_{c, d} / V)) - i
    \eta \partial_{x_2} \Phi_{c, d} + (1 - \eta) V \partial_{x_2} \left(
    \frac{\Phi_{c, d}}{V} \right) \right) .
  \end{eqnarray*}
  Since at $d = d_c$, $\lambda (c, d_c) = 0$, we have
  \[ \Pi_d^{\bot} (\partial_c F (\Phi_{c, d} / V)) - i \eta \partial_{x_2}
     \Phi_{c, d} + (1 - \eta) V \partial_{x_2} \left( \frac{\Phi_{c, d}}{V}
     \right)_{| d = d_c \nobracket} = - i \partial_{x_2} Q_c, \]
  hence, with Proposition \ref{invertop},
  \begin{eqnarray*}
    \| \partial_c \Psi_{c, d | d = d_c \nobracket} \|_{\ast, \sigma, d} &
    \leqslant & K \| \partial_c \Psi_{c, d | d = d_c \nobracket}
    \|_{\circledast, \sigma, d_{\circledast}}\\
    & \leqslant & K (\sigma, \sigma') \left\| \frac{i \partial_{x_2} Q_c}{V}
    \right\|_{\circledast \circledast, \frac{\sigma + \sigma'}{2},
    d_{\circledast}} .
  \end{eqnarray*}
  We will conclude by showing that for any $0 < \sigma < \sigma' < 1$,
  \[ \left\| \frac{i \partial_{x_2} Q_c}{V} \right\|_{\circledast \circledast,
     \sigma, d_{\circledast}} \leqslant K (\sigma, \sigma') c^{- \sigma'} ; \]
  which, applied to $0 < \frac{\sigma + \sigma'}{2} < \sigma' < 1$, concludes
  the proof.
  
  \
  
  By Lemma \ref{fineestimate}, we have
  \[ \left\| \frac{i \partial_{x_2} V}{V} \right\|_{\circledast \circledast,
     \sigma, d_{\circledast}} \leqslant K (\sigma) c^{- \sigma}, \]
  and using $\| \Psi_{c, d_c} \|_{\ast, \sigma, d_c} \leqslant K (\sigma,
  \sigma') c^{1 - \sigma'}$ with Lemma \ref{nonmodV}, we check easily that,
  for $c$ small enough,
  \[ \left\| \frac{i \partial_{x_2} Q_c}{V} \right\|_{\circledast \circledast,
     \sigma, d_{\circledast}} \leqslant K (\sigma, \sigma') c^{- \sigma'} . \]

  We now focus on the estimation of $\partial_d \Phi_{c, d | d = d_c
  \nobracket}$. At the end of step 1 of the proof of Lemma \ref{dderpsy}, we
  have shown that
  \[ \partial_d \Phi_{c, d | d = d_c \nobracket} = - d_{\Phi} H^{- 1}
     (\partial_d H (\Phi_{c, d_c}, c, d_c)) . \]
  From Lemma \ref{lemme20}, we have that, at $d = d_c, \Phi = \Phi_{c, d_c}$,
  the operator $d_{\Phi} H^{- 1}$ is invertible from
  $\mathcal{E}_{\circledast, \sigma, d_{\circledast}}$ to
  $\mathcal{E}_{\circledast, \sigma, d_{\circledast}}$, with an operator norm
  with size $1 + o_{c \rightarrow 0}^{\sigma} (1)$. We therefore only have to
  check that
  \[ \| \partial_d H (\Phi_{c, d_c}, c, d_c) \|_{\ast, \sigma, d_c} \leqslant
     K (\sigma, \sigma') c^{1 - \sigma'} . \]
  Since $\partial_d H (\Phi_{c, d_c}, c, d_c) = \left( \eta L (.) + (1 - \eta)
  V L' \left( \frac{.}{V} \right) \right)^{- 1} (G (d_c, \Phi_{c, d_c}))$, By
  Proposition \ref{invertop} (from $\mathcal{E}_{\circledast \circledast,
  \sigma', d_{\circledast}}$ to $\mathcal{E}_{\circledast, \sigma,
  d_{\circledast}}$), it will be a consequence of
  \[ \left\| \frac{G (d_c, \Phi_{c, d_c})}{V} \right\|_{\ast \ast, \sigma, d}
     \leqslant K (\sigma, \sigma') c^{1 - \sigma'} \]
  for any $0 < \sigma < \sigma' < 1$.
  
  We have, since $\mathbbm{H}_{d_c} (\Phi_{c, d_c}) = \Phi_{c, d_c}$, that
  \[ \begin{array}{lll}
       \frac{G (d_c, \Phi_{c, d_c})}{V} & = & \partial_d (| V |^2)
       \frac{\Phi_{c, d_c}}{V} + 2\mathfrak{R}\mathfrak{e}
       (\overline{\partial_d V} \Phi_{c, d_c}) + 2\mathfrak{R}\mathfrak{e}
       (\bar{V} \Phi_{c, d_c}) \frac{\partial_d V}{V}\\
       & + & \partial_d ((1 - \eta) (E - i c \partial_{x_2} V))_{| d = d_c
       \nobracket} \frac{\Phi_{c, d_c}}{V} - \frac{1}{V} \partial_d
       (\Pi_d^{\bot} (F_d (\Phi / V)))_{| d = d_c \nobracket} .
     \end{array} \]
  Since $\partial_d (| V |^2) = 2\mathfrak{R}\mathfrak{e} (\partial_d V
  \bar{V})$, we check, with Lemma \ref{ddVest} that
  \[ \left| \partial_d (| V |^2) \frac{\Phi_{c, d_c}}{V} \right| + \left|
     2\mathfrak{R}\mathfrak{e} (\bar{V} \Phi_{c, d_c}) \frac{\partial_d V}{V}
     \right| \leqslant \frac{K (\sigma, \sigma') c^{1 - \sigma'}}{(1 +
     \tilde{r})^{2 + \sigma}}, \]
  and
  \[ | \mathfrak{R}\mathfrak{e} (\overline{\partial_d V} \Phi_{c, d_c}) |
     \leqslant \frac{K (\sigma, \sigma') c^{1 - \sigma'}}{(1 + \tilde{r})^{1 +
     \sigma}}, \]
  as well as
  \[ \left| \nabla \left( \partial_d (| V |^2) \frac{\Phi_{c, d_c}}{V} +
     2\mathfrak{R}\mathfrak{e} (\bar{V} \Phi_{c, d_c}) \frac{\partial_d V}{V}
     +\mathfrak{R}\mathfrak{e} (\overline{\partial_d V} \Phi_{c, d_c}) \right)
     \right| \leqslant \frac{K (\sigma, \sigma') c^{1 - \sigma'}}{(1 +
     \tilde{r})^{2 + \sigma}}, \]
  and this estimate a real valued quantity. From step 2 of the proof of Lemma
  \ref{dderpsy}, we have
  \[ \left\| \frac{1}{V} \partial_d ((1 - \eta) (E - i c \partial_{x_2} V))
     \right\|_{\ast \ast, \sigma, d} \leqslant K (\sigma) c^{1 - \sigma}, \]
  which is enough to show that
  \[ \left\| \partial_d ((1 - \eta) (E - i c \partial_{x_2} V))_{| d = d_c
     \nobracket} \frac{\Phi_{c, d_c}}{V} \right\|_{\ast \ast, \sigma, d}
     \leqslant K (\sigma, \sigma') c^{1 - \sigma'} . \]
  Finally, in step 2 of the proof of Lemma \ref{dderpsy}, we have shown that
  (taking the estimate for $\Phi = \Phi_{c, d_c}$)
  \[ \left\| \frac{1}{V} \partial_d (\Pi_d^{\bot} (F_d (\Phi / V)))_{| d = d_c
     \nobracket} \right\|_{\ast \ast, \sigma, d} \leqslant K (\sigma, \sigma')
     c^{1 - \sigma'}, \]
  which conclude the proof of this lemma.
\end{proof}

\end{document}